\newtheorem*{theorem*}{Theorem}
\newtheorem{theorem}{Theorem}[section]
\newtheorem{proposition}[theorem]{Proposition}
\newtheorem{lemma}[theorem]{Lemma}
\newtheorem{corollary}[theorem]{Corollary}
\newtheorem{notation}[theorem]{Notation}
\newtheorem{conjecture}[theorem]{Conjecture}
\newtheorem{assumption}[theorem]{Assumption}
\theoremstyle{remark}
\newtheorem{remark}[theorem]{Remark}
\newcommand{\R}{\mathbb{R}}
\newcommand{\N}{\mathbb{N}}
\newcommand{\Z}{\mathbb{Z}}
\newcommand{\C}{\mathbb{C}}
\newcommand{\D}{\mathbb{D}}
\newcommand{\Bc}{\mathcal{B}}
\newcommand{\Cc}{\mathcal{C}}
\newcommand{\Ec}{\mathcal{E}}
\newcommand{\Fc}{\mathcal{F}}
\newcommand{\Kc}{\mathcal{K}}
\newcommand{\Lc}{\mathcal{L}}
\newcommand{\Mc}{\mathcal{M}}
\newcommand{\Tc}{\mathcal{T}}
\newcommand{\Pc}{\mathcal{P}}
\newcommand{\Cf}{\mathfrak{C}}
\newcommand{\Qf}{\mathfrak{Q}}
\newcommand{\Expect}[1]{\mathbb{E} \left[ #1 \right] }
\newcommand{\EXPECT}[2]{\mathbb{E}_{#1} \left[ #2 \right] }
\newcommand{\Prob}[1]{\mathbb{P} \left( #1 \right) }
\newcommand{\PROB}[2]{\mathbb{P}_{#1} \left( #2 \right) }
\renewcommand{\P}{\mathbb{P}}
\newcommand{\E}{\mathbb{E}}
\newcommand{\Ps}{\mathds{P}}
\newcommand{\Es}{\mathds{E}}
\newcommand{\abs}[1]{\left\vert #1 \right\vert}
\newcommand{\norme}[1]{\left\| #1 \right\| }
\newcommand{\scalar}[1]{\left( #1 \right)}
\newcommand{\floor}[1]{\left\lfloor #1 \right\rfloor}
\newcommand{\indic}[1]{ \mathbf{1}_{ \left\{ #1 \right\} } }
\newcommand{\eps}{\varepsilon}
\DeclareMathOperator{\CR}{CR}
\renewcommand{\d}{\mathrm{d}}
\def \loopmeasure{\mu^{\rm loop}}
\renewcommand{\Im}{\mathrm{Im}}
\newcommand{\chrT}{\mathbf{T}}
\newcommand{\fav}{\mathtt{F}}
\newcommand{\coT}{\mathcal{T}}
\newcommand{\Range}{\operatorname{Range}}
\newcommand{\cyl}{\mathtt{C}}
\newcommand{\espace}{\mathcal{E}}
\newcommand{\Komp}{\mathcal{Q}}
\newcommand{\perm}{\mathfrak{P}}
\newcommand{\Law}{\operatorname{Law}}
\newcommand{\diam}{\operatorname{diam}}
\newcommand{\one}{\mathbf{1}}
\newcommand{\Pb}{\mathbb{P}}
\newcommand{\Hb}{\mathbb{H}}
\newcommand{\Rb}{\mathbb{R}}
\newcommand{\Eb}{\mathbb{E}}
\newcommand{\Qc}{\mathcal{Q}}
\newcommand{\wt}{\widetilde}
\newcommand{\wh}{\widehat}
\newcommand{\ol}{\overline}
\title{Conformally invariant fields out of Brownian loop soups}
\author{Antoine Jego\thanks{\'Ecole Polytechnique Fédérale de Lausanne; antoine.jego@epfl.ch} \and Titus Lupu\thanks{Sorbonne Université and Université Paris Cité, CNRS, Laboratoire de Probabilités, Statistique et Modélisation, F-75005 Paris, France; titus.lupu@sorbonne-universite.fr} \and Wei Qian\thanks{City University of Hong Kong; weiqian@cityu.edu.hk}}
\date {\today}
\numberwithin{equation}{section}
\begin{document}

\maketitle

\abstract{Consider a Brownian loop soup $\Lc_D^\theta$ with subcritical intensity $\theta \in (0,1/2]$ in some 2D bounded simply connected domain $D$. We define and study the properties of a conformally invariant field $h_\theta$ naturally associated to $\Lc_D^\theta$. Informally, this field is a signed version of the local time of $\Lc_D^\theta$ to the power $1-\theta$.
When $\theta = 1/2$, $h_\theta$ is a Gaussian free field (GFF) in $D$.

Our construction of $h_\theta$ relies on the multiplicative chaos $\Mc_\gamma$ associated with $\Lc_D^\theta$, as introduced in \cite{ABJL21}. Assigning independent symmetric signs to each cluster, we restrict $\Mc_\gamma$ to positive clusters. We prove that, when $\theta = 1/2$, the resulting measure $\Mc_\gamma^+$ corresponds to the exponential of $\gamma$ times a GFF. At this intensity, the GFF can be recovered by differentiating at $\gamma = 0$ the measure $\Mc_\gamma^+$.
When $\theta < 1/2$, we show that $\Mc_\gamma^+$ has a nondegenerate \textit{fractional derivative} at $\gamma =0$ defining a random generalised function $h_\theta$.

We establish a result which is analogous to the recent work \cite{ALS4} in the GFF case ($\theta =1/2$),
but for $h_\theta$ with $\theta \in (0,1/2]$.
%Inspired by the recent work \cite{ALS4} which treats the GFF case ($\theta =1/2$), and
Relying on the companion article \cite{JLQ23a}, we prove that each cluster of $\Lc_D^\theta$ possesses a nondegenerate Minkowski content in some non-explicit gauge function $r \mapsto r^2 |\log r|^{1-\theta+o(1)}$. We then prove that $h_\theta$ agrees a.s.\ with the sum of the Minkowski content of each cluster multiplied by its sign.

We further extend the couplings between CLE$_4$, SLE$_4$ and the GFF \cite{schramm2013contour,MS} to $h_\theta$ for $\theta\in(0,1/2]$.
We show that the (non-nested) CLE$_\kappa$ loops form level lines for $h_\theta$  
and that there exists a constant height gap $c(\theta)>0$ between the values of the field on either side of the CLE loops. 
However, unless $\theta=1/2$, $h_\theta$ is not equal to the CLE nesting field.
We finally study the Wick powers of $h_\theta$ and the expansion of $\Mc_\gamma$ and $\Mc_\gamma^+$ in power series of $\gamma$.
}

\setcounter{tocdepth}{1}
\tableofcontents

\section{Introduction}
%!TEX root = ../main_field.tex

The two-dimensional Brownian loop soup, introduced by Lawler and Werner \cite{Lawler04}, is a fundamental object with numerous connections to other important conformally invariant/covariant models. 
Brownian loops in the form of bubbles first appeared in \cite{LSW03Restriction}, where Lawler, Schramm and Werner constructed chordal conformal restriction measures by attaching Brownian loops to an SLE path. 
A Brownian loop soup is a random collection of infinitely many Brownian-type loops in a given domain $D$ of the plane, sampled according to a Poisson point process with intensity $\theta \loopmeasure_D$. $\theta$ is a positive real parameter and $\loopmeasure_D$ is a measure on loops defined by \eqref{Eq loop meas}. The parameter $c=2\theta$ corresponds to the central charge in conformal field theory.
In their seminal work \cite{SheffieldWernerCLE}, Sheffield and Werner 
showed the following phase transition:
\begin{itemize}[leftmargin=*]
\item
Supercritical:
when $\theta>1/2$, there is a.s. only one cluster of loops;
\item
Critical and subcritical:
when $\theta \leq 1/2$, there are a.s. infinitely many clusters. In this case, the outermost boundaries of the outermost clusters are distributed according to a (non-nested) conformal loop ensemble CLE$_\kappa$ where $\kappa = \kappa(\theta) \in (8/3, 4]$ is related to $\theta$ by
\begin{equation}
\label{E:kappa}
c=2 \theta = (6-\kappa)(3\kappa -8)/(2\kappa).
\end{equation}
\end{itemize}
SLE (by Schramm \cite{MR1776084}) and its loop variant CLE (by Sheffield \cite{MR2494457}) were introduced to describe the scaling limits of discrete models from statistical mechanics, and have now become fundamental objects in random conformal geometry. 

The intensity $\theta=1/2$ is special for another (related) reason. Indeed, when $\theta=1/2$,
%At $\theta=1/2$ (which happens to be the critical for the connectivity property of the loop soup),
Le Jan \cite{LeJan2010LoopsRenorm, LeJan2011Loops} identified the occupation field of the loop soup with half of the square of the Gaussian free field (GFF), on discrete graphs and in dimensions $1,2,3$ in the continuum. (In the 2D and 3D continuum, both the GFF and the occupation field are not defined pointwise and additive renormalisations are needed to define rigourously the square of the GFF and the occupation field of the loop soup.)
%hence the Wick renormalization is needed to take the square of the GFF, and one needs to recenter the occupation field of the loop soup.
This result is part of the random walk/Brownian motion
representations of the GFF, 
also known as \emph{isomorphism theorems}
\cite{Symanzik65Scalar,Symanzik66Scalar,Symanzik1969QFT,
BFS82Loop,Dynkin1984Isomorphism,
Dynkin1984IsomorphismPresentation,
Wolpert2,
EKMRS2000,
MarcusRosen2006MarkovGaussianLocTime,
Sznitman2012LectureIso}.

The \emph{critical} intensity $\theta=1/2$ corresponds to $\kappa=4$, which is also the critical parameter for SLE in terms of self intersection \cite{MR2153402}. 
A remarkable relation between SLE$_4$ and the GFF was established by Schramm and Sheffield in \cite{schramm2013contour}, where SLE$_4$ appeared as \emph{level lines} of the GFF. There, SLE$_4$ was recovered as a \emph{local set} of the GFF, so that the GFF satisfies a certain spatial Markov property when we condition on such sets. The value of the GFF is constant on either side of the level line, with a constant height gap $2\lambda>0$ between the two sides. Such a coupling was also extended to CLE$_4$ by Miller and Sheffield \cite{MS} (see also \cite{MR3708206,MR3936643}), so that conditionally on the CLE$_4$, the GFF restricted to the domain encircled by each CLE$_4$ loop is an independent GFF with boundary conditions 
$\pm 2\lambda$.

\begin{figure}
\centering
\includegraphics[width=.85\textwidth]{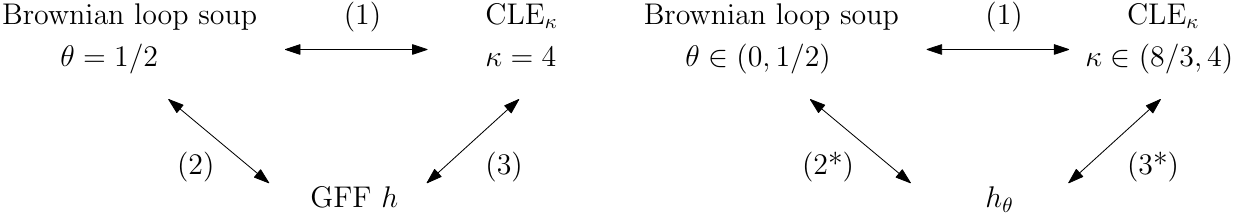}
\caption{\label{fig:triangle} Relation (1) \cite{SheffieldWernerCLE} states that the outer boundaries of the outermost clusters in a loop soup are CLE. Relation (2) \cite{LeJan2010LoopsRenorm, LeJan2011Loops} states that the renormalised $h^2/2$ is equal to the occupation time field of the loop soup. Relation (3) \cite{MS} states that CLE$_4$ form the level loops of the GFF. 
In this paper, we construct a field $h^\theta$ out of a loop soup with intensity $\theta\in(0,1/2)$,  and conjecture that the occupation time field of the loop soup is equal to a constant times the renormalised $|h_\theta|^{1/(1-\theta)}$. We prove that  SLE$_\kappa$/CLE$_\kappa$ appear as level lines of  $h^\theta$.} 
\end{figure}

The three relations between the critical loop soup, CLE$_4$, and GFF in Figure~\ref{fig:triangle} are proved using three different approaches, but are shown to commute in \cite{QianWerner19Clusters}, using the metric graph approximation by \cite{LupuIsomorphism, lupu2018convergence} as a key input. The metric graph is locally one dimensional, so that the local times of a loop soup are defined pointwise. It was shown in \cite{LupuIsomorphism} that the GFF on the metric graph can be obtained by taking the square root of the occupation field of the critical loop soup, and then assigning i.i.d. symmetric signs in $\{\pm 1\}$ to each cluster. In the 2D continuum, the local times are not defined pointwise,
so it is not immediately clear what the square root of the local time means.
Yet, the article \cite{ALS4} gives the precise analogue of this procedure in the 2D continuum, and shows that the GFF is equal to the sum of the signed Minkowski contents of the clusters.

\begin{comment}
The relation between the loop soup and the GFF is specific to the critical case $\theta=1/2$.  \red{[complete/shorten this part?]}
The continuum Gaussian free field, also known as the Euclidean bosonic free field, is an ubiquitous object both in Quantum Field Theory, where it represents, up to a Wick rotation, a gas of non-interacting bosons, and in Statistical Physics, where it appears in the scaling limit of different discrete models [cite dimers, Ginzburg-Landau, random matrices other]. The 2D GFF in particular is conformally invariant and logarithmically correlated. Together with its multiplicative chaos, it has been at the heart of some extraordinary developments in the past few years in probability theory, in particular in connection with Liouville quantum gravity *** and the mathematical construction of Liouville conformal field theory ***.
\end{comment}

\medskip

Relying on the connection with the GFF, one is able to deduce many properties about the loop soup at $\theta=1/2$. For example, the excursions induced by the Brownian loops that touch the boundary of a cluster were shown to 
have a surprisingly simple distribution, forming a Poisson point process
\cite{QianWerner19Clusters, qian2018}.
In \cite{ALS1}, the Minkowski gauge function of the clusters was shown to be $r \mapsto |\log r|^{1/2} r^2$. For $\theta\in (0,1/2)$, without the powerful relation to the GFF, much less is known,
and the picture is expected to be overall more complicated.
This paper, together with the companion article \cite{JLQ23a}, aims to fill this gap.
The main question addressed in the current paper is:
%In the companion paper \cite{JLQ23a}, we computed the crossing exponents for loop soups for all $\theta\in (0,1/2]$. This result was known for $\theta=1/2$ thanks to the GFF. However, for $\theta\in(0,1/2)$, it is the crossing exponent deduced there that will in turn be a key input to the answer of the following question:
\begin{center}
\emph{What is the natural field associated to the Brownian loop soup when $\theta \in(0, 1/2)$?}
\end{center}
More precisely, we would like to have a field $h_\theta$ that is related to the occupation field of a loop soup in a similar way that the GFF is related to the critical loop soup. 
It was shown in \cite{LeJan2011Loops, MR3298468} that the occupation field of the loop soup is a permanental field, an object introduced by Vere-Jones \cite{MR1450811} long before the Brownian loop soup (also see \cite{MR3077522}).
The field $h_\theta$ that we are after would be a more fundamental object behind these permanental fields.
%We see $h_\theta$ as a more fundamental field behind the permanental field given by the occupation time of the loop soup.
%The field $h_\theta$ should possess an isomorphism relation with this type of permanental fields. \red{(What do we want to say here?)}

A natural idea is to proceed as in the critical case  \cite{LupuIsomorphism}, i.e., to take the square root of the occupation field of a loop soup, and then assign i.i.d.\ signs to each cluster. 
Although one can always do this on the metric graph, the resulting field is not guaranteed to have any nice property nor to converge to a nondegenerate limit in the continuum. As it turns out, surprisingly, the relevant field will not correspond to a signed version of the square root of the local time of the loop soup, as for $\theta = 1/2$, but instead to a signed version of the local time to the power $1-\theta$.

\medskip

Our construction of the field $h_\theta$ is done directly in the continuum, relying on two key inputs. The first one is the computation of the crossing exponent and related estimates obtained in the companion paper \cite{JLQ23a}. The second key input is the construction in \cite{ABJL21} of a multiplicative chaos associated to the Brownian loop soup with any intensity $\theta >0$.
Gaussian multiplicative chaos (GMC), initially introduced by Kahane \cite{kahane} in the eighties and extensively studied in the past decade \cite{RobertVargas2010, DuplantierSheffield, RhodesVargasGMC, ShamovGMC, BerestyckiGMC}, is a theory that defines and studies properties of random measures informally defined as the exponential of a real parameter $\gamma \in \R$ times a Gaussian logarithmically-correlated field. The two-dimensional GFF is an archetypal example of such a field and its GMC measure is sometimes called Liouville measure. 
%\red{[Wei: Do we need to say this in the introduction? Move it to later?] Because log-correlated fields are not defined pointwise but are only generalised functions, making sense of such a measure requires a non trivial work and induces the requirement that $\gamma \in (-2,2)$ in dimension 2 ($\gamma = \pm 2$ is a critical case where a nondegenerate measure can still be made sense of).}
GMC appears to be a universal feature of log-correlated fields going beyond the Gaussian setting: \cite{Junnila18} studies fields defined as random Fourier series with i.i.d. coefficients and \cite{jegoBMC, jegoCritical} makes sense of the exponential of the square root of the local time of planar Brownian motion.
The article \cite{ABJL21} provides a third example of a non-Gaussian multiplicative chaos and defines what can be thought of as the exponential of $\gamma$ times the square root of twice the local time of the Brownian loop soup, for any intensity $\theta$. In view of Le Jan's isomorphism, it is natural to expect, and has been proven in \cite{ABJL21}, that when $\theta=1/2$ the resulting measure agrees with an unsigned version of Liouville measure: $e^{\gamma |\text{GFF}|} \d x = e^{\gamma \text{GFF}} \d x+ e^{-\gamma \text{GFF}} \d x$, i.e. a sum of two \emph{correlated} Liouville measures.

\paragraph{Construction of $h_\theta$} ((2*) in Figure \ref{fig:triangle})
Concretely, we assign i.i.d.\ symmetric signs to each cluster in the loop soup, and then consider the multiplicative chaos $\Mc_\gamma$ of the loop soup restricted to positive clusters. We prove that, when $\theta = 1/2$, the resulting measure $\Mc_\gamma^+$ agrees with $e^{\gamma \text{GFF}} \d x$. The GFF can therefore be recovered from the loop soup, together with the independent spins, by differentiating $\Mc_\gamma^+$ at 
$\gamma = 0$. 
This idea is used for instance in \cite{ALS1}.
When $\theta < 1/2$, we show that the same procedure yields a nondegenerate conformally invariant field $h_\theta$, with the surprising and important difference that one has to take a \emph{fractional derivative} of $\Mc_\gamma^+$ at $\gamma=0$. This fractional power is equal to $2(1-\theta)$ which is twice the crossing exponent computed in \cite{JLQ23a}.

We provide a second and equivalent approach to $h_\theta$. We show that each cluster has a nondegenerate Minkowski content in some non-explicit gauge function $r \mapsto r^2 |\log r|^{1-\theta+o(1)}$ and we show that $h_\theta$ is equal to the sum of the signed Minkowski contents of all the clusters for all $\theta\in(0,1/2)$, similarly to the case $\theta=1/2$ \cite{ALS4}.

We emphasise that, when $\theta\not=1/2$, $h_\theta$ is not log-correlated 
(its correlations blow-up as a power of log, see  Theorem~\ref{T:prop}) nor is supposed to be Gaussian. 
A conjectural expansion of $\Mc_\gamma^+$ for $\theta\in(0,1/2)$ near $\gamma=0$ in terms of $h_\theta$  and the local times is given in \eqref{E:conj_expansion}. This conjecture is closely related to Conjecture~\ref{Conj:intro} below. %\red{[Antoine: I'd rather delete the following], which gives the isomorphism relation (i.e., (2*) in Figure~\ref{fig:triangle}).} 
%Besides these conjectures (that we plan to prove in the future), we rigorously prove properties that indicate that $h_\theta$ is the right field. \red{(right field?)}

\paragraph{Level lines and height gap} ((3*) in Figure \ref{fig:triangle})
In addition, we establish a coupling between $h_\theta$ and CLE$_\kappa$, where $\kappa$ and $\theta$ are related by \eqref{E:kappa}. 
For each $\theta\in(0,1/2]$, there is a height gap $c(\theta)>0$ between the values of $h_\theta$ on both sides of the level lines.
We show that our field (with $0$ boundary conditions) can also be constructed by first sampling a CLE$_\kappa$, and then sampling an independent $h_\theta$ with wired boundary conditions 
in each domain encircled by a CLE$_\kappa$ loop.
Although the field $h_\theta$ with wired boundary condition has constant value $\pm c(\theta)$ on the boundary, we emphasise that, when $\theta <1/2$ and unlike the GFF, this field is \emph{not} equal to $\pm c(\theta)$ plus $h_\theta$ with zero boundary conditions.
In particular, our field is \emph{not} equal to the CLE nesting field constructed by  \cite{miller2015conformal}.
By definition, the CLE nesting field should also admit CLE as its level lines, but it has no apparent relation to the loop soup occupation field to our knowledge (except when $\theta=1/2$, where the field is the GFF).

The fact that CLE$_\kappa$ or SLE$_\kappa$ are level lines of $h_\theta$ is not at all clear form our definition. To prove the level line coupling, we need to prove additional properties about the Brownian excursions induced by the loops that touch the boundary of a cluster, and crucially rely on the conformal restriction property of such excursions deduced in \cite{MR3901648}.

This level line coupling can also be made to work in the chordal case, where an SLE$_\kappa$ curve can be coupled with a field $h_\theta$ with mixed wired/zero boundary conditions.
This generalises the SLE$_4$/GFF coupling.
Note that a coupling between SLE$_\kappa$ (for all $\kappa>0$) and GFF was established in \cite{MR2525778} and in the celebrated imaginary geometry series \cite{MR3477777,MR3502592,MR3548530,MR3719057}, which was another natural generalisation of the SLE$_4$/GFF coupling. In the imaginary geometry coupling, when $\kappa\not=4$, the value of the GFF on both sides of the SLE$_\kappa$ curve are not constant, but is equal to a constant plus a winding term, which depends on the choice of the starting point and does not have pointwise value on the curve. In comparison, the value of $h_\theta$ is constant along the curves, as in the SLE$_4$/GFF coupling.

\medskip

In a nutshell, we construct for any $\theta \in (0,1/2]$ a coupling between a field $h_\theta$, a Brownian loop soup and a CLE$_\kappa$ such that:
\begin{itemize}[leftmargin=*]
\item (Figure \ref{fig:triangle}, (1))
The CLE loops are the outermost boundaries of the outermost clusters of the loop soup.
\item (Figure \ref{fig:triangle}, (2*))
$h_\theta$ is a signed version of the local time of the loop soup to the power $1-\theta$. The precise relation is more complicated as described above.
\item (Figure \ref{fig:triangle}, (3*))
The CLE loops form level lines of $h_\theta$ with a constant height gap on either sides of the loops.
\end{itemize}
When $\theta=1/2$, this coupling possesses an additional feature: the centred occupation field of the loop soup is half the Wick square of the GFF and the CLE loops are some two-valued sets of the GFF. In particular both the occupation field and the CLE are measurable w.r.t.\ the GFF. To keep the paper of a reasonable size, we do not attempt to prove such a measurability feature when $\theta \in (0,1/2)$.

\subsection{A conformally invariant field out of the Brownian loop soup}
%!TEX root = ../main_field.tex

Let $D \subset \C$ be a bounded simply connected domain and $\theta \in (0,1/2]$. Consider a Brownian loop soup $\Lc_D^\theta$ in $D$ with intensity $\theta$.
Let us denote by $\Cf$ the collection of all clusters $\Cc$ of $\Lc_D^\theta$. Conditionally on $\Cf$, let $(\sigma_\Cc)_{\Cc \in \Cf}$ be i.i.d. signs taking values in $\{ \pm 1 \}$ with equal probability 1/2. If a point $x \in D$ belongs to some cluster $\Cc$, we will write $\sigma_x$ for the sign $\sigma_\Cc$ of that cluster.

Let $\gamma \in (0,2)$ and let $\Mc_\gamma$ be the multiplicative chaos of the Brownian loop soup $\Lc_D^\theta$ constructed in \cite{ABJL21}, normalised so that $\Expect{ \Mc_\gamma(\d x)} = 2\d x$; see Section \ref{S:multiplicative_chaos} for details. Let $\Mc_\gamma^+ (\d x) = \indic{\sigma_x = +1} \Mc_\gamma (\d x)$ be the multiplicative chaos of $\Lc_D^\theta$ restricted to positive clusters. Note that this definition is meaningful since every $\Mc_\gamma$--typical point is visited by a loop (actually by infinitely many loops, see \cite[Theorem 1.8]{ABJL21}).
Let $h_\gamma$ be the field obtained from $\Mc_\gamma^+$ by
\begin{equation}\label{eq:mc_def_field}
(h_\gamma,f) := \frac{1}{Z_\gamma} \int_D f(x) (\Mc_\gamma^+(\d x) - \d x),
\end{equation}
for any bounded measurable function $f: \C \to \R$. Here $Z_\gamma$ is a normalising constant defined in \eqref{E:Zgamma} below in terms of some crossing probability.
With some abuse of notation, we will often write $(h_\gamma,f) = \int h_\gamma(x) f(x) \d x$. Note that $h_\gamma$ is defined as a random generalised function on the whole complex plane, vanishing outside of $D$.

To define the normalising constants we will use in this article, we first introduce the following notations:

\begin{notation}
For any family $\Lc$ of loops and any sets $A, B \subset D$, we will denote by $\{ A \overset{\Lc}{\leftrightarrow} B \}$ the event that there is a cluster of loops in $\Lc$ that intersects both $A$ and $B$. If $x$ and $y$ are points, we will simply write $\{ x \overset{\Lc}{\leftrightarrow} A \}$ and $\{ x \overset{\Lc}{\leftrightarrow} y \}$ instead of $\{ \{x\} \overset{\Lc}{\leftrightarrow} A \}$ and $\{ \{x\} \overset{\Lc}{\leftrightarrow} \{y\} \}$.
\end{notation}

We will then use two normalising constants $Z_r$ and $Z_\gamma$ defined by
\begin{equation}
\label{E:Zgamma}
Z_r := \P \Big( e^{-1} \partial \D \overset{\Lc_\D^\theta}{\longleftrightarrow} r \D \Big), \quad r >0,
\quad \quad \text{and} \quad \quad
Z_\gamma := \P \Big( e^{-1} \partial \D \overset{\Lc_\D^\theta \cup \Xi_a^{0,\D}}{\longleftrightarrow} 0 \Big), \quad \gamma \in (0,2),
\end{equation}
where $\Xi_a^{0,\D}$ is an $a$-thick loop ($a = \gamma^2/2$) at the origin defined in Section \ref{SS:preliminaries_paths} that is independent from $\Lc_\D^\theta$.

\begin{theorem}[Construction of $h_\theta$]\label{T:convergenceL2}
Let $\theta \in (0,1/2]$.
For any bounded measurable function $f : \C \to \R$, $(h_\gamma, f)$ converges in $L^2$ as $\gamma \to 0$.
Moreover, for any $\eps >0$, $h_\gamma$ converges in $L^2$ as $\gamma \to 0$ in the Sobolev space $H^{-\eps}(\C)$ \eqref{E:Sobolev} to some random field $h_\theta \in H^{-\eps}(\C)$.

Furthermore, the normalising constant $Z_\gamma$ satisfies $Z_\gamma = \gamma^{2(1-\theta) + o(1)}$.
\end{theorem}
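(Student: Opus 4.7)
The plan is to establish the two parts of the theorem---the $L^2$ convergence of $(h_\gamma,f)$ and the asymptotic $Z_\gamma = \gamma^{2(1-\theta)+o(1)}$---by combining the Palm/tilting formula for the multiplicative chaos $\Mc_\gamma$ developed in~\cite{ABJL21} with the crossing and two-point estimates from the companion paper~\cite{JLQ23a}. For the normalisation asymptotic, I would first show that the thick loop $\Xi_a^{0,\D}$ with $a=\gamma^2/2$ has effective spatial extent of order $\gamma$ (up to subpolynomial factors), so that the event defining $Z_\gamma$ reduces to a one-arm crossing from scale $\approx \gamma$ up to scale $e^{-1}$ in the enlarged soup $\Lc_\D^\theta \cup \Xi_a^{0,\D}$. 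The one-arm/crossing exponent $2(1-\theta)$ from~\cite{JLQ23a}, together with quasi-multiplicativity, then yields $Z_\gamma = \gamma^{2(1-\theta)+o(1)}$.

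For the $L^2$ convergence, the field $(h_\gamma,f)$ is centred since $\E[\Mc_\gamma^+]=\d x$: indeed, for a $\Mc_\gamma$-typical point $x$ (which is almost surely in some cluster by~\cite[Theorem 1.8]{ABJL21}), $\sigma_x=+1$ with probability $1/2$, and $\E[\Mc_\gamma]=2\,\d x$ by normalisation. To compute $\E[(h_\gamma,f)(h_{\gamma'},f)]$, I condition on $\Lc_\D^\theta$ (and hence on the cluster decomposition) and average over the i.i.d.\ cluster signs. Using that $\E[\mathbf{1}_{\sigma_x=+1}\mathbf{1}_{\sigma_y=+1}\mid \text{clusters}] = \tfrac14(1+\mathbf{1}_{x\overset{\Lc_\D^\theta}{\leftrightarrow}y})$, one obtains the decomposition
\[
\mathrm{Cov}(\Mc_\gamma^+(\d x),\Mc_{\gamma'}^+(\d y)) \;=\; \tfrac14 \,\mathrm{Cov}(\Mc_\gamma(\d x),\Mc_{\gamma'}(\d y)) \;+\; \tfrac14\,\E\bigl[\Mc_\gamma(\d x)\Mc_{\gamma'}(\d y)\mathbf{1}_{x\overset{\Lc_\D^\theta}{\leftrightarrow}y}\bigr].
\]
Applying the two-point Palm formula for $\Mc_\gamma$ at $x$ and $y$ (with independent thick loops $\Xi_a^{x,\D}$ and $\Xi_{a'}^{y,\D}$ of thicknesses $a=\gamma^2/2$ and $a'=(\gamma')^2/2$), the cluster term rewrites as $4\,\d x\,\d y\cdot \P\bigl(x \overset{\Lc_\D^\theta\cup\Xi_a^{x,\D}\cup\Xi_{a'}^{y,\D}}{\longleftrightarrow} y\bigr)$, while the intrinsic chaos covariance term admits an analogous representation in terms of the joint Palm density.

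The crux is then to divide the above covariance by $Z_\gamma Z_{\gamma'}$ and show convergence, for $x\neq y$, to a kernel $K_\theta(x,y)$ that is locally integrable. Two inputs from~\cite{JLQ23a} are crucial: quasi-multiplicativity of one-arm probabilities decouples the connection event into two one-arm crossings from the thick-loop scales $\gamma,\gamma'$ up to the intermediate scale $|x-y|$---which combine with $Z_\gamma Z_{\gamma'}$ in the denominator into a bounded $|x-y|$-dependent factor---together with a ``bulk'' connection event at scale $|x-y|$; and the sharp up-to-constant two-point estimates identify the limit kernel $K_\theta$ and yield the polylogarithmic (rather than power-law) singularity on the diagonal announced in Theorem~\ref{T:prop}. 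Local integrability of $K_\theta$ in two dimensions then gives $\iint f(x)f(y)K_\theta(x,y)\,\d x\,\d y<\infty$ and hence the $L^2$ convergence of $(h_\gamma,f)$.

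The extension to $L^2$ convergence in $H^{-\eps}(\C)$ follows by standard duality: uniform $L^2$-boundedness of $(h_\gamma,f)$ for $f$ in the unit ball of $H^\eps$---which is a consequence of the same estimates---combined with pointwise convergence on a countable dense set of $H^\eps$ upgrades to Sobolev-space convergence. The main obstacle will be analysing the joint scaling of the intrinsic chaos covariance term and the cluster correlation term: individually each may carry subleading divergences, and showing that they combine to a finite limit after normalisation by $Z_\gamma Z_{\gamma'}$ requires careful decoupling through quasi-multiplicativity and the sharp connection estimates of~\cite{JLQ23a}, together with a precise understanding of the two-point Palm density for $\Mc_\gamma$ inherited from~\cite{ABJL21}.
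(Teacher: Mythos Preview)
Your overall architecture is right and matches the paper closely: the decomposition of $\mathrm{Cov}(\Mc_\gamma^+,\Mc_{\gamma'}^+)$ via sign averaging, the identification of the main term as $\frac{1}{Z_\gamma Z_{\gamma'}}\P\bigl(x\overset{\Lc_D^\theta\cup\Xi_a^x\cup\Xi_{a'}^y}{\longleftrightarrow}y\bigr)$ via the two-point Palm formula, and the $Z_\gamma$ asymptotic via the radius of $\Xi_a^{0,\D}$ together with the crossing exponent are all exactly what the paper does (Lemma~\ref{L:second_moment_to_crossing_proba}, Section~\ref{SS:thick_loop}). The intrinsic chaos covariance term is indeed $O(\gamma^2{\gamma'}^2)$ and negligible after dividing by $Z_\gamma Z_{\gamma'}$.

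There is, however, a genuine gap at the step where you invoke ``sharp up-to-constant two-point estimates'' from~\cite{JLQ23a} to identify the limit kernel. The companion paper only gives the exponent with a subpolynomial error: $\P(r\partial\D\leftrightarrow e^{-1}\partial\D)=|\log r|^{-1+\theta+o(1)}$. Quasi-multiplicativity and such exponent-level estimates yield that $\frac{1}{Z_\gamma Z_{\gamma'}}\P(x\leftrightarrow y)$ is bounded above and below (this is Lemma~\ref{L:crossing_upper}), but \emph{not} that it converges. To prove the Cauchy property you need
\[
\frac{\P(\cdot;a_1,a_1)}{Z_{\gamma_1}^2}+\frac{\P(\cdot;a_2,a_2)}{Z_{\gamma_2}^2}-2\,\frac{\P(\cdot;a_1,a_2)}{Z_{\gamma_1}Z_{\gamma_2}}\longrightarrow 0,
\]
which requires all three terms to converge to the \emph{same} limit $C_\theta(x,y)$. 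The paper obtains this (Proposition~\ref{P:crossing}) not from sharp estimates but from Theorem~\ref{T:ratio_general}: ratios of hitting probabilities $\P(K\leftrightarrow D(x,r))/\P(\hat K\leftrightarrow D(x,r))$ converge as $r\to 0$. This in turn is proved via a substantial coupling construction (Section~\ref{S:convergence_ratio}, Theorems~\ref{T:coupling_rough} and~\ref{Thm abstract coupling}): one builds a Markov chain of ``clusters seen from scale $t$'', couples two copies started from different initial conditions so that they coalesce in finite time uniformly in the survival horizon $T$, and deduces convergence of survival-probability ratios. This coupling is the main technical input of the paper and is entirely absent from your plan; without it, or a replacement yielding actual convergence (not just bounds), the argument does not close.

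A smaller point: your Sobolev upgrade via ``pointwise convergence on a dense set plus uniform boundedness'' is not quite sufficient as stated. The paper instead computes $\E\|h_{\gamma_1}-h_{\gamma_2}\|_{H^{-\eps}}^2$ directly via Fourier transform, reducing it to $\int C_{\gamma_1,\gamma_2}(x,y)|x-y|^{-2+2\eps}\,\d x\,\d y$, and then uses the same kernel convergence.
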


\begin{theorem}[Properties of $h_\theta$]\label{T:prop}
Let $\theta \in (0,1/2]$.
\begin{enumerate}
\item
\label{I:covariance}
Covariance:
There exists a measurable function $C_\theta : D \times D \to [0,\infty]$ such that for all test functions $f$,
\begin{equation}
\label{E:Ctheta}
\Expect{ (h_\theta,f)^2 } = \int_{D \times D} f(x) C_\theta(x,y) f(y) \d x \d y.
\end{equation}
The blow-up of $C_\theta$ on the diagonal is given by
\[
C_\theta(x,y) = | \log |x-y||^{2(1-\theta) + o(1)}
\]
as $x-y \to 0$, while staying at a positive distance from $\partial D$.
\item
\label{I:conformal_invariance}
Conformal invariance: Let $\psi : D \to \widetilde{D}$ be a conformal map between two bounded simply connected domains and let $h_{\theta, D}$ and 
$h_{\theta, \widetilde{D}}$ denote the fields from Theorem \ref{T:convergenceL2} in the domains $D$ and $\widetilde{D}$. 
Then\footnote{Recall that by definition $(h_{\theta, D} \circ \psi^{-1}, \tilde{f}) = (h_{\theta,D}, \tilde{f} \circ \psi^{-1})$ for any test function $\tilde{f}$.}
$h_{\theta, D} \circ \psi^{-1}$ and $h_{\theta, \widetilde{D}}$ 
have the same law.
\item
\label{I:symmetry}
Symmetry: Let $h_\theta^-$ denote the field obtained in a similar way as $h_\theta$ with negative clusters instead of positive ones. Then $h_\theta^- = - h_\theta$ a.s. In particular, $h_\theta \overset{\mathrm{(d)}}{=} - h_\theta$.
\item
\label{I:nondegenerate}
Nondegeneracy: Let $f : D \to \R$ be a smooth test function, non identically zero. Then the law of $(h_\theta,f)$ is nonatomic. In particular, $(h_\theta,f) \neq 0$ a.s.
\end{enumerate}
\end{theorem}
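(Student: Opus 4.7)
The plan is to prove all four items together using the $L^2$ construction of Theorem~\ref{T:convergenceL2} and inputs from \cite{ABJL21, JLQ23a}. The common pre-limit identity rests on conditioning on $\Lc := \Lc_D^\theta$: the signs being i.i.d.\ symmetric with $\sigma_x = \sigma_y$ exactly when $x \overset{\Lc}{\leftrightarrow} y$ gives $\Pb(\sigma_x = \sigma_y = +1\mid \Lc) = \tfrac14 + \tfrac14 \indic{x \overset{\Lc}{\leftrightarrow} y}$, so
\begin{equation*}
\Expect{(h_\gamma,f)^2} = \frac{1}{4 Z_\gamma^2}\iint f(x)f(y)\Bigl(\bigl(\Expect{\Mc_\gamma(\d x)\Mc_\gamma(\d y)} - 4\,\d x\, \d y\bigr) + \Expect{\Mc_\gamma(\d x)\Mc_\gamma(\d y)\indic{x \overset{\Lc}{\leftrightarrow} y}}\Bigr).
\end{equation*}
For (1), the first summand is the covariance density of $\Mc_\gamma$ and will contribute $o(1)$ after division by $Z_\gamma^2$ (the same variance bound used in (3) below), while the second, divided by $4 Z_\gamma^2$, should converge to the kernel $C_\theta$. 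The near-diagonal asymptotic rests on the Cameron--Martin-type identity for $\Mc_\gamma$ in \cite{ABJL21}, which rewrites $\Expect{\Mc_\gamma(\d x)\Mc_\gamma(\d y)\indic{x \overset{\Lc}{\leftrightarrow} y}}$ as the probability that $\Lc_D^\theta$ augmented by two independent $\gamma^2/2$-thick loops rooted at $x$ and $y$ has a cluster containing both; combined with $Z_\gamma = \gamma^{2(1-\theta)+o(1)}$ and the two-point crossing estimates of \cite{JLQ23a}, this ratio scales like $|\log|x-y||^{2(1-\theta)+o(1)}$ as $|x-y|\to 0$.

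For (2), the conformal invariance of $\loopmeasure_D$ transports $\Lc_D^\theta$, its cluster decomposition and the attached signs to the target domain; combined with the conformal transformation law for $\Mc_\gamma$ from \cite{ABJL21}, the centring in \eqref{eq:mc_def_field}, and the fact that $Z_\gamma$ defined by \eqref{E:Zgamma} is domain-independent, one checks that $h_\gamma$ is equivariant under $\psi$ up to $L^2$-errors that vanish as $\gamma\to 0$, giving conformal invariance of the limit. For (3), use $\Mc_\gamma^+ + \Mc_\gamma^- = \Mc_\gamma$ and $\Expect{\Mc_\gamma(\d x)} = 2\,\d x$ to obtain
\begin{equation*}
(h_\gamma,f) + (h_\gamma^-, f) = \frac{1}{Z_\gamma}\int f(x)\bigl(\Mc_\gamma(\d x) - 2\,\d x\bigr).
\end{equation*}
The right-hand side is $\Lc$-measurable and centred; passing to the $L^2$ limit yields $h_\theta + h_\theta^- = 0$ a.s.\ provided $\var(\Mc_\gamma(f)) = o(Z_\gamma^2)$, the same variance bound used in (1). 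At $\theta = 1/2$ this amounts to the cancellation $e^{\gamma\phi}+e^{-\gamma\phi}-2 = O(\gamma^2\phi^2)$; in general it follows from the $\Mc_\gamma$ two-point analysis of \cite{ABJL21, JLQ23a}. Since swapping positive and negative clusters flips every sign, $h_\theta^- \stackrel{\mathrm{d}}{=} h_\theta$, hence $h_\theta \stackrel{\mathrm{d}}{=} -h_\theta$.

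For (4), I condition on $\Lc$ and use that $\Mc_\gamma$ disintegrates over clusters, $\Mc_\gamma = \sum_\Cc \Mc_\gamma\vert_\Cc$, so
\begin{equation*}
(h_\gamma,f) = \frac{1}{2Z_\gamma}\Bigl(\Mc_\gamma(f) - 2\int f\,\d x\Bigr) + \frac{1}{2Z_\gamma}\sum_{\Cc\in\Cf}\sigma_\Cc \int f(x)\,\Mc_\gamma\vert_\Cc(\d x).
\end{equation*}
Given $\Lc$, this is a deterministic shift plus a convergent sum of i.i.d.\ symmetric $\pm 1$ variables weighted by $\Lc$-measurable coefficients, and passing to the $L^2$ limit represents the conditional law of $(h_\theta,f)$ given $\Lc$ as the law of a translate of $\sum_\Cc \sigma_\Cc d_\Cc$ for some $\Lc$-measurable, square-summable $(d_\Cc)$. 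Since a.s.\ infinitely many clusters meet $\mathrm{supp}(f)$ with nonzero contribution (subcritical loop soups have a.s.\ infinitely many clusters in every subregion, each carrying positive chaos mass), Jessen--Wintner's pure-type theorem forces the conditional law to be purely continuous; integrating over $\Lc$ yields the nonatomicity of $(h_\theta,f)$, and in particular $(h_\theta,f)\neq 0$ a.s.

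The main obstacle is the near-diagonal analysis in (1): controlling the cluster-crossing probability for $\Lc_D^\theta$ augmented by two thick loops and showing that its ratio to $Z_\gamma^2$ has the polylogarithmic gauge $|\log|x-y||^{2(1-\theta)+o(1)}$ uniformly in $\gamma$ is the technical heart of the argument and is where the full strength of \cite{JLQ23a} is used; the same estimate drives the variance bound $\var(\Mc_\gamma(f))=o(Z_\gamma^2)$ needed in (3) and the summability of the coefficients $(d_\Cc)$ justifying the limit in (4).
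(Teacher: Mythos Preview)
Your approach to Points~\ref{I:covariance}--\ref{I:symmetry} matches the paper's closely. For Point~\ref{I:covariance}, the paper does exactly your Girsanov-then-crossing-probability reduction (Lemma~\ref{L:second_moment_to_crossing_proba} and Proposition~\ref{P:crossing}); one thing to flag is that the \emph{existence} of the limit defining $C_\theta(x,y)$---not just its polylogarithmic asymptotic---relies on the coupling of conditioned cluster processes developed in Section~\ref{S:convergence_ratio} of this paper (Theorem~\ref{T:ratio_general}), not solely on the up-to-$o(1)$ exponent from \cite{JLQ23a}. For Point~\ref{I:symmetry}, the paper obtains $\mathrm{Var}(\Mc_\gamma(f))=o(Z_\gamma^2)$ by citing the expansion of $\Mc_\gamma$ (Theorem~\ref{T:expansion_unsigned}, second term $O(\gamma^2)$); your two-point-function route via \eqref{E:second_moment_measure1} works equally well.

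There is a genuine gap in Point~\ref{I:nondegenerate}. Your claim that ``the conditional law of $(h_\theta,f)$ given $\Lc$ is a translate of $\sum_\Cc \sigma_\Cc d_\Cc$'' is fine: the pre-limit is affine in the Rademacher signs, and this structure survives an $L^2$ limit. But the assertion that ``a.s.\ infinitely many clusters meet $\mathrm{supp}(f)$ with nonzero contribution'' is not justified by ``each carrying positive chaos mass'': that is a statement about the pre-limit masses $\Mc_\gamma\vert_\Cc(f)>0$, whereas what you need is that the \emph{limiting} coefficients $d_\Cc=\lim_{\gamma\to 0}\tfrac{1}{2Z_\gamma}\Mc_\gamma\vert_\Cc(f)$ are nonzero for infinitely many $\Cc$. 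Nothing prevents these per-cluster rescaled chaos masses from vanishing in the limit a priori. The paper closes this gap by invoking the Minkowski content construction: Theorem~\ref{T:measure_cluster} shows that each cluster carries a nondegenerate Minkowski measure $\mu_k$ with $\mu_k(\overline{\Cc}_k)>0$ a.s., and Theorem~\ref{T:h_and_minkowski} identifies $(h_\theta,f)=\sum_k\sigma_{\Cc_k}(\mu_k,f)$; then $(\mu_k,f)\neq 0$ for infinitely many $k$ because $f$ has constant sign on some open set containing infinitely many clusters. The Jessen--Wintner-type Lemma~\ref{L:sum_elementary} then finishes as you indicate. So your overall strategy for Point~\ref{I:nondegenerate} is right, but the nondegeneracy of the per-cluster limits is a substantial input (all of Section~\ref{S:minkowski}) that you cannot sidestep.
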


As already mentioned, when $\theta = 1/2$, $\Mc_\gamma$ agrees with the hyperbolic cosine $:e^{\gamma h}: \d x$ $+ :e^{-\gamma h}:\d x$ of the GFF $h$ with Dirichlet boundary condition. Here $h$ is normalised so that $\Expect{h(x) h(y)} \sim -\log|x-y|$ as $x-y \to 0$ and $:e^{\pm \gamma h}: \d x$ are GMC measures normalised so that $\Expect{:e^{\pm \gamma h}: \d x} = \d x$. See \cite[Theorem 1.5]{ABJL21}. Because the isomorphism relating the local time to the GFF is much stronger in the discrete, the proof of this result was obtained by first proving a discrete approximation of $\Mc_\gamma$. In Theorem \ref{T:one_cluster}, we show a stronger version of this convergence by showing that the discrete approximation holds cluster by cluster, for any $\theta \in (0,1/2]$. It implies that, when $\theta =1/2$, the measure $\Mc_\gamma^+(\d x) = \indic{\sigma_x = +1} \Mc_\gamma(\d x)$ agrees with $:e^{\gamma h}: \d x$. As a corollary,

\begin{theorem}\label{T:identification_field1/2}
When $\theta = 1/2$, $h_\theta$ has the law of a multiple of the GFF in $D$.
More precisely, there is a deterministic constant $c_{1/2} \in (0,\infty)$ and a coupling between a critical loop soup $\Lc_D^{1/2}$ together with i.i.d. spins $(\sigma_\Cc)_{\Cc \in \Cf}$ and a Gaussian free field $h$ in $D$ such that almost surely,
\begin{itemize}
\item
$\Mc_\gamma^+ = :e^{\gamma h}: \d x$ and $\Mc_\gamma = 2:\cosh (\gamma h): \d x$ for all $\gamma \in (0,2)$;
\item
$h_{1/2} = c_{1/2} h$ and $:L: = \frac12 :h^2:$.
\end{itemize}
\end{theorem}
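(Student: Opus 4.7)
The plan is to reduce everything to the cluster-by-cluster discrete approximation stated in Theorem~\ref{T:one_cluster} and then to invoke Lupu's metric graph isomorphism \cite{LupuIsomorphism}. On the metric graph approximation of $D$ at mesh $\eps$, the signed square root of twice the occupation field of $\Lc_D^{1/2}$ (with i.i.d.\ signs $(\sigma_\Cc)$ attached to clusters) has the law of a metric graph GFF $h^\eps$; in this coupling, the event $\{\sigma_\Cc = +1\}$ coincides with $\{h^\eps > 0 \text{ on } \Cc\}$. Moreover, the discrete approximation of $\Mc_\gamma$ used in \cite{ABJL21} is, on the metric graph, essentially a normalised version of $e^{\gamma \sqrt{2 L^\eps}}\,\d x = e^{\gamma |h^\eps|}\,\d x$, so its restriction to positive-sign clusters is the natural discrete approximation of the Liouville measure $:e^{\gamma h}:\,\d x$ of the Dirichlet GFF $h$.

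First, I would use Theorem~\ref{T:one_cluster} to upgrade the convergence of this discrete approximation from a global statement to one compatible with the decomposition into clusters, so that inserting i.i.d.\ signs at the discrete level commutes with taking the continuum limit. This yields $\Mc_\gamma^+ = :e^{\gamma h}:\,\d x$ in the coupling, and by the symmetry item~\ref{I:symmetry} of Theorem~\ref{T:prop} also $\Mc_\gamma^- = :e^{-\gamma h}:\,\d x$; summing gives $\Mc_\gamma = 2:\cosh(\gamma h):\,\d x$. To identify $h_{1/2}$, expand the Liouville measure in $L^2$ for small $\gamma$:
\begin{equation*}
:e^{\gamma h}:\,\d x = \d x + \gamma\, h\,\d x + \tfrac{\gamma^2}{2}:h^2:\,\d x + O(\gamma^3).
\end{equation*}
Hence $(\Mc_\gamma^+(\d x) - \d x)/\gamma \to h\,\d x$ in $L^2$ when tested against any smooth function. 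Combined with the $L^2$ convergence $h_\gamma = (\Mc_\gamma^+ - \d x)/Z_\gamma \to h_{1/2}$ from Theorem~\ref{T:convergenceL2} and the nondegeneracy in item~\ref{I:nondegenerate} of Theorem~\ref{T:prop}, this forces $Z_\gamma \sim c_{1/2}^{-1}\gamma$ as $\gamma\to 0$ for some $c_{1/2}\in(0,\infty)$ and gives $h_{1/2} = c_{1/2} h$. For the Le Jan identity $:L: = \tfrac12 :h^2:$, I would match the $\gamma^2$ coefficients in $\Mc_\gamma = 2:\cosh(\gamma h):\,\d x = 2\,\d x + \gamma^2 :h^2:\,\d x + O(\gamma^4)$ with the $\gamma^2$ term in the small-$\gamma$ expansion of $\Mc_\gamma$ coming from the construction in \cite{ABJL21}; that coefficient is proportional to $:L:\,\d x$, with the proportionality constant pinned down by the normalisation $\E[\Mc_\gamma(\d x)] = 2\,\d x$.

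The main obstacle is entirely packaged into Theorem~\ref{T:one_cluster}: one must show that the discrete multiplicative chaos of \cite{ABJL21} converges not merely as a random measure on $D$, but in a way that respects the cluster decomposition, so that the operation of assigning i.i.d.\ signs to clusters commutes with the passage to the continuum. Once this is available, the metric graph identification is the classical one of Lupu, and all the remaining work is standard Wick calculus for the GFF plus bookkeeping of the normalising constant $Z_\gamma$ from \eqref{E:Zgamma}.
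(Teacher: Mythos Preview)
Your proposal is correct and follows essentially the same route as the paper: reduce to the cluster-by-cluster discrete convergence of Theorem~\ref{T:one_cluster} (packaged as Corollary~\ref{C:discrete_ma+}), invoke Lupu's cable-graph isomorphism to identify $\Mc_{a,N}^+$ with an approximation of the Liouville measure of the metric-graph GFF, pass to the limit, and then read off $h_{1/2}=c_{1/2}h$ and $:L:=\tfrac12:h^2:$ by expanding near $\gamma=0$. The only cosmetic difference is that the paper phrases the discrete approximation as the uniform measure on $a$-thick points (citing \cite{BiskupLouidor} for its convergence to $:e^{\gamma h}:\d x$) rather than as a normalised exponential, and defers the bookkeeping to \cite[Theorem~1.5]{ABJL21}.
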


The appearance of the constant $c_{1/2}$ comes from the fact that we do not renormalise $\Mc_{\gamma}^+(\d x) - \d x$ by $\gamma$, but by a normalising constant $Z_\gamma$ that behaves asymptotically like a multiple of $\gamma$ (see \cite[Lemma 2.6]{JLQ23a} for a similar result where the $a$-thick loop is replaced by a small disc).

\begin{remark}
As a consequence of Theorem \ref{T:identification_field1/2}, the covariance $C_\theta$ is a constant multiple of the Green function when $\theta=1/2$. When $\theta <1/2$,
we do not know any close formula for $C_\theta$.
We intend to investigate this question in the future.
However, note that the conformal invariance of $C_\theta$ implies that there is some measurable function $F : [0,\infty) \to [0,\infty)$ such that for all $x,y \in D$, $C_\theta(x,y) = F(G_D(x,y))$. This simply comes from the fact that $G_D(x,y)$ characterises the conformal type of a disc with two punctures.
\end{remark}

\subsection{Level lines and the height gap of \texorpdfstring{$h_\theta$}{h theta}}

The outermost boundaries of the outermost clusters form natural interfaces coupled with $h_\theta$. 
In Theorem \ref{T:level_line} below, we show that, for general intensities $\theta \in (0,1/2]$, the (non-nested) CLE$_\kappa$ loops are level lines of $h_\theta$, where $\kappa$ and $\theta$ are related by \eqref{E:kappa}, 
and that there is a constant height gap $c(\theta)>0$.

The field $h_\theta$ given by Theorem~\ref{T:convergenceL2} is a field with $0$ boundary conditions.
In order to state  Theorem \ref{T:level_line}, we need to define a field $h_\theta^\mathrm{wired}$ out of a loop soup with wired boundary conditions, which will turn out to have constant value $\pm c(\theta)$ on the boundary. 
Let $x_0 \in D$ and let $\Cc$ be the outermost cluster surrounding $x_0$. Let $O$ be the simply connected domain delimited by the outermost boundary of $\Cc$ and let $\sigma_{\partial O}$ be the sign associated to $\Cc$.
We recall the following fact from \cite[Theorem 1]{QianWerner19Clusters}. The law of the loop soup inside $O$, conditionally given $O$, is that of the union of two conditionally independent collections of loops. The loops that do not touch the boundary of $O$ are simply distributed according to an unconditioned loop soup $\Lc_O^\theta$ in $O$. The loops that touch the boundary form a random collection $\mathcal{E}^\theta_{\partial O}$ of excursions. The law of $\mathcal{E}^\theta_{\partial O}$ is explicit only when $\theta = 1/2$. However, for general intensities $\theta \leq 1/2$, $\mathcal{E}^\theta_{\partial O}$ is conformally invariant in the following strong sense: if $\mathcal{E}^\theta_{\partial O}$ is conformally mapped to the unit disc, then the resulting collection of excursions $\Ec_{\partial \D}^\theta$ is independent of $O$ and invariant in law under M\"{o}bius transformations.

Let $h_\theta^\mathrm{wired}$ be the random field in $\D$ obtained by restricting $h_\theta$ to test functions compactly supported in $O$ and then by conformally mapping $O$ onto $\D$. More precisely, let $\psi : O \to \D$ be a conformal map. For any smooth test function $f : \D \to \R$ with compact support in $\D$, we define $(h_\theta^\mathrm{wired},f) = (h_\theta, f \circ \psi)$. By \cite[Theorem 1]{QianWerner19Clusters}, the law of $h_\theta^\mathrm{wired}$ does not depend on the choice of the conformal map $\psi$ and $h_\theta^\mathrm{wired}$ is independent of $O$.

In Theorem \ref{T:level_line} below, we will consider smooth test functions $f_\eps : \D \to [0,\infty)$, $\eps \in (0,1)$, whose supports concentrate on the circle $\partial \D$ as $\eps \to 0$. We will make the following assumptions:

\begin{assumption}\label{assumption_feps}
For all $\eps \in (0,1)$, $f_\eps$ is compactly supported in $\{ x \in \D: \d(x,\partial \D) < \eps \}$ and $\int_\D f_\eps =1$. Moreover,
\begin{gather}
\label{E:assumption_feps}
\sup_\eps \int_{\D \times \D} \left( C_{\theta,\D}(x,y) + \max(1, -\log|x-y|) \right) f_\eps(x) f_\eps(y) \d x \d y < \infty,\\
\label{E:assumption_feps2}
\text{and} \quad \lim_{\delta \to 0} \limsup_{\eps \to 0} \int_{\D \times \D} \indic{|x-y| < \delta} \left( C_{\theta,\D}(x,y) + \max(1, -\log|x-y|) \right) f_\eps(x) f_\eps(y) \d x \d y = 0
\end{gather}
where $C_{\theta,\D}$ denotes the covariance of the field $h_\theta$ in the domain $\D$ with zero boundary condition; see \eqref{E:Ctheta}.
%Note that away from the boundary of $\D$, $C_{\theta,\D}(x,y)$ is at least a constant times $\max(1, -\log|x-y|)$ (see Theorem \ref{T:prop} when $\theta < 1/2$ and Theorem \ref{T:identification_field1/2} when $\theta =1/2$).

If $u:[0,\infty) \to [0,\infty)$ is a smooth function with $u(0)=0$, $u(r)=0$ for $r \geq 1$ and $\int_{(0,\infty)} u =1$, then $f_\eps: x \in \D \mapsto \frac{1}{2\pi \eps |x|} u( \frac{1-|x|}{\eps}), \eps \in (0,1),$ satisfies the desired properties.
\end{assumption}

\begin{theorem}\label{T:level_line}
For $\theta\in(0,1/2]$, there exists a constant $c(\theta) >0$ depending only on $\theta$ such that:
\begin{itemize}
\item
(Constant expectation)
For all test function $f$ compactly supported in $\D$,
\begin{equation}
\label{E:T_level_line_expectation}
\Expect{ (h_\theta^\mathrm{wired},f) \vert \sigma_{\partial \D} } = \sigma_{\partial \D} c(\theta) \int_\D f,
\quad \quad \text{a.s.}
\end{equation}
\item
(Constant boundary conditions) Let $(f_\eps)_\eps$ be a sequence of test functions satisfying Assumption~\ref{assumption_feps}. Then
\begin{equation}
\label{E:T_level_line_bc}
(h_\theta^\mathrm{wired}, f_\eps) - \sigma_{\partial \D} c(\theta) \xrightarrow[\eps \to 0]{L^2} 0.
\end{equation}

\item (Spatial Markov property) We can construct $h_\theta$ in the domain $D$ as follows. We first sample a CLE$_\kappa = \{ \ell, \ell \in$ CLE$\}$ and i.i.d. signs $\{\sigma_\ell, \ell \in$ CLE$\}$.
Let $h_{\theta,\ell}^\mathrm{wired}$, $\ell \in$ CLE, be independent fields in $\D$ with wired boundary conditions and sign $\sigma_\ell$ on $\partial \D$. For $\ell \in$ CLE, let $\psi_\ell$ be a conformal map from $\D$ onto the domain encircled by $\ell$. 
Then 
$\sum_\ell h_{\theta,\ell}^\mathrm{wired} \circ \psi_\ell^{-1}$ 
has the law of the field $h_\theta$ in $D$ with zero boundary condition.
\end{itemize}
The height gap $c(\theta)$ is expressed as the limit of the ratio of the probabilities of some crossing event; see \eqref{E:c_theta}.
\end{theorem}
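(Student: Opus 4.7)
Use the signed Minkowski-content representation $h_\theta = \sum_{\Cc \in \Cf} \sigma_\Cc \mu_\Cc$ established earlier in the paper. Expanding $(h_\theta^\mathrm{wired}, f) = (h_\theta, f \circ \psi) = \sum_\Cc \sigma_\Cc \int_O (f \circ \psi) \, \d \mu_\Cc$ and conditioning on $\sigma_{\partial \D}$, every cluster other than the outer one carries a symmetric sign independent of the loop soup and of $\sigma_{\partial \D}$, so the corresponding terms have zero conditional expectation. This leaves $\Expect{(h_\theta^\mathrm{wired}, f) \vert \sigma_{\partial \D}} = \sigma_{\partial \D} \int_\D f \, \d \nu$ with $\nu := \Expect{\psi_*(\mu_{\mathrm{outer}}|_O)}$. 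To identify $\nu = c(\theta) \, \d y$ on $\D$, I would bypass Minkowski content and work from the pre-limit expression $h_\gamma = Z_\gamma^{-1}(\Mc_\gamma^+ - \d x)$, computing $\Expect{\Mc_\gamma|_{\mathrm{outer}}}$ pointwise via outer-cluster crossing probabilities from \cite{JLQ23a} and passing to the limit using $Z_\gamma = \gamma^{2(1-\theta)+o(1)}$, with $c(\theta)$ emerging from \eqref{E:c_theta}.

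\textbf{Part 2 (boundary concentration).} By Part 1 and $\int f_\eps = 1$, $\Expect{(h_\theta^\mathrm{wired}, f_\eps) \vert \sigma_{\partial \D}} = \sigma_{\partial \D} c(\theta)$, so the $L^2$ discrepancy $\Expect{((h_\theta^\mathrm{wired}, f_\eps) - \sigma_{\partial \D} c(\theta))^2}$ reduces to the conditional variance of $(h_\theta^\mathrm{wired}, f_\eps)$. Bound this variance by $\iint_{\D \times \D} C_{\theta, \D}^{\mathrm{wired}}(x, y) f_\eps(x) f_\eps(y) \, \d x \, \d y$, where $C^{\mathrm{wired}}$ inherits the logarithmic diagonal singularity of $C_\theta$ plus bounded boundary corrections. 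The diagonal-vanishing condition \eqref{E:assumption_feps2} in Assumption~\ref{assumption_feps} then forces this integral, and hence the variance, to vanish as $\eps \to 0$.

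\textbf{Part 3 (spatial Markov property).} Invoke the decomposition of the loop soup by its outermost CLE$_\kappa$ loops \cite{SheffieldWernerCLE, QianWerner19Clusters}: conditional on CLE$_\kappa$ and on $(\sigma_\ell)_\ell$, the loop soup inside each loop $\ell$ splits as $\Lc_{O_\ell}^\theta \cup \Ec^\theta_{\partial O_\ell}$, independently across loops. The Minkowski-content decomposition then splits along $\ell$: inside $O_\ell$, the outer cluster's content is supported on $\Ec^\theta_{\partial O_\ell}$ (the CLE loop itself has Hausdorff dimension less than $2$ and contributes nothing at the relevant gauge), providing the wired boundary data; the strictly inner clusters combine into an independent zero-boundary $h_\theta$-field in $O_\ell$. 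Together, pushed forward via $\psi_\ell$, they form an independent copy of $h_{\theta, \ell}^\mathrm{wired}$ with sign $\sigma_\ell$; summing across $\ell$ reconstructs $h_\theta$ on $D$.

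The \textbf{main obstacle} is Part 1. A naive symmetry argument based on the Möbius invariance of $\Ec^\theta_{\partial \D}$ would suggest that $\nu$ is a Möbius-invariant measure on $\D$, i.e.\ has the hyperbolic rather than Lebesgue density. The resolution must reconcile this invariance with the Euclidean nature of the Minkowski gauge $r^2 |\log r|^{1-\theta+o(1)}$ and of the $-\d x$ subtraction in $h_\gamma$, together with the sharp scaling $Z_\gamma = \gamma^{2(1-\theta)+o(1)}$. The rigorous implementation relies on the crossing estimates and their uniformity across the domain from the companion paper \cite{JLQ23a}, combined with a delicate $\gamma \to 0$ limit exchange that precisely extracts the constant density and the value of $c(\theta)$.
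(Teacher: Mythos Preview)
Your Part 1 is close in spirit to the paper's argument, though your ``main obstacle'' is overthought. The paper works directly from the pre-limit $h_\gamma$ (as you suggest bypassing to), and the identification $\nu = c(\theta)\,\d y$ falls out in one line: after the Girsanov step (Lemma~\ref{L:moment_measure}) one obtains the density $1 - \P\big(z \overset{\Lc_\D^\theta \cup \Xi_a^{z,\D}}{\longleftrightarrow} \Ec_{\partial\D}^\theta\big)$, and this probability is \emph{independent of $z$} because a M\"obius map of $\D$ carries $(\Lc_\D^\theta, \Xi_a^{z,\D}, \Ec_{\partial\D}^\theta)$ to a triple with the same joint law. There is no tension between hyperbolic and Euclidean normalisations to reconcile.

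Your Part 2, however, has a genuine gap. The assumption~\eqref{E:assumption_feps2} only controls the \emph{near-diagonal} contribution $|x-y|<\delta$; it says nothing about the off-diagonal. For the conditional variance to vanish you need the off-diagonal wired two-point function to converge to exactly $c(\theta)^2$ as $x,y$ approach distinct boundary points, and this is the heart of the matter. The paper first proves a second-moment decomposition (Lemma~\ref{L:train}) splitting $\E[(h_\theta^\mathrm{wired},f)^2]$ into a term $\E[C_{\theta,\D\setminus\Ec_{\partial\D}^\theta}(x,y)]$ (which vanishes off-diagonal because $x,y$ land in different components of $\D\setminus\Ec_{\partial\D}^\theta$) and a two-point connection probability to the excursions. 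Showing the latter is asymptotically $\le c(\theta)^2$ is Proposition~\ref{P:independance_u}: the quantities $u(x_\eps)=\lim_\gamma Z_\gamma^{-1}\P(\Ec_{\partial\D}^\theta\leftrightarrow\Xi_a^{x_\eps}\mid\Ec_{\partial\D}^\theta)$ decorrelate as $x_\eps,y_\eps$ approach distinct boundary points. Proving this decorrelation is nontrivial because for $\theta<1/2$ the excursion process $\Ec_{\partial\D}^\theta$ is not Poisson; the paper devotes all of Section~\ref{sec:exc_bdy} to extracting the needed one- and two-point bounds (Proposition~\ref{prop:be_main}) from the conformal-restriction property of \cite{MR3901648}, and then runs an exploration/coupling argument in Section~\ref{SS:quasi_independence}. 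Your sketch skips this entirely.

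Part 3 is fine and matches the paper's one-line appeal to \cite{QianWerner19Clusters}.
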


We will prove in Section~\ref{S:1D} that a stronger version of the ``constant expectation'' property holds in dimension one, i.e., the 1D analogue of this field is a martingale.
In Section~\ref{S:1D},  we define  $h_\theta$ in 1D to be a square Bessel process taken to the power $\nu=1-\theta$ and multiplied by an i.i.d.\ sign for each of its excursion away from $0$. In Lemma~\ref{L:1D_martingale2}, we show that this one-dimensional process is a martingale if and only if $\nu=1-\theta$, which again singles out the special role of the exponent $1-\theta$.

We believe that, if $\theta<1/2$, the ``spatial Markov'' decomposition for $h_\theta$ only holds along some special interfaces, 
including the boundaries of the clusters. This makes our field more complicated to analyse than the GFF. For example, we believe that we cannot couple a \emph{nested} CLE$_\kappa$ as the level lines of $h_\theta$, unless $\theta=1/2$. As already mentioned, for $\theta<1/2$, our fields $h_{\theta}$ are different from the \textit{nesting fields} built in \cite{miller2015conformal}.
Indeed, for the nesting fields, the correlation blows up logarithmically on the diagonal 
\cite[Theorem 1.3]{miller2015conformal},
and in our case it blows up as $\log$ to the power $2(1-\theta) + o(1)$.

\medskip

By \cite{SheffieldWernerCLE}, one can 
 define an exploration process of a CLE which stops in the middle of a loop. One can conformally map $\varphi : H \to \Hb$ the unexplored region $H$ onto the upper half plane $\Hb$ in such a way that the explored portion of this CLE loop is sent to $\R^-$ and the rest of the boundary is sent to $\R^+$. The portion of the CLE loop that remains unexplored is a chordal SLE$_\kappa$ curve $\eta$ from 0 to infinity. Suppose that this CLE is the outer boundaries of the clusters in a loop soup $\Lc^\theta_\Hb$. 
 %Let $\Lc_H$ be the collection of loops in $\Lc^\theta_\Hb$ that are contained in $\ol H$. By \cite{MR3901648}, the law of $\varphi(\Lc_H)$ does not depend on the explored part, nor on the conformal map $\varphi$. 
For any smooth test function $f : \Hb \to \R$ with compact support in $\Hb$, we define $(h_\theta^\mathrm{mixed},f) = (h_\theta, f \circ \varphi^{-1})$. By \cite{MR3901648}, the law of $h_\theta^\mathrm{mixed}$ does not depend on the choice of the conformal map $\varphi$ and $h_\theta^\mathrm{mixed}$ is independent of $H$. Moreover, the loop soup satisfies spatial Markov property as one continues to explore along $\eta$.
This implies a coupling between $h_\theta^\mathrm{mixed}$ and $\eta$, stated in the following.

\begin{theorem}\label{T:mixed}
Let $h_\theta^\mathrm{mixed}$ be a field in $\Hb$ with wired boundary conditions on $\R^-$ and zero on $\R^+$. There is a curve $\eta$ coupled with $h_\theta^\mathrm{mixed}$, such that $\eta$ is a chordal SLE$_\kappa$ from $0$ to $\infty$. For each $t>0$, let $g_t$ be a conformal map from $\Hb\setminus\eta([0,t])$ onto $\Hb$ that sends $\eta(t)$ to $0$ and $\infty$ to $\infty$.
Then conditionally on $\eta([0,t])$, $h_\theta^\mathrm{mixed}\circ g_t^{-1}$ is distributed as $h_\theta^\mathrm{mixed}$ and is independent from $\eta([0,t])$.
\end{theorem}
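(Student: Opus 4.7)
I would build the coupling directly from the joint law of $h_\theta$ and the Brownian loop soup $\Lc_\Hb^\theta$ with its i.i.d.\ cluster signs, from which $h_\theta$ is obtained through $\Mc_\gamma^+$ and Theorem~\ref{T:convergenceL2}. The outermost cluster boundaries form a non-nested CLE$_\kappa$, and the radial CLE exploration of \cite{SheffieldWernerCLE} can be stopped partway through a loop targeting a fixed interior point. Conformally mapping the unexplored region $H$ to $\Hb$ via $\varphi$ (as in the paragraph preceding the statement) sends the already-traced portion of the CLE loop to $\R^-$, the rest of $\partial H$ to $\R^+$, and the remainder of the CLE loop to a chordal SLE$_\kappa$ curve $\eta$ from $0$ to $\infty$. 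Setting $(h_\theta^\mathrm{mixed}, f) := (h_\theta, f \circ \varphi^{-1})$ produces the field of the theorem and couples it with $\eta$.

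\textbf{Spatial Markov property.} Fix $t > 0$, set $H_t := \Hb \setminus \eta([0,t])$ and let $g_t : H_t \to \Hb$ be as in the statement. The core input is the spatial Markov property of the loop-soup-with-boundary-excursions along $\eta$, recalled in the paragraph before the theorem and established in \cite{MR3901648}: applying $g_t$ to the pair consisting of (i) the loop-soup loops lying in $H_t$ and (ii) the Brownian excursions attached to the boundary of the current explored region (including $\eta([0,t])$) produces a configuration distributed as the pair originally used to construct $h_\theta^\mathrm{mixed}$, and independent of $\eta([0,t])$. Since $h_\theta$ is a measurable function of the loop soup together with its independent cluster signs (Theorem~\ref{T:convergenceL2}), and since the construction commutes with $g_t$ by the conformal covariance of Theorem~\ref{T:prop}, the image field $h_\theta^\mathrm{mixed} \circ g_t^{-1}$ is a measurable function of this transported pair together with the signs of the unexplored clusters. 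This data is independent of $\eta([0,t])$ and has the same law as the data used to define $h_\theta^\mathrm{mixed}$, which yields the desired conditional distribution.

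\textbf{Main obstacle.} The delicate step is to justify the localisation needed above, namely that the distribution $h_\theta^\mathrm{mixed}$ tested against functions supported in $H_t$ is determined by the loop soup restricted to $H_t$ and by the excursions attached to $\partial H_t$. The $L^2$ construction of Theorem~\ref{T:convergenceL2} proceeds globally from $\Mc_\gamma^+$, so I would approximate a compactly supported test function in $H_t$ by ones vanishing in a neighbourhood of $\eta([0,t])$, argue that the corresponding contribution to $\Mc_\gamma^+$ depends only on loops supported in $H_t$ together with the attached boundary excursions, and pass to the $L^2$ limit. One must also track the cluster signs carefully: clusters of the unexplored configuration that meet $\eta([0,t])$ inherit the sign of the CLE loop currently being traced, which is consistent because $\eta([0,t])$ lies on a single cluster boundary, while clusters lying entirely in $H_t$ carry fresh independent signs. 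Once this localisation is in place, the statement reduces to the loop-soup spatial Markov property of \cite{MR3901648} and the conformal invariance of $h_\theta$.
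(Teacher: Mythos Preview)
Your approach is correct and matches the paper's: the authors state at the opening of Section~\ref{S:properties} that Theorem~\ref{T:mixed} follows directly from the spatial Markov property of the loop soup along the CLE exploration established in \cite{MR3901648}, combined with the definition of $h_\theta^{\mathrm{mixed}}$ as a conformal pullback of $h_\theta$ and its conformal invariance. Your identification of the localisation step as the delicate point is apt---the paper glosses over it, treating it as implicit in the construction and the results of \cite{MR3901648}.
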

This is a generalisation of the SLE$_4$/GFF coupling. 
We can also immediately deduce from Theorem~\ref{T:level_line} that $h_\theta^\mathrm{mixed}$ has boundary value $c(\theta)$ (resp.\ $0$) on the left (resp. right) side of $\eta$.

Combining \eqref{E:T_level_line_expectation} from Theorem \ref{T:level_line} and Theorem \ref{T:mixed}, we see that for all $x \in \Hb$, $\Expect{h_\theta^\mathrm{mixed}(x)}$ is equal to $c(\theta)$ times the probability that a chordal SLE$_\kappa$ from 0 to $\infty$ goes to the right of $x$. It turns out that this probability is explicit, given by Schramm's formula \cite{10.1214/ECP.v6-1041}.

\subsection{\texorpdfstring{$h_\theta$}{h theta} as a sum of Minkowski contents}
We now give another description of $h_\theta$, somewhat more geometric. We will show that the field $h_{\theta}$ is a limit of random signed Radon measures supported on the topological closures of clusters of 
$\Lc^{\theta}_{D}$,
and give an expression for these measures.
This establishes the analogue of the result in the critical case  \cite{ALS1,ALS4}, where $h_{1/2}$ is the GFF.

We start by defining a nondegenerate measure supported on the closure of a given cluster of $\Lc_D^\theta$.
Given a closed (fractal) set $A \subset \C$, it is a classical problem (that can be highly non-trivial) to define a nondegenerate measure supported on $A$. One possibility is to try to renormalise the Lebesgue measure of the $r$-neighbourhood of $A$ and take the limit as $r \to 0^+$. More precisely, one seeks an appropriate gauge function $\phi : (0,\infty) \to (0,\infty)$ such that the measure
$
r^{-2} \phi(r) \indic{\mathrm{dist}(x,A)<r} \d x
$
converges weakly as $r \to 0^+$ to a nondegenerate limit. If such a function can be found, the resulting measure is then called the Minkowski content of $A$ in the gauge $\phi$.

It is a well known fact that the Minkowski content of a single planar Brownian trajectory is well defined and nondegenerate in the gauge $r \mapsto r^2 |\log r|$. In this case, it coincides with the occupation measure of the path.
Because the non-renormalised local time of a given cluster of $\Lc_D^\theta$ (i.e. the sum of the local times of all the trajectories of a given cluster) is infinite, the Brownian gauge function $r \mapsto r^2 |\log r|$ is not appropriate for the clusters of $\Lc_D^\theta$. And indeed, \cite{ALS1} showed that, when $\theta = 1/2$, the Minkowski content of a given cluster of $\Lc_D^\theta$ is well-defined and nondegenerate in the gauge $r \mapsto r^2 |\log r|^{1/2}$.
In Theorem \ref{T:measure_cluster} below, we resolve the analogous problem when $\theta \in (0,1/2)$ by showing that the Minkowski content of a given cluster of $\Lc_D^\theta$ is well-defined and nondegenerate in some non-explicit gauge function $r \mapsto r^2 / Z_r$ that behaves like $r^2 |\log r|^{1-\theta + o(1)}$ as $r \to 0$.

We order the clusters of $\Lc^{\theta}_{D}$ as follows.
Let $(x_{k})_{k\geq 1}$ be a dense sequence in $D$.
We will denote by $\Cc_{k}$ the outermost cluster of $\Lc^{\theta}_{D}$
surrounding $x_{k}$, among the clusters different from
$\Cc_{1},\dots,\Cc_{k-1}$.
By construction, the clusters $(\Cc_{k})_{k\geq 1}$ are two by two distinct.
It is also easy to see that a.s. this is an enumeration of the clusters of
$\Lc^{\theta}_{D}$.
For $k \geq 1$, let $\mu_{k,r}$ and $\mu_{k, \gamma}$ be the random Radon measures defined by
\[
\mu_{k, r}(B) =
\dfrac{1}{Z_{r}} \int_B \indic{\mathrm{dist}(x,\Cc_{k})<r} \,\d x,
\quad
\mu_{k, \gamma}(B) =
\dfrac{1}{Z_{\gamma}}\int_B \indic{\mathrm{dist}(x,\Cc_{k})<1} (1-\mathrm{dist}(x,\Cc_{k})^{\gamma^{2}/2}) \,\d x,
\]
for all Borel set $B \subset D$. 
Recall that the normalising constants $Z_r$ and $Z_\gamma$ are defined in \eqref{E:Zgamma}.

\begin{theorem}[Minkowksi contents of clusters]\label{T:measure_cluster}
Let $\theta \in (0,1/2]$. For all $k\geq 1$, $\mu_{k,r}$ and $\mu_{k,\gamma}$ converge in probability w.r.t. the topology of weak convergence as $r \to 0$ and $\gamma \to 0$ to the same random Radon measure $\mu_k$. Moreover, $\mu_k$ is a.s. a non-negative finite measure supported on $\overline{\Cc}_{k}$ and $\mu_k(\overline{\Cc}_k) > 0$ a.s.
\end{theorem}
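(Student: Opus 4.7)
The plan is to establish $L^2$ convergence of $(\mu_{k,r},f)$ and $(\mu_{k,\gamma},f)$ for each bounded continuous test function $f$, then identify the two limits and deduce weak convergence in probability together with positivity. The argument is a first-and-second-moment computation, modelled on the treatment in \cite{ALS1} for $\theta=1/2$ but adapted to general $\theta \in (0,1/2]$ using the crossing estimates from the companion paper \cite{JLQ23a}.

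For the first moment, Fubini gives
\[
\E\bigl[(\mu_{k,r},f)\bigr] = \frac{1}{Z_r}\int_D f(x)\, \P\bigl(\mathrm{dist}(x,\Cc_k) < r\bigr)\,\mathrm{d}x.
\]
The event $\{\mathrm{dist}(x,\Cc_k) < r\}$ says that the outermost cluster around $x_k$ (among those not already labelled) reaches the disc of radius $r$ around $x$. After conditioning on this cluster and using conformal covariance, the microscopic piece near $x$ should decouple from the macroscopic geometry near $x_k$ and contribute a factor of order $Z_r$, matching the normalisation; a dominating function coming from the one-arm bound of \cite{JLQ23a} yields convergence of the integrand divided by $Z_r$ to a conformally covariant density $M_\theta(x_k,x)$. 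For the second moment,
\[
\E\bigl[(\mu_{k,r},f)^2\bigr] = \frac{1}{Z_r^2}\int_{D\times D} f(x)f(y)\, \P\bigl(\mathrm{dist}(x,\Cc_k)<r,\ \mathrm{dist}(y,\Cc_k)<r\bigr)\,\mathrm{d}x\,\mathrm{d}y,
\]
where the joint event says that the cluster of $x_k$ reaches both $B(x,r)$ and $B(y,r)$. A separation-of-scales / quasi-multiplicativity argument should show that for $|x-y|\gg r$ this factorises as the product of the two one-point probabilities times a decorrelation factor controlled by the two-point crossing exponent of \cite{JLQ23a}; divided by $Z_r^2$ this gives an integrable upper bound, so that the second moment converges to that of a well-defined random Radon measure $\mu_k$. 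The same scheme applies to $\mu_{k,\gamma}$, proving $L^2$ convergence of both families.

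To identify the two limits, use the layer-cake representation $1-t^{\gamma^2/2} = (\gamma^2/2)\int_t^1 s^{\gamma^2/2-1}\,\mathrm{d}s$ and Fubini to write $\mu_{k,\gamma} = \int_0^1 \rho_\gamma(s)\, \mu_{k,s}\,\mathrm{d}s$ with $\rho_\gamma(s) = (\gamma^2/2) s^{\gamma^2/2-1} Z_s / Z_\gamma$. Combining the scalings $Z_\gamma = \gamma^{2(1-\theta)+o(1)}$ from Theorem \ref{T:convergenceL2} with $Z_s = |\log s|^{-(1-\theta)+o(1)}$ (implicit in the gauge description) and the already established $L^2$ convergence $\mu_{k,s}\to\mu_k$ as $s\to 0$, one checks that $\rho_\gamma$ concentrates at the small-$s$ end as $\gamma\to 0$ and identifies the $\gamma$-limit with $\mu_k$. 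Positivity $\mu_k(\overline{\Cc}_k)>0$ a.s.\ then follows from a Paley--Zygmund argument applied to $(\mu_{k,r},\mathbf{1}_U)$ for $U$ a small neighbourhood of $x_k$, combined with a zero--one argument coming from the irreducibility of the cluster, once the second moment is shown to match the squared first moment up to a multiplicative constant.

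The main obstacle will be the two-point estimate when $|x-y|$ is at scale comparable to $r$. What is needed is a quasi-multiplicativity for crossings pinned to a \emph{single} cluster, strictly stronger than the one-arm input of \cite{JLQ23a}: conditional on $B(x,r)$ being reached by the cluster of $x_k$, one must control the probability that the \emph{same} cluster also reaches $B(y,r)$, at all relative scales of $|x-y|$ with respect to $r$. This is where the subcritical nature $\theta<1/2$ forces genuine adaptations of the GFF-based arguments available at $\theta=1/2$, and where the quantitative crossing estimates of \cite{JLQ23a} will have to be used in their sharpest form.
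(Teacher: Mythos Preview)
Your overall architecture (Cauchy in $L^2$ via two-point probabilities, then positivity via a zero--one argument) matches the paper's, but two steps as written do not go through.

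\medskip

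\textbf{Convergence of the two-point function.} Quasi-multiplicativity only yields
\[
\frac{1}{Z_r Z_{r'}}\,\P\bigl(\Cc_k\cap D(x,r)\neq\varnothing,\ \Cc_k\cap D(y,r')\neq\varnothing\bigr)\ \asymp\ C(x,y),
\]
i.e.\ two-sided bounds up to constants. That suffices for the domination needed to swap limit and integral, but it is \emph{not} enough for the Cauchy difference $\E[(\mu_{k,r}-\mu_{k,r'})^2]\to 0$: there you need genuine \emph{pointwise convergence} of the integrand, not comparability. The paper obtains this through a separate and substantial result, Theorem~\ref{T:ratio_general}: for any compact $K\subset\overline D\setminus\{x\}$ the ratio $\P(K\leftrightarrow D(x,r))/Z_r$ converges as $r\to 0$ (and likewise for the thick-loop version). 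This is proved by a coupling of two conditioned-to-survive Markov chains of clusters (Theorem~\ref{Thm abstract coupling}), which is the real engine behind Theorem~\ref{T:measure_cluster}. Your proposal does not invoke or replace this ingredient.

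\medskip

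\textbf{Identifying the two limits via the layer-cake formula.} Your identity $\mu_{k,\gamma}=\int_0^1\rho_\gamma(s)\,\mu_{k,s}\,\mathrm{d}s$ with $\rho_\gamma(s)=(\gamma^2/2)s^{\gamma^2/2-1}Z_s/Z_\gamma$ is correct, but to conclude you need to know that $\int_0^1\rho_\gamma(s)\,\mathrm{d}s\to 1$, equivalently that $Z_{c\gamma}/Z_\gamma$ has a limit for fixed $c>0$. For $\theta<1/2$ only $Z_\gamma=\gamma^{2(1-\theta)+o(1)}$ is known; the $o(1)$ is not controlled well enough to push the layer-cake through. The paper avoids this obstacle entirely: Theorem~\ref{T:ratio_general} is stated so that the disc limit and the thick-loop limit coincide, so the equality $\lim\mu_{k,r}=\lim\mu_{k,\gamma}$ falls out of the same ratio-convergence statement without ever comparing $Z_r$ and $Z_\gamma$ directly.

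\medskip

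For positivity, your Paley--Zygmund step gives $\P(\mu_k(\overline\Cc_k)>0)>0$, which the paper also uses; but upgrading this to probability one is not just ``irreducibility''. The paper partitions the domain into dyadic squares $Q_i$ and observes that, by scale invariance and independence of the loop soups in the $Q_i$, each square has a fixed positive chance of containing a surrounding sub-cluster with nonzero content. Since a macroscopic loop of $\Cc_k$ visits infinitely many such squares as the mesh shrinks, a Borel--Cantelli-type bound $(1-p)^{\#I}\to 0$ forces $\mu_k(\overline\Cc_k)>0$ a.s.
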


Recall that we denote by $\sigma_{\Cc_k}$ the spin associated to the $k$-th cluster $\Cc_k$.
The field $h_\theta$ is then obtained by:

\begin{theorem}[Construction of $h_\theta$ from the Minkowski contents]\label{T:h_and_minkowski}
Let $\theta \in (0,1/2]$. For all $\eps >0$ and all bounded measurable function $f: \C \to \R$, the following convergences hold a.s. and in $L^2$:
\[
\sum_{j=1}^k \sigma_{\Cc_j} \mu_j \xrightarrow[k \to \infty]{} h_\theta
\quad \quad \text{and} \quad \quad \sum_{j=1}^k \sigma_{\Cc_j} (\mu_j,f) \xrightarrow[k \to \infty]{} (h_\theta,f)
\]
in the spaces $H^{-\eps}(\C)$ and $\R$ respectively.
\end{theorem}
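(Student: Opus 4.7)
The main structural observation is that, for each fixed $\gamma>0$, $h_\gamma$ is an affine function of the i.i.d.\ spins $(\sigma_\Cc)_{\Cc\in\Cf}$. Writing $\Mc_\gamma|_\Cc$ for the restriction of the multiplicative chaos $\Mc_\gamma$ to points whose cluster is $\Cc$, the identity $\Mc_\gamma^+ = \tfrac{1}{2}\Mc_\gamma + \tfrac{1}{2}\sum_\Cc \sigma_\Cc\Mc_\gamma|_\Cc$ yields
\begin{equation*}
h_\gamma \;=\; C_\gamma \;+\; \sum_{\Cc \in \Cf}\sigma_\Cc\, L_{\gamma,\Cc},\qquad
C_\gamma := \frac{1}{2Z_\gamma}(\Mc_\gamma - 2\,\d x),\quad
L_{\gamma,\Cc} := \frac{1}{2Z_\gamma}\Mc_\gamma|_\Cc,
\end{equation*}
where $C_\gamma$ and $L_{\gamma,\Cc}$ are $\sigma(\Lc_D^\theta)$-measurable. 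Since the subspace of affine functions of independent centred variables is $L^2$-closed (its orthogonal complement is the span of monomials of degree $\geq 2$ in the $\sigma_\Cc$'s, which vanish against $h_\gamma$ by independence of the spins), the convergence in Theorem~\ref{T:convergenceL2} passes this structure to the limit: almost surely, for any bounded measurable $f$, there exist $\sigma(\Lc_D^\theta)$-measurable $c_0(f)$ and $(c_j(f))_{j\geq 1}$ such that $(h_\theta, f) = c_0(f) + \sum_{j\geq 1}\sigma_{\Cc_j}c_j(f)$.

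The symmetry $h_\theta^- = -h_\theta$ in Theorem~\ref{T:prop}\eqref{I:symmetry} immediately gives $c_0(f) = \E[(h_\theta, f)\mid \Lc_D^\theta] = 0$, while for each fixed $j$
\begin{equation*}
c_j(f) \;=\; \E[(h_\theta, f)\,\sigma_{\Cc_j} \mid \Lc_D^\theta]
\;=\; \lim_{\gamma \to 0}\E[(h_\gamma, f)\,\sigma_{\Cc_j} \mid \Lc_D^\theta]
\;=\; \lim_{\gamma \to 0}(L_{\gamma,\Cc_j}, f),
\end{equation*}
by Theorem~\ref{T:convergenceL2} and the centring of the spins. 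Identifying $c_j(f)$ with $(\mu_j, f)$ then reduces to proving the cluster-wise convergence $L_{\gamma,\Cc_j}\to \mu_j$ (as Radon measures, in probability) for every fixed $j$: the density of $L_{\gamma,\Cc_j}$ is expressed, via the $a$-thick loop representation of $\Mc_\gamma$, through exactly the same hitting events that drive the definition of $\mu_{j,\gamma}$ in Theorem~\ref{T:measure_cluster}, so essentially the same Minkowski-content argument applies, fuelled by the crossing asymptotics of \cite{JLQ23a}.

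Granted the identification $c_j(f) = (\mu_j, f)$, the partial sums $S_k(f) := \sum_{j=1}^{k}\sigma_{\Cc_j}(\mu_j, f)$ coincide with the conditional expectations $\E[(h_\theta, f) \mid \Lc_D^\theta, \sigma_{\Cc_1}, \ldots, \sigma_{\Cc_k}]$ and thus form a martingale bounded in $L^2$ by $\E[(h_\theta, f)^2] < \infty$ (Theorem~\ref{T:prop}\eqref{I:covariance}); the martingale convergence theorem then gives $S_k(f) \to (h_\theta, f)$ a.s.\ and in $L^2$. The vector-valued statement $\sum_{j=1}^k\sigma_{\Cc_j}\mu_j \to h_\theta$ in $H^{-\eps}(\C)$ follows by applying the scalar case to a countable dense family in $H^\eps(\C)$, combined with a uniform $L^2(H^{-\eps})$ bound on $(S_k)_k$ inherited from Theorem~\ref{T:convergenceL2}. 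The main obstacle is the cluster-wise identification $L_{\gamma,\Cc_j} \to \mu_j$: the measures $L_{\gamma,\Cc_j}$ are \emph{not} literally the $\mu_{j,\gamma}$ used in Theorem~\ref{T:measure_cluster}, so one must extend the proof of that theorem to cover them; this adaptation should proceed along the same lines, with the same role played by the crossing estimates of \cite{JLQ23a}.
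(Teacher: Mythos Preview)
Your martingale framework is clean and the decomposition $h_\gamma = C_\gamma + \sum_\Cc \sigma_\Cc L_{\gamma,\Cc}$ is correct. The identification $c_j(f)=\E[\sigma_{\Cc_j}(h_\theta,f)\mid\Lc_D^\theta]=\lim_\gamma(L_{\gamma,\Cc_j},f)$ is valid, and once you know $c_j=\mu_j$ the martingale convergence argument goes through. But the identification $c_j=\mu_j$ is the entire content of the theorem, and your suggested route for it has a real gap.

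You propose to show $L_{\gamma,\Cc_j}=\tfrac{1}{2Z_\gamma}\Mc_\gamma|_{\Cc_j}\to\mu_j$ ``along the same lines'' as Theorem~\ref{T:measure_cluster}. This does not work as stated. The approximants $\mu_{j,\gamma}$ in Theorem~\ref{T:measure_cluster} are absolutely continuous, deterministic given $\overline{\Cc_j}$, and their two-point function unpacks via Theorem~\ref{T:ratio_general} into ratios of disc-to-disc crossing probabilities. By contrast $L_{\gamma,\Cc_j}$ is a singular measure depending on the \emph{full} loop soup, and after Girsanov (Lemma~\ref{L:moment_measure}) its two-point function involves events of the form ``$x$ and $y$ both lie in the $j$-th cluster of $\Lc_D^\theta\cup\Xi_a^x\cup\Xi_{a'}^y$''. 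Adding the thick loops perturbs the cluster enumeration, and these events are not the crossing events that Theorem~\ref{T:ratio_general} controls. There is also no ``density of $L_{\gamma,\Cc_j}$ expressed through hitting events'': the measure has no Lebesgue density.

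The paper takes a different route precisely to avoid this. It works with the coarser filtration $\Fc_k$ generated by the loops \emph{inside} $\Cc_1,\dots,\Cc_k$ together with their signs, and exploits the fact (Lemma~\ref{Lem cond loop soup D k}) that conditionally on $\Fc_k$ the remaining loops form a loop soup in $D_k$ with an explicit conditioning. This makes $\E[\Mc_\gamma^+\mid\Fc_k]$ computable on the simply connected pieces of $D_k$: it is just $e^{\gamma^2\pi(G_{D_k}-G_{D_{k-1}})}\,\d x$. After disposing of the multiply connected pieces (Lemma~\ref{Lem remove multiply}), the increment $\nu_k$ reduces to $\lim_\gamma Z_\gamma^{-1}\int f(x)(1-e^{\gamma^2\pi(G_{D_k}-G_{D_{k-1}})})\,\d x$, and a Koebe-type comparison of $G_{D_k}-G_{D_{k-1}}$ with $\log d(x,\Cc_k)$ (together with $\dim\partial_o\Cc_k<2$) identifies this with $\mu_{k,\gamma}$, to which Theorem~\ref{T:measure_cluster} applies directly. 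Your filtration $\sigma(\Lc_D^\theta,\sigma_1,\dots,\sigma_k)$ is too fine for this computation: knowing all of $\Lc_D^\theta$ leaves nothing random to average out, so no analogous explicit formula emerges.
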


In other words, the ``restriction of $|h_\theta|$'' to $\Cc_k$ is a well-defined Radon measure that agrees with $\mu_k$.
Note however that this does not at all mean that the field $h_\theta$ is a signed measure.
Most likely, the associated total variation measure 
$
\sum_{j=1}^k \mu_j
$
diverges, as $k\to +\infty$, in every open subset of $D$;
so the compensations induced by the signs are necessary for the convergence.
This phenomenon is known to occur when $\theta=1/2$.

\subsection{Connecting two points with a cluster}\label{SS:intro_conditioning}
We now want to elaborate on the intimate relation between the field $h_\theta$ and the connectivity properties of the loop soup. 
We are first going to define the law $\P_{x \leftrightarrow y}$ of the loop soup conditioned on the event that two given points $x$ and $y$ belong to the closure of the same cluster. Because this event has a vanishing probability, we will define this law via a limiting procedure; see Theorem \ref{T:cluster2point}.
Theorem \ref{T:cluster_xy_doob} then provides a Doob-transform approach to the definition of $\P_{x \leftrightarrow y}$: $\P_{x \leftrightarrow y}$ can be informally obtained by reweighting the law of $\Lc_D^\theta$ by $h_\theta(x) h_\theta(y) / C_\theta(x,y)$.

Let $x, y \in D$ be two distinct points. For all $r >0$, let $\P_{x \leftrightarrow y, r}$ be the law of a loop soup $\Lc_D^\theta$ conditioned on the event $\{ D(x,r) \overset{ \Lc_D^\theta }{\longleftrightarrow} D(y,r) \}$ that a cluster of $\Lc_D^\theta$ connects the two discs $D(x,r)$ and $D(y,r)$.

\begin{theorem}\label{T:cluster2point}
Let $x, y \in D$ be two distinct points. Then $\P_{x \leftrightarrow y, r}$ converges weakly as $r \to 0$ to some probability measure $\P_{x \leftrightarrow y}$, with respect to the topology induced by $d_\mathfrak{L}$ \eqref{metric}.
\end{theorem}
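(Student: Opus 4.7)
The plan is to combine the Minkowski-content representation of $h_\theta$ (Theorems~\ref{T:measure_cluster} and \ref{T:h_and_minkowski}) with the crossing estimates of the companion paper \cite{JLQ23a} in order to rewrite any conditional expectation under $\P_{x\leftrightarrow y,r}$ as an explicit, convergent, size-biasing. The candidate limit is characterised by computing $\Expect{F(\Lc_D^\theta)\mid D(x,r)\leftrightarrow D(y,r)}$ for $F$ in a determining class of bounded continuous functionals of the loop soup.

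First I would identify the asymptotics of $p_r := \Prob{D(x,r) \overset{\Lc_D^\theta}{\longleftrightarrow} D(y,r)}$ and show $p_r/Z_r^2 \to c\,C_\theta(x,y)$ for some constant $c>0$. Setting $S_r := \sum_{\Cc\in\Cf}\one_{\mathrm{dist}(x,\Cc)<r}\one_{\mathrm{dist}(y,\Cc)<r}$, the connection event is $\{S_r\geq 1\}$. Expanding $\Expect{S_r}$ cluster by cluster and comparing with the Minkowski-content identity of Theorem~\ref{T:h_and_minkowski}, one obtains the averaged statement
\[
Z_r^{-2}\int_{D\times D} f(x')g(y')\Prob{D(x',r)\leftrightarrow D(y',r)}\d x'\d y' \xrightarrow[r\to 0]{} c\int_{D\times D} f(x')g(y')C_\theta(x',y')\d x'\d y',
\]
for smooth $f,g$ with disjoint supports, by identifying $\Expect{\sum_k (\mu_k,f)(\mu_k,g)}$ with $\iint f\, C_\theta\, g$ via Theorem~\ref{T:prop}(\ref{I:covariance}). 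Taking $f,g$ to approximate $\delta_x,\delta_y$ and using continuity of $C_\theta$ off the diagonal upgrades this to the pointwise statement; a matching bound $\Prob{S_r\geq 2}=o(Z_r^2)$, obtained by a Mecke/inclusion–exclusion second-moment argument built on the crossing exponent $1-\theta$ of \cite{JLQ23a}, shows that $\one_{\{S_r\geq 1\}}$ and $S_r$ have the same asymptotics.

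Next, for a fixed bounded continuous functional $F$ on $(\Lc,d_\mathfrak{L})$, I would run the same scheme with $F$ inserted inside the expectations. Decomposing
\[
Z_r^{-2}\Expect{F(\Lc_D^\theta)\one_{\{S_r\geq 1\}}}=Z_r^{-2}\sum_{k\geq 1}\Expect{F(\Lc_D^\theta)\one_{\mathrm{dist}(x,\Cc_k)<r}\one_{\mathrm{dist}(y,\Cc_k)<r}}+o(1),
\]
and smoothing the indicators against $f\otimes g$, the $k$-th term converges to $\Expect{F(\Lc_D^\theta)(\mu_k,f)(\mu_k,g)}$ by a version of Theorem~\ref{T:measure_cluster} upgraded to joint convergence with the loop soup. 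The tail $\sum_{k>N}$ is controlled by the $L^2$-bound on $h_\theta$ provided by Theorems~\ref{T:prop} and \ref{T:h_and_minkowski}. Dividing by $p_r\sim c\,Z_r^2 C_\theta(x,y)$ and desmoothing (letting $f,g\to \delta_x,\delta_y$) identifies the limit of $\Expect{F\mid D(x,r)\leftrightarrow D(y,r)}$ as a fixed continuous linear functional of $F$, of the form $\Expect{F\cdot W(x,y)}/\Expect{W(x,y)}$ for a non-negative weight $W(x,y)$ obtained as the limit of $\sum_{k}(\mu_k,f_n)(\mu_k,g_n)$ along mollifiers $(f_n,g_n)\to(\delta_x,\delta_y)$. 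Since this identifies a single limit on a class of functionals which determines weak convergence for $d_\mathfrak{L}$, tightness and convergence of $(\P_{x\leftrightarrow y,r})_r$ to a unique $\P_{x\leftrightarrow y}$ follow.

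The main obstacle will be the desmoothing step: as $\mu_k$ has no atoms, the object $W(x,y)$ has no intrinsic meaning a priori and its existence as an $L^1$-limit of $\sum_k (\mu_k,f_n)(\mu_k,g_n)$ is not automatic. I would address this by working throughout with the pointwise indicator $\one_{D(x,r)\leftrightarrow D(y,r)}$ rather than its smoothed version, proving the uniform-in-$r$ estimate
\[
\Expect{\bigl(\int \chi_\delta(x-x')\chi_\delta(y-y')\one_{D(x',r)\leftrightarrow D(y',r)}\d x'\d y' - \one_{D(x,r)\leftrightarrow D(y,r)}\bigr)^2} = o_\delta(1)\cdot Z_r^4,
\]
with $o_\delta(1)\to 0$ as $\delta\to 0$, again derived from the crossing/four-arm-type estimates of \cite{JLQ23a} together with continuity of $C_\theta$ off the diagonal. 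This estimate transfers the smoothed convergence into a pointwise one, ensures the existence of $W(x,y)$ as a genuine random variable, and closes the identification of the limit.
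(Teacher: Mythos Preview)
Your approach has a genuine gap at the desmoothing step, and even setting that aside it is far more circuitous than necessary.

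The paper's proof is almost immediate once one has Proposition~\ref{P:crossing}, specifically \eqref{E:P_law}: for any bounded $F$ depending only on loops of diameter $\geq\eps$, the quantity $Z_r^{-2}\Expect{F(\Lc_D^\theta)\one\{D(x,r)\leftrightarrow D(y,r)\}}$ converges. Dividing by the case $F\equiv 1$ gives convergence of $\EXPECT{x\leftrightarrow y,r}{F}$, and since such $F$ form a determining class for $d_\mathfrak{L}$, weak convergence follows. The underlying engine is Theorem~\ref{T:ratio_general}, proved via the Markov-chain coupling of Section~\ref{S:convergence_ratio}: one shows that two conditioned cluster processes, started from different initial conditions, can be coupled to coalesce in finite time. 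No Minkowski contents, no smoothing, no $W(x,y)$.

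Your route instead imports Theorems~\ref{T:measure_cluster} and~\ref{T:h_and_minkowski}. But those results are proved in Section~\ref{S:minkowski} \emph{using} Theorem~\ref{T:ratio_general}; so you are implicitly assuming the same coupling machinery and then adding an extra layer on top. More seriously, the desmoothing is not sound as written. For fixed $x,y\in D$, almost surely neither point lies in the closure of any cluster, so $\sum_k(\mu_k,f_n)(\mu_k,g_n)\to 0$ along any mollifiers $(f_n,g_n)\to(\delta_x,\delta_y)$; the candidate weight $W(x,y)$ is identically zero, not a useful size-bias. Your proposed rescue, the uniform-in-$r$ $L^2$ comparison between the smoothed and pointwise indicators with error $o_\delta(1)\,Z_r^4$, is the wrong scaling (the indicators have second moment of order $p_r\sim Z_r^2$, not $Z_r^4$) and in any case would require exactly the kind of quantitative stability of the conditioned connection event that the coupling of Section~\ref{S:convergence_ratio} is designed to deliver. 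You would end up re-proving Theorem~\ref{T:ratio_general} in disguise. The direct route is to invoke \eqref{E:P_law}.
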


We can now state the aforementioned relation between the field $h_\theta$ and the connectivity properties of the loop soup:

\begin{theorem}\label{T:cluster_xy_doob}
Let $F : D \times D \times \mathfrak{L} \to \R$ be a bounded measurable function such that for all $\Lc \in \mathfrak{L}$, $F(\cdot, \cdot, \Lc)$ is smooth. Then
\[
\Expect{
\int_D F(x,y,\Lc_D^\theta) h_\theta(x) h_\theta(y) \d x \d y
}
= \int_{D \times D} C_\theta(x,y) \EXPECT{x \leftrightarrow y}{ F(x,y,\Lc) } \d x \d y.
\]
\end{theorem}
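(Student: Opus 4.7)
The approach is to combine the Minkowski content decomposition $h_\theta = \sum_j \sigma_{\Cc_j} \mu_j$ of Theorem \ref{T:h_and_minkowski} with the weak convergence $\P_{x\leftrightarrow y,r} \to \P_{x\leftrightarrow y}$ of Theorem \ref{T:cluster2point}. The key algebraic fact is that, conditionally on the loop soup $\Lc_D^\theta$ (and hence on the collection of clusters), the signs $(\sigma_{\Cc_j})_j$ are i.i.d.\ symmetric, so $\Expect{\sigma_{\Cc_j}\sigma_{\Cc_k} \mid \Lc_D^\theta}=\delta_{jk}$. This collapses the formal ``double sum'' appearing in $h_\theta(x)h_\theta(y)$ to a diagonal ``same-cluster'' contribution, which is precisely the object that encodes $\P_{x\leftrightarrow y}$.

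Concretely, plugging the decomposition of Theorem \ref{T:h_and_minkowski} into the left-hand side and conditioning on $\Lc_D^\theta$ first, the orthogonality of the signs formally yields
\[
\Expect{\int F(x,y,\Lc_D^\theta)\, h_\theta(x)\, h_\theta(y)\,\d x\,\d y}
= \Expect{\sum_j \int F(x,y,\Lc_D^\theta)\, \mu_j(\d x)\,\mu_j(\d y)}.
\]
The interchange is justified using the $L^2$ convergence of the partial sums $\sum_{j\le N}\sigma_{\Cc_j}(\mu_j,\varphi)$ from Theorem \ref{T:h_and_minkowski}, together with the covariance bound of Theorem \ref{T:prop} providing an $L^2$ domination uniform in $N$. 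Polarising in $\varphi$ handles the tensor form $F(x,y,\Lc)=f(x)g(y)\psi(\Lc)$, and a monotone-class extension yields the identity for general bounded measurable $F$ smooth in $(x,y)$.

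The right-hand side is then analysed via the $r$-neighbourhood approximation $\mu_{j,r}$ from Theorem \ref{T:measure_cluster}. A direct computation gives
\[
\sum_j \mu_{j,r}(\d x)\,\mu_{j,r}(\d y) = \frac{N_r(x,y)}{Z_r^{\,2}}\,\d x\,\d y,\qquad
N_r(x,y):=\#\bigl\{j:\mathrm{dist}(x,\Cc_j)<r,\ \mathrm{dist}(y,\Cc_j)<r\bigr\}.
\]
On the connection event $\{D(x,r)\overset{\Lc_D^\theta}{\longleftrightarrow}D(y,r)\}$ one has $N_r(x,y)\ge 1$, and the sharp crossing/quasi-multiplicativity estimates developed in the companion paper \cite{JLQ23a} should yield $\Expect{(N_r(x,y)-1)_+} = o\!\left(\Prob{D(x,r)\leftrightarrow D(y,r)}\right)$ uniformly in $(x,y)$ bounded away from the diagonal and from $\partial D$. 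Replacing $N_r$ by the indicator of the connection event and unfolding the definition of $\P_{x\leftrightarrow y,r}$ turns the integrand into $Z_r^{-2}\,\Prob{D(x,r)\leftrightarrow D(y,r)}\,\EXPECT{x\leftrightarrow y,r}{F(x,y,\Lc)}$ up to a negligible error.

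It then suffices to pass to the limit $r\to 0$. Theorem \ref{T:cluster2point} gives $\EXPECT{x\leftrightarrow y,r}{F(x,y,\cdot)}\to\EXPECT{x\leftrightarrow y}{F(x,y,\cdot)}$ (for $F$ continuous in $\Lc$; a mild regularisation reduces to this case), while the second-moment identity from Theorem \ref{T:prop} combined with the same Minkowski approximation identifies the prefactor $Z_r^{-2}\Prob{D(x,r)\leftrightarrow D(y,r)}\to C_\theta(x,y)$. The main obstacle is the uniform control in $(x,y)$ needed to commute the $r\to 0$ limit with the $(x,y)$-integration: one must combine the $\log$-power blow-up of $C_\theta$ on the diagonal with the boundary behaviour near $\partial D$ to produce a single integrable dominating function, and in particular show quantitatively that multi-cluster connections at scale $r$ are negligible after integration. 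This is the non-routine technical step of the argument, and it is where the estimates of \cite{JLQ23a} are crucial.
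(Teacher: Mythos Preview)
Your approach is genuinely different from the paper's, and while the overall strategy is plausible, it is substantially more laborious and leaves open a step that you yourself flag as ``non-routine''.

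The paper's proof is almost immediate once Lemma~\ref{L:second_moment_to_crossing_proba} and Proposition~\ref{P:crossing} are in place. By a density argument one reduces to $F$ depending only on loops of diameter at least some $\eps>0$. For such $F$, Lemma~\ref{L:second_moment_to_crossing_proba} expresses $\Expect{\int F\, h_\gamma(x)h_\gamma(y)\,\d x\,\d y}$ directly as $Z_\gamma^{-2}\int \Expect{F(x,y,\Lc_D^\theta\cup\{\Xi_{a_i}^x\}\cup\{\Xi_{a_i'}^y\})\,\mathbf{1}\{x\leftrightarrow y\}}\,\d x\,\d y$ plus an $O(\gamma^4/Z_\gamma^2)$ error, and \eqref{E:P_law} from Proposition~\ref{P:crossing} identifies the limit of the integrand as $C_\theta(x,y)\,\EXPECT{x\leftrightarrow y}{F}$. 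The point is that in the thick-loop picture there is exactly \emph{one} anchoring loop at $x$ and one at $y$, so the connection event is unambiguous and no multi-cluster issue arises.

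Your route via the Minkowski decomposition $h_\theta=\sum_j\sigma_{\Cc_j}\mu_j$ trades that cleanliness for two difficulties. First, the control $\Expect{(N_r(x,y)-1)_+}=o(\P(D(x,r)\leftrightarrow D(y,r)))$: you attribute this to \cite{JLQ23a}, but nothing there gives a multi-arm estimate. It can in fact be obtained from a BK-type inequality for Poisson point processes (two distinct clusters each crossing is a disjoint occurrence of the increasing connection event, so $\P(N_r\ge k)\le \P(N_r\ge 1)^k$), but you do not supply this, and your ``should yield'' is not a proof. Second, passing from $\sum_j\mu_j\otimes\mu_j$ to $\lim_{r\to 0}\sum_j\mu_{j,r}\otimes\mu_{j,r}$ requires interchanging an infinite sum over clusters with the $r\to 0$ limit; Theorem~\ref{T:measure_cluster} only gives convergence for each fixed $j$, and you do not justify the interchange. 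Both gaps are fillable, but the paper's thick-loop approach sidesteps them entirely and is the intended argument.
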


There are many instances in statistical mechanics where such a relationship between the two-point function of a field and connection probabilities holds.
The Edwards--Sokal coupling between the Potts model and the random cluster model is a famous example \cite{fortuin1972random, edwards1988generalization}. 
These relations can be very powerful tools to study one model via the knowledge of the other.
As already alluded to, the clusters of the loop soup are well understood at criticality ($\theta = 1/2$) thanks to the GFF. On the other hand, the current paper, together with \cite{JLQ23a}, studies directly the clusters ($\theta \leq 1/2$) and then infers properties on $h_\theta$.

\begin{remark}\label{rmk:isomorphism1/2}
When $\theta = 1/2$, the law $\P_{x \leftrightarrow y}$ can be described precisely as follows. Consider the metric graph GFF $\tilde{\varphi}$ on some graph and let $x$ and $y$ be two distinct points. The BFS-Dynkin isomorphism \cite{BFS82Loop, Dynkin1984Isomorphism, Dynkin1984IsomorphismPresentation} tells us that for any test function $F$,
\begin{equation}
\label{E:BFS_Dynkin}
\Expect{ \tilde{\varphi}(x) \tilde{\varphi}(y) F( \tilde{\varphi}^2 / 2 ) }
= G(x,y) \Expect{ F( \tilde{\varphi}^2 / 2 + \ell_\wp ) }
\end{equation}
where $\wp$ is an independent Brownian excursion from $x$ to $y$ and $\ell_\wp$ is its local time. Note that by independence of the signs on different clusters, the left hand side can be rewritten as $\Expect{ \tilde{\varphi}(x) \tilde{\varphi}(y) F( \tilde{\varphi}^2 / 2 ) \indic{x \leftrightarrow y} }$.
In the scaling limit, the identity \eqref{E:BFS_Dynkin} suggests that the law of the cluster joining $x$ to $y$ under $\P_{x \leftrightarrow y}$ can be obtained as follows. Sample a critical Brownian loop soup $\Lc_D^{1/2}$ and an independent Brownian excursion $\wp$ from $x$ to $y$, then consider the cluster joining $x$ to $y$ in the collection of paths $\Lc_D^{1/2} \cup \{ \wp \}$.

Viewing the path $\wp$ as an SLE$_2$ curve decorated by Brownian loops coming from a loop soup with intensity $\theta =1$ \cite{LSW03Restriction}, we can see the above description as a notion of duality between the loop soup with intensity $1/2$ and the loop soup with intensity $1/2 + 1 = 3/2$. See also Lemma \ref{L:duality_criticality}.
As we will see in Section \ref{S:2D_Wick}, some notion of duality between the intensities $\theta$ and $2-\theta$ might hold more generally for all $\theta \in (0,1/2]$. We will see that this is reminiscent to the duality between Bessel processes of dimensions $d$ and $4-d$.
\end{remark}

\subsection{Conjectural construction of \texorpdfstring{$h_\theta$}{h theta} from a discrete loop soup}\label{SS:intro_discrete}
Let $D_N \subset \frac1N \Z^2 \cap D$ be a subset of the square lattice with mesh size $\frac1N$ which approximates the continuous domain $D$. See \eqref{eq:DN} for a precise definition.
Fix $\theta \in (0,1/2]$ and let $\Lc_{D_N}^\theta$ be a random walk loop soup in the metric graph $\widetilde{D}_N$ and $\Cf_N = \{ \Cc \}_{\Cc \in \Cf_N}$ be the collection of clusters $\Cc$ of $\Lc_{D_N}^\theta$. Conditionally on $\Cf_N$, let $\sigma_\Cc$, $\Cc \in \Cf_N$, be i.i.d. spins taking values in $\{\pm 1\}$ with equal probability.
For all $x \in \widetilde{D}_N$, let $\ell_x$ be the total local time accumulated by all loops at $x$, see \eqref{E:discrete_local_time}.
For orientation, we recall that for all $x$, $\ell_x$ is distributed according to Gamma$(\theta,G_{D_N}(x,x))$ where $G_{D_N}(x,x)$ is the discrete Green function which behaves, with our normalisation, like $G_{D_N}(x,x) \sim \frac{1}{2\pi} \log N$ as $N \to \infty$.
We can now define what we conjecture being the analogue of $h_\theta$ in the discrete:
\begin{equation}
\label{E:def_htheta_discrete}
h_{\theta,N}(x) := \frac{\Gamma(\theta)}{2^{1-\theta} \Gamma(2-\theta)} \sigma_{\Cc_x} \{ 2\pi \ell_x \}^{1-\theta},
\end{equation}
where $\Cc_x$ denotes the cluster containing $x$. Importantly, there is no renormalisation in the definition of $h_{\theta,N}$.

\begin{conjecture}\label{Conj:intro}
Let $\theta \in (0,1/2]$. There exists $c_{\mathrm{conj}} = c_{\mathrm{conj}}(\theta) >0$ such that for all bounded smooth test function $f$,
\begin{equation}
\frac{1}{N^2} \sum_{x \in D_N} f(x) h_{\theta,N}(x)
\xrightarrow[N \to \infty]{\mathrm{(d)}} c_{\mathrm{conj}} (h_\theta,f).
\end{equation}
\end{conjecture}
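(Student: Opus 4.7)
The plan is to transfer the two continuous constructions of $h_\theta$ provided by Theorems~\ref{T:convergenceL2} and \ref{T:h_and_minkowski} to the discrete setting. The cleaner route, and the one I would pursue first, is through the Minkowski content representation: for each fixed $k$, establish the cluster-wise convergence
\[
\frac{1}{N^2} \sum_{x \in D_N} f(x)\, \indic{\Cc_x^N = \Cc_k^N}\, \{2\pi \ell_x\}^{1-\theta} \xrightarrow[N\to\infty]{(d)} c_{\mathrm{conj}}\, (\mu_k, f),
\]
where $\Cc_k^N$ denotes the discrete cluster approximating $\Cc_k$. Conditionally on the clusters, the signs $\sigma_{\Cc_k}$ are independent and carry over verbatim between discrete and continuum, so Theorem~\ref{T:h_and_minkowski} would deliver the conjecture once the sum over $k$ is controlled.

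The heuristic for the explicit prefactor is as follows: at a generic point $x$ of a given cluster, $\ell_x$ is approximately Gamma$(\theta, G_{D_N}(x,x))$ with $G_{D_N}(x,x) \sim (2\pi)^{-1} \log N$, so $\ell_x^{1-\theta}$ has typical size $(\log N)^{1-\theta + o(1)}$. Summed over the roughly $N^2\, \mu_k(\Cc_k)\, |\log N|^{-(1-\theta)+o(1)}$ sites of $\Cc_k^N$, this exactly compensates the Minkowski gauge $r^2 |\log r|^{1-\theta + o(1)}$ from Theorem~\ref{T:measure_cluster}, and the factor $\Gamma(\theta)/(2^{1-\theta} \Gamma(2-\theta))$ in the definition of $h_{\theta,N}$ arises as a Gamma moment. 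I would first prove the cluster-wise convergence in expectation by exploiting the conditional independence structure of local times given the skeleton of loops in a cluster (in the spirit of \cite{ABJL21}), and then upgrade it to convergence in probability via a second-moment computation based on the two-point crossing estimates of \cite{JLQ23a}.

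An alternative route adapts the fractional-derivative definition of $h_\theta$ to the discrete. Using the identity $\indic{\sigma_x=+1} e^{\gamma\sqrt{2\pi\ell_x}} - \tfrac12 e^{\gamma\sqrt{2\pi\ell_x}} = \tfrac12 \sigma_x e^{\gamma\sqrt{2\pi\ell_x}}$ and a Tauberian/Mellin-transform argument on $\gamma^{-2(1-\theta)}(\cosh(\gamma\sqrt{2\pi\ell_x})-1)$, one recognises $\sigma_x \ell_x^{1-\theta}$ as the $\gamma^{2(1-\theta)}$ coefficient in the expansion around $\gamma = 0$ of a suitably renormalised discrete chaos $\Mc_{\gamma,N}^+$. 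Combined with a cluster-by-cluster version of the discrete chaos convergence of \cite{ABJL21} (the analogue of Theorem~\ref{T:one_cluster}) and Theorem~\ref{T:convergenceL2}, this yields the result modulo an interchange of the $N\to\infty$ and $\gamma\to 0$ limits, which is the main analytic step.

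The main obstacle, shared by both routes, is precisely this exchange of limits together with the attendant tail control. Concretely, one needs sharp two-point estimates of the form $\P(x \leftrightarrow_N y) \approx \mathrm{const}\cdot C_\theta(x,y)$, uniformly over $|x-y|$ away from zero and in $N$; however, Theorem~\ref{T:prop} only controls $C_\theta$ up to $|\log|x-y||^{o(1)}$ factors. Removing this $o(1)$ and propagating the improved estimate from the Brownian to the random-walk loop soup at $\theta<1/2$ is, in my view, the central technical reason the statement is left as a conjecture rather than a theorem. In addition, the contribution of clusters of microscopic diameter must be controlled uniformly in $N$ via a discrete analogue of the $L^2$ summability of $\sum_j \sigma_{\Cc_j}\mu_j$ furnished by Theorem~\ref{T:h_and_minkowski}.
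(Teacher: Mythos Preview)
The statement is a \emph{conjecture} in the paper, not a theorem: the authors do not prove it and explicitly write ``We plan to prove these conjectures in a future work.'' So there is no paper proof to compare against, and you are right to present a strategy rather than a proof and to flag the exchange-of-limits issue as the essential gap.

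That said, the paper does discuss a preferred line of attack in Section~\ref{S:2D_Wick}, and it differs from both of your routes in an interesting way. Rather than working directly with either the Minkowski content or a Tauberian expansion of $\Mc_{\gamma,N}^+$, the paper embeds Conjecture~\ref{Conj:intro} into a stronger Conjecture~\ref{Conjecture1}: that for every $n\ge 0$ the discrete Wick powers $:h_{\theta,N}(x)\ell_x^n: \;=\; h_{\theta,N}(x)\,G_N(x,x)^n\,\mathbf{L}_n^{(\theta^*-1)}(\ell_x/G_N(x,x))$, with the \emph{dual} Laguerre parameter $\theta^*=2-\theta$, converge jointly. The key algebraic input is Lemma~\ref{L:Laguerre_Bessel}, which sums these Laguerre polynomials into a modified Bessel function; this yields an \emph{exact} discrete identity (Theorem~\ref{T:expansion_signed_discrete}) showing that a specific series in $\gamma$ built from $:\ell_x^k:$ and $:h_{\theta,N}(x)\ell_x^k:$ already converges to $\langle \Mc_\gamma^+,f\rangle$ as $N\to\infty$. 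If Conjecture~\ref{Conjecture1} held, one could then read off the $\gamma^{2(1-\theta)}$ coefficient termwise and identify $h_\theta^{\mathrm{conj}}$ with $c_{\mathrm{conj}} h_\theta$; this is closer in spirit to your Route~2, but the Bessel/Laguerre structure replaces the Tauberian step by exact algebra, so the only remaining analytic input is a second-moment bound on each fixed Wick power.

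Your Route~1 via cluster-wise Minkowski content is a reasonable alternative the paper does not pursue. Your diagnosis of the obstruction --- removing the $|\log|x-y||^{o(1)}$ slack in the two-point function and controlling microscopic clusters uniformly in $N$ --- is consistent with what the paper's approach would also require, and is essentially why both formulations remain conjectural.
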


In Section \ref{S:2D_Wick}, we considerably elaborate on this conjecture and address the question of defining the renormalised powers of $h_\theta$. The powers of $h_\theta$ that reduce to integer powers $:L_x^n:$ of the local time $:L_x:$ have already been defined in \cite{LeJan2011Loops, MR3633290}. In order to define the remaining powers (the ``odd'' powers of $h_\theta$), we give a specific approximation scheme from the discrete and we conjecture that it possesses a nondegenerate scaling limit (Conjecture \ref{Conjecture1}). This is based on a connection with the one-dimensional Brownian loop soup (first revealed in \cite{JLQ23a}) and reveals a surprising notion of duality between the intensities $\theta$ and $\theta^* = 2-\theta$. If this conjecture held true, we would obtain the following expansion for $\Mc_\gamma^+$ around $\gamma =0$:
\[
\Mc_\gamma^+(\d x) = \Big( \sum_{k \geq 0} \frac{\gamma^{2k}}{2^k} \frac{\Gamma(\theta)}{k! \Gamma(k+\theta)} (2\pi)^k :L_x^k:
+ c_\mathrm{conj} \gamma^{2(1-\theta)} \frac{\gamma^{2k}}{2^k} \frac{\Gamma(\theta^*)}{k! \Gamma(k+\theta^*)} (2\pi)^k :h_\theta(x) L_x^k: \Big) \d x.
\]
See Section \ref{S:expansion_signed} for the definition of the different terms appearing in the above display.
We plan to prove these conjectures in a future work.

\subsection{Relation to other conformally invariant and covariant fields}\label{SS:intro_other}
%!TEX root = ../main_field.tex

Let us review some other conformally invariant and covariant fields that appeared in the mathematical literature.

For $\theta = 1/2$, our field $h_{\theta}$ is the continuum GFF (up to a multiplicative constant).
We believe that for $\theta < 1/2$, the fields $h_{\theta}$
are neither Gaussian nor Markovian (physicists would say non-local fields).

The GFF is in particular a CLE$_{4}$ nesting field (Miller-Sheffield coupling).
In \cite{miller2015conformal}, the authors constructed other conformally invariant 
 fields using the nested CLE$_{\kappa}$ for
$\kappa\in (8/3,4]$.
However, as already mentioned, for $\theta<1/2$, 
our fields $h_{\theta}$ are different from the \textit{nesting fields} built in \cite{miller2015conformal}.
In the case $\theta=1/2$, the clusters of the corresponding Brownian loop soup can be described through the nested labelled CLE$_{4}$; see \cite[Remark 17]{ALS4}.
However, for $\theta<1/2$, we believe that the clusters of the Brownian loop soup cannot be described through the nested CLE$_{\kappa}$,
since there is too much independence in the latter.

For $\theta=1/4$, the outer boundaries of the Brownian loop soup clusters
are the CLE$_{3}$ loops,
which are also the scaling limits of interfaces in the critical 2D Ising model
\cite{KempainnenSmirnovCLE}.
But this coincidence is just for the outer boundaries.
The full scaling limit of Ising spin clusters is different from the Brownian loop soup clusters.
The dimension of the former is $187/96$ 
(dimension of the CLE$_{3}$ gasket \cite{MillerSunWilsonDimCLEgasket}),
and the dimension of the latter is $2$.
We further believe that our field is none of the Ising local fields studied in the Conformal Field Theory (CFT),
and in particular not the continuum \textit{Ising spin field} constructed
in \cite{CamiaGarbanNewmanIsing1,CamiaGarbanNewmanIsing2},
since the two-point correlations of these fields from the Ising CFT
blow up on the diagonal as a power (and not a power of the $\log$),
and these fields are conformally covariant but not invariant
(under conformal mappings, the modulus of the derivative to some power appears).
Still, it is an open question whether $h_{\theta}$ for $\theta=1/4$ is related to the Ising model in some other way, perhaps through a non-local transformation.

We would like to mention that other conformally covariant (but not invariant)
fields have been constructed using the Brownian loop soup.
First, there are the so-called \textit{winding fields}, obtained by renormalising
$e^{i\beta W(z)}$, where $W(z)$ is the sum of the indices of the Brownian loops around the point $z\in D$, and $\beta$ is a parameter in $[0,\pi]$ 
\cite{CamiaGandolfiKleban16, BrugCamiaLis, LeJanWinding}.
The correlations of the winding fields blow up as a power on the diagonal.
Informally speaking, the winding fields have constant values on the dual clusters of the Brownian loop soup, that is to say on the connected components of the domain minus the Brownian loops, all these connected components being
CLE$_{\kappa}$ gaskets in the scaling limit.
So this is different from our setting, since the fields $h_{\theta}$
are in a sense constant on the clusters of the Brownian loop soup,
not on their planar duals.
It has been observed that for $\theta =1/2$ and
$\beta = \pi$ (so that $e^{i\beta W(z)}\in\{-1,1\}$),
the corresponding winding field is exactly the Kramers-Wannier 
dual (a planar duality inherited from the duality in FK random clusters) 
of the 2D GFF \cite[Theorem 2.1]{BrugCamiaLis}.
Another family of fields constructed from the Brownian loop soups are the 
so called \textit{layering fields},
which are a variant of winding fields, where one considers only the outer boundaries of Brownian loops, which are SLE$_{8/3}$ loops.
The latter construction does not capture the structure of the clusters of Brownian loops, since one keeps only little information on those loops.
However, these layering fields are known to satisfy a lot of exact solvability
and have been related to the CFT \cite{CamiaGandolfiKleban16,CamiaGandolfiPeccatiReddyGMC,CamiaFoitGandolfiKleban20,
CamiaFoitGandolfiKleban22,CamiaFoitGandolfiKleban22SE}.

\subsection{Organisation of the paper}

The paper is organised as follows:

\begin{itemize}[leftmargin=*]
\item Section \ref{S:preliminaries}: we recall some definitions and results that we will need from the literature. We will also state and prove preliminary lemmas that we will be useful in the rest of the paper.
\item Section \ref{S:convergence_ratio}: we prove the convergence of the ratio of two hitting probabilities by clusters; see Theorem \ref{T:ratio_general}. To do so, we will develop in Theorem \ref{Thm abstract coupling} a general coupling result between two trajectories of the same Markov chain with distinct starting points.
\item Section \ref{S:construction_h}: we prove Theorems \ref{T:convergenceL2}, \ref{T:cluster2point} and \ref{T:cluster_xy_doob} about the construction of $h_\theta$, the conditioned probability measure $\P_{x \leftrightarrow y}$ and the link between the two.
\item Section \ref{S:minkowski}: we prove Theorems \ref{T:measure_cluster} and \ref{T:h_and_minkowski} concerning the definition of the Minkowski contents of the clusters and their relations to the field $h_\theta$.
\item Section \ref{sec:exc_bdy}: we obtain estimates on the loops that touch the outer boundary of a given cluster; see in particular Propositions \ref{prop:be_main}, \ref{prop:integral} and \ref{prop:sum_mm2}.
\item Section \ref{S:properties}: we prove the properties of $h_\theta$ stated in Theorem \ref{T:prop}. We also prove that CLE/SLE are level lines of $h_\theta$ as stated in Theorems \ref{T:level_line} and \ref{T:mixed}.
\item Section \ref{S:discrete}: Relying on a discrete approximation, we prove the identification of $h_\theta$ with the GFF when $\theta =1/2$ (Theorem \ref{T:identification_field1/2}). See in particular Theorem \ref{T:one_cluster}.
\item Section \ref{S:2D_Wick}: we prove an expansion of the unsigned measure $\Mc_\gamma$ in powers of $\gamma$; see Theorem \ref{T:expansion_unsigned}. We then investigate the Wick powers of $h_\theta$ and state a precise conjecture about a renormalisation procedure that would define them (Conjecture \ref{Conjecture1}). We further show that these Wick powers would allow one to expand the signed measure $\Mc_\gamma^+$; see Theorem \ref{T:expansion_signed_discrete} and \eqref{E:conj_expansion}. Finally, we study the analogue of the field $h_\theta$ in dimension 1; see Section \ref{S:1D}.
\end{itemize}
Most sections of the current paper contain an introduction that describes more thoroughly its content.

\paragraph*{Acknowledgements}
The authors thank Wendelin Werner for discussions that ultimately inspired our investigations.
This material is based upon work supported by the National Science Foundation under Grant No. DMS-1928930 while AJ and WQ participated in a program hosted by the Mathematical Sciences Research Institute in Berkeley, California, during the Spring 2022 semester.
Some aspects of this work were discussed while AJ visited the City University of Hong Kong and the LPSM in Paris VI. The hospitality of these institutions is greatly acknowledged.
AJ is supported by Eccellenza grant 194648 of the Swiss National Science Foundation and is a member of NCCR SwissMAP.
WQ was in CNRS and Laboratoire de Math\'ematiques d’Orsay, Universit\'e Paris-Saclay, when this work was initiated.

\section{Preliminaries}\label{S:preliminaries}
%!TEX root = ../main_field.tex

\subsection{Measures on paths and definition of the Brownian loop soup}\label{SS:preliminaries_paths}

\begin{itemize}[leftmargin=*]
\item
Brownian motion: in this paper, we will always consider Brownian motions with infinitesimal generator $\Delta$ rather than the standard Brownian motion,
which has generator $\frac{1}{2}\Delta$ (this is to have more tractable constants in isomorphism relations).
\item
$p_D(t, z, w), t>0, z,w \in D$: transition probability of Brownian motion killed upon leaving the domain $D$.
If $$
p_{\C}(t,z,w) = \frac1{4\pi t} \exp \Big( - \frac{|w-z|^2}{4\pi t }\Big), \quad \quad t >0, \quad z, w \in D,
$$ 
denotes the transition probabilities of Brownian motion in the full plane, and if $\pi_D(t, z, w)$ denotes the probability that a Brownian bridge of duration $t$ remains in the domain $D$ throughout, then 
$
p_D(t, z, w) = p_{\C}(t,z,w) \pi_D(t, z, w).
$
\item
Green function $G_D(z,w) = \int_0^\infty p_D(t,z,w) \d t$, $z,w \in D$. In our normalisation, 
$
G_D(z,w) \sim - \frac1{2\pi} \log (|w-z|)
$
as $|w-z| \to 0$.
\item
Gaussian free field $\varphi$: random generalised function such that $((\varphi, f))_{f \in \mathcal{S}(\C)}$ is a centred Gaussian process with covariance given by $\Expect{(\varphi, f)(\varphi, g)} = \int_{D \times D} f(z) G_D(z,w) f(w) \d z \d w$. See \cite{WernerPowell, BerestyckiPowell} for introductions to the GFF. We will denote by $h = \sqrt{2\pi} \varphi$ the GFF in $D$ normalised so that $\Expect{h_z h_w} \sim -\log |z-w|$ as $z-w \to 0$.
\item 
$\P_{D,t}^{z,w}, t>0, z, w \in D$:  probability measure on
Brownian bridges from $z$ to $w$ of duration $t$, conditioned on staying in $D$.
\item 
$\loopmeasure_D$: loop measure in $D$:
\begin{equation}
\label{Eq loop meas}
\loopmeasure_{D}(d\wp)=
\int_{D}\int_{0}^{+\infty}
\P_{D,t}^{z,z}(d\wp)
p_D(t, z, z)\dfrac{\d t}{t} \d z.
\end{equation}
\item
$\Lc_D^\theta$, $\theta >0$: Brownian loop soup in $D$ with intensity $\theta$. Random collection of loops distributed according to a Poisson point process with intensity $\theta \loopmeasure_D$.
\item
$\mu_{D}^{z,w}, z,w \in D$: measure on continuous paths
from $z$ to $w$ in $D$:
\begin{equation}
\label{Eq mu D z w}
\mu_{D}^{z,w}(d\wp)
=
\int_{0}^{+\infty}\P_{D,t}^{z,w}(d\wp)p_D(t, z, w) \d t.
\end{equation}
\item
$\Xi_a^{x,D}$, $x \in D$, $a>0$: random loop which is the concatenation at $x$ of all the excursions in a Poisson point process with intensity $a \mu_D^{x,x}$. If there is no ambiguity, we will simply write $\Xi_a^x$.
\end{itemize}

Depending on our needs, we will view clusters of loops as random collections of loops or as random compact subsets of $\overline{D}$.
We now define the metrics we will use in these two situations. In the latter case, we will denote by $(\mathcal{K},\d_\Kc)$ the collection of all compact subsets of $\overline{D}$, endowed with the Hausdorff metric:
\begin{equation}
\label{E:Hausdorff_distance}
\d_\Kc(A,B) = \max \left( \sup_{x \in A} d(x,B), \sup_{x \in B} d(x,A) \right), \quad \quad A, B \in \Kc.
\end{equation}
We now specify the distance we will use between collections of loops.

\paragraph*{Distance between collections of loops}

It is easier to consider the loops in the loop soup as rooted loops (we can sample a root uniformly at random w.r.t.\ the time parametrization of each loop).
Let $\Pc$ be the set of all parametrized continuous planar curves $\gamma$ defined on a finite time-interval $[0, t_\gamma]$. We endow $\Pc$ with the metric
\begin{align*}
d_\Pc(\gamma, \gamma')=|t_\gamma -t_{\gamma'}| + \sup|\gamma(\cdot/ t_\gamma) - \gamma'(\cdot/ t_{\gamma'})|.
\end{align*}
Let $\mathfrak{L}$ be the collection of locally finite families $\Lc$ of loops in $\Pc$. We define the following metric on $\mathfrak{L}$:
\begin{align}
\label{metric}
d_\mathfrak{L}(\Lc, \Lc') \le \eps \iff \, &\exists f: \Lc_\eps \to \Lc' \text{ injective such that } \forall \gamma\in\Lc_\eps, d_\Pc(\gamma, f(\gamma))\le \eps\\
\notag
&\text{ and similarly when we exchange } \Lc \text{ and } \Lc',
 \end{align}
where $\Lc_\eps$ is the collection of loops in $\Lc$ with a diameter larger than $\eps$.

\paragraph*{Random walk loop soup}

For a portion $D_N$ of the square lattice $\frac1N \Z^2$, we will also need to consider the random walk loop soup $\Lc_{D_N}^\theta$ in $D_N$ with intensity $\theta$, introduced by \cite{LawlerFerreras07RWLoopSoup}. Its definition is analogous to the definition of $\Lc_D^\theta$ with discrete versions of the heat kernel and Brownian bridges. See \cite[Section 2.2]{ABJL21} for details.

\subsection{Multiplicative chaos of the Brownian loop soup}\label{S:multiplicative_chaos}

In this section, we will recall the definition and some properties of the multiplicative chaos measure $\Mc_\gamma$ of the Brownian loop soup $\Lc_D^\theta$ in a given bounded simply connected domain $D \subset \C$. We will recall the construction from the discrete setting since it is the easiest one to explain and since it will be the approximation we will use in Section \ref{S:discrete}. Let us mention however that there is a construction directly in the continuum which is based on Brownian multiplicative chaos measures \cite{bass1994, AidekonHuShi2018, jegoBMC, jegoRW, jegoCritical}. We follow \cite{ABJL21}. Let us stress already that our normalisation of $\Mc_\gamma$ is given by $\Expect{\Mc_\gamma(\d z)} = 2 \d z$ which is different from the one in \cite{ABJL21}: if we denote by $\hat{\Mc}_\gamma$ the measure constructed in \cite[Theorem 1.1]{ABJL21}, we will have
\begin{equation}\label{E:difference_normalisation}
\Mc_\gamma(\d z) = a^{1-\theta} 2^{1+\theta} \Gamma(\theta) \CR(z,D)^{-\gamma^2/2} \hat{\Mc}_\gamma(\d z).
\end{equation}

Let $N \geq 1$ be a large integer and let $D_N \subset \frac1N \Z^2$ be a discrete approximation of $D$. Specifically, assuming without loss of generality that $D$ contains the origin, let
\begin{equation}
\label{eq:DN}
D_N := \Big\{ z \in D \cap \frac{1}{N} \Z^2 :
\begin{array}{c}
\mathrm{there~exists~a~path~in~} \tfrac{1}{N} \Z^2 \mathrm{~from~} z \mathrm{~to~the~origin} \\
\mathrm{whose~distance~to~the~boundary~of~} D \mathrm{~is~at~least~} \tfrac{1}{N}
\end{array}
\Big\}.
\end{equation}
Let $\Lc_{D_N}^\theta$ be a random walk loop soup with intensity
$\theta$. For any vertex $x \in D_N$ and any loop $\wp = (\wp(t))_{0 \leq t \leq T(\wp)} \in \Lc_{D_N}^\theta$, we denote by $\ell_x(\wp)$ the local time of $\wp$ at $x$ and $\ell_x$ the local time of the loop soup at $x$:
\begin{equation}
\label{E:discrete_local_time}
\ell_x(\wp) := N^2 \int_0^{T(\wp)} \indic{ \wp(t) = z } \d t
\quad \quad \text{and} \quad \quad \ell_x := \sum_{\wp \in \Lc_{D_N}^\theta} \ell_x(\wp).
\end{equation}
In our normalisation,
$
\Expect{\ell_x} \sim \frac{\theta}{2\pi} \log N
$
as $N \to \infty$,
for any $x$ in the bulk of $D_N$. We then define the set of $a$-thick points by
\begin{equation}
\Tc_N(a) := \left\{ x \in D_N: \ell_x \geq \frac{1}{2\pi} a (\log N)^2 \right\}.
\end{equation}
The parameter $a$ can be thought of as a thickness parameter. It is related to the GMC parameter $\gamma$ by
\begin{equation}
\label{E:gamma}
a = \frac{\gamma^2}{2},
\quad \quad \gamma = \sqrt{2a}.
\end{equation}
We finally define the discrete approximation $\Mc_a^N$ of $\Mc_a$ as the uniform measure on the set of thick points: for all Borel set $A \subset \C$,
\begin{equation}
\label{E:def_discrete_measure}
\Mc_a^N(A) := \frac{1}{c_*(a)} \frac{(\log N)^{1-\theta}}{N^{2-a}} \sum_{x \in \Tc_N(a)} \CR(x,D)^{-a} \indic{x \in A}.
\end{equation}
where the constant $c_*(a)$ is given by $c_*(a) = \frac{(2\sqrt{2})^a e^{a \gamma_{\text{EM}}}}{2 a^{1-\theta} \Gamma(\theta)}$ with $\gamma_{\text{EM}}$ being the Euler--Mascheroni constant.

\begin{theorem}[{\cite[Theorem 1.12]{ABJL21}}]
Let $\theta >0$ and $a \in (0,2)$. The following convergence holds in distribution
\[
(\Lc_{D_N}^\theta, \Mc_a^N) \to (\Lc_D^\theta, \Mc_a) \quad \quad \text{as} \quad N \to \infty,
\]
relatively to the topology induced by $d_{\mathfrak{L}}$ for $\Lc_{D_N}^\theta$ and the weak topology on $\C$ for $\Mc_a^N$.
\end{theorem}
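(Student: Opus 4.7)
My plan is to establish joint tightness and then identify the limit via a rooted/Cameron--Martin expectation computation, using a Girsanov-type description of the discrete loop soup conditioned to be thick at a given vertex. The marginal convergence $\Lc_{D_N}^\theta \to \Lc_D^\theta$ with respect to $d_\mathfrak{L}$ is classical (invariance principle for loops of macroscopic diameter together with quantitative control of microscopic ones), so the real content is to exhibit $\Mc_a$ as a (measurable) functional of the limiting soup and its local times, and to show that the discrete prescription converges to it jointly.

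Tightness of $(\Mc_a^N)_N$ as random Radon measures on $D$ follows from a uniform first-moment bound. The total local time $\ell_x$ at a bulk vertex $x$ under $\Lc_{D_N}^\theta$ is $\mathrm{Gamma}(\theta, G_{D_N}(x,x))$ with $G_{D_N}(x,x) = \frac{1}{2\pi}\log N + \frac{1}{2\pi}\log\CR(x,D) + O(N^{-1})$ in the bulk, so the upper-incomplete-Gamma asymptotics yield
\[
\Prob{ x \in \Tc_N(a) } \sim C(a)\, \frac{(\log N)^{\theta-1}}{\Gamma(\theta)}\, \CR(x,D)^{a}\, N^{-a}, \qquad N \to \infty,
\]
for some explicit constant $C(a)$ coming from the finite part of $G_{D_N}(x,x)$. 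The prefactor $c_*(a)$ and the conformal-radius weight in \eqref{E:def_discrete_measure} are designed precisely so that $\Expect{\Mc_a^N(K)} \to 2\,|K|$, matching $\Expect{\Mc_a(\d z)} = 2\,\d z$ and giving tightness.

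For identification of the limit I would characterise the joint law via expectations of the form $\Expect{F(\Lc_{D_N}^\theta)\, \Mc_a^N(f)}$, expand as
\[
\Expect{ F(\Lc_{D_N}^\theta)\, \Mc_a^N(f) } = \frac{(\log N)^{1-\theta}}{c_*(a)\, N^{2-a}} \sum_{x \in D_N} \CR(x,D)^{-a}\, f(x)\, \Expect{ F(\Lc_{D_N}^\theta)\, \indic{ \ell_x \geq \frac{a(\log N)^2}{2\pi} } },
\]
and prove the Girsanov-type statement that, conditionally on $\{\ell_x \geq \frac{a(\log N)^2}{2\pi}\}$, $\Lc_{D_N}^\theta$ is asymptotically distributed as $\Lc_{D_N}^\theta \cup \Xi_a^{x,D_N}$ with the extra thick loop independent of the original soup. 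The underlying mechanism is classical for Poisson processes: rooting at $x$, the loops of $\Lc_{D_N}^\theta$ visiting $x$ correspond to a Poisson point process of bubbles at $x$ with intensity $\theta\, \mu_{D_N}^{x,x}$; conditioning on the \emph{total} excursion time at $x$ being of order $(\log N)^2$ amounts, to leading order, to superposing an independent PPP of bubbles at $x$ with intensity $a\, \mu_{D_N}^{x,x}$, which is exactly $\Xi_a^{x,D_N}$. The random-walk invariance principle then gives $\mu_{D_N}^{x,x} \to \mu_D^{x,x}$ (hence $\Xi_a^{x,D_N} \to \Xi_a^{x,D}$), and a Riemann-sum argument yields
\[
\Expect{ F(\Lc_{D_N}^\theta)\, \Mc_a^N(f) } \xrightarrow[N \to \infty]{} 2 \int_D f(x)\, \Expect{ F\big(\Lc_D^\theta \cup \Xi_a^{x,D}\big) }\, \d x,
\]
which characterises the first-moment law of $(\Lc_D^\theta, \Mc_a)$ and in particular identifies $\Mc_a$ via its Palm description at a typical point.

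To upgrade this into convergence in distribution I would run the same analysis for $\Expect{F(\Lc_{D_N}^\theta)\, \Mc_a^N(f)^2}$ via a two-point rooted expansion, requiring a joint Girsanov description at two vertices $x,y$: when $|x-y|$ is macroscopic the two conditionings decouple into independent thick loops $\Xi_a^{x,D_N}$ and $\Xi_a^{y,D_N}$, while the delicate regime is $|x-y| \to 0$, where one must control the joint thickness probability by $|x-y|^{-a^2 + o(1)}$ uniformly in $N$ (the constraint $a \in (0,2)$, i.e.\ $\gamma < 2$, being exactly what makes this bound integrable against $\d x\, \d y$, just as in the $L^2$-phase of Gaussian multiplicative chaos). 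The main obstacle is precisely this uniform Girsanov coupling in the large-deviation regime: one needs the Poisson decomposition at a single thick point, and at a pair of nearby thick points, to have errors that vanish after normalisation, and this requires a careful decomposition of the loops by diameter (macroscopic loops handled by the invariance principle, mesoscopic and microscopic ones by explicit Gamma/Poisson computations on their local-time contributions at $x$, resp.\ at $x$ and $y$). Combined with tightness and the first-moment identification, this $L^2$-control forces every subsequential limit to coincide with $(\Lc_D^\theta, \Mc_a)$, completing the proof.
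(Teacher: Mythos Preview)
The paper does not give its own proof of this statement; it is quoted verbatim from \cite[Theorem 1.12]{ABJL21} as background. So there is no ``paper's proof'' to compare against, and I evaluate your proposal on its own merits.

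Your overall architecture (tightness from a first-moment computation, identification of the limit via a rooted/Girsanov description, and an $L^2$ upgrade via a two-point expansion) is exactly the standard route and is in the spirit of how \cite{ABJL21} proceeds. One minor inaccuracy: the rooted description in Lemma~\ref{L:moment_measure} does not add a single thick loop $\Xi_a^{x}$ but a collection $\{\Xi_{a_i}^x\}_{i\geq 1}$ indexed by a Poisson--Dirichlet partition of $[0,a]$; these have the same range but are different as elements of $\mathfrak{L}$, and the topology $d_{\mathfrak{L}}$ sees the difference.

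There is, however, a genuine gap in your $L^2$ step. The two-point function of $\Mc_a$ is given exactly by \eqref{E:second_moment_measure1}, and using the asymptotics \eqref{E:BesselI_asymp} together with $G_D(x,y)\sim -\tfrac{1}{2\pi}\log|x-y|$ one finds that $\Expect{\Mc_a(\d x)\Mc_a(\d y)}$ blows up like $|x-y|^{-2a}=|x-y|^{-\gamma^2}$ on the diagonal, not like $|x-y|^{-a^2}$. This is locally integrable in two dimensions only when $a<1$ (equivalently $\gamma<\sqrt{2}$), which is the genuine $L^2$-phase; your identification of $a\in(0,2)$ with the $L^2$-phase is incorrect. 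Consequently the second-moment argument you outline establishes the convergence only for $a\in(0,1)$. To cover the full range $a\in(0,2)$ stated in the theorem, one needs an additional mechanism (a truncation to a good event on which second moments are controlled, together with a first-moment argument showing the truncation is harmless), as is standard for multiplicative chaos beyond the $L^2$-regime. Without that step your proof is incomplete for $a\in[1,2)$.
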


With some abuse of notation, we will write $\Mc_a$ or $\Mc_\gamma$ depending on whether we view the measure as a function of $a$ or $\gamma$. In this paper $a$ and $\gamma$ will always be related by \eqref{E:gamma}.

\begin{remark}\label{rmk:multiplicative_chaos}
Let us mention that the previous results can easily be generalised to finitely connected domains. In that case, one would need to change in \eqref{E:def_discrete_measure} $\log \CR(z,D)$ by the harmonic extension of $\log |z - \cdot|$ from $\partial D$ to $D$. Indeed, this is the function that appears in the asymptotic behaviour of the Green function on the diagonal (see e.g. (1.4) in \cite{BerestyckiPowell}).
\end{remark}

\paragraph*{Some properties}

In the following lemma we recall some first and second moments computations related to $\Mc_a$. Let $a,a' \in (0,1)$ be two thickness parameters lying in the $L^2$-regime. Let $\{a_1, a_2, \dots \}$ and $\{a_1', a_2', \dots \}$ be two independent random partitions of $[0,a]$ and $[0,a']$, respectively, distributed according to a Poisson--Dirichlet distribution with parameter $\theta$. Conditionally on these partitions, let $\Xi_{a_i}^x, \Xi_{a_i'}^y$, $i \geq 1$, be independent loops whose distributions are described in Section \ref{SS:preliminaries_paths}. 
We also recall the definition of the modified Bessel function of the first kind
\begin{equation}
\label{E:BesselI}
I_{\theta-1}(u) = \sum_{n \geq 0} \frac{1}{n! \Gamma(\theta+n)} \left( \frac{u}{2} \right)^{2n+\theta-1},
\quad \quad \theta >0, \quad u >0.
\end{equation}

\begin{lemma}\label{L:moment_measure}
For $a \in (0,2)$ and $F: D \times \mathfrak{L} \to \R$ a bounded measurable admissible function, we have
\begin{equation}
\label{E:first_moment_measure}
\Expect{ \int_D F(x, \Lc_D^\theta) \Mc_a(\d x) } = 2 \int_D \Expect{F(x, \Lc_D^\theta \cup \{ \Xi_{a_i}^x \}_{i \geq 1} ) } \d x
\end{equation}
where the two collections of loops $\Lc_D^\theta$ and $\{\Xi_{a_i}^x \}_{i \geq 1}$ are independent.
Let $a, a' \in (0,1)$.
We have
\begin{equation}
\label{E:second_moment_measure1}
\Expect{ \Mc_{a}(\d x) \Mc_{a'}(\d y) } = 4 \left( 2 \pi \sqrt{a a'} G_D(x,y) \right)^{1-\theta} \Gamma(\theta) I_{\theta-1} \left(4\pi \sqrt{a a'} G_D(x,y) \right) \d x \d y.
\end{equation}
Moreover, if $F : D \times D \times \mathfrak{L} \to \R$ is a bounded measurable admissible function, then
\begin{align}
\label{E:second_moment_measure2}
& \Expect{F(x,y,\Lc_D^\theta) \Mc_{a}(\d x) \Mc_{a'}(\d y) \indic{\nexists \wp \in \Lc_D^\theta \text{~visiting~both~} x \text{~and~} y} } \\
& = 4 \Expect{F(x,y,\Lc_D^\theta \cup \{ \Xi_{a_i}^x \}_{i \geq 1} \cup \{ \Xi_{a_i'}^y \}_{i \geq 1} ) } \d x \d y
\nonumber
\end{align}
where the three collections of loops $\Lc_D^\theta$, $\{\Xi_{a_i}^x \}_{i \geq 1}$ and $\{ \Xi_{a_i'}^y \}_{i \geq 1}$ are independent.
\end{lemma}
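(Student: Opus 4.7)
The plan is to prove the three identities via Palm / size-biasing arguments at one marked point (for \eqref{E:first_moment_measure}) and at two marked points (for \eqref{E:second_moment_measure1} and \eqref{E:second_moment_measure2}). I would carry this out through the discrete approximation $\Mc_a^N$ from \eqref{E:def_discrete_measure}: on the lattice, the total local time $\ell_x$ at a vertex is a sum of Gamma variables whose size-biased tilts can be handled explicitly, and the convergence provided by \cite[Theorem 1.12]{ABJL21} then delivers the continuum identities provided one has enough uniform integrability (which, in the $L^2$-regime $a,a' \in (0,1)$, is implicit in the construction).

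For \eqref{E:first_moment_measure}, the first step is to unfold
\begin{equation*}
\Expect{\int_D F(x,\Lc_{D_N}^\theta)\,\Mc_a^N(\d x)} = \frac{1}{c_*(a)}\frac{(\log N)^{1-\theta}}{N^{2-a}}\sum_{x\in D_N}\CR(x,D)^{-a}\,\Expect{F(x,\Lc_{D_N}^\theta)\,\indic{\ell_x\geq\frac{a}{2\pi}(\log N)^2}}.
\end{equation*}
For each fixed $x$, I would decompose $\Lc_{D_N}^\theta$ into the loops avoiding $x$ (an independent loop soup on $D_N\setminus\{x\}$) and the loops through $x$ (which, after splitting at $x$, form a Poisson point process of excursions with intensity $\theta\mu_{D_N}^{x,x}$, re-concatenated into clusters indexed by the original loops). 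Conditioning on $\ell_x$ being large and using the standard size-biased decomposition of a Gamma$(\theta)$ variable into a Poisson--Dirichlet$(\theta)$ partition, I would identify this conditioned system of excursions with the concatenation of independent thick loops $\{\Xi_{a_i}^x\}_{i\geq 1}$ in the continuum limit. The tail asymptotic of $\P(\ell_x\geq\frac{a}{2\pi}(\log N)^2)$ for $\theta$-Gamma variables combines with the definition of $c_*(a)$ to produce precisely the factor $2$ on the right-hand side.

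For \eqref{E:second_moment_measure2}, I would run the same Palm argument at two distinct points simultaneously. On the event that no loop of $\Lc_{D_N}^\theta$ visits both $x$ and $y$, the loops split into three independent subsystems, namely loops avoiding $\{x,y\}$, loops visiting $x$ only, and loops visiting $y$ only. Each of the latter two admits the same size-biased Poisson--Dirichlet structure as in the previous paragraph, producing the independent concatenations $\{\Xi_{a_i}^x\}_{i\geq 1}$ and $\{\Xi_{a_i'}^y\}_{i\geq 1}$ in the limit and yielding the factor $4 = 2\cdot 2$. For the full second-moment identity \eqref{E:second_moment_measure1} I would additionally account for the loops of $\Lc_D^\theta$ visiting both $x$ and $y$. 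Using Campbell's formula for the Poisson process of loops, the contribution of exactly $n$ such loops ($n\geq 1$) involves an integral of $n$ bi-excursions with joint mass governed by $\mu_D^{x,y}\otimes \mu_D^{y,x}$, whose total integrates to $G_D(x,y)^{2n}$; combining this with the independent Poisson--Dirichlet partitions of the remaining local time at $x$ and at $y$ and summing over $n\geq 0$ reconstructs exactly the series expansion \eqref{E:BesselI} of $I_{\theta-1}$ with the correct weights $\frac{1}{n!\,\Gamma(\theta+n)}$.

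The main obstacle is technical rather than conceptual. I must upgrade the distributional convergence of $\Mc_a^N$ to convergence of the moments on the left-hand sides, and I must verify that ``admissibility'' of $F$ is compatible both with the one-point conditioning $\{\ell_x$ large$\}$ and with the simultaneous two-point decomposition (on the cut-event $\{$no loop through both $x$ and $y\}$); this requires uniform bounds in the joint test function rather than just at $F\equiv 1$. A secondary difficulty is the combinatorial book-keeping for the Bessel series: each of the $n$ loops through both $x$ and $y$ interacts with the two Poisson--Dirichlet partitions at $x$ and at $y$, and the accounting must be done carefully so that no cluster is double-counted and so that the $n$-th term emerges with the correct prefactor, matching \eqref{E:BesselI} term by term.
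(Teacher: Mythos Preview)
Your approach is conceptually sound but takes a far more laborious route than the paper. The paper simply cites \cite{ABJL21} for \eqref{E:first_moment_measure} (this is Theorem~1.8 there) and for \eqref{E:second_moment_measure1} (Remark~1.3 there), so there is no need to redo the discrete Palm calculus from scratch. For \eqref{E:second_moment_measure2}, the paper uses an elegant shortcut that you miss entirely: apply the one-point identity \eqref{E:first_moment_measure} at $x$ first, which replaces $\Mc_a(\d x)$ by the insertion of the independent thick loops $\{\Xi_{a_i}^x\}_{i\geq 1}$ and turns $\Mc_{a'}(\d y)$ into the chaos $\Mc_{a'}^{\Lc_D^\theta \cup \{\Xi_{a_i}^x\}}(\d y)$ of the augmented soup; then observe that on the event that none of these thick loops visits $y$, they contribute nothing to the thickness at $y$, so this chaos reduces to $\Mc_{a'}(\d y)$; finally apply \eqref{E:first_moment_measure} once more, now at $y$. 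The indicator that no loop visits both $x$ and $y$ is then removed for free since Lebesgue-typical pairs are a.s.\ not jointly visited.

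This iterated use of the one-point formula sidesteps the simultaneous two-point decomposition, the Bessel-series combinatorics, and the uniform-integrability upgrades you flag as obstacles. Your route would essentially reprove the relevant parts of \cite{ABJL21}; it would work, but it is unnecessary here and carries exactly the technical burdens you anticipate.
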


\begin{proof}
Recall that our normalisation of $\Mc_a$ is different than in \cite{ABJL21} (see \eqref{E:difference_normalisation}).
\eqref{E:first_moment_measure} is the content of \cite[Theorem 1.8]{ABJL21} and 
\eqref{E:second_moment_measure1} can be found in \cite[Remark 1.3]{ABJL21}. It remains to prove \eqref{E:second_moment_measure2}. We are going to see that it is a quick consequence of \eqref{E:first_moment_measure}.
Indeed, the left hand side of \eqref{E:second_moment_measure2} is equal to
\begin{align*}
2 \Expect{ F(x,y,\Lc_D^\theta \cup \{ \Xi_{a_i}^x \}_{i \geq 1} ) \Mc_{a'}^{\Lc_D^\theta \cup \{ \Xi_{a_i}^x \}_{i \geq 1} }(\d y) \indic{\nexists \wp \in \Lc_D^\theta \cup \{ \Xi_{a_i}^x \}_{i \geq 1} \text{~visiting~both~} x \text{~and~} y} } \d x
\end{align*}
where $\Lc_D^\theta$ and $\{\Xi_{a_i}^x \}_{i \geq 1}$ are independent. In the above display, we denoted by $\Mc_{a'}^{\Lc_D^\theta \cup \{\Xi_{a_i}^x \}_{i \geq 1}}$ the multiplicative chaos generated by the loops in $\Lc_D^\theta \cup \{\Xi_{a_i}^x \}_{i \geq 1}$. For more details see \cite[Section 1.3]{ABJL21}. Here, we notice that on the event that none of the loops in $\{\Xi_{a_i}^x \}_{i \geq 1}$ visits $y$, none of these loops can contribute to the thickness at $y$:
\[
\Mc_{a'}^{\Lc_D^\theta \cup \{ \Xi_{a_i}^x \}_{i \geq 1} }(\d y) \indic{\nexists \wp \in \{ \Xi_{a_i}^x \}_{i \geq 1} \text{~visiting~} y} 
= \Mc_{a'}(\d y) \indic{\nexists \wp \in \{ \Xi_{a_i}^x \}_{i \geq 1} \text{~visiting~} y}
\quad \quad \text{a.s.}
\]
Hence, the left hand side of \eqref{E:second_moment_measure2} is equal to
\begin{align*}
& 2 \Expect{ F(x,y,\Lc_D^\theta \cup \{ \Xi_{a_i}^x \}_{i \geq 1} ) \Mc_{a'}(\d y) \indic{\nexists \wp \in \Lc_D^\theta \cup \{ \Xi_{a_i}^x \}_{i \geq 1} \text{~visiting~both~} x \text{~and~} y} } \d x \\
& = 4 \E \Big[ F(x,y,\Lc_D^\theta \cup \{ \Xi_{a_i}^x \}_{i \geq 1} \cup \{ \Xi_{a_i'}^y \}_{i \geq 1} ) \mathbf{1}_{ \{ \nexists \wp \in \Lc_D^\theta \cup \{ \Xi_{a_i}^x \}_{i \geq 1} \cup \{ \Xi_{a_i'}^y \}_{i \geq 1} \text{~visiting~both~} x \text{~and~} y \} } \Big] \d x ~\d y
\end{align*}
where in the last equality we applied once again \cite[Theorem 1.8]{ABJL21}. Finally, we notice that for Lebesgue-typical points $x$ and $y$, almost surely none of the loops in $\Lc_D^\theta \cup \{ \Xi_{a_i}^x \}_{i \geq 1} \cup \{ \Xi_{a_i'}^y \}_{i \geq 1}$ visits both $x$ and $y$. This concludes the proof.
\end{proof}

\subsection{Crossing exponent in the Brownian loop soup}

We now recall the main result of \cite{JLQ23a} concerning the decay rate of the probability of some crossing event in the loop soup. This decay rate is particularly important to us since it drives the blow-up of the correlations of $h_\theta$ (Theorem \ref{T:prop}). The decay of the normalising constant $Z_\gamma$ in Theorem \ref{T:convergenceL2} is also obtained from the asymptotic behaviour of these crossing probabilities.

\begin{theorem}[{\cite[Theorem 1.2]{JLQ23a}}]\label{T:large_crossing}
Let $\theta \in (0,1/2]$.
The probability that a cluster in a Brownian loop soup $\Lc_\D^\theta$ intersects both the microscopic circle $r \partial \D$ and the macroscopic circle $e^{-1} \partial \D$ decays like
\[
\Prob{ r \partial \D \overset{\Lc_\D^\theta}{\longleftrightarrow} e^{-1} \partial \D } = |\log r|^{-1+ \theta +o(1)}
\quad \quad \text{as~} \quad r \to 0.
\]
\end{theorem}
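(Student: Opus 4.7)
The key feature to explain is that the decay is polynomial in $|\log r|$ rather than in $r$ itself: a naive quasi-multiplicative argument based on independent crossings of geometric annuli would wrongly predict exponential decay in $|\log r|$. The heuristic is that a typical cluster realising the crossing is in some sense one-dimensional, and my plan is to make this rigorous by a conformal reduction to a cylinder, followed by a comparison with the one-dimensional Brownian loop soup, whose analogous exponent can be read off from Bessel excursion theory.

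First, apply $z\mapsto -\log z$ to send the annulus $\{r \leq |z| \leq e^{-1}\}$ onto the finite cylinder $C_L := [1,|\log r|]\times \R/2\pi\Z$. By conformal invariance of $\loopmeasure_\D$, the loops of $\Lc_\D^\theta$ contained in this annulus become a Brownian loop soup of intensity $\theta$ on $C_L$, and the crossing event becomes a connection of the two boundary circles of $C_L$. Write $q(L)$ for this probability with $L = |\log r|-1$; the goal is $q(L) = L^{-(1-\theta)+o(1)}$. Using the spatial Markov property and the independence of loops contained in disjoint sub-cylinders, one obtains quasi-multiplicative bounds of the form $C^{-1}q(L)q(L') \leq q(L+L') \leq q(L)$, which promote any one-scale estimate into the claimed polynomial asymptotic and reduce the task to pinning down the exponent.

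The exponent is then identified by comparing this cylinder loop soup to a one-dimensional Brownian loop soup of intensity $\theta$ on $[0,L]$, obtained by projecting onto the length coordinate. The 1D loop soup has an entirely explicit cluster structure via squared Bessel excursions of dimension $2\theta$ (consistently with the fact that the local time of $\Lc_D^\theta$ at a point is $\mathrm{Gamma}(\theta,\cdot)$-distributed), and the probability that a 1D cluster spans an interval of length $L$ decays like $L^{-(1-\theta)+o(1)}$, the Bessel index $1-\theta$ emerging from standard excursion scaling. The upper bound $q(L) \leq L^{-(1-\theta)+o(1)}$ then follows by projecting a 2D cluster to a 1D one; for the lower bound, one must instead build 2D clusters out of 1D crossings by inserting, at each vertical scale, angular ``bridging'' loops with sufficient probability, which is carried out by a second-moment argument leveraging the thick-point statistics encoded by the multiplicative chaos $\Mc_\gamma$ constructed in \cite{ABJL21}.

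The main obstacle is this last step: proving that the 1D exponent is not degraded by the 1D/2D passage. Angular fluctuations can a priori either disconnect otherwise 1D-adjacent pieces or force the cluster to spread over the cross-section of the cylinder and pay an extra cost. Producing either a coupling or a direct second-moment estimate that preserves the exponent $1-\theta$ is delicate and constitutes the technical heart of the argument, relying on the fine cluster-geometry estimates developed in the companion paper.
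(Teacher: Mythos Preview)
This theorem is not proved in the present paper but imported from the companion article \cite{JLQ23a}, so there is no in-paper proof to compare against. From the hints in Section~\ref{S:1D} and the surrounding discussion, the proof in \cite{JLQ23a} does pass through the cylinder picture and a comparison with the one-dimensional Brownian loop soup (whose local time is a reflected squared Bessel process of dimension $2\theta$, Proposition~\ref{P:1Disomorphism}), so your overall strategy and your identification of the exponent $1-\theta$ via Bessel scaling are on the right track.

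The serious gap is your upper bound. Projecting the 2D loop soup on $(0,L)\times\mathbb{S}^1$ onto the longitudinal coordinate does \emph{not} yield a 1D loop soup of intensity $\theta$: the pushforward of the cylindrical loop measure picks up the factor $2\pi\,p_{\mathbb{S}^1}(t,0,0)$, which blows up like $t^{-1/2}$ for short durations. In particular, the projections of the small 2D loops over-cover the longitudinal axis (a loop of duration $t$ projects to an interval of length $\asymp\sqrt t$, while there are $\asymp t^{-1}$ such loops per unit length of cylinder), so the projected process trivially spans any interval and the inequality $q_{2D}(L)\le q_{1D,\theta}(L)$ you want degenerates to $q_{2D}(L)\le 1$. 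The 1D--2D link in \cite{JLQ23a} is not a naive projection; one has to isolate the large-loop skeleton of the crossing cluster and argue that small loops only contribute local fluctuations that do not affect the exponent. Your lower-bound sketch is also unconvincing: invoking the multiplicative chaos $\Mc_\gamma$ to produce connectivity runs backwards relative to the logical flow of both papers (Theorem~\ref{T:large_crossing} is an \emph{input} to the study of $\Mc_\gamma^+$ and $h_\theta$, not a consequence), and you would be better served by a direct FKG/second-moment argument using Lemma~\ref{L:surround}.
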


We will also need the following estimates from \cite{JLQ23a}. As opposed to Theorem \ref{T:large_crossing}, the next results concern some crossing events by \emph{loops} rather than \emph{clusters} (they are in particular much easier to establish).
In the first lemma below and in the rest of the paper, we will say that a loop $\wp \in \Lc_\D^\theta$ surrounds the disc $r \D$ if $\wp$ does not intersect $r \D$ but disconnects it from $\partial \D$.

\begin{lemma}[{\cite[Lemma 2.5]{JLQ23a}}] \label{L:surround}
There exists $c = c(\theta)>0$ such that for all $r \in (0,1/10)$,
\begin{equation}
\Prob{ \exists \wp \in \Lc_\D^\theta \mathrm{~surrounding~} r \D } \geq 1 - r^c.
\end{equation}
\end{lemma}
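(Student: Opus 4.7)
By the Poisson point process structure of the loop soup,
$$\Prob{\nexists \wp \in \Lc_\D^\theta \text{ surrounding } r\D} \;=\; \exp\!\bigl(-\theta\, m(r)\bigr),\quad \text{where}\quad m(r) := \loopmeasure_\D\bigl(\{\wp : \wp \text{ surrounds } r\D\}\bigr).$$
It therefore suffices to establish a lower bound of the form $m(r) \geq c'\log(1/r)$ with $c'=c'(\theta)>0$, since then $\exp(-\theta m(r)) \leq r^{\theta c'}$, giving the claim.

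For $k\geq 1$, let $A_k := \{z\in\C : 2^{-k-1} < |z| < 2^{-k}\}$ and set $K := \lfloor \log_2(1/r)\rfloor - 2$. Whenever $r\in(0,1/10)$ and $1\leq k\leq K$, the annulus $A_k$ lies in $\D\setminus\overline{r\D}$. If a loop $\wp\subset A_k$ has nonzero winding number around $0$, the connected component of $\C\setminus\wp$ containing $0$ is bounded and must therefore contain the entire connected set $r\D$, while the unbounded component contains $\{|z|\geq 1\}\supset \partial\D$. Hence such a $\wp$ disconnects $r\D$ from $\partial\D$ and surrounds $r\D$ in the sense of the lemma. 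Since the events $\{\wp\subset A_k\}$ for distinct $k$ are pairwise disjoint,
$$m(r) \;\geq\; \sum_{k=1}^K \loopmeasure_\D\bigl(\{\wp\subset A_k\}\cap\{\wp\text{ winds around }0\}\bigr).$$

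The restriction property of the Brownian loop measure yields $\loopmeasure_\D|_{\{\wp\subset A_k\}} = \loopmeasure_{A_k}$: for $z\in A_k$, the formula \eqref{Eq loop meas} combined with the identity $p_\D(t,z,z)\,\P^{z,z}_{\D,t}(\wp\subset A_k) = p_{A_k}(t,z,z)$ (since a Brownian bridge in $\D$ from $z$ to $z$ of duration $t$ staying in $A_k$ is exactly a bridge in $A_k$) gives the equality. Moreover, $\loopmeasure$ is invariant under the scaling $(z,t)\mapsto(\lambda z, \lambda^2 t)$ (the Jacobian $\lambda^2$ on $\d z$ cancels the $\lambda^{-2}$ rescaling of the heat kernel, and $\d t/t$ is scale invariant), and the winding event around $0$ is preserved. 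Applying this with $\lambda=2^{-k-1}$, each of the $K$ summands above equals the single constant $c_0 := \loopmeasure_{A(1/2,1)}(\wp\text{ winds around }0)$.

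Finally, $c_0>0$, as Brownian bridges based at, say, $z=3/4$ of unit duration wind once around $0$ while remaining in $A(1/2,1)$ with positive probability, so the inner integrand in \eqref{Eq loop meas} is strictly positive on a set of positive Lebesgue measure. Summing, $m(r)\geq c_0 K \geq c_0(\log_2(1/r)-3)$ for $r<1/10$, which completes the argument after absorbing the additive $-3$ into the constant. No step is genuinely hard: the only ingredients are the restriction and scaling properties of $\loopmeasure$, both routine consequences of the definition \eqref{Eq loop meas}.
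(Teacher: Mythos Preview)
Your argument is correct: the Poisson structure reduces the question to a lower bound on the loop-measure mass of surrounding loops, and the dyadic annuli together with restriction and scale invariance of $\loopmeasure$ give $m(r)\gtrsim\log(1/r)$ exactly as needed. The paper does not actually prove this lemma here---it is imported from the companion paper \cite{JLQ23a}---so there is no in-paper proof to compare against; your approach is the standard one and is essentially how such statements are proved. One inconsequential slip: the scaling factor sending $A(1/2,1)$ to $A_k$ is $\lambda=2^{-k}$, not $2^{-k-1}$.
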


\begin{lemma}[{\cite[(2.3) in Lemma 2.2]{JLQ23a}}] \label{L:cross_loop}
Let $0<r_1<r_2<1$. We have
\[
\loopmeasure_\D (\{ \wp \text{~crossing~} r_2 \D \setminus r_1 \D \} ) = \left( 1 + O(1) \frac{r_1}{r_2} \right) \log \frac{\log (1/r_1)}{\log (r_2/r_1)}.
\]
In particular, for all $r_0 \in (0,e^{-1})$ and $s \geq 1$,
\[
\Prob{ \exists \wp \in \Lc_\D^\theta: r_0 \overset{\wp}{\longleftrightarrow} r_0^s } = (1+O(s^{-1})) \theta s^{-1}.
\]
\end{lemma}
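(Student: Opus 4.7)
The plan is to compute $\loopmeasure_\D$ of the crossing event explicitly and then to deduce the loop soup probability from the Poisson structure of $\Lc_\D^\theta$.

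The second formula is immediate from the first. Since $\Lc_\D^\theta$ is a Poisson point process with intensity $\theta \loopmeasure_\D$, the event that some loop connects $r_0 \partial \D$ to $r_0^s \partial \D$ has probability $1 - \exp(-\theta \loopmeasure_\D(E_s))$, where $E_s$ is the crossing event of the annulus $r_0 \D \setminus r_0^s \overline{\D}$. Applying the first formula with $r_1 = r_0^s$ and $r_2 = r_0$, one gets
\[
\loopmeasure_\D(E_s) = \bigl(1 + O(r_0^{s-1})\bigr) \log\frac{s\log(1/r_0)}{(s-1)\log(1/r_0)} = \bigl(1 + O(r_0^{s-1})\bigr) \log\frac{s}{s-1}.
\]
Since $r_0 < e^{-1}$ and $s \geq 1$, $r_0^{s-1} = O(s^{-1})$ and $\log(s/(s-1)) = 1/s + O(s^{-2})$. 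Taylor expanding the exponential yields $(1 + O(s^{-1}))\theta s^{-1}$, as claimed.

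For the first formula, the plan is to compute $\loopmeasure_\D(E)$ from the defining integral \eqref{Eq loop meas}. The strategy rests on three ingredients. First, rotational symmetry of both the domain $\D$ and the annulus reduces the computation to the radial coordinate: writing $z = \rho e^{i\phi}$, the integrand in \eqref{Eq loop meas} depends only on $\rho$. Second, the unrooted loop measure can be re-rooted at the first visit to $r_1 \partial \D$, expressing $\loopmeasure_\D(E)$ as an integral of the form $\int_0^\infty (dt/t) \int_{r_1 \partial \D} q_t(z_0)\,|dz_0|$, where $q_t$ is a density factored, by the strong Markov property, into a bubble-type term at a point of $r_1 \partial \D$ and a return term. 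Third, after the standard Lamperti time-change, the radial projection of 2D Brownian motion is a one-dimensional Brownian motion, so a crossing between $r_1 \partial \D$ and $r_2 \partial \D$ corresponds to a 1D crossing between $\log(1/r_2)$ and $\log(1/r_1)$. The hitting probability of $r_2 \partial \D$ starting from $r_1 \partial \D$ before exiting $\D$ is governed by the harmonic function $\log(1/|\cdot|)$ and equals $\log(1/r_1)/\log(r_2/r_1) \cdot (1+o(1))$, producing the inner ratio in the formula. The outer logarithm arises from integrating the factor $1/t$ over the natural time scales of the bridge. The $O(1) r_1/r_2$ correction accounts for the restriction $\wp \subset \D$ (as opposed to $\D \setminus \{0\}$, equivalent to the half-infinite cylinder): the conditional probability that a Brownian path crossing $r_1 \partial \D \to r_2 \partial \D$ also escapes $\D$ is of this order, by the gambler's ruin estimate applied to $\log|B_t|$.

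The main obstacle is the precise implementation of the re-rooting plus Lamperti change of variables: tracking the constants carefully to isolate the exact double-logarithmic structure. The appearance of $\log\log$ is not obvious a priori, and reflects the fact that the 1D loop measure of loops crossing a fixed interval at height $\approx u$ decays like $1/u$, and that this decay, combined with the $dt/t$ weighting in the definition of the loop measure, integrates to a logarithm of the ratio of the relevant scales. Such computations are standard in the Lawler--Werner--Ferreras framework and are carried out in detail in the companion paper \cite{JLQ23a}.
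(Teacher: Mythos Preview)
The paper does not prove this lemma at all: it is simply cited from the companion article \cite{JLQ23a}, with no argument given. In that sense your proposal already does more than the paper. Your derivation of the second formula from the first is correct and complete: the Poisson computation, the substitution $r_1=r_0^s$, $r_2=r_0$, and the expansion $\log\frac{s}{s-1}=s^{-1}+O(s^{-2})$ together with $r_0^{s-1}=O(s^{-1})$ for $r_0<e^{-1}$ are all fine.

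For the first formula your text is an outline, not a proof, and you acknowledge as much by deferring to \cite{JLQ23a} at the end; this is exactly what the paper does. One concrete point in the outline is wrong, though: your explanation of the $O(r_1/r_2)$ correction as coming from ``the restriction $\wp\subset\D$ (as opposed to $\D\setminus\{0\}$)'' cannot be right, since $\{0\}$ is polar and $\loopmeasure_\D=\loopmeasure_{\D\setminus\{0\}}$. Likewise, a gambler's-ruin estimate for escaping $\D$ from $r_2\partial\D$ gives a correction of order $\log(r_2/r_1)/\log(1/r_1)$, not $r_1/r_2$. The exponentially small correction $r_1/r_2=e^{-(u_1-u_2)}$ in cylinder coordinates instead arises from the angular part of the loop measure on the cylinder (the deviation of the $\mathbb{S}^1$ heat kernel from its equilibrium value for the relevant time-scales); it is not a boundary effect. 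This does not affect your overall deferral, but the heuristic attribution should be corrected if you keep it.
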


We finally recall the FKG inequality which is an import tool that we will use frequently in the paper.
A function $f : \mathfrak{L} \to \R$ is said to be increasing if for all $\Lc, \Lc' \in \mathfrak{L}$ with $\Lc \subset \Lc'$, $f(\Lc) \leq f(\Lc')$.

\begin{lemma}[FKG inequality {\cite[Lemma 2.1]{Janson84}}]
For all increasing bounded measurable functions $f, g : \mathfrak{L} \to \R$,
\[
\Expect{ f(\Lc_D^\theta) g(\Lc_D^\theta) } \geq \Expect{ f(\Lc_D^\theta) } \Expect{ g(\Lc_D^\theta) }.
\]
\end{lemma}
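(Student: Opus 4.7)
The plan is to reduce to the classical FKG (Harris) inequality for products of independent random variables, by first truncating the loop soup to finite intensity and then using a discretization of path space. Recall that $\Lc_D^\theta$ is a Poisson point process with intensity measure $\theta \loopmeasure_D$, which is $\sigma$-finite but of infinite total mass (loops of small diameter accumulate).

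First I would handle the case of a Poisson process with finite total intensity on path space. Partition path space into measurable cells $C_1, \dots, C_M$; the vector $(N(C_1), \dots, N(C_M))$ of loop counts then consists of independent Poisson random variables, so the classical Harris--FKG inequality for product measures on $\Z_{\geq 0}^M$ gives the correlation inequality for any pair of coordinate-wise increasing functions of this vector. For a general increasing bounded measurable $f : \mathfrak{L} \to \R$, approximate it by $\E[f \mid \sigma(N(C_1), \dots, N(C_M))]$. This conditional expectation is increasing in each count because, conditionally on $(N(C_1), \dots, N(C_M))$, the loops inside each cell are i.i.d.\ samples from the normalised restriction of $\loopmeasure_D$ to that cell, and adjoining one further independent sample to an i.i.d.\ family can only increase $f$ in expectation. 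Refining the partition and passing to the limit yields FKG for any finite-intensity PPP.

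To handle the infinite-intensity case, I would truncate: for $n \geq 1$, let $\Lc^{(n)} := \{ \wp \in \Lc_D^\theta : \diam(\wp) \geq 1/n \}$. By Lemma \ref{L:cross_loop}, the intensity $\theta \loopmeasure_D(\{\diam \geq 1/n\})$ is finite, so $\Lc^{(n)}$ is a finite-intensity PPP and $\Lc^{(n)} \nearrow \Lc_D^\theta$ almost surely. By Poisson thinning, $\Lc^{(n)}$ and $\Lc_D^\theta \setminus \Lc^{(n)}$ are independent. Define $f_n := \E[f(\Lc_D^\theta) \mid \sigma(\Lc^{(n)})]$ and $g_n$ analogously. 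Because $f$ is increasing and the complementary loops are independent of $\Lc^{(n)}$, the functional $f_n$ is an increasing bounded measurable function of $\Lc^{(n)}$. The previous step therefore gives
\[
\E[f_n g_n] \;\geq\; \E[f_n]\,\E[g_n] \;=\; \E[f(\Lc_D^\theta)]\,\E[g(\Lc_D^\theta)].
\]
By martingale convergence, since $f$ and $g$ are bounded, $f_n \to f(\Lc_D^\theta)$ and $g_n \to g(\Lc_D^\theta)$ in $L^2$, so $f_n g_n \to f(\Lc_D^\theta) g(\Lc_D^\theta)$ in $L^1$ and the inequality passes to the limit.

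The main technical obstacle is the discretization step, where one must verify that conditional expectations of arbitrary increasing measurable $f$ truly inherit coordinate-wise monotonicity in the Poisson counts. The cleanest route is in fact to invoke directly Janson's general FKG inequality for Poisson point processes from \cite{Janson84}, which encompasses exactly this situation and bypasses the explicit cell-by-cell construction; this is the route implicit in our citation.
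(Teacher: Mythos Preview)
The paper does not actually prove this lemma; it simply states it with a citation to \cite[Lemma 2.1]{Janson84} and moves on. Your proposal therefore goes well beyond what the paper does: you sketch the standard reduction (truncate to finite intensity, discretize path space, apply Harris--FKG to the independent Poisson cell counts, and pass to limits via martingale convergence), which is essentially the content of Janson's argument, and then you yourself note that citing Janson directly is the cleanest route. Your sketch is sound and your identification of the one delicate point --- that conditional expectations over cell counts inherit coordinate-wise monotonicity --- is accurate; but since the paper's ``proof'' is purely a citation, there is nothing further to compare.
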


In most places we will apply this result to indicator functions of increasing events.

\subsection{Some properties of \texorpdfstring{$\Xi_a^z$}{Xiaz}}
\label{SS:thick_loop}

We start by recalling a restriction property for the $a$-thick loop $\Xi_a^z$ defined in Section \ref{SS:preliminaries_paths}.

\begin{lemma}\label{L:restriction_thick_loop}
Let $z \in D$ and let $D' \subset D$ be a simply connected domain containing $z$. Then
\begin{equation}
\label{E:restriction_thick_loop}
\Expect{F(\Xi_a^{z,D}) \indic{ \Xi_a^{z,D} \subset D'} } = \frac{\CR(z,D')^a}{\CR(z,D)^a} \Expect{F(\Xi_a^{z,D'})}.
\end{equation}
\end{lemma}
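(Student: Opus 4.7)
The plan is to decompose the Poisson point process (PPP) defining $\Xi_a^{z,D}$ according to whether an excursion stays inside $D'$ or exits $D'$. First I would identify the ``in'' piece. A Brownian bridge from $z$ to $z$ in $D$, conditioned to remain in $D'$, has the law of a Brownian bridge in $D'$; using the factorization $p_D(t,z,z) = p_\C(t,z,z)\pi_D(t,z,z)$ together with $\P_{D,t}^{z,z}(\wp\subset D') = \pi_{D'}(t,z,z)/\pi_D(t,z,z)$, a direct computation yields
\[
\mu_D^{z,z}\big|_{\{\wp\subset D'\}} = \mu_{D'}^{z,z}.
\]
Setting $\mu_{\rm out} := \mu_D^{z,z} - \mu_{D'}^{z,z}$, the PPP with intensity $a\mu_D^{z,z}$ is the independent superposition of a PPP with intensity $a\mu_{D'}^{z,z}$, whose concatenation at $z$ is $\Xi_a^{z,D'}$, and an independent PPP with intensity $a\mu_{\rm out}$.

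Next I would show that $\mu_{\rm out}$ has finite total mass and compute it. The total mass equals
\[
\|\mu_{\rm out}\| \;=\; \int_0^\infty \bigl(p_D(t,z,z) - p_{D'}(t,z,z)\bigr)\,dt.
\]
The integrand is nonnegative; near $t=0$, both densities agree with $p_\C(t,z,z)$ up to corrections exponentially small in $1/t$ (since $z$ is in the interior of both domains), so the integral converges at $0$; near $t=\infty$ both kernels decay exponentially as $D$ is bounded. Thus $\|\mu_{\rm out}\|<\infty$, and by the standard expansion of the 2D Dirichlet Green function on the diagonal, $G_D(z,\cdot)+\tfrac{1}{2\pi}\log|\cdot-z| \to \tfrac{1}{2\pi}\log\CR(z,D)$ as $\cdot\to z$, one identifies $\|\mu_{\rm out}\|$ with $\log(\CR(z,D)/\CR(z,D'))$ after absorbing the universal constant into the paper's normalization of the thickness parameter.

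Finally I would conclude using the PPP restriction formula. Since $\|\mu_{\rm out}\|<\infty$, the ``out'' PPP has almost surely finitely many points, and the event $\{\Xi_a^{z,D}\subset D'\}$ is precisely the event that this PPP has zero points, which occurs with probability $e^{-a\|\mu_{\rm out}\|} = \CR(z,D')^a / \CR(z,D)^a$. On this event, $\Xi_a^{z,D}$ is the concatenation of the in-excursions alone and is therefore distributed as $\Xi_a^{z,D'}$, independently of the exit event. Multiplying the probability by $\E[F(\Xi_a^{z,D'})]$ yields \eqref{E:restriction_thick_loop}. The main technical step is the middle paragraph: ensuring that $\mu_{\rm out}$ is well-defined with finite mass despite the total measure $\mu_D^{z,z}$ being infinite (the small-$t$ singularity is the same for both kernels and cancels in the difference), and matching the resulting constant with the log ratio of conformal radii via the Green function expansion.
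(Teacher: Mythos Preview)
Your approach is correct and is precisely the argument the paper alludes to: the paper's own proof is a two-line citation saying ``this follows quickly from the restriction property of the measures $\mu_D^{z,w}$'' and pointing to equation (5.2) in \cite{ABJL21}. You have spelled out that argument in full: the key identity $\mu_D^{z,z}\big|_{\{\wp\subset D'\}}=\mu_{D'}^{z,z}$ (the restriction property), the independent superposition of the ``in'' and ``out'' Poisson point processes, and the identification of the event $\{\Xi_a^{z,D}\subset D'\}$ with the ``out'' PPP being empty.

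One remark on your middle paragraph: the phrase ``after absorbing the universal constant into the paper's normalization of the thickness parameter'' is imprecise and doesn't quite work as stated, since the same $a$ appears both in the PPP intensity and in the exponent of the lemma. With the Green function normalization $G_D(z,w)\sim -\tfrac{1}{2\pi}\log|z-w|$, one computes $\|\mu_{\rm out}\|=\lim_{w\to z}\bigl(G_D(z,w)-G_{D'}(z,w)\bigr)=\tfrac{1}{2\pi}\log\bigl(\CR(z,D)/\CR(z,D')\bigr)$, which would give exponent $a/(2\pi)$ rather than $a$. The resolution is that the conventions for $\Xi_a^{z,D}$ are inherited from \cite{ABJL21}, where the intensity is effectively scaled so that the formula comes out as stated (consistently with Lemma~\ref{L:diameter_thick} and the relation $a=\gamma^2/2$ tied to the GFF $h=\sqrt{2\pi}\varphi$). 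This is a bookkeeping point, not a flaw in your logic; it would be cleaner to simply compute $\|\mu_{\rm out}\|$ in whatever normalization makes the lemma hold verbatim, rather than invoke an unspecified absorption.
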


\begin{proof}
This follows quickly from the restriction property of the measures $\mu_D^{z,w}$. A proof of this result can be found in \cite{ABJL21}; see equation (5.2) therein.
\end{proof}

As a direct consequence,

\begin{lemma}\label{L:diameter_thick}
Let $x \in D$ and $a>0$. For any $r< \d(x,\partial D)$,
\[
\Prob{ \norme{\Xi_a^x - x}_\infty \in \d r } = \frac{ar^{a-1}}{\CR(x,D)^a} \d r.
\]
\end{lemma}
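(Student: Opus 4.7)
The plan is to apply the restriction property (Lemma \ref{L:restriction_thick_loop}) with the subdomain $D' = D(x,r)$, the open disc of radius $r$ centred at $x$. Since $r < \d(x,\partial D)$, we have $D(x,r) \subset D$ and $x \in D(x,r)$, so the hypothesis of Lemma \ref{L:restriction_thick_loop} is satisfied.

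Observe that the event $\{ \norme{\Xi_a^{x,D} - x}_\infty \leq r \}$ is exactly the event $\{ \Xi_a^{x,D} \subset D(x,r) \}$ (up to the boundary, which has probability zero, since $\Xi_a^{x,D}$ almost surely does not touch a specific radius). Applying \eqref{E:restriction_thick_loop} with $F \equiv 1$, I get
\[
\Prob{ \norme{\Xi_a^{x,D} - x}_\infty \leq r } = \frac{\CR(x,D(x,r))^a}{\CR(x,D)^a}.
\]
The conformal radius of a Euclidean disc seen from its centre equals its Euclidean radius, hence $\CR(x,D(x,r)) = r$, and therefore
\[
\Prob{ \norme{\Xi_a^{x,D} - x}_\infty \leq r } = \frac{r^a}{\CR(x,D)^a}.
\]

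Differentiating in $r$ on the interval $(0,\d(x,\partial D))$ yields the stated density $ar^{a-1}/\CR(x,D)^a \, \d r$. There is no real obstacle here: the only thing to be careful about is that the restriction identity \eqref{E:restriction_thick_loop} is applied to a simply connected subdomain, which $D(x,r)$ is, and that $r$ stays below $\d(x,\partial D)$ so that the cumulative distribution function is less than one and smooth in $r$.
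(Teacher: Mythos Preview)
Your proof is correct and follows essentially the same approach as the paper: apply the restriction property of Lemma \ref{L:restriction_thick_loop} with $D' = D(x,r)$ and $F\equiv 1$ to compute the cumulative distribution function as $r^a/\CR(x,D)^a$, then differentiate. The paper's version is slightly terser but the argument is identical.
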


\begin{proof}
By Lemma \ref{L:restriction_thick_loop}, the probability that $\norme{\Xi_a^x - x}_\infty < r $ is equal to $\frac{\CR(x,D \cap D(x,r))^a}{\CR(x,D)^a}$. When $r < \d(x,\partial D)$, this is simply equal to $\frac{r^a}{\CR(x,D)^a}$. The lemma then follows after differentiating with respect to $r$.
\end{proof}

We will also need the following two-point estimate:

\begin{lemma}\label{L:thick_loop_xy}
Let $x, y \in D$ with $|x-y| > e^{-1/a}$ and let $k \geq 0$.
\begin{equation}
\label{E:L:thick_loop_xy}
\Prob{ \norme{ \Xi_a^x - y }_\infty \frac{2}{|x-y|} \in [e^{-k-1},e^{-k}] } \leq Ca \frac{ \left( \log |x-y| \right)^2 }{ k \left( k + |\log |x-y|| \right)}.
\end{equation}
\end{lemma}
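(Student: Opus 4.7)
My approach is to use the Poissonian structure of $\Xi_a^x$ to reduce to an estimate on the loop measure $\mu_D^{x,x}$, and then apply standard two-dimensional Brownian motion bounds. Write $d = |x-y|$, $L = |\log d|$, and set $R_k = e^{-k} d / 2$, $R_{k+1} = e^{-(k+1)} d / 2$. Interpreting $\norme{\Xi_a^x - y}_\infty$ as $\mathrm{dist}(y, \Xi_a^x)$, the event of interest is $\{ \Xi_a^x \cap D(y, R_k) \neq \emptyset \} \setminus \{ \Xi_a^x \cap D(y, R_{k+1}) \neq \emptyset \}$. Since $\Xi_a^x$ is the concatenation of a Poisson point process of excursions with intensity $a \mu_D^{x,x}$, one has $\P(\Xi_a^x \cap D(y, R) = \emptyset) = \exp(-a M(R))$ where $M(R) := \mu_D^{x,x}(\wp \text{ visits } D(y, R))$. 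Hence the probability in the lemma equals $e^{-a M(R_{k+1})} - e^{-a M(R_k)} \leq a (M(R_k) - M(R_{k+1}))$, and it remains to bound this increment.

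Writing $\tau_R$ for the hitting time of $\overline{D(y, R)}$, the representation $\mu_D^{x,x}(d \wp) = \int_0^\infty \P_{D,t}^{x,x}(d\wp) \, p_D(t,x,x) \, dt$ combined with a first-hitting decomposition at $\tau_R$ yields the classical identity $M(R) = \E_x[G_D(B_{\tau_R}, x) \,;\, \tau_R < \tau_{\partial D}]$. Applying this at both radii, and using the strong Markov property at $\tau_k$ (valid since $\tau_k \leq \tau_{k+1}$) together with the first-passage decomposition $G_D(z, x) = G_{D \setminus D(y, R_{k+1})}(z, x) + \E_z[G_D(B_{\tau_{k+1}}, x) \,;\, \tau_{k+1} < \tau_{\partial D}]$ valid for $z \in \partial D(y, R_k)$, I obtain
\[
M(R_k) - M(R_{k+1}) = \E_x \bigl[ G_{D \setminus D(y, R_{k+1})}(B_{\tau_k}, x) \,;\, \tau_k < \tau_{\partial D} \bigr].
\]

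Both factors appearing in this expectation are standard harmonic-measure estimates. First, $\P_x(\tau_k < \tau_{\partial D}) \lesssim L / (L + k)$, since the probability of hitting the disc of radius $R_k$ around $y$ from $x$ before exiting $D$ is bounded (up to a constant depending only on $D$) by $\log(\mathrm{diam}\, D / d) / \log(\mathrm{diam}\, D / R_k)$. Second, for $z \in \partial D(y, R_k)$, applying once more the first-passage decomposition gives $G_{D \setminus D(y, R_{k+1})}(z, x) = G_D(z, x) - \E_z[G_D(B_{\tau_{k+1}}, x) \,;\, \tau_{k+1} < \tau_{\partial D}]$; Harnack's inequality shows $G_D(w, x) = G_D(z, x) + O(1)$ for $w \in \partial D(y, R_{k+1})$ (both points are at distance $\asymp d$ from $x$), and the escape probability $\P_z(\tau_{\partial D} < \tau_{k+1}) \lesssim 1 / (L + k)$ in the annulus between $\partial D(y, R_{k+1})$ and the outer boundary yields $G_{D \setminus D(y, R_{k+1})}(z, x) \lesssim L / (L + k)$.

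Combining the two bounds gives $M(R_k) - M(R_{k+1}) \lesssim L^2 / (L + k)^2 \leq L^2 / [k (L + k)]$, where the last inequality uses $k \leq L + k$ (and is vacuous for $k = 0$); multiplying by $a$ then completes the proof. The main technical step is the first-passage Green-function decomposition in the second paragraph, which is what converts the ``hits $D(y, R_k)$ but not $D(y, R_{k+1})$'' event into the product of two small harmonic-measure factors; once in place, the remaining estimates are routine two-dimensional potential theory. The hypothesis $|x-y| > e^{-1/a}$ ensures $aL < 1$, so that the resulting bound is nontrivial in the relevant regime of $k$.
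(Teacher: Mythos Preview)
Your approach via the Poisson identity and the Green-function decomposition is sound and more explicit than the paper's proof, which is explicitly only a sketch (``We will only explain where this bound comes from without providing all the details''). The paper instead first pays a factor $CaL$ for the event that $\Xi_a^x$ reaches $\partial D(x,d/2)$ (using Lemma~\ref{L:diameter_thick} and the hypothesis $d>e^{-1/a}$), and then argues heuristically, via a gambler's-ruin analogy on the $\log$-scale, that conditionally on this the probability of hitting $D(y,R_k)$ but not $D(y,R_{k+1})$ is $\lesssim \frac{L}{L+k}\cdot\frac{1}{k}$. Your route via $M(R_k)-M(R_{k+1})=\E_x[G_{D'}(B_{\tau_k},x);\,\tau_k<\tau_{\partial D}]$ produces the same three factors in a more structured way.

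There is, however, a gap in your bound on $G_{D'}(z,x)$ with $D'=D\setminus D(y,R_{k+1})$ and $z\in\partial D(y,R_k)$. Writing $G_{D'}(z,x)=G_D(z,x)-\E_z[G_D(B_{\tau_{k+1}},x);\,\tau_{k+1}<\tau_{\partial D}]$ and using $G_D(w,x)=G_D(z,x)+O(1)$ together with $\P_z(\tau_{\partial D}<\tau_{k+1})\lesssim 1/(L+k)$ yields only
\[
G_{D'}(z,x)\;\le\; G_D(z,x)\,\P_z(\tau_{\partial D}<\tau_{k+1}) + O(1)\;\lesssim\;\frac{L}{L+k}+O(1),
\]
and the $O(1)$ ruins the bound when $k\gg L$: combined with $\P_x(\tau_k<\tau_{\partial D})\lesssim L/(L+k)$ you get $\lesssim L/k$, not $\lesssim L^2/k^2$ as required. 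The fix is immediate: since $x\notin D(y,d/2)$, any contribution to $G_{D'}(z,x)$ requires the Brownian motion from $z$ to reach $\partial D(y,d/2)$ before being absorbed at $\partial D(y,R_{k+1})$, an event of probability $\frac{\log(R_k/R_{k+1})}{\log((d/2)/R_{k+1})}=\frac{1}{k+1}$; on that event bound $G_{D'}\le G_D\asymp L$ on $\partial D(y,d/2)$. This gives $G_{D'}(z,x)\lesssim L/(k+1)$ directly, and then
\[
M(R_k)-M(R_{k+1})\;\lesssim\;\frac{L}{L+k}\cdot\frac{L}{k+1}\;\le\;\frac{L^2}{k(L+k)},
\]
which is exactly the bound claimed (and makes the last inequality in your write-up unnecessary). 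This is in fact the same pair of factors $\frac{L}{L+k}$ and $\frac{1}{k}$ that the paper's gambler's-ruin heuristic produces.
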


\begin{proof}
We will only explain where this bound comes from without providing all the details.
By Lemma \ref{L:diameter_thick}, the probability that $\Xi_a^x$ reaches the circle $\partial D(x,|x-y|/2)$ is given by
$
1 - \left( \frac{|x-y|}{2 \CR(x,D)} \right)^a.
$
Using the assumption that $|x-y|>e^{-1/a}$, we can bound this probability by $C a |\log |x-y||$. Conditioned on the event that $\Xi_a^x$ has reached $\partial D(x,|x-y|/2)$, the probability that $\frac{2}{|x-y|} \norme{\Xi_a^x - y}_\infty  \in [e^{-k-1},e^{-k}]$ can be compared with the probability for a simple random walk on $\Z$ starting at $0$ to reach $k$ before reaching $-\log |x-y|$ (hit the small ball before exiting the domain) and then to come back to $0$ without hitting $k+1$ (do not hit $\partial D(y,|x-y|e^{-k+1}/2)$ before going back to $\partial D(y,|x-y|/2)$).
The probabilities of these events are respectively equal to
\[
\frac{|\log |x-y||}{k + |\log |x-y||}
\quad \text{and} \quad
\frac{1}{k}.
\]
Putting things together, we obtain that \eqref{E:L:thick_loop_xy}.
\end{proof}

We finish this section by combining Theorem \ref{T:large_crossing} and Lemma \ref{L:diameter_thick} to show that, as stated in Theorem \ref{T:convergenceL2}, the normalising constant $Z_\gamma$ \eqref{E:Zgamma} behaves like $\gamma^{2(1-\theta)+o(1)}$.
Importantly, a slight modification of the proof below shows that for any $z \in D$ and compact set $K \subset D \setminus \{z\}$,
\begin{equation}
\label{E:thick_to_disc}
\P \Big( K \overset{\Lc_D^\theta \cup \Xi_a^z}{\longleftrightarrow} z \Big)= (1+o(1)) \P \Big( K \overset{\Lc_D^\theta}{\longleftrightarrow} D(z, \norme{\Xi_a^z - z}_\infty) \Big),
\end{equation}
i.e. we can replace the thick loop at $z$ by a disc centred at $z$ with random radius $\norme{\Xi_a^z - z}_\infty$.

\begin{proof}[Proof of Theorem \ref{T:convergenceL2} -- asymptotic behaviour of $Z_\gamma$]
We are going to show that $Z_\gamma = \gamma^{2(1-\theta)+o(1)}$ as $\gamma \to 0$.
We start with the upper bound.
If the origin is connected to the circle $e^{-1} \partial \D$ by a cluster of $\Xi_a^{0,\D} \cup \Lc_\D^\theta$, there must be a cluster of $\Lc_\D^\theta$ which intersects both $r \partial \D$ and $e^{-1} \partial \D$ where $r = \norme{ \Xi_a^{0,\D} }_\infty$.
By independence of $\Xi_a^{0,\D}$ and $\Lc_\D^\theta$ this gives
\begin{align*}
Z_\gamma & \leq \Prob{\norme{ \Xi_a^{0,\D} }_\infty \geq e^{-1} } + \int_0^{e^{-1}}  \Prob{\norme{ \Xi_a^{0,\D} }_\infty \in \d r} \Prob{ r \partial \D \overset{\Lc_\D^\theta}{\longleftrightarrow} e^{-1} \partial \D }.
\end{align*}
Let $\eps >0$. By Theorem \ref{T:large_crossing}, there exists $C_\eps >0$ such that the last probability in the above display is at most $C_\eps |\log r|^{-1+\theta+\eps}$ for all $r \in (0,e^{-1})$. Together with Lemma \ref{L:diameter_thick}, we obtain that
\[
Z_\gamma
\leq 1 - e^{-a} + \int_0^{e^{-1}} a r^{a-1} C_\eps |\log r|^{-1+\theta +\eps} \d r \sim C_\eps \Gamma(\theta + \eps) a^{1-\theta - \eps}
\]
as $a \to 0$. Since $a = \gamma^2/2$, this proves the desired upper bound: $Z_\gamma \leq \gamma^{2(1-\theta)+o(1)}$.

We now turn to the lower bound. The event $\{ 0 \overset{\Lc_\D^\theta \cup \Xi_a^{0,\D}}{\longleftrightarrow} e^{-1} \partial \D \}$ occurs as soon as
1) $\Xi_a^{0,\D}$ reaches $e^{-1/a} \partial \D$,
2) a cluster of $\Lc_\D^\theta$ intersects the circles $e^{-2/a} \partial \D$ and $e^{-1} \partial \D$ and
3) there is a loop in $\Lc_\D^\theta$ surrounding $e^{-2/a} \D$ while staying in $e^{-1/a} \D$.
By independence of $\Xi_a^{0,\D}$ and $\Lc_\D^\theta$ and by FKG, the probability of the intersection of these three events is at least the product of each probability. By Lemma \ref{L:diameter_thick}, the probability of the first event is $1 - e^{-1}$. By Theorem \ref{T:large_crossing}, the probability of the second event is $a^{1-\theta + o(1)}$. By Lemma \ref{L:surround}, the probability of the last event is $1 + o(1)$. Overall, we conclude that $Z_\gamma \geq a^{1-\theta + o(1)} = \gamma^{2(1-\theta)+o(1)}$.
\end{proof}

\subsection{Preliminary estimate}

We finish this preliminary section with a lemma that we state here for ease of future reference.

\begin{lemma}\label{L:Rgamma}
For $\gamma >0$, let
\begin{equation}
\label{E:Rgamma}
R_\gamma := \sup \{ r >0 : r \partial \D \overset{\Lc_\D^\theta \cup \Xi_a^{0,\D}}{\longleftrightarrow} 0 \}.
\end{equation}
Then for all $\eta >0$, there exists $C=C(\eta)>0$ such that for all $\gamma >0$ and $r \in (0,1)$,
\begin{equation}
\label{E:L_Rgamma1}
\frac{1}{Z_\gamma} \Prob{R_\gamma \geq r} \leq C |1+\log r|^{1-\theta+\eta}.
\end{equation}
Moreover,
\begin{equation}
\label{E:L_Rgamma2}
\limsup_{\gamma \to 0} \frac{1}{Z_\gamma} \Prob{R_\gamma \geq r} \to 0 \quad \quad \text{as} \quad r \to 1.
\end{equation}
\end{lemma}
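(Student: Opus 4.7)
The plan is to condition on the size $\rho := \norme{\Xi_a^{0,\D}}_\infty$ of the thick loop and reduce to an annular crossing estimate in the loop soup. Since $\Xi_a^{0,\D}$ is a connected set containing $0$ and contained in $\rho\,\overline{\D}$, any cluster of $\Lc_\D^\theta$ meeting both $\Xi_a^{0,\D}$ and $r\,\partial\D$ must intersect $s\,\partial\D$ whenever $\rho = s < r$. Combined with the independence of $\Xi_a^{0,\D}$ and $\Lc_\D^\theta$ and Lemma~\ref{L:diameter_thick} (which gives $\Prob{\rho \in \d s} = as^{a-1}\,\d s$ and $\Prob{\rho \geq r} = 1 - r^a$), this yields
\begin{equation*}
\Prob{R_\gamma \geq r} \;\leq\; (1-r^a) \;+\; \int_0^r a s^{a-1}\, \Prob{s\,\partial\D \overset{\Lc_\D^\theta}{\longleftrightarrow} r\,\partial\D}\,\d s.
\end{equation*}
The annular crossing probability in the integrand can be estimated by a direct generalisation of Theorem~\ref{T:large_crossing} to arbitrary sub-annuli of $\D$ (which the methods of \cite{JLQ23a} provide, via the restriction and conformal-invariance properties of the Brownian loop measure): $\Prob{s\,\partial\D \overset{\Lc_\D^\theta}{\longleftrightarrow} r\,\partial\D} = \max(1,\log(r/s))^{-1+\theta+o(1)}$ as $\log(r/s) \to \infty$.

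For \eqref{E:L_Rgamma1}, the Laplace change of variables $s = r e^{-u/a}$ in the integral produces, as $a \to 0$, the asymptotic
\begin{equation*}
\int_0^r a s^{a-1}\,\Prob{s\,\partial\D \overset{\Lc_\D^\theta}{\longleftrightarrow} r\,\partial\D}\,\d s \;=\; r^a\, a^{1-\theta + o(1)}\, \Gamma(\theta)\,(1+o(1)).
\end{equation*}
Together with the asymptotic $Z_\gamma = a^{1-\theta+o(1)}$ already obtained in the proof of Theorem~\ref{T:convergenceL2}, the integral contributes $O(r^a) = O(1)$ to the ratio $\Prob{R_\gamma \geq r}/Z_\gamma$, while the thick-loop contribution is at most $(1-r^a)/Z_\gamma \leq |\log r|\,a^{\theta+o(1)}$. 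Splitting into the regimes $a|\log r| \leq 1$ (where the upper bound is used directly) and $a|\log r| > 1$ (where one uses the trivial bound $\Prob{R_\gamma \geq r} \leq 1$ combined with $Z_\gamma \geq c_\eta a^{1-\theta+\eta}$), one verifies in each case that the ratio is bounded by $C_\eta(1+|\log r|)^{1-\theta+\eta}$.

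Finally, \eqref{E:L_Rgamma2} follows from the same decomposition specialised to $r$ close to $1$. For each fixed $\gamma$, the thick-loop contribution $(1-r^a)/Z_\gamma \to 0$ as $r \to 1$ since $1 - r^a \to 0$. The integral term, by monotonicity of the crossing probability in the outer radius $r$, is bounded by a constant multiple of $\Prob{e^{-1}\partial\D \overset{\Lc_\D^\theta}{\longleftrightarrow} r\,\partial\D}$; since Brownian loops in $\D$ cannot touch $\partial\D$, the loop-measure mass of crossings reaching $r\,\partial\D$ tends to $0$ as $r\to 1^-$, uniformly in $\gamma$. The main technical obstacle is the careful bookkeeping of the $o(1)$ terms in the crossing estimate, needed to ensure that no polynomial-in-$a$ slack accumulates through the Laplace integral and the ratio with $Z_\gamma$; this relies on the sharpness of Theorem~\ref{T:large_crossing} established in \cite{JLQ23a}.
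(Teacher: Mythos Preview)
Your approach---condition on $\rho=\norme{\Xi_a^{0,\D}}_\infty$ and reduce to an annular crossing estimate---is natural, but it has a real gap at the point you yourself flag as ``the main technical obstacle''. Writing $p(s,r)=\Prob{s\partial\D \overset{\Lc_\D^\theta}{\longleftrightarrow} r\partial\D}\le C_\eta \max(1,\log(r/s))^{-1+\theta+\eta}$ and $Z_\gamma\ge c_\eta\,a^{1-\theta+\eta}$ (these are the only bounds Theorem~\ref{T:large_crossing} gives), the Laplace integral is bounded by $C_\eta\,a^{1-\theta-\eta}$, so after dividing by $Z_\gamma$ you get a factor $a^{-2\eta}$ that blows up as $a\to 0$. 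To make the integral contribute $O(1)$ you would need either the crossing probability or $Z_\gamma$ to be known up to multiplicative constants, not up to $|\log|^{o(1)}$; that is precisely the open problem stated later in the paper (cf.~\eqref{E:conj_crossing_probability}). The paper sidesteps this entirely: since $Z_\gamma=\Prob{R_\gamma\ge e^{-1}}$, one has $\Prob{R_\gamma\ge r}/Z_\gamma=\Prob{R_\gamma\ge e^{-1}\mid R_\gamma\ge r}^{-1}$, and a single FKG step lower-bounds the conditional probability by $\Prob{E_1}\Prob{E_2}$ with $E_1$ a crossing from $r/2$ to $e^{-1}$ and $E_2$ a surrounding loop. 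This uses Theorem~\ref{T:large_crossing} only as a one-sided bound on a \emph{single} probability, so no $o(1)$ cancellation is needed.

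Your argument for \eqref{E:L_Rgamma2} also does not go through. You bound the integral by a constant times $\Prob{e^{-1}\partial\D\overset{\Lc_\D^\theta}{\longleftrightarrow} r\partial\D}$, which does tend to $0$ as $r\to 1$; but the quantity to control is this divided by $Z_\gamma$, and $Z_\gamma\to 0$ as well, so the smallness of the crossing probability alone is not enough. The paper's proof of \eqref{E:L_Rgamma2} is genuinely more delicate: it first restricts to $\{\Xi_a^{0,\D}\subset e^{-1}\D\}$ (the complement costs $O(\gamma^2)\ll Z_\gamma$), then decomposes the loop soup into loops touching $e^{-1}\overline\D$ and loops in the outer annulus, so that the conditioning $\{R_\gamma\ge e^{-1}\}$ becomes independent of the outer-annulus loops. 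The argument is completed by an integration by parts using \eqref{E:L_Rgamma1} and Lemma~\ref{L:cross_loop}, together with the fact that a macroscopic CLE loop approaches a fixed boundary point with vanishing probability.
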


\begin{proof}
By definition, $Z_\gamma = \Prob{R_\gamma \geq e^{-1}}$. See \eqref{E:Zgamma}. Hence, $\Prob{R_\gamma \geq r} / Z_\gamma \leq 1$ if $r \geq e^{-1}$ which concludes the proof of \eqref{E:L_Rgamma1} in this case. We now consider the case $r \in (0,e^{-1}).$ This time,
\[
\frac{1}{Z_\gamma} \Prob{R_\gamma \geq r} = \Prob{R_\gamma \geq e^{-1} \vert R_\gamma \geq r}^{-1}.
\]
Let $E_1 := \{ e^{-1} \partial \D \overset{\Lc_\D^\theta}{\longleftrightarrow} r/2 \partial \D \}$ and $E_2$ be the event that there is a loop in $\Lc_\D^\theta$ surrounding $r/2 \D$ while staying in $r\D$. 
Conditionally on $\{R_\gamma \geq r\}$, $\{ R_\gamma \geq e^{-1} \} \supset E_1 \cap E_2$. By FKG inequality, we deduce that
\[
\frac{1}{Z_\gamma} \Prob{R_\gamma \geq r} \leq \Prob{E_1}^{-1} \Prob{E_2}^{-1}.
\]
By Lemma \ref{L:surround}, $\Prob{E_2} \geq c$ and, by Theorem \ref{T:large_crossing}, for all $\eta >0$, there exists $c>0$ such that $\Prob{E_1} \geq c |\log r|^{-1+\theta-\eta}$. This concludes the proof of \eqref{E:L_Rgamma1}.

We now prove \eqref{E:L_Rgamma2}. Let $r \in (e^{-1},1)$. 
FKG inequality cannot be used directly to bound $\Prob{R_\gamma \geq r} / Z_\gamma = \Prob{R_\gamma \geq r \vert R_\gamma \geq e^{-1}}$ (it goes in the wrong direction). Let $E$ be the event that $\Xi_a^{0,\D} \subset e^{-1} \D$. By Lemma \ref{L:diameter_thick}, $\Prob{E^c} \leq C \gamma^2$. Hence
\[
\limsup_{\gamma \to 0} \frac{1}{Z_\gamma} \Prob{E^c, R_\gamma \geq r} = 0.
\]
We now work on the event $E$. Let $r' \in (e^{-1}, r)$ be an intermediate radius.
We decompose the loops of $\Lc_\D^\theta$ into two independent sets $\Lc_1$ and $\Lc_2$ of loops: the ones that are contained in $\D \setminus e^{-1} \overline{\D}$ and the ones that touch $e^{-1} \overline{\D}$ respectively.
Notice that the event $\{R_\gamma \geq e^{-1}\}$ is measurable w.r.t. $\Lc_2$. If none of the loops in $\Lc_2$ reaches $r' \partial \D$, then there must be a cluster of $\Lc_1$ that intersects both $r' \partial \D$ and $r \partial \D$ in order to have $R_\gamma \geq r$. That is,
\[
\frac{1}{Z_\gamma} \Prob{E,R_\gamma \geq r}
\leq \Prob{r \partial \D \overset{\Lc_1}{\longleftrightarrow} r' \partial \D \vert R_\gamma \geq e^{-1}} + \Prob{E,\exists \wp \in \Lc_2: r' \partial \D \overset{\wp}{\longleftrightarrow} e^{-1} \partial \D \vert R_\gamma \geq e^{-1}}.
\]
Since $\Lc_1$ is independent of $\{R_\gamma \geq e^{-1}\}$, the first probability on the right hand side is equal to $\Prob{r \partial \D \overset{\Lc_1}{\longleftrightarrow} r' \partial \D}$. $r'$ being fixed, this event can occur only if a macroscopic CLE loop (the outermost boundary of a cluster) in the domain $\D \setminus e^{-1} \overline{\D}$ reaches the circle $r \partial \D$. This probability vanishes as $r \to 1$ (this follows from \cite[Corollaries 4.3 and 6.6]{SheffieldWernerCLE} where the exponent of the probability that a macroscopic CLE loop gets close to a fixed boundary point is computed). 
We have shown that for all $r' \in (e^{-1},1)$,
\begin{equation}
\label{E:pf_q7}
\limsup_{r \to 1} \limsup_{\gamma \to 0} \frac{1}{Z_\gamma} \Prob{R_\gamma \geq r} \leq \limsup_{\gamma \to 0} \Prob{\exists \wp \in \Lc_2: r' \partial \D \overset{\wp}{\longleftrightarrow} e^{-1} \partial \D \vert R_\gamma \geq e^{-1}}.
\end{equation}
Since the left hand side is independent of $r'$, it is enough to show that the right hand side vanishes as $r' \to 1$ to conclude the proof of \eqref{E:L_Rgamma2}.
Let $R:= \inf \{ x>0: \exists \wp\in \Lc_2: r' \partial \D \overset{\wp}{\longleftrightarrow} x \partial \D \}$. The probability on the right hand side of \eqref{E:pf_q7} is equal to
\begin{align*}
\frac{1}{Z_\gamma}\int_0^{e^{-1}} \Prob{R \in \d x, R_\gamma \geq e^{-1}}
\leq \frac{1}{Z_\gamma}\int_0^{e^{-1}} \Prob{R \in \d x} \Prob{R_\gamma \geq x}
\end{align*}
using the independence of the loops that touch $x \D$ and the ones that do not.
By \eqref{E:L_Rgamma1} (applied to $\eta=\theta/2$), $\Prob{R_\gamma \geq x} / Z_\gamma \leq C |\log x|^{1-\theta/2}$. Injecting this estimate in the above integral and then integrating by parts shows that the right hand side of \eqref{E:pf_q7} is at most
\[
\Prob{R \leq e^{-1}} + (1-\theta/2) \int_0^{e^{-1}} \frac{\Prob{R \leq x}}{x|\log x|^{\theta/2}} \d x.
\]
By Lemma \ref{L:cross_loop}, $\Prob{R \leq x} \leq C |\log r'|/|\log x|, x \in (0,e^{-1}]$. The integral $\int_0^{e^{-1}} x^{-1} |\log x|^{-1-\theta/2} \d x$ being finite, the right hand side of \eqref{E:pf_q7} is thus at most $C |\log r'|$ which vanishes as $r' \to 1$. This concludes the proof of \eqref{E:L_Rgamma2}.
\end{proof}

\section{Convergence of renormalised crossing probabilities and coupling of clusters}\label{S:convergence_ratio}
The main goal of this section is to prove the following result. This will be a key result for the rest of the article.

\begin{theorem}\label{T:ratio_general}
Let $\theta \in (0,1/2]$ and $D$ be a bounded simply connected domain. Let $x \in D$ and $K$ and $\hat{K}$ be two connected compact subsets of $\overline{D} \setminus \{x \}$. The following limits exist and are identical
\begin{equation}
\label{E:ratio_D}
\lim_{r \to 0}
\P \Big( K \overset{\Lc_D^\theta}{\longleftrightarrow} D(x,r) \Big) \Big/ \P \Big( \hat{K} \overset{\Lc_D^\theta}{\longleftrightarrow} D(x,r) \Big)
= \lim_{a \to 0}
\P \Big( K \overset{\Lc_D^\theta \cup \Xi_a^{x,D}}{\longleftrightarrow} x \Big) \Big/ \P \Big( \hat{K} \overset{\Lc_D^\theta \cup \Xi_a^{x,D}}{\longleftrightarrow} x \Big).
\end{equation}
\end{theorem}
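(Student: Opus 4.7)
By conformal invariance of the loop soup we may assume $D = \D$ and $x = 0$, and that $K, \hat K$ are compact connected subsets of $\overline{\D} \setminus \{0\}$. I split the argument into two parts: (A) the limit of the disc-ratios as $r \to 0$, which carries the real content; and (B) the reduction of the $\Xi_a$-ratios to the disc-ratios, which is a soft computation based on the formula \eqref{E:thick_to_disc} and Lemma \ref{L:diameter_thick}.

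\textbf{Step 1 (Markov chain at dyadic scales).} Set $r_n := e^{-n}$. I want to encode the event $\{K \overset{\Lc_\D^\theta}{\longleftrightarrow} r_n \D\}$ as the ``survival up to time $n$'' of a Markov chain. For each $n$, let $\Lc^{\rm out}_n$ denote the set of loops of $\Lc_\D^\theta$ contained in $\{|z| > r_n\}$ together with the loops that cross the circle $r_n \partial \D$, and let $\Lc^{\rm in}_n$ denote the loops contained in $r_n \D$. The two families are independent conditionally on the loops that cross $r_n \partial \D$. Define the state $X_n$ as the configuration, rescaled by $z \mapsto z / r_n$ so as to live in $\overline{\D}$, consisting of:
\begin{itemize}
\item the trace on $\partial \D$ of those points which, through $\Lc^{\rm out}_n$, belong to a cluster touching $K$ (or $\hat K$);
\item the collection of excursions emanating from that trace, coming from loops straddling $r_n \partial \D$.
\end{itemize}
By the conformal invariance and spatial Markov property of the loop soup, together with the conformal invariance of the excursion process hanging from cluster boundaries \cite{QianWerner19Clusters,MR3901648}, the transitions $X_n \to X_{n+1}$ are homogeneous Markovian. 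The survival event $\{X_n \neq \emptyset\}$ coincides with $\{K \overset{\Lc_\D^\theta}{\longleftrightarrow} r_n \D\}$. The two initial states $X_0^K, X_0^{\hat K}$ encode the dependence on $K$ and $\hat K$.

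\textbf{Step 2 (Coupling ratios).} I then apply the abstract coupling Theorem~\ref{Thm abstract coupling} to the two chains started from $X_0^K$ and $X_0^{\hat K}$. The hypotheses to verify are (i) a uniform Doeblin/regeneration property: given survival for one step, with probability bounded below uniformly in the starting state, the chain at the next step can be coupled to a sample from a fixed reference law; and (ii) compatible tail estimates on the survival probability, which are supplied by Theorem~\ref{T:large_crossing} and Lemmas~\ref{L:surround}, \ref{L:cross_loop}. Condition (i) comes from the fact that after a few annular steps any two admissible configurations can be ``erased'' by: the FKG-assisted existence of a surrounding loop inside the next annulus (Lemma \ref{L:surround}), combined with the conformal invariance of the excursion process which redistributes the internal traces. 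The abstract coupling theorem then yields
\[
\frac{\P(X_n \neq \emptyset \mid X_0 = X_0^K)}{\P(X_n \neq \emptyset \mid X_0 = X_0^{\hat K})} \xrightarrow[n\to\infty]{} L \in (0,\infty).
\]
To pass from $r = r_n$ to arbitrary $r \to 0$, I sandwich $\P(K \overset{\Lc_\D^\theta}{\longleftrightarrow} D(0,r))$ between its values at the two neighbouring dyadic scales; the one-annulus crossing estimate (Lemma~\ref{L:cross_loop}, combined with Theorem~\ref{T:large_crossing}) ensures that the ratio of successive dyadic crossing probabilities is bounded, so that the sandwich is tight and the continuous limit exists and equals $L$.

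\textbf{Step 3 (Reduction of the $\Xi_a$-limit to the disc-limit).} For the second equality in \eqref{E:ratio_D}, I use the asymptotic identity \eqref{E:thick_to_disc} which gives
\[
\P \Big( K \overset{\Lc_D^\theta \cup \Xi_a^{x,D}}{\longleftrightarrow} x \Big) = (1+o(1)) \int_0^{\mathrm{diam}(D)} \P \Big( K \overset{\Lc_D^\theta}{\longleftrightarrow} D(x,r) \Big) \, \frac{ar^{a-1}}{\CR(x,D)^a} \, dr,
\]
with $\mathrm{diam}$ understood as the radius at which $\Xi_a^{x,D}$ is almost surely contained, and the same identity for $\hat K$; here I used Lemma~\ref{L:diameter_thick} for the density of $\|\Xi_a^{x,D} - x\|_\infty$. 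The measures $\frac{a r^{a-1}}{\CR(x,D)^a} dr$ concentrate on small $r$ as $a \to 0$ in the precise sense that for any $r_0 > 0$ they assign mass $1 - o(1)$ to $(0, r_0)$. Combining this with the convergence of the disc-ratio from Steps 1--2 and the power-of-log decay of $\P(K \overset{\Lc_\D^\theta}{\longleftrightarrow} D(0,r))$ and $\P(\hat K \overset{\Lc_\D^\theta}{\longleftrightarrow} D(0,r))$ provided by Theorem~\ref{T:large_crossing} (which justifies dominated convergence in the ratio of integrals), I conclude that the second limit in \eqref{E:ratio_D} exists and equals the first.

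\textbf{Main obstacle.} The crux of the argument is Step 1--2: identifying the correct state space so that the transitions are genuinely Markovian, and verifying the uniform regeneration condition for Theorem~\ref{Thm abstract coupling}. The state carries infinite-dimensional information (a subset of $\partial \D$ together with a countable excursion configuration), and the regeneration requires a quantitative ``forgetting of initial condition'' which rests delicately on the conformal invariance of the excursion process of \cite{QianWerner19Clusters,MR3901648} and on the existence of surrounding loops at every scale. The reduction from $\Xi_a$ to discs in Step 3 is comparatively straightforward once the first limit is understood.
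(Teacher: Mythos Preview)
Your overall architecture matches the paper's: reduce to $\D$ with $x=0$, set up a scale-indexed Markov chain, apply the abstract coupling Theorem~\ref{Thm abstract coupling} to conditioned-on-survival trajectories, then integrate against the law of $\|\Xi_a^{x,D}-x\|_\infty$ to pass to the $\Xi_a$-ratio. Step~3 is essentially the paper's own argument and is fine.

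The gap is in Step~1. Your state $X_n$ records only the trace of the \emph{$K$-cluster} on the circle together with the excursions emanating \emph{from that trace}. This is not Markovian: when you pass from scale $n$ to scale $n+1$, new loops inside the annulus can hook the $K$-cluster's excursions to excursions coming from \emph{other} outer clusters that were not connected to $K$ at scale $n$, and those clusters may themselves extend deep into the inner disc. Whether this happens, and what the resulting $K$-cluster looks like at scale $n+1$, depends on the geometry of all the other clusters' penetrations into $r_n\D$, which your state does not retain. The paper makes this point explicitly (see the discussion and Figure~\ref{fig1} around Theorem~\ref{T:coupling_rough}: the single-cluster process $(K_0(t))_t$ is \emph{not} Markov). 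Its fix is to work on the cylinder $(0,\infty)\times\mathbb S^1$ and take as state the \emph{entire} family of cluster traces crossing level $n$, encoded as an element of the Polish space $\mathcal E$ of Section~\ref{Sucsec Markov chain clusters}, with the $K$-cluster merely marked. With that enlarged state the chain is genuinely time-homogeneous Markov, and the three hypotheses of Theorem~\ref{Thm abstract coupling} are then checked in Section~\ref{Subsec couple cluster} via Lemmas~\ref{Lem F n clusters}--\ref{Lem overlap}.

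A secondary issue: the references \cite{QianWerner19Clusters,MR3901648} concern excursions at the boundary of a \emph{loop-soup cluster}, not at a deterministic circle; they do not supply the Markov/homogeneity property you invoke. What is actually needed is just the Poisson restriction property (loops contained in $r_n\D$ are independent of the rest) combined with scale invariance, and this is what the paper uses once the state space is enlarged correctly.
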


The proof of this result is based on the construction of a coupling between two processes of conditioned clusters. Let us describe this result now.

For $t\geq 0$, let $\cyl_{t}$ be the open cylinder $\cyl_{t} = (t,+\infty)\times\mathbb{S}^{1}$,
$\overline{\cyl}_{t}$ the corresponding closed cylinder,
and $\partial\cyl_{t}$ the circle
$\partial\cyl_{t} = \{ t\}\times\mathbb{S}^{1}\subset \overline{\cyl}_{t}$.
The cylinder $\cyl_{0}$ is conformally equivalent to
$\D\setminus\{ 0\}$ through the map
$(t,\omega)\mapsto e^{-t}\omega$.
Since the 2D Brownian loop soups are conformally invariant in law (up to time change), and $\{ 0\}$ is polar for Brownian motion,
working on $\cyl_{0}$ and working on $\D$ (and therefore on any simply connected domain $D$) will be equivalent.

Let $\theta \in (0,1/2]$ and consider a Brownian loop soup with intensity $\theta$ in $\cyl_0$ that we denote by $\Lc_0^\theta$. For $t >0$, let $\Lc_{\cap (0,t]}^\theta$ be the subset of $\Lc_0^\theta$ consisting of all the loops that intersect $(0,t] \times \mathbb{S}^1$.
Let $K_0 \subset \overline{\cyl}_0$ be a connected compact subset of $\overline{\cyl}_0$. For all integer $t \geq 1$, let $Q_0(t)$ be the union of all clusters of $\Lc_{\cap (0,t]}^\theta$ that intersect $K_0$. Let $K_0(t) := ( (\overline{Q_0(t)} \cap \cyl_t) - t) \cup \partial \cyl_0$ (for $A \subset \cyl_0$, $A-t$ stands for the longitudinal translation of $A$ by $t$, to the left)
and $T^\dagger := \inf\{ t \in \N : K_0(t) = \partial \cyl_0 \}$. The event $\{ T^\dagger >t \}$ corresponds to the event that there is a cluster intersecting both $K_0$ and $\{ t\} \times \mathbb{S}^1$. 
Let $\hat{K}_0$ be another initial connected compact subset of $\overline{\cyl}_0$ and define analogously $(\hat{K}_0(t))_{t \in \N}$ and $\hat{T}^\dagger$.
We would like to stress that $(K_0(t))_{t \in \N}$ is \emph{not} a Markov process; see Figure \ref{fig1}. For this reason, we will need to consider a larger filtration than the natural filtration associated to $(K_0(t))_{t \in \N}$; this filtration is introduced in \eqref{E:filtration} and will be denoted by $(\Fc_t)_{t \in \N}$.

\begin{figure}
   \centering
    \def\svgwidth{0.5\columnwidth}
   %% Creator: Inkscape 1.1 (c4e8f9e, 2021-05-24), www.inkscape.org
%% PDF/EPS/PS + LaTeX output extension by Johan Engelen, 2010
%% Accompanies image file 'drawing1.pdf' (pdf, eps, ps)
%%
%% To include the image in your LaTeX document, write
%%   \input{<filename>.pdf_tex}
%%  instead of
%%   \includegraphics{<filename>.pdf}
%% To scale the image, write
%%   \def\svgwidth{<desired width>}
%%   \input{<filename>.pdf_tex}
%%  instead of
%%   \includegraphics[width=<desired width>]{<filename>.pdf}
%%
%% Images with a different path to the parent latex file can
%% be accessed with the `import' package (which may need to be
%% installed) using
%%   \usepackage{import}
%% in the preamble, and then including the image with
%%   \import{<path to file>}{<filename>.pdf_tex}
%% Alternatively, one can specify
%%   \graphicspath{{<path to file>/}}
%% 
%% For more information, please see info/svg-inkscape on CTAN:
%%   http://tug.ctan.org/tex-archive/info/svg-inkscape
%%
\begingroup%
  \makeatletter%
  \providecommand\color[2][]{%
    \errmessage{(Inkscape) Color is used for the text in Inkscape, but the package 'color.sty' is not loaded}%
    \renewcommand\color[2][]{}%
  }%
  \providecommand\transparent[1]{%
    \errmessage{(Inkscape) Transparency is used (non-zero) for the text in Inkscape, but the package 'transparent.sty' is not loaded}%
    \renewcommand\transparent[1]{}%
  }%
  \providecommand\rotatebox[2]{#2}%
  \newcommand*\fsize{\dimexpr\f@size pt\relax}%
  \newcommand*\lineheight[1]{\fontsize{\fsize}{#1\fsize}\selectfont}%
  \ifx\svgwidth\undefined%
    \setlength{\unitlength}{564.33762851bp}%
    \ifx\svgscale\undefined%
      \relax%
    \else%
      \setlength{\unitlength}{\unitlength * \real{\svgscale}}%
    \fi%
  \else%
    \setlength{\unitlength}{\svgwidth}%
  \fi%
  \global\let\svgwidth\undefined%
  \global\let\svgscale\undefined%
  \makeatother%
  \begin{picture}(1,0.53454246)%
    \lineheight{1}%
    \setlength\tabcolsep{0pt}%
    \put(0,0){\includegraphics[width=\unitlength,page=1]{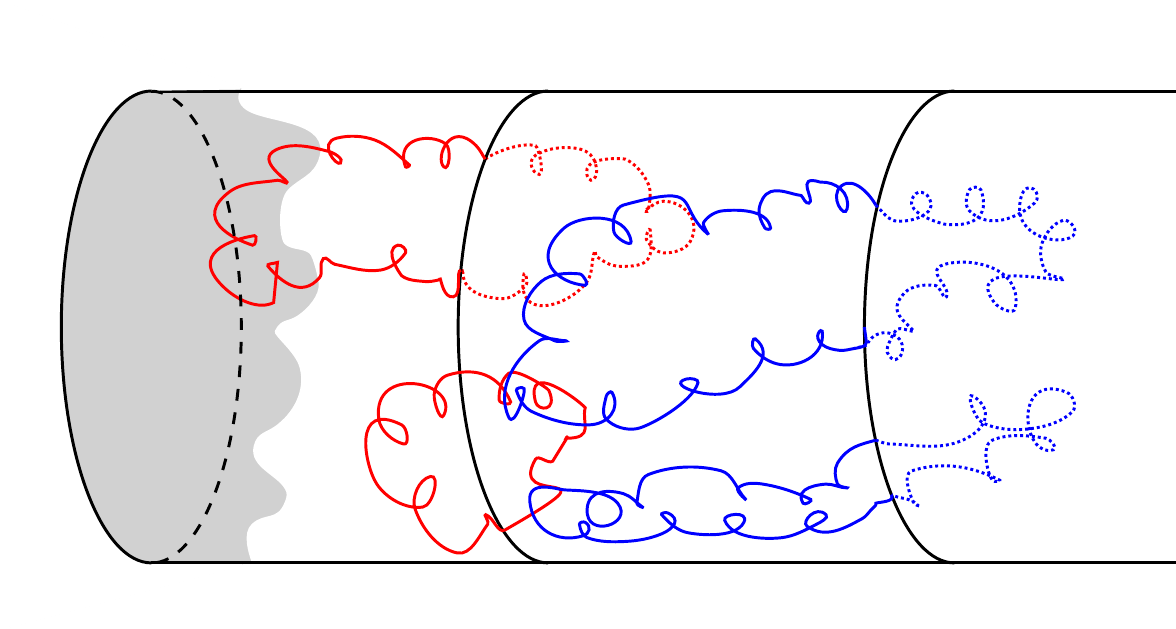}}%
    \put(0.08137569,0.22733275){\makebox(0,0)[lt]{\lineheight{1.25}\smash{\begin{tabular}[t]{l}$K_0$\end{tabular}}}}%
    \put(0.43178562,0.00962135){\makebox(0,0)[lt]{\lineheight{1.25}\smash{\begin{tabular}[t]{l}$t_1$\end{tabular}}}}%
    \put(0.78936224,0.00962135){\makebox(0,0)[lt]{\lineheight{1.25}\smash{\begin{tabular}[t]{l}$t_2$\end{tabular}}}}%
  \end{picture}%
\endgroup%

   \caption{Schematic representation of the sequence $(K_0(t))_{t \in \N}$. The filled grey region is the initial set $K_0$. The clusters of $\Lc_{\cap(0,t_1]}^\theta$ (resp. $\Lc_{\cap(0,t_2]}^\theta \setminus \Lc_{\cap(0,t_1]}^\theta$) are depicted in red (resp. blue). $Q_0(t_1) \cap \cyl_{t_1}$ and $Q_0(t_2) \cap \cyl_{t_2}$ correspond to the dotted red and blue regions respectively. $Q_0(t_2)$ is \emph{not} measurable w.r.t. $Q_0(t_1)$ and the loops in $\Lc_{\cap(0,t_2]}^\theta \setminus \Lc_{\cap(0,t_1]}^\theta$ since it also depends on the other clusters of $\Lc_{\cap(0,t_1]}^\theta$ (bottom red cluster in this picture).}\label{fig1}
\end{figure}

Our coupling result reads as follows:

\begin{theorem}\label{T:coupling_rough}
For every initial conditions $K_0$ and $\hat{K}_0$ not identical to $\partial \cyl_0$, and every $T \geq 1$ large enough, we can couple on the same probability space the two stochastic processes $(K_0(t))_{t = 0, \dots, T}$ under $\P(\cdot \vert T^\dagger > T)$ and $(\hat{K}_0(t))_{t = 0, \dots, T}$ under $\P(\cdot \vert \hat{T}^\dagger > T)$ such that there exists a stopping time $\Tc^T$ w.r.t.\ the filtration $(\Fc_t)_{t \in \N}$ such that the following holds:
\begin{enumerate}
\item Coalescence:
On the event $\{ \Tc^T < T \}$, $K_0(t) = \hat{K}_0(t)$ for all $t \in \{\Tc^T, \dots, T\}$;
\item In finite time:
Moreover $\lim_{n \to \infty} \sup_{T \geq n} \Prob{ \Tc^T \geq n } = 0$.
\end{enumerate}
\end{theorem}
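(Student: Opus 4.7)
The plan is to realise $(K_0(t))_{t \in \N}$ as a function of a Markov chain with respect to a suitably enlarged filtration and then invoke the abstract coupling result Theorem \ref{Thm abstract coupling}. The natural filtration of $(K_0(t))_{t \in \N}$ is insufficient: as illustrated in Figure \ref{fig1}, clusters of $\Lc_{\cap(0,t]}^\theta$ that touch $K_0$ but do not reach $\partial \cyl_t$ are invisible in $K_0(t)$ and can reappear later through further merging, and unmarked clusters at time $t$ can become marked after merging with marked ones. I would therefore let $\Fc_t$ be the $\sigma$-algebra generated by $\Lc_{\cap(0,t]}^\theta$ together with, for each of its clusters, the indicator of whether it intersects $K_0$. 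The state at time $t$ is summarised by the pair $X_t = (\sim_t, \ell_t)$, where $\sim_t$ is the partition of $\partial\cyl_t \cap \overline{\Lc_{\cap(0,t]}^\theta}$ induced by cluster membership and $\ell_t$ labels each class by whether the corresponding cluster touches $K_0$. By the spatial Markov property of the loop soup across $\{t\} \times \mathbb{S}^1$, the longitudinally translated process $(X_t)_{t \in \N}$ is a Markov chain with respect to $\Fc_t$, and $K_0(t)$ is a deterministic function of $X_t$.

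The next step is to exhibit a universal coalescence event. For $t \in \N$, let $A_t$ be the event that the strip $(t,t+1) \times \mathbb{S}^1$ contains a loop of $\Lc_0^\theta$ surrounding the cylinder. A cylindrical analogue of Lemma \ref{L:surround} gives $\Prob{A_t} \geq p_0$ for some universal $p_0 = p_0(\theta) > 0$, and $A_t$ is measurable with respect to loops supported in that strip. On $A_t$, any cluster of $\Lc_{\cap(0,t+1]}^\theta$ that contains a path from $\partial \cyl_t$ to $\partial \cyl_{t+1}$ must intersect the surrounding loop $\Gamma$, so all such clusters merge into a single super-cluster $\Cc(\Gamma)$. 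On the joint event $A_t \cap \{T^\dagger > T\} \cap \{\hat T^\dagger > T\}$, both $K_0$ and $\hat K_0$ have a descendant class reaching $\partial \cyl_{t+1}$, both are absorbed by $\Cc(\Gamma)$, and the marked portion of the trace on $\partial \cyl_{t+1}$ is determined by $\Cc(\Gamma)$ alone, independently of $K_0$ versus $\hat K_0$. Consequently $X_{t+1} = \hat X_{t+1}$ as marked partitions, and $K_0(s) = \hat K_0(s)$ for all $s \geq t+1$ along this coupling.

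It remains to assemble these ingredients into the joint coupling, which is the content of Theorem \ref{Thm abstract coupling}. The two chains $(X_t)$ and $(\hat X_t)$ share the same Doob-transformed transition kernel, so it suffices to couple step-by-step: at each time $t$, sample jointly the loop soup increments producing $X_{t+1}$ and $\hat X_{t+1}$ so as to maximise the chance of hitting $A_t$ simultaneously. The events $\{T^\dagger > T\}$, $\{\hat T^\dagger > T\}$ and $A_t$ are all increasing in the loop soup, so FKG yields a uniform lower bound $p_0' > 0$ on the probability of $A_t$ at each step of the coupled chain, independently of $T \geq n \geq t$. Setting $\Tc^T$ to be the first time at which $A_t$ is realised jointly, one obtains $X_t = \hat X_t$ for $t \geq \Tc^T$ and the geometric tail $\Prob{\Tc^T \geq n} \leq (1-p_0')^n$. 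The main obstacle is the formal construction of this joint law: one cannot simply reuse a single realisation of $\Lc_0^\theta$, since the two conditioning events $\{T^\dagger > T\}$ and $\{\hat T^\dagger > T\}$ differ, and one must instead work with the two distinct Doob-transformed kernels and verify that they share enough mass on states supporting $A_t$ to force coalescence with uniformly positive probability. This is precisely the abstract scheme encoded in Theorem \ref{Thm abstract coupling}.
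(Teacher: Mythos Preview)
Your overall strategy---enlarge to a Markov chain, find a ``coalescence event'' built from a surrounding loop, and invoke Theorem~\ref{Thm abstract coupling}---matches the paper's. But two steps are genuinely incomplete.

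\medskip

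\textbf{The proposed state is not Markov.} You take $X_t=(\sim_t,\ell_t)$, the labelled partition that the clusters of $\Lc_{\cap(0,t]}^\theta$ induce on $\partial\cyl_t$. This is not enough: loops in $\Lc_{\cap(0,t]}^\theta$ can protrude arbitrarily far into $\cyl_t$, and a new loop in $\Lc_{\cap(t,t+1]}^\theta$ interacts with those protrusions, not merely with their trace on the circle $\partial\cyl_t$. Two configurations with the same $X_t$ but different protrusions into $\cyl_t$ yield different laws for $X_{t+1}$, so the conditional law of $X_{t+1}$ given $\Fc_t$ depends on more than $X_t$. In particular, $K_0(t)$---by definition the portion of the marked cluster lying in $\overline{\cyl}_t$, translated back---is \emph{not} a function of your $X_t$. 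There is no ``spatial Markov property of the loop soup across $\{t\}\times\mathbb{S}^1$''; what is true is that the loops contained in $\cyl_t$ are independent of $\Lc_{\cap(0,t]}^\theta$, and the correct sufficient statistic is the full family $(K_i(t))_{i\ge0}$ of cluster extensions into $\cyl_t$, as in Section~\ref{Sucsec Markov chain clusters}.

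\medskip

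\textbf{The uniform overlap $p_0'$ is not established.} Your argument for a geometric tail rests on a per-step coalescence probability bounded below uniformly in the current state and in $T$. Tracing your bound: using a common loop soup, on $A_t\cap\{T^\dagger>T\}\cap\{\hat T^\dagger>T\}$ the chains agree at time $t+1$, so the total-variation overlap of the two conditioned one-step kernels from states $x,y$ is at least
\[
\frac{\P(A_0,\,T_\dagger^x>T',\,T_\dagger^y>T')}{\P(T_\dagger^x>T')\vee\P(T_\dagger^y>T')}
\;\ge\;
\P(A_0)\cdot\P(T_\dagger^y>1)
\]
(after FKG and the observation that on $A_0$ the two survival events coincide beyond level $1$). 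The factor $\P(T_\dagger^y>1)$ is the \emph{unconditioned} probability that the marked compact in state $y$ reaches level $1$; it can be arbitrarily small if $\rho$ of the marked compact is tiny, and states with small $\rho$ do occur along the conditioned chain. So the bound is not uniform over states, and the geometric-tail claim does not follow. This is precisely the non-uniformity that the paper handles by introducing the characteristic time $\chrT((K_i)_i)=\lceil\sup_i\rho(K_i)\rceil$ and the favourable sets $\fav_n$ (marked compact long enough, all compacts not too long), then verifying the three hypotheses of Theorem~\ref{Thm abstract coupling}: from $\fav_n$ a surrounding-loop-plus-long-crossing event (Lemma~\ref{Lem overlap}) forces overlap of order one because the crossing event has probability comparable to survival (Lemma~\ref{Lem another crossing}); reaching $\fav_n$ has uniformly positive probability (Lemma~\ref{Lem F n clusters} plus Lemma~\ref{Lem exponent conditioned loops}); and $\chrT$ after a failed attempt has a controlled tail (again Lemma~\ref{Lem exponent conditioned loops}). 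Your sketch skips all of this and so leaves the heart of the proof undone.
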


This result will be crucial in the proof of the convergence of the ratio $\P ( T^\dagger > T ) / \P ( \hat{T}^\dagger > T )$ as $T \to \infty$ (see Theorem~\ref{T:ratio_general}). Indeed, thanks to the above coupling, as soon as the two processes $(K_0(t))_{t \in [0,T]}$ and $(\hat{K}_0(t))_{t \in [0,T]}$ have coalesced, surviving up to time $T$ will have the same cost. The cost will be different only before the coalescence time that we know being finite almost surely. Making this argument rigorous requires some extra work (especially on the event that the coalescence time is large), but most of the difficulty lies in Theorem \ref{T:coupling_rough} above. See Section \ref{SS:ratio} for details.

To prove Theorem \ref{T:coupling_rough}, we will develop a general abstract result that produces a coupling between two trajectories of the same Markov chain with distinct starting points. In this coupling the two trajectories coalesce in finite time. See Section \ref{SS:coupling_Markov} and Theorem \ref{Thm abstract coupling}.

As already mentioned, $(K_0(t))_{t \geq 1}$ is not Markovian. In Section \ref{Sucsec Markov chain clusters}, we introduce an enlarged process of clusters (actually a collection of clusters) that is Markovian. In Section \ref{Subsec couple cluster}, we show that we can apply Theorem \ref{Thm abstract coupling} to the framework described in Section \ref{Sucsec Markov chain clusters}, concluding the proof of Theorem \ref{T:coupling_rough}.

We will start in the next section by recalling some facts about the coupling of two random variables.

\subsection{Total variation and optimal coupling}
\label{Subsec TV}

Let $(\espace,d)$ be a Polish space, that is to say a separable and complete metric space, endowed with its Borel sigma-algebra $\mathcal{B}(\espace)$ generated by the open subsets.
Given two probability measures $\mu_{1}$ and $\mu_{2}$ on
$(\espace,\mathcal{B}(\espace))$,
the total variation distance $d_{\rm TV}(\mu_{1},\mu_{2})$
is given by
\begin{displaymath}
d_{\rm TV}(\mu_{1},\mu_{2}) = 
2\sup\{\vert\mu_{2}(A)-\mu_{1}(A)\vert ~: A\in \mathcal{B}(\espace)\} \leq 2.
\end{displaymath}
This is also the total mass of total variation measure corresponding to the signed measure $\mu_{2} - \mu_{1}$.
According to Lebesgue's decomposition theorem,
there are measures $\nu$, $\nu_{1}$, $\nu_{2}$,
two by two mutually singular, and non-negative measurable functions $f_{1}$,
$f_{2}$, such that
$
\mu_{1} = f_{1}\nu + \nu_{1}
$
and
$
\mu_{2} = f_{2}\nu + \nu_{2}.
$
One can also impose that $f_{1}$ and $f_{2}$ are $0$
$\nu_{1} + \nu_{2}$ almost everywhere,
and this is the convention we will use.
Then
\begin{equation}\label{E:defq}
d_{\rm TV}(\mu_{1},\mu_{2}) = 
\int_{E}\vert f_{2} - f_{1} \vert d\nu
+ \nu_{1}(\espace) + \nu_{2}(\espace)
\quad \text{and} \quad
1 - \dfrac{1}{2} d_{\rm TV}(\mu_{1},\mu_{2})
=
\int_{E} f_{1}\wedge f_{2}\,d\nu =: q.
\end{equation}

We are interested in the couplings $(X_{1},X_{2})$,
where the marginal distribution of $X_{i}$ is $\mu_{i}$,
$i=1,2$,
so as to maximize $\mathbb{P}(X_{1}=X_{2})$.
This maximal probability is actually given by $q$ defined in \eqref{E:defq} above.
An optimal coupling can be constructed as follows.
Let $Y$, $Y_{i}, i=1,2,$ be three independent r.v.s taking values in $\espace$ distributed respectively according to
\begin{displaymath}
\dfrac{1}{q} f_{1}\wedge f_{2}\,d\nu
\quad \text{and} \quad
\dfrac{1}{1-q}\big(
(f_{i}-f_{1}\wedge f_{2})d\nu
+ d\nu_{i}
\big), \quad i=1,2.
\end{displaymath}
Moreover, let $Z$ be a Bernouilli r.v. independent from
$(Y,Y_{1},Y_{2})$, with $\mathbb{P}(Z=1) = q$.
On the event $\{Z=1\}$, we set
$X_{1} = X_{2} =Y$.
On the event $\{Z=0\}$, we set
$X_{1} = Y_{1}$ and $X_{2} =Y_{2}$.
It is easy to check that each $X_{i}$, $i=1,2$,
has for distribution $\mu_{i}$.
Note that in this construction,
\begin{displaymath}
\mathbb{P}(X_{2}=X_{1}\vert X_{1}) = 
(f_{2}(X_{1})/f_{1}(X_{1}))\wedge 1.
\end{displaymath}
Moreover, conditionally on the value of $X_{1}$ and the event
$\{X_{2}\neq X_{1}\}$,
the conditional distribution of $X_{2}$ is that of $Y_{2}$.

\subsection{Coupling Markov processes conditioned to survive}\label{SS:coupling_Markov}

Let $(\espace,d)$ be a Polish space, endowed with its Borel sigma-algebra $\Bc(\espace)$ generated by the open subsets.
Let $(P_{n})_{n\geq 1}$ be a time-homogeneous Markov semi-group on $\espace$.
That is to say, for every $n\geq 1$ and $x\in \espace$,
$P_{n}(x,dy)$ is a probability measure on $\espace$ and the Chapman--Kolmogorov equation holds: for every $x\in\espace$ and $n_{1}, n_{2} \geq 1$, 
\begin{displaymath}
\int_{y\in\espace}P_{n_{1}}(x,dy)P_{n_{2}}(y,dz)
= P_{n_{1}+n_{2}}(x, dz).
\end{displaymath}
For $x\in\espace$, let $(\xi^{x}(n))_{n\geq 0}$ denote a Markov chain
starting from $x$, with transition semi-group $(P_{n})_{n\geq 1}$.

Let $\dagger$ be a measurable subset of $\espace$,
with $\dagger\neq \espace$.
We will denote $\espace^{\ast}=\espace\setminus\dagger$.
For $x\in\espace$, let
\begin{displaymath}
T_{\dagger}^{x} = 
\inf\{
n\geq 0:
\xi^{x}(n)\in \dagger
\}
\end{displaymath}
be the first hitting time of $\dagger$.
We assume that for every $x\in \espace^{\ast}$ and
$T\in\N\setminus\{ 0\}$,
$\mathbb{P}(T_{\dagger}^{x}>T)>0$.
We will denote by $(\xi^{x,T}(n))_{n\geq 0}$ the trajectory
$(\xi^{x}(n))_{n\geq 0}$ conditioned on the event
$\{T_{\dagger}^{x}>T\}$.
Let $P^{T}_{n}(x,dy)$ denote the law of
$\xi^{x,T}(n)$.
We have a time-inhomogeneous Markov property:
for every $x\in \espace^{\ast}$,
$n_{1},n_{2}\geq 1$ and $T\geq n_{1} + n_{2}$,
\begin{displaymath}
P^{T}_{n_{1} + n_{2}}(x,dz)
=\int_{y\in \espace^{\ast}}
P^{T}_{n_{1}}(x,dy)P^{T-n_{1}}_{n_{2}}(y,dz).
\end{displaymath}

Given $x,y\in\espace^{\ast}$, and $T\in\N\setminus\{ 0\}$,
we will denote by $(\xi^{x,y,T}(n))_{0\leq n\leq T}$
the process $(\xi^{x,T}(n))_{0\leq n\leq T}$
conditioned on $\xi^{x,T}(T)=y$ 
(Markovian bridge conditioned on not hitting $\dagger$).
This conditional distribution is defined for
$P^{T}_{T}(x,dy)$ almost every $y$.
For this, we refer to the existence of regular conditional distributions on Polish spaces \cite[Section 5.3.C, Theorem 3.19]{KaratzasShreve2010BMStochCalc}.
Further, for every $T'\geq T$,
conditionally on $\xi^{x,T'}(T)=y$,
the two processes  $(\xi^{x,T'}(n))_{0\leq n\leq T}$
and $(\xi^{x,T'}(T+n))_{n\geq 0}$
are independent, and distributed respectively as
$(\xi^{x,y,T}(n))_{0\leq n\leq T}$ and
$(\xi^{y,T'-T}(n))_{n\geq 0}$.

We are interested in coupling the conditioned processes
$(\xi^{x,T}(n))_{0\leq n\leq T}$ and
$(\xi^{y,T}(n))_{0\leq n\leq T}$,
for two distinct starting points $x\neq y\in\espace^{\ast}$,
on the same probability space,
such that the two processes coalesce with high probability for
$T$ large enough, and with a uniform control in $T$ over the coalescence time.
In our approach, we will need the following data:
\begin{itemize}
\item For every $n\in \N\setminus\{ 0\}$, a non-empty measurable subset 
$\fav_{n}$ of $\espace^{\ast}$.
This will be the subset of favourable configurations (favourable to attempt a
coalescence).
\item A measurable function
$\chrT: \espace^{\ast}\mapsto \N\setminus\{ 0\}$.
The value $\chrT(x)$ is a sort of characteristic time
that is roughly proportional to the time that the process $\xi^{x,T}$ needs in order to reach a favourable configuration.
We will also use the following notation:
$\chrT(x,y) := \chrT(x)\vee\chrT(y)$.
\end{itemize}

\begin{theorem}
\label{Thm abstract coupling}
With the notations above, we assume that
there are integer constants $c_{2}>c_{1}\geq 2$, such that the following holds.
\begin{enumerate}
\item\label{cond1} Control of coalescence starting from favourable configurations.
We have that
\begin{equation}
\label{Eq q}
\inf_{\substack{n\in\N\setminus\{ 0\}
\\ x',y'\in\fav_{n}}}
\inf_{T\geq c_{2}n}
\Big(
1- \frac{1}{2}
d_{\rm TV}
(P^{T-n}_{(c_{1}-1)n}(x',dz),P^{T-n}_{(c_{1}-1)n}(y',dz))
\Big)
>0.
\end{equation}
\item\label{cond2} Reaching the favourable set $\fav_n$.
We have that
\begin{equation}
\label{Eq p}
\inf_{x\in\espace^{\ast}}
\inf_{n\geq \chrT(x)}
\inf_{T\geq c_{2}n}
\mathbb{P}\big(
\xi^{x,T}(n)\in\fav_{n}
\big) >0.
\end{equation}
\item\label{cond3} Control on the characteristic time.
We have that
\begin{equation}
\label{Eq control bfT}
\lim_{u\to +\infty}
\sup_{x\in\espace^{\ast}}
\sup_{n\geq \chrT(x)}
\sup_{T\geq c_{2}n}
\mathbb{P}(\chrT(\xi^{x,T}(c_{1}n))\geq u n) = 0.
\end{equation}
\end{enumerate}
Then, for every $x,y\in\espace^{\ast}$ and 
every $T\geq c_{2}\chrT(x,y)$,
one can couple on the same probability space the stochastic processes
$(\xi^{x,T}(n))_{0\leq n\leq T}$ and 
$(\xi^{y,T}(n))_{0\leq n\leq T}$, such
that there exists a stopping time 
$\coT^{T}\in \{0,1,\dots,T\}$
with respect to the joint process 
$(\xi^{x,T}(n),\xi^{y,T}(n))_{0\leq n\leq T}$
such that the following holds.
\begin{enumerate}
\item On the event $\{\coT^{T}<T\}$,
for every $n\in \{\coT^{T},\dots,T\}$,
$\xi^{x,T}(n)=\xi^{y,T}(n)$ a.s.
\item On the event $\{\coT^{T}<T\}$,
conditionally on 
$(\coT^{T},(\xi^{x,T}(n),\xi^{y,T}(n))_{0\leq n\leq \coT^{T}})$,
the processes 
\\$(\xi^{x,T}(\coT^{T}+n))_{0\leq n\leq T-\coT^{T}}$ and $(\xi^{y,T}(\coT^{T}+n))_{0\leq n\leq T-\coT^{T}}$ coincide a.s and are distributed as
$(\xi^{z,T'}(n))_{0\leq n\leq T'}$,
with $z=\xi^{x,T}(\coT^{T})$
and $T'= T-\coT^{T}$.
\item We have that
\begin{equation}
\label{Eq unif control coupling time}
\lim_{n\to +\infty}
\sup_{T>n}
\mathbb{P}(\coT^{T}>n)=0.
\end{equation}
\end{enumerate}
In particular,
\begin{equation}
\label{Eq TV large t}
\lim_{n\to +\infty}
\sup_{T\geq n}
d_{\rm TV}(P^{T}_{n}(x, dz),P^{T}_{n}(y, dz)) = 0.
\end{equation}
\end{theorem}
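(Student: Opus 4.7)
The plan is to construct the coupling as an iterative sequence of coupling attempts, each succeeding with probability bounded below by an absolute constant. Set $n_0 := \chrT(x)\vee \chrT(y)$ and $\tau_0 := 0$. Inductively, given a stopping time $\tau_k$ and states $X_k^{x} := \xi^{x,T}(\tau_k)$, $X_k^{y} := \xi^{y,T}(\tau_k)$, put $n_k := \chrT(X_k^{x})\vee \chrT(X_k^{y})$; if $T - \tau_k < c_2 n_k$, abort and set $\coT^{T} := T$. Otherwise, I perform the $k$-th attempt on the interval $[\tau_k,\tau_k+c_1 n_k]$.

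Each attempt consists of two phases. First, on $[\tau_k,\tau_k+n_k]$, the two processes are evolved \emph{independently} as Markovian conditioned chains of horizon $T-\tau_k$ started from $X_k^{x}$ and $X_k^{y}$. By the time-inhomogeneous Markov property of the conditioned chain and condition~(\ref{cond2}), each endpoint lies in $\fav_{n_k}$ with probability at least the constant $p > 0$ from \eqref{Eq p}, so both do with probability at least $p^{2}$. On this favourable event, on $[\tau_k+n_k,\tau_k+c_1 n_k]$ I apply the optimal coupling from Section~\ref{Subsec TV} to the two laws $P^{T-\tau_k-n_k}_{(c_1-1)n_k}(\xi^{x,T}(\tau_k+n_k),\cdot)$ and $P^{T-\tau_k-n_k}_{(c_1-1)n_k}(\xi^{y,T}(\tau_k+n_k),\cdot)$ to couple the two endpoints at time $\tau_k + c_1 n_k$, and fill in the intermediate values using two independent Markovian bridges of the conditioned chain (whose existence is the Polish regular-conditional-distribution construction already invoked for $\xi^{x,y,T}$). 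By condition~(\ref{cond1}) the two endpoints then coincide with probability at least some absolute $q > 0$. If coalescence has occurred I set $\coT^{T} := \tau_k + c_1 n_k$ and drive both processes afterwards by a single conditioned chain of horizon $T - \coT^{T}$ started from the common endpoint; otherwise I set $\tau_{k+1} := \tau_k + c_1 n_k$ and iterate.

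Two ingredients yield the uniform control \eqref{Eq unif control coupling time}. First, by the strong Markov property each attempt succeeds independently with probability at least $p^{2} q$, so the number $N$ of attempts before coalescence is dominated by a geometric random variable of parameter $p^{2} q$. Second, writing $\rho(u) := \sup \mathbb{P}(\chrT(\xi^{x,T}(c_1 n)) \geq u n)$ for the supremum in \eqref{Eq control bfT}, condition~(\ref{cond3}) gives $\rho(u) \to 0$ as $u \to \infty$. Given $\eta > 0$, choose $K$ with $(1-p^{2}q)^{K} \leq \eta/2$ and then $u$ with $K \rho(u) \leq \eta/2$: on an event of probability at least $1-\eta$ one has $N \leq K$ and $n_k \leq u^{k} n_0$ for all $k \leq K$, whence $\coT^{T} \leq c_1 n_0 (u^{K}-1)/(u-1) =: M'_\eta$, and each required inequality $T - \tau_k \geq c_2 n_k$ holds provided $T \geq M_\eta := M'_\eta + c_2 u^{K} n_0$. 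Hence $\sup_{T > n} \mathbb{P}(\coT^{T} > n) \leq \eta$ whenever $n \geq M_\eta$, proving \eqref{Eq unif control coupling time}. The consequence \eqref{Eq TV large t} then follows from the standard coupling inequality $d_{\rm TV}(P^{T}_{n}(x,\cdot), P^{T}_{n}(y,\cdot)) \leq 2 \mathbb{P}(\xi^{x,T}(n) \neq \xi^{y,T}(n)) \leq 2 \mathbb{P}(\coT^{T} > n)$.

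The main delicate point is preserving the correct marginal law of each of $(\xi^{x,T}(n))_{0 \leq n \leq T}$ and $(\xi^{y,T}(n))_{0 \leq n \leq T}$ throughout the iteration. The first phase does so trivially by independent sampling, while in the second phase one must couple the endpoints optimally and then use \emph{bridge} laws of the conditioned chain (not free chains) so that the conditional distribution given the endpoints is correct; this is why all the data $(f_1, f_2, \nu, q)$ of Section~\ref{Subsec TV} must be constructed from the measures $P^{T-\tau_k-n_k}_{(c_1-1)n_k}(\cdot,dz)$ of the \emph{conditioned} chain. Given this, verifying that $\coT^{T}$ is a stopping time of the joint filtration is immediate, since each branching decision (favourable or not, coalesced or not) is measurable with respect to the trajectories observed up to $\tau_k + c_1 n_k$.
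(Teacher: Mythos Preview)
Your proof is correct and follows essentially the same strategy as the paper's: an iterated two-phase coupling attempt (reach $\fav_{n_k}$, then optimally couple) with a geometric bound on the number of attempts and condition~(\ref{cond3}) controlling the growth of the characteristic times. The one structural difference is that the paper samples $(\xi^{x,T}(n))_{0\le n\le T}$ once and for all and only resamples the $y$-trajectory at each iteration (an asymmetric construction, ending with a deterministic cutoff index $I(T)$ to guarantee the $y$-marginal), whereas you resample \emph{both} endpoints via the symmetric optimal coupling and fill in both sides with bridges. Your symmetric version is perfectly legitimate---the marginal of each coordinate is preserved step by step, and the finite number of steps (each advances time by at least $c_1$) makes the construction terminate without any $I(T)$ device---while the paper's asymmetric version has the advantage that one marginal is trivially correct.

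Two small imprecisions worth tightening: the attempts do not succeed ``independently'' but rather with conditional probability at least $p^2q$ given the past, which is what actually yields the geometric domination; and the union bound over $n_{k+1}\le u\,n_k$ should carry a factor $2$ (one for each coordinate), together with a harmless off-by-one in the geometric sum for $\coT^T$. Neither affects the argument.
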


\begin{proof}
Denote by $q$ the infimum \eqref{Eq q}, and $p$ the infimum \eqref{Eq p}.
Fix $x,y\in \espace^{\ast}$ and an integer
$T\geq c_{2}\chrT(x,y)$.
Let $(\xi^{x,T}(n))_{0\leq n\leq T}$ be a conditioned Markov trajectory
starting from $x$.
We will construct by induction a sequence of processes
$(\xi^{y,T}_{i}(n))_{0\leq n\leq T, i\geq 1}$,
two random sequences of points $(x_{i})_{i\geq 1}$ and $(y_{i})_{i\geq 1}$
in $\espace^{\ast}$, and a non-increasing sequence of random non-negative integer times $(T_{i})_{i\geq 1}$. We will also denote by $\tau_{i}=\chrT(x_{i},y_{i}), i \geq 1$.
We initiate these sequences by setting $x_{1}=x$, $y_{1}=y$ and $T_{1}=T$.
If $y = x$,
we set $x_{i}=x$, $y_{i} = y$, $T_{i}=T$ and
$(\xi^{y,T}_{i}(n))_{0\leq n\leq T}
=(\xi^{x,T}(n))_{0\leq n\leq T}$
for every $i\geq 1$.
If $y \neq x$, we take $(\xi^{y,T}_{1}(n))_{0\leq n\leq T}$
distributed as $(\xi^{y,T}(n))_{0\leq n\leq T}$
and independent from $(\xi^{x,T}(n))_{0\leq n\leq T}$.
We will explain in the next few paragraphs how our inductive definition works.

If $x_{i}\neq y_{i}$ and $c_{2}\tau_{i}\leq T_{i}$,
we will set $T_{i+1} = T_{i}-c_{1}\tau_{i}$.
Whenever $x_{i}=y_{i}$ or $c_{2}\tau_{i}> T_{i}$,
we will set $x_{j}=x_{i}$, $y_{j}=y_{i}$, $T_{j}=T_{i}$
and $(\xi^{y,T}_{j}(n))_{0\leq n\leq T}=
(\xi^{y,T}_{i}(n))_{0\leq n\leq T}$
for every $j\geq i$.
We will set
\begin{displaymath}
i^{\ast} = \inf\{i\geq 1  :x_{i}=y_{i} 
\text{ or } c_{2}\tau_{i}> T_{i}\}.
\end{displaymath}
We will see that $i^{\ast} <+\infty$ a.s.

We now consider the case where
$y\neq x$ and we explain our definition of $x_2$, $y_2$, $T_2$ and $(\xi^{y,T}_{2}(n))_{0\leq n\leq T}$.
We set $T_{2} = T - c_{1}\tau_{1}$.
Denote by $F_{1}$ the event
$\{\xi^{x,T}(\tau_{1})\in\fav_{\tau_{1}}
\text{ and } \xi^{y,T}_{1}(\tau_{1})\in\fav_{\tau_{1}}\}$,
and by $F_{1}^{\rm c}$ its complementary.
We have that
$
\mathbb{P}(F_{1})\geq p^{2}>0.
$
On the event $F_{1}^{\rm c}$,
then we set $\xi^{y,T}_{2}(n) = \xi^{y,T}_{1}(n)$ for every $n\in\{0,1,\dots,T\}$.
We set $x_{2}=\xi^{x,T}(c_{1}\tau_{1})$
and $y_{2}=\xi^{y,T}_{1}(c_{1}\tau_{1})$.
On the event $F_{1}$,
we set $x_{2}=\xi^{x,T}(c_{1}\tau_{1})$.
Let $x_{1}'= \xi^{x,T}(\tau_{1})$
and $y_{1}' = \xi^{y,T}_{1}(\tau_{1})$.
We consider the Lebesgue decomposition of the probability measure
$P^{T-\tau_{1}}_{(c_{1}-1)\tau_{1}}(y_{1}',dz)$ with respect to
$P^{T-\tau_{1}}_{(c_{1}-1)\tau_{1}}(x_{1}',dz)$:
\begin{displaymath}
P^{T-\tau_{1}}_{(c_{1}-1)\tau_{1}}(y_{1}',dz)
= f_{1}(z)P^{T-\tau_{1}}_{(c_{1}-1)\tau_{1}}(x_{1}',dz)
+\nu_{1}(dz),
\end{displaymath}
where the measure $\nu_{1}$ is singular with respect to
$P^{T-\tau_{1}}_{(c_{1}-1)\tau_{1}}(x_{1}',dz)$.
Let $Z_{1}$ be a Bernoulli random variable,
conditionally independent from 
$(\xi^{x,T}(n),\xi^{y_{1},T}(t))_{0\leq n\leq T}$
given $(x_{1}',y_{1}',x_{2})$, with
\begin{displaymath}
\mathbb{P}(Z_{1}=1\vert x_{1}',y_{1}',x_{2})
= f_{1}(x_{2})\wedge 1.
\end{displaymath}
Let $V_{1}$ be a random variable on $\espace^{\ast}$,
conditionally independent from 
$(Z_{1},(\xi^{x,T}(n),\xi^{y_{1},T}(n))_{0\leq n\leq T})$
given $(x_{1}',y_{1}',x_{2})$, with
the conditional distribution given $(x_{1}',y_{1}',x_{2})$
equal to
\begin{displaymath}
\dfrac{1}{1-q_{1}}
\big((f_{1}(z)-f_{1}(z)\wedge 1)
P^{T-\tau_{1}}_{(c_{1}-1)\tau_{1}}(x_{1}',dz)
+\nu_{1}(dz)
\big),
\end{displaymath}
where $1-q_{1}$ is the normalization constant.
On the event $F_{1}\cap\{Z_{1}=1\}$,
we further set $y_{2}=x_{2}$.
On the event $F_{1}\cap\{Z_{1}=0\}$, we set $y_{2}=V_{1}$.
In this way,
\begin{align*}
& \mathbb{P}\big(y_{2}=x_{2}\big\vert
(\xi^{x,T}(n),\xi^{y,T}_{1}(n))_{0\leq n\leq \tau_{1}},
F_{1}\big)
\\
& =
1-\dfrac{1}{2} d_{\rm TV}(P^{T-\tau_{1}}_{(c_{1}-1)\tau_{1}}(x_{1}',dz),
P^{T-\tau_{1}}_{(c_{1}-1)\tau_{1}}(y_{1}',dz))
= q_{1}\geq q>0,
\end{align*}
and the conditional distribution of $y_{2}$ given
$(\xi^{x,T}(n),\xi^{y,T}_{1}(n))_{0\leq n\leq \tau_{1}}$
on the event $F_{1}$ is
$P^{T-\tau_{1}}_{(c_{1}-1)\tau_{1}}(y_{1}',dz)$;
see Section \ref{Subsec TV}.
In this way,
the distribution of
$((\xi^{y,T}_{1}(n))_{0\leq n\leq \tau_{1}},y_{2})$
conditionally on the event $F_{1}$
coincides with the conditional distribution of
$((\xi^{y,T}_{1}(n))_{0\leq n\leq \tau_{1}},\xi^{y,T}_{1}(c_{1}\tau_{1}))$
given the same event.
Consider a conditioned Markov process 
$(\xi^{y_{2}, T-c_{1}\tau_{1}}(n))_{0\leq n\leq T-c_{1}\tau_{1}}$
conditionally independent from 
$((\xi^{x,T}(n),\xi^{y,T}_{1}(n))_{0\leq n\leq T},Z_{1},V_{1})$
given $y_{2}$.
Consider also a Markovian bridge conditioned on not hitting $\dagger$,
$(\xi^{y_{1}',y_{2},(c_{1}-1)\tau_{1}}(n))_{0\leq n\leq (c_{1}-1)\tau_{1}}$,
with the bridge being conditionally independent from
\[
((\xi^{x,T}(t),\xi^{y,T}_{1}(n))_{0\leq n\leq T},Z_{1},V_{1},
(\xi^{y_{2}, T-c_{1}\tau_{1}}(n))_{0\leq n\leq T-c_{1}\tau_{1}})\]
given $(y_{1}',y_{2})$.
On the event $F_{1}\cap\{Z_{1}=1\}$, we define
$(\xi^{y,T}_{2}(n))_{0\leq n\leq T}$ as follows:
\begin{displaymath}
\xi^{y,T}_{2}(n) =
\left\lbrace
\begin{array}{ll}
\xi^{y,T}_{1}(n) & \text{for } n\in\{0,\dots,\tau_{1}\}, \\ 
\xi^{y_{1}',y_{2},(c_{1}-1)\tau_{1}}(n-\tau_{1}) & \text{for } n\in 
\{\tau_{1}+1,\dots,c_{1}\tau_{1}-1\}, \\ 
\xi^{x,T}(n) & \text{for } n\in\{c_{1}\tau_{1},\dots, T\}.
\end{array} 
\right.
\end{displaymath}
On the event $F_{1}\cap\{Z_{1}=0\}$, we define
$(\xi^{y,T}_{2}(n))_{0\leq n\leq T}$ as follows:
\begin{displaymath}
\xi^{y,T}_{2}(n) =
\left\lbrace
\begin{array}{ll}
\xi^{y,T}_{1}(n) & \text{for } n\in\{0,\dots,\tau_{1}\}, \\ 
\xi^{y_{1}',y_{2},(c_{1}-1)\tau_{1}}(n-\tau_{1}) & \text{for } n\in 
\{\tau_{1}+1,\dots,c_{1}\tau_{1}-1\}, \\ 
\xi^{y_{2}, T-c_{1}\tau_{1}}(n-c_{1}\tau_{1}) & \text{for } 
n\in\{c_{1}\tau_{1},\dots, T\}.
\end{array} 
\right.
\end{displaymath}

The process $(\xi^{y,T}_{2}(n))_{0\leq n\leq T}$ has been constructed in such a way that it has the same distribution as $(\xi^{y,T}_{1}(n))_{0\leq n\leq T}$.
But now $(\xi^{y,T}_{2}(n))_{0\leq n\leq T}$ is correlated to
$(\xi^{x,T}(n))_{0\leq n\leq T}$.
What we have gained is that the two process $(\xi^{x,T}(n))_{0\leq n\leq T}$ and 
$(\xi^{y,T}_{2}(n))_{0\leq n\leq T}$ have a positive probability to coalesce.
More precisely,
\begin{displaymath}
\mathbb{P}
(\forall n\in \{c_{1}\tau_{1},\dots, T\}, \xi^{y,T}_{2}(n)=\xi^{x,T}(n))
=
\mathbb{P}(F_{1}\cap\{Z_{1}=1\})
\geq p^{2} q >0.
\end{displaymath}

Consider now the r.v. $\tau_{2}=\chrT(x_{2},y_{2})$.
The condition \eqref{Eq control bfT} provides a control on the tail of
$\tau_{2}/\tau_{1}$: for all $u>0$, $\mathbb{P}(\tau_{2}\geq u\tau_{1})$ is at most
\begin{align*}
\mathbb{P}(\chrT(x_{2})\geq u\tau_{1}) + \mathbb{P}(\chrT(y_{2})\geq u\tau_{1})
& =
\mathbb{P}(\chrT(\xi^{x,T}(c_{1}\tau_{1}))\geq u\tau_{1}) +
\mathbb{P}(\chrT(\xi^{y,T}_{2}(c_{1}\tau_{1}))\geq u\tau_{1}) \\
& \leq
2\sup_{z\in\espace^{\ast}}
\sup_{n\geq \chrT(z)}
\sup_{T'\geq c_{2}n}
\mathbb{P}(\chrT(\xi^{z,T'}(c_{1}n))\geq u n).
\end{align*}
Let 
\begin{displaymath}
\psi : u \in [0,\infty) \mapsto 1\wedge 2\sup_{z\in\espace^{\ast}}
\sup_{n\geq \chrT(z)}
\sup_{T'\geq c_{2}n}
\mathbb{P}(\chrT(\xi^{z,T'}(c_{1}n))\geq u n).
\end{displaymath}
The function $\psi$ is non-increasing, but not necessarily left-continuous.
We will denote by $\psi(u^{-})$ the limit to the left of $\psi(u)$.
Let $U_{2}$ be a r.v. in $[0+\infty)$ with distribution
characterised by
\begin{equation}
\label{E:defU2}
\mathbb{P}(U_{2}\geq u) = \psi(u^{-}).
\end{equation}
The condition \eqref{Eq control bfT} ensures that $U_{2}<+\infty$.
The r.v. $\tau_{2}$ is stochastically dominated by
$U_{2} \tau_{1}$.
Note however that conditionally on $y_{2}\neq x_{2}$,
the r.v. $\tau_{2}$ is not necessarily stochastically dominated by
$U_{2} \tau_{1}$.

On the event $\{ x_{2}=y_{2}\}$, we have $i^{\ast} =2$,
and for every $i\geq 3$,
$x_{i} = x_{2} = y_{i} = y_{2}$, $T_{i}=T_{2}$,
and 
$(\xi^{y,T}_{i}(n))_{0\leq n\leq T}=(\xi^{y,T}_{2}(n))_{0\leq n\leq T}$.
On the event $\{ x_{2}\neq y_{2}\}$,
conditionally on 
\[((\xi^{x,T}(n),\xi^{y,T}_{2}(n))_{0\leq n\leq T - T_{2}},
(\xi^{y,T}_{1}(n))_{0\leq n\leq T}),\]
the processes
$(\xi^{x,T}(T - T_{2}+ n))_{0\leq n\leq T_{2}}$
and $(\xi^{y,T}_{2}(T - T_{2}+ n))_{0\leq n\leq T_{2}}$
are independent,
distributed as
$(\xi^{x_{2},T_{2}}(n))_{0\leq n\leq T_{2}}$ and 
$(\xi^{y_{2},T_{2}}(n))_{0\leq n\leq T_{2}}$ respectively.
Thus, we can iterate by applying the previous construction
to $(\xi^{x,T}(T - T_{2}+ n),\xi^{y,T}_{2}(T - T_{2}+ n))_{0\leq n\leq T_{2}}$.

\medskip

By iterating the construction further, we get the sequences
$(x_{i})_{i\geq 1}$, $(y_{i})_{i\geq 1}$,
$(T_{i})_{i\geq 1}$ and $(\xi^{y,T}_{i}(n))_{0\leq n\leq T, i\geq 1}$
satisfying the following properties:
\begin{itemize}
\item For every $i\geq 1$,
the process $(\xi^{y,T}_{i}(n))_{0\leq n\leq T}$
is distributed as the conditioned Markov process 
$(\xi^{y,T}(n))_{0\leq n\leq T}$.
\item For every $i\geq 1$,
$x_{i}=\xi^{x,T}(T-T_{i})$ and
$y_{i}=\xi^{y,T}_{i}(T-T_{i})$.
\item For every $i\geq 2$,
a.s. for every $n\in\{0,\dots, T-T_{i-1}\}$, 
$\xi^{y,T}_{i}(n)=\xi^{y,T}_{i-1}(n)$.
\item For every $i\geq 1$,
on the event $\{ x_{i}=y_{i}\}$,
we have that a.s. for every $n\in\{T-T_{i},\dots,T\}$,
$\xi^{y,T}_{i}(n)=\xi^{x,T}(n)$,
and for every $j\geq i$, $T_{j}=T_{i}$,
$x_{j}=y_{j}=x_{i}=y_{i}$,
and $(\xi^{y,T}_{j}(n))_{0\leq n\leq T}=(\xi^{y,T}_{i}(n))_{0\leq n\leq T}$.
\item For every $i\geq 1$,
on the event $\{c_{2}\tau_{i}>T_{i}\}$
(where $\tau_{i}=\chrT(x_{i},y_{i})$),
we have for every $j\geq i$, $T_{j}=T_{i}$,
$x_{j}=x_{i}$, $y_{j}=y_{i}$,
and $(\xi^{y,T}_{j}(n))_{0\leq n\leq T}=(\xi^{y,T}_{i}(n))_{0\leq n\leq T}$.
\item For every $i\geq 1$,
on the event $\{ x_{i}=y_{i} \text{ and } c_{2}\tau_{i-1}\leq T_{i-1}\}$,
conditionally on 
\\$((\xi^{x,T}(n),\xi^{y,T}_{i}(n))_{0\leq n\leq T-T_{i}},
(\xi^{y,T}_{j}(n))_{0\leq n\leq T, 1\leq j\leq i-1})$,
the process
$(\xi^{x,T}(T - T_{i}+ n))_{0\leq n\leq T_{i}}$
is distributed as the conditioned Markov processes
$(\xi^{x_{i},T_{i}}(n))_{0\leq n\leq T_{i}}$.
\item For every $i\geq 2$,
on the event $\{ x_{i}\neq y_{i} \text{ and } c_{2}\tau_{i-1}\leq T_{i-1}\}$,
conditionally on 
\\$((\xi^{x,T}(n),\xi^{y,T}_{i}(n))_{0\leq n\leq T-T_{i}},
(\xi^{y,T}_{j}(n))_{0\leq n\leq T, 1\leq j\leq i-1})$,
the processes
$(\xi^{x,T}(T - T_{i}+ n))_{0\leq n\leq T_{i}}$
and $(\xi^{y,T}_{i}(T - T_{i}+ n))_{0\leq n\leq T_{i}}$
are independent,
distributed as the conditioned Markov processes
$(\xi^{x_{i},T_{i}}(n))_{0\leq n\leq T_{i}}$ and 
$(\xi^{y_{i},T_{i}}(n))_{0\leq n\leq T_{i}}$ respectively.
\item For every $i\geq 1$,
on the event $\{ x_{i}\neq y_{i} \text{ and } c_{2}\tau_{i}\leq T_{i}\}$
(i.e. $\{i^{\ast}>i\}$),
$T_{i+1} = T_{i}-c_{1}\tau_{i}$.
\item For every $i\geq 1$,
\begin{displaymath}
\mathbb{P}\big(x_{i+1}=y_{i+1}\big\vert i^{\ast}>i,
(\xi^{x,T}(n),\xi^{y,T}_{i}(n))_{0\leq n\leq T-T_{i}},
(\xi^{y,T}_{j}(n))_{0\leq n\leq T, 1\leq j\leq i-1}\big)
\geq p^{2}q.
\end{displaymath}
\end{itemize}

From the above properties one can derive the following.
First of all, for every $i\geq 2$,
\begin{displaymath}
\mathbb{P}(i^{\ast}> i)\leq (1-p^{2}q)^{i-1},
\end{displaymath}
and in particular, $i^{\ast}<+\infty$ a.s.
Further, for every $i\geq 2$,
the r.v. $T-T_{i}$ is stochastically dominated by
\begin{displaymath}
c_{1}(1+U_{2}+U_{2}U_{3}+ \dots +
U_{2}U_{3}\dots U_{i-1})\tau_{1},
\end{displaymath}
where the r.v.s $U_{j}$, $j\geq 3$, are i.i.d. copies of the r.v. $U_{2}$ \eqref{E:defU2}.
We will denote
\begin{displaymath}
\widehat{U}_{i} = 1+U_{2}+U_{2}U_{3}+ \dots +
U_{2}U_{3}\dots U_{i},
\end{displaymath}
with $\widehat{U}_{1}=1$,
so that $T-T_{i}$ is stochastically dominated by
$c_{1}\widehat{U}_{i-1}\tau_{1}$.
Note that this stochastic domination is uniform in
$T\geq c_{2}\tau_{1}$.

Then, for every $i\geq 2$,
\begin{displaymath}
\mathbb{P}(i^{\ast}>i, y_{i}\neq x_{i})
\leq \mathbb{P}(i^{\ast}>i)\leq (1-p^{2}q)^{i-1}
\end{displaymath}
and
\begin{align*}
\mathbb{P}(i^{\ast}\leq i, y_{i}\neq x_{i})
& \leq 
\mathbb{P}(c_{2}\tau_{i}>T_{i}) =
\mathbb{P}(T-T_{i} + c_{2}\tau_{i}>T)
\leq 
\mathbb{P}((c_{1}\widehat{U}_{i-1}+ c_{2}(\widehat{U}_{i}-\widehat{U}_{i-1}))\tau_{1}>T).
\end{align*}
In this way,
\begin{displaymath}
\mathbb{P}(y_{i}\neq x_{i}) \leq 
(1-p^{2}q)^{i-1} + 
\mathbb{P}((c_{1}\widehat{U}_{i-1}+ c_{2}(\widehat{U}_{i}-\widehat{U}_{i-1}))\tau_{1}>T).
\end{displaymath}

Let $I(T)$ be a \textbf{deterministic} integer-valued function such that
$\lim_{T\to +\infty}I(T)=+\infty$ and
\begin{displaymath}
\lim_{T\to +\infty}
\mathbb{P}((c_{1}\widehat{U}_{I(T)-1}+ c_{2}(\widehat{U}_{I(T)}-\widehat{U}_{I(T)-1}))\tau_{1}>T) = 0,
\end{displaymath}
so that
\begin{displaymath}
\lim_{T\to +\infty}\mathbb{P}(y_{I(T)}=x_{I(T)}) = 1.
\end{displaymath}
On the event $\{y_{I(T)}=x_{I(T)}\}$
we necessarily have $i^{\ast}\leq I(T)$, and we will set
$\coT^{T} = T-T_{i^{\ast}}$.
On the event $\{y_{I(T)}\neq x_{I(T)}\}$, we will set $\coT^{T} = T$.
Then we take the coupling
$(\xi^{x,T}(n),\xi^{y,T}_{I(T)}(n))_{0\leq n\leq T}$.
Since $I(T)$ is deterministic, the process
$(\xi^{y,T}_{I(T)}(n))_{0\leq n\leq T}$ is distributed as the conditioned
Markov process $(\xi^{y,T}(n))_{0\leq n\leq T}$.
Moreover, on the event $\{\coT^{T} < T\}$,
the processes $\xi^{x,T}(n)$ and $\xi^{y,T}_{I(T)}(n)$
coincide on $\{\coT^{T},\dots,T\}$, as desired.
Finally, to bound the tail of the distribution $\coT^{T}$,
we use the following.
\begin{multline*}
\mathbb{P}(\coT^{T}>n)
\leq 
\indic{T> n}(\mathbb{P}(\coT^{T}=T)
+\mathbb{P}(i^{\ast}>i)
+\mathbb{P}(T-T_{i}>n))
\\\leq \indic{T> n}(\mathbb{P}(y_{I(T)}\neq x_{I(T)})
+(1-p^{2}q)^{i-1}
+\mathbb{P}(c_{1}\widehat{U}_{i-1}\tau_{1}>n)),
\end{multline*}
where $i$ is arbitrary.
By taking $i=I(n)$, we get \eqref{Eq unif control coupling time}.

For \eqref{Eq TV large t}, we use that
$
d_{\rm TV}(P^{T}_{n}(x, dz),P^{T}_{n}(y, dz))
\leq 
2(1-\mathbb{P}(\coT^{T}<T, \coT^{T}\leq n)).
$
This concludes the proof.
\end{proof}

\subsection{Markov chain of clusters in a Brownian loop soup}
\label{Sucsec Markov chain clusters}

We now describe precisely the Markov chain we will consider in our Brownian loop soup setting. We start by introducing the relevant spaces. Recall that we denote by $\cyl_0$ the open cylinder $(0,\infty) \times \mathbb{S}^1$.

\paragraph*{The space $(\Komp,\d_\Komp)$.}
Denote
\begin{displaymath}
\Komp = 
\{
K \text{ compact subset of } \overline{\cyl}_{0} :
\partial\cyl_{0}\subset K, K \text{ connected}
\}.
\end{displaymath}
We endow $\Komp$ with the Hausdorff distance $\d_{\Komp}$ on compact subsets of
$\overline{\cyl}_{0}$.
The metric space $(\Komp, \d_{\Komp})$ is Polish, i.e. separable and complete.
Indeed, the space of compact subsets of $\overline{\cyl}_{0}$ endowed with
$\d_{\Komp}$ is Polish
(see e.g. \cite{molchanov2005theory}, Theorems C.2 and C.8),
and $\Komp$ is a closed subset of this space, thus also Polish.
Note that in the definition of $\Komp$ it is important to take
$K$ connected and not connected by arcs,
otherwise we do not get a complete space.
Let us also explain why we want our compacts to contain $\partial\cyl_{0}$.
Essentially, the behaviour of compacts on $\partial\cyl_{0}$
will not be important, and we will not distinguish between two compacts that differ only on $\partial\cyl_{0}$. 
So imposing the compacts to contain $\partial\cyl_{0}$ is a way to choose a representative in each equivalence class.

Given $K\in \Komp$, we will denote
\begin{equation}\label{E:rho_K}
\rho(K) = 
\max\{ t\geq 0 : \exists \omega\in \mathbb{S}^{1},
(t,\omega)\in K\} = \d_{\Komp}(K,\partial\cyl_{0}).
\end{equation}
Given $t\in\R$ and $K$ a compact subset of $\R\times \mathbb{S}^{1}$,
we will denote
\begin{displaymath}
K - t = \{(s,\omega)\in\R\times \mathbb{S}^{1} :  (s+t,\omega)\in K\}.
\end{displaymath}

\paragraph*{The space $(\espace,\d_\espace)$.}
In what follows we will consider sequences (infinite countable families)
of compacts in $\Komp$.
Actually we will consider the family of compacts up to permutation,
except the first one in the family, which will be marked.
Let $\perm_{0}(\N)$ denote the permutations $\sigma$ of 
$\N$ such that $\sigma(0)=0$.
Given $(K_{i})_{i\geq 0}\in\Komp^{\N}$,
we will identify this sequence with all the
$(K_{\sigma(i)})_{i\geq 0}$ for $\sigma\in \perm_{0}(\N)$,
and we will denote by $\sim$ the corresponding equivalence relation.
We will denote by $\Komp^{\N}/\sim$ the quotient space.
By an abuse of notation, we will still denote by
$(K_{i})_{i\geq 0}$ and element of $\Komp^{\N}/\sim$.
Let be
\begin{displaymath}
\espace = \{(K_{i})_{i\geq 0}\in \Komp^{\N}/\sim ~ :
\lim_{i\to +\infty} \rho(K_i)=0\}.
\end{displaymath}
Note that the condition $\lim_{i\to +\infty} \rho(K_i)=0$
is invariant under the permutations $\sigma\in \perm_{0}(\N)$.
We endow $\espace$ with the following distance:
\begin{displaymath}
\d_{\espace}((K_{i})_{i\geq 0},(K_{i}')_{i\geq 0})
=
\d_{\Komp}(K_{0},K_{0}') + 
\inf_{\sigma\in \perm_{0}(\N)}\sup_{i\geq 1}
\d_{\Komp}(K_{i},K_{i}').
\end{displaymath}

\begin{lemma}
\label{Lem E Polish}
The metric space $(\espace,\d_{\espace})$ is Polish,
i.e. separable and complete.
\end{lemma}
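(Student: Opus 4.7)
The plan is to establish separability and completeness of $(\espace, \d_\espace)$ separately. The first coordinate $K_0$ plays no essential role, and the only new difficulty compared to the analogous statement for $\Komp$ is handling the quotient by $\perm_0(\N)$.

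For \textbf{separability}, I use that $(\Komp, \d_\Komp)$ is separable (as a closed subspace of the space of compact subsets of $\overline{\cyl}_0$ under the Hausdorff distance), and pick a countable dense set $\Dc \subset \Komp$ containing $\partial\cyl_0$. I then claim the countable set of equivalence classes of finite sequences $(K_0, K_1, \dots, K_N, \partial\cyl_0, \partial\cyl_0, \dots)$ with $K_i \in \Dc$ is dense in $\espace$. Given $\mathbf{K} = (K_i) \in \espace$ and $\eps > 0$, reindex (using a permutation in $\perm_0(\N)$) so that $\rho(K_i)$ is non-increasing in $i \geq 1$, which is possible since $\rho(K_i) \to 0$ implies only finitely many entries exceed any fixed threshold, and pick $N$ with $\rho(K_i) < \eps/2$ for $i > N$. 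Approximating $K_0, \dots, K_N$ within $\eps/(2(N+1))$ by elements of $\Dc$ and using $\d_\Komp(K_i, \partial\cyl_0) = \rho(K_i)$, the identity permutation witnesses the desired $\eps$-approximation.

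For \textbf{completeness}, let $(\mathbf{K}^n)_{n \geq 1}$ be Cauchy; passing to a subsequence, assume $\d_\espace(\mathbf{K}^n, \mathbf{K}^{n+1}) \leq 4^{-n}$. I would then choose permutations $\sigma_n \in \perm_0(\N)$ and representatives of $\mathbf{K}^{n+1}$ so that $\sup_{i \geq 1} \d_\Komp(K_i^n, K_{\sigma_n(i)}^{n+1}) \leq 2 \cdot 4^{-n}$. Replacing each $\mathbf{K}^n$ (for $n \geq 2$) by the representative obtained via the composition $(\sigma_{n-1} \circ \cdots \circ \sigma_1)^{-1}$ produces representatives of the same equivalence classes satisfying $\d_\Komp(K_i^n, K_i^{n+1}) \leq 2 \cdot 4^{-n}$ for every $i \geq 0$. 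By completeness of $\Komp$, each coordinate sequence converges to some $K_i^\infty \in \Komp$, with $\d_\Komp(K_i^n, K_i^\infty) \leq C \cdot 4^{-n}$ \emph{uniformly in $i$}.

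The main obstacle is verifying that $\mathbf{K}^\infty := (K_i^\infty)_{i \geq 0}$ lies in $\espace$, i.e.\ that $\rho(K_i^\infty) \to 0$; the realigned indices need not coincide with any natural enumeration of the ``limit clusters''. The resolution uses that $\rho$ is $1$-Lipschitz on $\Komp$ (it equals the Hausdorff distance to $\partial\cyl_0$), so the uniform estimate yields $|\rho(K_i^n) - \rho(K_i^\infty)| \leq C \cdot 4^{-n}$. Given $\eta > 0$, fix $n$ with $C \cdot 4^{-n} < \eta/2$; since $\mathbf{K}^n \in \espace$ there is $i_0$ with $\rho(K_i^n) < \eta/2$ for $i \geq i_0$, hence $\rho(K_i^\infty) < \eta$ for $i \geq i_0$. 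Thus $\mathbf{K}^\infty \in \espace$, and the convergence $\d_\espace(\mathbf{K}^n, \mathbf{K}^\infty) \to 0$ follows using the identity permutation together with the uniform estimate.
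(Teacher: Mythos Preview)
Your proof is correct. The separability argument matches the paper's exactly: approximate by eventually-constant sequences with entries in a countable dense subset of $\Komp$.

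For completeness, however, you take a genuinely different route from the paper. The paper's (very brief) argument is to choose a \emph{canonical} representative of each equivalence class by ordering the $K_i$, $i\geq 1$, in non-increasing order of $\rho(K_i)$; once this is done, a Cauchy sequence in $\espace$ becomes coordinatewise Cauchy in $\Komp$ and one concludes from completeness of $\Komp$. Your approach instead performs a \emph{successive realignment}: you do not pick a single canonical representative, but rather compose the near-optimal permutations between consecutive terms of a fast-Cauchy subsequence, producing representatives that are uniformly coordinatewise Cauchy. Your method avoids the minor bookkeeping needed to handle ties in the canonical ordering, and makes the verification that the limit lies in $\espace$ entirely explicit via the Lipschitz property of $\rho$. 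The paper's approach has the conceptual advantage of reducing the problem to a fixed product space, but leaves more to the reader. One small quibble: the composition you want is $\sigma_{n-1}\circ\cdots\circ\sigma_1$ rather than its inverse (with the convention that reindexing by $\tau$ means $i\mapsto K_{\tau(i)}$), but this is purely notational and the argument is sound either way.
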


\begin{proof}
The fact that $(\espace,\d_{\espace})$ is Polish is a consequence of
$(\Komp, \d_{\Komp})$ being Polish, 
and does not rely on the specific nature of $(\Komp, \d_{\Komp})$.
Let $\Komp^{\#}$ be a countable dense subset of $\Komp$.
Let $\espace^{\#}$ be the subset of $\espace$ made of families
where each element is in $\Komp^{\#}$, and only finitely many elements are different from $\partial\cyl_{0}$.
Then $\espace^{\#}$ is countable and dense in $\espace$ for 
$\d_{\espace}$.
To see that $(\espace,\d_{\espace})$ is complete,
order the elements of $(K_{i})_{i\geq 1}$, for some element $(K_{i})_{i\geq 0}\in \espace$,
in a non-increasing order of $\rho(K_{i})$, and use the fact that
$(\Komp, \d_{\Komp})$ is complete.
\end{proof}

From now on we will fix a value $\theta\in (0,1/2]$ and
consider a Brownian loop soup $\Lc_0^\theta$ of intensity parameter $\theta$ in
the open cylinder $\cyl_{0}$.
For $t>0$, we set
\begin{displaymath}
\cyl_{t} = (t,+\infty)\times\mathbb{S}^{1},
\qquad
\overline{\cyl}_{t} = [t,+\infty)\times\mathbb{S}^{1},
\end{displaymath}
\begin{displaymath}
\Lc^{\theta}_{\cap (0,t]}
= \{\wp\in \Lc^{\theta}_{0} : \Range(\wp)\cap (0,t]\times \mathbb{S}^{1}
\neq \emptyset\},
\qquad
\Lc^{\theta}_{t} = 
\{\wp\in \Lc^{\theta}_{0} : \Range(\wp)\subset \cyl_{t}\}.
\end{displaymath}
The collections of loops $\Lc^{\theta}_{\cap (0,t]}$ and
$\Lc^{\theta}_{t}$ are independent and
$\Lc^{\theta}_{\cap (0,t]}\cup\Lc^{\theta}_{t} = \Lc^{\theta}_{0}$.
Moreover, $\Lc^{\theta}_{t}$ has the same distribution as $\Lc^{\theta}_{0}$
up to a translation by $t$ along the cylinder. The filtration we will consider is then defined by
\begin{equation}
\label{E:filtration}
\Fc_t = \sigma \{ \Lc_{\cap (0,t]} \}, \quad \quad t \geq 1.
\end{equation}

\paragraph*{A Markov chain of clusters}
Given $(K_{i})_{i\geq 0}\in\espace$, we will denote by
$(K_{i})_{i\geq 0}\cup \Lc^{\theta}_{\cap (0,t]}$ (by abuse of notation)
the family formed by the compacts $(K_{i})_{i\geq 0}$ and the loops in 
$\Lc^{\theta}_{\cap (0,t]}$,
where the Brownian loops are identified to their range and considered as compact subsets of $\cyl_{0}$.
Let $\sim_{t}$ be the following equivalence relation on the elements of
$(K_{i})_{i\geq 0}\cup \Lc^{\theta}_{\cap (0,t]}$.
\begin{itemize}[leftmargin=*]
\item Given $\wp, \wp'\in \Lc^{\theta}_{\cap (0,t]}$,
whenever $\Range(\wp)\cap\Range(\wp')\neq \emptyset$,
we have $\wp\sim_{t}\wp'$.
\item Given $\wp\in \Lc^{\theta}_{\cap (0,t]}$ and
$i\geq 0$,
whenever  $\wp\cap K_{i}\neq \emptyset$,
we have $\Range(\wp)\sim_{t}K_{i}$.
\item Given $i,j\geq 1$,
whenever $(K_{i}\cap K_{j})\setminus\partial \cyl_{0} \neq \emptyset$,
we have $K_{i}\sim_{t} K_{j}$.
We would like to emphasize that $K_{i}\cap K_{j}$ always contains 
$\partial \cyl_{0}$, but our condition is that this intersection is strictly larger than $\partial \cyl_{0}$.
\item Further, we define $\sim_{t}$ to be the minimal
equivalence relation satisfying the above three conditions.
That is to say, we identify into the same equivalence class the finite chains
of objects where each couple of consecutive elements satisfy one of the above three rules.
\end{itemize} 
In essence, the equivalence relation $\sim_{t}$ corresponds to clusters of
$(K_{i}\setminus \partial \cyl_{0})_{i\geq 0}\cup \Lc^{\theta}_{\cap (0,t]}$.
Note that a set $K_{i}\setminus \partial \cyl_{0}$ is not necessarily connected,
however, by definition, we put all its connected components into the same cluster.
Given an equivalence class $A$ of $\sim_{t}$,
we will consider the associated cluster $\Cc$ seen as a connected compact subset of
$\overline{\cyl}_{0}$:
\begin{displaymath}
\Cc = 
\overline{\bigcup_{\wp\in A}\Range(\wp)\cup\bigcup_{K_{i}\in A} K_{i}}.
\end{displaymath}
Let $\Cf_{t}$ denote the family of such clusters induced by $\sim_{t}$,
where each cluster is seen as a compact connected subset of $\overline{\cyl}_{0}$.
We will denote by $\Cc_{0,t}\in \Cf_{t}$ the marked cluster containing $K_{0}$.
Next, we will denote by $(K_{i}(t))_{i\geq 0}$ the following family of compacts:
\begin{displaymath}
\big(
((\Cc\cap \overline{\cyl}_{t}) - t)\cup\partial\cyl_{0}
\big)_{\Cc\in \Cf_{t}, \Cc\cap \overline{\cyl}_{t}\neq \emptyset}.
\end{displaymath}
That is to say, among the compacts of $\Cf_{t}$, we take those that intersect
$\overline{\cyl}_{t}$, 
consider only the parts inside $\overline{\cyl}_{t}$, translate them by $-t$
to make them adjacent to $\partial\cyl_{0}$,
and add the circle $\partial\cyl_{0}$ for the normalization.
Also, $K_{0}(t)$ will be the compact obtained from the marked cluster
$\Cc_{0,t}$ (the one containing $K_{0}$).
We see the family $(K_{i}(t))_{i\geq 0}$ as an element of $\espace$.
By convention, we set $(K_{i}(0))_{i\geq 0}=(K_{i})_{i\geq 0}$.

%Also, in the notation $(K_{i}(t))_{i\geq 0}$ we omit $\theta$, although the family of compacts depends on $\theta$.

Now, we consider $t$ to be an integer $n$ and take the process
$(K_{i}(n))_{i\geq 0,n\in\N}$.
We see the latter as a stochastic process in $\espace$,
parametrized by discrete time $n$.
This process is actually time-homogeneous Markov with respect to the filtration $(\Fc_n)_{n \in \N}$ \eqref{E:filtration}.
This is because for each $n\geq 1$,
the loop soup $\Lc^{\theta}_{n}$ is independent from $\Lc^{\theta}_{\cap (0,n]}$,
and $\Lc^{\theta}_{n}$ has the same law as $\Lc^{\theta}_{0}$
up to a longitudinal translation by $n$.

\subsection{Coupling conditioned clusters in a Brownian loop soup}
\label{Subsec couple cluster}

We consider the state space $\espace$ and the Markov chain of Brownian loop soup clusters introduced in Section \ref{Sucsec Markov chain clusters}.
Let $\dagger$ be the subset
\begin{displaymath}
\dagger = \{(K_{i})_{i\geq 0}\in\espace : K_{0}=\partial\cyl_{0}\}.
\end{displaymath}
Note that the subset $\dagger$ is absorbing, i.e. is preserved by the Markov chain.
The subset $\dagger$ corresponds to the extinction of the marked cluster $K_{0}$.
Let $\espace^{\ast} = \espace\setminus\dagger$.
We will denote by $T_{\dagger}$ the first hitting time of $\dagger$.
Given an initial condition 
$(K_{i}(0))_{i\geq 0}\in \espace^{\ast}$,
for every $T\in\N\setminus\{ 0\}$,
$\mathbb{P}(\{T_{\dagger}>T)>0$.
Indeed, it is enough to have a single Brownian loop in $\Lc^{\theta}_{0}$
that connects $K_{0}(0)$ to $\cyl_{T}$.
Given $T\in\N\setminus\{ 0\}$,
we will denote by 
$(K_{i}^{T}(n))_{i\geq 0,0\leq n\leq T}$ the Markov chain of clusters
conditioned on $\{T_{\dagger}>T\}$.

In this section we will prove Theorem \ref{T:coupling_rough} (actually a stronger version; see Theorem \ref{Thm abstract coupling}) by showing that the conditioned Markov chain $(K_{i}^{T}(n))_{i\geq 0,0\leq n\leq T}$
satisfies the assumptions of Theorem \ref{Thm abstract coupling}.
For this we will need characteristic times
$\chrT$ and favourable configurations
$\fav_{n}\subset \espace^{\ast}$ for $n\geq 1$.
We set
\begin{displaymath}
\chrT((K_{i})_{i\geq 0})
=
\sup_{i\geq 0}
\lceil
\rho(K_{i})
\rceil
\in\N,
\end{displaymath}
where $\lceil~\rceil$ is the integer ceiling.
Let $c_{1}\in\N$, $c_{1}\geq 5$. We will tune the value of
$c_{1}$ later; see \eqref{Eq condition c1}.
We also set $c_{2} = c_{1} + 1$ and
\begin{displaymath}
\fav_{n} = 
\big\{
(K_{i})_{i\geq 0}\in \espace^{\ast} : 
\rho(K_{0})\geq 2n
\text{ and }
\forall i\geq 0, 
\rho(K_{i})\leq (c_{1}-2)n
\big\},
\quad n \geq 1.
\end{displaymath}

\begin{lemma}
\label{Lem F n clusters}
There is $p_{0}\in (0,1]$ such that for every $n\in\N\setminus\{ 0\}$,
for every $T\in \N$ with $T\geq n$, 
and for every initial condition
$(K_{i}^{T}(0))_{i\geq 0}\in \espace^{\ast}$,
\begin{displaymath}
\mathbb{P}(\rho(K^{T}_{0}(n))\geq 2n)\geq p_{0}.
\end{displaymath}
\end{lemma}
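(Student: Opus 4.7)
The plan is to use the spatial Markov property of the loop soup at depth $n$, the crossing exponent from Theorem~\ref{T:large_crossing}, the FKG inequality, and the loop measure estimate of Lemma~\ref{L:cross_loop}, distinguishing between the case $T\leq 3n$ (where the target event is contained in the conditioning event) and $T>3n$ (where one constructs the favourable configuration directly).

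First I would decompose $\Lc^\theta_0=\Lc^\theta_{\cap(0,n]}\sqcup\Lc^\theta_n$, separating loops touching the slab $(0,n]\times\mathbb{S}^1$ from loops entirely inside $\cyl_n$; these two Poisson families are independent and $K_0(n)$ is measurable with respect to the former alone. Writing $\psi(K,s):=\mathbb{P}_K(T_\dagger>s)$, the conditional probability becomes
\[
\mathbb{P}(\rho(K_0(n))\geq 2n\mid T_\dagger>T)
=\frac{\mathbb{E}[\mathbf{1}_{\rho(K_0(n))\geq 2n}\,\psi(K_0(n),T-n)]}{\mathbb{E}[\psi(K_0(n),T-n)]}.
\]
When $n\leq T\leq 3n$, the event $\{\rho(K_0(n))\geq 2n\}$ implies that the cluster built from $\Lc^\theta_{\cap(0,n]}$ alone already reaches $\cyl_{3n}\supseteq\cyl_T$, so the conditional probability reduces to the ratio $\mathbb{P}(\rho(K_0(n))\geq 2n)/\mathbb{P}(T_\dagger>T)$. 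Via the conformal map $(t,\omega)\mapsto e^{-t}\omega$ from $\cyl_0$ to $\D\setminus\{0\}$ and Theorem~\ref{T:large_crossing}, both probabilities are cluster crossing probabilities of respective orders $(3n-\rho(K_0(0)))^{-1+\theta+o(1)}$ and $(T-\rho(K_0(0)))^{-1+\theta+o(1)}$; using $\rho(K_0(0))\leq\chrT(K_0(0))\leq n$, their ratio is bounded below by $(1/3)^{1-\theta+o(1)}$, a positive constant depending only on $\theta$.

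In the remaining range $T>3n$, I would proceed differently. By Lemma~\ref{L:cross_loop} applied after conformally mapping $\cyl_0$ to $\D$, the Brownian loop measure of loops touching the slab $(0,n]\times\mathbb{S}^1$ and reaching past $\cyl_{3n}$ is uniformly bounded below (of order $\log(3/2)$), so the probability of having such a loop in $\Lc^\theta_0$ is at least $1-(2/3)^\theta$. By FKG, the increasing event that such a ``bridging'' loop exists and is connected to $K_0(0)$ through $\Lc^\theta_{\cap(0,n]}$ is positively correlated with the increasing event $\{T_\dagger>T\}$; hence its conditional probability is at least its unconditional probability, and any such loop forces $\rho(K_0(n))\geq 2n$.

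The hardest part will be ensuring that the above estimates are uniform over all initial configurations $K_0(0)\in\espace^\ast$, in particular that the $o(1)$ in Theorem~\ref{T:large_crossing} does not degenerate when the tip of $K_0(0)$ is thin or close to depth $n$. To handle this I would FKG-sandwich $K_0(0)$ between the horizontal slice $\{\rho(K_0(0))\}\times\mathbb{S}^1$ (as an upper comparison for the denominator) and a minimal single-fiber configuration (as a lower comparison for the numerator), attaching the latter to a suitable loop of bounded extent whose existence is controlled by Lemma~\ref{L:cross_loop}; this removes any dependence on the local geometry at the tip of $K_0(0)$ and produces a single constant $p_0$ valid across both regimes.
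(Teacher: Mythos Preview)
Your approach has a genuine gap: neither case yields a bound that is uniform in the initial condition $K_0(0)$. In the case $T>3n$ you apply FKG to the event ``a bridging loop exists \emph{and is connected to $K_0(0)$ through $\Lc^\theta_{\cap(0,n]}$}'', but you only bound the probability of the first half; the probability that such a loop is actually connected to $K_0(0)$ can be arbitrarily small when $K_0(0)$ is a thin fiber, and nothing in the hypothesis prevents this --- the lemma is stated for \emph{all} $(K_i(0))_{i\geq0}\in\espace^*$, with no constraint on $\chrT$. Your case $T\leq 3n$ has the same defect (the ratio of cluster-crossing probabilities depends on the shape of $K_0(0)$, not just on $\rho(K_0(0))$), compounded by the unjustified assumption $\chrT(K_0(0))\leq n$. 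The sandwiching fix you sketch in the last paragraph does not close this: a single-fiber lower comparison still has vanishing probability of being hit by a fixed loop.

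The paper bypasses this entirely with one extra ingredient: a \emph{non-contractible surrounding loop}. Let $E_{1,n}$ be the event that some loop crosses from $\partial\cyl_{n/2}$ to $\partial\cyl_{3n}$, and $E_{2,n}$ the event that some loop contained in the slab $(n/2,n)\times\mathbb{S}^1$ is non-contractible. On $\{T_\dagger>T\}$ the marked cluster already reaches $\cyl_n$ (since $T\geq n$), hence crosses the slab $(n/2,n)\times\mathbb{S}^1$, hence by topology intersects the surrounding loop; the bridging loop from $E_{1,n}$ also crosses this slab, hence also intersects the surrounding loop. Thus on $E_{1,n}\cap E_{2,n}\cap\{T_\dagger>T\}$ the surrounding loop glues the marked cluster to the bridging loop \emph{regardless of the shape of $K_0(0)$}, forcing $\rho(K_0(n))\geq 2n$. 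All three events are increasing, so FKG gives
\[
\mathbb{P}\big(\rho(K_0^T(n))\geq 2n\big)\geq\mathbb{P}(E_{1,n})\,\mathbb{P}(E_{2,n}),
\]
and both factors are bounded below uniformly in $n$ by Lemmas~\ref{L:cross_loop} and~\ref{L:surround}. No case split is needed, and Theorem~\ref{T:large_crossing} (whose $o(1)$ is indeed not uniform for small $n$) is not used at all.
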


\begin{proof}
Let $E_{1,n}$ be the event
\begin{displaymath}
E_{1,n} = 
\big\{
\exists\wp\in \Lc^{\theta}_{0}, 
\partial\cyl_{n/2}\stackrel{\wp}{\longleftrightarrow}
\partial\cyl_{3n}
\big\}.
\end{displaymath}
Let $E_{2,n}$ be the event that there is a loop in $\Lc^{\theta}_{0}$,
contained in the cylinder $(n/2,n)\times\mathbb{S}^{1}$, and that surrounds this cylinder, i.e. is non-contractible in the topological sense.
Then
\begin{displaymath}
\mathbb{P}(\rho(K^{T}_{0}(n))\geq 2n)\geq
\mathbb{P}(E_{1,n}\cap E_{2,n}, T_{\dagger}>T)
/\mathbb{P}(T_{\dagger}>T).
\end{displaymath}
The three events $E_{1,n}$, $E_{2,n}$ and $\{T_{\dagger}>T\}$
are increasing for the Poisson point process $\Lc^{\theta}_{0}$.
By the FKG inequality for Poisson point processes,
\begin{displaymath}
\mathbb{P}(\rho(K^{T}_{0}(n))\geq 2n)\geq
\mathbb{P}(E_{1,n}\cap E_{2,n})
\geq\mathbb{P}(E_{1,n})\mathbb{P}(E_{2,n}).
\end{displaymath}
By Lemma \ref{L:cross_loop},
$\mathbb{P}(E_{1,n})\geq p_{1}$
for some $p_{1}\in (0,1)$.
Moreover,
$\mathbb{P}(E_{2,n})\geq 1- p_{2}^{\theta n}$
for some $p_{2}\in (0,1)$ (Lemma \ref{L:surround}).
Thus we get a uniform lower bound for 
$\mathbb{P}(\rho(K^{T}_{0}(n))\geq 2n)$.
\end{proof}

By Lemma \ref{L:cross_loop}, the probability that a single loop crosses $[1,t] \times \mathbb{S}^1$ decays like $t^{-1}$ as $t \to \infty$. In the next lemma, we show that if one conditions on the event that a cluster makes a large crossing, then this probability decays like $t^{-\theta+o(1)}$. This will in particular ensure that loops stay small (although larger) after this conditioning.

\begin{lemma}
\label{Lem exponent conditioned loops}
For every $\varepsilon\in (0,\theta)$,
there is a constant $C_{\varepsilon}>0$,
such that for every initial condition
$(K_{i}(0))_{i\geq 0}\in \espace^{\ast}$
for the Markov chain,
for every $T\in\N\setminus\{ 0\}$,
for every $t_{1}\geq 1$ and every 
$t_{2}\geq 2 t_{1}$,
\begin{equation}
\label{E:L_at}
\mathbb{P}\big(
\exists\wp\in\Lc^{\theta}_{0},
\partial\cyl_{t_{1}}\stackrel{\wp}{\longleftrightarrow}
\partial\cyl_{t_{2}}
\big\vert T_{\dagger}>T
\big)
\leq
C_{\varepsilon}
\Big(
\dfrac{t_{1}}{t_{2}}
\Big)^{\theta-\varepsilon}
.
\end{equation}
\end{lemma}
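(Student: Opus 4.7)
The plan is to estimate the conditional probability via the Mecke formula for the Poisson loop process $\Lc_0^\theta$, combined with one-arm estimates from Theorem~\ref{T:large_crossing} transferred to the cylinder by conformal invariance. I fix $\eta>0$ small (to be chosen in terms of $\varepsilon$ and $\theta$); Theorem~\ref{T:large_crossing} gives uniform bounds $\mathbb{P}(T_\dagger>T)\ge c_\eta T^{-1+\theta-\eta}$ and $\mathbb{P}(K_0\overset{\Lc}{\leftrightarrow}\partial\cyl_u)\le C_\eta u^{-1+\theta+\eta}$. By translation along the cylinder (using that loops of $\Lc_0^\theta$ contained in $\cyl_v$ form a loop soup on $\cyl_v\cong\cyl_0$), I additionally obtain $\mathbb{P}(\partial\cyl_v\overset{\Lc}{\leftrightarrow}\cyl_T)\le C\,\mathbb{P}(T_\dagger>T)$ for $v\le T/2$.

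Let $E=\{\exists\wp\in\Lc_0^\theta\text{ crossing from } \partial\cyl_{t_1}\text{ to }\partial\cyl_{t_2}\}$ and split $E\cap\{T_\dagger>T\}$ according to whether a crossing loop belongs to the marked cluster. In the \emph{unshared} case, by Poisson thinning the crossing loops are independent of the rest; on this event the marked cluster uses only non-crossing loops, giving $\mathbb{P}(\text{unshared}\cap\{T_\dagger>T\})\le\mathbb{P}(E)\mathbb{P}(T_\dagger>T)\le C(t_1/t_2)\mathbb{P}(T_\dagger>T)$, so the conditional bound is $\le C(t_1/t_2)\le C(t_1/t_2)^{\theta-\varepsilon}$ since $\theta-\varepsilon<1$. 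For the \emph{shared} case, I apply the Mecke formula: parametrising a crossing loop $\wp$ by its minimum and maximum first coordinates $u\le t_1$ and $v\ge t_2$, and using the loop-measure density $(v-u)^{-2}\,du\,dv$ (derivable from Lemma~\ref{L:cross_loop}), I get
\[
\P(\text{shared}\cap\{T_\dagger>T\})\le \theta\!\int_0^{t_1}\!\!\int_{t_2}^\infty\! \P\bigl(K_0\overset{\Lc}{\leftrightarrow}\partial\cyl_u,\ \partial\cyl_v\overset{\Lc\cup\{\wp\}}{\longleftrightarrow}\cyl_T\bigr)\,\frac{du\,dv}{(v-u)^2}.
\]

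I bound the joint probability (up to subpolynomial slack absorbed into $\eta$) by the product of the individual crossing probabilities; this near-independence is justified by splitting $\Lc$ into three independent Poisson families of loops contained respectively in $(0,m)\times\mathbb{S}^1$, in $(m,\infty)\times\mathbb{S}^1$, or crossing $\{m\}\times\mathbb{S}^1$, for an intermediate $m\in(u_\wp,v_\wp)$. Using $\mathbb{P}(\partial\cyl_v\overset{\Lc}{\leftrightarrow}\cyl_T)\le C\mathbb{P}(T_\dagger>T)$ on the dominant range $v\in(t_2,T/2]$, direct integration gives $C_\eta t_1^{\theta+\eta}\mathbb{P}(T_\dagger>T)/t_2$. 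Dividing by $\mathbb{P}(T_\dagger>T)$ and using $t_2\ge 2t_1$, the result is $C_\eta t_1^{\theta+\eta}/t_2$, which for $\eta<\varepsilon\wedge(1-\theta)$ is bounded by $C(t_1/t_2)^{\theta-\varepsilon}$ via direct comparison of exponents (using $t_1\ge 1$). The remaining contribution from $v\ge T/2$ corresponds to very long loops and is handled using the small loop-measure of such crossings together with the upper bound on $\mathbb{P}(K_0\overset{\Lc}{\leftrightarrow}\partial\cyl_u)$.

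The main technical obstacle will be the near-independence step: both connection events $K_0\leftrightarrow\partial\cyl_u$ and $\partial\cyl_v\leftrightarrow\cyl_T$ use loops from the same $\Lc_0^\theta$, and loops spanning the middle of the cylinder potentially couple them. The careful spatial decomposition of the loop soup and the control of the coupling contribution via Lemma~\ref{L:cross_loop} form the technical heart of the proof.
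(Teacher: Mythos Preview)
Your approach has a genuine gap concerning uniformity in the initial condition $K_0$. The lemma requires the constant $C_\varepsilon$ to be independent of $K_0\in\espace^*$, but the one-arm bounds you invoke are not uniform: since $K_0$ is an arbitrary connected compact containing $\partial\cyl_0$ with $\rho(K_0)>0$ possibly very small or very large, neither $\mathbb{P}(T_\dagger>T)\ge c_\eta T^{-1+\theta-\eta}$ nor $\mathbb{P}(K_0\overset{\Lc}{\leftrightarrow}\partial\cyl_u)\le C_\eta u^{-1+\theta+\eta}$ holds with constants independent of $K_0$. In particular your key inequality $\mathbb{P}(\partial\cyl_v\overset{\Lc}{\leftrightarrow}\cyl_T)\le C\,\mathbb{P}(T_\dagger>T)$ fails badly when $\rho(K_0)$ is tiny. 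Moreover, the near-independence step you flag as the technical heart is left unproven; as you note, loops straddling the middle of the cylinder couple the two events, and controlling this requires essentially the same work as the direct argument.

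The paper circumvents both issues at once. Instead of Mecke plus a factorisation, it parametrises the crossing loops by their extremal levels $(\underline\tau,\overline\tau)$ and bounds the \emph{ratio} $\mathbb{P}(T_\dagger>T\mid\underline\tau=u,\overline\tau=v)/\mathbb{P}(T_\dagger>T)$ directly via the FKG inequality: the conditional numerator is dominated by $\mathbb{P}(T_\dagger>ut_1,\ \partial\cyl_{vt_1}\leftrightarrow\cyl_T)$, and FKG (together with inserting non-contractible loops and a cluster crossing $[ut_1/2,2vt_1]$) lower-bounds the denominator by the same quantity times $\mathbb{P}(E_1)\mathbb{P}(E_2)\mathbb{P}(E_3)$, where the $E_i$ do not depend on $K_0$. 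The $K_0$-dependence thus cancels exactly, yielding a ratio bound $C_\varepsilon(v/u)^{1-\theta+\varepsilon}$ uniform in $K_0$; two integrations by parts against $\mathbb{P}(\underline\tau\le u,\overline\tau\ge v)\le C u/v$ then give the result. Your Mecke route could in principle be salvaged by inserting the same FKG step to control the ratio rather than bounding numerator and denominator separately, but once you do that the argument essentially becomes the paper's.
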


\begin{proof}
Let $\eps>0$ be as in the statement.
Let $\underline{\tau} t_1$ (resp. $\overline{\tau} t_1$) be the left most (resp. right most) level reached by the loops that cross $[t_1,t_2] \times \mathbb{S}^1$:
\[
\underline{\tau} := \inf \{ u \in (0,1): \exists \wp \in \Lc_0^\theta: \partial \cyl_{u t_1} \overset{\wp}{\longleftrightarrow} \partial \cyl_{t_2} \},
\quad \bar{\tau} := \sup \{ v > t_2/t_1: \exists \wp \in \Lc_0^\theta: \partial \cyl_{t_1} \overset{\wp}{\longleftrightarrow} \cyl_{v t_1} \}.
\]
The levels $\underline{\tau} t_1$ and $\overline{\tau} t_1$ can be reached either by the same loop or by two distinct loops. In both cases, one can use Lemma \ref{L:cross_loop} to show that there exists $C >0$ such that for all $u \in (0,1)$ and $v \geq t_2/t_1$,
\begin{equation}
\label{E:pf_at2}
\Prob{ \underline{\tau} \leq u, \overline{\tau} \geq v } \leq C (u/v).
\end{equation}
With these notations, the left hand side of \eqref{E:L_at} is equal to
\begin{align}
\label{E:pf_at1}
\int_0^1 \int_{t_2/t_1}^\infty \Prob{ \underline{\tau} \in \d u, \bar{\tau} \in \d v }
\frac{\Prob{ T_\dagger > T \vert \underline{\tau} = u, \overline{\tau} = v }}{\Prob{T_\dagger > T}}.
\end{align}
If $vt_1 < T$, the above ratio on the right hand side is at most
\[
\Prob{ T_\dagger > ut_1, \cyl_{vt_1} \overset{\Lc_0^\theta}{\longleftrightarrow} \cyl_T }/\Prob{T_\dagger > T}.
\]
If $vt_1 \geq T$, this ratio can be bounded by
\[
\Prob{ T_\dagger > ut_1 }/\Prob{T_\dagger > T}.
\]
In both cases and by FKG inequality, the ratio in \eqref{E:pf_at1} is bounded from above by the inverse of the product of the probabilities of three events: 1) that there is a cluster joining $\partial \cyl_{ut_1/2}$ to $\partial \cyl_{2vt_1}$, 2) there is a non contractible loop in $[ut_1/2, ut_1] \times \mathbb{S}^1$, 3) there is a non contractible loop in $[vt_1, 2vt_1] \times \mathbb{S}^1$. By Lemma \ref{L:surround} and Theorem \ref{T:large_crossing}, the ratio in \eqref{E:pf_at1} is therefore bounded by $C_\eps (v/u)^{1-\theta+\eps}$. Plugging this estimate in \eqref{E:pf_at1} and performing two integrations by part, we obtain that the left hand side of \eqref{E:L_at} is bounded by
\begin{align*}
C \Prob{ \underline{\tau} \leq 1, \overline{\tau} \geq \frac{t_2}{t_1} } \left( \frac{t_2}{t_1} \right)^{1-\theta+\eps}
+ C \int_0^1 \Prob{ \underline{\tau} \leq u, \overline{\tau} \geq \frac{t_2}{t_1} } \left( \frac{t_2}{t_1} \right)^{1-\theta+\eps} \frac{1}{u^{2-\theta+\eps}} \d u \\
+ C \int_{t_2/t_1}^\infty \Prob{ \underline{\tau} \leq 1, \overline{\tau} \geq v } \frac{v^{-\theta+\eps}}{u^{1-\theta+\eps}} \d v
+ C \int_0^1 \int_{t_2/t_1}^\infty \Prob{ \underline{\tau} \leq u, \overline{\tau} \geq v } \frac{v^{-\theta+\eps}}{u^{2-\theta+\eps}} \d u \d v.
\end{align*}
\eqref{E:pf_at2} then concludes the proof.
\end{proof}

Given $n\in\N\setminus\{ 0\}$,
an integer time $T\geq c_{2}n$,
and an initial condition
$(K_{i}^{T}(0))_{i\geq 0}\in \espace^{\ast}$ for the conditioned Markov chain,
with $\chrT((K_{i}^{T}(0))_{i\geq 0})\leq n$,
we have that
\begin{eqnarray*}
\mathbb{P}
\big(
(K_{i}^{T}(n))_{i\geq 0}\in\fav_{n}
\big)
&\geq &
\mathbb{P}
\big(
\rho(K_{0}^{T}(n))\geq 2n
\big)
-
\mathbb{P}
\big(
\exists \wp\in\Lc^{\theta}_{0},
\partial\cyl_{n}\stackrel{\wp}{\longleftrightarrow}
\partial\cyl_{(c_{1}-1)n}
\big \vert
T_{\dagger}>T
\big)
\\
&\geq &
p_{0} - C_{\varepsilon}(c_{1}-1)^{-(\theta-\varepsilon)},
\end{eqnarray*}
where $p_{0}$ is given by Lemma \ref{Lem F n clusters},
$\varepsilon\in (0,\theta)$ and $C_{\varepsilon}$
is given by Lemma \ref{Lem exponent conditioned loops}.
We chose $\varepsilon = \theta/2$,
and we take an integer $c_{1}\geq 5$ such that
\begin{equation}
\label{Eq condition c1}
p_{0} - C_{\theta/2}(c_{1}-1)^{-\frac{\theta}{2}}>0.
\end{equation}
With this choice of $c_{1}$ we are sure that Condition \ref{cond2}
of Theorem \ref{Thm abstract coupling} is satisfied.
Let us also check that Condition \ref{cond3} of Theorem \ref{Thm abstract coupling}
is satisfied.
Indeed,
\begin{displaymath}
\mathbb{P}\big(
\chrT((K_{i}^{T}(c_{1}n))_{i\geq 0})\geq u n\big)
\leq
\mathbb{P}\big(
\exists\wp\in\Lc^{\theta}_{0},
\partial\cyl_{c_{1}n}\stackrel{\wp}{\longleftrightarrow}
\partial\cyl_{(c_{1}+u)n}
\big\vert T_{\dagger}>T
\big)
\leq 
C_{\varepsilon}\Big(\dfrac{c_{1}}{c_{1}+u}\Big)^{\theta-\varepsilon}
,
\end{displaymath}
where $\varepsilon\in (0,\theta)$ and $C_{\varepsilon}$
is given by Lemma \ref{Lem exponent conditioned loops}.
It remains to check Condition \ref{cond1} of Theorem \ref{Thm abstract coupling}.

For $t_{2}>t_{1}>0$, we will denote
\begin{displaymath}
\Lc^{\theta}_{\cap [t_{1},t_{2}]}
=\{
\wp\in\Lc^{\theta}_{0} : 
\Range(\wp)\cap [t_{1},t_{2}] \times \mathbb{S}^{1} 
\neq\emptyset
\}.
\end{displaymath}

\begin{lemma}
\label{Lem another crossing}
There is $\hat{p}_{0}\in (0,1]$ such that for every $n\in\N\setminus\{ 0\}$,
and for every $T\geq c_{2}n$,
\begin{displaymath}
\mathbb{P}
\Big(
\partial\cyl_{n}\stackrel{\Lc^{\theta}_{\cap [(c_{1}-1)n, T-n]}}{\longleftrightarrow}
\partial\cyl_{T-n}
\Big\vert
\partial\cyl_{(c_{1}-1)n}\stackrel{\Lc^{\theta}_{\cap [(c_{1}-1)n, T-n]}}{\longleftrightarrow}
\partial\cyl_{T-n}
\Big)
\geq \hat{p}_{0}.
\end{displaymath}
\end{lemma}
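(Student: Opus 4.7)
The starting observation is that $E_2 \subseteq E_1$: any cluster in $\Lc^\theta_{\cap [(c_1-1)n, T-n]}$ connecting $\partial \cyl_n$ to $\partial \cyl_{T-n}$ is a connected subset of $\overline{\cyl}_0$, so the image of its $t$-coordinate projection is an interval containing $n$ and $T-n$, hence also $(c_1-1)n$. Consequently $\mathbb{P}(E_2\mid E_1)=\mathbb{P}(E_2)/\mathbb{P}(E_1)$ and the goal reduces to a uniform lower bound $\mathbb{P}(E_2)\geq \hat p_0\,\mathbb{P}(E_1)$.

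The plan is to enhance any $E_1$-configuration with two mild increasing features that force the crossing cluster to reach $\partial\cyl_n$, and then apply FKG. I would introduce two increasing events on the full loop soup $\Lc^\theta_0$: let $E_{\mathrm{sur}}$ be the event that some loop $\wp_{\mathrm{sur}}\in\Lc^\theta_0$ has its range contained in the open strip $\bigl((c_1-1)n,(c_1-1)n+1\bigr)\times\mathbb{S}^1$ and is non-contractible in $\cyl_0$; let $E_{\mathrm{big}}$ be the event that some $\wp_{\mathrm{big}}\in\Lc^\theta_0$ meets both $\partial\cyl_n$ and $\partial\cyl_{(c_1-1)n+2}$. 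Since $T\geq c_2 n=(c_1+1)n\geq (c_1-1)n+2$ for $n\geq 1$, both loops automatically touch $[(c_1-1)n,T-n]\times\mathbb{S}^1$, so they belong to $\Lc^\theta_{\cap[(c_1-1)n,T-n]}$. The key claim is $E_1 \cap E_{\mathrm{sur}}\cap E_{\mathrm{big}}\subseteq E_2$: since $\wp_{\mathrm{sur}}$ is a non-contractible closed continuous curve, $\cyl_0\setminus\Range(\wp_{\mathrm{sur}})$ has a component $L$ containing $(0,(c_1-1)n]\times\mathbb{S}^1$ and a component $R$ containing $[(c_1-1)n+1,\infty)\times\mathbb{S}^1$; the crossing cluster is connected and meets both $L$ (at $\partial\cyl_{(c_1-1)n}$) and $R$ (at $\partial\cyl_{T-n}$), so it must intersect $\Range(\wp_{\mathrm{sur}})$, forcing $\wp_{\mathrm{sur}}$ into the cluster; by the same topological argument applied to the continuous curve $\Range(\wp_{\mathrm{big}})$, whose endpoints lie in $L$ and $R$ respectively, $\wp_{\mathrm{big}}$ crosses $\wp_{\mathrm{sur}}$ and so joins the cluster, which therefore reaches $\partial\cyl_n$.

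The FKG inequality for Poisson point processes, applied to the three increasing events $E_1,E_{\mathrm{sur}},E_{\mathrm{big}}$, now gives
\[
\mathbb{P}(E_2)\geq \mathbb{P}(E_1\cap E_{\mathrm{sur}}\cap E_{\mathrm{big}}) \geq \mathbb{P}(E_1)\,\mathbb{P}(E_{\mathrm{sur}})\,\mathbb{P}(E_{\mathrm{big}}),
\]
and it only remains to bound $\mathbb{P}(E_{\mathrm{sur}})$ and $\mathbb{P}(E_{\mathrm{big}})$ from below, uniformly in $n$ and $T$. For the first, since the defining strip lies at distance $(c_1-1)n\geq 4$ from $\partial\cyl_0$, the $\loopmeasure_{\cyl_0}$-mass of loops contained in it coincides with their $\loopmeasure_{\R\times\mathbb{S}^1}$-mass, and translation invariance of the latter shows that the mass of non-contractible loops inside a width-$1$ strip equals a positive constant $\mu_{\mathrm{sur}}$ depending only on $c_1$, whence $\mathbb{P}(E_{\mathrm{sur}})\geq 1-e^{-\theta\mu_{\mathrm{sur}}}>0$. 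For the second, the cylinder analogue of Lemma~\ref{L:cross_loop} says that the $\loopmeasure_{\cyl_0}$-mass of loops crossing $[n,(c_1-1)n+2]\times\mathbb{S}^1$ equals $\log\frac{(c_1-1)n+2}{(c_1-2)n+2}\geq \log(1+1/c_1)>0$ for every $n\geq 1$. The product of these two uniform lower bounds is the desired $\hat p_0$.

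The main obstacle is the topological separation statement used in the second paragraph: Brownian loops are highly non-simple, so one cannot invoke Jordan--Schönflies directly and must argue either by lifting $\wp_{\mathrm{sur}}$ to the universal cover $\R\times(0,\infty)$ (where the lifted image is a connected set whose $x$-projection is all of $\R$ but whose $t$-coordinates stay in a width-$1$ interval, hence separates the half-plane above from the one below) or by a direct algebraic intersection argument using that $[\wp_{\mathrm{sur}}]$ generates $H_1(\cyl_0)\cong\mathbb{Z}$ and so has non-zero intersection number with any chain connecting the two ends.
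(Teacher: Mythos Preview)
Your proof is correct and follows essentially the same approach as the paper's: both introduce a non-contractible ``surrounding'' loop near level $(c_1-1)n$ and a ``big'' loop crossing from level $n$ past that barrier, argue that on $E_1$ these force the crossing cluster to reach $\partial\cyl_n$, and then apply FKG together with Lemmas~\ref{L:surround} and~\ref{L:cross_loop} for the uniform lower bounds. The only differences are cosmetic choices of intervals (the paper places the surrounding loop in the width-$n$ strip $((c_1-1)n,c_1 n)$ and lets the big loop reach $\partial\cyl_{c_1 n}$), and the paper does not spell out the topological separation argument you sketch at the end.
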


\begin{proof}
This is similar to Lemma \ref{Lem F n clusters}.
Let $E_{1,n}$ be the event
\begin{displaymath}
E_{1,n} = \{\exists\wp\in\Lc^{\theta}_{0},
\partial\cyl_{n}\stackrel{\wp}{\longleftrightarrow}
\partial\cyl_{c_{1}n}\}.
\end{displaymath}
Let $E_{2,n}$ be the event that there is a loop in
$\Lc^{\theta}_{0}$,
contained in the cylinder
$((c_{1}-1)n, c_{1}n)\times\mathbb{S}^{1}$ that surrounds the cylinder.
Then
\begin{displaymath}
E_{1,n}\cap E_{2,n}\cap
\Big\{\partial\cyl_{(c_{1}-1)n}\stackrel{\Lc^{\theta}_{\cap [(c_{1}-1)n, T-n]}}{\longleftrightarrow}
\partial\cyl_{T-n}\Big\}
\subset
\Big\{\partial\cyl_{n}\stackrel{\Lc^{\theta}_{\cap [(c_{1}-1)n, T-n]}}{\longleftrightarrow}
\partial\cyl_{T-n}\Big\}.
\end{displaymath}
The three events $E_{1,n}$, $E_{2,n}$ and
$\big\{\partial\cyl_{(c_{1}-1)n}\stackrel{\Lc^{\theta}_{\cap [(c_{1}-1)n, T-n]}}{\longleftrightarrow}
\partial\cyl_{T-n}\big\}$
are increasing for the Poisson point process $\Lc^{\theta}_{0}$,
so one can apply the FKG inequality.
Therefore,
\begin{displaymath}
\mathbb{P}
\big(
\partial\cyl_{n}\stackrel{\Lc^{\theta}_{\cap [(c_{1}-1)n, T-n]}}{\longleftrightarrow}
\partial\cyl_{T-n}
\big\vert
\partial\cyl_{(c_{1}-1)n}\stackrel{\Lc^{\theta}_{\cap [(c_{1}-1)n, T-n]}}{\longleftrightarrow}
\partial\cyl_{T-n}
\big)
\geq 
\mathbb{P}(E_{1,n}\cap E_{2,n})
\geq\mathbb{P}(E_{1,n})\mathbb{P}(E_{2,n}).
\end{displaymath}
By Lemma \ref{L:cross_loop},
$\mathbb{P}(E_{1,n})\geq 1-\hat{p}_{1}^{\theta}$
for some $\hat{p}_{1}\in (0,1)$.
Moreover, $\mathbb{P}(E_{1,n})\geq 1-\hat{p}_{2}^{\theta n}$
for some $\hat{p}_{2}\in (0,1)$.
This gives the desired lower bound uniform in $n$ and $T$.
\end{proof}

In the sequel, given a random variable $X$ and an event A,
we will denote by $\Law(X\vert A)$
the law of $X$ conditioned on the event $A$.

\begin{lemma}
\label{Lem overlap}
There is $q\in (0,1]$ such that
for every $n\in\N\setminus\{ 0\}$,
for every $T\geq c_{2} n$ and
and for every two initial conditions
$(K_{i}(0))_{i\geq 0}$
and $(\widehat{K}_{i}(0))_{i\geq 0}$ in $\fav_{n}$,
we have
\begin{displaymath}
1-\dfrac{1}{2} d_{\rm TV}(L_{n,T}, \widehat{L}_{n,T})\geq q,
\end{displaymath}
where
\begin{displaymath}
L_{n,T} = 
\Law
\big(
(K_{i}((c_{1}-1)n))_{i\geq 0}
\big\vert
T_{\dagger}>T-n
\big),
\qquad
\widehat{L}_{n,T} = 
\Law
\big(
(\widehat{K}_{i}((c_{1}-1)n))_{i\geq 0}
\big\vert
T_{\dagger}>T-n
\big),
\end{displaymath}
and where $(K_{i}(m))_{i\geq 0, m\geq 0}$,
resp. $(\widehat{K}_{i}(m))_{i\geq 0, m\geq 0}$
are the Markov chains of clusters starting from the corresponding initial conditions.
\end{lemma}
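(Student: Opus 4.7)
We will prove the lemma by a coupling argument driven by a single underlying Brownian loop soup. Realise the two Markov chains $(X_{i}(m))_{i,m\geq 0}$ and $(\widehat{X}_{i}(m))_{i,m\geq 0}$ (starting from $(K_{i})_{i\geq 0}$ and $(\widehat{K}_{i})_{i\geq 0}$ respectively) on a common probability space using the same loop soup $\Lc^{\theta}$ on $\cyl_{0}$; write $T_{\dagger}^{K},T_{\dagger}^{\widehat{K}}$ for their extinction times. We will construct an event $G$, measurable with respect to $\Lc^{\theta}$, such that
\begin{displaymath}
G \subset \{T_{\dagger}^{K}>T-n\}\cap\{T_{\dagger}^{\widehat{K}}>T-n\},
\quad
(X_{i}((c_{1}-1)n))_{i\geq 0}=(\widehat{X}_{i}((c_{1}-1)n))_{i\geq 0} \text{ on } G,
\end{displaymath}
and
\begin{displaymath}
\mathbb{P}(G)\geq q\cdot \big(\mathbb{P}(T_{\dagger}^{K}>T-n)\vee\mathbb{P}(T_{\dagger}^{\widehat{K}}>T-n)\big)
\end{displaymath}
for some universal $q>0$. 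Setting $\nu(A):=\mathbb{P}\big(G,\,(X_{i}((c_{1}-1)n))_{i\geq 0}\in A\big)$, both $L_{n,T}$ and $\widehat{L}_{n,T}$ dominate $\nu$ divided by the corresponding survival probability, hence $1-\tfrac{1}{2}d_{\mathrm{TV}}(L_{n,T},\widehat{L}_{n,T})\geq q$.

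\textbf{The event $G$.} Writing $s=(c_{1}-3/2)n$, we take $G=G_{1}\cap G_{2}\cap G_{3}$ with
$G_{1}$ the event that $\Lc^{\theta}$ contains a non-contractible loop $\wp^{\ast}$ with $\Range(\wp^{\ast})\subset (s,(c_{1}-1)n)\times\mathbb{S}^{1}$;
$G_{2}$ the event that the cluster $\Cc^{\ast}$ of $\wp^{\ast}$ in the augmented system $(K_{i})_{i}\cup (\widehat{K}_{i})_{i}\cup \Lc^{\theta}_{\cap [0,(c_{1}-1)n]}$ contains every initial compact from both families with $\rho>0$;
$G_{3}$ the event that the cluster of $\wp^{\ast}$ in $\Lc^{\theta}_{\cap [s,T-n]}$ reaches $\partial\cyl_{T-n}$.
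On $G$, $\Cc^{\ast}$ is the marked cluster for both chains (since it contains $K_{0}$ and $\widehat{K}_{0}$) and it reaches $\partial\cyl_{T-n}$; all other initial compacts are absorbed in $\Cc^{\ast}$, so they do not bridge any other pair of loops, and the induced partition of the loops into clusters---and hence the state at time $(c_{1}-1)n$---depends only on $\Lc^{\theta}$. The three events $G_{1},G_{2},G_{3}$ are all increasing in $\Lc^{\theta}$, so FKG gives $\mathbb{P}(G)\geq \mathbb{P}(G_{1})\mathbb{P}(G_{2})\mathbb{P}(G_{3})$. Lemma \ref{L:surround} and Lemma \ref{L:cross_loop} yield $\mathbb{P}(G_{1})\geq p_{1}>0$ uniformly. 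For $G_{3}$, we imitate the proof of Lemma \ref{Lem another crossing}: the cluster from $\wp^{\ast}$ may be extended to $\partial\cyl_{T-n}$ by crossing an annulus between $\wp^{\ast}$ and $\partial\cyl_{(c_{1}-1)n}$ (a non-contractible loop there contributes a universal factor) and then surviving from $(c_{1}-1)n$ to $T-n$; FKG gives $\mathbb{P}(G_{3})\geq c\,(\mathbb{P}(T_{\dagger}^{K}>T-n)\vee \mathbb{P}(T_{\dagger}^{\widehat{K}}>T-n))$.

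\textbf{Bounding $\mathbb{P}(G_{2})$ and main obstacle.} The hypothesis $(K_{i})_{i},(\widehat{K}_{i})_{i}\in\fav_{n}$ gives that all initial compacts are confined to $[0,(c_{1}-2)n]\times\mathbb{S}^{1}$, separated from $\wp^{\ast}$ by a radial belt of width $\geq n/2$. We cascade $\wp^{\ast}$'s cluster through this belt (and further down towards $\partial\cyl_{0}$) by inserting, via FKG, non-contractible loops of $\Lc^{\theta}$ at a sequence of geometrically shrinking scales; the universal lower bounds of Lemma \ref{L:cross_loop} applied to each annulus, combined with an FKG chaining argument, produce a single enlarged cluster whose footprint accumulates densely on $\partial\cyl_{0}$. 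A connectedness argument for compacts in $\Komp$ (every such compact contains $\partial\cyl_{0}$ and is connected to a point at height $\rho(K_{i})$) then ensures that $\Cc^{\ast}$ intersects each $K_{i}$ with $\rho(K_{i})>0$, yielding $\mathbb{P}(G_{2})\geq p_{2}>0$ uniformly in $n$, $T$ and in the choice of initial conditions in $\fav_{n}$. The hard part is making this last step work uniformly in the initial data: the compacts $K_{i}$, $\widehat{K}_{i}$ can be geometrically wild (thin spikes, oscillatory, or accumulating at $\partial\cyl_{0}$), and the cluster $\Cc^{\ast}$ in the subcritical regime is itself a.s.\ finite, so achieving a quantitative absorption of every compact requires a careful multi-scale FKG construction that is uniform over $\fav_{n}$.
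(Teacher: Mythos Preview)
Your coupling framework (shared loop soup, FKG, lower bound via a good event $G$ contained in both survival events) matches the paper's, but your choice of $G_2$ makes the argument much harder than it needs to be and contains a real gap.

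You require the cluster $\Cc^{\ast}$ to swallow \emph{every} initial compact with $\rho>0$. There are infinitely many such compacts, with $\rho(K_i)\to 0$, and your ``cascade of non-contractible loops at geometrically shrinking scales accumulating on $\partial\cyl_0$'' cannot give a positive lower bound uniform over $\fav_n$: the probability of a non-contractible loop in a thin belt $(a,b)\times\mathbb{S}^1$ near $\partial\cyl_0$ goes to $0$ as $b-a\to 0$, and the infinite product over scales diverges to zero. You also implicitly need the absorption to happen separately in the $K$-system and in the $\widehat K$-system (the augmented system mixes the two, so a connection $K_0\leftrightarrow\wp^{\ast}$ there might route through some $\widehat K_j$), which you do not address.

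The paper avoids all of this by a one-line \emph{screening} argument instead of absorption. Since every initial compact lies in $[0,(c_1-2)n]\times\mathbb{S}^1$, a single non-contractible loop in the belt $((c_1-2)n,(c_1-1)n)\times\mathbb{S}^1$ (event $E_{3,n}$) sits strictly between the initial data and level $(c_1-1)n$. Any chain connecting two loops that touch $\partial\cyl_{(c_1-1)n}$ and uses some $K_i$ or $\widehat K_i$ must cross this belt, hence hits the non-contractible loop; the connection can therefore be rerouted through Brownian loops alone. Thus on $E_{3,n}$ the cluster decomposition at level $(c_1-1)n$ is independent of the initial compacts, so the two chains give the same state. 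A second non-contractible loop in $(n,2n)$ (event $E_{2,n}$) ensures that both $K_0$ and $\widehat K_0$ (which reach level $\geq 2n$) are caught by the long cluster from $E_{1,n,T}$, guaranteeing survival and that the marked component coincides. The three events are independent (disjoint loop types), and the denominator is handled cleanly via $\{T_\dagger>T-n\}\subset E_{4,n,T}$ together with Lemma~\ref{Lem another crossing}. So the fix is simple: replace your absorption event $G_2$ by the screening event ``there is a non-contractible loop in $((c_1-2)n,(c_1-1)n)\times\mathbb{S}^1$''.
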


\begin{proof}
Consider the following events for the loop soup $\Lc^{\theta}_{\cap (0,T-n]}$.
Let $E_{1,n,T}$ be the event
\begin{displaymath}
E_{1,n,T} = 
\Big\{
\partial\cyl_{n}\stackrel{\Lc^{\theta}_{\cap [(c_{1}-1)n, T-n]}}{\longleftrightarrow}
\partial\cyl_{T-n}
\Big\}.
\end{displaymath}
Let $E_{2,n}$ be the event that there is a loop of
$\Lc^{\theta}_{\cap (0,T-n]}$ contained in the cylinder
$(n,2n)\times\mathbb{S}^{1}$ that surrounds the cylinder.
Let $E_{3,n}$ be the event that there is a loop of
$\Lc^{\theta}_{\cap (0,T-n]}$ contained in the cylinder
$((c_{1}-2)n,(c_{1}-1)n)\times\mathbb{S}^{1}$ that surrounds the cylinder.
See Figure \ref{fig2}.
Note that the events $E_{1,n,T}$, $E_{2,n}$ and $E_{3,n}$
are independent since they involve disjoint types of loops.

\begin{figure}
   \centering
    \def\svgwidth{0.8\columnwidth}
   %% Creator: Inkscape 1.1 (c4e8f9e, 2021-05-24), www.inkscape.org
%% PDF/EPS/PS + LaTeX output extension by Johan Engelen, 2010
%% Accompanies image file 'drawing2.pdf' (pdf, eps, ps)
%%
%% To include the image in your LaTeX document, write
%%   \input{<filename>.pdf_tex}
%%  instead of
%%   \includegraphics{<filename>.pdf}
%% To scale the image, write
%%   \def\svgwidth{<desired width>}
%%   \input{<filename>.pdf_tex}
%%  instead of
%%   \includegraphics[width=<desired width>]{<filename>.pdf}
%%
%% Images with a different path to the parent latex file can
%% be accessed with the `import' package (which may need to be
%% installed) using
%%   \usepackage{import}
%% in the preamble, and then including the image with
%%   \import{<path to file>}{<filename>.pdf_tex}
%% Alternatively, one can specify
%%   \graphicspath{{<path to file>/}}
%% 
%% For more information, please see info/svg-inkscape on CTAN:
%%   http://tug.ctan.org/tex-archive/info/svg-inkscape
%%
\begingroup%
  \makeatletter%
  \providecommand\color[2][]{%
    \errmessage{(Inkscape) Color is used for the text in Inkscape, but the package 'color.sty' is not loaded}%
    \renewcommand\color[2][]{}%
  }%
  \providecommand\transparent[1]{%
    \errmessage{(Inkscape) Transparency is used (non-zero) for the text in Inkscape, but the package 'transparent.sty' is not loaded}%
    \renewcommand\transparent[1]{}%
  }%
  \providecommand\rotatebox[2]{#2}%
  \newcommand*\fsize{\dimexpr\f@size pt\relax}%
  \newcommand*\lineheight[1]{\fontsize{\fsize}{#1\fsize}\selectfont}%
  \ifx\svgwidth\undefined%
    \setlength{\unitlength}{818.05844313bp}%
    \ifx\svgscale\undefined%
      \relax%
    \else%
      \setlength{\unitlength}{\unitlength * \real{\svgscale}}%
    \fi%
  \else%
    \setlength{\unitlength}{\svgwidth}%
  \fi%
  \global\let\svgwidth\undefined%
  \global\let\svgscale\undefined%
  \makeatother%
  \begin{picture}(1,0.46609829)%
    \lineheight{1}%
    \setlength\tabcolsep{0pt}%
    \put(0,0){\includegraphics[width=\unitlength,page=1]{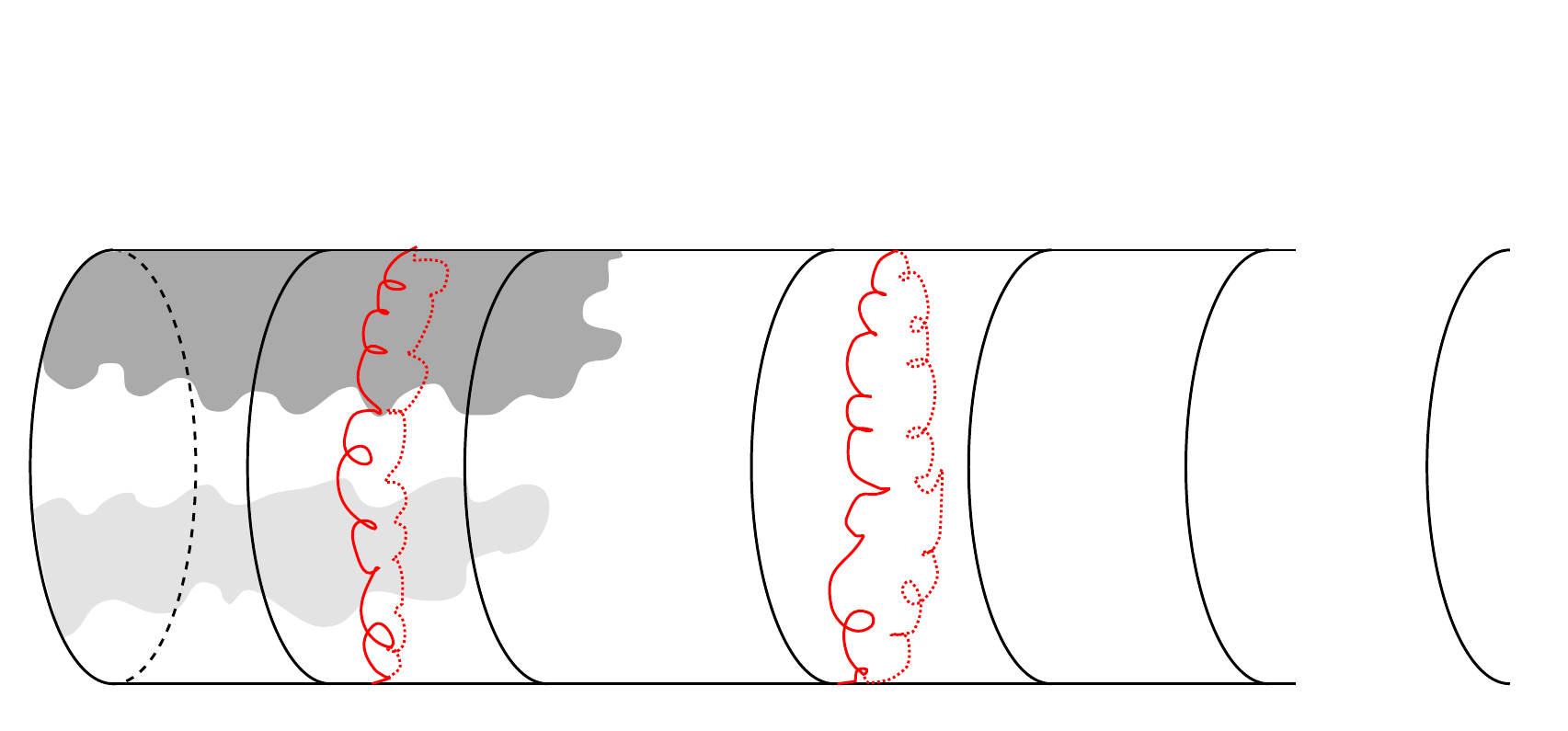}}%
    \put(0.19001896,0.00442645){\makebox(0,0)[lt]{\lineheight{1.25}\smash{\begin{tabular}[t]{l}$n$\end{tabular}}}}%
    \put(0.31814897,0.00442645){\makebox(0,0)[lt]{\lineheight{1.25}\smash{\begin{tabular}[t]{l}$2n$\end{tabular}}}}%
    \put(0.44845975,0.00442645){\makebox(0,0)[lt]{\lineheight{1.25}\smash{\begin{tabular}[t]{l}$(c_1-2)n$\end{tabular}}}}%
    \put(0.59883091,0.00442645){\makebox(0,0)[lt]{\lineheight{1.25}\smash{\begin{tabular}[t]{l}$(c_1-1)n$\end{tabular}}}}%
    \put(0.92543827,0.00442645){\makebox(0,0)[lt]{\lineheight{1.25}\smash{\begin{tabular}[t]{l}$T-n$\end{tabular}}}}%
    \put(0.10205191,0.27182739){\makebox(0,0)[lt]{\lineheight{1.25}\smash{\begin{tabular}[t]{l}$\widehat{K}_0(0)$\end{tabular}}}}%
    \put(0.03831965,0.11090306){\makebox(0,0)[lt]{\lineheight{1.25}\smash{\begin{tabular}[t]{l}$K_0(0)$\end{tabular}}}}%
    \put(0.77305462,0.00442645){\makebox(0,0)[lt]{\lineheight{1.25}\smash{\begin{tabular}[t]{l}$c_1 n$\end{tabular}}}}%
    \put(0,0){\includegraphics[width=\unitlength,page=2]{drawing2.pdf}}%
  \end{picture}%
\endgroup%

   \caption{Schematic representation of the events considered in the proof of Lemma \ref{Lem overlap}. Only the marked compact $K_0(0)$ and $\widehat{K}_0(0)$ of the initial conditions $(K_i(0))_{i \geq 0}$ and $(\widehat{K}_i(0))_{i \geq 0}$ have been depicted (light and dark grey respectively). The existence of the blue cluster corresponds to the event $E_{1,n,T}$ whereas the existence of the two red loops corresponds to the events $E_{2,n,T}$ and $E_{3,n,T}$.}\label{fig2}
\end{figure}

Now we consider the Markov chains $(K_{i}(m))_{i\geq 0, m\geq 0}$
and $(\widehat{K}_{i}(m))_{i\geq 0, m\geq 0}$ on the same probability space,
obtained from the same Brownian loop soup $\Lc^{\theta}_{0}$.
Note that in this coupling, neither of the two Markov processes is conditioned
on $T_{\dagger}>T-n$. Not yet.

Since $(K_{i}(0))_{i\geq 0}\in\fav_{n}$ and $(\widehat{K}_{i}(0))_{i\geq 0}\in\fav_{n}$, we have
\begin{displaymath}
E_{1,n,T}\cap E_{2,n}
\subset\{T_{\dagger}((K_{i})_{i\geq 0})> T-n\}
\quad \text{and} \quad
E_{1,n,T}\cap E_{2,n}
\subset\{T_{\dagger}((\widehat{K}_{i})_{i\geq 0})> T-n\};
\end{displaymath}
see Figure \ref{fig2}.
Further, on the event $E_{1,n,T}\cap E_{2,n}\cap E_{3,n}$,
we have $(K_{i}((c_{1}-1)n))_{i\geq 0}=(\widehat{K}_{i}((c_{1}-1)n))_{i\geq 0}$ a.s. 
Indeed, if two Brownian loops $\wp$ and $\wp'$ intersecting
$\partial\cyl_{(c_{1}-1)n}$ are connected by a finite chain that uses
$(K_{i}(0))_{i\geq 0}$
and $(\widehat{K}_{i}(0))_{i\geq 0}$,
then this chain has to intersect the non-contractible loops
in $((c_{1}-2)n,(c_{1}-1)n)\times\mathbb{S}^{1}$,
that exist on the event $E_{3,n}$,
and thus the connection between $\wp$ and $\wp'$
can also be done without using $(K_{i}(0))_{i\geq 0}$
or $(\widehat{K}_{i}(0))_{i\geq 0}$,
by using just the Brownian loop; see Figure \ref{fig2}.

Thus, we get some overlap in total variation between
the non-conditioned laws
$\Law\big((K_{i}((c_{1}-1)n))_{i\geq 0}\big)$ and
$\Law\big((\widehat{K}_{i}((c_{1}-1)n))_{i\geq 0}\big)$.
Moreover, this overlap happens on the event
\begin{displaymath}
\{T_{\dagger}((K_{i})_{i\geq 0})> T-n\}\cap
\{T_{\dagger}((\widehat{K}_{i})_{i\geq 0})> T-n\}.
\end{displaymath}
Therefore, we get a non-trivial overlap for the conditioned laws too:
\begin{displaymath}
1-\dfrac{1}{2} d_{\rm TV}(L_{n,T}, \widehat{L}_{n,T})\geq 
\dfrac{\mathbb{P}(E_{1,n,T}\cap E_{2,n}\cap E_{3,n})}
{\mathbb{P}(T_{\dagger}((K_{i})_{i\geq 0})> T-n)\vee
\mathbb{P}(T_{\dagger}((\widehat{K}_{i})_{i\geq 0})> T-n)}.
\end{displaymath}
Further,
\begin{displaymath}
\{T_{\dagger}((K_{i})_{i\geq 0})> T-n\}
\cup
\{T_{\dagger}((\widehat{K}_{i})_{i\geq 0})> T-n\}
\subset E_{4,n,T},
\end{displaymath}
where $E_{4,n,T}$ is the event
\begin{displaymath}
E_{4,n,T} = 
\Big\{
\partial\cyl_{(c_{1}-1)n}\stackrel{\Lc^{\theta}_{\cap [(c_{1}-1)n, T-n]}}{\longleftrightarrow}
\partial\cyl_{T-n}
\Big\}.
\end{displaymath}
Thus,
\begin{displaymath}
\mathbb{P}(T_{\dagger}((K_{i})_{i\geq 0})> T-n)\vee
\mathbb{P}(T_{\dagger}((\widehat{K}_{i})_{i\geq 0})> T-n)
\leq\mathbb{P}(E_{4,n,T}).
\end{displaymath}
Thus,
\begin{displaymath}
1-\dfrac{1}{2} d_{\rm TV}(L_{n,T}, \widehat{L}_{n,T})\geq 
\dfrac{\mathbb{P}(E_{1,n,T}\cap E_{2,n}\cap E_{3,n})}
{\mathbb{P}(E_{4,n,T})}
\geq
\dfrac{\mathbb{P}(E_{1,n,T})}{\mathbb{P}(E_{4,n,T})}
\mathbb{P}(E_{2,n})\mathbb{P}(E_{3,n}).
\end{displaymath}
By Lemma \ref{Lem another crossing},
\begin{displaymath}
\dfrac{\mathbb{P}(E_{1,n,T})}{\mathbb{P}(E_{4,n,T})}
\geq\hat{p}_{0}>0,
\end{displaymath}
where the lower bound is uniform in $n$ and $T$.
Further,
$\mathbb{P}(E_{2,n})=\mathbb{P}(E_{3,n})= 1 - \hat{p}_{2}^{\theta n}$,
for some $\hat{p}_{2}\in (0,1)$.
This concludes the proof of the lemma.
\end{proof}

Lemma \ref{Lem overlap} is exactly Condition \ref{cond1} of 
Theorem \ref{Thm abstract coupling}.
So it applies to our Markov chains of clusters in a Brownian loop soup and concludes the proof of Theorem \ref{T:coupling_rough}.

\subsection{Convergence of the ratio of survival probabilities}\label{SS:ratio}

We finish this section by proving Theorem \ref{T:ratio_general}. We first state and prove a corollary of Theorem \ref{T:coupling_rough}.
We use the same notations as the ones introduced above Theorem \ref{T:coupling_rough}.

\begin{corollary}\label{C:ratio}
For every initial conditions $K_0$ and $\hat{K}_0$ not identical to $\partial \cyl_0$, $\P ( T^\dagger > T ) / \P ( \hat{T}^\dagger > T )$ converges as $T \to \infty$.
\end{corollary}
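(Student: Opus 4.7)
Work in the Markov-chain framework of Sections~\ref{Sucsec Markov chain clusters}--\ref{Subsec couple cluster}. Write $\phi_T(z):=\P^z(T^\dagger>T)$ for $z\in\espace^\ast$, and let $x,y\in\espace^\ast$ be the initial states of the Markov chain associated with $K_0$ and $\hat K_0$, so that the ratio of interest is $r_T=\phi_T(x)/\phi_T(y)$. Note that the sequence $(r_T)$ is bounded above and below away from $0$: from either initial state one can, within a few steps, reach a configuration dominating the other, and an FKG-type argument then yields uniform bounds on $r_T$.

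By the Markov property at an intermediate time $T_0\leq T$,
\[
\phi_T(z)=\phi_{T_0}(z)\int \phi_{T-T_0}(w)\,\mu^z_{T_0}(dw),\qquad \mu^z_{T_0}:=\Law\bigl(\xi(T_0)\,\big|\,T^\dagger>T_0\bigr),
\]
so that
\[
\frac{r_T}{r_{T_0}}=\frac{\int \phi_{T-T_0}\,d\mu^x_{T_0}}{\int \phi_{T-T_0}\,d\mu^y_{T_0}}.
\]
It therefore suffices to show that this ratio tends to $1$ as $T_0\to\infty$, uniformly in $T\geq T_0$; the Cauchy criterion will then yield convergence of $(r_T)$.

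The coupling input is Theorem~\ref{Thm abstract coupling}, whose three hypotheses have been verified for our Markov chain in Lemmas~\ref{Lem F n clusters}, \ref{Lem exponent conditioned loops} and~\ref{Lem overlap} (with $c_1$ chosen as in~\eqref{Eq condition c1}). It yields, via~\eqref{Eq TV large t},
\[
\eta_{T_0}:=\sup_{T\geq T_0}\,d_{\mathrm{TV}}\bigl(P^T_{T_0}(x,\cdot),P^T_{T_0}(y,\cdot)\bigr)\;\longrightarrow\;0\quad\text{as }T_0\to\infty.
\]
A direct Bayes computation shows $P^T_{T_0}(z,dw)=\phi_{T-T_0}(w)\,\mu^z_{T_0}(dw)\big/\!\!\int\phi_{T-T_0}\,d\mu^z_{T_0}$, so $\eta_{T_0}$ controls precisely the TV distance between the two ``tilted'' measures entering the Markov decomposition, uniformly over $T\geq T_0$.

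The final step---and the main obstacle---is to convert this uniform tilted-TV bound into the desired ratio convergence. The na\"\i ve difference estimate $|\!\int\phi_{T-T_0}\,d\mu^x_{T_0}-\int\phi_{T-T_0}\,d\mu^y_{T_0}|\leq\tfrac12 d_{\mathrm{TV}}(\mu^x_{T_0},\mu^y_{T_0})$ is inadequate, because both integrals vanish as $T\to\infty$ for fixed~$T_0$, so controlling a difference does not control a ratio of vanishing quantities. The resolution is to use the full joint coupling of Theorem~\ref{T:coupling_rough}, not just its TV corollary: on the event $\{\mathcal T^T\leq T_0\}$, whose probability is at least $1-\eta_{T_0}/2$ uniformly in $T\geq T_0$, the two conditioned trajectories coincide at time $T_0$ \emph{and at every later time up to~$T$}. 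This identifies a common ``coalesced'' component of both $P^T_{T_0}(x,\cdot)$ and $P^T_{T_0}(y,\cdot)$, of mass $\geq 1-\eta_{T_0}/2$, together with small residuals. Combined with the boundedness of $(r_T)$ and the resulting two-sided control on each integral, this structural identity is enough to conclude that $r_T/r_{T_0}=1+o_{T_0\to\infty}(1)$ uniformly in $T\geq T_0$, giving the Cauchy property and hence existence of $\lim_{T\to\infty}r_T$.
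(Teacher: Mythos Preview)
Your overall strategy matches the paper's: establish the Cauchy property for $r_T=\phi_T(x)/\phi_T(y)$ by using the coupling at an intermediate time $T_0$ and showing $r_T/r_{T_0}\to 1$ uniformly in $T\ge T_0$. You also correctly diagnose the central difficulty: the coupling controls $d_{\rm TV}\bigl(P^T_{T_0}(x,\cdot),P^T_{T_0}(y,\cdot)\bigr)$, but since $A_z=\int\phi_{T-T_0}\,d\mu^z_{T_0}$ tends to $0$ as $T\to\infty$, a TV bound on either the untilted or tilted laws does not by itself control the \emph{ratio} $A_x/A_y$.

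The gap is in your last paragraph. You assert that the ``full joint coupling'' together with boundedness of $(r_T)$ is enough, but you do not carry out any argument, and I do not see how to make one work at this level of generality. Knowing that the two conditioned trajectories agree on $\{\mathcal T^T\le T_0\}$ for all later times gives nothing beyond the TV statement at time $T_0$: by the Markov property, equality at $T_0$ already forces equality afterwards. Concretely, one can have $d_{\rm TV}(P^T_{T_0}(x,\cdot),P^T_{T_0}(y,\cdot))=0$ while $A_x/A_y$ is arbitrary --- take $\mu^x_{T_0}=p\,\delta_a+(1-p)\,\delta_b$, $\mu^y_{T_0}=q\,\delta_a+(1-q)\,\delta_c$ with $\phi_{T-T_0}(b)=\phi_{T-T_0}(c)=0$, so both tilted laws equal $\delta_a$ yet $A_x/A_y=p/q$. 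Even adding boundedness of $(r_T)$ and the further constraint $d_{\rm TV}(\mu^x_{T_0},\mu^y_{T_0})\le\eta_{T_0}$, the obstacle remains that on the non-coalescence event the state may have atypically large survival probability $\phi_{T-T_0}$, and this residual contribution is not negligible relative to $A_z$ without a quantitative tail estimate.

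This is exactly the point at which the paper brings in model-specific input. It applies H\"older's inequality,
\[
\bigl|A_x-A_y\bigr|\;\le\;\P(E_{t,T}^c)^{1/p}\,\Bigl(\E\bigl[\P(T^\dagger>T\mid T^\dagger>t,\mathcal F_t)^q\bigr]+\E\bigl[\P(\hat T^\dagger>T\mid \hat T^\dagger>t,\mathcal F_t)^q\bigr]\Bigr)^{1/q},
\]
and then bounds the $q$-th moment by a loop-soup computation: writing $\tau$ for the rightmost level reached by loops in $\mathcal L^\theta_{\cap(0,t]}$, one has $\P(T^\dagger>T\mid T^\dagger>t,\mathcal F_t)\le \P(\tau-t\leftrightarrow T-t)$, and the tail $\P(\tau\ge s)\le C\,t/s$ from Lemma~\ref{L:cross_loop} combined with the crossing exponent from Theorem~\ref{T:large_crossing} yields $\E[\phi_{T-t}^q]\le C\,\P(T^\dagger>T-t)^q/\P(T^\dagger>t)^q$ for any $q\in(1,\tfrac{1}{1-\theta})$. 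This moment bound is precisely what converts the small-probability non-coalescence event into a contribution that is $o(A_y)$, and it cannot be extracted from the coupling alone.
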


\begin{remark}\label{rmk:ratio_bounded}
By FKG inequality, it is clear that the ratio $\P ( T^\dagger > T ) / \P ( \hat{T}^\dagger > T )$ remains uniformly bounded away from zero and from infinity. Indeed, recalling the notation $\rho(K_0)$ \eqref{E:rho_K}, let $E$ be the event that there is a loop intersecting $\hat{K}_0$ that disconnects $\partial \cyl_{\rho(K_0)}$ and $\partial \cyl_{\rho(K_0)+1}$. Since $E \cap \{ T^\dagger > T \} \subset \{ \hat{T}^\dagger > T \}$, by FKG inequality we get that
\[
\P ( T^\dagger > T ) / \P ( \hat{T}^\dagger > T ) \leq 1 / \Prob{E}.
\]
The lower bound is similar. On the other hand, showing that this ratio actually converges is much more involved and relies on the coupling results we developed.
\end{remark}

\begin{proof}
Let $T \gg t \gg 1$ be large integers and let $E_{t,T}$ be the event that under the coupling of Theorem \ref{T:coupling_rough}, the coalescence time $\Tc^T$ is smaller than $t$. By Theorem \ref{T:coupling_rough}, $\Prob{E_{t,T}} \to 1$ as $T \to \infty$ followed by $t \to \infty$.
In this proof, we will denote by $(\Fc_s)_{s \geq 0}$ the filtration $\Fc_s = \sigma( K_0(u), \hat{K}_0(u), u \in \{0, \dots, s\} )$, $s \geq 0$.
We have
\begin{align*}
\P ( T^\dagger >T & \vert T^\dagger>t ) = \Expect{ P (T^\dagger >T \vert T^\dagger > t, \Fc_t) \mathbf{1}_{E_{t,T}} } + \Expect{ \P(T^\dagger >T \vert T^\dagger > t, \Fc_t) \mathbf{1}_{E_{t,T}^c} } \\
& = \Expect{ \P(\hat{T}^\dagger >T \vert \hat{T}^\dagger > t, \Fc_t) \mathbf{1}_{E_{t,T}} } + \Expect{ \P(T^\dagger >T \vert T^\dagger > t, \Fc_t) \mathbf{1}_{E_{t,T}^c} } \\
& = \P(\hat{T}^\dagger >T \vert \hat{T}^\dagger > t) + \Expect{ \left( \P(T^\dagger >T \vert T^\dagger > t, \Fc_t) - \P(\hat{T}^\dagger >T \vert \hat{T}^\dagger > t, \Fc_t) \right) \mathbf{1}_{E_{t,T}^c} }.
\end{align*}
The rest of the proof is dedicated to showing that the second right hand side term is much smaller than the first one.
Let $p,q >1$ be such that $1/p + 1/q = 1$. By H\"older's inequality and the above inequality, $\abs{ \P ( T^\dagger >T \vert T^\dagger>t ) - \P ( \hat{T}^\dagger >T \vert \hat{T}^\dagger > t ) }$ is bounded by
\begin{align}
\label{E:pf_ratio1}
\Prob{E_{t,T}^c}^{1/p} \Expect{\Prob{T^\dagger >T \vert T^\dagger > t, \Fc_t}^q + \P(\hat{T}^\dagger >T \vert \hat{T}^\dagger > t, \Fc_t)^q }^{1/q}.
\end{align}
Let us denote by
$
\tau := \sup \{ s > 0: \exists \wp \in \Lc_{\cap (0,t]}^\theta: \Range(\wp) \cap \{s\} \times \mathbb{S}^1 \neq \varnothing \}.
$
Conditionally on $\tau, \Fc_t, T^\dagger > t$, in order to have $T^\dagger > T$, there must be a cluster of loops included in $(t,\infty) \times \mathbb{S}^1$ that crosses $[ \tau, T] \times \mathbb{S}^1$. By scaling, this probability is equal to $\P \Big( \tau - t \overset{\Lc_0^\theta}{\longleftrightarrow} T-t \vert \tau \Big)$. We obtain that
\begin{align}
\label{E:pf_ratio2}
\Expect{\Prob{T^\dagger >T \vert T^\dagger > t, \Fc_t}^q} = \Prob{ \tau > T} + \int_t^T \Prob{ \tau \in \d s } \Prob{ s - t \overset{\Lc_0^\theta}{\longleftrightarrow} T-t }^q.
\end{align}
Let $E_1, E_2$ and $E_3$ be the following events:
\begin{itemize}
\item $E_1$: there is a cluster of $\Lc_0^\theta$ crossing $[t-1,s-t+1] \times \mathbb{S}^1$;
\item $E_2$: there is a non contractible loop inside $[t-1,t] \times \mathbb{S}^1$;
\item $E_3$: there is a non contractible loop inside $[s-t,s-t+1] \times \mathbb{S}^1$.
\end{itemize}
We have $E_1 \cap E_2 \cap E_3 \cap \{ T^\dagger > t \} \cap \{ s - t \overset{\Lc_0^\theta}{\longleftrightarrow} T-t \} \subset \{ T^\dagger > T-t \} $. The five events on the left hand side are increasing events. By FKG inequality we deduce that the product of the five individual probabilities is at most $\Prob{T^\dagger > T-t}$. By Lemma \ref{L:surround}, $\Prob{E_2} = \Prob{E_3} \geq c$ and by Theorem \ref{T:large_crossing}, $\Prob{E_1} \geq c_\eps (s/t)^{-1+\theta-\eps}$. This leads to
\[
\P \Big( s - t \overset{\Lc_0^\theta}{\longleftrightarrow} T-t \Big) \leq C (s/t)^{1-\theta+\eps} \Prob{T^\dagger > T-t} / \Prob{T^\dagger > t}.
\]
Plugging this estimate back in \eqref{E:pf_ratio2} and integrating by parts (differentiating $s^{q(1-\theta+\eps)}$ and integrating $\Prob{ \tau \in \d s}$) gives the following upper bound for the left hand side of \eqref{E:pf_ratio2}:
\begin{align*}
C (1/t)^{q(1-\theta+\eps)} \Prob{T^\dagger > T-t}^q / \Prob{T^\dagger > t}^q \int_t^T \Prob{ \tau \geq s} s^{q(1-\theta+\eps) -1} \d s.
\end{align*}
By Lemma \ref{L:cross_loop}, $\Prob{ \tau \geq s} \leq C t/s$. With the change of variable $s = xt$, we have obtained that 
\begin{align*}
\Expect{\Prob{T^\dagger >T \vert T^\dagger > t, \Fc_t}^q} 
\leq C \Prob{T^\dagger > T-t}^q / \Prob{T^\dagger > t}^q \int_1^\infty  x^{q(1-\theta+\eps) -2} \d x.
\end{align*}
We choose $1 < q < 1/(1-\theta+\eps)$ to make sure that the integral above converges. We have overall obtained that $\Prob{T^\dagger>t}^q \Expect{\Prob{T^\dagger >T \vert T^\dagger > t, \Fc_t}^q} \leq C \Prob{T^\dagger > T-t}^q$. Since $\P(\hat{T}^\dagger > t) / \Prob{T^\dagger >t} \in [1/C,C]$, we also obtain that
\[
\Prob{T^\dagger>t}^q \Expect{\Prob{\hat{T}^\dagger >T \vert \hat{T}^\dagger > t, \Fc_t}^q} \leq C \Prob{T^\dagger > T-t}^q.
\]
Multiplying \eqref{E:pf_ratio1} by $\Prob{ T^\dagger>t} / \P(\hat{T}^\dagger > T)$, we deduce that
\begin{align*}
\abs{ \frac{\Prob{ T^\dagger >T}}{\P(\hat{T}^\dagger>T)} - \frac{\Prob{T^\dagger>t}}{\P(\hat{T}^\dagger > t)} }
\leq C \Prob{E_{t,T}^c}^{1/p} \frac{\Prob{T^\dagger > T-t}}{\P(\hat{T}^\dagger >T)}.
\end{align*}
Using FKG inequality and the fact that the probability of crossing $[T-t, T] \times \mathbb{S}^1$ tends to 1 as $T \to \infty$ ($t$ fixed), it can be checked that $\limsup_{T \to \infty} \Prob{T^\dagger > T-t} / \P (\hat{T}^\dagger >T )$ is bounded uniformly in $t$; see Remark \ref{rmk:ratio_bounded} for a very similar statement. By then sending $t \to \infty$, we obtain that
\[
\limsup_{t \to \infty} \limsup_{T \to \infty} \abs{ \frac{\Prob{ T^\dagger >T}}{\P(\hat{T}^\dagger>T)} - \frac{\Prob{T^\dagger>t}}{\P(\hat{T}^\dagger > t)} } = 0.
\]
This shows that $\Prob{ T^\dagger >T}/\P(\hat{T}^\dagger>T)$ converges as $T \to \infty$.
\end{proof}

We are now ready to prove Theorem \ref{T:ratio_general}.

\begin{proof}[Proof of Theorem \ref{T:ratio_general}]
The proof of the convergence of the first ratio in \eqref{E:ratio_D} follows quickly from Corollary \ref{C:ratio} and from conformal invariance of the loop soup. Indeed, let $f : D \to \D$ be a conformal map with $f(x) = 0$. By conformal invariance of the loop soup, the first ratio in \eqref{E:ratio_D} is equal to
\[
\P \Big( f(K) \overset{\Lc_\D^\theta}{\longleftrightarrow} f(D(x,r)) \Big) \Big/ \P \Big( f(\hat{K}) \overset{\Lc_\D^\theta}{\longleftrightarrow} f(D(x,r)) \Big).
\]
There exists a constant $C>1$ such that if $r$ is small enough $C^{-1} r \D \subset f(D(x,r)) \subset C r \D$. Because the probability of connecting $K$ to $C^{-1} r \D$ is asymptotically equivalent to that of connecting $K$ to $C r \D$, we deduce that the first ratio in \eqref{E:ratio_D} is equal to
\[
(1+o(1)) \P \Big( f(K) \overset{\Lc_\D^\theta}{\longleftrightarrow} r \D \Big) \Big/ \P \Big( f(\hat{K}) \overset{\Lc_\D^\theta}{\longleftrightarrow} r \D \Big).
\]
By Corollary \ref{C:ratio}, the above ratio converges as $r \to 0$ ($r = e^{-T}$ in the notations of Corollary \ref{C:ratio}).

We now move to the proof of the convergence of the second ratio in \eqref{E:ratio_D} and the identification of the limit. Let us denote by $\ell$ the limit of the first ratio in \eqref{E:ratio_D}.
Let $\delta >0$ and let $\eps \in (0, \d(z, \partial D))$ be small enough so that for all $r \in (0,\eps)$,
\[
\P \Big( K \overset{\Lc_D^\theta}{\longleftrightarrow} D(z,r) \Big)
\Big/
\P \Big( \hat{K} \overset{\Lc_D^\theta}{\longleftrightarrow} D(z,r) \Big)
\leq (1+\delta) \ell.
\]
As in \eqref{E:thick_to_disc}, we can replace $\Xi_a^z$ by a disc centred at $z$ with random radius $\norme{\Xi_a^z -z}_\infty$:
\[
\P \Big( K \overset{\Lc_D^\theta \cup \Xi_a^z}{\longleftrightarrow} z \Big)
= (1+o(1)) \P \Big( K \overset{\Lc_D^\theta}{\longleftrightarrow} D(z,\norme{\Xi_a^z -z}_\infty) \Big).
\]
By Lemma \ref{L:diameter_thick}, the probability that $\norme{\Xi_a^z -z}_\infty > \eps$ is at most $C a = C \gamma^2$ which is much smaller than the probability we are looking at (it behaves like $\gamma^{2(1-\theta)+o(1)}$, see Theorem \ref{T:convergenceL2}). By Lemma \ref{L:diameter_thick}, we get that
\begin{align*}
& \P \Big( K \overset{\Lc_D^\theta \cup \Xi_a^z}{\longleftrightarrow} z \Big)
= (1+o(1)) \int_0^\eps \d r \frac{a r^{a-1}}{\CR(z,D)^a} \P \Big( K \overset{\Lc_D^\theta}{\longleftrightarrow} D(z,r) \Big) \\
& \leq (1+o(1)) (1+\delta) \ell \int_0^\eps \d r \frac{a r^{a-1}}{\CR(z,D)^a} \P \Big( \hat{K} \overset{\Lc_D^\theta}{\longleftrightarrow} D(z,r) \Big)
 \leq (1+o(1))(1+\delta) \ell \P \Big( \hat{K} \overset{\Lc_D^\theta \cup \Xi_a^z}{\longleftrightarrow} z \Big)
\end{align*}
where in the last inequality we simply reversed the above procedure.
This proves that
\[
\limsup_{a \to 0} \P \Big( K \overset{\Lc_D^\theta \cup \Xi_a^z}{\longleftrightarrow} z \Big) \Big/ \P \Big( \hat{K} \overset{\Lc_D^\theta \cup \Xi_a^z}{\longleftrightarrow} z \Big)
\leq \ell.
\]
The lower bound is proved similarly. This concludes the proof.
\end{proof}

%We finish this section by stating a variant of Corollary \ref{C:ratio} that we will use in the proof of Theorem \ref{T:convergenceL2}. We state it in the unit disc instead of the cylinder for ease of future reference. The only difference compare to Corollary \ref{C:ratio} is that the small disc of radius $e^{-T}$ has been replaced by an $a$-thick loop with $a \to 0$.
%
%\begin{theorem}\label{T:ratio_thick}
%Let $K$ and $\hat{K}$ be two connected compact subsets of the closed unit disc, strictly larger than $\partial \D$. The ratio
%\[
%\Prob{ z \overset{\Lc_\D^\theta \cup \Xi_a^{0,\D}}{\longleftrightarrow} K} \Big/ \Prob{ z \overset{\Lc_\D^\theta \cup \Xi_a^{0,\D}}{\longleftrightarrow} \hat{K}}
%\]
%converges as $a \to 0$.
%\end{theorem}
%
%\begin{proof}
%The proof of this result follows the exact same strategy as with discs by first establishing a coupling analogous to Theorem \ref{T:coupling_rough}. We do not write the details.
%\end{proof}

\section{Construction of \texorpdfstring{$h_\theta$}{h theta}: proof of Theorems \ref{T:convergenceL2},  \ref{T:cluster2point} and \ref{T:cluster_xy_doob}}\label{S:construction_h}
We start by stating the intermediate results needed in order to prove Theorems \ref{T:convergenceL2}, \ref{T:cluster2point} and \ref{T:cluster_xy_doob}, and then use them to prove these theorems. The rest of the section then consists of the proofs of these intermediate results.

We start with a lemma that shows that computing the second moment of $h_\gamma$ boils down to computing the crossing probabilities between $x$ and $y$ in a soup whose loops come from
1) an unconditioned Brownian loop soup $\Lc_D^\theta$,
2) a thick loop $\Xi_a^x$ at $x$ and
3) a thick loop $\Xi_a^y$ at $y$.

\begin{lemma}\label{L:second_moment_to_crossing_proba}
Let $\eps >0$ and $F : D \times D \times \mathfrak{L} \to \R$ be a bounded measurable admissible function such that for all $x,y \in D$, $F(x,y, \cdot)$ does not depend on loops with diameter smaller than $\eps$. For all $\gamma, \gamma' \in (0,\sqrt{2})$,
\begin{align}
\label{E:L_second_moment_to_crossing_proba}
& \Expect{ \int_{D \times D} F(x,y,\Lc_D^\theta) h_{\gamma}(x) h_{\gamma'}(y) \d x \d y } =
C(\gamma,\gamma',\eps) \frac{\gamma^2 {\gamma'}^2}{Z_{\gamma} Z_{\gamma'}}
\\
& + \frac{1}{Z_{\gamma} Z_{\gamma'}} \int_{D \times D} \E \Big[ F(x,y,\Lc_D^\theta \cup \{\Xi_{a_i}^x \}_{i \geq 1} \cup \{\Xi_{a_i'}^y \}_{i \geq 1}) \mathbf{1} \Big\{ x \overset{\Lc_D^\theta \cup \{\Xi_{a_i}^x \}_{i \geq 1} \cup \{\Xi_{a_i'}^y \}_{i \geq 1}}{\longleftrightarrow} y \Big\} \Big] \d x \d y
\nonumber
\end{align}
where the three collections of loops $\Lc_D^\theta$, $\{\Xi_{a_i}^x \}_{i \geq 1}$ and $\{ \Xi_{a_i'}^y \}_{i \geq 1}$ are independent and distributed as in Lemma \ref{L:moment_measure}.
The constant $C(\gamma, \gamma', \eps)$ is bounded uniformly w.r.t. $\gamma, \gamma'$ in compact subsets of $[0,\sqrt{2})$ by a quantity which may depend on $\eps$.
\end{lemma}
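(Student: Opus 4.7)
The plan is to start from the definition \eqref{eq:mc_def_field},
\[
Z_\gamma Z_{\gamma'} \Expect{\int_{D \times D} F(x,y,\Lc_D^\theta) h_\gamma(x) h_{\gamma'}(y) \d x \d y} = \Expect{\int_{D \times D} F \, (\Mc_\gamma^+(\d x) - \d x)(\Mc_{\gamma'}^+(\d y) - \d y)},
\]
and to expand the product on the right-hand side into four terms. Conditionally on $\Lc_D^\theta$, the signs $(\sigma_\Cc)_{\Cc \in \Cf}$ are i.i.d.\ and symmetric, and with our normalisation $\Expect{\Mc_\gamma(\d x)} = 2 \d x$, the first-moment formula \eqref{E:first_moment_measure} yields $\Expect{F \Mc_\gamma^+(\d x)} = \Expect{F(\Lc_D^\theta \cup \{\Xi_{a_i}^x\}_{i\geq 1})} \d x$ (and similarly at $y$), while for the diagonal term
\[
\Expect{F \Mc_\gamma^+(\d x) \Mc_{\gamma'}^+(\d y)} = \tfrac{1}{4} \Expect{F \Mc_\gamma \Mc_{\gamma'}} + \tfrac{1}{4} \Expect{F \Mc_\gamma \Mc_{\gamma'} \mathbf{1}_{x \overset{\Lc_D^\theta}{\leftrightarrow} y}},
\]
because the two signs coincide on $\{x \overset{\Lc_D^\theta}{\leftrightarrow} y\}$ and are independent otherwise.

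Next I would apply the second-moment formula \eqref{E:second_moment_measure2} twice: once to $F$ and once to the (still admissible) functional $F \cdot \mathbf{1}_{x \leftrightarrow y}$, in order to compute the contributions to the two second moments above coming from configurations where no loop of $\Lc_D^\theta$ visits both $x$ and $y$. Setting $\wt{\Lc} := \Lc_D^\theta \cup \{\Xi_{a_i}^x\}_{i\geq 1} \cup \{\Xi_{a_i'}^y\}_{i\geq 1}$ and combining these ``no loop visits both'' contributions with the cross terms produced by the first moments, the identity
\[
Z_\gamma Z_{\gamma'} \Expect{\int F h_\gamma h_{\gamma'}} = \int_{D \times D} \Expect{F(\wt{\Lc}) \mathbf{1}_{x \overset{\wt{\Lc}}{\leftrightarrow} y}} \d x \d y + R_1 + R_2
\]
emerges, with
\[
R_1 := \int_{D \times D} \bigl\{ \Expect{F(\wt{\Lc})} - \Expect{F(\Lc_D^\theta \cup \{\Xi_{a_i}^x\})} - \Expect{F(\Lc_D^\theta \cup \{\Xi_{a_i'}^y\})} + \Expect{F(\Lc_D^\theta)} \bigr\} \d x \d y
\]
and $R_2 := \tfrac{1}{2} \Expect{\int F \Mc_\gamma(\d x) \Mc_{\gamma'}(\d y) \mathbf{1}_{\exists \wp \in \Lc_D^\theta \text{ visiting both } x \text{ and } y}}$.

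It then remains to show that $|R_1| + |R_2| \leq C(\eps) \|F\|_\infty \, a a'$ with $a = \gamma^2/2$, $a' = \gamma'^2/2$, uniformly for $(\gamma,\gamma')$ in compact subsets of $[0,\sqrt{2})^2$. For $R_1$: by the admissibility of $F$, the integrand is unaffected by any thick loop whose diameter is smaller than $\eps$; combined with the independence of $\Lc_D^\theta, \Xi^x, \Xi^y$, the inclusion-exclusion structure of the second difference then shows that the integrand vanishes unless \emph{both} $\|\Xi^x\|_\infty \geq \eps$ and $\|\Xi^y\|_\infty \geq \eps$. Consequently, by Lemma \ref{L:diameter_thick}, $|R_1| \leq 4 \|F\|_\infty \int_{D \times D} \P(\|\Xi^x\|_\infty \geq \eps) \P(\|\Xi^y\|_\infty \geq \eps) \d x \d y = O(aa')$. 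For $R_2$: applying \eqref{E:second_moment_measure2} with $F \equiv 1$ gives $\Expect{\Mc_\gamma \Mc_{\gamma'} \mathbf{1}_{\text{no loop visits both}}} = 4 \d x \d y$, and then the exact formula \eqref{E:second_moment_measure1} together with the power-series expansion \eqref{E:BesselI} of $I_{\theta-1}$ yields $\Expect{\Mc_\gamma \Mc_{\gamma'}} - 4 \d x \d y = O(aa')(2\pi G_D(x,y))^2 \d x \d y$ with constants locally uniform in $(\gamma,\gamma')$; integration over $D \times D$ (using that $G_D \in L^p(D \times D)$ for every $p < \infty$) then gives $|R_2| \leq C \|F\|_\infty a a'$.

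The main obstacle will be the careful bookkeeping that collapses the expansion into the clean main term plus $R_1 + R_2$, and in particular making sure that the event $\{x \leftrightarrow y\}$ is transported from the original soup $\Lc_D^\theta$ to the augmented soup $\wt{\Lc}$. This step works precisely because $F \cdot \mathbf{1}_{x \leftrightarrow y}$ remains an admissible functional of the soup, so that \eqref{E:second_moment_measure2} applies to it and automatically substitutes $\wt{\Lc}$ for $\Lc_D^\theta$ inside the indicator.
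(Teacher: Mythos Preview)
Your proof is correct and follows essentially the same route as the paper's. The only cosmetic difference is the order of two operations: you first average over the signs (giving $\Expect{F\Mc_\gamma^+\Mc_{\gamma'}^+}=\tfrac14\Expect{F\Mc_\gamma\Mc_{\gamma'}}+\tfrac14\Expect{F\Mc_\gamma\Mc_{\gamma'}\mathbf{1}_{x\leftrightarrow y}}$) and then apply the Girsanov-type identity \eqref{E:second_moment_measure2} to each piece, whereas the paper first applies \eqref{E:second_moment_measure2} to $F\cdot\mathbf{1}\{x,y\in\Cf^+\}$ and only afterwards computes the conditional sign probability $\tfrac14+\tfrac14\mathbf{1}\{x\overset{\wt\Lc}{\leftrightarrow}y\}$ inside the augmented soup. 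Both orderings produce exactly the same main term and the same remainders $R_1,R_2$, bounded in the same way (your $R_2$ bound via \eqref{E:second_moment_measure1}--\eqref{E:BesselI} is precisely the paper's display \eqref{E:proof_loop_2points}, and your $R_1$ argument matches the paper's second-difference observation verbatim). The paper's ordering has the slight advantage that it never needs to feed the connectivity indicator $\mathbf{1}_{x\leftrightarrow y}$ into \eqref{E:second_moment_measure2} as part of the test functional, so it sidesteps the admissibility check you flag in your last paragraph; but as you note, that check is straightforward.
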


The next two results are then dedicated to estimating the crossing probability appearing in  Lemma \ref{L:second_moment_to_crossing_proba}. The first result has a very similar flavour as \cite[Proposition 4.1]{JLQ23a}. On the other hand, the second result is novel and crucially relies on Theorem \ref{T:ratio_general}.

\begin{lemma}\label{L:crossing_upper}
For any $\eta >0$, there exists $C>0$ such that for all $\gamma \in (0,1]$ and $x, y \in D$,
\begin{equation}
\label{E:P_crossing_bound}
\frac{1}{Z_\gamma^2} \Prob{ x \overset{\Lc_D^\theta \cup \Xi_a^x \cup \Xi_a^y}{\longleftrightarrow} y } \leq C |\log |x-y||^{2(1-\theta) + \eta}.
\end{equation}
\end{lemma}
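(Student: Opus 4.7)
The approach is to reduce the event $\{x \overset{\Lc_D^\theta \cup \Xi_a^x \cup \Xi_a^y}{\longleftrightarrow} y\}$ to a disc-to-disc crossing in the independent loop soup $\Lc_D^\theta$ and to invoke a two-point version of Theorem~\ref{T:large_crossing} from the companion paper \cite{JLQ23a}. Since $x$ and $y$ are the roots of $\Xi_a^x$ and $\Xi_a^y$, and $\Lc_D^\theta$ is independent of both thick loops, the connectivity event decomposes as the union of the direct-intersection event $\{\Xi_a^x \cap \Xi_a^y \neq \varnothing\}$ and the event $\{\Xi_a^x \overset{\Lc_D^\theta}{\longleftrightarrow} \Xi_a^y\}$ of being joined by a cluster of $\Lc_D^\theta$. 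The direct-intersection contribution is controlled by conditioning on $R_y := \|\Xi_a^y - y\|_\infty$, bounding the probability that $\Xi_a^x$ enters $\overline{D(y, R_y)}$ via Lemma~\ref{L:thick_loop_xy} summed over dyadic scales, and integrating against the density of Lemma~\ref{L:diameter_thick}. This produces a bound of order $a^2$ times a polylog in $|x-y|$, which after division by $Z_\gamma^2 \asymp a^{2(1-\theta)}$ is $O(a^{2\theta})$ times a polylog, uniformly absorbed into the advertised logarithmic factor.

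For the main contribution, enclosing each thick loop in its minimal ball and using the independence of $\Lc_D^\theta$ from $R_x := \|\Xi_a^x - x\|_\infty$ and $R_y$ yields
\[
\Prob{\Xi_a^x \overset{\Lc_D^\theta}{\longleftrightarrow} \Xi_a^y} \leq \iint \Prob{R_x \in dr_x}\, \Prob{R_y \in dr_y}\, \Prob{\overline{D(x, r_x)} \overset{\Lc_D^\theta}{\longleftrightarrow} \overline{D(y, r_y)}}.
\]
I then invoke the natural two-point analogue of Theorem~\ref{T:large_crossing}, of the form $C_\eta Z_{r_x} Z_{r_y}\, |\log|x-y||^{2(1-\theta)+\eta/2}$, available as Proposition~4.1 of \cite{JLQ23a}. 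The factors $Z_{r_x}$ and $Z_{r_y}$ account for the one-point survival of a cluster from each small scale up to the macroscopic separation, while the logarithmic factor encodes the cluster survival at the separating scale $|x-y|$. The quasi-independent decoupling of the two endpoints at this intermediate scale is the main technical input and the principal obstacle.

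Finally, integrating against the thick-loop diameter density and using an analogue of the computation carried out in the proof of $Z_\gamma = \gamma^{2(1-\theta)+o(1)}$ at the end of Section~\ref{SS:thick_loop}, namely
\[
\int_0^{\d(x, \partial D)} Z_{r_x}\, \frac{a r_x^{a-1}}{\CR(x, D)^a}\, dr_x \leq C Z_\gamma
\]
(and similarly for $r_y$), produces the factor $Z_\gamma^2$ which cancels with $1/Z_\gamma^2$ on the left-hand side and yields the stated bound. The boundary case where $R_x$ exceeds $\d(x, \partial D)$ has probability $O(a)$ by Lemma~\ref{L:diameter_thick} and contributes at most $O(a\cdot Z_\gamma)$ to $\Prob{x \leftrightarrow y}$ (the conditional connection probability being bounded by a one-point crossing from the $y$ side), whence a ratio $O(a^{\theta+o(1)})$ absorbed into the constant.
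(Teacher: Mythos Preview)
Your strategy is a legitimate alternative to the paper's, but there is one genuine gap and one decomposition that differs from the paper's.

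\textbf{The missing regime.} You never treat the case where $|x-y|$ is extremely small relative to the thick-loop scale, say $|x-y|\le e^{-a^{-1+\eta'}}$ for a small $\eta'>0$. Your very first tool, Lemma~\ref{L:thick_loop_xy}, already carries the hypothesis $|x-y|>e^{-1/a}$, and the two-point disc-to-disc bound you attribute to \cite[Proposition~4.1]{JLQ23a} will likewise not be uniform down to $|x-y|\sim e^{-1/a}$ (the discs $D(x,R_x)$ and $D(y,R_y)$ then overlap with nonvanishing probability). The paper disposes of this regime first by the trivial bound: if $|x-y|\le e^{-a^{-1+\eta'}}$ then $\frac{1}{Z_\gamma^2}\le a^{-2(1-\theta)+o(1)}\le C|\log|x-y||^{2(1-\theta)/(1-2\eta')}$, and one chooses $\eta'$ so that the exponent is at most $2(1-\theta)+\eta$. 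Without this, neither your direct-intersection estimate nor your main term is uniform over all $x,y\in D$ and $\gamma\in(0,1]$.

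\textbf{Difference in the main decomposition.} Your scheme conditions on the diameters $R_x,R_y$ of the thick loops and invokes a disc-to-disc crossing estimate from the companion paper. The paper instead localises at scale $|x-y|/2$: it sets $D_z=D(z,|x-y|/2)$, first shows $\Xi_a^z\not\subset D_z$ is negligible (this is the correct ``boundary case'', not $R_x>\d(x,\partial D)$), and then, by the restriction property of $\Xi_a^z$, reduces to the event that $z$ is connected inside $D_z$ to the reach $r_z$ of the \emph{macroscopic} loops (those not contained in $D_x\cup D_y$). The final input is \cite[Remark~4.5,~(4.16)]{JLQ23a}, which bounds $\mathbb{E}\bigl[\prod_{z}(\log(|x-y|/r_z))^{1-\theta+\eta}\bigr]$, rather than Proposition~4.1. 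Both routes ultimately defer the quasi-multiplicativity at the separation scale to \cite{JLQ23a}; your reduction is cleaner and more modular (replace thick loops by random discs, then cite the disc result), while the paper's is closer to being self-contained. Provided you add the small-$|x-y|$ case and correct the boundary threshold to $R_z>|x-y|/2$, your argument goes through.
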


\begin{proposition}\label{P:crossing}
For any distinct points $x, y \in D$, the limit
\begin{equation}
\label{E:P_crossing_limit}
C_\theta(x,y) := \lim_{\gamma \to 0}
\frac{1}{Z_\gamma^2} \P \Big( x \overset{\Lc_D^\theta \cup \Xi_a^x \cup \Xi_a^y}{\longleftrightarrow} y \Big)
\end{equation}
exists and
\begin{equation}
\label{E:P_crossing_limit2}
C_\theta(x,y) \leq \liminf_{\gamma_1, \gamma_2 \to 0}
\frac{1}{Z_{\gamma_1} Z_{\gamma_2}} \P \Big( x \overset{\Lc_D^\theta \cup \Xi_{a_1}^x \cup \Xi_{a_2}^y}{\longleftrightarrow} y \Big).
\end{equation}
Moreover,
\begin{equation}
\label{E:P_crossing_limit3}
C_\theta(x,y) = \left( \log \frac{1}{|x-y|} \right)^{2(1-\theta)+o(1)}
\quad \quad \text{as} \quad x-y \to 0.
\end{equation}
More generally, let $\eps >0$ and $F: \mathfrak{L} \to \R$ be a bounded measurable function that does not depend on loops with diameter smaller than $\eps>0$. Then the following two limits exist and are identical
\begin{equation}
\label{E:P_law}
\lim_{r \to 0}
\frac{1}{Z_r^2}
\Expect{ F(\Lc_D^\theta) \mathbf{1}\Big\{ D(x,r) \overset{\Lc_D^\theta}{\longleftrightarrow} D(y,r) \Big\} }
=
\lim_{a \to 0}
\frac{1}{Z_\gamma^2}
\Expect{ F(\Lc_D^\theta) \mathbf{1}\Big\{x \overset{\Lc_D^\theta \cup \Xi_{a}^x \cup \Xi_{a}^y}{\longleftrightarrow} y \Big\} }.
\end{equation}
\end{proposition}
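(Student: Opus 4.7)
We reduce both limits in \eqref{E:P_law} (and in particular the limit \eqref{E:P_crossing_limit} defining $C_\theta(x,y)$) to the joint limit
\begin{equation*}
L_F(x, y) := \lim_{r_1, r_2 \to 0} \frac{p_F(r_1, r_2)}{Z_{r_1} Z_{r_2}},
\qquad p_F(r_1, r_2) := \Expect{F(\Lc_D^\theta) \mathbf{1}\{D(x, r_1) \overset{\Lc_D^\theta}{\longleftrightarrow} D(y, r_2)\}}.
\end{equation*}
For the reduction, we will apply \eqref{E:thick_to_disc} twice (once at $x$ and once at $y$, using the independence of $\Xi_a^x, \Xi_a^y$ and $\Lc_D^\theta$ and the fact that $F$ ignores loops of diameter $<\eps$) and integrate against the density $f_a^z(r) = a r^{a-1}/\CR(z, D)^a$ of $\norme{\Xi_a^z - z}_\infty$ given by Lemma~\ref{L:diameter_thick}. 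This will yield
\begin{equation*}
\Expect{F(\Lc_D^\theta) \mathbf{1}\{x \overset{\Lc_D^\theta \cup \Xi_a^x \cup \Xi_a^y}{\longleftrightarrow} y\}} = (1+o(1))\iint p_F(r_1, r_2)\, f_a^x(r_1) f_a^y(r_2)\, dr_1 dr_2,
\end{equation*}
with the analogous identity for $Z_\gamma^2$ involving a single thick loop.

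\textbf{Joint convergence.} To establish $L_F(x, y)$ we fix a reference compact $K_0 \subset \overline D \setminus \{x, y\}$ and proceed by two successive applications of Theorem~\ref{T:ratio_general}, extended to incorporate the test function $F$ (this extension is direct, since $F$ depends only on loops of diameter $\geq \eps$ and is thus unaffected by a shrinking disc). Combined with the conformal invariance of the loop soup and the slow variation $Z_{cr}/Z_r \to 1$ (a consequence of Theorem~\ref{T:large_crossing}), this gives for fixed $r_1$ the convergence $\lim_{r_2 \to 0} p_F(r_1, r_2)/Z_{r_2} = G_F(D(x, r_1); y)$ for some $G_F$, and similarly in $r_1$. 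To upgrade this iterated limit to a genuine joint limit, we will extract a uniform rate of convergence in Theorem~\ref{T:ratio_general} from the uniform tail estimate on the coalescence time $\Tc^T$ of Theorem~\ref{T:coupling_rough}, combined with the uniform upper bound $p_F(r_1, r_2)/(Z_{r_1} Z_{r_2}) \leq C \abs{\log \abs{x-y}}^{2(1-\theta)+\eta}$ from Lemma~\ref{L:crossing_upper}.

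\textbf{Identification and bounds.} The change of variables $t_i = -a \log r_i$ turns $f_a^z(r_i) dr_i$ into (up to the factor $\CR(z, D)^{-a} \to 1$) the exponential density $e^{-t_i} dt_i$, so
\begin{equation*}
\frac{1}{Z_\gamma^2}\iint p_F\, f_a^x f_a^y\, dr_1 dr_2 = (1+o(1))\iint \frac{p_F(e^{-t_1/a}, e^{-t_2/a})}{Z_{e^{-t_1/a}} Z_{e^{-t_2/a}}}\, \mu_a(dt_1)\mu_a(dt_2),
\end{equation*}
where $\mu_a$ is a family of probability measures on $(0, \infty)$ (density proportional to $Z_{e^{-t/a}} e^{-t}$, with total mass tending to $1$ via the $Z_\gamma$ asymptotic of Theorem~\ref{T:convergenceL2}). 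Dominated convergence, using the uniform bound from Lemma~\ref{L:crossing_upper}, then gives that the integral tends to $L_F(x, y)$, proving the second limit in \eqref{E:P_law}; the first limit is the diagonal specialisation $r_1 = r_2 = r \to 0$ of the joint limit. The upper bound in \eqref{E:P_crossing_limit3} is immediate from Lemma~\ref{L:crossing_upper}; the matching lower bound follows from an FKG construction combining, for each endpoint, the event that the thick loop together with $\Lc_D^\theta$ produces a cluster reaching scale $\abs{x-y}/4$ (probability $\gtrsim Z_\gamma \abs{\log\abs{x-y}}^{1-\theta-\eta}$, by a construction analogous to the $Z_\gamma$ lower bound in the proof of Theorem~\ref{T:convergenceL2}), surrounding loops in thin annuli (Lemma~\ref{L:surround}), and a macroscopic bridging cluster (Theorem~\ref{T:large_crossing}). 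Inequality \eqref{E:P_crossing_limit2} is obtained by running the same arguments with two distinct parameters $\gamma_1 \neq \gamma_2$, every step preserving its structure separately in $\gamma_1$ and $\gamma_2$.

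\textbf{Main obstacle.} The principal technical hurdle is the passage from the iterated to the joint limit in the second step, which requires a quantitative uniformity in Theorem~\ref{T:ratio_general} beyond its stated form. This uniformity will be extracted from the uniform tail control on the coalescence time $\Tc^T$ in Theorem~\ref{T:coupling_rough}, and is precisely what makes the elaborate coupling construction of Section~\ref{S:convergence_ratio} essential.
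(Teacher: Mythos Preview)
Your route differs substantially from the paper's, and the place you correctly flag as the ``principal technical hurdle'' is exactly where the paper uses a different idea that you are missing. The paper does \emph{not} try to establish a joint two-parameter limit $p_F(r_1,r_2)/(Z_{r_1}Z_{r_2})$ at all. Instead, for a small parameter $\eta$ it sets $D_z=D(z,\eta|x-y|/2)$ for $z\in\{x,y\}$, works on the event that (i) there is a loop of $\Lc_D^\theta$ disconnecting $D_x$ from $D_y$ and (ii) each thick loop $\Xi_a^z$ stays in $D_z$ (the complementary contributions are shown to be negligible, using Lemma~\ref{L:surround} and \eqref{E:exitDx} respectively). On this event there is a unique cluster $\Cc_{x,y}$ of loops not contained in $D_x\cup D_y$ touching both discs, and by the restriction property of $\Xi_a^z$ the crossing probability \emph{factorises}:
\[
\P\Big(x\overset{\Lc_D^\theta\cup\Xi_a^x\cup\Xi_a^y}{\longleftrightarrow}y,\;E_1\cap E_2\Big)
=(1+o(1))\,\E\Big[\mathbf{1}_{E_1}\prod_{z=x,y}\P\big(z\overset{\Lc_{D_z}^\theta\cup\Xi_a^{z,D_z}}{\longleftrightarrow}\Cc_{x,y}\,\big|\,\Cc_{x,y}\big)\Big].
\]
Each factor divided by $Z_\gamma$ is now a ratio of hitting probabilities in the single domain $D_z$, with the compact $\overline{(\Cc_{x,y}\cap D_z)\cup\partial D_z}$ held fixed, and converges by a \emph{single} direct application of Theorem~\ref{T:ratio_general}; dominated convergence (with the bound from Lemma~\ref{L:crossing_upper}) then exchanges limit and expectation over $\Cc_{x,y}$. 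The test function $F$ enters effortlessly because for $\eta$ small it does not depend on the loops inside $D_x\cup D_y$.

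Your scheme, by contrast, leaves the two endpoints entangled and forces you into a genuine joint limit. Your proposed fix---extracting a uniform rate from Theorem~\ref{T:coupling_rough}---is not available as stated: the coalescence-time tail in that theorem depends on the initial condition through $\chrT(K_0,\hat K_0)$, and you would have to go back into its proof (and that of Corollary~\ref{C:ratio}) to argue that the relevant initial conditions stay in a controlled class. Moreover, your ``second application in $r_1$'' is to the abstract limit $G_F(D(x,r_1);y)$, which is not a priori a hitting-probability ratio to which Theorem~\ref{T:ratio_general} applies. A smaller point: the slow-variation $Z_{cr}/Z_r\to1$ does \emph{not} follow from Theorem~\ref{T:large_crossing} alone (which only gives the exponent up to $o(1)$); it too needs the coupling machinery. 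In short, the paper's conditioning on the bridging cluster $\Cc_{x,y}$ is precisely the device that decouples the two sides and reduces everything to the one-point ratio result you already have.
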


\begin{notation}\label{N:covariance}
If we want to keep track of the domain $D$, we will denote by $C_{\theta,D}(x,y)$ the limit of \eqref{E:P_crossing_limit}.
\end{notation}

We can now show Theorems \ref{T:convergenceL2}, \ref{T:cluster2point} and \ref{T:cluster_xy_doob}.

\begin{proof}[Proof of Theorem \ref{T:cluster2point}]
It follows directly from \eqref{E:P_law}. Indeed,
let $x, y \in D$ be two distinct points.
By a density-type argument, to prove Theorem \ref{T:cluster2point}, it is enough to show that for any $\eps >0$ and any bounded measurable function $F: \mathfrak{L} \to \R$ which does not depend on loops with diameter smaller than $\eps$,
$
\EXPECT{x \leftrightarrow y,r}{F(\Lc_\D^\theta)}
$
converges as $r \to 0$. This is a mere restatement of the convergence \eqref{E:P_law}.
\end{proof}

\begin{proof}[Proof of Theorem \ref{T:convergenceL2}]
Let $f:D \to \R$ be a bounded measurable function. By decomposing $f$ into its positive and negative parts, we can assume without loss of generality that $f$ is nonnegative. We want to show that $((h_\gamma,f), \gamma \in (0,\sqrt{2})$ is Cauchy in $L^2$, i.e. that
$
\Expect{ | (h_{\gamma_1},f) - (h_{\gamma_2},f) |^2 } \to 0
$
as $\gamma_1, \gamma_2 \to 0$.
By Lemma \ref{L:second_moment_to_crossing_proba}, this second moment is equal to
\[
o(1) + \int_{D \times D} f(x) f(y) \Bigg( \frac{\P \Big( x \overset{\Lc_D^\theta \cup \Xi_{a_1}^x \cup \Xi_{a_1}^y}{\longleftrightarrow} y \Big)}{Z_{\gamma_1}^2} + \frac{\P \Big( x \overset{\Lc_D^\theta \cup \Xi_{a_2}^x \cup \Xi_{a_2}^y}{\longleftrightarrow} y \Big)}{Z_{\gamma_2}^2} - 2 \frac{\P \Big( x \overset{\Lc_D^\theta \cup \Xi_{a_1}^x \cup \Xi_{a_2}^y}{\longleftrightarrow} y \Big)}{Z_{\gamma_1} Z_{\gamma_2}} \Bigg).
\]
Proposition \ref{P:crossing} together with dominated convergence theorem and Lemma \ref{L:crossing_upper} (and Fatou's lemma for the crossed term involving both $a_1$ and $a_2$) then shows that
$
\limsup_{\gamma_1, \gamma_2 \to 0} \Expect{ | (h_{\gamma_1},f) - (h_{\gamma_2},f) |^2 } \leq 0
$.
This concludes the $L^2$ convergence of $(h_\gamma,f)$.

We now explain how one can lift this convergence to a convergence in the Sobolev space $H^{-\eps}(\C)$, where $\eps \in (0,1)$ is arbitrary small. Recall that $H^{-\eps}(\C)$ is the set of tempered distribution $f$ such that
\begin{equation}
\label{E:Sobolev}
\norme{f}_{H^{-\eps}(\C)}^2 := \int_\C (1+|\xi|^2)^{-\eps} |\hat{f}(\xi)|^2 \d \xi < \infty.
\end{equation}
Let $\gamma_1, \gamma_2 \in (0,\sqrt{2})$. Denoting $C_{\gamma_1,\gamma_2}(x,y) = \Expect{ (h_{\gamma_1}(x) - h_{\gamma_2}(x))(h_{\gamma_1}(y) - h_{\gamma_2}(y)) }$, we have
\begin{align}
\label{E:Sobolev2}
\E \norme{h_{\gamma_1}-h_{\gamma_2}}_{H^{-\eps}(\C)}^2 & = \int_{\R^2} \frac{\E \abs{\hat{h}_{\gamma_1}(\xi) - \hat{h}_{\gamma_2}(\xi)}^2}{(1+\abs{\xi}^2)^\eps} \d \xi
\leq \int_{\R^2} \frac{\E \abs{\hat{h}_{\gamma_1}(\xi) - \hat{h}_{\gamma_2}(\xi)}^2}{\abs{\xi}^{2\eps}} \d \xi \\
& = \int_{D \times D} \d x \d y ~C_{\gamma_1,\gamma_2}(x,y) \int \d \xi ~\frac{e^{2\pi i \xi \cdot (x-y)}}{\abs{\xi}^{2\eps}}
= C_\eps \int_{D \times D} \d x \d y ~\frac{C_{\gamma_1,\gamma_2}(x,y)}{|x-y|^{2-2\eps}}
\nonumber
\end{align}
where we used the fact that the Fourier transform of $|\cdot|^{-2\eps}$ is $C_\eps |\cdot|^{-2+2\eps}$ for some constant $C_\eps$ depending on $\eps$.
We can then conclude as before using Proposition \ref{P:crossing} that $h_\gamma$ is Cauchy in $L^2(H^{-\eps}(\C), \P)$.

Finally, the proof that the normalising constant $Z_\gamma$ \eqref{E:Zgamma} satisfies $Z_\gamma = \gamma^{2(1-\theta)+o(1)}$ as $\gamma \to 0$ can be found in Section \ref{SS:thick_loop}.
\end{proof}

\begin{proof}[Proof of Theorem \ref{T:cluster_xy_doob}]
By a simple density-type argument, it is enough to check the identity statement in Theorem \ref{T:cluster_xy_doob} for a function $F: \mathfrak{L} \to \R$ depending only on loops with diameter larger than a given threshold. For such functions, it then follows from Lemma \ref{L:second_moment_to_crossing_proba}, Theorem \ref{T:cluster2point} and \eqref{E:P_law}.
\end{proof}

\subsection{Proof of Lemma \ref{L:second_moment_to_crossing_proba}}

\begin{proof}[Proof of Lemma \ref{L:second_moment_to_crossing_proba}]
By definition of $h_\gamma$, the left hand side of \eqref{E:L_second_moment_to_crossing_proba} is equal to
\begin{align*}
\frac{1}{Z_\gamma Z_{\gamma'}} \int_{D \times D}
\Expect{ F(x,y,\Lc_D^\theta) (\Mc_a^+(\d x) \Mc_{a'}^+(\d y) - \Mc_a^+(\d x) \d y - \d x \Mc_{a'}^+(\d y) + \d x \d y) }.
\end{align*}
By \eqref{E:first_moment_measure},
\[
\Expect{ F(x,y,\Lc_D^\theta) \Mc_a^+(\d x)} = \Expect{F(x,y,\Lc_D^\theta \cup \{\Xi_{a_i}^x \}_{i \geq 1})} \d x
\]
where the loops $\{\Xi_{a_i}^x \}_{i \geq 1}$ are as defined above Lemma \ref{L:moment_measure}. A similar result holds at $y$.
To handle the term $\Expect{ F(x,y,\Lc_D^\theta) (\Mc_a^+(\d x) \Mc_{a'}^+(\d y)}$, we first control the contribution of soups containing at least one loop which visits both $x$ and $y$. By triangle inequality and then by
\eqref{E:second_moment_measure1} and \eqref{E:second_moment_measure2} applied to $F \equiv 1$, we have
\begin{align}
\nonumber
& \abs{ \Expect{F(x,y,\Lc_D^\theta) \Mc_a^+(\d x) \Mc_{a'}^+(\d y) \indic{\exists \wp \in \Lc_D^\theta: x,y \in \wp}} }
\leq \norme{F}_\infty \Expect{ \Mc_a(\d x) \Mc_{a'}(\d y) \indic{\exists \wp \in \Lc_D^\theta: x,y \in \wp}} \\
& = 4 \norme{F}_\infty \left( \left( 2 \pi \sqrt{aa'} G_D(x,y) \right)^{1-\theta} \Gamma(\theta) I_{\theta-1} \left(4\pi \sqrt{aa'} G_D(x,y) \right) - 1 \right) \d x \d y
= O(\gamma^2 {\gamma'}^2).
\label{E:proof_loop_2points}
\end{align}
On the other hand, by the Girsanov-type transform \eqref{E:second_moment_measure2}, the contribution with no such loop is equal to
\begin{align*}
& \Expect{F(x,y,\Lc_D^\theta) \Mc_a^+(\d x) \Mc_a^+(\d y) \indic{\nexists \wp \in \Lc_D^\theta: x,y \in \wp}} \\
& = 4 \E \Big[ F(x,y,\Lc_D^\theta \cup \{\Xi_{a_i}^x \}_{i \geq 1} \cup \{\Xi_{a_i'}^y \}_{i \geq 1}) \mathbf{1} \{ \Cf^+ (\Lc_D^\theta \cup \{\Xi_{a_i}^x \}_{i \geq 1} \cup \{\Xi_{a_i'}^y \}_{i \geq 1}) \} \Big] \d x \d y.
\end{align*}
We then notice that, conditionally on $\Lc_D^\theta \cup \{\Xi_{a_i}^x \}_{i \geq 1} \cup \{\Xi_{a_i'}^y \}_{i \geq 1}$, the probability that both $x$ and $y$ belong to positive clusters of $\Lc_D^\theta \cup \{\Xi_{a_i}^x \}_{i \geq 1} \cup \{\Xi_{a_i'}^y \}_{i \geq 1}$ is equal to
\begin{align*}
&\frac14 \Big(1- \mathbf{1} \Big\{ x \overset{\Lc_D^\theta \cup \{\Xi_{a_i}^x \}_{i \geq 1} \cup \{\Xi_{a_i'}^y \}_{i \geq 1}}{\longleftrightarrow} y \Big\} \Big) + \frac12 \mathbf{1} \Big\{ x \overset{\Lc_D^\theta \cup \{\Xi_{a_i}^x \}_{i \geq 1} \cup \{\Xi_{a_i'}^y \}_{i \geq 1}}{\longleftrightarrow} y \Big\}\\
= &\frac{1}{4} + \frac{1}{4} \mathbf{1} \Big\{ x \overset{\Lc_D^\theta \cup \{\Xi_{a_i}^x \}_{i \geq 1} \cup \{\Xi_{a_i'}^y \}_{i \geq 1}}{\longleftrightarrow} y \Big\}.
\end{align*}
Putting everything together, we have obtained that the left hand side of \eqref{E:L_second_moment_to_crossing_proba} is equal to
\begin{align*}
& \frac{1}{Z_\gamma Z_{\gamma'}} \int_{D \times D} \Expect{ F(x,y,\Lc_D^\theta \cup \{\Xi_{a_i}^x \}_{i \geq 1} \cup \{\Xi_{a_i'}^y \}_{i \geq 1}) \mathbf{1} \Big\{ x \overset{\Lc_D^\theta \cup \{\Xi_{a_i}^x \}_{i \geq 1} \cup \{\Xi_{a_i'}^y \}_{i \geq 1}}{\longleftrightarrow} y \Big\} } \d x \d y \\
& + O(1) \frac{\gamma^2 {\gamma'}^2}{Z_\gamma Z_{\gamma'}} + \frac{1}{Z_\gamma Z_{\gamma'}} \int_{D \times D} \E [ F(x,y,\Lc_D^\theta \cup \{\Xi_{a_i}^x \}_{i \geq 1} \cup \{\Xi_{a_i'}^y \}_{i \geq 1}) - F(x,y,\Lc_D^\theta \cup \{\Xi_{a_i}^x \}_{i \geq 1}) \\
& ~~~~~~~~~~~~~~~~~~~~~~~~~~~~~~~~~~~~~~~~ - F(x,y,\Lc_D^\theta \cup \{\Xi_{a_i'}^y \}_{i \geq 1}) + F(x,y,\Lc_D^\theta) ] \d x \d y.
\end{align*}
To bound the last integral above, we need to recall that $F$ does not depend on loops with diameter smaller than a given threshold $\eps$. Hence, if $\wedge_{i \geq 1} \Xi_{a_i}^x$ (concatenation of $\Xi_{a_i}^x, i \geq 1$)  or $\wedge_{i \geq 1} \Xi_{a_i'}^y$ has diameter smaller than $\eps$, then
\[
F(x,y,\Lc_D^\theta \cup \{\Xi_{a_i}^x \}_{i \geq 1} \cup \{\Xi_{a_i'}^y \}_{i \geq 1}) - F(x,y,\Lc_D^\theta \cup \{\Xi_{a_i}^x \}_{i \geq 1}) - F(x,y,\Lc_D^\theta \cup \{\Xi_{a_i'}^y \}_{i \geq 1}) + F(x,y,\Lc_D^\theta) = 0.
\]
The probability that both $\wedge_{i \geq 1} \Xi_{a_i}^x$ and $\wedge_{i \geq 1} \Xi_{a_i'}^y$ have diameter at least $\eps$ is of order $\gamma^2 {\gamma'}^2$ (Lemma \ref{L:diameter_thick}) concluding the proof.
\end{proof}

\subsection{Proof of Lemma \ref{L:crossing_upper} and Proposition \ref{P:crossing}}

We start by proving Lemma \ref{L:crossing_upper}.

\begin{proof}[Proof of Lemma \ref{L:crossing_upper}]
Let $\eta >0$ be small and $x,y \in D$. If $|x-y| \leq e^{-a^{-1+\eta}}$, we simply bound the probability in  \eqref{E:P_crossing_bound} by 1. Using that $Z_\gamma = \gamma^{2(1-\theta)+o(1)} = a^{1-\theta + o(1)}$, we obtain that the left hand side of \eqref{E:P_crossing_bound} is at most
\[
a^{-2(1-\theta) + o(1)} \leq C |\log |x-y||^{2(1-\theta)/(1-2\eta)}.
\]
We therefore only need to consider the case where $x$ and $y$ are not too close to each other, i.e. $|x-y| \geq e^{-a^{-1+\eta}}$. 
For $z \in \{x,y\}$, we will denote by $D_z = D(z, |x-y|/2)$.
By Lemma \ref{L:diameter_thick}, the minimal disc centred at $x$ containing $\Xi_a^x$ has typically a radius of order $e^{- O(1) /a}$. It is therefore very unlikely that $\Xi_a^x$ exits $D_x$. We are first going to show that this is still the case conditionally on the existence of a cluster joining $x$ and $y$:
\begin{equation}
\label{E:exitDx}
\frac{1}{Z_\gamma^2} \Prob{ x \overset{\Lc_D^\theta \cup \Xi_a^x \cup \Xi_a^y}{\longleftrightarrow} y, \exists z \in \{x,y\}: \Xi_a^z \not\subset D_z } \to 0 
\quad \quad \text{as~} \gamma \to 0
\end{equation}
and also that the left hand side can be bounded by $C |\log |x-y||^{\frac{2(1-\theta)}{1-2\eta}}$, uniformly in $\gamma$.
Thanks to a union bound, we can focus on the event that $\Xi_a^x \not\subset D_x$.
With some abuse of notation, we will view $\Xi_a^x$ both as the collection of excursions in a Poisson point process with intensity $a \mu_D^{x,x}$ and as the loop formed by the concatenation of all these excursions. 
Let $E_x^1$ (resp. $E_x^2$) be the event that there is a unique excursion (resp. at least two distinct excursions) of $\Xi_a^x$ that exits $D_x$.
By Lemma \ref{L:diameter_thick},
$
\mu_D^{x,x}( \{ \wp \not\subset D_x \} ) = \log \frac{2 \CR(x,D)}{|x-y|}.
$
So
\[
\Prob{E_x^2} \leq
\E \Big[ \sum_{\wp \neq \wp' \in \Xi_a^x} \indic{\wp \not\subset D_x, \wp' \not\subset D_x } \Big]
= a^2 \mu_D^{z,z}( \{ \wp \not\subset D_x \} )^2 \leq C a^2 |\log |x-y||^2.
\]
In that case, we simply bound
\[
\frac{1}{Z_\gamma^2} \Prob{ x \overset{\Lc_D^\theta \cup \Xi_a^x \cup \Xi_a^y}{\longleftrightarrow} y, E_x^2 }
\leq a^{-2(1-\theta)+o(1)} \Prob{E_x^2} \leq C a^{2\theta+o(1)} |\log |x-y||^2
\leq C |\log |x-y||^{\frac{2(1-\theta)}{1-2\eta}}.
\]
The probability of the event $E_x^1$ is at most $C a |\log |x-y||$. On that event, let $R_x$ be the minimal distance between $y$ and the unique excursion of $\Xi_a^x$ that exists $D_x$. Let also $R_y$ be the maximal $r>0$ such that there exists a cluster of loops of $\Lc_D^\theta \cup \Xi_a^y$ that connects $y$ to $\partial D(y,r)$.
On the event $E_1^x$, in order to have a cluster joining $x$ and $y$, we need to have $R_x \leq R_y$.
Since $R_y$ and $R_x$ are independent, we get that
\begin{align}
\label{E:pf_q6}
\frac{1}{Z_\gamma^2} \Prob{ x \overset{\Lc_D^\theta \cup \Xi_a^x \cup \Xi_a^y}{\longleftrightarrow} y, E_x^1 }
& \leq \frac{\Prob{E_x^1}}{Z_\gamma^2} \Prob{R_x \leq R_y \vert E_x^1} \\
& \leq \frac{C a |\log |x-y||}{Z_\gamma^2} \int_0^{|x-y|} \Prob{R_y \in \d r} \Prob{R_x \leq r \vert E_x^1}.
\nonumber
\end{align}
For all $r \in (0,|x-y|)$, the probability that $R_x$ is smaller than $r$ can be compared with the probability that a Brownian path starting on the circle $\partial D_x$ hits $D(y,r)$ before exiting the domain $D$. Hence for all $r \in (0,|x-y|)$,
\[
\Prob{ R_x \leq |x-y| \Big\vert E_x^1} \leq C \frac{|\log|x-y||}{|\log r|}.
\]
Injecting this estimate in \eqref{E:pf_q6} and then integrating by parts leads to
\[
\frac{1}{Z_\gamma^2} \Prob{ x \overset{\Lc_D^\theta \cup \Xi_a^x \cup \Xi_a^y}{\longleftrightarrow} y, E_x^1 }
\leq \frac{C a |\log |x-y||^2}{Z_\gamma^2} \int_0^{|x-y|} \frac{\Prob{R_y \geq r}}{r |\log r|^2} \d r.
\]
Now, by Lemma \ref{L:Rgamma} (and conformal invariance to get back to the unit disc), 
\[
\frac{1}{Z_\gamma} \Prob{ R_y \geq r }
\leq C |\log r|^{1-\theta+\eta}, \quad \quad r \in (0,|x-y|).
\]
We thus have
\[
\frac{1}{Z_\gamma^2} \Prob{ x \overset{\Lc_D^\theta \cup \Xi_a^x \cup \Xi_a^y}{\longleftrightarrow} y, E_x^1 }
\leq \frac{C a |\log |x-y||^2}{Z_\gamma} \int_0^{|x-y|} \frac{\d r}{r |\log r|^{1+\theta-\eta}}
\leq C a^{\theta+o(1)} |\log |x-y||^{2-\theta+\eta}.
\]
In the last inequality we used that $a/Z_\gamma = a^{\theta+o(1)}$.
This concludes the proof of \eqref{E:exitDx}. This also shows that the left hand side of \eqref{E:exitDx} is bounded by $C |\log |x-y||^{\frac{2(1-\theta)}{1-2\eta}}$ because $|x-y| \geq e^{-a^{-1+\eta}}$.

We can now work on the event that $\Xi_a^z \subset D_z$ for $z=x,y$.
Let $\Lc_{D,x,y}^\theta$ be the subset of $\Lc_D^\theta$ consisting of the loops that are not included in $D_x \cup D_y$. Let
\[
r_z = \inf\{ r>0: \exists \wp \in \Lc_{D,x,y}^\theta: \wp \cap D(z,r) \neq \varnothing \},
\quad \quad z=x,y.
\]
By the restriction property of $\Xi_a^x$ (see Lemma \ref{L:restriction_thick_loop}),
\begin{align*}
& \frac{1}{Z_\gamma^2} \Prob{ x \overset{\Lc_D^\theta \cup \Xi_a^x \cup \Xi_a^y}{\longleftrightarrow} y, \forall z=x,y, \Xi_a^z \subset D_z}
\leq \frac{1}{Z_\gamma^2} \Prob{ \forall z=x,y, z \overset{\Lc_{D_z}^\theta \cup \Xi_a^z}{\longleftrightarrow} \partial D(z,r_z), \Xi_a^z \subset D_z } \\
& = \frac{1}{Z_\gamma^2} \frac{\CR(x,D_x)^a \CR(y,D_y)^a}{\CR(x,D)^a \CR(y,D)^a} \Prob{ \forall z=x,y, z \overset{\Lc_{D_z}^\theta \cup \Xi_a^{z,D_z}}{\longleftrightarrow} \partial D(z,r_z)}.
\end{align*}
In what follows, we will simply bound the ratio of conformal radii by 1 (which is also a good approximation since $a \to 0$).
By scale invariance of $\Lc_{D_z}^\theta$ and $\Xi_a^{z,D_z}$ and by Lemma \ref{L:Rgamma},
\[
\frac{1}{Z_\gamma} \Prob{ z \overset{\Lc_{D_z}^\theta \cup \Xi_a^{z,D_z}}{\longleftrightarrow} \partial D(z,r_z) \Big\vert r_z } \leq C \left( 1 + \log \frac{|x-y|}{2r_z} \right)^{1-\theta+\eta},
\quad \quad z=x,y.
\]
\begin{comment}
Now by scale invariance of $\Lc_D^\theta$ and $\Xi_a^z$,
\begin{align*}
& Z_\gamma \Big/ \Prob{ z \overset{\Lc_{D_z}^\theta \cup \Xi_a^{z,D_z}}{\longleftrightarrow} \partial D(z,r_z) \Big\vert r_z }
= Z_\gamma \Big/ \Prob{ 0 \overset{\Lc_{\D}^\theta \cup \Xi_a^{0,\D}}{\longleftrightarrow} \frac{2r_z}{|x-y|} \partial \D \Big\vert r_z } \\
& = \Prob{ 0 \overset{\Lc_{\D}^\theta \cup \Xi_a^{0,\D}}{\longleftrightarrow} e^{-1} \partial \D \Big\vert r_z, 0 \overset{\Lc_{\D}^\theta \cup \Xi_a^{0,\D}}{\longleftrightarrow} \frac{2r_z}{|x-y|} \partial \D }.
\end{align*}
Conditionally on the origin being linked to $\frac{2r_z}{|x-y|} \partial \D$, the origin will be linked to $e^{-1} \partial \D$ provided the following events hold:
\begin{itemize}
\item
There is a cluster of $\Lc_\D^\theta$ crossing $e^{-1} \D \setminus \frac{r_z}{|x-y|} \D$;
\item
There is a loop in $\Lc_\D^\theta$ surrounding $\frac{r_z}{|x-y|} \D$ while staying in $\frac{2r_z}{|x-y|} \D$.
\end{itemize}
By FKG inequality, the conditional probability of the intersection of these two events is at least the product of the two unconditioned probabilities. The probability of the first event is at least $\left( \log \frac{|x-y|}{2r_z} \right)^{-1+\theta-\eta}$ by Theorem \ref{T:large_crossing}. By Lemma \ref{L:surround}, the probability of the second event is at least some constant showing that
\[
Z_\gamma \Big/ \Prob{ z \overset{\Lc_{D_z}^\theta \cup \Xi_a^{z,D_z}}{\longleftrightarrow} \partial D(z,r_z) \Big\vert r_z } \geq c \left( \log \frac{|x-y|}{2r_z} \right)^{-1+\theta-\eta}.
\]
\end{comment}
Hence,
\begin{align*}
\frac{1}{Z_\gamma^2} \Prob{ x \overset{\Lc_D^\theta \cup \Xi_a^x \cup \Xi_a^y}{\longleftrightarrow} y, \forall z=x,y, \Xi_a^z \subset D_z}
\leq C \Expect{ \prod_{z=x,y} \left( \log \frac{|x-y|}{2r_z} \right)^{1-\theta+\eta} }.
\end{align*}
In \cite{JLQ23a}, we show that the above quantity is upper bounded by $C|\log |x-y||^{2(1-\theta)+2\eta}$ (see (4.16) in Remark 4.5 therein) concluding the proof of Lemma \ref{L:crossing_upper}.
%We also show in that paper that for $A>0$ large,
%\[
%\Expect{ \prod_{z=x,y} \left( \log \frac{|x-y|}{2r_z} \right)^{1-\theta+\eta} \indic{\exists z \in \{x,y\}, r_z \geq A|\log |x-y||} } \leq o(1) |\log |x-y||^{2(1-\theta)+2\eta}
%\]
%where $o(1) \to 0$ as $A \to \infty$, uniformly in $x, y$; see (4.17) in \cite[Remark 4.5]{JLQ23a}.
%In particular, if $x$ and $y$ are at a macroscopic distance to each other and if $r_0 >0$ is small, then
%\begin{equation}
%\label{E:proof_P_crossing3}
%\frac{1}{Z_\gamma^2} \Prob{ x \overset{\Lc_D^\theta \cup \Xi_a^x \cup \Xi_a^y}{\longleftrightarrow} y, \forall z = x,y, \Xi_a^z \subset D_z, \min(r_x,r_y) \leq r_0 } \leq o_{r_0 \to 0}(1)
%\end{equation}
%where $o_{r_0 \to 0}(1)$ goes to zero as $r_0 \to 0$ and does not depend on $\gamma$. This estimate will be needed in the proof Proposition \ref{P:crossing}.
\end{proof}

We can now prove Proposition \ref{P:crossing}.

\begin{proof}[Proof of Proposition \ref{P:crossing}]
Let $x$ and $y$ be two distinct points in $D$. Compare to the upper bound, the two points $x$ and $y$ are fixed and we wish to take the limit as $\gamma_1, \gamma_2 \to 0$. It is therefore important for the proof to keep in mind that $x$ and $y$ are at macroscopic distance to each other and that we can assume $\gamma_1$ and $\gamma_2$ to be as small as desired. We start by proving \eqref{E:P_crossing_limit}. We will explain at the end of the proof what needs to be changed in order to get \eqref{E:P_crossing_limit2} and \eqref{E:P_law}.

Let $\eta >0$ be small and let $D_x = D(x,\eta|x-y|/2)$ and $D_y = D(y,\eta|x-y|/2)$. By taking $\eta >0$ small enough, we can ensure that $D_x$ and $D_y$ are subsets of $D$. We will denote $\Lc_{D,x,y}^\theta = \Lc_D^\theta \setminus (\Lc_{D_x}^\theta \cup \Lc_{D_y}^\theta)$ the collection of loops which are not included in $D_x$ or $D_y$.
We now introduce the following events:
\begin{itemize}
\item $E_1$: there is a loop in $\Lc_{D,x,y}^\theta$ that disconnects $D_x$ from $D_y$;
\item $E_2$: for $z = x,y$, $\Xi_a^z \subset D_z$.
%\item $E_3$: $\Lc_{D,x,y}^\theta$ does not intersect $D(x,r_0) \cup D(y, r_0)$.
\end{itemize}
We first claim that we can work on $E_1 \cap E_2$. Indeed, by Lemma \ref{L:surround} $\Prob{E_1^c} \to 0$ as $\eta \to 0$ and, by FKG inequality,
\[
\limsup_{\gamma \to 0} \frac{1}{Z_{\gamma}^2}
\Prob{ x \overset{\Lc_D^\theta \cup \Xi_{a}^x \cup \Xi_{a}^y}{\longleftrightarrow} y, E_1^c } \to 0 \quad \quad \text{as} \quad \eta \to 0.
\]
Moreover, in the proof of the upper bound, we showed that
\[
\lim_{\gamma \to 0} \frac{1}{Z_{\gamma}^2}
\Prob{ x \overset{\Lc_D^\theta \cup \Xi_{a}^x \cup \Xi_{a}^y}{\longleftrightarrow} y, E_2^c } = 0.
\]
%and that
%\[
%\limsup_{\gamma \to 0} \frac{1}{Z_{\gamma}^2}
%\Prob{ x \overset{\Lc_D^\theta \cup \Xi_{a}^x \cup \Xi_{a}^y}{\longleftrightarrow} y, E_2, E_3^c } \to 0 \quad \quad \text{as} \quad r_0 \to 0.
%\]
See \eqref{E:exitDx}.
% and \eqref{E:proof_P_crossing3} respectively.
We can therefore work on the event $E_1 \cap E_2$. On the event $E_1$, there is at most one cluster $\Cc_{x,y}$ of loops in $\Lc_{D,x,y}^\theta$ that intersects both $D_x$ and $D_y$ (we set $\Cc_{x,y} = \varnothing$ if there is no such cluster).
By the restriction property \eqref{E:restriction_thick_loop} of $\Xi_a^z$, we have
\begin{align*}
& \Prob{ x \overset{\Lc_D^\theta \cup \Xi_a^x \cup \Xi_a^y}{\longleftrightarrow} y, E_1\cap E_2}
 = \frac{\CR(x,D_x)^a \CR(y,D_y)^a}{\CR(x,D)^a \CR(y,D)^a} \Expect{ \mathbf{1}_{E_1} \prod_{z=x,y} \Prob{z \overset{\Lc_{D_z}^\theta \cup \Xi_a^{z,D_z}}{\longleftrightarrow} \Cc_{x,y} \Big\vert \Cc_{x,y} }  }.
\end{align*}
The ratio of conformal radii converges to 1. Moreover, for $z = x$ or $y$, we can rewrite
\[
\frac{1}{Z_\gamma}
\P \Big( z \overset{\Lc_{D_z}^\theta \cup \Xi_a^{z,D_z}}{\longleftrightarrow} \Cc_{x,y} \vert \Cc_{x,y} \Big)
=
\P \Big( z \overset{\Lc_{D_z}^\theta \cup \Xi_a^{z,D_z}}{\longleftrightarrow} \Cc_{x,y} \vert \Cc_{x,y} \Big)
\Big/ \P \Big( z \overset{\Lc_{D_z}^\theta \cup \Xi_a^{z,D_z}}{\longleftrightarrow} \partial D(z, e^{-1}\eta |x-y|/2) \Big)
\]
and we can replace
% For $z = x$ or $y$, and on the event $E_3$, we can rewrite
%\begin{align*}
%\frac{1}{Z_\gamma}
%\P \Big( z \overset{\Lc_{D_z}^\theta \cup \Xi_a^{z,D_z}}{\longleftrightarrow} \Cc_{x,y} \vert \Cc_{x,y} \Big)
%& = \P \Big( z \overset{\Lc_{D_z}^\theta \cup \Xi_a^{z,D_z}}{\longleftrightarrow} \Cc_{x,y} \Big\vert \Cc_{x,y}, z \overset{\Lc_{D_z}^\theta \cup \Xi_a^{z,D_z}}{\longleftrightarrow} \partial D(z,r_0) \Big) \\
%& \times \P \Big( z \overset{\Lc_{D_z}^\theta \cup \Xi_a^{z,D_z}}{\longleftrightarrow} \partial D(z,r_0) \Big\vert z \overset{\Lc_{D_z}^\theta \cup \Xi_a^{z,D_z}}{\longleftrightarrow} \partial D(z,|x-y|/4) \Big).
%\end{align*}
%In the first probability on the right hand side, we can replace
$\Cc_{x,y}$ by the closure of $(\Cc_{x,y} \cap D_z) \cup \partial D_z$ which is a.s. a connected compact subset of $\overline{D_z} \setminus \{z\}$. We are now in the setting of Theorem \ref{T:ratio_general} which shows that the above ratio converges as $a \to 0$, a.s. with respect to $\Cc_{x,y}$.
The upper bound \eqref{E:P_crossing_bound} provides the bound necessary to apply dominated convergence in order to exchange the expectation w.r.t. $\Cc_{x,y}$ and the limit. We have proved that
\begin{equation}
\label{E:proof_P_crossing4}
\lim_{\gamma \to 0} \frac{1}{Z_\gamma^2} \Prob{ x \overset{\Lc_D^\theta \cup \Xi_a^x \cup \Xi_a^y}{\longleftrightarrow} y, E_1 \cap E_2}
\end{equation}
exists and that
$
\frac{1}{Z_\gamma^2} \P \Big( x \overset{\Lc_D^\theta \cup \Xi_a^x \cup \Xi_a^y}{\longleftrightarrow} y \Big)
$
converges as $\gamma \to 0$ towards the nondecreasing limit of \eqref{E:proof_P_crossing4} as $\eta \to 0$. This shows \eqref{E:P_crossing_limit}.

The proof of \eqref{E:P_crossing_limit2} concerning the mixed case $(a_1,a_2)$ follows along the same lines. The main difference comes from the fact that we only need a lower bound, so we can add for free the extra events $E_1$ and $E_2$.
The proof of \eqref{E:P_law} is obtained similarly. Indeed, because the function $F$ does not depend on loops that have diameter smaller than a given threshold $\eps$, $F$ does not depend on loops included in $D_x$ and $D_y$ provided $\eta$ is small enough. We then repeat the above procedure and use Theorem \ref{T:ratio_general}.

Finally, the upper bound of \eqref{E:P_crossing_limit3} follows from Lemma \ref{L:crossing_upper}. The lower bound is much easier to prove and follows from Theorem \ref{T:large_crossing} and FKG inequality. We omit the details.
\end{proof}

%\section{Cluster conditioned to visit a point}
%\input{./subfiles/law_cluster.tex}

%\section{Cluster conditioned to visit a point}
%\input{./subfiles/law_cluster.tex}

\section{\texorpdfstring{$h_\theta$}{h theta} as a sum of Minkowski measures: proof of Theorems \ref{T:measure_cluster} and \ref{T:h_and_minkowski}}\label{S:minkowski}
This section is devoted to the proofs of Theorems \ref{T:measure_cluster} and \ref{T:h_and_minkowski}. We start by proving Theorem \ref{T:measure_cluster}. Recall the notations introduced above Theorem \ref{T:measure_cluster}.

\begin{proof}[Proof of Theorem \ref{T:measure_cluster}]
Let $k \geq 1$.
Let $B \subset D$ be some Borel set. It is enough to show that $(\mu_{k,r}(B))_{r >0}$ and $(\mu_{k,\gamma}(B))_{\gamma >0}$ are Cauchy in $L^2$ and that $\mu_{k,r}(B) - \mu_{k,\gamma}(B)$ converges to 0 in $L^2$ as $\gamma, r \to 0$. We already have all the ingredients to prove this. Indeed,
\begin{align*}
& \limsup_{r,r' \to 0}
\Expect{ (\mu_{k,r}(B) - \mu_{k,r'}(B))^2 }
= \limsup_{r,r' \to 0} \frac{1}{Z_r Z_{r'}} \int_{B \times B} \Big( \Prob{ \Cc_k \cap D(x,r) \neq \varnothing, \Cc_k \cap D(y,r) \neq \varnothing } \\
&~~~ + \Prob{ \Cc_k \cap D(x,r') \neq \varnothing, \Cc_k \cap D(y,r') \neq \varnothing } - 2 \Prob{ \Cc_k \cap D(x,r) \neq \varnothing, \Cc_k \cap D(y,r') \neq \varnothing }\Big) \d x \d y.
\end{align*}
The analogue of Lemma \ref{L:crossing_upper} with small discs instead of thick loops (and Fatou's lemma to deal with the mixed term $r$-$r'$) provides the necessary domination to exchange limit and integral. To show that the integrand converges pointwise to zero, we then use the same approach as in the proof of Proposition \ref{P:crossing}. Eventually, the proof of Theorem \ref{T:measure_cluster} boils down to Theorem \ref{T:ratio_general}. We omit the details.

The above line of argument also shows that the second moment of $\mu_k(D)$ is positive. In particular, the probability that $\mu_k(D)>0$ is positive. We are going to show that this probability is actually equal to 1.
Without loss of generality assume that $D = [0,1]^2$. 
Let $\wp$ be some fixed macroscopic loop of $\Lc_D^\theta$, e.g. the loop with the largest diameter. Let $n \geq 1$ be a large integer and divide $D$ into disjoint squares $Q_i$, $i = 1, \dots, 4^n$, of side length $2^{-n}$. For each $i=1, \dots, 4^n$, let $\Lc_i$ be the set of loops included in $Q_i$. Conditionally on $\{ \text{diam}(\wp) > \sqrt{2} 2^{-n} \}$, the collections $\Lc_i$, $i=1, \dots, 4^n$, are independent from $\wp$ and independent loop soups in $D_i$.
For $i=1, \dots, 4^n$, let $Q_i'$ be the square with side length $2^{-n-1}$ with the same centre as $Q_i$ and let $E_i$ be the event that there is a cluster $\Cc_i$ of $\Lc_i$ with non-zero Minkowski content and such that $\Cc_i$ disconnects $Q_i'$ from $\partial Q_i$. By scale invariance, $p = \P(E_i)$ is positive and independent of $n$ and $i$.
Let $I = \{ i=1, \dots, 4^n: \wp \cap Q_i' \neq \varnothing \}$. On the event $E_i \cap \{i \in I\}$, the cluster of $\wp$ contains $\Cc_i$. Hence, the probability that the Minkowski content of the cluster of $\wp$ vanishes is at most
\[
\Prob{ \text{diam}(\wp) \leq \sqrt{2} 2^{-n} } + \Expect{(1-p)^{\# I} }
\xrightarrow[n \to \infty]{} 0.
\]
This concludes the proof.
\end{proof}

The main item that remains to be checked is that, as stated in Theorem \ref{T:h_and_minkowski}, $\mu_k$ corresponds to the restriction of $|h_\theta|$ to $\Cc_k$. The rest of this section is dedicated to showing this.

As usually, we will denote by $\overline{\Cc}_{k}$ the topological closure of
$\Cc_{k}$, seen as a compact subset of $D$.
We will denote
\begin{displaymath}
D_{k} = D\setminus\bigcup_{1\leq j\leq k}\overline{\Cc}_{j}.
\end{displaymath}
Then $D_{k}$ is an open subset of $D$ that has infinitely many connected components,
among which finitely many (at most $k$) are multiply connected and the rest are simply connected, the total number of holes being $k$.
We will denote by
\begin{displaymath}
\Lc^{\theta}_{D,k} = \{\wp\in \Lc^{\theta}_{D}: \wp \subset D_{k}\}.
\end{displaymath}
We will denote by $\sigma_{k}$ the sign $\sigma_{\Cc_{k}}$ of the cluster
$\Cc_{k}$.
We will denote by $\Fc_{k}$ the $\sigma$-algebra generated by the loops contained in
$\Cc_{1},\dots,\Cc_{k}$ (including their time parametrization)
and the signs $\sigma_{1},\dots, \sigma_{k}$.

\begin{lemma}
\label{Lem cond loop soup D k}
Fix $k\geq 1$.
Let $O_{i}$, $i\geq 0$ be the connected components of $D_{k}$.
Conditionally on $\Fc_{k}$, we have:
\begin{itemize}
\item
The collections of loops $\{ \wp \in \Lc_D^\theta: \wp \subset O_i \}, i \geq 0$, are independent;
\item
For all $i \geq 0$ such that $O_{i}$ is simply connected, 
$\{ \wp \in \Lc_D^\theta: \wp \subset O_i \}$ has the law of a Brownian loop soup in $O_i$ with intensity $\theta$;
\item
For all $i \geq 0$ such that $O_{i}$ is not simply connected, 
$\{ \wp \in \Lc_D^\theta: \wp \subset O_i \}$ has the law of a Brownian loop soup in $O_i$ with intensity $\theta$ conditioned on not having clusters surrounding an inner hole of $O_i$.
\end{itemize}
\end{lemma}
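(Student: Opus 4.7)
The plan is to first eliminate the role of the signs and then to induct on $k$, using a cluster-by-cluster spatial Markov property of the Brownian loop soup (in the spirit of Sheffield--Werner \cite{SheffieldWernerCLE} and Qian--Werner \cite{QianWerner19Clusters}).

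\smallskip\noindent\textbf{Step 1: Removing the signs.}
By construction, conditionally on $\Lc^{\theta}_{D}$, the signs $(\sigma_{\Cc})_{\Cc\in\Cf}$ are i.i.d.\ uniform on $\{\pm 1\}$, and in particular independent of the loop soup itself. The remaining signs $(\sigma_{\Cc})_{\Cc\neq\Cc_{1},\dots,\Cc_{k}}$ are also i.i.d.\ uniform, independent of everything we condition on. Hence conditioning on $\Fc_{k}$ is the same, for the purpose of determining the law of the loops of $\Lc^{\theta}_{D}$ not belonging to $\Cc_{1}\cup\cdots\cup\Cc_{k}$, as conditioning on the sub-$\sigma$-algebra $\Fc_{k}'$ generated only by the loops contained in these clusters. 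From now on we work with $\Fc_{k}'$.

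\smallskip\noindent\textbf{Step 2: Identification of the remaining loops.}
Observe that a loop $\wp\in\Lc^{\theta}_{D}$ does \emph{not} belong to $\Cc_{1}\cup\cdots\cup\Cc_{k}$ if and only if $\wp$ does not intersect any of these clusters, and by continuity of loops and the fact that $\partial\Cc_{j}\setminus\Cc_{j}$ is a.s.\ avoided by independent Brownian loops, this is a.s.\ equivalent to $\wp\subset D_{k}$. Thus, $\Fc_{k}'$-a.s., the family $\{\wp\in\Lc^{\theta}_{D}:\wp\not\subset \bigcup_{j\leq k}\overline{\Cc}_{j}\}$ coincides with $\{\wp\in\Lc^{\theta}_{D}:\wp\subset D_{k}\}$, and by the Poissonian nature of $\Lc^{\theta}_{D}$ the loops in these two disjoint subsets of loop space are (prior to any conditioning on the cluster structure) independent.

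\smallskip\noindent\textbf{Step 3: Induction on $k$.}
The case $k=0$ is the definition of $\Lc^{\theta}_{D}$. Assume the conclusion holds for $k-1$. By the inductive hypothesis applied to $\Fc_{k-1}'$, conditionally on the loops of $\Cc_{1},\dots,\Cc_{k-1}$ the loops in $D_{k-1}$ decompose as independent loop soups in each connected component $O$ of $D_{k-1}$, each of intensity $\theta$, with an additional conditioning to have no cluster surrounding an inner hole whenever $O$ is multiply connected. Let $O^{\ast}$ be the connected component of $D_{k-1}$ containing $x_{k}$. Then by construction $\Cc_{k}$ is precisely the outermost cluster surrounding $x_{k}$ in the (conditioned) loop soup inside $O^{\ast}$.

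\smallskip\noindent\textbf{Step 4: Conditioning on one more outermost cluster.}
The key input is the following spatial Markov property, which is the content of \cite[Theorem~1]{QianWerner19Clusters} combined with the Sheffield--Werner CLE/loop soup coupling of \cite{SheffieldWernerCLE}: in a loop soup in a simply connected domain $O'$ of intensity $\theta\in(0,1/2]$, conditionally on the outermost cluster $\Cc$ surrounding a given interior point, the loops not belonging to $\Cc$ decompose into independent loop soups, one in each connected component $U$ of $O'\setminus\overline{\Cc}$; the component adjacent to $\partial O'$ is an annular (multiply connected) region in which the soup is further conditioned on having no cluster surrounding $\overline{\Cc}$ (equivalently, no cluster surrounding the inner hole), while every other component is simply connected and supports an unconditioned soup. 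Applying this to the restriction of the soup to $O^{\ast}$, first in the simply connected case, yields exactly the claimed decomposition for the loops in the connected components of $O^{\ast}\setminus\overline{\Cc}_{k}$. Combining with the decomposition given by the inductive hypothesis on the other components of $D_{k-1}$ (which are unchanged) gives the statement for $k$.

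\smallskip\noindent\textbf{Step 5: The multiply connected case and the main obstacle.}
The main technical point is Case B, in which $O^{\ast}$ is multiply connected and the soup in $O^{\ast}$ is conditioned on not surrounding any inner hole. One must verify the analogue of the Sheffield--Werner/Qian--Werner spatial Markov property under this conditioning. The plan is to realize the conditioned soup as a weak limit of unconditioned soups restricted to the event of not forming such surrounding clusters, decompose each using the simply connected result after first uniformizing to a simply connected cover or by excising a small disc in each inner hole, and pass to the limit; since the conditioning event depends only on the macroscopic topology of clusters relative to the inner holes, and is preserved by the cluster-wise decomposition, it distributes correctly over the components of $O^{\ast}\setminus\overline{\Cc}_{k}$: the outer, annular components inherit the no-surrounding condition (coming both from the pre-existing inner holes of $O^{\ast}$ and the new hole $\overline{\Cc}_{k}$), while the inner simply connected components are unconditioned. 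This is precisely the form predicted by the lemma for step $k$, concluding the induction.
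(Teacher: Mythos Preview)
Your inductive plan is a reasonable strategy but differs from the paper's approach, and it has a real gap. The paper (via its companion Lemma~\ref{L:condition_cluster}) does not induct on $k$; instead it fixes the dyadic enlargement $\Cc^{(p)}$ of the clusters, uses the Poisson independence of loops that avoid a \emph{deterministic} region from loops that hit it, reads off the conditional law on the event $\{\Cc^{(p)}=C_p\}$ (where the only residual constraint on the outside loops is the topological one: no cluster may surround a hole, since otherwise $\Cc_j$ would not be the $j$-th outermost), and then lets $p\to\infty$. This handles the simply and multiply connected components in one stroke, with no appeal to \cite{QianWerner19Clusters} or \cite{SheffieldWernerCLE}.

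The gap in your argument is twofold. First, the ``key input'' you attribute to \cite[Theorem~1]{QianWerner19Clusters} and \cite{SheffieldWernerCLE} is not what those results say: Qian--Werner condition on the outer \emph{boundary} of a cluster and describe the loops \emph{inside} it (excursions plus an independent soup), not the decomposition of the loops in $O'\setminus\overline{\Cc}$ given the full cluster $\Cc$. What you actually need in Step~4 is precisely the $k=1$ case of the lemma you are proving, so the induction has no independent base case. Second, and more seriously, your Step~5 is an outline, not a proof: you acknowledge the multiply connected case as ``the main obstacle'' and propose to handle it by a limiting/uniformization argument, but the details (why the no-surrounding conditioning factorizes correctly over the components of $O^{\ast}\setminus\overline{\Cc}_k$, why the limit commutes with the decomposition) are exactly where the work lies. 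The dyadic-approximation route avoids this difficulty entirely because the conditioning event is visibly a function of the loops in the multiply connected components alone, and the Poisson structure makes the independence immediate before the limit.
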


\begin{proof}
This result is folklore and has a very similar flavour as Lemma \ref{L:condition_cluster} and follows from the same proof. We refer to Section \ref{S:discrete} for more details.
\end{proof}

In particular, the collection of loops $\Lc^{\theta}_{D,k}$ is a Brownian loop soup in
$D_{k}$ conditioned on an event that has a positive probability.
So for $\gamma\in (0,2)$, one can define a multiplicative chaos measure associated to
$\Lc^{\theta}_{D,k}$. 
The fact that $D_k$ is not simply connected is non essential: see Remark \ref{rmk:multiplicative_chaos}.
The multiplicative chaos measure associated to $\Lc^{\theta}_{D,k}$
is nothing else than
\begin{displaymath}
\indic{x\in D_{k}}e^{\gamma^{2}\pi(G_{D}(x,x)-G_{D_{k}}(x,x))}\Mc_\gamma(dx),
\end{displaymath}
where $G_{D}$ is the Dirichlet Green's function on $D$ 
and $G_{D_{k}}$ is the Dirichlet Green's function on $D_{k}$.
Note that $G_{D}(x,x)-G_{D_{k}}(x,x)<+\infty$.
We get a density and not just a restriction to $D_{k}$ because our convention
$\E[\Mc_\gamma(dx)] = 2 dx$ makes the normalization for the multiplicative chaos domain-dependent: how we normalize around a point $x$ depends also on the global shape of the domain and not just the Brownian loops we see around $x$. With the normalisation used in \cite{ABJL21}, we would not get this extra density: see \eqref{E:difference_normalisation} for the relation between these two normalisations.
One can further consider the measure $\Mc_\gamma^{+}\mathbf{1}_{D_{k}}$
and the field
\begin{displaymath}
h_{\theta, k}
= \lim_{\gamma\to 0^{+}}
\dfrac{1}{Z_{\gamma}}\indic{x\in D_{k}}(e^{\gamma^{2}\pi(G_{D}(x,x)-G_{D_{k}}(x,x))}
\Mc_\gamma^{+}(dx) - dx),
\end{displaymath}
which is a well defined random element of $H^{-\varepsilon}(\C)$
for $\varepsilon>0$ (simply by working in the domain $D_k$).

Further, one can consider the conditional expectation
$\mathbb{E}[h_{\theta}\vert \Fc_{k}]$.
Since $\mathbb{E}[\Vert h_{\theta}\Vert^{2}_{H^{-\varepsilon}(\C)}] <+\infty$
for $\varepsilon>0$,
the conditional expectation $\mathbb{E}[h_{\theta}\vert \Fc_{k}]$
is a well defined random element of $H^{-\varepsilon}(\C)$,
with
\begin{displaymath}
\Vert \mathbb{E}[h_{\theta}\vert \Fc_{k}]\Vert^{2}_{H^{-\varepsilon}(\C)}
\leq 
\mathbb{E}[\Vert h_{\theta}\Vert^{2}_{H^{-\varepsilon}(\C)}\vert \Fc_{k}]
< +\infty ~~\text{a.s.}
\end{displaymath}
See \cite[Section 2.6]{hytonen2016analysis} for a reference on conditional expectation of random variables in a Banach space.
We define $\nu_{1} = \sigma_{1}\mathbb{E}[h_{\theta}\vert \Fc_{1}]$,
and for $k\geq 2$,
\begin{displaymath}
\nu_{k} = \sigma_{k}(\mathbb{E}[h_{\theta}\vert \Fc_{k}]
-\mathbb{E}[h_{\theta}\vert \Fc_{k-1}]),
\quad \quad \text{so~that} \quad \quad
\mathbb{E}[h_{\theta}\vert \Fc_{k}] = 
\sum_{j=1}^{k}\sigma_{j}\nu_{j}.
\end{displaymath}

\begin{lemma}
The following holds.
\begin{enumerate}
\item For every $k\geq 1$,
conditionally on $\Fc_{k}$, $h_{\theta,k}$ has the same law as $-h_{\theta,k}$,
and in particular
$\mathbb{E}[h_{\theta, k}\vert \Fc_{k}] = 0$.
\item A.s., for every $k\geq 1$
and every $f\in\Cc^{\infty}_{c}(\C)$, such that
$\operatorname{Supp}(f) \cap (D \setminus D_k) = \varnothing$,
$(h_{\theta},f) = (h_{\theta,k},f)$.
\item For every $k\geq 1$, 
$h_{\theta} = \mathbb{E}[h_{\theta}\vert \Fc_{k}] + h_{\theta, k}$ a.s.
\item For every $\varepsilon>0$, as $k\to +\infty$,
$\Vert h_{\theta}-\mathbb{E}[h_{\theta}\vert \Fc_{k}]\Vert_{H^{-\varepsilon}(\C)}
\to 0$ a.s. and in $L^2$.
%\begin{displaymath}
%\lim_{k\to +\infty}
%\mathbb{E}[\Vert h_{\theta}-\mathbb{E}[h_{\theta}\vert \Fc_{k}]
%\Vert^{2}_{H^{-\varepsilon}(\C)}] = 0.
%\end{displaymath}
\item For every $k\geq 2$,
$h_{\theta, k-1} = \sigma_{k}\nu_{k} + h_{\theta, k}$ a.s. and
\begin{displaymath}
\nu_{k} = \sigma_{k}\mathbb{E}[h_{\theta, k-1}\vert \Fc_{k}]~~\text{a.s.}
\end{displaymath}
\item $\nu_{1}$ and $\sigma_{1}$ are independent,
and for every $k\geq 2$,
$\nu_{k}$ and $\sigma_{k}$ are independent conditionally on $\Fc_{k-1}$.
\item A.s., for every $k\geq 1$
and every $f\in\Cc^{\infty}_{c}(\C)$ such that
$\operatorname{Supp}(f)\cap \overline{\Cc}_{k} = \varnothing$,
$(\nu_{k},f) = 0$,
that is to say $\nu_{k}$ is supported on $\overline{\Cc}_{k}$.
\end{enumerate}
\end{lemma}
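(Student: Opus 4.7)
The plan is to prove items (1) and (2) from first principles, combine them to establish (3), invoke vector-valued martingale convergence for (4), and then derive (5)--(7) as structural consequences. The recurring theme is that projecting $h_\theta$ onto $\Fc_k$ isolates the contributions coming from the clusters $\Cc_1,\dots,\Cc_k$.

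For (1), Lemma~\ref{Lem cond loop soup D k} gives that conditionally on $\Fc_k$ the loop soup $\Lc^\theta_{D,k}$ is (a conditioned) Brownian loop soup in $D_k$, while the spins $\{\sigma_\Cc : \Cc\text{ cluster of }\Lc^\theta_{D,k}\}$ remain i.i.d.\ uniform on $\{\pm 1\}$ and independent of the loops. Flipping all these spins is a measure-preserving involution sending $h_{\theta,k}$ to $-h_{\theta,k}$ by the symmetry property (item~\ref{I:symmetry} of Theorem~\ref{T:prop}) applied in $D_k$, so $\E[h_{\theta,k}\vert\Fc_k]=0$. For (2), observe that $e^{\gamma^{2}\pi(G_{D}(x,x)-G_{D_{k}}(x,x))}$ equals $(\CR(x,D)/\CR(x,D_k))^{a}$, which is exactly the ratio translating the normalisation $\CR(\cdot,D)^{-a}$ in \eqref{E:def_discrete_measure} into the intrinsic normalisation $\CR(\cdot,D_k)^{-a}$ adapted to $D_k$ (cf.\ Remark~\ref{rmk:multiplicative_chaos}). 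For $f$ compactly supported in $D_k$, any $\Mc_\gamma$-typical point $x \in \operatorname{Supp}(f)$ is visited only by loops in its own cluster, and this cluster is necessarily contained in $D_k$ (the only clusters touching $D\setminus D_k$ are $\Cc_1,\dots,\Cc_k$, from which $x$ is excluded). Hence the restriction of $e^{\gamma^2\pi(G_D-G_{D_k})}\Mc_\gamma^+$ to $\operatorname{Supp}(f)$ agrees with the multiplicative chaos of $\Lc^\theta_{D,k}$ restricted to positive clusters, and $(h_\theta,f) = (h_{\theta,k},f)$ follows by passing to the $\gamma\to 0$ limit.

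For (3), set $R_k := h_\theta - h_{\theta,k}$. It suffices to show $R_k$ is $\Fc_k$-measurable, for then $\E[h_\theta\vert\Fc_k] = R_k + \E[h_{\theta,k}\vert\Fc_k] = R_k$ by (1). By (2), $R_k$ is supported on $D\setminus D_k = \bigcup_{j\leq k}\overline{\Cc}_j$. Using the cluster decomposition $\Mc_\gamma = \sum_\Cc \Mc_\gamma\vert_\Cc$ of \cite[Theorem~1.8]{ABJL21}, and that any loop visiting a point of $\Cc_j$ must itself belong to $\Cc_j$ (so $\Mc_\gamma\vert_{\Cc_j}$ is $\Fc_j$-measurable), one gets, at the level of approximations, the identity
\[
\frac{\Mc_\gamma^+ - \d x}{Z_\gamma} - \frac{\indic{x\in D_k}(e^{\gamma^2\pi(G_D-G_{D_k})}\Mc_\gamma^+ - \d x)}{Z_\gamma}
= \sum_{j\leq k}\frac{\indic{\sigma_j = +1}\Mc_\gamma\vert_{\Cc_j} - \indic{x\in\overline{\Cc}_j}\,\d x}{Z_\gamma} + \varepsilon_k^\gamma,
\]
with $\varepsilon_k^\gamma := Z_\gamma^{-1}\indic{x\in D_k}(1-e^{\gamma^2\pi(G_D-G_{D_k})})\Mc_\gamma^+$. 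The first sum on the right is manifestly $\Fc_k$-measurable. For the error, Lemma~\ref{L:moment_measure} combined with $Z_\gamma = \gamma^{2(1-\theta)+o(1)}$ and $1-e^{\gamma^2\pi(G_D-G_{D_k})} = O(\gamma^2)$ yields $\E\|\varepsilon_k^\gamma\|_{H^{-\eps}}^2 = O(\gamma^{4\theta+o(1)})\to 0$. Since the LHS of the display converges in $L^2(H^{-\eps})$ to $R_k$ by Theorem~\ref{T:convergenceL2}, $R_k$ is an $L^2$-limit of $\Fc_k$-measurable variables and hence $\Fc_k$-measurable.

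For (4), $(M_k := \E[h_\theta\vert\Fc_k])_{k\ge 1}$ is an $H^{-\eps}$-valued $L^2$-bounded martingale. Since the $(\Cc_j,\sigma_j)_{j\ge 1}$ enumerate all clusters and their signs, $\Fc_\infty = \sigma(\Lc_D^\theta,\{\sigma_\Cc\})$ and $h_\theta$ is $\Fc_\infty$-measurable, so Banach-valued martingale convergence \cite[Section~2.6]{hytonen2016analysis} gives $M_k\to h_\theta$ a.s.\ and in $L^2(H^{-\eps})$. For (5), subtracting item (3) at $k$ and $k-1$ gives $h_{\theta,k-1} - h_{\theta,k} = \E[h_\theta\vert\Fc_k] - \E[h_\theta\vert\Fc_{k-1}] = \sigma_k\nu_k$; taking $\E[\cdot\vert\Fc_k]$ of this identity and using (1) yields $\nu_k = \sigma_k\E[h_{\theta,k-1}\vert\Fc_k]$. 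Finally, iterating the cluster-decomposition argument of (3) shows $\E[h_\theta\vert\Fc_k] = \sum_{j\leq k}\sigma_j\mu_j$, with each $\mu_j$ a function of $\Cc_j$ alone and supported on $\overline{\Cc}_j$; telescoping gives $\nu_k = \mu_k$, whence (7) and (6) (the latter because $\sigma_k$ is independent of $\Cc_k$ conditionally on $\Fc_{k-1}$). The main obstacle is the $L^2(H^{-\eps})$-control of the error $\varepsilon_k^\gamma$ in (3): ensuring that the $O(\gamma^2)$ density correction is negligible relative to the nontrivial renormalisation $Z_\gamma = \gamma^{2(1-\theta)+o(1)}$ is what requires combining the asymptotics of $Z_\gamma$ from Theorem~\ref{T:convergenceL2} with the second moment estimates of Lemma~\ref{L:moment_measure}.
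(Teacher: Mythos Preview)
Your approach to items (1), (2), (4) and (5) is fine and close to the paper's. The serious problem lies in item (3): your claim that $\varepsilon_k^\gamma\to 0$ in $L^2(H^{-\varepsilon})$ is \emph{false}, and this breaks the whole argument.

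To see why, take $k=1$ and work on the event $\{\sigma_1=-1\}$. Then $\Mc_\gamma^+\vert_{\Cc_1}=0$, so your own identity gives $h_\gamma-h_{\gamma,1}=\varepsilon_1^\gamma$ exactly. But the left-hand side converges to $R_1=\E[h_\theta\vert\Fc_1]=\sigma_1\nu_1=-\nu_1$, which by Theorem~\ref{T:measure_cluster} is a.s.\ nonzero. Hence $\varepsilon_1^\gamma\not\to 0$. The flaw in your estimate is the assertion ``$1-e^{\gamma^2\pi(G_D-G_{D_k})}=O(\gamma^2)$'': this holds only for $x$ at positive distance from $\partial D_k\cap D$, whereas $G_D(x,x)-G_{D_k}(x,x)=\tfrac{1}{2\pi}\log(\CR(x,D)/\CR(x,D_k))$ blows up as $x$ approaches $\overline{\Cc}_j$. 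On the set $\{x:d(x,\overline{\Cc}_j)<e^{-C/\gamma^2}\}$ the density is of order $1$, and by the very Minkowski-content estimates underlying Theorem~\ref{T:measure_cluster} this set has Lebesgue measure of order $Z_\gamma$ (the $r$-neighbourhood of a cluster has area $\asymp Z_r=|\log r|^{-(1-\theta)+o(1)}$, not $r^\beta$). So the contribution is $Z_\gamma^{-1}\cdot O(1)\cdot O(Z_\gamma)=O(1)$, not $o(1)$. Lemma~\ref{L:moment_measure} cannot help here because the density is random and correlated with $\Mc_\gamma^+$ through $D_k$.

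The paper avoids this obstacle entirely. It does not attempt a cluster decomposition of $\Mc_\gamma^+$; instead it uses cutoff functions $\chi_{k,m}$ supported away from $D\setminus D_k$ to show that $h_\theta-h_{\theta,k}$ is the limit in probability of $h_\theta\chi_{k,m}$, hence measurable with respect to the tail $\sigma$-algebra $\Fc_{k,\infty}=\bigcap_n\Fc_{k,n}$ (where $\Fc_{k,n}$ adjoins to $\Fc_k$ the loops and signs within distance $1/n$ of $D\setminus D_k$). A three-step $0$--$1$ law argument then shows $\Fc_{k,\infty}=\Fc_k$ up to null sets. For (6) and (7) the paper argues directly from symmetry (as in (1)) and from items (2), (3), (5), without needing the stronger ``$\mu_j$ is a function of $\Cc_j$ alone'' statement you invoke --- which in any case is essentially Theorem~\ref{T:h_and_minkowski}, proved only afterwards.
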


\begin{proof}
1. The proof is the same as for $h_{\theta}$ (see Section \ref{SS:properties_field}).

2. Note that we also deal with test functions that are not compactly supported in $D$.
Let $k\geq 1$ and $n\geq 1$.
Let $\C_{k,n}$ be the open subset of $\C$
formed by points $x\in \C$ such that
$d(x,D\setminus D_{k})>1/n$.
By construction, for every $f\in\Cc^{\infty}_{c}(\C_{k,n})$,
\begin{displaymath}
(h_{\theta},f) = 
\lim_{\gamma\to 0^{+}}
\dfrac{1}{Z_{\gamma}}\int_{D_{k}}f(x)(\Mc_\gamma^{+}(dx) - dx),
\end{displaymath}
where the convergence holds in $L^2$ for a given Sobolev norm
$H^{-\varepsilon}(\C)$
(and thus for all test functions simultaneously).
Further, since $G_{D}(x,x)-G_{D_{k}}(x,x)$ is bounded on $\C_{k,n}$,
the convergence and the limit do no change if we replace $\Mc_\gamma^{+}$
by
\begin{displaymath}
e^{\gamma^{2}\pi(G_{D}(x,x)-G_{D_{k}}(x,x))}
\Mc_\gamma^{+}(dx).
\end{displaymath}

\medskip

3. By 1., we only need to show that $h_{\theta} - h_{\theta,k}$ is
measurable w.r.t. $\Fc_{k}$.
Actually will see that $h_{\theta} - h_{\theta,k}$ is
measurable w.r.t. the $\sigma$-algebra $\Fc_{k}$ augmented by the negligible events (i.e. with probability $0$),
which also implies the desired result.

Consider the sets $\C_{k,n}$ defined above. 
Let $\Fc_{k,n}$ be the $\sigma$-algebra generated by $\Fc_{k}$
as well as by the loops of $\Lc^{\theta}_{D,k}$ that are in the clusters at distance at most $1/n$ from $D\setminus D_{k}$, and the signs of these clusters.
By definition, $\Fc_{k}\subset \Fc_{k,n}$ and $\Fc_{k,n+1}\subset \Fc_{k,n}$.
Denote	
\begin{displaymath}
\Fc_{k,\infty} = \bigcap_{n\geq 1}\Fc_{k,n}.
\end{displaymath}
We have $\Fc_{k}\subset \Fc_{k,\infty}$.

For $k\geq 1$ and $n\geq 1$, let $\chi_{k,n}$ be a cutoff function
such that $\chi_{k,n}\in\Cc^{\infty}(\C)$, $\chi_{k,n}$ takes values in $[0,1]$,
$\chi_{k,n}$ equals $0$ on $\C_{k,n}$ and $1$ on $\C\setminus \C_{k,n+1}$.
We also want $\chi_{k,n}$ to be, as a random variable, measurable w.r.t.
$D_{k}$.
We omit the detailed construction of such a cutoff function, which is standard.
Fix $k\geq 1$ and $n\geq 1$. For $m\geq 1$ we have
$h_{\theta} = h_{\theta}\chi_{k,m}+h_{\theta}(1-\chi_{k,m})$,
and by 2., $h_{\theta}(1-\chi_{k,m}) = h_{\theta,k}(1-\chi_{k,m})$.
Moreover, for $m\geq n$, $h_{\theta}\chi_{k,m}$
is measurable w.r.t. $\Fc_{k,n}$.
Further, for $\varepsilon>0$,
\begin{displaymath}
\lim_{m\to +\infty}
\mathbb{E}[\Vert h_{\theta,k}\chi_{k,m}\Vert_{H^{-\varepsilon}(\C)}^{2}
\vert \Fc_{k}] = 0 ~~ \text{a.s.}
\end{displaymath}
by a computation similar to \eqref{E:Sobolev2}.
Therefore,
\begin{displaymath}
\lim_{m\to +\infty}
\mathbb{E}[\Vert(h_{\theta} - h_{\theta,k})- h_{\theta}\chi_{k,m}
\Vert_{H^{-\varepsilon}(\C)}^{2} \vert \Fc_{k}] = 0 ~~ \text{a.s.}
\end{displaymath}
So in particular, $h_{\theta} - h_{\theta,k}$ is the limit in probability of $h_{\theta}\chi_{k,m}$ as $m\to +\infty$.
Therefore, $h_{\theta} - h_{\theta,k}$ is measurable w.r.t. $\Fc_{k,n}$.
By letting $n\to +\infty$, we get that
$h_{\theta} - h_{\theta,k}$ is measurable w.r.t. $\Fc_{k,\infty}$.

To conclude, it is enough to check that the $\sigma$-algebra $\Fc_{k,\infty}$
is the same as $\Fc_{k}$ up to negligible events.
This is what is sketched below, by omitting some details.
\begin{itemize}[leftmargin=*]
\item Step 1: Let $\Fc'_{k,n}$ be the $\sigma$-algebra generated by $\Fc_{k}$
as well as by the loops of $\Lc^{\theta}_{D,k}$ that are in the clusters at distance at most $1/n$ from $D\setminus D_{k}$.
Contrary to $\Fc_{k,n}$, $\Fc'_{k,n}$ does not contain any information on the
sign of clusters of $\Lc^{\theta}_{D,k}$. Denote
\begin{displaymath}
\Fc'_{k,\infty} = \bigcap_{n\geq 1}\Fc'_{k,n}.
\end{displaymath}
By construction, $\Fc_{k}\subset \Fc'_{k,\infty}\subset \Fc_{k,\infty}$.
We further claim that $\Fc_{k,\infty}$ is the same as
$\Fc'_{k,\infty}$ up to negligible events.
Indeed, the signs of clusters are sampled independently
of the loops in $\Lc^{\theta}_{D,k}$ and are i.i.d.
Moreover, as $n$ increases, $\Fc_{k,n}$ contains information about less clusters and their signs, and in particular, $\Fc_{k,\infty}$ contains
information only on the tail of the sequence of signs.
So essentially this follows from the $0-1$ law of the tails of sequences of 
independent r.v.s.
\item Step 2: Let $\Fc''_{k,n}$ be the $\sigma$-algebra generated by $\Fc_{k}$
as well as by the loops of $\Lc^{\theta}_{D,k}$ that are at distance at most $1/n$ from $D\setminus D_{k}$.
So contrary to $\Fc'_{k,n}$, we look only at individual loops and do not consider how they are grouped into clusters.
By construction, $\Fc''_{k,n}\subset \Fc'_{k,n}$.
Denote
\begin{displaymath}
\Fc''_{k,\infty} = \bigcap_{n\geq 1}\Fc''_{k,n}.
\end{displaymath}
We claim that $\Fc'_{k,\infty}$ is the same as
$\Fc''_{k,\infty}$ up to negligible events. 
For $m>n\geq 1$, let $A_{k,m,n}$ be the event that all the clusters of
$\Lc^{\theta}_{D,k}$ that are at distance at most $1/m$ from $D\setminus D_{k}$,
are actually made of loops that stay in the $1/n$-neighborhood of
$D\setminus D_{k}$.
Then $A_{k,m,n}\in \Fc''_{k,n}$ and for every $A\in \Fc'_{k,m}$,
we have $A\cap A_{k,m,n}\in \Fc''_{k,n}$. Moreover,
\begin{displaymath}
\lim_{m\to +\infty}\mathbb{P}(A_{k,m,n}^{\rm c}) = 0.
\end{displaymath}
Thus, $\Fc'_{k,\infty}$ is contained in $\Fc''_{k,n}$ augmented by the negligible
events, whatever the value of $n$.
\item Step 3: We have that $\Fc_{k}\subset \Fc''_{k,\infty}$.
We claim that $\Fc''_{k,\infty}$ is the same as $\Fc_{k}$ up to negligible events.
If the conditional distribution of $\Lc^{\theta}_{D,k}$ given $\Fc_{k}$
were exactly Poisson, this would have been immediate by $0-1$ law.
However, by Lemma \ref{Lem cond loop soup D k},
this conditional distribution is Poisson conditioned on an event with positive probability, so the $0-1$ law still holds.
\end{itemize}

\medskip

4. This is simply due to the fact that the $\sigma$-algebra
$\bigvee_{k\geq 1}\Fc_{k}$ generated by the union of the $\Fc_{k}$
is the total $\sigma$-algebra generated by the loop soup
$\Lc^{\theta}_{D}$ and the signs clusters.

5. Since
\begin{displaymath}
\mathbb{E}[h_{\theta}\vert \Fc_{k-1}] + h_{\theta, k-1}
=\mathbb{E}[h_{\theta}\vert \Fc_{k-1}] + \sigma_{k}\nu_{k} + h_{\theta, k}
= h_{\theta},
\end{displaymath}
we get that $\sigma_{k}\nu_{k} = h_{\theta, k-1}-h_{\theta, k}$.
Since $\sigma_{k}\nu_{k}$ is $\Fc_{k}$-measurable
and $\mathbb{E}[h_{\theta, k}\vert \Fc_{k}] = 0$,
we conclude by taking the conditional expectation w.r.t. $\Fc_{k}$.

6. Since $-h_{\theta}$ has the same law as $h_{\theta}$, we have that
\begin{displaymath}
\nu_{1}\indic{\sigma_{1} = -1}
=
-\mathbb{E}[h_{\theta}\vert \Fc_{1}]\indic{\sigma_{1} = -1}
=\mathbb{E}[-h_{\theta}\vert \Fc_{1}]\indic{\sigma_{1} = -1}
\stackrel{\text{(d)}}{=}\mathbb{E}[h_{\theta}\vert \Fc_{1}]\indic{\sigma_{1} = 1}
=\nu_{1}\indic{\sigma_{1} = 1},
\end{displaymath}
which gives the independence between $\nu_{1}$ and $\sigma_{1}$.
For $k\geq 2$, we use the same reasoning, by relying on points 5. and 1.

7. From 3. and 2. follows that a.s. for every 
$f\in\Cc^{\infty}_{c}(\C\setminus\overline{\Cc}_{1})$,
$(\mathbb{E}[h_{\theta}\vert\Fc_{1}],f)=0$.
So this concludes in the case $k=1$.
Now take $k\geq 2$. By 5., for every 
$f\in\Cc^{\infty}_{c}(\C)$, 
\begin{displaymath}
\sigma_{k}(\nu_{k},f) = (h_{\theta, k-1}-h_{\theta, k},f).
\end{displaymath}
Then, similarly to 2., we get that
$(h_{\theta, k-1},f) = (h_{\theta, k},f)$
for every $f\in \Cc^{\infty}_{c}(\C\setminus\overline{\Cc}_{k})$.
\end{proof}

By convention, we set $D_{0}=D$.
Let $D_{k}^{\ast}$ be the open subset of $D_{k}$ obtained as the union of the
non-simply connected components of $D_{k}$, which are finitely many.

%\begin{lemma}
%\label{Lem int to 0 dim}
%A.s., as $\gamma\to 0$, the measure
%\begin{displaymath}
%\dfrac{1}{Z_{\gamma}}\indic{x\in D_{k}^{\ast}}
%(1-e^{\gamma^{2}\pi(G_{D_{k}}(x,x)-G_{D_{k-1}}(x,x))}) dx
%\end{displaymath}
%converges to $0$ for the weak topology of measures on
%$\overline{D_{k}^{\ast}}$.
%\end{lemma}

\begin{lemma}
\label{Lem remove multiply}
The measure
\begin{equation}
\label{Eq cond measure multiply}
\dfrac{1}{Z_{\gamma}}
(\indic{x\in D_{k}^{\ast}}dx-
\mathbb{E}[\mathbf{1}_{x\in D_{k}^{\ast}}e^{\gamma^{2}\pi (G_{D}(x,x) - G_{D_{k-1}}(x,x))}\Mc_{\gamma}^{+}(dx))\vert \Fc_{k}]) 
\end{equation}
is a.s. a positive finite measure that converges a.s. to $0$ for the weak topology of measures on
$\overline{D_{k}^{\ast}}$ as $\gamma \to 0$.
\end{lemma}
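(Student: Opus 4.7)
The plan is to identify the conditional expectation appearing in \eqref{Eq cond measure multiply} explicitly, and then bound the resulting deficit in two pieces. Given $\Fc_k$ and $y\in D_k^*$, let $O$ denote the (non-simply connected) component of $D_k$ containing $y$; by Lemma \ref{Lem cond loop soup D k}, the loops of $\Lc^\theta_D$ that visit $y$ are distributed as a Brownian loop soup in $O$ conditioned on the event $E$ that no cluster surrounds an inner hole of $O$. Using the identity $\mathbf{1}_{y\in O}\Mc_\gamma(\d y) = e^{-\gamma^2\pi(G_D(y,y)-G_{D_k}(y,y))}\mathbf{1}_{y\in O}\Mc_\gamma^O(\d y)$ relating the chaos in $D$ to the chaos $\Mc_\gamma^O$ of an unconditioned loop soup in $O$, the sign independence $\E[\Mc_\gamma^+(\d y)\vert\Fc_k] = \tfrac12\E[\Mc_\gamma(\d y)\vert\Fc_k]$, and the first moment formula \eqref{E:first_moment_measure} applied with $F=\mathbf{1}_E$ to the loop soup in $O$, one obtains
\[
\E\!\left[\mathbf{1}_{y\in D_k^*}\,e^{\gamma^2\pi(G_D(y,y)-G_{D_{k-1}}(y,y))}\Mc_\gamma^+(\d y)\,\big\vert\,\Fc_k\right]
= \mathbf{1}_{y\in D_k^*}\,e^{\gamma^2\pi(G_{D_k}(y,y)-G_{D_{k-1}}(y,y))}\,\frac{\P(E^y)}{\P(E)}\,\d y,
\]
where $E^y$ denotes the analogue of $E$ but for the augmented soup $\Lc^\theta_O\cup\{\Xi_{a_i}^y\}_{i\geq 1}$.

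Positivity of the measure in \eqref{Eq cond measure multiply} is then immediate: since $D_k\subset D_{k-1}$, one has $G_{D_k}(y,y)\leq G_{D_{k-1}}(y,y)$ on $D_k$, so the exponential factor is $\leq 1$; and since adding loops can only merge clusters and hence can only create (never destroy) a surrounding cluster, $E^y\subset E$, giving $\P(E^y)/\P(E)\leq 1$. Finiteness follows since both $\mathbf{1}_{D_k^*}\,\d y$ and the conditional expectation have finite total mass. The measure being positive and supported on the compact set $\overline{D_k^*}$, weak convergence to $0$ is equivalent to convergence of its total mass to $0$. Bounding the integrand by
\[
\bigl(1-e^{\gamma^2\pi(G_{D_k}(y,y)-G_{D_{k-1}}(y,y))}\bigr) + \frac{\P(E)-\P(E^y)}{\P(E)},
\]
and using that the Green's function difference $G_{D_k}-G_{D_{k-1}}$ is uniformly bounded on $D_k^*$, the first contribution is $O(\gamma^2)$ uniformly in $y$, which after division by $Z_\gamma=\gamma^{2(1-\theta)+o(1)}$ gives $O(\gamma^{2\theta+o(1)})\to 0$.

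The heart of the proof is therefore the estimate $\int_{D_k^*}(\P(E)-\P(E^y))\,\d y = o(Z_\gamma)$ as $\gamma\to 0$. On the event $E\setminus E^y$, the new surrounding cluster must contain the thick loops $\{\Xi^y_{a_i}\}$, so that the cluster of $y$ in the augmented soup surrounds an inner hole of $O$. Decomposing according to the overall diameter $L$ of the concatenation of the $\Xi^y_{a_i}$ (whose law is controlled via Lemma \ref{L:diameter_thick}), this forces the existence of loops of $\Lc^\theta_O$ that, starting within distance $L$ of $y$, assemble after bridging by $\{\Xi^y_{a_i}\}$ into a closed loop around an inner hole. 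The main obstacle is that a naive one-arm estimate via Theorem \ref{T:large_crossing} only yields $\P(E)-\P(E^y)=O(Z_\gamma)$, which is insufficient after division. The crucial improvement comes from the observation that \emph{surrounding} the hole (as opposed to merely \emph{reaching} it) forces essentially two disjoint arms from a neighbourhood of $y$ to the inner hole: I would use FKG to decorrelate the two arms and apply the crossing exponent of \cite{JLQ23a} to each arm independently, producing a bound whose exponent is strictly larger than $1-\theta$, and then integrating the resulting estimate in $L$ against the density from Lemma \ref{L:diameter_thick} would deliver the required $o(Z_\gamma)$ control.
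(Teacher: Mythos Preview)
Your computation of the conditional expectation and your proof of positivity are correct, and they match the paper's first step. The proof breaks down, however, in both of the two pieces of your decomposition.

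\medskip

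\textbf{Term 1 (the Green's function term).} You assert that $G_{D_k}(y,y)-G_{D_{k-1}}(y,y)$ is uniformly bounded on $D_k^\ast$, so that $(1-e^{\gamma^2\pi(G_{D_k}-G_{D_{k-1}})})=O(\gamma^2)$ uniformly. This is false. The boundary $\partial D_k^\ast$ contains $\partial_o\Cc_k$, the outer boundary of the $k$-th cluster, which lies in the \emph{interior} of $D_{k-1}$. As $y\to\partial_o\Cc_k$ from within $D_k^\ast$, one has $G_{D_{k-1}}(y,y)-G_{D_k}(y,y)\to+\infty$ (the conformal radius in $D_k$ shrinks to zero while the one in $D_{k-1}$ stays bounded away from zero), so the integrand tends to $1$, not to $O(\gamma^2)$. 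This is exactly the difficulty the paper isolates and then resolves: it bounds $e^{2\pi(G_{D_k}-G_{D_{k-1}})}$ from below by a constant times $d(y,\partial D_k^\ast)/d(y,\partial D_{k-1})$ (a Koebe-type estimate from \cite{ALS1}), and then uses that $\partial_o\Cc_k$ is an SLE$_{\kappa(\theta)}$-type curve of dimension strictly less than $2$ to show that
\[
\frac{1}{Z_\gamma}\int_{D_k^\ast}\bigl(1-d(y,\partial_o\Cc_k)^{\gamma^2/2}\bigr)\,dy\longrightarrow 0.
\]
Without this boundary analysis your Term~1 does not vanish.

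\medskip

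\textbf{Term 2 (the surrounding-probability term).} Here your instinct that a one-arm bound is insufficient is misleading, and the two-arm heuristic you propose is both incomplete and unnecessary. The paper treats this term by recognising it (with the extra density $f^{\gamma^2/2}=e^{\gamma^2\pi(G_{D_k}-G_{D_{k-1}})}$ that arises from the exact decomposition $1-ab=(1-a)+a(1-b)$ rather than your inequality $1-ab\le(1-a)+(1-b)$) as the \emph{centred} chaos fluctuation
\[
-\frac{1}{2Z_\gamma\,\P(E_k)}\,\E\!\left[\left((\Mc_{D_k^\ast,\gamma},f^{\gamma^2/2})-2\!\int_{D_k^\ast}\! f^{\gamma^2/2}\right)\mathbf{1}_{E_k}\,\Big|\,D_k^\ast\right],
\]
applies Cauchy--Schwarz, and uses the explicit second moment formula \eqref{E:second_moment_measure1} for $\Mc_\gamma$ to get an $O(\gamma^2/Z_\gamma)$ bound. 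No geometric arm analysis of $E\setminus E^y$ is needed. The same Cauchy--Schwarz argument would also dispose of your (slightly larger) Term~2, so the real work in this lemma is entirely in the boundary blow-up you overlooked in Term~1.
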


\begin{proof}
We start by proving that \eqref{Eq cond measure multiply} is a positive measure.
Let $\Lc^{\theta}_{D_{k}^{\ast}}$ be the collection of Brownian loops,
distributed conditionally on $D_{k}$,
as the Brownian loop soup in $D_{k}^{\ast}$ with intensity $\theta$
(that is without any topological conditioning).
Let $\Mc_{D_{k}^{\ast},\gamma}$ denote the multiplicative chaos
of $\Lc^{\theta}_{D_{k}^{\ast}}$, normalized by
$\mathbb{E}[\Mc_{D_{k}^{\ast},\gamma}\vert D_{k}]
= \indic{x\in D_{k}^{\ast}} 2 dx$.
Let $E_{k}$ be the event that no cluster of loops in $\Lc^{\theta}_{D_{k}^{\ast}}$
surrounds an inner hole of $D_{k}^{\ast}$,
which is an event with positive probability.
Then
\begin{displaymath}
\mathbb{E}[\mathbf{1}_{x\in D_{k}^{\ast}}
e^{\gamma^{2}\pi (G_{D}(x,x) - G_{D_{k-1}}(x,x))}\Mc_{\gamma}^{+}(dx))\vert \Fc_{k}]
=\dfrac{1}{2}
\mathbb{E}\Big[
e^{\gamma^{2}\pi(G_{D_{k}}(x,x)-G_{D_{k-1}}(x,x))}
\Mc_{D_{k}^{\ast},\gamma}(dx)
\Big\vert D_{k}^{\ast}, E_{k}\Big].
\end{displaymath}
Note that the Dirichlet Green's function on $D_{k}^{\ast}$
is the restriction of $G_{D_{k}}$ to
$D_{k}^{\ast}\times D_{k}^{\ast}$.
The event $E_{k}$ is a decreasing event.
Therefore, by FKG inequality for Poisson point processes, a.s.
\begin{multline*}
\mathbb{E}[\mathbf{1}_{x\in D_{k}^{\ast}}
e^{\gamma^{2}\pi (G_{D}(x,x) - G_{D_{k-1}}(x,x))}\Mc_{\gamma}^{+}(dx))\vert \Fc_{k}]
\\
\leq 
\dfrac{1}{2}
\mathbb{E}\Big[
e^{\gamma^{2}\pi(G_{D_{k}}(x,x)-G_{D_{k-1}}(x,x))}
\Mc_{D_{k}^{\ast},\gamma}(dx)
\Big\vert D_{k}^{\ast}\Big]
=
\indic{x\in D_{k}^{\ast}}
e^{\gamma^{2}\pi(G_{D_{k}}(x,x)-G_{D_{k-1}}(x,x))} dx .
\end{multline*}
Thus, for every $\gamma\in (0,2)$, 
\eqref{Eq cond measure multiply} is a positive finite measure.

The rest of the proof is then dedicated to showing that \eqref{Eq cond measure multiply} goes to zero as $\gamma \to 0$. Define $f(x) = e^{2\pi(G_{D_{k}}(x,x)-G_{D_{k-1}}(x,x))}$.
We can write the total mass of \eqref{Eq cond measure multiply} as
\begin{equation}
\label{Eq cond measure multiply1}
\dfrac{1}{Z_{\gamma}} \int_{D_{k}^{\ast}}
(1-e^{\gamma^{2}\pi(G_{D_{k}}(x,x)-G_{D_{k-1}}(x,x))}) dx
-
\dfrac{1}{2 Z_{\gamma} \mathbb{P}(E_{k})}
\mathbb{E}\Big[\Big((\Mc_{D_{k}^{\ast},\gamma},f^{\gamma^{2}/2})
-2\int_{D_{k}^{\ast}}f^{\gamma^{2}/2}\Big) \mathbf{1}_{E_k}
\Big\vert D_{k}^{\ast}\Big].
\end{equation}
We are going to show separately that each of these two terms vanishes as $\gamma \to 0$. By Cauchy--Schwarz and then by \eqref{E:second_moment_measure1}, the absolute value of the second term is at most
\begin{align*}
& \dfrac{1}{2 Z_{\gamma} \mathbb{P}(E_{k})^{1/2}}
\mathbb{E}\Big[\Big((\Mc_{D_{k}^{\ast},\gamma},f^{\gamma^{2}/2})
-2\int_{D_{k}^{\ast}}f^{\gamma^{2}/2}\Big)^{2}
\Big\vert D_{k}^{\ast}\Big]^{1/2} \\
& = \frac{2}{Z_\gamma \P(E_k)^{1/2}} \left( \int_{D_{k}^{\ast}\times D_{k}^{\ast}}
\big((\gamma^{2}\pi G_{D_{k}^{\ast}}(x,y))^{1-\theta}
\Gamma(\theta)I_{\theta -1}(2\gamma^{2}\pi G_{D_{k}^{\ast}}(x,y))
-1\big) f(x)^{\gamma^{2}/2} f(y)^{\gamma^{2}/2}\,dx\,dy \right)^{1/2} \\
& = O(\gamma^2 / Z_\gamma).
\end{align*}
The last estimate can be obtained by expanding $I_{\theta -1}$ (see \eqref{E:BesselI}).
Hence the second term of \eqref{Eq cond measure multiply1} vanishes as $\gamma \to 0$. 

%
%Since $f\leq 1$ and $u^{1-\theta}\Gamma(\theta)I_{\theta -1}(2u) - 1\geq 0$
%for $u>0$, we get
%\begin{displaymath}
%\mathbb{E}\Big[\Big((\Mc_{D_{k}^{\ast},\gamma},f^{\gamma^{2}/2})
%-2\int_{D_{k}^{\ast}}f^{\gamma^{2}/2}\Big)^{2}
%\Big\vert D_{k}^{\ast}\Big]
%\leq
%\mathbb{E}\Big[\Big((\Mc_{D_{k}^{\ast},\gamma},1)
%-2\int_{D_{k}^{\ast}} dx\Big)^{2}
%\Big\vert D_{k}^{\ast}\Big].
%\end{displaymath}
%
%Since we have Lemma \ref{Lem int to 0 dim},
%to conclude it is enough to show that a.s.,
%\begin{displaymath}
%\mathbb{E}\Big[\Big((\Mc_{D_{k}^{\ast},\gamma},1)
%-2\int_{D_{k}^{\ast}} dx\Big)^{2}
%\Big\vert D_{k}^{\ast}\Big]
%= o(Z_{\gamma}^{2}).
%\end{displaymath}
%But actually,
%\begin{displaymath}
%\mathbb{E}\Big[\Big((\Mc_{D_{k}^{\ast},\gamma},1)
%-2\int_{D_{k}^{\ast}} dx\Big)^{2}
%\Big\vert D_{k}^{\ast}\Big]
%\sim 
%4\pi^{2}\gamma^{4}
%\dfrac{1}{\theta}
%\int_{D_{k}^{\ast}\times D_{k}^{\ast}} G_{D_{k}}(x,y)^{2}\,dx\,dy,
%\end{displaymath}
%which concludes.
%\titus{Titus: Maybe explain why. Do not use Wick expansion, keep things elementary.
%The expansion of $I_{\theta -1}$ near $0$ is not enough because of the blow-up of the two-point correlation function near the diagonal.}

To conclude the proof, it remains to show that
\begin{equation}
\label{Eq int to 0 dim}
\dfrac{1}{Z_{\gamma}} \int_{D_{k}^{\ast}}
(1-e^{\gamma^{2}\pi(G_{D_{k}}(x,x)-G_{D_{k-1}}(x,x))}) dx
\to 0
\quad \quad \text{as} \quad \gamma \to 0.
\end{equation}
For fixed $x\in D_{k}^{\ast}$, the density
$
1-e^{\gamma^{2}\pi(G_{D_{k}}(x,x)-G_{D_{k-1}}(x,x))})
$
is of order $\gamma^{2}$.
Since $Z_{\gamma}\gg \gamma^{2}$, the density converges to $0$,
actually uniformly on compact subsets of $D_{k}^{\ast}$.
However, this is not enough for our purpose and we need to control what happens
near the boundary $\partial D_{k}^{\ast}$.

The boundary $\partial D_{k}^{\ast}$ has exactly $k+1$ connected components,
one of them being $\partial D$, and the $k$ others being the
outer boundaries of $\Cc_{1},\dots,\Cc_{k}$.
$\partial D$ and $\Cc_1, \dots, \Cc_{k-1}$ lie outside both $D_{k-1}$ and $D_k$. Thus $G_{D_{k-1}}(x,x)-G_{D_{k}}(x,x)$ stays bounded in a neighborhood of
$\partial D \cup \Cc_{1} \cup \dots \cup \Cc_{k-1}$. Therefore,
the uniform convergence of the density to $0$
is also true on compact neighborhoods of these $k$ boundary components of
$\partial D_{k}^{\ast}$.
On the other hand, $\Cc_k$ is contained in $D_{k-1}$ and lies outside of $D_k$. We will therefore have to treat this case separately.

Let us denote by $\partial_{o}\Cc_{k}$ the outer boundary of $\Cc_k$.
Near $\partial_{o}\Cc_{k}$, $G_{D_{k-1}}(x,x)- G_{D_{k}}(x,x)$ explodes to $+\infty$.
But there is a constant $c(D_{k-1})\in (0,1)$, depending on $D_{k-1}$, 
such that for every $x\in D_{k}^{\ast}$,
\begin{displaymath}
e^{2\pi(G_{D_{k}}(x,x)-G_{D_{k-1}}(x,x))}\geq 
c(D_{k-1})\dfrac{d(x,\partial D_{k}^{\ast})}{d(x,\partial D_{k-1})}.
\end{displaymath}
This is in a way a generalization of Koebe quarter theorem to domains with holes; see \cite[Lemma 5.13]{ALS1}.
Fix $\varepsilon_{0}\in 
(0, d(\partial_{o}\Cc_{k},\partial D_{k}^{\ast}\setminus\partial_{o}\Cc_{k}))$.
Then for every $x\in D_{k}^{\ast}$ such that
$d(x,\partial_{o}\Cc_{k})<1\wedge(\varepsilon_{0}/2)$, we have
\begin{multline*}
\dfrac{1}{Z_{\gamma}}(1-e^{\gamma^{2}\pi(G_{D_{k}}(x,x)-G_{D_{k-1}}(x,x))})
\leq 
\dfrac{1}{Z_{\gamma}}\Big(
1
-c(D_{k-1})^{\gamma^{2}/2}
\dfrac{d(x,\partial D_{k}^{\ast})^{\gamma^{2}/2}}{d(x,\partial D_{k-1})^{\gamma^{2}/2}}
\Big)
\\\leq
\dfrac{1}{Z_{\gamma}}\Big(
1
-
c(D_{k-1})^{\gamma^{2}/2}
\dfrac{d(x,\partial_{o}\Cc_{k})^{\gamma^{2}/2}}{\diam(D)^{\gamma^{2}/2}}
\Big)
=
\dfrac{1}{Z_{\gamma}}(
1
-
d(x,\partial_{o}\Cc_{k})^{\gamma^{2}/2}
)
+
\dfrac{1}{Z_{\gamma}}\Big(
1- \dfrac{c(D_{k-1})^{\gamma^{2}/2}}{\diam(D)^{\gamma^{2}/2}}
\Big)
d(x,\partial_{o}\Cc_{k})^{\gamma^{2}/2}
\end{multline*}
Since
\begin{displaymath}
\lim_{\gamma\to 0} \dfrac{1}{Z_{\gamma}}\Big(
1- \dfrac{c(D_{k-1})^{\gamma^{2}/2}}{\diam(D)^{\gamma^{2}/2}}
\Big) = 0,
\end{displaymath}
we only need to consider the term
$(1-d(x,\partial_{o}\Cc_{k})^{\gamma^{2}/2})/Z_{\gamma}$.
Further,
\begin{equation}
\label{Eq Mink bound 1}
\dfrac{1}{Z_{\gamma}}\int_{d(x,\partial_{o}\Cc_{k})<1}
(1-d(x,\partial_{o}\Cc_{k})^{\gamma^{2}/2})\,dx
=
\dfrac{\gamma^{2}}{2Z_{\gamma}}\int_{0}^{1} dr\,
r^{\gamma^{2}/2-1}
\int_{d(x,\partial_{o}\Cc_{k})<r}dx
\end{equation}
Since $\partial_{o}\Cc_{k}$ is an SLE$_{\kappa(\theta)}$ type curve,
it is of dimension $1+\kappa(\theta)/8\leq 3/2<2$ \cite{BeffaradimSLE}.
In particular, for $\beta\in (0,1/2)$, there is a.s. a (random) constant
$c(\beta)>0$ such that for every $r\in (0,1)$,
\begin{displaymath}
\int_{d(x,\partial_{o}\Cc_{k})<r}dx \leq c(\beta)r^{\beta}.
\end{displaymath}
Thus, \eqref{Eq Mink bound 1} is bounded by
\begin{displaymath}
c(\beta)\dfrac{\gamma^{2}}{2Z_{\gamma}}\int_{0}^{1}
r^{\gamma^{2}/2 + \beta -1}\, dr,
\end{displaymath}
which tends to $0$ as $\gamma$ to $0$. This concludes.
\end{proof}

%
%Next we identify the distributions $\nu_{k}$ as random non-negative finite measures that agree with the measures $\mu_k$ from Theorem \ref{T:measure_cluster}.
%
%\begin{theorem}
%\label{Thm Minkowski}
%Let $k\geq 1$. 
%Then for every $f\in\Cc^{\infty}_{c}(\C)$,
%\begin{equation}
%\label{Eq nu k limit}
%(\nu_{k},f) = \lim_{\gamma \to 0}
%\dfrac{1}{Z_{\gamma}}\int_{d(x,\Cc_{k})<1}
%f(x) (1-d(x,\Cc_{k})^{\gamma^{2}/2}) \,dx
%,
%\end{equation}
%where the limit is in probability.
%In particular, $\nu_{k}$ is a.s. a non-negative finite measure
%supported on $\overline{\Cc}_{k}$ that agrees with $\mu_k$.
%\end{theorem}

We now have all the ingredients to prove Theorem \ref{T:h_and_minkowski}.

\begin{proof}[Proof of Theorem \ref{T:h_and_minkowski}]
The only thing that remains to be proved is that the random generalised function 
$\nu_{k}$ agrees a.s. with the measure $\mu_k$ from Theorem \ref{T:measure_cluster}. Let $f\in\Cc^{\infty}_{c}(\C)$. It is enough to show that
\begin{equation}
\label{Eq nu k limit}
(\nu_{k},f) = \lim_{\gamma \to 0}
\dfrac{1}{Z_{\gamma}}\int_{d(x,\Cc_{k})<1}
f(x) (1-d(x,\Cc_{k})^{\gamma^{2}/2}) \,dx
,
\end{equation}
in probability.
We have that
\begin{displaymath}
(\nu_{k},f) = \sigma_{k}\mathbb{E}[(h_{\theta, k-1},f)\vert\Fc_{k}],
\end{displaymath}
where by convention, $h_{\theta, 0} = h_{\theta}$.
Since $\nu_{k}$ and $\sigma_{k}$ are independent conditionally on $\Fc_{k}$,
we get that
\begin{equation}
\label{Eq nu k f}
(\nu_{k},f) = -\mathbb{E}[(h_{\theta, k-1},f)\vert\Fc_{k},\{\sigma_{k} = -1\}]
\end{equation}
Since
\begin{displaymath}
(h_{\theta, k-1},f) = 
\lim_{\gamma\to 0}
\dfrac{1}{Z_{\gamma}}
\int_{D_{k-1}}
f(x)(
e^{\gamma^{2}\pi (G_{D}(x,x) - G_{D_{k-1}}(x,x))}
\Mc^{+}_{\gamma}(dx) 
- dx),
\end{displaymath}
where the convergence is a.s. in $L^2$ conditionally on
$\Fc_{k-1}$
(that is to say $\lim_{\gamma\to 0}\mathbb{E}[(\cdot)^{2}\vert \Fc_{k-1}] =0$ a.s.).
Therefore, in \eqref{Eq nu k f} one can interchange the conditional expectation and the limit in $\gamma$ and get
\begin{displaymath}
(\nu_{k},f) = 
\lim_{\gamma\to 0}
\dfrac{1}{Z_{\gamma}}
\Big(\int_{D_{k-1}} f(x)\,dx
-\mathbb{E}\Big[
\int_{D_{k-1}}
 f(x)e^{\gamma^{2}\pi (G_{D}(x,x) - G_{D_{k-1}}(x,x))}
\Mc^{+}_{\gamma}(dx) 
\Big\vert
\Fc_{k},\{\sigma_{k} = -1\}
\Big]
\Big),
\end{displaymath}
where the convergence is a.s. in $L^2$ conditionally on $\Fc_{k-1}$.
By Theorem \ref{T:large_crossing}, $\overline{\Cc}_{k}$ has a.s. zero Lebesgue measure (the expectation of the Lebesgue measure of its $r$-neighbourhood vanishes as $r \to 0$). So $\int_{D_{k-1}} f(x)\,dx = \int_{D_k} f(x)\,dx$ a.s.
Moreover,
since on the event $\{\sigma_{k} = -1\}$,
$\indic{D_{k-1}}\Mc^{+}_{\gamma} = \indic{D_{k}}\Mc^{+}_{\gamma}$,
and since $\sigma_{k}$ is independent from $\indic{D_{k}}\Mc^{+}_{\gamma}$
conditionally on $\Fc_{k} $,
we get that
\begin{displaymath}
(\nu_{k},f) = 
\lim_{\gamma\to 0}
\dfrac{1}{Z_{\gamma}}
\Big(\int_{D_{k}} f(x)\,dx
-\mathbb{E}\Big[
\int_{D_{k}}
 f(x)e^{\gamma^{2}\pi (G_{D}(x,x) - G_{D_{k-1}}(x,x))}
\Mc^{+}_{\gamma}(dx) 
\Big\vert
\Fc_{k}
\Big]
\Big).
\end{displaymath}

Now we partition $D_{k} = D_{k}^{0} \cup D_{k}^{\ast}$,
where $D_{k}^{0}$ is the union of the simply connected components of
$D_{k}$ (there are infinitely many of them) and $D_{k}^{\ast}$
is the union of the multiply connected components of
$D_{k}$ (there are finitely many of them).
Then Lemma \ref{Lem remove multiply} ensures that
\begin{displaymath}
(\nu_{k},f) = 
\lim_{\gamma\to 0}
\dfrac{1}{Z_{\gamma}}
\Big(\int_{D_{k}^{0}} f(x)\,dx
-\mathbb{E}\Big[
\int_{D_{k}^{0}}
 f(x)e^{\gamma^{2}\pi (G_{D}(x,x) - G_{D_{k-1}}(x,x))}
\Mc^{+}_{\gamma}(dx) 
\Big\vert
\Fc_{k}
\Big]
\Big).
\end{displaymath}
Since conditionally on $\Fc_{k}$,
the loops in $D_{k}^{0}$ form a Poisson point process without conditioning,
we get that
\begin{displaymath}
\mathbb{E}\Big[
\int_{D_{k}^{0}}
 f(x)e^{\gamma^{2}\pi (G_{D}(x,x) - G_{D_{k-1}}(x,x))}
\Mc^{+}_{\gamma}(dx) 
\Big\vert
\Fc_{k}
\Big]
=
\int_{D_{k}^{0}}
f(x)e^{\gamma^{2}\pi (G_{D_{k}}(x,x) - G_{D_{k-1}}(x,x))}\,dx.
\end{displaymath}
The function $G_{D_{k-1}}(x,x)-G_{D_{k}}(x,x)$ is bounded for $x$
away from $\Cc_{k}$ and explodes when $x$ approaches $\partial\Cc_{k}$.
Further, \cite[Lemma 5.13]{ALS1} provides a comparison with the Euclidean
distance:
there is a constant $c(D_{k-1})\in (0,1)$
depending on $D_{k-1}$
such that for every $x\in D_{k}^{0}$,
\begin{displaymath}
c(D_{k-1})\dfrac{d(x,\partial D_{k}^{0})}{d(x,\partial D_{k-1})}
\leq e^{2\pi (G_{D_{k}}(x,x) - G_{D_{k-1}}(x,x))}
\leq 4 \dfrac{d(x,\partial D_{k}^{0})}{d(x,\partial D_{k-1})}.
\end{displaymath}
Then using these bounds,
similarly to the proof of \eqref{Eq int to 0 dim},
we get that
\begin{displaymath}
(\nu_{k},f) = 
\lim_{\gamma\to 0}
\dfrac{1}{Z_{\gamma}}
\int_{\substack{x\in D_{k}^{0}\\d(x,\Cc_{k})<1}}f(x)(1-d(x,\Cc_{k})^{\gamma^{2}/2})\,dx .
\end{displaymath}
Since the dimension of the outer boundary of
$\Cc_{k}$ is strictly smaller than $2$,
we get that (see the proof of \eqref{Eq int to 0 dim})
\begin{displaymath}
(\nu_{k},f) = 
\lim_{\gamma\to 0}
\dfrac{1}{Z_{\gamma}}
\int_{\substack{d(x,\Cc_{k})<1}}f(x)(1-d(x,\Cc_{k})^{\gamma^{2}/2})\,dx ,
\end{displaymath}
which concludes.
\end{proof}

\section{Excursions from the boundary of a cluster}\label{sec:exc_bdy}
%!TEX root = ../main_field.tex
In this section, we will obtain estimates about the excursions induced by the Brownian loops that touch the outer boundary of a loop soup cluster.

Let $\Lc$ be a loop soup with intensity $\theta\in (0,1/2]$ in the unit disk $\D$, and let $\Cc_0$ be the outermost cluster in $\Lc$ that encircles the origin. Let $O$ be the domain encircled by the outermost boundary of $\Cc_0$.
We will denote by $\Lc_O$ the subset of $\Lc$ consisting in loops included in $\overline{O}$.
The set of loops in $\Lc_O$ that touch $\partial O$ induce a collection $\Ec(\Lc_O)$ of excursions in $O$ with endpoints in $\partial O$. If one maps $O$ onto $\Hb$ by some conformal map $f$, then we say that $f(\Lc_O)$ is a \emph{loop soup in $\Hb$ with wired boundary conditions}. It was shown in \cite{QianWerner19Clusters} that the law of $f(\Lc_O)$ is conformally invariant, and does not depend on the choice of $f$. Moreover, the following properties were proved in \cite{QianWerner19Clusters, qian2018}.
\begin{theorem}[\cite{QianWerner19Clusters, qian2018}]
\label{thm:qw19}
The law of $f(\Lc_O)$ is independent from $\partial O$ and the distribution of $\Lc$ outside of $O$.
Inside $f(\Lc_O)$ (a loop soup in $\Hb$ at intensity  $\theta\in (0,1/2]$ with wired boundary conditions), the set of loops that touch $\Rb$ are independent from the set of loops that stay in $\Hb$. The latter set is distributed as a loop soup in $\Hb$ (with free boundary conditions). Let $\Ec$ be the collection of excursions induced by the loops in  $f(\Lc_O)$ that touch $\Rb$. Then at $\theta=1/2$, $\Ec$ is distributed as a Poisson point process of excursions of intensity $1/4$.
\end{theorem}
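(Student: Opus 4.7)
The plan is to prove the three statements in turn, relying on the Poissonian structure of the Brownian loop soup and, for the last statement, on the coupling between the critical loop soup and the Gaussian free field.

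For the independence and conformal invariance asserted in the first two statements, the central idea is to show that the outermost boundary $\partial O$ of $\Cc_0$ is a \emph{local set} of $\Lc$ in the following sense: it can be discovered by a growing exploration which, at each step, only reveals loops that touch the currently explored region. Concretely, I would start from a small annular neighbourhood of $\partial \D$ and iteratively absorb into the explored region every loop of $\Lc$ touching it, until the process stabilises. At termination, the unexplored region is exactly $O$, and every loop of $\Lc$ that touches $\partial O$ has been revealed, while the loops strictly contained in $O$ have not been looked at. Using the Poissonian structure of $\Lc$, together with the restriction property of the Brownian loop measure $\loopmeasure_D\rvert_{\{\wp\subset O\}}=\loopmeasure_O$, one concludes that conditionally on $O$ and on the loops touching $\partial O$, the collection of loops of $\Lc_O$ staying in the open set $O$ is a Brownian loop soup in $O$ at intensity $\theta$, independent of the loops of $\Lc$ outside of $\overline O$ and of $\partial O$ itself.

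To identify the law of $f(\Lc_O)$ and show that it does not depend on $\partial O$, I would then use the conformal invariance (up to time change) of $\loopmeasure_D$ under $f\colon O\to \Hb$: both the independent interior loop soup and the collection of boundary-touching loops transform equivariantly. The conditional law of the boundary-touching component given $\partial O$ is then expressible purely in terms of an intrinsic object in $\Hb$, the ``loop soup with wired boundary conditions'', and the previous step shows that it is independent of the configuration of $\Lc$ outside $O$. The somewhat delicate point here is that ``conditioning on $\partial O$'' is conditioning on a random set of zero probability, so one must justify the existence of a regular conditional law; I would do this by a stability/approximation argument in which $\partial O$ is replaced by the boundary of a slightly enlarged neighbourhood discovered at a positive scale, and then the limit is taken.

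For the third statement, I would rely on the coupling between $\Lc_\D^{1/2}$ and the GFF: by Miller--Sheffield, $\partial O$ is a CLE$_4$ loop which forms a level line of the GFF with height gap $2\lambda$, so conditionally on $\partial O$, the GFF inside $O$ is an independent GFF with boundary value $\pm 2\lambda$. Via Le Jan's isomorphism, the occupation field of $\Lc_O$ agrees with the Wick square of this GFF; the loops of $\Lc_O$ touching $\partial O$ therefore correspond, at the level of the occupation field, to a signed renormalisation of excursions from the boundary level, which can be disintegrated as a Poissonian collection. The explicit intensity $1/4$ then falls out from a direct computation involving the normalisation constant $2\lambda$ of the height gap and the $\tfrac{1}{2}$ prefactor in Le Jan's isomorphism. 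The main obstacle here is to make the identification of boundary-touching loops with GFF excursions rigorous, since on the continuum the occupation field is distributional near $\partial O$; this is what is carried out in \cite{QianWerner19Clusters,qian2018} using the metric-graph approximation \cite{LupuIsomorphism,lupu2018convergence}, where all the objects are well defined pointwise and the intensity $1/4$ can be computed directly from the discrete Poisson kernel before passing to the limit.
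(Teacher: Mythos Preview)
The paper does not prove this theorem; it is quoted as a black box from \cite{QianWerner19Clusters,qian2018}. So there is no ``paper's own proof'' to compare against, and what you have written is really a sketch of how the cited references proceed.

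Your outline is broadly faithful to those references, but one step is imprecise enough to be wrong as written. You say: start from a small annular neighbourhood of $\partial\D$, iteratively absorb every loop touching the explored region, and at termination the unexplored region is exactly $O$. This is not true. That exploration discovers the union of all clusters touching $\partial\D$; its complement is a countable collection of simply connected holes, none of which is $O$. The domain $O$ is bounded by the outer boundary of the outermost cluster $\Cc_0$ that \emph{surrounds the origin}, and $\Cc_0$ typically does not touch $\partial\D$. To reach $\partial O$ one must continue the Sheffield--Werner CLE exploration further inward, discovering successive CLE$_\kappa$ loops until one hits the loop surrounding the origin; this is what \cite{QianWerner19Clusters} actually does, and it is this iterated exploration (not a single absorption step) that yields the Markov property and the conformal invariance of the conditional law inside $O$.

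Your sketch for the $\theta=1/2$ statement is closer to the mark: the identification of $\Ec$ with a Poisson point process of intensity $1/4$ in \cite{qian2018} does go through the metric-graph coupling of \cite{LupuIsomorphism,lupu2018convergence} and the Miller--Sheffield level-line picture, and the constant $1/4$ indeed comes out of a boundary Poisson-kernel computation on the cable graph before passing to the limit.
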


As one knows precisely the law of $\Ec$ for $\theta=1/2$, much less is known for the subcritical intensities $\theta\in(0,1/2)$. However, \cite{MR3901648} proved that the excursions attached to the boundary of a cluster satisfy a conformal restriction property that we describe now (we give some background on conformal restriction measures in Section \ref{SS:conformal_restriction}).
One can explore the outer boundaries of the outermost clusters in $\Lc$ progressively, instead of discovering an entire cluster at once, using the CLE exploration \cite{SheffieldWernerCLE}. This exploration process allows one to define a \emph{loop soup wired on a portion of its boundary} (see \cite{MR3901648} for a precise definition). Such a configuration was shown to be conformally invariant (with two marked points on the boundary). As a consequence, it is sufficient to consider a loop soup in the upper half plane $\Hb$ which is wired on $\Rb^-$ and free on $\Rb^+$.

\begin{theorem}[\cite{MR3901648}]\label{thm:restriction}
Let $\Lc_{\Rb^-}$ be a loop soup in $\Hb$ with intensity $\theta\in(0,1/2]$ which is wired on $\Rb^-$ and free on $\Rb^+$. The set of loops in $\Lc_{\Rb^-}$ that touch $\Rb^-$  are independent from the set of loops that stay in $\Hb$. The latter set is distributed as a loop soup in $\Hb$ (with free boundary conditions). Let $\Ec_{\Rb^-}$ be the collection of excursions induced by the loops in  $\Lc_{\Rb^-}$ that touch $\Rb$. Then the filling of $\Ec_{\Rb^-}$ (namely the complement in $\C$ of the unbounded connected component of $\C\setminus K$, where $K$ is the closure of the union of all the excursions in $\Ec_{\Rb^-}$) satisfies one-sided restriction in $\Hb$ with exponent $\alpha(\theta):=(6-\kappa)/(2\kappa)$, where $\kappa\in(8/3,4]$ is related to $\theta$ via \eqref{E:kappa}.
\end{theorem}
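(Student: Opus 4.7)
The plan is to prove the three assertions of the theorem in order: the independence of boundary-touching and interior loops, the free-loop-soup distribution of the latter, and the one-sided restriction of the filling $K$ of $\Ec_{\Rb^-}$.

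For the first two assertions, I would rely on the Sheffield--Werner construction of the CLE$_\kappa$ exploration \cite{SheffieldWernerCLE}, which allows one to build $\Lc_{\Rb^-}$ by iteratively discovering the outermost CLE$_\kappa$ loops along a discretization of $\Rb^-$ and including the entire loop-soup cluster that each such CLE loop bounds. The loops of $\Lc$ not absorbed by this exploration are, by the Poissonian structure of the loop soup, a conditionally independent Poisson process; by the conformal restriction property of the Brownian loop measure $\loopm$ (see \cite{LSW03Restriction}), this Poisson process has intensity $\theta\,\loopm_\Hb$ and is therefore a free Brownian loop soup in $\Hb$ of intensity $\theta$.

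For the restriction property of the filling, fix a compact $A \subset \Hb$ at positive distance from $\Rb^-$ with $\Hb \setminus A$ simply connected, and let $\Phi_A \colon \Hb \setminus A \to \Hb$ be the conformal map satisfying $\Phi_A(0)=0$, $\Phi_A(\infty)=\infty$, and $\Phi_A(z)/z \to 1$ as $z\to\infty$; in particular, $\Phi_A(\Rb^-) = \Rb^-$. Let $\widetilde\Lc$ denote the wired loop soup on $\Rb^-$ in $\Hb \setminus A$ (free on the rest of the boundary). By the conformal invariance of the wired loop soup (Theorem \ref{thm:qw19}, applied to domains with a marked boundary arc as in \cite{MR3901648}), $\Phi_A(\widetilde\Lc)$ has the same law as $\Lc_{\Rb^-}$ in $\Hb$. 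The central claim is that the conditional law of $\Lc_{\Rb^-}$ given $\{K \cap A = \varnothing\}$ coincides with the law of $\widetilde\Lc$ viewed in $\Hb$: since all loops of $\Lc_{\Rb^-}$ contributing to $K$ must lie in $\Hb \setminus A$ on this event, a careful decomposition into free loops (where the conformal restriction of $\loopm$ to $\Hb\setminus A$ is used) and boundary-touching loops (where the conformal identification above is used) identifies the two laws. Therefore $\Prob{K \cap A = \varnothing} = C_A$ for some constant $C_A$ depending only on the conformal equivalence class of $(\Hb \setminus A, 0, \infty, \Rb^-)$. Standard multiplicativity $C_{A_1 \cup A_2} = C_{A_1}\, C_{\Phi_{A_1}(A_2)}$ (for disjoint hulls $A_1, A_2$ at positive distance from $\Rb^-$) together with the chain rule $\Phi_{A_1 \cup A_2}'(0) = \Phi_{A_1}'(0)\,\Phi_{\Phi_{A_1}(A_2)}'(0)$ and continuity in $A$ then forces $C_A = \Phi_A'(0)^\alpha$ for some $\alpha \geq 0$, establishing that $K$ is a one-sided restriction hull with \emph{some} exponent.

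The main obstacle is identifying this exponent as $\alpha = \alpha(\theta) = (6-\kappa)/(2\kappa)$. The natural strategy is to couple the outer boundary of the filling (on the $\Rb^+$ side) with a chordal SLE$_\kappa$-type process from $0$ to $\infty$ via the Sheffield--Werner exploration, and to match its boundary-hitting exponent with the restriction exponent. Alternatively, $\alpha$ can be extracted from the asymptotics of $\Prob{K \cap A = \varnothing}$ as $A$ shrinks to a boundary point of $\Rb^+$, matched against the loop-soup crossing probabilities of \cite{JLQ23a} to pin down the value $(6-\kappa)/(2\kappa)$.
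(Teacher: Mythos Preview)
This theorem is not proved in the paper; it is quoted from \cite{MR3901648} and used as an input. There is therefore no ``paper's own proof'' to compare against. That said, your sketch has real gaps that would need to be filled before it could stand as an independent proof.

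The main problem is your ``central claim'' that the conditional law of $\Lc_{\Rb^-}$ given $\{K\cap A=\varnothing\}$ coincides with the law of the wired loop soup $\widetilde\Lc$ in $\Hb\setminus A$. This is false as stated: $K$ is the filling of the boundary-touching excursions $\Ec_{\Rb^-}$, and by the independence you established in step one, conditioning on $\{K\cap A=\varnothing\}$ places no constraint whatsoever on the free loops. Hence under the conditioning the free loops remain a loop soup in all of $\Hb$, whereas in $\widetilde\Lc$ they form a loop soup in $\Hb\setminus A$. Your invocation of ``conformal restriction of $\loopm$ to $\Hb\setminus A$'' for the free loops is therefore misplaced. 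What you actually need is the statement about the excursions alone: that $\Ec_{\Rb^-}$ conditioned on $\{K\cap A=\varnothing\}$, pushed forward by $\Phi_A$, has the original law of $\Ec_{\Rb^-}$. But you do not prove this; you assert it via the conformal invariance of the wired loop soup ``as in \cite{MR3901648}'', which is circular since that conformal invariance (for domains with a marked boundary arc and two marked points) is precisely part of the content of the theorem you are citing. The construction of $\Lc_{\Rb^-}$ goes through the Sheffield--Werner exploration, so the excursions are not a simple Poisson process to which one can apply restriction of $\loopm$ directly; the actual argument in \cite{MR3901648} proceeds instead by identifying the outer boundary of $K$ with an explicit $\mathrm{SLE}_\kappa$-type curve via the exploration and reading off the restriction property (and the exponent) from there.

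Your exponent identification is also incomplete. Matching against the crossing estimates of \cite{JLQ23a} would at best give $\alpha$ up to $o(1)$, not exactly; and invoking \cite{JLQ23a} to prove a result that \cite{JLQ23a} itself relies on is anachronistic. The correct route is the one just mentioned: once the right boundary of $K$ is identified as a specific $\mathrm{SLE}_\kappa$-type path, the value $(6-\kappa)/(2\kappa)$ follows from the Lawler--Schramm--Werner restriction formulas.
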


Throughout this section, we use the following notations:

\begin{notation}\label{N:exc}
For any interval $I\subset \Rb$, denote by $\Lc_I$ a loop soup in $\Hb$ wired on $I$ and free elsewhere. We will use $\Ec_I$ to denote the boundary-touching excursions induced by $\Lc_I$.

For $z\in \Hb$, let $f_z(u)= (u-z)/(u-\bar z)$ be the conformal map from $\Hb$ onto $\D$ that sends $z$ to $0$. For all $r\in(0,1)$, let $U(z,r):=f_z^{-1}(D(0,r))$.
\end{notation}

Our main result on the boundary excursions in a cluster with any intensity $\theta\in(0,1/2]$ is encapsulated in Proposition \ref{prop:be_main} below. This proposition will be crucial to define and study the field $h_\theta$ with wired boundary condition.

\begin{proposition}\label{prop:be_main}
Let $\Lc_\Rb$ be a loop soup in $\Hb$ with intensity $\theta\in (0,1/2]$ with wired boundary conditions. Let $\Ec_\Rb$ be the Brownian excursions in $\Hb$ attached on $\Rb$, induced by $\Lc_\Rb$.
For all $\eta>0$, there exists $C>0$, such that the following holds. For all $z\in\Hb$ and $r\in(0,1-\eta)$, we have
\begin{align}\label{propeq:one-point}
\Pb[\Ec_\Rb \cap U(z,r)\not=\emptyset] \le C |\log r|^{-1}.
\end{align}
For all $z_1, z_2\in\Hb$ and $r_1, r_2 \in (0,1-\eta)$, we have
\begin{align}\label{propeq:two-point}
\Pb[\Ec_\Rb \cap U(z_1,r_1)\not=\emptyset, \Ec_\Rb \cap U(z_2, r_2)\not=\emptyset]
\le C (1+ G_\Hb(z_1,z_2)) |\log r_1|^{-1} |\log r_2|^{-1}.
\end{align}
\end{proposition}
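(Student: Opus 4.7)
The plan is to reduce both estimates to the half-wired configuration $\Lc_{\Rb^-}$ of Theorem \ref{thm:restriction} and then combine the one-sided conformal restriction property with a Taylor expansion of the restriction derivatives. Every excursion in $\Ec_\Rb$ is attached to some portion of $\Rb = \Rb^- \cup \{0\} \cup \Rb^+$, so by a union bound together with the reflective symmetry $u \mapsto -\bar u$ (which preserves the law of $\Lc_\Rb$), it suffices to control the excursions attached to $\Rb^-$. Using the CLE exploration construction of $\Lc_\Rb$ from \cite{QianWerner19Clusters, MR3901648} together with a monotonicity/stochastic-comparison argument, one can bound the hitting probabilities of such excursions in $\Lc_\Rb$ by a constant multiple of the hitting probabilities for $\Ec_{\Rb^-}$ in $\Lc_{\Rb^-}$.

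Once reduced, Theorem \ref{thm:restriction} yields, for any compact set $A \subset \Hb$ at positive distance from $\Rb^-$,
\begin{equation*}
\Pb[\Ec_{\Rb^-} \cap A = \emptyset] = \Pb[\mathrm{filling}(\Ec_{\Rb^-}) \cap A = \emptyset] = \Psi_A^{\alpha(\theta)},
\end{equation*}
where $\Psi_A \in (0,1]$ is a suitably normalised conformal derivative of the uniformising map from $\Hb \setminus \mathrm{hull}(A)$ to $\Hb$. For the one-point estimate I would take $A = U(z,r)$; the small-region asymptotics of the conformal derivative, reflecting the logarithmic $2$D capacity of a small disc, give $1 - \Psi_{U(z,r)} \le C/|\log r|$ uniformly in $z \in \Hb$ and $r < 1-\eta$. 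Expanding $\Psi^{\alpha(\theta)}$ to first order around $1$ then yields \eqref{propeq:one-point}.

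For the two-point estimate, apply inclusion--exclusion:
\begin{equation*}
\Pb[\Ec_{\Rb^-} \cap U_1 \neq \emptyset,\, \Ec_{\Rb^-} \cap U_2 \neq \emptyset] = 1 - \Psi_{U_1}^{\alpha} - \Psi_{U_2}^{\alpha} + \Psi_{U_1 \cup U_2}^{\alpha}.
\end{equation*}
Writing $\Psi_{U_i} = 1-\delta_i$ and $\Psi_{U_1 \cup U_2} = 1-\delta_{12}$ with $\delta_i = O(|\log r_i|^{-1})$, a second-order expansion reduces the right-hand side to $\alpha(\delta_1 + \delta_2 - \delta_{12}) + \alpha(\alpha-1)\delta_1 \delta_2 + O(\delta^3)$. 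The key input is the identity
\begin{equation*}
0 \le \delta_1 + \delta_2 - \delta_{12} \le C\, \delta_1 \delta_2\, G_\Hb(z_1,z_2) + o(\delta_1 \delta_2),
\end{equation*}
which I would prove via the composition formula for the uniformising conformal maps: removing $U_1$ and uniformising moves $U_2$ to a displaced set $\tilde U_2 \subset \Hb$, and the difference $\delta_{12} - \delta_2$ is controlled by the size of the derivative of this uniformising map near $z_2$, which is in turn proportional to $\delta_1 \cdot G_\Hb(z_1, z_2)$ via a Poisson-kernel expansion. Combining this estimate with the quadratic term $\alpha(\alpha-1)\delta_1\delta_2$ produces the factor $(1 + G_\Hb(z_1,z_2))$ in \eqref{propeq:two-point}. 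This mirrors the direct Brownian-excursion-measure computation available at $\theta = 1/2$, where $\mu_\Hb^{\mathrm{exc}}(\{\wp \cap U_1 \ne \emptyset,\, \wp \cap U_2 \ne \emptyset\}) \asymp |\log r_1|^{-1}|\log r_2|^{-1} G_\Hb(z_1,z_2)$.

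The hardest steps are expected to be \emph{(i)} the stochastic comparison reduction from the fully-wired to the half-wired configuration, whose justification requires unpacking the exploration procedure underlying \cite{QianWerner19Clusters, MR3901648}, and \emph{(ii)} the uniform control of the conformal-mapping expansion of $\delta_{12}$ in terms of $\delta_1$, $\delta_2$, and $G_\Hb(z_1,z_2)$ as $U_1, U_2$ vary in position and size. The regime $G_\Hb(z_1, z_2) \gtrsim \min(|\log r_1|, |\log r_2|)$, where the Taylor expansion breaks down, is handled separately by the trivial bound $\Pb[E_1 \cap E_2] \le \min(\Pb[E_1], \Pb[E_2]) \le C \min(|\log r_1|^{-1}, |\log r_2|^{-1})$, which is majorised by $C(1 + G_\Hb)|\log r_1|^{-1}|\log r_2|^{-1}$ in that regime.
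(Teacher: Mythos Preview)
Your approach has a fundamental gap. The one-sided restriction formula of Theorem~\ref{thm:restriction} applies only to hulls $A$ attached to the free part of the boundary, so that $\Hb\setminus A$ is simply connected and the uniformising map $f_A:\Hb\setminus A\to\Hb$ fixing the marked points exists. The sets $U(z,r)$ are \emph{interior} discs; $\Hb\setminus U(z,r)$ is doubly connected, and the quantity $\Psi_{U(z,r)}$ you write down is not defined by any restriction identity. Even ignoring this, the estimate you would need is false: the filling of $\Ec_{\Rb^-}$ is the region enclosed by an $\mathrm{SLE}_\kappa$-type curve from $0$ to $\infty$, which contains any fixed interior point (say $z=i$) with positive probability independent of $r$. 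Hence $\Pb[\mathrm{filling}(\Ec_{\Rb^-})\cap U(z,r)\neq\emptyset]$ does not decay as $r\to0$, and bounding the excursions by their filling is useless here. The equality $\Pb[\Ec_{\Rb^-}\cap A=\emptyset]=\Pb[\mathrm{filling}(\Ec_{\Rb^-})\cap A=\emptyset]$ you assert is also wrong for interior $A$: the individual excursions can all miss $U(z,r)$ while the filling swallows it.

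This is exactly why the paper's argument is necessarily indirect. Restriction is applied only to boundary half-discs $D(r,\eps)\cap\Hb$, where it is legitimate, to extract exact identities for the \emph{endpoint} statistics of the excursions (Propositions~\ref{prop:integral} and~\ref{prop:sum_mm2}). These are then transferred from the half-wired $\Ec_{\Rb^-}$ to the fully-wired $\Ec_\Rb$ by exploring the outer boundary of the wired cluster and controlling the distortion of the uniformising map (Lemmas~\ref{lem:1point_inf}--\ref{lem:estimate_interval}); this step is genuinely delicate and is not a union bound over $\Rb^-\cup\Rb^+$. Finally, since conditionally on their endpoints the excursions are independent Brownian excursions (Lemma~\ref{lem:p_loc}), one bounds $\Pb[\Ec_\Rb\cap U(z,r)\neq\emptyset]$ by summing the single-excursion hitting probabilities from Lemmas~\ref{lem:B_1point} and~\ref{lem:b_2p}: the factor $|\log r|^{-1}$ comes from a single Brownian excursion hitting a small hyperbolic disc, and the sum over excursions is controlled by the endpoint moments. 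Your observation that the near-diagonal regime $G_\Hb(z_1,z_2)\gtrsim\min(|\log r_1|,|\log r_2|)$ can be handled by the trivial bound does match the paper, but the main regime requires this excursion-by-excursion decomposition rather than a filling estimate.
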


In the special case of $\theta=1/2$,
Proposition~\ref{prop:be_main} can be obtained directly using the Poisson point process description of $\Ec_\Rb$.
We would like to emphasise that, contrary to the restriction property stated in Theorem \ref{thm:restriction}, Proposition \ref{prop:be_main} is not concerned with the outer boundary of $\Ec_\R$ and requires a deeper understanding of these excursions. 
Our approach to prove Proposition \ref{prop:be_main} is to extract information from the restriction property (Theorem~\ref{thm:restriction}), and transform this property about the outer boundary of $\Ec_\Rb$ to a property about the endpoints of the excursions in $\Ec_\Rb$.
More concretely, we deduce the following identities which are of independent interest.

%Theorem~\ref{thm:restriction} only describes the outer boundary of $\Ec_{\Rb^-}$, but we need to know more about $\Ec_{\Rb}$ (wired everywhere) as a collection of excursions. More precisely, in order to define the field $h_\theta$ in $\Hb$ with wired boundary conditions, we need to get the following uniform upper bound on the probability that an excursion in $\Ec_{\Rb}$ gets close to one or two interior points.

%However, for $\theta\in(0,1/2)$, we do not know the law of $\Ec_\Rb$. The crux of the proof is to extract information from the restriction property (Theorem~\ref{thm:restriction}), and transform this property about the outer boundary of  $\Ec_\Rb$ to a property about the endpoints of the excursions in $\Ec_\Rb$. 

\begin{proposition}\label{prop:integral}
For each excursion $e$, let $a(e)$ and $b(e)$ be the left and right endpoints of $e$. For all $r>0$, we have
\begin{align*}
\Eb\left[\sum_{e\in\Ec_{\Rb^-}} \frac{(b(e) -a(e))^2 r^2}{(r-b(e))^2(r-a(e))^2} \right]=\alpha(\theta)
\end{align*} 
where $\alpha(\theta)$ is the restriction exponent from Theorem \ref{thm:restriction}.
\end{proposition}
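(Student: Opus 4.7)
The plan is to apply Theorem~\ref{thm:restriction} to the infinitesimal test hull $A_\epsilon := D(r,\epsilon) \cap \Hb$ (a half-disk of radius $\epsilon$ centred at the boundary point $r>0$), and then to match the $\epsilon^2$-coefficients in two asymptotic expansions as $\epsilon \to 0$. The hydrodynamic conformal map is $\phi_\epsilon(u) = u + \epsilon^2/(u-r)$, which sends $\Hb \setminus A_\epsilon$ onto $\Hb$; after the translation by $\epsilon^2/r$ that is needed to fix the origin, its derivative at $0$ equals $1 - \epsilon^2/r^2 + O(\epsilon^4)$. Since $A_\epsilon$ is attached to $\Rb^+$ and bounded, the filling $K$ of $\Ec_{\Rb^-}$ avoids $A_\epsilon$ if and only if no excursion in $\Ec_{\Rb^-}$ hits $A_\epsilon$; the one-sided restriction property of Theorem~\ref{thm:restriction} would then yield
\[
\Pb\bigl[K\cap A_\epsilon = \emptyset\bigr] = \bigl(1 - \epsilon^2/r^2\bigr)^{\alpha(\theta)} = 1 - \alpha(\theta)\,\frac{\epsilon^2}{r^2} + O(\epsilon^4).
\]

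Next I would compute the probability that a single excursion hits $A_\epsilon$, given its endpoints. Every $e \in \Ec_{\Rb^-}$ is the sub-trajectory of a Brownian loop between two consecutive hits of $\Rb^-$, so conditionally on $a := a(e)$ and $b := b(e)$ it is distributed as a Brownian excursion from $a$ to $b$ in $\Hb$. Using the fact that the boundary Poisson kernel $H_\Hb(a,b) = (\pi(b-a)^2)^{-1}$ transforms as a weight-$1$ density at each endpoint, and that $\mu_\Hb^{a,b}\vert_{\text{avoids }A} = \mu_{\Hb \setminus A}^{a,b}$, one obtains the standard identity
\[
\Pb_\Hb^{a,b}[e \cap A_\epsilon = \emptyset] = \frac{H_{\Hb\setminus A_\epsilon}(a,b)}{H_\Hb(a,b)} = \frac{|\phi_\epsilon'(a)|\,|\phi_\epsilon'(b)|\,(b-a)^2}{(\phi_\epsilon(b)-\phi_\epsilon(a))^2}.
\]
Expanding the right-hand side in $\epsilon$ using $\phi_\epsilon'(a)=1-\epsilon^2/(r-a)^2$ and $\phi_\epsilon(b)-\phi_\epsilon(a)=(b-a)\bigl[1 - \epsilon^2/((r-a)(r-b))\bigr]$, the cross terms combine into a perfect square and I obtain the key formula
\[
\Pb_\Hb^{a,b}[e \cap A_\epsilon \neq \emptyset] = \epsilon^2\,\frac{(b-a)^2}{(r-a)^2(r-b)^2} + O(\epsilon^4),
\]
uniformly in $(a,b)$ lying in compact subsets of $\Rb^-\times\Rb^-$.

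Finally, setting $N_\epsilon = \sum_{e\in\Ec_{\Rb^-}}\mathbf{1}\{e\cap A_\epsilon\neq\emptyset\}$, Bonferroni gives $\Pb[N_\epsilon \geq 1] = \Eb[N_\epsilon] - O\bigl(\Eb[N_\epsilon(N_\epsilon-1)]\bigr)$. I would control the factorial moment by applying the restriction formula to the union of two disjoint half-disks at $r_1 \neq r_2$: a simple inclusion-exclusion then shows that the joint probability of both being hit is $O(\epsilon^4)$, from which $\Eb[N_\epsilon(N_\epsilon-1)] = O(\epsilon^4)$. Taking expectations through the conditioning on endpoints and combining with the first two steps,
\[
\alpha(\theta)\,\frac{\epsilon^2}{r^2} + O(\epsilon^4) = \Eb[N_\epsilon] = \epsilon^2\,\Eb\!\left[\sum_{e\in\Ec_{\Rb^-}} \frac{(b(e)-a(e))^2}{(r-a(e))^2(r-b(e))^2}\right] + O(\epsilon^4);
\]
dividing by $\epsilon^2/r^2$ and letting $\epsilon\to 0$ yields the claimed identity.

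The main obstacle is to justify the limit-interchange and the $O(\epsilon^4)$ remainder in the last display, particularly because the endpoint intensity of $\Ec_{\Rb^-}$ is not known explicitly for $\theta<1/2$. This requires an explicit, $(a,b)$-integrable upper bound on the remainder in the Taylor expansion of $\Pb_\Hb^{a,b}[e\cap A_\epsilon\neq\emptyset]$, together with an a~priori tail estimate on the endpoint intensity obtained from the restriction property itself by comparing first-moment bounds against tests at varying scales; a posteriori integrability of the limiting summand then follows from the very identity being proved after a standard truncation argument.
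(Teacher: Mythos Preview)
Your approach is correct and essentially the same as the paper's: both compute $\Pb[\Ec_{\Rb^-}\cap D(r,\eps)\neq\emptyset]$ in two ways (via the restriction exponent, and via summing single-excursion hits using the $\alpha=1$ Poisson-kernel identity), then match the $\eps^2$ coefficients.

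The one place the paper is cleaner is precisely the obstacle you flag. Rather than bounding the full factorial moment $\Eb[N_\eps(N_\eps-1)]$ (which, as you note, would require some a~priori control on the endpoint intensity), the paper truncates \emph{first}: it keeps only the $M$ excursions with largest $p_r(e)$, so that the inclusion--exclusion error is trivially at most $M^2 r^{-4}\eps^4$. Since $\Pb[\Ec^M\cap D(r,\eps)\neq\emptyset]\le\Pb[\Ec_{\Rb^-}\cap D(r,\eps)\neq\emptyset]=\alpha(\theta)\eps^2/r^2+O(\eps^4)$, this yields $\Eb[\sum_{e\in\Ec^M}p_r(e)]\le\alpha(\theta)/r^2$ for every $M$, and monotone convergence gives the a~priori finiteness of the full sum for free. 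Equality is then recovered by letting $M\to\infty$ on both sides. Your two-disk argument at $r_1\neq r_2$ is not needed (and would in any case give control of $\Eb[\sum p_{r_1}(e_1)p_{r_2}(e_2)]$ rather than the diagonal quantity you want).
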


A similar but more complicated ``second-moment'' result is also stated in Proposition~\ref{prop:sum_mm2}. Note that Propositions~\ref{prop:integral} and~\ref{prop:sum_mm2} are exact identities, not only bounds. Nevertheless, a complete description of the laws of $\Ec_{\Rb^-}$ and $\Ec_{\Rb}$ remains an open question.

\begin{remark}
In Section~\ref{subsec:bexr-}, we prove Propositions~\ref{prop:integral} and~\ref{prop:sum_mm2} using only two properties of the set $\Ec_{\Rb^-}$: it satisfies conformal restriction (Theroem~\ref{thm:restriction}) and is a locally finite point process of Brownian excursions (Lemma~\ref{lem:p_loc}). Therefore, Propositions~\ref{prop:integral} and~\ref{prop:sum_mm2} also hold for any locally finite point process $\Ec$ of Brownian excursions in $\Hb$ with endpoints on $\Rb^-$, such that the filling of $\Ec$ satisfies one-sided conformal restriction. Then in Sections~\ref{subsec:wired} and~\ref{subsec:proof}, we prove Proposition~\ref{prop:be_main} using the additional property that we can obtain $\Ec_\Rb$ by exploring the outer boundary of $\Ec_{\Rb^-}$ in a Markovian way.
\end{remark}

\subsection{Properties and estimates about restriction measures}\label{SS:conformal_restriction}
We will recall some properties of chordal restriction measures introduced in \cite{MR1992830}, and deduce some estimates which will be useful later. 

Fix a simply connected domain $D$ which is not the whole plane. Fix two points $a, b\in \partial D$. Let $\Omega$ be the collection of all simply connected compact sets $K\subset \overline D$, such that $K\cap \partial D=\{a, b\}$. Let $\Qc$ be the collection of all compact sets $A\subset\overline D$ such that $D\setminus A$ is simply connected and $a, b \not\in A$. For all $A\in \Qc$, let $f_A$ be a conformal map from $D\setminus A$ onto $D$ that leaves $a, b$ fixed.
A probability law on $K\in \Omega$ is said to satisfy chordal conformal restriction if
\begin{enumerate}
\item The law of $K$ is invariant under conformal maps from $D$ to itself that fixes $a, b$.
\item For any $A\in \Qc$, conditionally on $K \cap A=\emptyset$, $f_A(K)$ is distributed as $K$.
\end{enumerate}
It was shown in \cite{MR1992830} that the family of chordal restriction measures is characterized by one parameter $\alpha\ge 5/8$. A chordal restriction measure in $D$ with parameter $\alpha$ satisfies
\begin{align}\label{eq:restriction1}
\Pb(K\cap A=\emptyset) =(f_A'(a)f_A'(b))^\alpha.
\end{align}
%Note that the first property in the definition also allows us to define a chordal restriction measure in any simply connected domain $D$ with two boundary marked points $a,b$ by taking the image of $K$ by a conformal map from $\D$ to $D$ that sends $-i, i$ to $a,b$.
The simplest example of chordal restriction measure is provided by:

\begin{proposition}[\cite{MR1992830}, Proposition 4.1]\label{P:Brownian_restriction}
Let $D$ be a simply connected domain different from $\C$. Let $a, b \in \partial D$ be two distinct boundary points. Then the law of the filling of a Brownian excursion from $a$ to $b$ is the chordal restriction measure in $D$ from $a$ to $b$ with exponent $\alpha=1$.
\end{proposition}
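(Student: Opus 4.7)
The plan is to reduce to the canonical case $(D,a,b)=(\Hb,0,\infty)$ via conformal invariance of planar Brownian motion (up to time change) and of the class of chordal restriction measures, and then verify the restriction axiom and identify the exponent. First I would realize the Brownian excursion from $0$ to $\infty$ in $\Hb$ as the weak limit, as $y\to 0^{+}$ and then $R\to\infty$, of the law of a Brownian motion started from $iy$ and conditioned to exit the half-disk $\Hb\cap D(0,R)$ through the upper semicircle; equivalently, as the Doob $h$-process associated with the positive harmonic function $h(z)=\Im z$ on $\Hb$. Conformal invariance of the law of the range, and hence of its filling $K$, is then immediate from conformal invariance of two-dimensional Brownian motion up to time change, which gives the first axiom in the definition of chordal restriction.

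The heart of the proof is the restriction axiom together with the identification $\alpha=1$. Fix $A\in\Qc$ and, working in $(\Hb,0,\infty)$, let $\psi_{A}\colon\Hb\setminus A\to\Hb$ be the unique conformal map with $\psi_{A}(0)=0$, $\psi_{A}(\infty)=\infty$ and $\psi_{A}(z)=z+o(1)$ as $z\to\infty$. In this normalization $f_{A}'(a)f_{A}'(b)=\psi_{A}'(0)$, so the target becomes
\[
\Pb\bigl(K\cap A=\emptyset\bigr)=\psi_{A}'(0).
\]
To establish this I would compare two quantities for a Brownian motion $B$ started at $iy$: the probability $p_{R}(y)$ that $B$ exits $\Hb\cap D(0,R)$ through the upper semicircle, and the probability $\tilde p_{R}(y)$ that it does so while avoiding $A$. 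A direct computation of the harmonic measure in the upper half-disk (or the Cauchy exit distribution on $\Rb$) gives $p_{R}(y)\sim 4y/(\pi R)$ as $y\to 0$ with $R$ fixed. For $\tilde p_{R}(y)$, the conformal map $\psi_{A}$ transforms $B$ (killed on $\partial A$) into a Brownian motion started at $\psi_{A}(iy)$ in the image domain; since $\psi_{A}(z)=z+o(1)$ at infinity, the image of the upper semicircle of $D(0,R)$ converges, after rescaling by $1/R$, to a genuine upper semicircle, so that $\tilde p_{R}(y)\sim 4\,\Im\psi_{A}(iy)/(\pi R)\sim 4y\,\psi_{A}'(0)/(\pi R)$ in the iterated limit. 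Taking the ratio and identifying the conditional law with that of the excursion as $y\to 0$ and $R\to\infty$ yields $\Pb(B\cap A=\emptyset)=\psi_{A}'(0)$.

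It remains to upgrade $\{B\cap A=\emptyset\}$ to $\{K\cap A=\emptyset\}$. Since $\Hb\setminus A$ is simply connected and $A$ is compact with $0,\infty\notin A$, the set $A$ necessarily touches $\Rb$. A continuous curve in $\overline{\Hb}$ from $0$ to $\infty$ that stays in $\Hb$ except at its endpoints cannot separate a subset of $\Rb$ from $\infty$ without crossing $\Rb$ a second time; hence almost surely $\{B\cap A=\emptyset\}=\{K\cap A=\emptyset\}$ for the filling $K$ of $B$. This verifies the restriction axiom with exponent $\alpha=1$, and by the classification of chordal restriction measures identifies the law of $K$ as the chordal restriction measure in $D$ from $a$ to $b$ with exponent one.

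The main obstacle will be justifying the iterated limit $y\to 0^{+}$ then $R\to\infty$: uniformly in small $y$ and large $R$ one must control the linearization of $\psi_{A}$ at $0$ and the deviation of $\psi_{A}(\partial D(0,R)\cap\Hb)$ from an actual upper semicircle, and turn these into uniform control on the ratio $\tilde p_{R}(y)/p_{R}(y)$. These are standard Koebe distortion and hydrodynamic-normalization estimates, combined with Harnack bounds for harmonic measure in the half-disk.
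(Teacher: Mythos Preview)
The paper does not give its own proof of this proposition: it is quoted verbatim as \cite[Proposition 4.1]{MR1992830} and used as a black box. Your sketch is essentially the original argument of Lawler--Schramm--Werner, carried out in the hydrodynamic normalization $(\Hb,0,\infty)$: realize the excursion as a limit of Brownian motion from $iy$ conditioned to reach the far semicircle, compute the conditional probability of avoiding $A$ by transporting with $\psi_{A}$, and read off $\psi_{A}'(0)$ from the ratio of linearized harmonic measures. The passage from $\{B\cap A=\emptyset\}$ to $\{K\cap A=\emptyset\}$ via the fact that $A$ touches $\Rb$ and the excursion touches $\Rb$ only at $0$ is correct as stated. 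The only substantive analytic point you flag yourself, namely the uniform control needed to justify the iterated limit $y\to 0$, $R\to\infty$, is handled by Koebe distortion at $0$ and the hydrodynamic expansion $\psi_{A}(z)=z+O(1)$ at $\infty$; there is nothing hidden there. So your proposal is correct and matches the standard proof in the cited reference.
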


In the following, we obtain an estimate about a chordal restriction measure in $\Hb$.
\begin{lemma}\label{lem:est_be}
Let $a, b, r\in \Rb$ be three distinct boundary points. Let $K$ be a chordal restriction measure in $\Hb$ between $a$ and $b$ with exponent $\alpha$. We have
\begin{align*}
\Pb[K \cap D(r,\eps)\not=\emptyset]=\alpha \frac{(a-b)^2}{(r-a)^2(r-b)^2} \eps^2 + O(1) \eps^3,
\end{align*}
where the $O(1)$ term is uniformly bounded for all $\eps\le 1$ by some function of $a,b,r$.
In particular, if $K$ is a Brownian excursion in $\Hb$ between $a$ and $b$, then $\Pb[K \cap D(r,\eps)\not=\emptyset]$ is given by the above formula with $\alpha=1$.
\end{lemma}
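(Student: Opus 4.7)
The plan is to apply the restriction formula \eqref{eq:restriction1} to the closed half-disc $A := \overline{D(r,\eps)} \cap \overline{\Hb}$. For $\eps$ small enough this set lies in $\Qc$, and since $K$ meets $\partial\Hb$ only at $\{a,b\}$ with $r \notin \{a,b\}$, we have $\{K \cap D(r,\eps) \neq \emptyset\} = \{K \cap A \neq \emptyset\}$, so
\[
\Pb[K \cap D(r,\eps) \neq \emptyset] = 1 - (f_A'(a) f_A'(b))^{\alpha}.
\]
The proof then reduces to a Taylor expansion of $f_A'(a) f_A'(b)$ to order $\eps^2$.

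I would realise $f_A$ as a composition $f_A = M \circ g \circ h$, where $h(z) = z-r$ translates $A$ to the half-disc of radius $\eps$ at the origin, $g(w) = w + \eps^2/w$ is the standard Joukowski-type map sending $\Hb \setminus \overline{D(0,\eps)}$ conformally onto $\Hb$ (collapsing the semicircle onto $[-2\eps, 2\eps]$), and $M$ is any Möbius self-map of $\Hb$ normalising so that $M(\tilde a) = a$ and $M(\tilde b) = b$, with $\tilde a := (a-r) + \eps^2/(a-r)$ and $\tilde b := (b-r) + \eps^2/(b-r)$ denoting the images of $a$ and $b$ under $g \circ h$. The chain rule then gives
\[
f_A'(a)\, f_A'(b) = M'(\tilde a)\, M'(\tilde b)\cdot \left(1 - \frac{\eps^2}{(a-r)^2}\right)\left(1 - \frac{\eps^2}{(b-r)^2}\right).
\]

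The key algebraic identity, valid for every Möbius self-map of $\Hb$ sending $u \mapsto a$ and $v \mapsto b$, is
\[
M'(u)\, M'(v) = \left(\frac{a-b}{u-v}\right)^{2},
\]
proved by a direct computation with $M(w) = (\alpha w + \beta)/(\gamma w + \delta)$ using the cross-ratio relation $a - b = (\alpha\delta - \beta\gamma)(u-v)/((\gamma u + \delta)(\gamma v + \delta))$. In particular this identity shows that the product is independent of the one-parameter freedom in the choice of $M$. Applying it with $(u,v)=(\tilde a,\tilde b)$ and using
\[
\tilde a - \tilde b = (a-b)\left(1 - \frac{\eps^2}{(a-r)(b-r)}\right)
\]
yields $M'(\tilde a)\, M'(\tilde b) = 1 + 2\eps^2/((a-r)(b-r)) + O(\eps^4)$. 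Multiplying out and using the algebraic identity $(a-r)^{-2} + (b-r)^{-2} - 2/((a-r)(b-r)) = (a-b)^2/((a-r)^2(b-r)^2)$ gives
\[
f_A'(a)\, f_A'(b) = 1 - \frac{(a-b)^2}{(a-r)^2(b-r)^2}\,\eps^2 + O(\eps^4),
\]
so that $(f_A'(a)\, f_A'(b))^{\alpha} = 1 - \alpha\, (a-b)^2/((a-r)^2(b-r)^2)\,\eps^2 + O(\eps^4)$, which is the claimed expansion (in fact with an even stronger $O(\eps^4)$ remainder). The last statement about Brownian excursions follows immediately from Proposition \ref{P:Brownian_restriction}. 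The only mild obstacle is checking the Möbius identity, which is precisely what removes the ambiguity in the non-unique choice of $M$.
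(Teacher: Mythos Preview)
Your proof is correct and follows essentially the same approach as the paper: both use the Joukowski-type map $z\mapsto z+\eps^2/(z-r)$ followed by a normalising M\"obius map, and then expand in $\eps$. The only cosmetic difference is that the paper takes the normalising map to be affine (a scaling by $(b-a)/(f_{r,\eps}(b)-f_{r,\eps}(a))$ followed by a translation), for which your identity $M'(u)M'(v)=((a-b)/(u-v))^2$ is immediate, whereas you prove it for a general M\"obius self-map; your observation that the error is actually $O(\eps^4)$ is sharper than the paper's stated $O(\eps^3)$.
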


\begin{proof}
Let $\eps_0 = \eps_0(a,b,r) >0$ be small enough so that $\overline{D(r,\eps_0)}$ does not contain $a$ or $b$. Since the result is clear if $\eps \in [\eps_0,1)$, we can assume that $\eps < \eps_0$.
We define the following conformal map $f_{r,\eps}$ from $\Hb\setminus D(r,\eps)$ onto $\Hb$, and compute its derivative
\begin{align}\label{eq:f}
f_{r,\eps}(z)= z+\frac{\eps^2}{z-r},\quad f_{r,\eps}'(z)= 1- \frac{\eps^2}{(z-r)^2}.
\end{align} 
If we rescale $f_{r,\eps}$ by $(b-a) (f_{r,\eps}(b)-f_{r,\eps}(a))^{-1}$ and then do a proper translation, then we obtain a conformal map  from $\Hb\setminus D(r,\eps)$ onto $\Hb$ that fixes $a$ and $b$. 
By \eqref{eq:restriction1}, we have
\begin{align*}
\Pb[K \cap D(r,\eps)=\emptyset]=\left(f_{r,\eps}'(a)f_{r,\eps}'(b) \frac{(b-a)^2}{(f_{r,\eps}(b)-f_{r,\eps}(a))^2}\right)^\alpha 
=\left(1- \frac{(a-b)^2}{(r-a)^2(r-b)^2} \eps^2 + O(1) \eps^3 \right)^\alpha,
\end{align*}
where the $O(1)$ term is bounded by some constant $C(a,b,r)$ for all $\eps\le \eps_0$.
%Note that
%\begin{align}\label{eq:taylor}
%f'(a)=1- \frac{\eps^2}{(a-r)^2}, \quad f'(b)=1- \frac{\eps^2}{(z-r)^2}, \quad \frac{b-a}{f(b)-f(a)}=1+ \frac{\eps^2}{(b-r)(a-r)} +O(\eps^4).
%\end{align}
%We can deduce that
%\begin{align*}
%\Pb[K \cap D(r,\eps)=\emptyset]=1- \alpha\frac{(a-b)^2}{(r-a)^2(r-b)^2} \eps^2 + O(\eps^4).
%\end{align*}
Since $\Pb[K \cap D(r,\eps) \not=\emptyset] =1-\Pb[B\cap D(1,\eps)=\emptyset]$, the lemma follows. The statement concerning Brownian excursion follows from Proposition \ref{P:Brownian_restriction}.
\end{proof}

Next, we need to obtain the following two-point estimate for chordal restriction measures.
\begin{lemma}\label{lem:Kr1r2}
Let $a, b, r_1, r_2 \in \Rb$ be four distinct points. Let $K$ be a chordal restriction measure in $\Hb$ between $a$ and $b$ with exponent $\alpha$. Then
\begin{equation}\label{eq:Kr1r2}
\begin{split}
\Pb[K\cap D(r_1,\eps)\not=\emptyset, K\cap D(r_2,\eps)\not=\emptyset]=\frac{\alpha(\alpha-1)(a-b)^4 \eps^4}{(r_1-a)^2 (r_2-a)^2(r_1-b)^2 (r_2-b)^2}\\
+\frac{\alpha(a-b)^2 \eps^4}{(r_1-b)^2(r_2-a)^2(r_2-r_1)^2}+\frac{\alpha(a-b)^2 \eps^4}{(r_1-a)^2(r_2-b)^2(r_2-r_1)^2}+O(1) \eps^5,
\end{split}
\end{equation}
where the $O(1)$ term is uniformly bounded for all $\eps\le 1$ by some function of $a,b, r_1, r_2$.
\end{lemma}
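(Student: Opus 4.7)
The plan is to reduce Lemma~\ref{lem:Kr1r2} to the restriction identity~\eqref{eq:restriction1} applied to the union of the two discs, and to extract the order-$\eps^4$ contribution via inclusion--exclusion. Writing $D_i = D(r_i, \eps)$, $P_{12} = \Pb[K \cap (D_1 \cup D_2) = \emptyset]$ and $P_i = \Pb[K \cap D_i = \emptyset]$, one has
\begin{equation*}
\Pb[K \cap D_1 \neq \emptyset,\, K \cap D_2 \neq \emptyset] = P_{12} - P_1 - P_2 + 1,
\end{equation*}
so the task reduces to expanding each of $P_{12}, P_1, P_2$ up to order $\eps^4$. The single-disc quantities are handled by pushing the proof of Lemma~\ref{lem:est_be} one order further; setting $p = a-r_1$, $q = b-r_1$, $s = a-r_2$, $t = b-r_2$, $\delta = r_2 - r_1$, $T_i = (a-b)^2/[(a-r_i)^2(b-r_i)^2]$ and $E_i = 1/[(a-r_i)(b-r_i)]$, a direct expansion of $(c_i^2 f_{r_i,\eps}'(a) f_{r_i,\eps}'(b))^\alpha$ gives
\begin{equation*}
P_i = 1 - \alpha \eps^2 T_i - 2\alpha \eps^4 E_i T_i + \tfrac{\alpha(\alpha-1)}{2}\,\eps^4 T_i^2 + O(\eps^6).
\end{equation*}

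For $P_{12}$, I would first build the conformal map $\phi : \Hb \setminus (D_1 \cup D_2) \to \Hb$ normalised to be tangent to the identity at infinity, and then compose it with the unique real affine self-map of $\Hb$ sending $\phi(a), \phi(b)$ back to $a, b$; \eqref{eq:restriction1} then gives $P_{12} = (c^2 \phi'(a) \phi'(b))^\alpha$ with $c = (b-a)/(\phi(b) - \phi(a))$. Since $D_1 \cup D_2$ is invariant under $\eps \mapsto -\eps$, the map $\phi$ is an even function of $\eps$; writing
\begin{equation*}
\phi(z) = z + \frac{\alpha_1}{z - r_1} + \frac{\alpha_2}{z - r_2} + \sum_{i=1,2}\frac{\gamma_i}{(z-r_i)^2} + \cdots
\end{equation*}
and matching imaginary parts on the upper semicircles $\partial D_i \cap \Hb$ order by order yields $\alpha_i = \eps^2 - \eps^4/\delta^2 + O(\eps^6)$, while $\gamma_i = O(\eps^6)$, so the higher-order poles do not affect $\phi'(a), \phi'(b)$ at the required precision.

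Inserting these expansions into $c^2 \phi'(a) \phi'(b)$ and raising to the power $\alpha$ produces $P_{12}$ up to $O(\eps^6)$. The $\eps^2$ contribution equals $-\alpha(T_1 + T_2)$, which exactly cancels the $\eps^2$ terms of $P_1$ and $P_2$ in the inclusion--exclusion, as it must since $\{K \cap D_1 \neq \emptyset\} \cap \{K \cap D_2 \neq \emptyset\}$ is an event of order $\eps^4$. At order $\eps^4$, the ``diagonal'' contributions from $(c^2\phi'(a)\phi'(b))^\alpha$ cancel against the $2\alpha E_i T_i$ terms of $P_i$, while the square term $\tfrac{\alpha(\alpha-1)}{2}[X_2^2 - T_1^2 - T_2^2] = \alpha(\alpha-1) T_1 T_2$ produces the first summand in~\eqref{eq:Kr1r2}. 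The remaining ``cross'' contributions reorganise using the identity $\delta^2 = (p - s)(q - t)$ and the factorisation
\begin{equation*}
\delta^4 - 2\delta^2(pq + st) + p^2 q^2 + s^2 t^2 = (pt + sq)^2 - 2\, pqst,
\end{equation*}
which telescopes into $(a-b)^2\,[1/(qs)^2 + 1/(pt)^2]\,/\,\delta^2$, matching the last two summands. The main technical step is thus the $O(\eps^5)$-accurate boundary matching for $\phi$; once this is established, the rest is careful but routine algebra.
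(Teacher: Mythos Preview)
Your approach is correct and genuinely different from the paper's. The paper explicitly acknowledges your route---``one can apply \eqref{eq:restriction1} to the conformal maps that map out either one or both of the balls''---but opts instead to condition on $\{K\cap D(r_1,\eps)\neq\emptyset\}$ and identify the conditional law, as $\eps\to0$, with a \emph{trichordal} restriction measure in $\Hb$ with marked points $a,b,r_1$ and exponents $(\alpha,\alpha,2)$ (Lemma~\ref{lem:cond_tri}); the two-point probability then factorises as the one-point estimate of Lemma~\ref{lem:est_be} times the probability that this trichordal sample hits $D(r_2,\eps)$, which is read off from an explicit formula (Lemma~\ref{lem:trichordal}) for trichordal avoidance probabilities in terms of an arbitrary conformal map of the complement. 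Your inclusion--exclusion reduces everything to the single quantity $P_{12}=(c^2\phi'(a)\phi'(b))^\alpha$ and pushes all the work into the perturbative expansion of the two-slit map $\phi$; the paper's route trades that analytic step for the introduction of trichordal restriction, which it needs only here but which makes the algebra shorter and the structure of the three terms in~\eqref{eq:Kr1r2} more transparent (one term per pair of marked points).

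Your sketch is internally consistent: the boundary-matching does give $\alpha_i=\eps^2-\eps^4/\delta^2+O(\eps^6)$, and the higher poles $\gamma_i$ are $O(\eps^6)$ because the $\sin(2\theta)$ mismatch on $\partial D_i$ is of order $\eps^4$ and is divided by $\eps^{-2}$ when solved for $\gamma_i$; hence they do not contribute to $\phi'(a),\phi'(b)$ at the required precision. The one point that would need to be made rigorous is that the formal Laurent ansatz really approximates the true uniformising map to the stated order---the exact map from the exterior of two discs is not rational---but this follows from real-analytic dependence of $\phi$ on $\eps^2$ (the domain depends analytically on $\eps^2$), so that boundary matching computes genuine Taylor coefficients. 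With that in hand, the final identity you need is equivalent to $(st)^2+(pq)^2-(pt)^2-(qs)^2=\delta^2(2a-r_1-r_2)(2b-r_1-r_2)$, which follows from $p-s=q-t=\delta$; this matches your factorisation.
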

There can be several ways to deduce Lemma~\ref{lem:Kr1r2}. One can apply \eqref{eq:restriction1} to the conformal maps that map out either one or both of the balls $D(r_1,\eps)$ and $D(r_2,\eps)$. However, a direct computation seems long and complicated.
We opt for a proof of Lemma~\ref{lem:Kr1r2} that relates to the trichordal restriction measures introduced in \cite{MR3827221}, which are measures on random sets in  a simply connecte domain $D$ with three marked points $a, b, c \in\partial D$. Let $\Omega_3$ be the collection of all simply connected compact set $K\subset \overline D$, such that $K\cap \partial D=\{a, b, c\}$. 
Let $\Qc_3$ be the collection of all compact sets $A\subset\overline D$ such that $D\setminus A$ is simply connected and $a, b, c\not\in A$. For all $A\in \Qc_3$, let $\wt f_A$ be the unique conformal map from $D\setminus A$ onto $D$ that leaves $a,b,c$ fixed. A probability law on $K\in \Omega$ is said to satisfy trichordal conformal restriction if for any $A\in \Qc_3$, conditionally on $K \cap A=\emptyset$, $\wt f_A(K)$ is distributed as $K$. 
It was shown in \cite{MR3827221} that the family of trichordal restriction measures is characterized by three parameter $\alpha, \beta,\gamma$. A trichordal restriction measure in $D$ with parameters $\alpha, \beta, \gamma$ is characterized by
\begin{align}
\label{E:trich}
\Pb(K\cap A=\emptyset) =\wt f_A'(a)^\alpha \wt f_A'(b)^\beta \wt f_A'(c)^\gamma.
\end{align}
The connection to trichordal restriction measures comes from the following lemma.
\begin{lemma}\label{lem:cond_tri}
Let $a, b, c\in \Rb$ be three distinct points. Suppose that $K$ is a chordal restriction measure in $\Hb$ with marked points $a,b$ and exponent $\alpha$, then the law of $K$ conditionally on $K\cap D(c, \eps) \not=\emptyset$ converges as $\eps\to 0$ to that of a trichordal restriction measure in $\Hb$ with marked points $a, b, c$ and exponents $\alpha, \alpha, 2$.
Moreover, if $a<b<c<r$ and $A=D(r, \delta)$, then there exist $\eps_0, \delta_0>0$ and a constant $C(a,b,c,r)$ such that for all $\eps\le \eps_0, \delta\le \delta_0$,
\begin{align}\label{eq:cond_tri}
\left|\Pb[K \cap A =\emptyset \mid K \cap D(c, \eps)\not=\emptyset]-\wt f_A'(a)^\alpha  \wt f_A'(b)^\alpha \wt f_A'(c)^2\right| \le C(a,b,c,r) \eps \delta^2.
\end{align}
%More precisely, for all $A \in\Qc_3$, 
%\begin{align}\label{eq:cond_tri}
%\Pb[K \cap A =\emptyset \mid K \cap D(c, \eps)\not=\emptyset]=\wt f_A'(a)^\alpha  \wt f_A'(b)^\alpha \wt f_A'(c)^2 + O(1)\eps,
%\end{align}
%where the $O(1)$ term is uniformly bounded for all $\eps\le 1/4$ by some constant $C(a,b,c, A)$. Moreover, if $a<b<c<r$ and $A=D(r, \delta)$, then
%$C(a,b,c,A)\le \delta C'(a,b,c,r)$ for all $\delta\le (r-c)/2$.
\end{lemma}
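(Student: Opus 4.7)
\textbf{Proof plan for Lemma \ref{lem:cond_tri}.}

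The plan is to compute the conditional probability by a careful expansion in $\eps$, and to identify the limit with the trichordal restriction formula \eqref{E:trich} after a Möbius correction. I would start from the identity
\[
\mathbb{P}[K \cap A = \emptyset \mid K \cap D(c,\eps) \neq \emptyset] = \frac{\mathbb{P}[K \cap A = \emptyset] - \mathbb{P}[K \cap (A \cup D(c,\eps)) = \emptyset]}{\mathbb{P}[K \cap D(c,\eps) \neq \emptyset]}.
\]
Denoting $F := f_A$, the key observation is that since $A$ and $D(c,\eps)$ are disjoint (for $\eps$ small), the map $f_{A \cup D(c,\eps)}$ factorises as $H_\eps \circ F$, where $H_\eps : \Hb \setminus F(D(c,\eps)) \to \Hb$ fixes $a,b$. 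By the chain rule and \eqref{eq:restriction1},
\[
\mathbb{P}[K \cap (A \cup D(c,\eps)) = \emptyset] = \mathbb{P}[K \cap A = \emptyset] \cdot (H_\eps'(a) H_\eps'(b))^\alpha.
\]

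Next, I would expand in $\eps$. The image $F(D(c,\eps))$ is a small set near $F(c) \in \R$, close to the disk $D(F(c), F'(c) \eps)$ up to a perturbation of order $\eps^2$ coming from the nonlinearity of $F$. Applying Lemma \ref{lem:est_be} with exponent $\alpha$ to this image — viewing $(H_\eps'(a) H_\eps'(b))^\alpha$ as a chordal restriction avoidance probability — and to $D(c,\eps)$ itself, yields
\[
(H_\eps'(a) H_\eps'(b))^\alpha = 1 - \alpha \, F'(c)^2 \, \eps^2 \, \frac{(a-b)^2}{(F(c)-a)^2 (F(c)-b)^2} + O(\eps^3), \quad \mathbb{P}[K \cap D(c,\eps) \neq \emptyset] = \alpha \, \eps^2 \, \frac{(a-b)^2}{(c-a)^2 (c-b)^2} + O(\eps^3).
\]
Substituting back and taking the ratio, the $\eps^2$ factors cancel and one obtains
\[
\mathbb{P}[K \cap A = \emptyset \mid K \cap D(c,\eps) \neq \emptyset] = F'(a)^\alpha F'(b)^\alpha \cdot F'(c)^2 \cdot \frac{(c-a)^2 (c-b)^2}{(F(c)-a)^2 (F(c)-b)^2} + O(\eps).
\]

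To match this expression with $\tilde f_A'(a)^\alpha \tilde f_A'(b)^\alpha \tilde f_A'(c)^2$, I would write $\tilde f_A = M \circ F$ where $M$ is the unique Möbius transformation of $\Hb$ fixing $a,b$ and sending $F(c)$ to $c$. Parametrising $M$ through $(M(z)-a)/(M(z)-b) = \lambda (z-a)/(z-b)$ with $\lambda = (c-a)(F(c)-b)/((c-b)(F(c)-a))$ gives $M'(a) M'(b) = 1$ and $M'(F(c)) = \lambda (c-b)^2/(F(c)-b)^2$, so the chain rule identifies the above limit exactly with $\tilde f_A'(a)^\alpha \tilde f_A'(b)^\alpha \tilde f_A'(c)^2$. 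This proves the first assertion: conditioning on $K \cap D(c,\eps) \neq \emptyset$ produces a limiting law whose avoidance probabilities match those of the trichordal restriction measure in $\Hb$ with marked points $a,b,c$ and exponents $\alpha, \alpha, 2$, which by \eqref{E:trich} characterises the measure.

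For the quantitative estimate \eqref{eq:cond_tri}, both sides of the inequality equal $1 - O(\delta^2)$ (since $A = D(r,\delta)$ is small, so $F = \mathrm{id} + O(\delta^2)$), and the point is that their difference carries an extra factor of $\delta^2$. I would obtain this by tracking the $\delta$-dependence of each correction: the perturbation of $F(D(c,\eps))$ from an exact disk has size $F''(c) \eps^2 = O(\delta^2 \eps^2)$, the error term in Lemma \ref{lem:est_be} applied to this image carries a corresponding $\delta^2$ factor, and the denominator $p_3$ is independent of $\delta$. Collecting the contributions, the $O(\eps)$ remainder from Step 3 refines to $O(\eps \delta^2)$, giving \eqref{eq:cond_tri}. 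The main technical obstacle here is bookkeeping: one must show that every $O(\eps^3)$ term from Lemma \ref{lem:est_be} actually acquires a $\delta^2$ factor (either from $F'(c) - 1$, from $F(c) - c$, or from the shape perturbation), so that after dividing by $p_3 \asymp \eps^2$ one obtains the sharp $\eps \delta^2$ rate rather than the naive $\eps$ rate.
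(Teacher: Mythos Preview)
Your argument is correct and is at heart the same computation as the paper's, but the paper's presentation is notably cleaner in one respect. Instead of decomposing via $\Pb[K\cap A=\emptyset]-\Pb[K\cap(A\cup D(c,\eps))=\emptyset]$ and then factorising the union map through $F=f_A$, the paper flips the conditioning by Bayes and applies the restriction property \emph{with the map $\tilde f_A$} (which fixes $a,b,c$ simultaneously):
\[
\Pb[K\cap A=\emptyset\mid K\cap D(c,\eps)\neq\emptyset]
=\frac{\Pb[K\cap \tilde f_A(D(c,\eps))\neq\emptyset]}{\Pb[K\cap D(c,\eps)\neq\emptyset]}\,\tilde f_A'(a)^\alpha\tilde f_A'(b)^\alpha.
\]
Since $\tilde f_A(c)=c$, the image $\tilde f_A(D(c,\eps))$ is sandwiched between $D(c,\tilde f_A'(c)\eps\pm C\eps^2)$, and Lemma~\ref{lem:est_be} gives the ratio as $\tilde f_A'(c)^2+O(\eps)$ directly. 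This eliminates your entire M\"obius step: the identity $M'(a)M'(b)=1$ and the computation of $M'(F(c))$ are absorbed into the choice of normalisation of the uniformising map. (Of course, your computation is exactly what verifies that this choice is harmless---the chordal law is invariant under M\"obius maps fixing $a,b$, so $\Pb[K\cap F(D(c,\eps))\neq\emptyset]=\Pb[K\cap\tilde f_A(D(c,\eps))\neq\emptyset]$.)

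For the quantitative bound \eqref{eq:cond_tri}, the paper's version of your bookkeeping is simply that $\tilde f_A''(c)=O(\delta^2)$ when $A=D(r,\delta)$, which controls the shape perturbation of $\tilde f_A(D(c,\eps))$ and hence the $O(\eps)$ error in the ratio by $O(\eps\,\delta^2)$. Your sketch is on the right track but would benefit from this observation: all the $\delta$-dependence you are tracking through $F'(c)-1$, $F(c)-c$, and the image distortion collapses to the single estimate $\tilde f_A''(c)=O(\delta^2)$ once you work with $\tilde f_A$ instead of $F$.
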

\begin{proof}
Here $D=\Hb$. For $A \in \Qc_3$, we have
\begin{equation}\label{eq:cond_tri2}
\begin{split}
&\Pb[K \cap A =\emptyset \mid K \cap D(c, \eps)\not=\emptyset]=\Pb[K \cap D(c, \eps)\not=\emptyset \mid K \cap A =\emptyset ] \frac{\Pb[K \cap A =\emptyset ]}{\Pb[K \cap D(c, \eps)\not=\emptyset]}\\
& = \Pb[K \cap \wt f_A(D(c, \eps)) \not=\emptyset] \frac{\Pb[K \cap A =\emptyset ]}{\Pb[K \cap D(c, \eps)\not=\emptyset]} =  \frac{\Pb[K \cap \wt f_A(D(c, \eps)) \not=\emptyset] }{\Pb[K \cap D(c, \eps)\not=\emptyset]} \wt f_A'(a)^\alpha \wt f_A'(b)^\alpha.
\end{split}
\end{equation}
Since $\wt f_A$ is analytic in a neighbourhood of $c$, there exist $\eps_0>0$ and $C=\wt f_A''(c)$, such that for all $\eps\le \eps_0$,
\begin{align*}
B\big(c, \wt f_A'(c)\eps - C \eps^2 \big) \subset \wt f_A(D(c, \eps)) \subset B\big(c, \wt f_A'(c)\eps + C \eps^2 \big).
\end{align*}
By Lemma~\ref{lem:est_be}, we can deduce
\begin{align}\label{eq:cond_tri3}
 \frac{\Pb[K \cap \wt f_A(D(c, \eps)) \not=\emptyset] }{\Pb[K \cap D(c, \eps)\not=\emptyset]}=\wt f_A'(c)^2 +O(1)\eps,
\end{align}
where $O(1)$ is uniformly bounded for $\eps\le \eps_0$ by a function of $a,b,c, A$. 
A trichordal restriction measure $\wt K$ in $\Hb$ with marked points $a, b, c$ and exponents $\alpha, \alpha, 2$ is characterized by the formula $\Pb[\wt K\cap A=\emptyset]=f_A'(a)^\alpha  f_A'(b)^\alpha f_A'(c)^2$ for all $A \in\Qc_3$. Plugging \eqref{eq:cond_tri3} back into \eqref{eq:cond_tri2} proves the weak convergence of the law of $K$ conditionally on $K \cap D(c, \eps)\not=\emptyset$.
Now, suppose $A=D(r,\delta)$, then $C=C(a,b,c,r) \delta^2$ for all $\delta\le (r-c)/2:=\delta_0$. One can compute using Lemma~\ref{lem:est_be} that the $O(1)$ term in \eqref{eq:cond_tri3} is bounded by $2C(a,b,c,r)\delta^2$ for all $\eps\le \eps_0$ and $\delta\le \delta_0$.
\end{proof}
%We remark that Lemma~\ref{lem:cond_tri} implies that the law of $K$ conditionally on $K\cap D(c, \eps) \not=\emptyset$ converges as $\eps\to 0$ to that of a trichordal restriction measure in $\Hb$ with marked points $a, b, c$ and exponents $\alpha, \alpha, 2$. Recall that trichordal restriction measures,  introduced in  \cite{MR3827221}, form a family of three parameters $\alpha, \beta,\gamma$, and are characterized by 
%\begin{align*}
%\Pb[K \cap A=\emptyset]=f_A'(a)^\alpha  f_A'(b)^\beta f_A'(c)^\gamma.
%\end{align*}

\begin{lemma}\label{lem:trichordal}
Let $K$ be a trichordal restriction measure in $\Hb$ with marked points $x_1,x_2,x_3\in \Rb$ and respective exponents $\alpha_1, \alpha_2, \alpha_3$. Let $A \in \Qc_3$ and $f :\Hb\setminus A \to \Hb$ be any conformal map. Then
\begin{align}\label{eq:tri_form}
&\Pb[K\cap A=\emptyset] =\prod_{i=1}^3 \left[f'(x_i) f'(x_{i+1}) \frac{(x_i -x_{i+1})^2}{(f(x_i) -f(x_{i+1}))^2}\right]^{\frac{\alpha_i+\alpha_{i+1}-\alpha_{i+2}}{2}},
\end{align}
where we let $x_4=x_1$, $\alpha_4=\alpha_1$, $\alpha_5=\alpha_2$.
\end{lemma}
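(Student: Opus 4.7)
My plan is to reduce the general statement to the defining identity \eqref{E:trich}, which concerns the specific normalized map $\wt f_A$ that fixes the three marked points, and then to show that the right-hand side of \eqref{eq:tri_form} is insensitive to post-composition with a Möbius self-map of $\Hb$.

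\textbf{Step 1: Matching of exponents of $\wt f_A'(x_i)$.} Write $f = \phi \circ \wt f_A$, where $\phi$ is the unique Möbius self-map of $\Hb$ taking $x_i$ to $f(x_i)$ for $i=1,2,3$. By the chain rule, $f'(x_i) = \phi'(x_i)\wt f_A'(x_i)$ and $f(x_i) = \phi(x_i)$. So the $i$-th factor on the right-hand side of \eqref{eq:tri_form} splits as
\[
\bigl[\wt f_A'(x_i)\wt f_A'(x_{i+1})\bigr]^{\frac{\alpha_i+\alpha_{i+1}-\alpha_{i+2}}{2}} \cdot \left[\phi'(x_i)\phi'(x_{i+1})\frac{(x_i-x_{i+1})^2}{(\phi(x_i)-\phi(x_{i+1}))^2}\right]^{\frac{\alpha_i+\alpha_{i+1}-\alpha_{i+2}}{2}}.
\]
In the product over $i=1,2,3$, the factor $\wt f_A'(x_i)$ appears with total exponent obtained by adding the contribution from position $i$ and position $i-1$ (cyclic mod 3):
\[
\tfrac{\alpha_i+\alpha_{i+1}-\alpha_{i+2}}{2}+\tfrac{\alpha_{i-1}+\alpha_i-\alpha_{i+1}}{2}=\alpha_i,
\]
using $\alpha_{i-1}=\alpha_{i+2}$. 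Thus the $\wt f_A$-part of the product equals exactly $\wt f_A'(x_1)^{\alpha_1}\wt f_A'(x_2)^{\alpha_2}\wt f_A'(x_3)^{\alpha_3}$, which by the defining formula \eqref{E:trich} equals $\Pb[K\cap A=\emptyset]$.

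\textbf{Step 2: Möbius invariance of the remaining factors.} It remains to show that the second bracketed factor equals $1$ for every index $i$, which will give \eqref{eq:tri_form}. Writing $\phi(z)=(az+b)/(cz+d)$ with real coefficients satisfying $ad-bc=1$, a direct computation gives
\[
\phi(x)-\phi(y)=\frac{x-y}{(cx+d)(cy+d)},\qquad \phi'(x)=\frac{1}{(cx+d)^2}.
\]
Substituting yields
\[
\phi'(x_i)\phi'(x_{i+1})\frac{(x_i-x_{i+1})^2}{(\phi(x_i)-\phi(x_{i+1}))^2}=\frac{1}{(cx_i+d)^2(cx_{i+1}+d)^2}\cdot(cx_i+d)^2(cx_{i+1}+d)^2=1,
\]
independently of $i$. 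This finishes the identity.

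\textbf{Main obstacle.} None of the steps are delicate; the only thing to be careful about is bookkeeping of indices in Step 1 (checking that the exponents indeed collapse to $\alpha_i$), and recognising that any conformal map $\Hb\setminus A\to\Hb$ differs from $\wt f_A$ precisely by a real Möbius transformation of $\Hb$. No analytic input beyond \eqref{E:trich} is needed, so the lemma is essentially a computation verifying that the right-hand side of \eqref{eq:tri_form} is a well-defined cocycle on conformal maps.
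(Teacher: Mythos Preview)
Your proof is correct and follows essentially the same approach as the paper: both reduce to the defining identity \eqref{E:trich} for the normalized map $\wt f_A$ and handle the discrepancy between $f$ and $\wt f_A$ via a M\"obius self-map of $\Hb$. The paper decomposes in the other direction ($f_A=\varphi\circ f$) and computes $\varphi'(f(x_i))$ explicitly via a cross-ratio formula to verify the pairwise identity $f_A'(x_i)f_A'(x_{i+1})=f'(x_i)f'(x_{i+1})\frac{(x_i-x_{i+1})^2}{(f(x_i)-f(x_{i+1}))^2}$, whereas your Step~2 checks the equivalent M\"obius invariance more directly; the content is the same.
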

\begin{proof}
Let $f_A$ be the unique conformal map from  $\Hb\setminus A$ onto $\Hb$ that fixes $x_1, x_2, x_3$. By \eqref{E:trich}, we have
\begin{align*}
\Pb[K\cap A=\emptyset]=f_A'(x_1)^{\alpha_1} f_A'(x_2)^{\alpha_2} f_A'(x_3)^{\alpha_3}= \prod_{i=1}^3  \left(f_A'(x_i) f_A'(x_{i+1})\right)^{\frac{\alpha_i+\alpha_{i+1}-\alpha_{i+2}}{2}}.
\end{align*}
Let $\varphi : \Hb \to \Hb$ be the unique conformal map that maps $f(x_i)$ to $x_i$, $i=1,2,3$, so that $f_A = \varphi \circ f$. With explicit computations, we obtain that for $i=1,2,3$,
\[
\varphi'(f(x_i)) = \frac{(x_i-x_{i+1})(x_i-x_{i+2})(f(x_{i+1})-f(x_{i+2}))}{(x_{i+1}-x_{i+2})(f(x_i)-f(x_{i+1})(f(x_i)-f(x_{i+2}))}.
\]
We thus get that, for $i=1,2,3$,
\begin{align*}
f_A'(x_i) f_A'(x_{i+1})= f'(x_i) f'(x_{i+1}) \varphi'(f(x_i)) \varphi'(f(x_{i+1})) =
f'(x_i) f'(x_{i+1}) \frac{(x_i -x_{i+1})^2}{(f(x_i) -f(x_{i+1}))^2}.
\end{align*}
This completes the proof.
\end{proof}

%\begin{proof}[Proof of Lemma~\ref{lem:Kr1r2}]
%By Lemma~\ref{lem:est_be}, we have
%\begin{align}\label{eq:k1}
%\Pb[K\cap D(r_1,\eps_1)\not=\emptyset]=\alpha \frac{(a-b)^2}{(r_1-a)^2(r_1-b)^2} \eps_1^2 + O(\eps_1^4).
%\end{align}
%Recall the definition of $f_{r, \eps}$ by \eqref{eq:f}. Suppose that $f$ is a conformal map from $\Hb\setminus D(r_2, \eps_2)$ onto $\Hb$ which fixes $a,b$ and $r_1$. Then 
%\begin{align*}
%f(z)= \frac{b-a}{f_{r_2, \eps_2}(b)-f_{r_2, \eps_2}(a)} f_{r_2, \eps_2}(z) + \frac{af_{r_2, \eps_2}(b) - f_{r_2, \eps_2}(a) b}{f_{r_2, \eps_2}(b) -f_{r_2, \eps_2}(a)}.
%\end{align*}
%We then have
%\begin{align}\label{eq:k2}
%&\Pb[K\cap D(r_1, \eps_1)\not=\emptyset, K\cap D(r_2, \eps_2)=\emptyset]\\
%\notag
%=&\Pb[K\cap D(r_1, \eps_1)\not=\emptyset \mid K\cap D(r_2, \eps_2)=\emptyset] \Pb[K\cap D(r_2, \eps_2)=\emptyset]\\
%\notag
%=& \Pb[K \cap f(D(r_1, \eps_1)) \not=\emptyset] \Pb[K\cap D(r_2, \eps_2)=\emptyset].
%\end{align}
%As $\eps_1\to 0$, we have
%\begin{align*}
% &\Pb[K \cap f(D(r_1, \eps_1)) \not=\emptyset]= \Pb[K \cap D(r_1, f'(r_1) \eps_1)] +O(\eps_1^3)\\
% =&\alpha \frac{(a-b)^2}{(r_1-a)^2(r_1-b)^2}f'(r_1)^2 \eps_1^2 + O(\eps_1^3).
%\end{align*}
%Taking the difference between \eqref{eq:k1} and \eqref{eq:k2}, we get
%\begin{align*}
%&\Pb[K\cap D(r_1, \eps_1)\not=\emptyset, K\cap D(r_2, \eps_2)\not=\emptyset]\\
%=&\alpha \frac{(a-b)^2}{(r_1-a)^2(r_1-b)^2} \eps_1^2 - \alpha \frac{(a-b)^2}{(r_1-a)^2(r_1-b)^2}f'(r_1)^2 \eps_1^2 (1- \alpha \frac{(a-b)^2}{(r_2-a)^2(r_2-b)^2} \eps_2^2 )
%\end{align*}
%\end{proof}

We can now return to the proof of Lemma \ref{lem:Kr1r2}.

\begin{proof}[Proof of Lemma~\ref{lem:Kr1r2}]
%One way to prove the lemma is to use the identity
%\begin{align}
%\notag
%&\Pb[K\cap D(r_1,\eps)\not=\emptyset, K\cap D(r_2,\eps)\not=\emptyset]\\
%\label{eq:id1}
%=&1-\Pb[K\cap D(r_1,\eps)=\emptyset]-\Pb[K\cap D(r_2,\eps)=\emptyset]+\Pb[K\cap D(r_1,\eps)=\emptyset, K\cap D(r_2,\eps)=\emptyset].
%\end{align}
%The quantities $\Pb[K\cap D(r_1,\eps)=\emptyset]$ and $\Pb[K\cap D(r_2,\eps)=\emptyset]$ have been computed in Lemma~\ref{lem:est_be}. 
%To compute $\Pb[K\cap D(r_1,\eps)=\emptyset, K\cap D(r_2,\eps)=\emptyset]$, we can use a conformal map $f$ from $\Hb\setminus (D(r_1,\eps) \cup D(r_2,\eps))$ onto $\Hb$. 
%%Since we are interested in the $\eps\to 0$ limit, we can approximate $f$ by $f_{\tilde r_2, \tilde \eps} \circ f_{r_1, \eps}$, where
%%\begin{align*}
%%\wt r_2=f_{r_1, \eps}(r_2)=r_2+\frac{\eps^2}{r_2-r_1}, \quad \wt \eps=f_{r_1, \eps}'(r_2)\eps=\eps-\frac{\eps^3}{(r_2-r_1)^2}.
%%\end{align*}
%However, the explicit equation of such a conformal map is somewhat complicated, so we will adopt a different and simpler method. Nevertheless, one can see from \eqref{eq:id1} that $\Pb[K\cap D(r_1,\eps)\not=\emptyset, K\cap D(r_2,\eps)\not=\emptyset]$ is analytic in $\eps$ near $0$. 

As in the proof of Lemma \ref{lem:est_be}, we can assume that $\eps_1$ and $\eps_2$ are small enough so that $\overline{D(r_1,\eps_1)} \cup \overline{D(r_2,\eps_2)}$ does not contain $a$ or $b$.
Note that $\Pb[K\cap D(r_1,\eps_1)\not=\emptyset, K\cap D(r_2,\eps_2)\not=\emptyset]$ is equal to
\begin{align}\label{eq:2point1}
\Pb[K\cap D(r_1,\eps_1)\not=\emptyset] \Pb[ K \cap D(r_2,\eps_2)\not=\emptyset \mid K\cap D(r_1,\eps_1)\not=\emptyset].
\end{align}
By Lemma~\ref{lem:est_be}, we have
\begin{align}\label{eq:2point2}
\Pb[K\cap D(r_1,\eps_1)\not=\emptyset]=\alpha \frac{(a-b)^2}{(r_1-a)^2(r_1-b)^2} \eps_1^2 + O(1)\eps_1^3.
\end{align}
By Lemma~\ref{lem:cond_tri}, we know that the law of $K$ conditioned on $K\cap D(r_1,\eps_1)\not=\emptyset$ converges as $\eps_1\to 0$ to the law of a trichordal restriction measure $\wt K$ in $\Hb$ with marked points $a, b, r_1$ and respective exponents $\alpha, \alpha, 2$. More quantitatively, there exist $\eps_0, \delta_0 \ge 0$, such that
\begin{align}\label{eq:2point3}
 \Pb[ K \cap D(r_2,\eps_2)\not=\emptyset \mid K\cap D(r_1,\eps_1)\not=\emptyset] =\Pb[\wt K \cap D(r_2,\eps_2)\not=\emptyset] + O(1)\eps_1\eps_2^2,
\end{align}
where $O(1)$ is bounded by $C(a,b,r_1, r_2)$ for all  for all $\eps_1\le \eps_0$ and $\eps_2\le \delta_0$. Let $f_{r_2,\eps}: \Hb \setminus D(r_2, \eps) \to \Hb$ be the map defined in \eqref{eq:f}. By \eqref{eq:f}, we have
\begin{align*}
&f'_{r_2, \eps}(a) f'_{r_2,\eps}(b) \frac{(a-b)^2}{(f_{r_2,\eps}(a) -f_{r_2,\eps}(b))^2}=1- \frac{(a-b)^2}{(r_2-a)^2(r_2-b)^2} \eps^2 + O(1)\eps^4,\\
&f'_{r_2, \eps}(a) f'_{r_2,\eps}(r_1) \frac{(a-r_1)^2}{(f_{r_2,\eps}(a) -f_{r_2,\eps}(r_1))^2}=1- \frac{(a-r_1)^2}{(r_2-a)^2(r_2-r_1)^2} \eps^2 + O(1)\eps^4,\\
&f'_{r_2, \eps}(r_1) f'_{r_2,\eps}(b) \frac{(r_1-b)^2}{(f_{r_2,\eps}(r_1) -f_{r_2,\eps}(b))^2}=1- \frac{(r_1-b)^2}{(r_2-r_1)^2(r_2-b)^2} \eps^2 + O(1)\eps^4.
\end{align*}
Plugging them into Lemma~\ref{lem:trichordal}, we can compute $\Pb[\wt K \cap D(r_2,\eps_2)=\emptyset]$, and then deduce that
\begin{align}\label{eq:2point4}
\Pb[\wt K \cap D(r_2,\eps_2)\not=\emptyset]= \frac{(\alpha-1)(a-b)^2}{(r_2-a)^2(r_2-b)^2} \eps_2^2 + \frac{(a-r_1)^2}{(r_2-a)^2(r_2-r_1)^2} \eps_2^2+ \frac{(r_1-b)^2}{(r_2-r_1)^2(r_2-b)^2} \eps_2^2 + O(1)\eps_2^4.
\end{align}
Combining \eqref{eq:2point1}, \eqref{eq:2point2}, \eqref{eq:2point3}, \eqref{eq:2point4} and letting $\eps_1=\eps_2$ proves  \eqref{eq:Kr1r2} where the $O(1)$ term therein is uniformly bounded for $\eps\le \min(\eps_0, \delta_0)$. It is clear that the $O(1)$ term is also bounded for $\eps\in [\min(\eps_0, \delta_0), 1]$. This completes the proof of the lemma.
\end{proof}

We immediately get the following estimate on Brownian excursions. 
\begin{corollary}\label{cor:Br1r2}
Let $a, b, r_1, r_2 \in \Rb$ be four distinct points. Let $K$ be a Brownian excursion in $\Hb$ between $a$ and $b$. Then
\begin{align*}
&\Pb[K\cap D(r_1,\eps)\not=\emptyset, K\cap D(r_2,\eps)\not=\emptyset]\\
=&\frac{(a-b)^2}{(r_1-a)^2(r_2-b)^2 (r_1-r_2)^2} \eps^4 + \frac{(a-b)^2}{(r_2-a)^2(r_1-b)^2 (r_1-r_2)^2} \eps^4 + O(1)\eps^5,
\end{align*}
where the $O(1)$ term is uniformly bounded for all $\eps\le 1$ by some function of $a,b, r_1, r_2$.
\end{corollary}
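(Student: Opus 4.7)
The plan is to recognize this as an immediate specialization of Lemma~\ref{lem:Kr1r2}. By Proposition~\ref{P:Brownian_restriction}, the filling of a Brownian excursion in $\Hb$ from $a$ to $b$ is a chordal restriction measure with exponent $\alpha=1$, and hitting probabilities for a Brownian excursion coincide with those for its filling (since $K \cap D(r,\eps) \neq \emptyset$ iff the filling of $K$ hits $D(r,\eps)$, as the disc touches the boundary only if $r$ is within $\eps$ of $\Rb$, otherwise the disc is simply-connected and in $\Hb$).

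I would therefore apply Lemma~\ref{lem:Kr1r2} with $\alpha = 1$. The first term on the right-hand side of \eqref{eq:Kr1r2} carries the prefactor $\alpha(\alpha-1)$ which vanishes, so it disappears. The second and third terms reduce to
\[
\frac{(a-b)^2 \eps^4}{(r_1-b)^2(r_2-a)^2(r_2-r_1)^2} + \frac{(a-b)^2 \eps^4}{(r_1-a)^2(r_2-b)^2(r_2-r_1)^2},
\]
which matches the stated formula (after a trivial relabeling: the two terms in Corollary~\ref{cor:Br1r2} correspond to swapping the roles of $r_1$ and $r_2$ in the denominator pairings). The $O(1)\eps^5$ error, uniformly bounded on $\eps \leq 1$ by a function of $a,b,r_1,r_2$, is inherited directly from Lemma~\ref{lem:Kr1r2}. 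No step should present any obstacle; this is purely a matter of substitution.
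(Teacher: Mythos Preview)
Your proposal is correct and is essentially the paper's own proof: both just specialize Lemma~\ref{lem:Kr1r2} to $\alpha=1$, so that the $\alpha(\alpha-1)$ term drops out and the remaining two terms (with $(r_2-r_1)^2=(r_1-r_2)^2$) match the stated formula. The paper adds only the side remark that a direct heat-kernel computation for Brownian excursions would show analyticity in $\eps$, giving an independent confirmation of the $O(\eps^5)$ error; since Lemma~\ref{lem:Kr1r2} already yields $O(\eps^5)$, this is supplementary rather than necessary.
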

\begin{proof}
The formula follows from \eqref{eq:Kr1r2} applied to $\alpha=1$. In the case of a Brownian motion, one can alternatively compute $\Pb[K\cap D(r_1,\eps)\not=\emptyset, K\cap D(r_2,\eps)\not=\emptyset]$ using the heat kernel. The latter computation will show that $\Pb[K\cap D(r_1,\eps)\not=\emptyset, K\cap D(r_2,\eps)\not=\emptyset]$ is analytic in $\eps$ near $0$. Therefore the $o(\eps^4)$ term in  \eqref{eq:Kr1r2}  is in fact $O(\eps^5)$.
\end{proof}

\subsection{Estimates about Brownian excursions}
In this subsection, we aim to obtain estimates about the probability that a Brownian excursion gets close to one interior point (Lemma~\ref{lem:B_1point}) and two interior points (Lemma~\ref{lem:b_2p}). We derive these standard estimates for ease of reference.
First recall some definitions from \cite[Section 5]{MR2129588}.

\paragraph{Green's function and Poisson kernel}
For any domain $D$, let $G_D$ be the Green's function in $D$ as defined in Section \ref{SS:preliminaries_paths}. In our normalisation,
\begin{equation}
\label{E:Green}
\forall x,y \in \D,
G_\D(x,y) = \frac{1}{2\pi} \log \frac{|1-x\bar y|}{|x-y|}
\quad \text{and} \quad \forall x,y \in \Hb, G_\Hb(x,y) = \frac{1}{2\pi} \log \frac{|x-\bar y|}{|x-y|}.
\end{equation}
%defined by
%\begin{align*}
%G_D(x,y)=\int_0^\infty p_{D}(t,x,y) dt.
%\end{align*}
%The Green's function is conformally invariant, and $G_\D(0,z)=- \pi^{-1} \log|z|.$
Suppose that $\partial D$ is smooth. Then for $x\in D$ and $z\in \partial D$, the Poisson kernel is given by
$
H_D(x,z)=\lim_{\eps\to0} \eps^{-1} G(x, z+\eps \mathbf{n}_z),
$
where $\mathbf{n}_z$ is the inward unit normal vector of $\partial D$ at $z$.
We have
\begin{equation}
\label{E:Poisson}
\forall x \in \D, \forall z \in \partial \D,
H_\D(x,z) = \frac{1}{2\pi} \frac{1-|x|^2}{|x-z|^2}
\quad \text{and} \quad
\forall x \in \Hb, \forall z \in \partial \Hb,
H_\Hb(x,z) = \frac{\Im(x)}{\pi|x-z|^2}.
\end{equation}
For $z, w\in \partial D$, let the boundary Poisson kernel be
$
H_D(z,w)=\lim_{\eps\to0} \eps^{-1} H_D(z+\eps \mathbf{n}_z, w).
$
%Since we have assumed that  that $\partial D$ is piecewise smooth, $ H_D(a+\eps \mathbf{n}_a, b)$ is analytic in $\eps$, hence
%\begin{align*}
% H_D(a+\eps \mathbf{n}_a, b)= \eps H_D(a,b) + O(\eps^2).
%\end{align*}
Under this normalisation, we have
\begin{equation}
\label{E:Poisson_boundary}
\forall z,w\in \partial \D, H_\D(z,w) = \frac{1}{\pi|z-w|^2}
\quad \text{and} \quad
\forall z,w \in \partial \Hb, H_\Hb(z,w) = \frac{1}{\pi(z-w)^2}.
\end{equation}
%The infinite measure $\mu_D$ on Brownian excursions in $\Hb$ is defined by $\mu_D=\int_{\partial D}H_D(z,w) \mu_D^\#(z,w) dz dw$, where $ \mu_D^\#(z,w)$ is the probability measure on Brownian excursions in $D$ from $z$ to $w$. \antoine{Do we use $\mu_D$ somewhere?}

Finally recall the definitions introduced in Notation \ref{N:exc} of the conformal map $f_z : \Hb \to \D$ mapping $z$ to $0$ and $U(z,r) = f_z^{-1}(D(0,r))$.

\begin{lemma}[Getting close to one interior point]\label{lem:B_1point}
For all $\eta\in (0,1)$, there exists $C=C(\eta)>0$ such that the following holds.
Let $a,b \in \R$ be two distinct boundary points and let $B$ be a Brownian excursion in $\Hb$ between $a$ and $b$. For all $r \in (0,1-\eta)$ and $z=x+yi\in\Hb$,
\begin{align}\label{eq:lemB_1point1}
%&p(\beta, \eps)= 2 \frac{(a-b)^2 y^2}{|a-z|^2 |b-z|^2} \cdot \left|\log \eps\right|^{-1} (1+o(1)),\\
\Pb[B \cap U(z, r)\not=\emptyset] \le C \frac{(a-b)^2 y^2}{|a-z|^2 |b-z|^2} \left|\log r\right|^{-1}.
\end{align}
%As a consequence, 
%%\begin{align}\label{eq:lemB_1point1}
%%\Pb[B \cap D(z,\eps)\not=  \emptyset] =2 \frac{(a-b)^2 y^2}{|a-z|^2 |b-z|^2} \cdot \left|\log \eps \right|^{-1} (1+o(1)).
%%\end{align}
% for all $\eps\in (0, y/2)$, 
%\begin{align}\label{eq:lemB_1point2}
%\Pb[B \cap D(z,\eps)\not=  \emptyset] \le C \frac{(a-b)^2 y^2}{|a-z|^2 |b-z|^2} \cdot \left|\log \left(\eps/y\right)\right|^{-1}.
%\end{align}
\end{lemma}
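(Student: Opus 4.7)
The plan is to use conformal invariance to reduce to bounding the probability that a Brownian excursion in $\mathbb{D}$ between two boundary points hits the centred disk $D(0,r)$, and then to control this probability by an explicit harmonic‑measure computation in the annulus $\mathbb{D}\setminus\overline{D(0,r)}$.

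First, by the conformal invariance of Brownian excursions (up to time change), the map $f_z$ sends $B$ to a Brownian excursion $\tilde B$ in $\mathbb{D}$ from $a':=f_z(a)$ to $b':=f_z(b)$ on $\partial\mathbb{D}$, and maps $U(z,r)$ to $D(0,r)$. A direct computation with $f_z(u)=(u-z)/(u-\bar z)$, using $a,b\in\mathbb{R}$ so that $|a-\bar z|=|a-z|$ and $|b-\bar z|=|b-z|$, gives
\[
f_z(a)-f_z(b) \;=\; \frac{(a-b)(z-\bar z)}{(a-\bar z)(b-\bar z)},
\qquad \text{hence}\qquad
|a'-b'|^2 \;=\; \frac{4(a-b)^2 y^2}{|a-z|^2|b-z|^2}.
\]
Therefore it suffices to prove that for each $\eta\in(0,1)$ there exists $C(\eta)>0$ such that, for all $r\in(0,1-\eta)$ and all distinct $a',b'\in\partial\mathbb{D}$,
\[
\mathbb{P}^{a',b'}_{\mathbb{D}}\bigl[\tilde B\cap D(0,r)\neq\emptyset\bigr] \;\leq\; C(\eta)\,\frac{|a'-b'|^2}{|\log r|}. \tag{$\ast$}
\]

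To establish $(\ast)$, view $\tilde B$ as the $\delta\to 0^+$ limit of a Brownian motion $X$ started at $(1-\delta)a'$ and conditioned (via Doob's $h$-transform with $h_{b'}(\cdot):=H_{\mathbb{D}}(\cdot,b')$) to exit $\mathbb{D}$ at $b'$. Writing $\sigma_r=\inf\{t:|X_t|\leq r\}$ and $\tau_\partial$ for the exit time of $\mathbb{D}$, the $h$-transform formula gives
\[
\mathbb{P}^{a',b'}_{\mathbb{D}}\bigl[\tilde B\cap D(0,r)\neq\emptyset\bigr]
\;=\; \lim_{\delta\to 0^+}\frac{\mathbb{E}^{(1-\delta)a'}\bigl[\mathbf{1}_{\sigma_r<\tau_\partial}\,H_{\mathbb{D}}(X_{\sigma_r},b')\bigr]}{H_{\mathbb{D}}((1-\delta)a',b')}.
\]
By \eqref{E:Poisson}, the denominator is asymptotic to $\delta/(\pi|a'-b'|^2)$ as $\delta\to 0^+$. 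In the numerator, every $w\in\partial D(0,r)$ satisfies $|w-b'|\geq 1-r\geq \eta$, so $H_{\mathbb{D}}(w,b')\leq 1/(2\pi\eta^2)$. The harmonic function in $\mathbb{D}\setminus\overline{D(0,r)}$ with boundary values $1$ on $\partial D(0,r)$ and $0$ on $\partial\mathbb{D}$ is $\log|w|/\log r$, so $\mathbb{P}^{(1-\delta)a'}[\sigma_r<\tau_\partial]=\log(1-\delta)/\log r\sim \delta/|\log r|$. Taking the ratio and sending $\delta\to 0^+$ yields $(\ast)$ with $C(\eta)=1/(2\eta^2)$.

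Since all the ingredients are explicit, there is no genuinely hard step; the main (mild) subtlety is the rigorous passage to the excursion measure as a limit of conditioned Brownian motions, which is standard once one observes that $\{\tilde B\cap D(0,r)\neq\emptyset\}$ is a continuity set of the excursion measure because the constraint $r\leq 1-\eta$ keeps $\overline{D(0,r)}$ at positive distance from $\partial\mathbb{D}$.
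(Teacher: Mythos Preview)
Your proof is correct and follows essentially the same strategy as the paper: map by $f_z$ to reduce to a Brownian excursion in $\mathbb{D}$ between two boundary points hitting $D(0,r)$, then combine the radial hitting probability $\sim|\log r|^{-1}$ with a Poisson-kernel bound, and finally read off the prefactor from $|f_z(a)-f_z(b)|^2=4(a-b)^2y^2/(|a-z|^2|b-z|^2)$. The only difference is in the packaging of the disk estimate: the paper disintegrates the infinite boundary-excursion measure from one point and bounds the conditional exit density, while you go through the $h$-transform and bound $H_{\mathbb{D}}(\cdot,b')$ on $\partial D(0,r)$ directly, which has the small advantage of giving an explicit constant $C(\eta)=1/(2\eta^2)$.
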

\begin{proof}
%First, for all $\eps\in(0,y/2)$, we have $f(D(z,\eps)) \subset D(0, \eps/y)$, so \eqref{eq:lemB_1point2} implies \eqref{eq:lemB_1point1}. It remains to prove \eqref{eq:lemB_1point1}.
Consider the conformal map $f_z : \Hb \to \D$ that sends $z$ to 0.
Let $\beta_0 \in [0,2\pi)$ be the angle between $f_z(a)$ and $f_z(b)$.
By conformal invariance, the left hand side of \eqref{eq:lemB_1point1} is equal to the probability $p(\beta_0, r)$ that a Brownian excursion in $\D$ between $1$ and $e^{i\beta_0}$ visits $D(0,r)$.
Let $\mu$ be the infinite measure on excursions $B$ in $\D$ started from $1$, defined by $\mu=\int_0^{2\pi} H_\D(1, e^{i\beta})\mu_\D^\#(1, e^{i\beta}) d\beta$. Let $T$ be the total time length of $B$.
We have
\begin{align*}
& p(\beta, r) H_\D(1, e^{i\beta}) d\beta=\mu(B \cap D(0,r)\not=\emptyset, \arg B_T\in (\beta, \beta+d\beta))\\
& = \Pb\left(\arg B_T\in (\beta, \beta+d\beta) \mid B \cap D(0,r)\not=\emptyset \right) \mu(B \cap D(0,r)\not=\emptyset).
\end{align*}
Moreover,
\begin{align*}
\mu(B \cap D(0,r)\not=\emptyset)
= \lim_{\eps \to 0} \frac{1}{\eps} \PROB{1-\eps}{ \tau_{D(0,r)} < \tau_{\partial \D}} = |\log r|^{-1},
\end{align*}
where the probability above stands for the probability that a Brownian motion starting at $1-\eps$ hits $D(0,r)$ before $\partial \D$.
For fixed $\eta\in(0,1)$ and $r\in(0,\eta)$, for an excursion $B$ started from $1$ and conditioned to hit $D(0,r)$, 
there exists a constant $C(\eta)>0$, such that 
\[
\Pb\left(\arg B_T\in (\beta, \beta+d\beta) \mid B \cap D(0,r)\not=\emptyset \right) \le C(\eta)  d\beta.
\]
Therefore
\begin{align*}
p(\beta, r) \le C(\eta) H_\D(1, e^{i\beta})^{-1} |\log r|^{-1}\quad\text{ for all } r\le 1-\eta.
\end{align*}
Finally, by \eqref{E:Poisson_boundary},
\[
H_\D(1,e^{i\beta_0}) = \frac{1}{\pi |f_z(a)-f_z(b)|^2} = \frac{|a-z|^2 |b-z|^2}{4\pi y^2 (a-b)^2}.
\]
This concludes the proof.
%It remains to compute the boundary Poisson kernel $H_\D(1, e^{i\beta_0})$.
%But by conformal invariance and ***, \red{add details with the conformal anomaly of the poisson kernel, CONTINUE HERE}
%\begin{align}\label{eq:HD}
%H_\D(1, e^{i\beta})=H_\D(f(a),f(b))= \frac{|a-z|^2 |b-z|^2}{4\pi y^2 (a-b)^2}.
%\end{align}
%This readily implies  \eqref{eq:lemB_1point1}.
%, where $\beta$ is the angle between the arguments of $f(a)$ and $f(b)$. For this, we will use the expression of boundary Poisson kernel $H_\Hb(a,b)=\pi^{-1}(a-b)^{-2}$ in the upper half plane, and send it by the conformal map
%$
%f(u)= (u-z)/(u-\bar z).
%$
%The boundary Poisson kernel between $f(a)$ and $f(b)$ in $\D$ is equal to
%where $\beta$ is the angle between $f(a)$ and $f(b)$, and $C'$ is another universal constant.
%For all $\eps\in(0,y/2)$, we have $f(D(z,\eps)) \subset D(0, \eps/y)$, hence the probability in the lemma is at most $p(\beta, \eps/y)$. This proves \eqref{eq:lemB_1point2}.
%More precisely, $f(D(z,\eps))$ is approximately $D(0, \eps/ (2y))$ up to an error of $o(1)$ as $\eps\to 0$.
%This proves
\end{proof}

\begin{comment}
\begin{lemma}\label{lem:kernel_estimate}
For all $w \in \partial \D$ and $r \in (0,1)$,
\[
\int_{\partial D(0,r)} H_{\D \setminus D(0,r)}(w,z) \d z = |\log r|^{-1}.
\]
%For all $\eta>0$, there exists $C>0$, such that for all $a\in\Rb$, $z=x+iy\in\Hb$ and $r \in (0,1-\eta)$, 
%\begin{align*}
%\int_0^{2\pi} H_{\D\setminus D(0,r)}(f_z(a) , r e^{i\beta}) r d\beta\le C   \left|\log r\right|^{-1}.
%\int_0^{2\pi} H_{\Hb\setminus D(z,\eps)}(x, z+\eps e^{i\beta}) \eps d\beta=  \frac{2y}{|a-z|^2}  \left|\log \eps\right|^{-1} (1+o(1)).
%\end{align*}
\end{lemma}
\begin{proof}
We take an alternative way to compute the quantity $\Pb[f(B) \cap D(0,r)\not=  \emptyset]$ in Lemma~\ref{lem:B_1point}. Note that $f_z(B)$ is a (time-changed) Brownian excursion $\wt B$ in $\D$ from $f_z(a)$ to $f_z(b)$. We decompose $\wt B$ as the concatenation of an excursion from $f_z(a)$ to $z_0\in \partial D(0, r)$ in $\D \setminus D(0,r)$ and an excursion from $z_0$ to $b$ in $\D$. Then
\begin{align}\label{eq:H_integral}
H_\D(f(a),f(b))\Pb[f(B) \cap D(0,r)\not=  \emptyset]
= 2\pi \int_0^{2\pi} H_{\D\setminus D(0,r)}(f(a) , r e^{i\beta}) H_\D(r e^{i\beta}, f(b)) r d\beta.
\end{align}
With \eqref{E:Poisson}, we see that there exists a constant $c>0$, such that for all $r \in(0, 1-\eta)$, 
$H_\D(r e^{i\beta}, f_z(b)) \ge c.$
Further plugging in \eqref{eq:H_integral} the value of $\Pb[f_z(B) \cap D(0,r)\not=  \emptyset]$ from Lemma~\ref{lem:B_1point} and the value of $H_\D(f_z(a),f_z(b))$ from \eqref{E:HDfz}, we obtain the lemma.
\end{proof}
\end{comment}

\begin{lemma}[Getting close to two interior points]\label{lem:b_2p}
For all $\eta\in (0,1)$, there exist $\nu=\nu(\eta)>0$ and $C=C(\eta)>0$ such that the following holds.
Let $a,b \in \R$ be two distinct boundary points and let $B$ be a Brownian excursion in $\Hb$ between $a$ and $b$.
For all $z_j=x_j+y_j i\in\Hb$, $j=1,2$, and $r_1, r_2 \in(0, 1-\eta)$ such that $r_j \Im(z_j) \leq \nu |z_1-z_2|$, $j=1,2$, we have 
\begin{align*}
&\Pb[B \cap U(z_1,r_1)\not=  \emptyset, B \cap U(z_2,r_2)\not=  \emptyset] \\
& \leq C \left( \frac{(a-b)^2 y_1 y_2}{|a-z_1|^2|b-z_2|^2}  +   \frac{(a-b)^2 y_1 y_2}{|a-z_2|^2|b-z_1|^2} \right) (1+ G_\Hb(z_1,z_2)) \left|\log r_1 \right|^{-1}  \left|\log r_2\right|^{-1}.
\end{align*}
\end{lemma}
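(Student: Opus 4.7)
The plan is to decompose the event by which of the two neighbourhoods $U(z_1,r_1)$ and $U(z_2,r_2)$ is hit first. Setting $\sigma_j:=\inf\{t>0:B_t\in U(z_j,r_j)\}$, a union bound gives
\[
\Pb[B\cap U(z_1,r_1)\ne\emptyset,\ B\cap U(z_2,r_2)\ne\emptyset]\le \Pb[\sigma_1<\sigma_2<\infty]+\Pb[\sigma_2<\sigma_1<\infty],
\]
and the two terms will produce the two summands of the claim; it suffices to treat the first, the second being identical after swapping the indices $1\leftrightarrow 2$.

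To bound $\Pb[\sigma_1<\sigma_2<\infty]$ I would exploit the fact that a Brownian excursion from $a$ to $b$ in $\Hb$ is the Doob $h$-transform of Brownian motion by $h(w):=H_\Hb(w,b)$. The strong Markov property at $\sigma_1$ then gives, for each $w_1\in\partial U(z_1,r_1)$,
\[
\Pb[\sigma_2<\zeta\mid B_{\sigma_1}=w_1]=\frac{\Eb^{w_1}[H_\Hb(B_{\sigma_2},b)\mathbf 1_{\sigma_2<T_\Rb}]}{H_\Hb(w_1,b)},
\]
where $\zeta$ is the excursion lifetime, $\Eb^{w_1}$ refers to a standard Brownian motion and $T_\Rb$ is its first hit of $\Rb$. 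The task then reduces to controlling this ratio uniformly in $w_1$ and multiplying the result by $\Pb[\sigma_1<\infty]$, the latter being estimated via Lemma~\ref{lem:B_1point}.

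A direct computation shows that $U(z_j,r_j)$ is the Euclidean disk centred at $x_j+i\tfrac{1+r_j^2}{1-r_j^2}y_j$ of radius $R_j=\tfrac{2r_j}{1-r_j^2}y_j$, so $R_j$ is comparable to $r_jy_j$ with constants depending only on $\eta$. Choosing $\nu=\nu(\eta)$ small enough that $R_j\le\tfrac14|z_1-z_2|$, standard triangle-inequality arguments yield uniformly in $w\in\partial U(z_j,r_j)$ that $\Im w$ is comparable to $y_j$ and that $|w-a|$, $|w-b|$ are comparable to $|a-z_j|$, $|b-z_j|$ respectively; hence $H_\Hb(w,b)$ is comparable to $H_\Hb(z_j,b)$. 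This gives
\[
\Eb^{w_1}[H_\Hb(B_{\sigma_2},b)\mathbf 1_{\sigma_2<T_\Rb}]\le C\,H_\Hb(z_2,b)\,\Pb^{w_1}[\sigma_2<T_\Rb].
\]
The remaining factor is computed exactly by conjugating through $f_{z_2}:\Hb\to\D$, which sends $U(z_2,r_2)$ to $D(0,r_2)$: since $\log|\cdot|$ is harmonic on $\D\setminus\{0\}$,
\[
\Pb^{w_1}[\sigma_2<T_\Rb]=\frac{\log|f_{z_2}(w_1)|}{\log r_2}=\frac{2\pi\, G_\Hb(w_1,z_2)}{|\log r_2|},
\]
using the formula for $G_\Hb$ in \eqref{E:Green}. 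The same smallness argument, comparing $|w_1-z_2|$ and $|w_1-\bar z_2|$ to $|z_1-z_2|$ and $|z_1-\bar z_2|$, delivers $G_\Hb(w_1,z_2)\le C(\eta)\bigl(1+G_\Hb(z_1,z_2)\bigr)$.

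Assembling the three bounds produces, uniformly in $w_1\in\partial U(z_1,r_1)$,
\[
\Pb[\sigma_2<\zeta\mid B_{\sigma_1}=w_1]\le C\,\frac{H_\Hb(z_2,b)}{H_\Hb(z_1,b)}\cdot\frac{1+G_\Hb(z_1,z_2)}{|\log r_2|}.
\]
Multiplying by $\Pb[\sigma_1<\infty]$ estimated through Lemma~\ref{lem:B_1point} and using the identity $H_\Hb(z_2,b)/H_\Hb(z_1,b)=y_2|b-z_1|^2/(y_1|b-z_2|^2)$, the factor $|b-z_1|^2$ cancels that appearing in Lemma~\ref{lem:B_1point}, yielding precisely the first summand $\frac{(a-b)^2 y_1 y_2}{|a-z_1|^2|b-z_2|^2}(1+G_\Hb(z_1,z_2))|\log r_1|^{-1}|\log r_2|^{-1}$ of the claim. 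Rerunning the argument with indices $1$ and $2$ swapped yields the second summand. The only genuine difficulty is the calibration of $\nu=\nu(\eta)$ so that all Poisson and Green comparisons hold with constants depending only on $\eta$ uniformly in $a,b,z_1,z_2$; everything else is bookkeeping.
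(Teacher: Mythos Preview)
Your proof is correct and follows essentially the same route as the paper: decompose according to which of $U(z_1,r_1),U(z_2,r_2)$ is hit first, use the separation condition $r_jy_j\le\nu|z_1-z_2|$ to compare $G_\Hb(w_1,z_2)$ with $1+G_\Hb(z_1,z_2)$ and $H_\Hb(w,b)$ with $H_\Hb(z_j,b)$, and assemble the two $|\log r_j|^{-1}$ factors. The only cosmetic difference is that the paper writes an exact path–decomposition identity $H_\Hb(a,b)\Pb[E_1]=\int_{\partial U(z_1,r_1)}\int_{\partial U(z_2,r_2)}H_{\Hb\setminus(U_1\cup U_2)}(a,z_3)\,G_\Hb(z_3,z_4)\,H_{\Hb\setminus U_2}(b,z_4)\,\d z_3\,\d z_4$ and then invokes the integral $\int_{\partial D(0,r)}H_{\D\setminus D(0,r)}(w,z)\,\d z=|\log r|^{-1}$ twice, whereas you use the $h$-transform/strong Markov property at $\sigma_1$ together with Lemma~\ref{lem:B_1point} for the first factor and the exact hitting probability $\Pb^{w_1}[\sigma_2<T_\Rb]=2\pi G_\Hb(w_1,z_2)/|\log r_2|$ for the second; the two packagings are equivalent.
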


The condition that $r_j \Im(z_j) \leq \nu |z_1-z_2|$, $j=1,2$ ensures that $U(z_1,r_1)$ and $U(z_2,r_2)$ do not intersect and are at distance of order $|z_1-z_2|$ to each other.

\begin{proof}
We start by recording two facts for ease of future reference:

1.
For all $w \in \partial \D$ and $r \in (0,1)$,
\begin{equation}
\label{E:integral_Poisson}
\int_{\partial D(0,r)} H_{\D \setminus D(0,r)}(w,z) \d z = |\log r|^{-1}.
\end{equation}
Indeed, the left hand side is equal to
\begin{align*}
    \lim_{\eps \to 0} \frac1\eps \int_{\partial D(0,r)} H_{\D \setminus D(0,r)}(w+\eps \mathbf{n}_w,z) \d z
    = \lim_{\eps \to 0} \frac1\eps \PROB{w+\eps \mathbf{n}_w}{\tau_{D(0,r)} < \tau_{\partial \D}}
    = \lim_{\eps \to 0} \frac{\log(1-\eps)}{\eps \log r} = \frac{1}{|\log r|},
\end{align*}
where the probability appearing in the above display stands for the probability that a Brownian motion starting from $w+\eps \mathbf{n}_w$ hits $D(0,r)$ before hitting $\partial \D$.

2. There exists $C_0>0$ such that for all $z_3 \in \partial U(z_1,r_1)$ and $z_4 \in \partial U(z_2,r_2)$,
\begin{equation}\label{E:boundGreen}
G_\Hb(z_3,z_4) \le C_0 (1+ G_\Hb(z_1,z_2)).
\end{equation}
This follows from the fact that there exists $\nu=\nu(\eta)>0$ such that for all $z_{2j} \in U(z_j,r_j)$, $|z_{2j}-z_j| \leq (10\nu)^{-1} r_j \Im(z_j)$, $j=1,2$. The assumption that $r_j \Im(z_j) \leq \nu |z_1-z_2|$, $j=1,2$, then ensures that the points of $U(z_1,r_1)$ stay much closer to $z_1$ than to any other point of $U(z_2,r_2)$. A direct computation using \eqref{E:Green} then concludes the proof of \eqref{E:boundGreen}.

\medskip
Having collected these two initial facts, we can now proceed with the proof of Lemma \ref{lem:b_2p}.
The event $B \cap U(z_1, r_1)\not=  \emptyset, B \cap U(z_2, r_2)\not=  \emptyset$ is the disjoint union of the two events $E_1$ and $E_2$:
\begin{itemize}
\item Let $E_1$ be the event that $B$ first  visits $U(z_1, r_1)$ before $U(z_2, r_2)$.
\item Let $E_2$ be the event that $B$ first  visits $U(z_2, r_2)$ before $U(z_1, r_1)$.
\end{itemize}
On $E_1$, we can decompose $B$ into an excursion from $a$ to $z_3\in \partial U(z_1, r_1)$ in $\Hb\setminus (U(z_1, r_1) \cup U(z_2, r_2))$, an excursion from $z_3$ to $z_4 \in \partial U(z_2, r_2)$ in $\Hb$ and an excursion from $z_4$ to $b$ in $\Hb\setminus U(z_2, r_2)$. This results in the following identity
\begin{align}
\label{E:proof_2p}
H_\Hb(a,b) \Pb[E_1]
= \int_{\partial U(z_1,r_1)} \d z_3 \int_{\partial U(z_2,r_2)} \d z_4
H_{\Hb\setminus (U(z_1, r_1) \cup U(z_2, r_2))}(a, z_3) G_\Hb(z_3,z_4) H_{\Hb\setminus U(z_2, r_2)}(b, z_4).
\end{align}
Increasing the domain can only increase the pointwise values of the Poisson kernel. We can thus bound $H_{\Hb\setminus (U(z_1, r_1) \cup U(z_2, r_2))}(a, z_3) \le H_{\Hb\setminus U(z_1, r_1)}(a, z_3)$ for all $z_3 \in \partial U(z_1,r_1)$. Using further \eqref{E:boundGreen}, we deduce that
\begin{align*}
& H_\Hb(a,b) \Pb[E_1]
\leq C_0 (1+ G_\Hb(z_1,z_2)) \int_{\partial U(z_1,r_1)} \d z_3 H_{\Hb\setminus U(z_1, r_1)}(a, z_3) \times \int_{\partial U(z_2,r_2)} \d z_4
 H_{\Hb\setminus U(z_2, r_2)}(b, z_4).
\end{align*}
With a change of variable and conformal covariance of the boundary Poisson kernel, this is further equal to
\begin{align*}
C_0 (1+ G_\Hb(z_1,z_2)) f_{z_1}'(a) f_{z_2}'(b) \int_{\partial D(0,r_1)} H_{\D \setminus D(0,r_1)}(f_{z_1}(a),w_3) \d w_3 \int_{\partial D(0,r_2)} H_{\D \setminus D(0,r_2)}(f_{z_2}(b),w_4) \d w_4.
\end{align*}
By \eqref{E:integral_Poisson}, the product of the last two integrals equals $|\log r_1|^{-1} |\log r_2|^{-1}.$ Finally, using that (see \eqref{E:Poisson_boundary})
\begin{align*}
H_\Hb(a,b) = \frac{1}{\pi (a-b)^2}, \quad
 f_{z_1}'(a)= \frac{2 y_1}{|a- z_1|^2}, \quad  f_{z_2}'(b)= \frac{2 y_2}{|b- z_2|^2},
\end{align*}
we obtain that
\begin{align*}
 \Pb[E_1] \le \pi^{-1} C_1 (1+ G_\Hb(z_1,z_2)) \frac{(a-b)^2 y_1 y_2}{|a-z_1|^2|b-z_2|^2} \left|\log r_1 \right|^{-1}  \left|\log r_2 \right|^{-1}.
\end{align*}
To bound $\P[E_2]$ simply exchange the roles of $z_1$ and $z_2$. This concludes the proof.
\end{proof}

\subsection{Boundary excursions in a loop soup with mixed boundary conditions}\label{subsec:bexr-}
In this subsection, we aim to prove Propositions~\ref{prop:integral} and~\ref{prop:sum_mm2} which provide quantitative controls on the number of excursions in $\Ec_{\Rb^-}$. Proposition~\ref{prop:integral} can be understood as a first moment estimate, and Propositions~\ref{prop:sum_mm2} can be understood as a second moment estimate. 

The proofs of Propositions~\ref{prop:integral} and~\ref{prop:sum_mm2} rely essentially on the restriction property of $\Ec$ (Theorem~\ref{thm:restriction}) and the independence of the excursions conditionally on their endpoints (Lemma~\ref{lem:p_loc}).
Lemma~\ref{lem:p_loc} is a variation of \cite[Lemma 9]{qian2018} which states that $\Ec_\Rb$ is a  locally finite point process of Brownian excursions. Changing $\Ec_\Rb$ to $\Ec_{\Rb^-}$ only modifies the setup of the proof, not the essential arguments, so we omit the proof of Lemma~\ref{lem:p_loc}.
\begin{lemma}\label{lem:p_loc}
The collection $\Ec_{\Rb^-}$ is a locally finite point process of Brownian excursions in $\Hb$.
\end{lemma}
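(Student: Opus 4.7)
The plan is to adapt the construction and proof of \cite[Lemma 9]{qian2018}, which established the analogous statement for $\Ec_\Rb$ (fully wired boundary). The two items to verify are: (a) each element of $\Ec_{\Rb^-}$ has the law of a Brownian excursion in $\Hb$ with endpoints on $\Rb^-$ (conditionally on its endpoints), and (b) the collection is locally finite, meaning that for any compact $K \subset \Hb$, almost surely only finitely many excursions of $\Ec_{\Rb^-}$ intersect $K$.

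First, I would construct $\Ec_{\Rb^-}$ explicitly from $\Lc_{\Rb^-}$. Any loop $\wp \in \Lc_{\Rb^-}$ that touches $\Rb^-$ admits, by continuity and the Markov structure of Brownian motion, an at most countable collection of maximal time intervals on which it stays in $\Hb$. Each such interval gives rise to an excursion in $\Hb$ with both endpoints on $\Rb^-$, and $\Ec_{\Rb^-}$ is the union, over all boundary-touching loops in $\Lc_{\Rb^-}$, of all such excursions. Note that loops of $\Lc_{\Rb^-}$ that stay in $\Hb$ do not contribute to $\Ec_{\Rb^-}$ by definition.

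Second, I would establish the Brownian excursion property. Conditionally on the visit times and locations of a boundary-touching loop $\wp$ on $\Rb^-$, the strong Markov property of Brownian motion (applied at the successive entry/exit times from $\Hb$) ensures that the excursions between consecutive visits are conditionally independent Brownian excursions with the prescribed endpoints. Since the loops of $\Lc_{\Rb^-}$ are built from Brownian paths (the mixed wired/free boundary condition only dictates how the loops are spatially organised, but not the path law of individual trajectories), each element of $\Ec_{\Rb^-}$ is a Brownian excursion in the required sense.

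Finally, I would verify local finiteness. Fix a compact $K \subset \Hb$ at positive distance $\delta>0$ from $\Rb$. The argument has two layers: (i) only finitely many loops of $\Lc_{\Rb^-}$ that touch $\Rb^-$ reach $K$, and (ii) each such loop, having finite time length, contributes only finitely many excursions meeting $K$. For (ii), finiteness of the time length of any Brownian loop is immediate. For (i), one uses the fact that the Brownian loop measure restricted to loops of diameter at least $\delta$ is finite, combined with local absolute continuity between $\Lc_{\Rb^-}$ and the unconditioned loop soup on macroscopic events. The main obstacle is (i): one must argue that conditioning/wiring on $\Rb^-$ does not introduce infinitely many macroscopic boundary-touching loops. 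Here I would follow the discovery procedure of \cite{MR3901648}: the loops touching $\Rb^-$ arise from the CLE exploration of outermost cluster boundaries meeting $\Rb^-$, and the number of outermost clusters whose boundaries reach the compact set $K$ is finite a.s., by the CLE$_\kappa$ half-plane exploration estimates (each outermost cluster touching both $\Rb^-$ and $K$ has a positive macroscopic cost). Each such cluster contributes only finitely many boundary-touching loops reaching $K$ by the loop soup's local finiteness at macroscopic scale.
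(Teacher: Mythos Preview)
Your approach---adapting \cite[Lemma 9]{qian2018} from the fully wired case $\Ec_\Rb$ to $\Ec_{\Rb^-}$---is exactly what the paper does; in fact the paper omits the proof entirely and simply remarks that changing $\Ec_\Rb$ to $\Ec_{\Rb^-}$ only modifies the setup, not the essential arguments. One small discrepancy worth noting: the paper's definition of ``locally finite'' is that for each $\delta>0$ only finitely many excursions have $|a(e)-b(e)|\ge\delta$, rather than your compact-set formulation, so step (b) of your outline should be rephrased accordingly (the adaptation is straightforward, since an excursion with endpoint separation $\ge\delta$ forces the corresponding loop to have diameter $\ge\delta$).
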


The ``point process of Brownian excursions'' property means that, conditionally on the endpoints of the excursions, $\Ec_{\Rb^-}$ is distributed as the collection of independent Brownian excursions between the given endpoints.
%In this subsection, we will only consider $\Ec_{\Rb^-}$, so will use $\Ec$ to denote $\Ec_{\Rb^-}$ for simplicity.
Throughout, for each excursion $e$, we assign an i.i.d.\ orientation with probability $1/2$. Let  $a(e)$ and $b(e)$ be respectively the starting and ending endpoints of an excursion $e$. 
The ``locally finite'' property means that for each $\delta>0$, the collection of excursions $e\in \Ec_{\Rb^-}$ with $|a(e)-b(e)|\ge \delta$ is a.s.\ finite.
 
%\begin{proposition}\label{prop:integral}
%For all $r>0$, we have
%\begin{align*}
%\Eb\left[\sum_{e\in\Ec} \frac{(b(e) -a(e))^2 r^2}{(r-b(e))^2(r-a(e))^2} \right]=\alpha(\theta).
%\end{align*} 
%\end{proposition}

Let us first prove Proposition~\ref{prop:integral}.
\begin{proof}[Proof of Proposition~\ref{prop:integral}]
We write $\Ec$ for $\Ec_{\Rb^-}$ for simplicity.
We define for each $e\in \Ec$ and $r>0$,
\begin{align*}
p_r(e):=\frac{(b(e) -a(e))^2}{(r-b(e))^2(r-a(e))^2}.
\end{align*}
By Lemma~\ref{lem:p_loc} and Lemma~\ref{lem:est_be}, we know that
\begin{align}\label{eq:pebreps}
\Pb\left[e\cap  D(r,\eps)\not=\emptyset \mid a(e), b(e)\right]= p_r(e) \eps^2+O(1)\eps^4,
\end{align}
where the $O(1)$ term depends on $a(e), b(e)$ and $r$, but is uniformly bounded for $\eps\le 1$. Note that for all $a(e)<b(e)<0$ and $r>0$, we have $p_r(e) \le r^{-2}$.
For each $M>0$, let $\Ec^M$ denote the collection of the $M$ excursions in $\Ec$ with the largest $p_r(e)$. Then $\Ec=\cup_{M>0}\Ec^M$.
Let us first show that 
\begin{align}\label{eq:Ec_delta}
\Pb\left[\Ec^M \cap D(r,\eps)\not=\emptyset \mid \{(a(e), b(e))\}_{e\in\Ec^M}\right]= \sum_{e\in\Ec^M} p_r(e)\eps^2 +O(1) \eps^4,
\end{align}
where the  $O(1)$ term depends on $r, M$ and the random set $\{(a(e), b(e))\}_{e\in\Ec^M}$, but is uniformly bounded for $\eps\le 1$.
The $\le$ direction in \eqref{eq:Ec_delta} directly follows from Lemma~\ref{lem:p_loc} and \eqref{eq:pebreps}. %we have 
%\begin{align*}
%\Pb\left[\Ec^M \cap D(r,\eps)\not=\emptyset \mid \{(a(e), b(e))\}_{e\in\Ec^M}\right] \le  \sum_{e\in\Ec^M} \Pb\left[e\cap  D(r,\eps)\not=\emptyset \mid a(e), b(e)\right] =\sum_{e\in\Ec^M} p_r(e) \eps^2 +O(\eps^4).
%\end{align*}
To prove the $\ge$ direction, note that by the inclusion-exclusion principle, the left-hand side of  \eqref{eq:Ec_delta} is at least
\begin{align*}
%&\Pb\left[\Ec^M \cap D(r,\eps)\not=\emptyset \mid \{(a(e), b(e))\}_{e\in\Ec^M}\right]\ge 
&\sum_{e\in\Ec^M} \Pb[e\cap D(r,\eps)\not=\emptyset] -\sum_{e_1\not=e_2\in \Ec^M} \Pb[e_1\cap D(r,\eps)\not=\emptyset, e_2\cap D(r,\eps)\not=\emptyset]\\
=&\sum_{e\in\Ec^M} p_r(e) \eps^2 -\sum_{e_1 \not=e_2 \in\Ec^M} p_r(e_1) p_r(e_2) \eps^4  +O(1)\eps^4=\sum_{e\in\Ec^M} p_r(e) \eps^2+O(1)\eps^4.
\end{align*}
%where the implicit constant in $O(\eps^4)$ depends on $M, r$ and $ \{(a(e), b(e))\}_{e\in\Ec^M}$. 
In the last equality, we have used the bound $\sum_{e_1 \not=e_2 \in\Ec^M} p_r(e_1) p_r(e_2) \le M^2 r^{-4}$.
This completes the proof of \eqref{eq:Ec_delta}.
Note that the $O(1)$ term in \eqref{eq:Ec_delta} has finite expectation, since $\Pb\left[\Ec^M \cap D(r,\eps)\not=\emptyset\right]\in [0,1]$.
Taking expectations on both sides of \eqref{eq:Ec_delta}, we get
\begin{align}\label{eq:ecdelta}
\Pb\left[\Ec^M \cap D(r,\eps)\not=\emptyset\right]= \Eb\bigg[\sum_{e\in\Ec^M} p_r(e) \bigg]\eps^2 +O(1)\eps^4,
\end{align}
where the $O(1)$ term depends on $M$ and $r$, but is uniformly bounded for $\eps\le 1$. 

On the other hand, by Theorem~\ref{thm:restriction}, we know that $\Ec$ is distributed as a restriction measure of exponent $\alpha(\theta)$. Applying Lemma~\ref{lem:est_be} to $a=\infty$ and $b=0$, we get
\begin{align}\label{eq:Ec_restriction}
\Pb\left[\Ec \cap D(r,\eps)\not=\emptyset\right]= \alpha(\theta) \eps^2/r^2 +O(1)\eps^4.
\end{align}
Since $\Pb\left[\Ec^M \cap D(r,\eps)\not=\emptyset\right]\le \Pb\left[\Ec \cap D(r,\eps)\not=\emptyset\right]$, together with \eqref{eq:ecdelta}, we deduce that
\begin{align*}
 \Eb\bigg[\sum_{e\in\Ec^M} p_r(e) \bigg] \le \alpha(\theta) /r^2.
\end{align*}
Letting $M\to \infty$, by monotone convergence, we have
\begin{align}\label{eq:sum_bound}
\Eb\bigg[\sum_{e\in\Ec} p_r(e) \bigg] \le \alpha(\theta) /r^2.
\end{align}
Let $M\to \infty$ in both sides of \eqref{eq:ecdelta}. The left-hand side of \eqref{eq:ecdelta} tends to $\Pb\left[\Ec \cap D(r,\eps)\not=\emptyset\right]$ which is less than $1$. In the right-hand side of \eqref{eq:ecdelta}, the coefficient in front of $\eps^2$ is increasing as $M\to\infty$, and tends to the same expected sum over $\Ec$ instead of $\Ec^M$, which is less than $\alpha(\theta)/r^2$ by \eqref{eq:sum_bound}. Therefore, the $O(1)$ term in  \eqref{eq:ecdelta} should also be bounded as $M\to\infty$.
This implies
\begin{align*}
\Pb\left[\Ec \cap D(r,\eps)\not=\emptyset\right] = \Eb\bigg[\sum_{e\in\Ec}p_r(e)\bigg]\eps^2 +O(1)\eps^4,
\end{align*}
where the $O(1)$ term depends only on $r$. Comparing with \eqref{eq:Ec_restriction} yields the lemma.
\end{proof}

Let us now deduce a ``second moment'' estimate.
\begin{proposition}\label{prop:sum_mm2}
%For each excursion $e$, let  $a(e)$ and $b(e)$ respectively be the left and right endpoints of an excursion $e$. 
For all $0<r_1<r_2$, we have
\begin{align}
%\label{eq:e1}
\notag
&\Eb\left[\sum_{e \in\Ec_{\Rb^-}} \frac{(b(e) -a(e))^2}{(r_1-b(e))^2 (r_2-a(e))^2 (r_1-r_2)^2} \right] + \Eb\left[\sum_{e \in\Ec_{\Rb^-}} \frac{(b(e) -a(e))^2 }{(r_1-a(e))^2 (r_2-b(e))^2 (r_1-r_2)^2} \right]\\
\label{eq:e2}
&+\Eb\left[\sum_{e_1 \not=e_2 \in\Ec_{\Rb^-}} \frac{(b(e_1) -a(e_1))^2 }{(r_1-b(e_1))^2(r_1-a(e_1))^2}  \frac{(b(e_2) -a(e_2))^2 }{(r_2-b(e_2))^2(r_2-a(e_2))^2} \right] \\
\notag
& = \frac{ \alpha(\theta)}{r_1^2(r_2-r_1)^2}+\frac{\alpha(\theta) }{r_2^2(r_2-r_1)^2} + \frac{\alpha(\theta)(\alpha(\theta)-1) }{r_1^2 r_2^2}.
\end{align} 
\end{proposition}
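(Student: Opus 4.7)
The proof follows the two-step strategy of Proposition \ref{prop:integral}: we expand
$$P(\eps) := \Pb[\Ec_{\Rb^-}\cap D(r_1,\eps)\neq\emptyset,\ \Ec_{\Rb^-}\cap D(r_2,\eps)\neq\emptyset]$$
as $\eps \to 0$ in two independent ways and match the coefficients of $\eps^4$. By Theorem \ref{thm:restriction}, the filling of $\Ec_{\Rb^-}$ is a chordal restriction measure in $\Hb$ with marked points $0,\infty$ and exponent $\alpha=\alpha(\theta)$. Applying Lemma \ref{lem:Kr1r2} with $a=\infty$, $b=0$ (exactly as Lemma \ref{lem:est_be} is used formally in \eqref{eq:Ec_restriction}; one may also argue rigorously via a M\"obius transformation sending $\{0,\infty\}$ to a generic pair of boundary points of $\Hb$), the three leading terms collapse to
$$P(\eps) \;=\; A\,\eps^4 + O(\eps^5), \qquad A := \frac{\alpha(\alpha-1)}{r_1^2 r_2^2} + \frac{\alpha}{r_1^2(r_2-r_1)^2} + \frac{\alpha}{r_2^2(r_2-r_1)^2}.$$

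For the second expansion, we truncate as in the proof of Proposition \ref{prop:integral}: for $M\ge 1$ let $\Ec^M\subset\Ec_{\Rb^-}$ consist of the $M$ excursions maximising $p_{r_1}(e)+p_{r_2}(e)$ (with $p_r(e)$ as in that proof), and set $N_i := \#\{e\in\Ec^M : e\cap D(r_i,\eps)\neq\emptyset\}$. The elementary bounds
$$0 \;\le\; N_1 N_2 - \indic{N_1\ge 1,\,N_2\ge 1} \;\le\; N_1(N_1-1)N_2 + N_1 N_2 (N_2-1),$$
combined with the conditional independence of excursions given their endpoints (Lemma \ref{lem:p_loc}), the one-point estimate (Lemma \ref{lem:est_be}) and the two-point single-excursion estimate (Corollary \ref{cor:Br1r2}) applied termwise, yield
$$\Pb[\Ec^M\cap D(r_1,\eps)\neq\emptyset,\,\Ec^M\cap D(r_2,\eps)\neq\emptyset] \;=\; \Eb\bigl[\Sigma^{(1)}_M + \Sigma^{(2)}_M\bigr]\,\eps^4 + O(\eps^5),$$
where $\Sigma^{(1)}_M$ is the $\Ec^M$-restriction of the sum of the first two sums in \eqref{eq:e2} (the single-excursion contribution, obtained from Corollary \ref{cor:Br1r2} applied to each $e\in\Ec^M$), and $\Sigma^{(2)}_M := \sum_{e_1\neq e_2\in\Ec^M} p_{r_1}(e_1)p_{r_2}(e_2)$ is the restriction of the third sum (the two-excursion contribution, via the product formula for $\Pb[e_1\cap D(r_1,\eps)\neq\emptyset,\,e_2\cap D(r_2,\eps)\neq\emptyset \mid \text{endpoints}]$ supplied by Lemma \ref{lem:p_loc}). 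The inclusion-exclusion remainder $\Eb[N_1(N_1-1)N_2 + N_1 N_2(N_2-1)]$ contributes only $O(\eps^6)$ uniformly in $M$, since $\Eb[N_i] \le O(\eps^2)$ with implicit constant bounded in expectation by $\alpha/r_i^2$ through Proposition \ref{prop:integral}.

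Since $\Ec^M\subset\Ec_{\Rb^-}$, the left-hand side of the second expansion is at most $P(\eps)=A\eps^4+O(\eps^5)$; dividing by $\eps^4$ and letting $\eps\to 0$ gives $\Eb[\Sigma^{(1)}_M+\Sigma^{(2)}_M]\le A$ for every $M$. Both $\Sigma^{(1)}_M$ and $\Sigma^{(2)}_M$ are non-decreasing in $M$ (the summands are positive and $\Ec^M\uparrow\Ec_{\Rb^-}$), so monotone convergence yields that $\Sigma^{(1)}_\infty$ and $\Sigma^{(2)}_\infty$ (the analogous sums over $\Ec_{\Rb^-}$) have finite expectation bounded by $A$. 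Finally, passing $M\to\infty$ in the second expansion exactly as in the concluding step of the proof of Proposition \ref{prop:integral} (the left-hand side converges to $P(\eps)$, the coefficient of $\eps^4$ converges by monotone convergence, and the uniform-in-$M$ remainder remains $O(\eps^5)$ in the limit) yields $P(\eps) = \Eb[\Sigma^{(1)}_\infty+\Sigma^{(2)}_\infty]\eps^4 + O(\eps^5)$; comparing with the first expansion gives \eqref{eq:e2}. The main technical obstacle is establishing that the $O(\eps^5)$ remainder in the $\Ec^M$-expansion is uniform in $M$; this hinges on the fact that the excursion endpoints $a(e),b(e)\in\Rb^-$ stay uniformly away from the disks $D(r_i,\eps)$, which controls the implicit constants in Lemma \ref{lem:est_be} and Corollary \ref{cor:Br1r2}, combined with the a priori finiteness of $\Eb[\sum_{e}p_{r_i}(e)]$ from Proposition \ref{prop:integral}.
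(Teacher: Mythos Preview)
Your proof is correct and follows essentially the same strategy as the paper: compute the $\eps^4$ coefficient of $P(\eps)$ in two ways (once via Lemma~\ref{lem:Kr1r2} applied to the restriction sample with marked points $0,\infty$, once via the excursion decomposition with truncation to $\Ec^M$ and monotone convergence as in Proposition~\ref{prop:integral}) and match. The only difference is organisational: the paper splits the event $\{N_1\ge 1,\,N_2\ge 1\}$ as $E_1\cup E_2$ (same excursion hits both discs / two different excursions) and treats $\Pb[E_1]$, $\Pb[E_2]$ separately, whereas you approximate $\indic{N_1\ge 1,\,N_2\ge 1}$ by $N_1N_2$, so that the diagonal of $\Eb[N_1N_2]$ yields $\Sigma^{(1)}$ and the off-diagonal yields $\Sigma^{(2)}$ in a single step. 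Both routes are equivalent.

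One point deserves care: your justification that the inclusion--exclusion remainder $\Eb[N_1(N_1-1)N_2+N_1N_2(N_2-1)]$ is $O(\eps^6)$ \emph{uniformly in $M$} is incomplete---knowing only $\Eb[N_i]=O(\eps^2)$ from Proposition~\ref{prop:integral} does not control a third-moment-type quantity. Fortunately this uniformity is not actually needed. For each fixed $M$ the remainder is $O_M(\eps^5)$, which already gives $\Eb[\Sigma^{(1)}_M+\Sigma^{(2)}_M]\le A$ after dividing by $\eps^4$ and letting $\eps\to 0$; monotone convergence then gives the inequality for $M=\infty$. For the matching equality you then argue exactly as in the last paragraph of the proof of Proposition~\ref{prop:integral}: since $\Pb[\Ec^M\cap D(r_1,\eps)\neq\emptyset,\,\Ec^M\cap D(r_2,\eps)\neq\emptyset]\to P(\eps)$ and the $\eps^4$ coefficient converges (to a finite limit), the remainder converges too for each fixed $\eps$, and the limiting relation is the desired expansion. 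This is how the paper handles it as well.
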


\begin{remark}
When $\theta=1/2$, the endpoints of $\Ec_{\R^-}$ are distributed according to a Poisson point process with intensity $1/4$ (see Theorem \ref{thm:qw19} for $\Ec_\R$ instead of $\Ec_{\R^-}$). At this value of $\theta$, the restriction exponent $\alpha(\theta)$ equals $1/4$ and thus also corresponds to the intensity of the point process. With a simple Poisson point process computation, one can deduce from Proposition~\ref{prop:integral} that the third expectation on the left hand side of \eqref{eq:e2} is equal to the third term on the right hand side. There is \textit{a priori} no reason to believe that this still holds for other values of $\theta$. However, we will only need to upper bound the terms on the left hand side of \eqref{eq:e2}.
\end{remark}

\begin{proof}
We write $\Ec$ for $\Ec_{\Rb^-}$ for simplicity.
By Theorem~\ref{thm:restriction} and Lemma~\ref{lem:Kr1r2} applied to $a=\infty$ and $b=0$, we have
\begin{align}\label{eq:K0infty}
\Pb\left[\Ec\cap D(r_1, \eps)\not=\emptyset, \Ec\cap D(r_2, \eps)\not=\emptyset\right]=\frac{\alpha(\theta)(\alpha(\theta)-1) \eps^4}{r_1^2 r_2^2} +\frac{ \alpha(\theta)\eps^4}{r_1^2(r_2-r_1)^2}+\frac{\alpha(\theta) \eps^4}{r_2^2(r_2-r_1)^2}+O(1)\eps^5.
\end{align}
The event $\{\Ec\cap D(r_1, \eps)\not=\emptyset, \Ec\cap D(r_2, \eps)\not=\emptyset\}$ is equal to the union of the two following events:
\begin{itemize}
\item Let $E_1$ be the event that $ D(r_1, \eps)$ and $ D(r_2, \eps)$ are visited by a same excursion in $\Ec$.
\item Let $E_2$ be the event that $ D(r_1, \eps)$ and $ D(r_2, \eps)$ are visited by two different excursions in $\Ec$.
\end{itemize}
We first evaluate $\Pb[E_1]$. Let
\begin{align*}
s_{r_1, r_2}(e):= \frac{(b(e) -a(e))^2}{(r_1-b(e))^2 (r_2-a(e))^2 (r_1-r_2)^2} +\frac{(b(e) -a(e))^2 }{(r_1-a(e))^2 (r_2-b(e))^2 (r_1-r_2)^2}.
\end{align*}
By Lemma~\ref{lem:p_loc} and Corollary~\ref{cor:Br1r2}, we have
\begin{align*}
\Pb[e \cap D(r_1, \eps)\not=\emptyset, e \cap D(r_2, \eps)\not=\emptyset ]=s_{r_1, r_2}(e)\eps^4+O(1)\eps^5,
\end{align*}
where the $O(1)$ term depends on $a(e), b(e)$ and $r_1, r_2$, but is uniformly bounded for $\eps\le 1$. Note that for all $a(e)<b(e)<0$ and $r_1, r_2>0$, we have $p_r(e) \le (r_1-r_2)^{-2} (r_1^{-2}+r_2^{-2})$.

For each $M>0$, let $\Ec^{M}$ be the set of the $M$ excursions in $\Ec$ with the largest $s_{r_1, r_2}(e)$. Then $\Ec=\cup_{M>0} \Ec^{M}$.
Let $E_1^M$ and $E_2^M$ be defined respectively as $E_1$ and $E_2$ where we replace $\Ec$ by $\Ec^{M}$ in their definitions.
By Lemma~\ref{lem:p_loc}, Corollary~\ref{cor:Br1r2}, and a use of the inclusion-exclusion principle as in the proof of \ref{prop:integral}, we can deduce
\begin{align*}
\Pb \left[E_1^M  \mid \{(a(e), b(e))\right]=\sum_{e \in\Ec^{M}} s_{r_1, r_2}(e) \eps^4 +O(1)\eps^5,
\end{align*}
where the $O(1)$ term depends on $r_1, r_2$ and the random set $\{(a(e), b(e))\}_{e\in\Ec^{M}}$, but is uniformly bounded for $\eps\le 1$. Moreover, this $O(1)$ term has finite expectation, since $\Pb[E_1^M]\in[0,1]$. A similar reasoning as in the proof of Proposition~\ref{prop:integral} will allow us to take the expectations on both sides of the previous equality and then let $M\to\infty$. We will get that 
\begin{align*}
\Pb[E_1]=\Eb\left[\sum_{e \in\Ec} s_{r_1, r_2}(e)   \right]\eps^4 +O(1)\eps^5.
\end{align*}
Let us now evaluate $\Pb[E_2]$.
By Lemmas~\ref{lem:p_loc} and~\ref{lem:est_be}, we have
\begin{align*}
\Pb \left[E_2^M  \mid \{(a(e), b(e))\right]
=\sum_{e_1 \not=e_2 \in\Ec} \frac{(b(e_1) -a(e_1))^2 \eps^2}{(r_1-b(e_1))^2(r_1-a(e_1))^2}  \frac{(b(e_2) -a(e_2))^2 \eps^2}{(r_2-b(e_2))^2(r_2-a(e_2))^2}+O(\eps^5).
\end{align*}
The same reasoning will again allow us to conclude that $\Pb[E_2]$ is equal to the line \eqref{eq:e2} times $\eps^4$ plus $O(1)\eps^5$.

Finally, one can see that $\Pb[E_1\cap E_2]$ is of smaller order than $\eps^4$, because on $E_1\cap E_2$, there needs to be one excursion $e_1\in\Ec$ that visits both $ D(r_1, \eps)$ and $ D(r_2, \eps)$ and a different excursion  $e_2\in\Ec$ that visits $ D(r_1, \eps)$ or $ D(r_2, \eps)$. This implies that
\begin{align*}
\Pb\left[\Ec\cap D(r_1, \eps)\not=\emptyset, \Ec\cap D(r_2, \eps)\not=\emptyset\right]=\Pb[E_1] +\Pb[E_2] +o(\eps^4).
\end{align*}
Combining with \eqref{eq:K0infty} and the previous evaluations of $\Pb[E_1]$ and $\Pb[E_2]$, we obtain the lemma.
\end{proof}

\subsection{Boundary excursions in a loop soup with wired boundary conditions}\label{subsec:wired}
In this section, we will use the results from Section~\ref{subsec:bexr-} to obtain quantitative controls on excursions in $\Ec_\Rb$, i.e., from a loop soup in $\Hb$ which is wired on the entire boundary. The main step is to prove Lemma~\ref{lem:1point_inf} below. After that, we will deduce Lemmas~\ref{lem:X_eps} and~\ref{lem:estimate_interval} as consequences.

\begin{lemma}\label{lem:1point_inf}
%For each excursion $e$, let  $a(e)$ and $b(e)$ respectively be the left and right endpoints of an excursion $e$. Then
We have
\begin{align}
\label{eq:bound}
&S_1:=\Eb\left[ \sum_{e\in \Ec_{\Rb}, a(e), b(e) \in [0,1]}  (a(e) -b(e))^2\right]<\infty.\\
\label{eq:bound1}
&S_2:=\Eb\left[\sum_{e_1, e_2 \in\Ec_{\Rb};\, a(e_1), b(e_1), a(e_2), b(e_2)\in [0, 1]} (b(e_1) -a(e_1))^2 (b(e_2) -a(e_2))^2 \right] <\infty.
\end{align}
\end{lemma}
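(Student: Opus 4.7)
The plan is to reduce the expected sums $S_1, S_2$ over $\Ec_\R$ to the first- and second-moment identities for $\Ec_{\R^-}$ already established in Propositions~\ref{prop:integral} and~\ref{prop:sum_mm2}. The first step is a conformal reduction: by M\"obius invariance of the law of $\Ec_\R$, apply a conformal automorphism $\varphi:\Hb\to\Hb$ (for instance $\varphi(z)=1-1/z$, possibly post-composed with an isometry to preserve $\Hb$) sending the bounded interval $[0,1]$ bijectively onto the half-line $(-\infty, 0]$. A direct computation gives $\varphi(b)-\varphi(a)=(b-a)/(ab)$, hence, writing $\tilde e = \varphi(e)$ and using $\varphi(\Ec_\R)\stackrel{(d)}{=}\Ec_\R$,
\[
S_1 = \E\!\left[\,\sum_{\substack{\tilde e\in\Ec_\R\\ a(\tilde e),b(\tilde e)\in(-\infty,0]}} \frac{(b(\tilde e)-a(\tilde e))^2}{(1-a(\tilde e))^2(1-b(\tilde e))^2}\,\right],
\]
and similarly $S_2$ becomes a double sum over such excursions with the corresponding product weight. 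The crucial observation is that the weight $(b-a)^2/((1-a)(1-b))^2$ is precisely $p_r(e)$ from Proposition~\ref{prop:integral} at $r=1$.

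The second step is to compare $\Ec_\R$ with $\Ec_{\R^-}$ along $(-\infty,0]$, using the Markovian exploration alluded to in the remark following Proposition~\ref{prop:sum_mm2}. Starting from the mixed wired/free configuration underlying $\Ec_{\R^-}$, one explores the outer boundary of the filling of $\Ec_{\R^-}$, an SLE$_{\kappa(\theta)}$-type curve $\eta$ (cf.\ Theorem~\ref{T:mixed}). As $\eta$ is traced, the wired part of the boundary grows and, in the limit, the whole of $\R$ is wired, producing $\Ec_\R$. At each stage the Markov property ensures that the unexplored domain carries again a loop soup with a mixed boundary condition, so the weighted first- and second-moment identities from Propositions~\ref{prop:integral} and~\ref{prop:sum_mm2} can be propagated along the exploration with controlled conformal Jacobians. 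Integrating these identities over the exploration bounds the sums over $\Ec_\R$ (with the weight produced in Step 1) by finite multiples of the analogous sums over $\Ec_{\R^-}$.

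The third step is then to invoke Propositions~\ref{prop:integral} and~\ref{prop:sum_mm2} directly. Proposition~\ref{prop:integral} at $r=1$ yields
\[
\E\!\left[\,\sum_{e\in\Ec_{\R^-}} \frac{(b(e)-a(e))^2}{(1-a(e))^2(1-b(e))^2}\,\right] = \alpha(\theta)<\infty,
\]
which together with Step~2 bounds $S_1$ by a finite multiple of $\alpha(\theta)$. For $S_2$, decompose the double sum into the diagonal part ($e_1=e_2$), which is bounded by a modified $S_1$-type sum with $(b-a)^4$ in the numerator and hence finite by the same mechanism, and the off-diagonal part ($e_1\neq e_2$), which matches the third line of the identity in Proposition~\ref{prop:sum_mm2} (after specialising $r_1=r_2=1$, up to a M\"obius rescaling of the weights). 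The right-hand side of that identity is a finite explicit polynomial in $\alpha(\theta)$, so $S_2<\infty$ follows.

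The genuinely hard part will be Step~2: justifying the coupling between $\Ec_\R$-excursions attached to $(-\infty,0]$ and $\Ec_{\R^-}$-excursions so that sums with weight $p_1(e)$ can be controlled. Naively comparing the loop collections is too crude, because loops that only join the boundary cluster once $\R^+$ has been wired can produce new excursions with endpoints in $(-\infty,0]$ that are absent from $\Ec_{\R^-}$. The correct implementation, to be carried out in Section~\ref{subsec:wired}, uses the conformal Markov property of the SLE$_{\kappa(\theta)}$ exploration (Theorem~\ref{T:mixed}) together with the restriction exponent $\alpha(\theta)$ from Theorem~\ref{thm:restriction}, so that the additional excursions created along the exploration are integrated against a finite restriction-type measure.
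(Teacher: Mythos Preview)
Your overall architecture is right --- reduce to the $\Ec_{\R^-}$ identities of Propositions~\ref{prop:integral} and~\ref{prop:sum_mm2} via a M\"obius map, then pass from a partially wired to the fully wired configuration by exploring an SLE$_\kappa$ boundary --- and this is indeed how the paper proceeds. But Step~2 as written has a genuine gap, and the paper fills it with a concrete idea that your proposal does not contain.

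You propose to start from $\Ec_{\R^-}$ and wire the remaining half-line $\R^+$ by exploring the SLE from $0$ to $\infty$. The difficulty you flag is real: the SLE curve is macroscopic (it runs off to infinity), and the conformal maps along the exploration have no uniform control on $\R^-$, so ``propagating the identities with controlled conformal Jacobians'' is not something you can just assert. Your fallback (``additional excursions integrated against a finite restriction-type measure'') is not a mechanism that appears anywhere in the paper and would need a separate argument.

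The paper sidesteps this entirely. Instead of going from half-wired to fully wired, it starts from $\Ec_{[-R,R]}$ for a large $R$ (M\"obius-equivalent to $\Ec_{\R^-}$, so Proposition~\ref{prop:integral} gives $\E[\sum_{e\in\Ec_{[-R,R]}}(b(e)-a(e))^2]=4R^2\alpha(\theta)$), and explores the SLE$_\kappa$ outer boundary $\eta$ from $-R$ to $R$. The image under the uniformising map $g_\infty$ is exactly $\Ec_\R$ by Theorem~\ref{thm:qw19}. The point of taking $R$ large is that with probability at least $1/2$ the curve $\eta$ avoids $D(0,10)$, and on that event one proves, by a harmonic-measure decomposition through $\partial D(0,10)$, that $g_t(b)-g_t(a)\le C(b-a)$ uniformly for $a,b\in[0,1]$. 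Thus the $\Ec_\R$-sum over $[0,1]$ is dominated by $C^2$ times the $\Ec_{[-R,R]}$-sum, and independence of $\Ec_\R$ from $\eta$ (Theorem~\ref{thm:qw19}) lets one remove the conditioning. No new excursions need to be tracked: it is the \emph{same} set of excursions, merely distorted by a controlled conformal map.

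Two smaller points. First, one cannot set $r_1=r_2=1$ in Proposition~\ref{prop:sum_mm2} (the statement requires $r_1<r_2$, and the right-hand side has a $(r_2-r_1)^{-2}$ singularity); the paper instead takes $0<r_1<r_2<1$ and uses monotonicity in $r$ to pass to the $r=1$ weight. Second, for the diagonal $e_1=e_2$ contribution to $S_2$, the paper simply bounds $(b-a)^4\le(b-a)^2$ and invokes the $S_1$ bound, which is the cleanest route.
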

\begin{proof}
We use $\Ec$ to denote $\Ec_{\Rb}$ for simplicity. We apply the conformal map $z \in \Hb \mapsto 1/(1-z) \in \Hb$ that maps $\R^-$ to $[0,1]$ to Proposition~\ref{prop:integral} (with $r=1$), and get
\begin{align*}
\Eb\left[\sum_{e\in \Ec_{[0,1]}}  (a(e) -b(e))^2\right]=\alpha(\theta).
\end{align*}
By scaling invariance and translation invariance, for all $R>0$, we have
\begin{align*}
\Eb\left[\sum_{e\in \Ec_{[-R,R]}}  (a(e) -b(e))^2\right]=4R^2\alpha(\theta).
\end{align*}
Therefore
\begin{align*}
\Eb\left[\sum_{e\in \Ec_{[-R,R]}, a(e), b(e)\in[0,1]}  (a(e) -b(e))^2\right]\le 4R^2\alpha(\theta).
\end{align*}
In a loop soup $\Lc_{[-R,R]}$ in $\Hb$ wired on $[-R, R]$ and free elsewhere, let $\eta$ be the curve from $-R$ to $R$ which is the outer boundary of the cluster glued at $[-R, R]$. Then $\eta$ is distributed as a SLE$_\kappa$ from $-R$ to $R$. Let $g_t$ be the conformal map from $\Hb\setminus \eta([0,t])$ onto $\Hb$ that fixes $0,1, \infty$.  Let $E$ be the event that $\eta \cap D(0, 10)=\emptyset$. We can take $R$ big enough so that $\Pb(E)\ge 1/2$. Let us first prove that on the event $E$,  
\begin{align}\label{eq:conf1}
\sum_{e\in \Ec_{[-R,R]}, a(e), b(e)\in[0,1]}  (g_t(a(e)) -g_t(b(e)))^2 \le C \sum_{e\in \Ec_{[-R,R]}, a(e), b(e)\in[0,1]}  (a(e) -b(e))^2.
\end{align}
There is an infinite measure $\mu$ on Brownian excursions in $\Hb$ between a point $x\in \Rb$ and $\infty$, so that the endpoint $x$ is distributed according to the Lebesgue measure \cite{MR1992830}. For a given $g_t$, there is a constant $C$ (which depends only on $g_t$) such that for all $-R < a<b <R$,  $g_t(b)-g_t(a)$ is equal to $C$ times the $\mu$-measure of excursions in $\Hb$ between $\infty$ and an endpoint in $[a,b]$ which do not intersect $\eta([0,t])$. On the event $E$, an excursion in $\Hb\setminus \eta([0,t])$ from $\infty$ to $[a,b]$ can further be decomposed as the concatenation of an excursion $\xi_1$ between $z_0 \in \partial D(0,10)$ in $\Hb\setminus \eta([0,t])$ and an excursion $\xi_2$ between $z_0$ and a point in $[a,b]$ in $D(0,10)$. The distribution of $z_0$ depends on $\eta([0,t])$, and we denote it by $\Pb_0$. However, given $z_0$, the distribution of $\xi_2$ is independent from $\eta([0,t])$. For all $0\le a<b\le 1$, the ratio
\begin{align}\label{eq:ratio}
\frac{g_t(b) -g_t(a)}{g_t(1) -g_t(0)} = \Eb_0 \left[ \frac{\int_a^b H_{D(0,10)}(z_0, x) dx}{\int_0^1 H_{D(0,10)}(z_0, x) dx} \right].
\end{align}
Since $\partial D(0,10)$ has positive distance from $[0,1]$, there exist $C_1>c_1>0$ such that for all $z_0 \in \partial D(0,10)$ and $x\in[0,1]$, we have $c_1\le H_{D(0,10)}(z_0, x) \le C_1$. Therefore \eqref{eq:ratio} is at most $(C_1/c_1)(b-a)$. This implies that $g_t(b) -g_t(a)\le C(b-a)$ for all $0\le a<b\le 1$ where $C$ is some universal constant, hence \eqref{eq:conf1} follows.

%For each $t>0$,  the left-hand side of \eqref{eq:conf1} is equal to
%$$S_t:= \sum_{e\in \Ec_{t}, a(e), b(e)\in[0,1]}  (a(e) -b(e))^2,$$
%where $\Ec_t$ is a loop soup configuration in $\Hb$ which is wired on $[g_t(\eta(t)), g_t(R)]$, and free elsewhere. As $t\to \infty$, we have $g_t(\eta(t)) \to -\infty$ and  $g_t(R)\to \infty$ a.s.

As $t\to \infty$, $g_t$ converges to the conformal map $g_\infty$ from the bounded connected component of $\Hb\setminus \eta([0,\infty])$ onto $\Hb$ that sends $0,1,R$ to $0,1, \infty$. The left-hand side of \eqref{eq:conf1} is at least
\begin{align*}
\sum_{e\in \Ec_{[-R,R]}, a(e), b(e)\in[0,1], e\cap \eta=\emptyset}  (g_t(a(e)) -g_t(b(e)))^2,
\end{align*}
which converges a.s.\ to 
\begin{align}\label{eq:absquare}
\sum_{e\in \Ec, a(e), b(e)\in[0,1]}  (a(e) -b(e))^2,
\end{align}
where $\Ec$ is the set of excursions induced from $g_\infty(\Lc_{[-R,R]})$, a loop soup in $\Hb$ with wired boundary conditions.
By Theorem~\ref{thm:qw19}, the law of \eqref{eq:absquare} is independent from $\eta$, hence remains unchanged if we condition on $E$. Taking the conditional expectation on $E$ of both sides of \eqref{eq:conf1}, letting $t\to\infty$, and using the dominated convergence where the dominator is given by the right-hand side of  \eqref{eq:conf1}, we get
\begin{align}\label{eq:inf1}
&\Eb\left[\sum_{e\in \Ec, a(e), b(e)\in[0,1]}  (a(e) -b(e))^2 \right] \le C \Eb\left[ \sum_{e\in \Ec_{[-R,R]}, a(e), b(e)\in[0,1]}  (a(e) -b(e))^2 \mid E \right]\\
& \le C \Eb\left[\sum_{e\in \Ec_{[-R,R]}, a(e), b(e)\in[0,1]}  (a(e) -b(e))^2\right] \Pb(E)^{-1} \le 8C R^2\alpha(\theta).
\end{align}
This completes the proof of \eqref{eq:bound}.

The proof of \eqref{eq:bound1} follows a similar line.
From Proposition~\ref{prop:sum_mm2}, we can deduce that for $0<r_1<r_2<1$,
\begin{align}
\label{eq:exp1}
&\Eb\left[\sum_{e_1 \not=e_2 \in\Ec_{\Rb^-}} \frac{(b(e_1) -a(e_1))^2 }{(1-b(e_1))^2(1-a(e_1))^2}  \frac{(b(e_2) -a(e_2))^2 }{(1-b(e_2))^2(1-a(e_2))^2} \right] \\
\notag
& \le \Eb\left[\sum_{e_1 \not=e_2 \in\Ec_{\Rb^-}} \frac{(b(e_1) -a(e_1))^2 }{(r_1-b(e_1))^2(r_1-a(e_1))^2}  \frac{(b(e_2) -a(e_2))^2 }{(r_2-b(e_2))^2(r_2-a(e_2))^2} \right] <\infty.
\end{align}
Under the conformal map $f(z)=1/(1-z)$, $f(\Ec_{\Rb^-})$ is distributed as $\Ec_{[0,1]}$, and \eqref{eq:exp1} becomes
\begin{align*}
\Eb\left[\sum_{e_1 \not=e_2 \in\Ec_{[0,1]}} (b(e_1) -a(e_1))^2 (b(e_2) -a(e_2))^2 \right]<\infty.
\end{align*}
Following the same strategy as before, we deduce that
\begin{align*}
\Eb\left[\sum_{e_1\not= e_2 \in\Ec_{\infty};\, a(e_1), b(e_1), a(e_2), b(e_2)\in [0, 1]} (b(e_1) -a(e_1))^2 (b(e_2) -a(e_2))^2 \right] <\infty.
\end{align*}
To deal with the case $e_1 = e_2$, we simply bound $(b(e)-a(e))^4 \leq (b(e)-a(e))^2$ and rely on \eqref{eq:bound}.
This completes the proof of \eqref{eq:bound1}.
\end{proof}

\begin{lemma}\label{lem:X_eps}
%For each excursion $e$, let  $a(e)$ and $b(e)$ respectively be the left and right endpoints of an excursion $e$. 
For all $\eps\in [0,1]$, let
\begin{align*}
X(\eps):=\sum_{e \in\Ec_{\Rb};\, a(e) \in [0,\eps); b(e) \in [0,1]} (b(e) -a(e))^2.
\end{align*} 
Then there exists a universal constant $C>0$ such that
$\Eb\left[X(\eps) \right] \le C \eps$ and $\Eb\left[X(\eps)^2 \right]\le C \eps$.
\end{lemma}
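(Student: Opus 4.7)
The plan is to exploit the translation and scale invariance of $\Ec_\Rb$ (since both $z \mapsto z+t$ and $z \mapsto \lambda z$ are conformal automorphisms of $\Hb$, the law of $\Ec_\Rb$ is invariant under them by Theorem \ref{thm:qw19}) to reduce both moments to the constants $S_1, S_2$ of Lemma \ref{lem:1point_inf}. For the first moment, introduce the $\sigma$-finite measure on $\Rb^2$
\[
\mu(A \times B) := \Eb\bigg[\sum_{e \in \Ec_\Rb} \mathbf{1}_{\{a(e) \in A,\, b(e) \in B\}} (b(e)-a(e))^2 \bigg].
\]
Translation invariance makes $\mu$ invariant under simultaneous translation of both coordinates, so $\mu(da, db) = da \otimes \nu(db - a)$ for some Radon measure $\nu$ on $[0, \infty)$. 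Scaling gives $\mu([0, R]^2) = R^2 S_1$, while the disintegration yields $\mu([0, R]^2) = \int_0^R \nu([0, s]) \, ds$, so that $\nu([0, s]) = 2 s S_1$ for all $s \ge 0$. Therefore
\[
\Eb[X(\eps)] = \mu([0, \eps] \times [0, 1]) = \int_0^\eps \nu([0, 1 - a]) \, da = 2 S_1 \int_0^\eps (1-a) \, da \le 2 S_1 \eps.
\]

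For the second moment, I would expand $X(\eps)^2$ into a diagonal sum ($e_1 = e_2$) plus an off-diagonal sum ($e_1 \ne e_2$). The diagonal contribution equals $\Eb[\sum_e \mathbf{1}_{\{a(e) \in [0,\eps),\, b(e) \in [0,1]\}} (b(e) - a(e))^4]$, bounded by $\Eb[X(\eps)] \le 2 S_1 \eps$ since $b(e) - a(e) \le 1$. For the off-diagonal part, introduce the weighted 2-point intensity $M^{\ne}$ defined analogously for ordered pairs $e_1 \ne e_2$, a $\sigma$-finite measure on $(\Rb^2)^2$ (finite on compacts by scaling: $M^{\ne}([0,R]^4) \le R^4 S_2$). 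Translation invariance of all four coordinates yields
\[
M^{\ne}(da_1, db_1, da_2, db_2) = da_1 \otimes K(db_1 - a_1,\, da_2 - a_1,\, db_2 - a_1)
\]
for some Radon measure $K$ on $[0,\infty) \times \Rb \times [0, \infty)$. Since $a_1, a_2 \in [0, \eps)$ forces $a_2 - a_1 \in (-\eps, \eps)$ and $b_i - a_i \le 1$, enlarging the integration domain gives
\[
\Eb[\text{off-diag}] \le \int_0^\eps da_1 \cdot K([0,1] \times [-\eps, \eps] \times [0,1]) \le \eps \cdot K([0,1] \times [-1, 1] \times [0,1]).
\]

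It remains to show that $K_0 := K([0,1] \times [-1, 1] \times [0,1])$ is finite. Integrating the constant $K_0$ over $a_1 \in [0, 1]$ gives, via the disintegration, the $M^{\ne}$-measure of the set $\{a_1 \in [0,1],\, b_1 - a_1 \in [0,1],\, a_2 - a_1 \in [-1,1],\, b_2 - a_2 \in [0,1]\}$, which is contained in $[-1, 3]^4$. By translation and scale invariance, this is at most $M^{\ne}([0, 4]^4) = 4^4 M^{\ne}([0, 1]^4) \le 4^4 S_2 < \infty$ (Lemma \ref{lem:1point_inf}). Combining the diagonal and off-diagonal bounds gives $\Eb[X(\eps)^2] \le C \eps$. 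The only delicate point is justifying the two disintegrations, but this is automatic from translation invariance plus the $\sigma$-finiteness guaranteed by the scaling relations and the finiteness of $S_1, S_2$.
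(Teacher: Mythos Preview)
Your argument is correct in outline and takes a genuinely different route from the paper. The paper's proof is discrete and almost trivial: assuming $\eps = 1/N$, it partitions $[0,1)$ into $N$ intervals $I_n = [n/N,(n+1)/N)$, defines translated copies $X_n(\eps)$ all having the law of $X(\eps)$, and observes that $\sum_n X_n(\eps)$ is dominated by the sum over all excursions with both endpoints in $[0,2]$. This immediately gives $N\,\Eb[X(\eps)] \le 4 S_1$, and since the $X_n$ are nonnegative, $\sum_n X_n(\eps)^2 \le (\sum_n X_n(\eps))^2$, so $N\,\Eb[X(\eps)^2] \le 16 S_2$. Your approach instead encodes the same translation invariance structurally, via the disintegration $\mu = \text{Leb}\otimes\nu$ of the weighted endpoint intensity, and then uses scaling to bound $\nu$ (and likewise $K$) on compacts. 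The paper's route is shorter and needs no measure-theoretic disintegration; yours extracts more structure (it in fact forces $\nu$ to be a constant multiple of Lebesgue measure) and would generalise more readily.

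One technical correction: under the paper's convention each excursion is given a random i.i.d.\ orientation, so $a(e)$ is the \emph{starting} endpoint, not the left one, and $b(e)-a(e)$ can be negative. Hence $\nu$ is supported on all of $\Rb$ (symmetrically about $0$), not on $[0,\infty)$, and your identity $\nu([0,s]) = 2sS_1$ and the ranges $[0,1]$ for the first and third coordinates of $K$ are not quite right. The repair is painless: simply enlarge the domains. For the first moment, $\Eb[X(\eps)] = \int_0^\eps \nu([-a,1-a])\,da \le \eps\cdot \nu([-1,1])$, and $\nu([-1,1]) \le \mu([0,1]\times[-1,2]) \le \mu([-1,2]^2) = 9S_1$. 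For the second moment, replace $K([0,1]\times[-\eps,\eps]\times[0,1])$ by $K([-1,1]\times[-\eps,\eps]\times[-1,1])$; your containment in $[-1,3]^4$ and the bound by $4^4 S_2$ then go through unchanged.
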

\begin{proof}
Without loss of generality, assume that $\eps = 1/N$ for some integer $N \geq 1$.
Note that $[0,1)$ is the disjoint union of the sets $I_n$ for $0\le n \le N-1$, where $I_n:=[n /N, (n+1)/N )$. Let
\begin{align*}
X_n(\eps):=\sum_{e \in\Ec_{\Rb};\, a(e) \in I_n; \, b(e) \in [n/N, n/N+1]} (b(e) -a(e))^2.
\end{align*}
By translation invariance, $X_n(\eps)$ has the same distribution as $X(\eps)$.
Hence 
\begin{align*}
\Expect{\sum_{e \in\Ec_{\Rb};\, a(e), b(e) \in [0,2]} (b(e) -a(e))^2} \ge \sum_{n=0}^{N-1} \Eb\left[X_n(\eps) \right]\ge N \Eb[X(\eps)].
\end{align*}
By \eqref{eq:bound}, this implies that $\Eb[X(\eps)]\le CN^{-1} = C \eps$.
The upper bound on $\E[X(\eps)^2]$ is obtained similarly using \eqref{eq:bound1} and
\begin{align*}
\Expect{\left(\sum_{e \in\Ec_{\Rb};\, a(e), b(e) \in [0,2]} (b(e) -a(e))^2\right)^2} \ge \Eb\left[\left(\sum_{n=0}^{N-1} X_n(\eps) \right)^2 \right]\ge\sum_{n=0}^{N-1} \Eb[X_n(\eps)^2]\ge N \Eb[X(\eps)^2].
\end{align*}
\end{proof}

\begin{lemma}\label{lem:estimate_interval}
For all intervals $I_1, I_2\subset \Rb$ with respective lengths $r_1$ and $r_2$, there is a universal constant $C>0$, such that
\begin{align*}
\Eb\left[\sum_{e\in\Ec_{\Rb},\, a(e)\in I_1, b(e)\in I_2} (b(e)-a(e))^2 \right]\le Cr_1r_2.
\end{align*}
\end{lemma}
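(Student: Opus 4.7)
The plan is to determine the first moment measure of the endpoint pairs of $\Ec_{\Rb}$ up to a multiplicative constant using conformal invariance, which will in fact produce the equality $F(I_1, I_2) = c(\theta)\, r_1 r_2$ rather than merely a bound. First I would define the first moment measure $\mu$ on $\Rb^2 \setminus \{a = b\}$ by $\mu(A) := \Eb[\sum_{e \in \Ec_{\Rb}} \mathbf{1}_A(a(e), b(e))]$. Lemma~\ref{lem:1point_inf} already ensures that $\int_A (b - a)^2\, \mu(da, db) < \infty$ for bounded rectangles $A$, so $\mu$ is locally finite once weighted by $(b - a)^2$. By Theorem~\ref{thm:qw19}, the law of $\Ec_{\Rb}$ is invariant under every Möbius transformation $\phi$ of $\Hb$ preserving $\Rb$, and consequently $\mu$ is invariant under the induced action $(a, b) \mapsto (\phi(a), \phi(b))$ on $\Rb^2$.

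Translation invariance, combined with the point process description of $\Ec_{\Rb}$ in Lemma~\ref{lem:p_loc} (conditionally on the endpoints the excursions are independent Brownian excursions, whose endpoint intensity is absolutely continuous), will show that $\mu$ admits the form $\rho(b - a)\, da\, db$ for some measurable $\rho : \Rb \setminus \{0\} \to [0, \infty)$. Applying Möbius invariance to the inversion $\phi(z) = -1/z$, and using the identities $\phi(b) - \phi(a) = (b - a)/(ab)$ and $\phi'(a)\phi'(b) = (ab)^{-2}$, yields the functional equation
\[
\rho(u) = (ab)^{-2}\, \rho\!\bigl(u/(ab)\bigr), \qquad u = b - a,
\]
and since the product $ab$ can be prescribed freely and independently of $u = b - a$, this forces $u^2 \rho(u)$ to be constant on each half-line, giving $\rho(u) = c/|u|^2$ for some finite constant $c > 0$ (the two half-line constants coincide by the i.i.d.\ orientation of the excursions).

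Plugging this density into the definition of $F$ then gives
\[
F(I_1, I_2) = \int_{I_1 \times I_2} \rho(b - a)(b - a)^2 \, da\, db = c\, r_1 r_2,
\]
and the finiteness of $c$ follows at once from Lemma~\ref{lem:1point_inf} via $c = F([0, 1], [0, 1]) \leq S_1 < \infty$. The hard part will be to rigorously justify the absolute continuity of the first moment measure~$\mu$ and to upgrade the Möbius invariance of the random set $\Ec_{\Rb}$ into the pointwise functional equation for $\rho$, in particular ruling out any singular translation-invariant component of $\mu$ supported on lines $\{b - a = \text{const}\}$; an alternative that avoids this technicality is to cover $I_2$ by $\lceil r_2/r_1\rceil$ translates of $I_1$ and prove the uniform bound $F(I, J) \leq C r^2$ for any two intervals of length $r$ by iterating the Möbius map $\phi(z) = 1/(c - z)$ to halve the ratio $\mathrm{dist}(I, J)/|J|$ at each step until the overlapping case handled by Lemma~\ref{lem:X_eps} is reached.
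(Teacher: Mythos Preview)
Your approach is genuinely different from the paper's and, once cleaned up, yields the stronger statement $\Eb[\ldots]=c\,r_1r_2$ (equality, not just a bound). The paper instead does a bare-hands case split: for $I_1\subset I_2$ it reduces by scaling and translation to Lemma~\ref{lem:X_eps}, and for $I_1\cap I_2=\varnothing$ it applies the single M\"obius map $z\mapsto (1-x)z/((1-2x)z+x)$ (fixing $0,1$ and sending $x$ to $1/2$) to bring the intervals back to the overlapping situation, again invoking Lemma~\ref{lem:X_eps}. That argument is more elementary but only gives the inequality.

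Two comments on your execution. First, the inversion is unnecessary: translations and dilations already suffice. Work with the weighted measure $\nu(da,db)=(b-a)^2\,\mu(da,db)$, which is locally finite on $\Rb^2$ by Lemma~\ref{lem:1point_inf} together with diagonal translation and scale invariance. In coordinates $(a,u)=(a,b-a)$, diagonal translation invariance means $\nu$ is invariant under $a\mapsto a+t$; by Riesz representation applied to $f\mapsto\int f(a)g(u)\,d\nu$ for fixed $g\in C_c(\Rb)$, one gets $\nu=da\otimes d\sigma(u)$ for some locally finite Borel measure $\sigma$ on $\Rb$, with no regularity assumption on $\mu$. Scale invariance $\nu(\lambda A)=\lambda^2\nu(A)$ then forces $\sigma(\lambda B)=\lambda\,\sigma(B)$, and the usual log change of variables plus uniqueness of Haar measure on $\Rb$ gives $\sigma=c_\pm\,du$ on each half-line; the i.i.d.\ orientation makes $c_+=c_-$. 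Second, this means the ``hard part'' you flag---absolute continuity of $\mu$, or ruling out singular components on lines $\{b-a=\text{const}\}$---is not actually an obstacle: the disintegration handles it automatically, and your appeal to Lemma~\ref{lem:p_loc} (which concerns $\Ec_{\Rb^-}$ and in any case says nothing about absolute continuity of the intensity) is both misplaced and unnecessary. Your fallback iteration with $\phi(z)=1/(c-z)$ is therefore also not needed.
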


\begin{proof}
By decomposing $I_1 = (I_1 \setminus I_2) \cup (I_1 \cap I_2)$, it is enough to treat two cases: $I_1 \subset I_2$ and $I_1 \cap I_2 = \varnothing$.

\textbf{Case 1.} $I_1 \subset I_2$. We can decompose $I_2$ into the union of two subintervals sharing an endpoint with $I_1$. We can therefore assume that $I_2$ shares an endpoint with $I_1$. By scaling and translation, this case reduces to Lemma \ref{lem:X_eps}.

\textbf{Case 2.} $I_1 \cap I_2 = \varnothing$.
By scaling and translation, it is enough to consider the case where $I_1=[0,x]$ and $I_2=[y,1]$, where $0<x\le y<1$. If $I_1$ or $I_2$ has length at least $1/4$, then the result is a direct consequence of Lemma \ref{lem:X_eps}. We can thus suppose that both $I_1$ and $I_2$ have length less than $1/4$. We apply the conformal map 
$f:z \in \Hb \mapsto \frac{(1-x)z}{(1-2 x)z+x} \in \Hb$ that fixes $0$ and $1$ and sends $x$ to $1/2$. Then
\begin{align}\label{eq:sum_case4}
\Eb\left[\sum_{a(e)\in [0,x], b(e)\in [y,1]} (b(e)-a(e))^2 \right] =\Eb\left[\sum_{a(e)\in [0, f(x)], b(e)\in [f(y),1]} (f^{-1}(b(e)) - f^{-1}(a(e)))^2 \right].
\end{align}
Note that
\begin{align*}
y-x+2x(1-y)\ge y-x\ge 1/2
\quad \text{and} \quad
1-f(y) =\frac{x(1-y)}{y-x+2x(1-y)} \le 2 x(1-y) \le 1/8.
\end{align*}
This implies that for all $a\in [0, x]$ and $b\in [y, 1]$, we have $f(b)-f(a)\ge 3/8 \ge 3/8 (b-a)$. Conversely, for all $a\in [0, f(x)]$ and $b\in[f(y),1]$, we have $f^{-1}(b)- f^{-1}(a)\le 8/3 (b-a)$. Therefore \eqref{eq:sum_case4} is at most
\begin{align*}
&\frac{64}{9} \Eb\left[\sum_{a(e)\in [0, f(x)], b(e)\in [f(y),1]} (b(e) - a(e))^2 \right]
\le \frac{64}{9} \Eb\left[\sum_{a(e)\in [0, 1], b(e)\in [f(y),1]} (b(e) - a(e))^2 \right]
\end{align*}
which is bounded by $C(1-f(y)) \leq 2Cx(1-y)$ by Lemma \ref{lem:X_eps}. This concludes Case 2 and the proof of Lemma \ref{lem:estimate_interval}.
\end{proof}

%\begin{lemma}
%For all $\eps\in [0,1/4]$, we have 
%\begin{align*}
%X_2(\eps):=\Eb\left[\sum_{e_1, e_2 \in\Ec_{\infty};\, a(e_1), b(e_1) \in [0,\eps]; a(e_2), b(e_2)\in [0,1]} (b(e_1) -a(e_1))^2 (b(e_2) -a(e_2))^2 \right] \le \eps^2 S_2.
%\end{align*}
%\end{lemma}
%\begin{proof}
%By Cauchy-Schwartz, $X_2(\eps)$ is upper bounded by
%\begin{align*}
%&\sqrt{\Eb\left[\left(\sum_{e_1 \in\Ec_{\infty};\, a(e_1), b(e_1) \in [0,\eps]} (b(e_1) -a(e_1))^2\right)^2  \right] \Eb\left[\left(\sum_{e_1 \in\Ec_{\infty};\, a(e_2), b(e_2) \in [0,1]} (b(e_2) -a(e_2))^2\right)^2 \right]}\\
%&\le\sqrt{\eps^4  S_2 \cdot S_2} = \eps^2 S_2,
%\end{align*}
%where the last inequality follows from scaling invariance of $\Ec_\infty$.
%\end{proof}

\subsection{Proof of Proposition~\ref{prop:be_main}}\label{subsec:proof}
We have now gathered all the ingredients to prove  Proposition~\ref{prop:be_main}.
In this section, we will only deal with $\Ec_\Rb$, so will write $\Ec$ for $\Ec_\Rb$. 
Let us first prove the one-point estimate \eqref{propeq:one-point}.
\begin{proof}[Proof of \eqref{propeq:one-point}]
%When $\eps \geq \Im(z)/2$,  \eqref{propeq:one-point} is obtained by simply bounding the probability on the left hand side by 1. We now assume that $\eps < \Im(z)/2$.
By conformal invariance, it is enough to prove \eqref{propeq:one-point} for $z=i$. Fix $\eta>0$. By Lemma~\ref{lem:B_1point}, there exists $C>0$, such that for all $r \in (0,1-\eta)$,
\begin{align}\label{eq:probeb}
\Pb[\Ec\cap U(i,r)\not=\emptyset] \le C \Eb\left[\sum_{e\in\Ec} \frac{(a(e)-b(e))^2 }{|a(e)-i|^2 |b(e)-i|^2} \right] |\log r|^{-1}.
\end{align}
Let $I_{-1} = [-1,1]$ and for each $n\ge 0$, let $I_n:=[-2^{n+1}, -2^n) \cup (2^n, 2^{n+1}]$ so that $\R = \bigcup_{n \geq -1} I_n$. Note that for all $n,m \geq -1$, if $a(e) \in I_n$ and $b(e) \in I_m$, then $|a(e)-i| \geq 2^n$ and $|b(e)-i| \geq 2^m$. We can thus bound the expectation in \eqref{eq:probeb} by
\begin{align*}
\sum_{n,m \geq -1} 2^{-2n-2m} \Eb\left[\sum_{e\in\Ec, a(e)\in I_n,  b(e)\in I_m} (a(e)-b(e))^2 \right].
\end{align*}
By Lemma \ref{lem:estimate_interval}, the expectation above is bounded by $C2^{n+m}$. This shows that the expectation in \eqref{eq:probeb} is bounded by some constant concluding the proof.
\end{proof}

Let us now prove the two-point estimate \eqref{propeq:two-point}.
\begin{proof}[Proof of \eqref{propeq:two-point}]
%Assume first that $\eps_j \geq \Im(z_i)/2$ for some $j \in \{1,2\}$, e.g. $j=1$. In that case, we bound the probability on the left hand side of \eqref{propeq:two-point} by $\Pb[\Ec_\Rb \cap D(z_2,\eps_2)\not=\emptyset]$. This probability is controlled by \eqref{propeq:one-point} which directly yields \eqref{propeq:two-point} in this case. In the rest of the proof we assume that $\eps_j < \Im(z_j)/2$ for all $j \in \{1,2\}$.
Suppose $z_1 =x_1 + i y_1$ and $z_2 =x_2 +i y_2$.
Let $\nu = \nu(\eta)$ be as in Lemma \ref{lem:b_2p}. Let us first assume that the condition $r_i y_i \leq \nu |z_1-z_2|$, $i=1,2$, from Lemma \ref{lem:b_2p} is violated. For instance, assume that $r_1 y_1 > \nu |z_1-z_2|$. In that case, we simply bound
\begin{align*}
\Pb[\Ec \cap U(z_1,r_1)\not=\emptyset, \Ec_\Rb \cap U(z_2, r_2)\not=\emptyset]
\leq \Pb[\Ec_\Rb \cap U(z_2, r_2)\not=\emptyset]
\leq C |\log r_2|^{-1}
\end{align*}
by \eqref{propeq:one-point}. Since $r_1 y_1 > \nu |z_1-z_2|$, we further have
\[
|\log r_2|^{-1} \leq C \log \frac{y_1}{|z_1-z_2|} |\log r_1|^{-1} |\log r_2|^{-1} \leq C G_\Hb(z_1,z_2) |\log r_1|^{-1} |\log r_2|^{-1}.
\]
This concludes the proof of \eqref{propeq:two-point} in this case.

Let us now assume that $z_1$ and $z_2$ are not too close to each other so that $r_i y_i \leq \nu |z_1-z_2|$, $i=1,2$. This will allow us to apply Lemma \ref{lem:b_2p}.
The event $\Ec\cap U(z_1, r_1)\not=\emptyset, \Ec\cap U(z_2, r_2)\not=\emptyset$ is the union of the following two events: 
\begin{itemize}
\item Let $E_1$ be the event that $U(z_1, r_1)$ and $U(z_2, r_2)$ are both visited by a same excursion in $\Ec$.
\item Let $E_2$ be the event that $U(z_1, r_1)$ and $U(z_2, r_2)$ are visited by two different excursions in $\Ec$.
\end{itemize}
We are going to show that
\[
\P[E_1] \leq C (1+G_\Hb(z_1,z_2)) |\log r_1|^{-1}|\log r_2|^{-1}
\quad \text{and} \quad
\P[E_2] \leq C |\log r_1|^{-1}|\log r_2|^{-1}
\]
which will conclude the proof of \eqref{propeq:two-point}.

\noindent \textbf{Contribution of $E_1$.}
By Lemma~\ref{lem:b_2p}, $\P[E_1]$ is at most
\begin{align*}
C  \Eb \left[\sum_{e\in \Ec} \left( \frac{(a(e)-b(e))^2 y_1 y_2}{|a(e)-z_1|^2|b(e)-z_2|^2}  +   \frac{(a(e)-b(e))^2 y_1 y_2}{|a(e)-z_2|^2|b(e)-z_1|^2} \right)   \right] (1+G_\Hb(z_1,z_2)) |\log r_1|^{-1}|\log r_2|^{-1}.
\end{align*}
By symmetry, it is enough to prove that there exists a universal constant $C>0$ such that
\begin{align}
\label{E:contributionE1}
\Eb \left[\sum_{e\in \Ec} \frac{(a(e)-b(e))^2 y_1 y_2}{|a(e)-z_1|^2|b(e)-z_2|^2} \right]\le C.
\end{align}
For $j=1,2$ and $n \geq 0$, let
\begin{equation}\label{E:Ijn}
I_{j,-1} := [x_j-y_j,x_j+y_j]
\quad \text{and} \quad
I_{j,n}:=[x_j-y_j 2^{n+1}, x_j- y_j 2^n) \cup (x_j+y_j 2^n, x_j+y_j 2^{n+1}].
\end{equation}
As in the proof of \eqref{propeq:one-point}, we can bound the left hand side of \eqref{E:contributionE1} by
\begin{align*}
y_1 y_2 \sum_{n,m \geq -1} (y_1 2^n)^{-2} (y_2 2^n)^{-2} \Eb \left[\sum_{e\in \Ec; a(e)\in I_{1,n}, b(e)\in I_{2,m}} (a(e)-b(e))^2 \right]
\leq C \sum_{n,m \geq -1} 2^{-n-m}
\end{align*}
using Lemma \ref{lem:estimate_interval} in the last inequality.

\noindent \textbf{Contribution of $E_2$.} 
By Lemma~\ref{lem:B_1point}, $\Pb[E_2]$ is upper bounded by a constant times
\begin{align}\label{eq:2psum}
\Eb\left[\sum_{e_1 \in \Ec} \sum_{e_2 \in \Ec}  \frac{(b(e_1) -a(e_1))^2 y_1^2}{|z_1-b(e_1)|^2 |z_1-a(e_1)|^2} \frac{(b(e_2) -a(e_2))^2 y_2^2}{|z_2-b(e_2)|^2 |z_2-a(e_2)|^2}\right] |\log r_1|^{-1} |\log r_2|^{-1}.
\end{align}
Let $I_{j,n}$, $j=1,2$, $n \geq -1$, be as in \eqref{E:Ijn}. We can bound the above expectation by
\begin{align*}
y_1^{-2} y_2^{-2} \sum_{\substack{n_1,m_1 \geq -1\\n_2,m_2 \geq -1}} 2^{-2(n_1-m_1-n_2-m_2)}
\Eb\left[ \sum_{\substack{e_1 \in \Ec; a(e_1)\in I_{1,n_1}, b(e_1) \in I_{1,m_1}\\ e_2 \in \Ec_2; a(e_2)\in I_{2,n_2}, b(e_2) \in I_{2,m_2}}}   (b(e_1) -a(e_1))^2 (b(e_2) -a(e_2))^2  \right].
\end{align*}
By Cauchy-Schwarz, the expectation inside the sum is at most $\sqrt{X_1X_2}$ where
\[
X_j := \Eb\left[ \left( \sum_{e \in \Ec; a(e)\in I_{j,n_j}, b(e) \in I_{j,m_j}}   (b(e) -a(e))^2 \right)^2  \right], \quad j=1,2.
\]
It is enough to show that $X_j \leq C y_j^4 2^{3n_j + 3m_j}$ to conclude the proof.
If $n=m$, then by scaling, $X_j$ is at most $C y_j^4 2^{4n_j}S_2$, where $S_2$ is defined in \eqref{eq:bound1}. 
By symmetry, we can now suppose that $m_j>n_j$. By scaling, we have
\begin{align*}
X_j \leq y_j^4 2^{4m_j} \Eb\left[ \left( \sum_{e \in \Ec; a(e)\in [-2^{n_j-m_j},2^{n_j-m_j}], b(e) \in [-2,2]} (b(e) -a(e))^2 \right)^2  \right]
\leq C y_j^4 2^{4m_j} 2^{n_j-m_j}
\end{align*}
by Lemma \ref{lem:X_eps}. This shows that $X_j \leq C y_j^4 2^{3m_j+n_j} \leq C y_j^4 2^{3m_j+3n_j}$ as desired.
\end{proof}

The previous proofs already indicate that the probability of $\Ec_\Rb$ intersecting a neighborhood of $z\in\Hb$ is mainly contributed by the excursions in $\Ec_\Rb$ which are ``close'' to $z$. We formulate this precisely in the following lemmas, which will be useful in the proof of Proposition \ref{P:independance_u}; see Section \ref{SS:quasi_independence}.

\begin{lemma}\label{L:1pointconc}
Fix $R>0$. Let $\Ec_0$ be the set of excursions in $\Ec_\Rb$ with endpoints in $[-R, R]$. 
For all $\eta>0$ and $r \in (0, 1-\eta)$, we have
\begin{align*}
%\Pb[\Ec_\Rb \cap D(i,\eps)\not=\emptyset]-\Pb[\Ec_0 \cap D(i,\eps)\not=\emptyset]=o(1) |\log \eps|^{-1}.
\Pb[\Ec_\Rb \cap U(i,r) \not=\emptyset]-\Pb[\Ec_0 \cap U(i,r) \not=\emptyset]=o(1) |\log r|^{-1}.
\end{align*}
where the $o(1)$ term goes to $0$ as $R\to \infty$, uniformly for all $r \in(0,1-\eta)$. 
\end{lemma}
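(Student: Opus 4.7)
The strategy is to write the difference of the two probabilities as the probability that $\Ec_\Rb \setminus \Ec_0$ hits $U(i,r)$, and then to show that the bound provided by Lemma~\ref{lem:B_1point} for this probability has a prefactor that vanishes as $R \to \infty$, uniformly in $r \in (0,1-\eta)$.

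More concretely, since $\Ec_0 \subset \Ec_\Rb$, we have
\[
\Pb[\Ec_\Rb \cap U(i,r) \neq \emptyset] - \Pb[\Ec_0 \cap U(i,r) \neq \emptyset]
\leq \Pb\bigl[(\Ec_\Rb \setminus \Ec_0) \cap U(i,r) \neq \emptyset\bigr].
\]
By a union bound, Lemma~\ref{lem:p_loc} and Lemma~\ref{lem:B_1point}, this probability is at most
\[
C\,|\log r|^{-1} \, \Eb\Bigg[\sum_{\substack{e \in \Ec_\Rb\\ a(e) \notin [-R,R] \text{ or } b(e) \notin [-R,R]}} \frac{(a(e)-b(e))^2}{|a(e)-i|^2 |b(e)-i|^2}\Bigg],
\]
where $C$ depends only on $\eta$. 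So the lemma reduces to showing that the expectation above tends to $0$ as $R \to \infty$.

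To do so, I would revisit the dyadic decomposition used in the proof of \eqref{propeq:one-point}. Recall that $\Rb$ can be written as $\bigcup_{n \geq -1} I_n$ with $I_{-1}=[-1,1]$ and $I_n = [-2^{n+1}, -2^n) \cup (2^n, 2^{n+1}]$ for $n \geq 0$. Setting $N = N(R) := \lfloor \log_2 R \rfloor$, the restriction that at least one endpoint lies outside $[-R,R]$ forces $n \geq N$ or $m \geq N$. Using the lower bounds $|a(e)-i| \geq 2^n$ when $a(e) \in I_n$ and $|b(e)-i|\geq 2^m$ when $b(e) \in I_m$, together with Lemma~\ref{lem:estimate_interval} which yields
\[
\Eb\Bigg[\sum_{e \in \Ec_\Rb, a(e) \in I_n, b(e) \in I_m}(a(e)-b(e))^2\Bigg] \leq C\, 2^{n+m},
\]
the expectation in question is bounded by
\[
C \sum_{\substack{n,m \geq -1\\ n \geq N \text{ or } m \geq N}} 2^{-2n-2m} \cdot 2^{n+m} = C \sum_{\substack{n,m \geq -1\\ n \geq N \text{ or } m \geq N}} 2^{-n-m}.
\]
This tail of a convergent double series vanishes as $N \to \infty$, i.e. as $R \to \infty$. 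Since no dependence on $r$ remains in this bound, the convergence is uniform in $r \in (0,1-\eta)$, which is exactly what the lemma asserts.

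The only mildly delicate point is to make sure the bound produced by Lemma~\ref{lem:B_1point} can legitimately be summed pointwise over excursions in $\Ec_\Rb \setminus \Ec_0$; but this is justified precisely by the local finiteness from Lemma~\ref{lem:p_loc} together with the conditional independence of excursions given their endpoints, exactly as in the proof of \eqref{propeq:one-point}. I do not anticipate any truly hard step here — the whole argument is a tail estimate that recycles the machinery already developed in Section~\ref{subsec:proof}.
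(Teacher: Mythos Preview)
Your proposal is correct and follows essentially the same route as the paper: bound the difference by the probability that some excursion in $\Ec_\Rb\setminus\Ec_0$ hits $U(i,r)$, apply Lemma~\ref{lem:B_1point} excursion by excursion, and control the resulting expectation via the dyadic decomposition $I_n$ together with Lemma~\ref{lem:estimate_interval}. The paper restricts to $R=2^N$ and writes the tail sum as $\sum_{n\ge N,\,m\ge -1}2^{-n-m}\le C'2^{-N}$ (using symmetry in the endpoints to reduce to one case), whereas you keep the condition ``$n\ge N$ or $m\ge N$'' explicitly; these are equivalent up to a constant, and your formulation is arguably cleaner.
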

\begin{proof}
By Lemma~\ref{lem:B_1point}, there exists $C>0$, such that for all $r \in(0,1-\eta)$,
\begin{align}\label{eq:PCR}
\Pb[\Ec_\Rb \cap U(i,r)\not=\emptyset, \Ec_0 \cap U(i,r)=\emptyset]
\le C\Eb\left[\sum_{e\in\Ec_\Rb\setminus \Ec_0}  \frac{(a(e)-b(e))^2 }{|a(e)-i|^2 |b(e)-i|^2} \right] |\log r|^{-1}.
\end{align}
By monotonicity, it suffices to prove the result for $R=2^N$ and $N\to \infty$. Using the intervals $I_{-1} = [-1,1]$ and $I_n=[-2^{n+1}, -2^n) \cup (2^n 2^{n+1}]$ as in the proof of \eqref{propeq:one-point} and with the help of Lemma \ref{lem:estimate_interval}, the expectation in the right-hand side of \eqref{eq:PCR} is upper bounded by
\begin{align*}
&\sum_{n\ge N,m\ge -1} 2^{-2(n+m)}\Eb\left[\sum_{a(e) \in I_m , b(e)\in I_n}  (a(e)-b(e))^2 \right]
\le C \sum_{n\ge N,m\ge -1} 2^{-n-m} \leq C' 2^{-N} \to 0
\end{align*}
as $N \to \infty$.
This completes the proof.
\end{proof}

\begin{comment}
\begin{lemma}\label{lem:quasi_indep}
Fix $y\in(0,1/4)$ and $y_1, y_2\in (0,y]$. Let $z_1=iy_1$ and $z_2=1 +i y_2$. Fix $\nu\in (0,1)$. 
 Let $\Ec_1$ be the set of excursions in $\Ec_\Rb$ with both endpoints in $[-y_1^\nu, y_1^\nu]$.  Let $\Ec_2$ be the set of excursions in $\Ec_\Rb$ with both endpoints in $[1-y_2^\nu, 1+y_2^\nu]$. 
Fix $\eta\in (0,1)$. For all $r_1, r_2\in(0, 1-\eta)$, we have
\begin{align}\label{propeq:quasi_indep}
&\Pb[\Ec_\Rb \cap U(z_1, r_1) \not=\emptyset, \Ec_\Rb \cap U(z_2, r_2) \not=\emptyset]-\Pb[\Ec_1 \cap U(z_1,r_1) \not=\emptyset, \Ec_2 \cap U(z_2, r_2)\not=\emptyset]\\
\notag
 &=o (1)  |\log r_1|^{-1}|\log r_2|^{-1},
\end{align}
where the $o(1)$ term goes to $0$ as $y\to 0$, uniformly for all $r_1, r_2\in(0, 1-\eta)$.
\end{lemma}
\end{comment}

\begin{lemma}\label{lem:quasi_indep}
Fix $\eps>0$ and $\nu\in(0,1)$.
Let $\Ec_1$ (resp. $\Ec_2$) be the set of excursions in $\Ec_\Rb$ with both endpoints in $[-\eps^\nu, \eps^\nu]$ (resp. $[1-\eps^\nu, 1+ \eps^\nu]$). 
Fix $\eta\in (0,1)$. For all $z_1\in D(0, \eps)\cap\Hb$, $z_2\in D(1, \eps)\cap \Hb$ and $r_1, r_2\in(0, 1-\eta)$, we have
\begin{align}\label{coreq:quasi_indep}
&\Pb[\Ec_\Rb \cap U(z_1, r_1) \not=\emptyset, \Ec_\Rb \cap U(z_2, r_2) \not=\emptyset]-\Pb[\Ec_1 \cap U(z_1,r_1) \not=\emptyset, \Ec_2 \cap U(z_2, r_2)\not=\emptyset]\\
\notag
 &=o (1)  |\log r_1|^{-1}|\log r_2|^{-1},
\end{align}
where the $o(1)$ term goes to $0$ as $\eps\to 0$, uniformly for all $r_1, r_2\in(0, 1-\eta)$.
\end{lemma}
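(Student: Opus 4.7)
The plan is to follow the same strategy as in the proofs of Proposition~\ref{prop:be_main} and Lemma~\ref{L:1pointconc}: reduce the difference of probabilities to expectations indexed by boundary excursions, split according to whether a single excursion or two distinct excursions contribute the hits, and show that the excursions outside the prescribed windows contribute negligibly as $\eps\to 0$. Concretely, I would set $A=\{\Ec_\Rb\cap U(z_1,r_1)\ne\emptyset\}$, $A'=\{\Ec_1\cap U(z_1,r_1)\ne\emptyset\}$, and analogously $B,B'$. Since $A'\subset A$ and $B'\subset B$, the left-hand side of \eqref{coreq:quasi_indep} is bounded by $\Pb[A\cap B\cap(A')^c]+\Pb[A\cap B\cap(B')^c]$, and by symmetry it suffices to control the first. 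On $A\cap(A')^c$ some excursion in $\Ec_\Rb\setminus\Ec_1$ must hit $U(z_1,r_1)$, so one further decomposes into (i) a single excursion in $\Ec_\Rb\setminus\Ec_1$ hits both $U(z_1,r_1)$ and $U(z_2,r_2)$; and (ii) two distinct excursions do, with the first in $\Ec_\Rb\setminus\Ec_1$ at $U(z_1,r_1)$ and the second in $\Ec_\Rb$ at $U(z_2,r_2)$.

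Conditioning on endpoints via Lemma~\ref{lem:p_loc} and applying Lemmas~\ref{lem:B_1point} and~\ref{lem:b_2p} (both apply: the hypothesis $r_j y_j\le\nu(\eta)|z_1-z_2|$ of Lemma~\ref{lem:b_2p} holds for small $\eps$ since $y_j\le\eps$ and $|z_1-z_2|\ge 1-2\eps$, and $G_\Hb(z_1,z_2)=O(\eps)$ is bounded) one obtains
\begin{equation*}
\Pb[\text{(i)}]\le C|\log r_1|^{-1}|\log r_2|^{-1}\,\Eb\!\!\sum_{e\in\Ec_\Rb\setminus\Ec_1}\!\!\left(\tfrac{(a-b)^2 y_1 y_2}{|a-z_1|^2|b-z_2|^2}+\tfrac{(a-b)^2 y_1 y_2}{|a-z_2|^2|b-z_1|^2}\right)
\end{equation*}
and
\begin{equation*}
\Pb[\text{(ii)}]\le C|\log r_1|^{-1}|\log r_2|^{-1}\,\Eb[S_1\,X_2],
\end{equation*}
with $S_1=\sum_{e\in\Ec_\Rb\setminus\Ec_1}\tfrac{(a-b)^2y_1^2}{|a-z_1|^2|b-z_1|^2}$ and $X_2$ the analogous full sum around $z_2$.

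The main content is then to show that both expectations are $o(1)$ as $\eps\to 0$, uniformly in the remaining parameters. For the first-moment sum in (i) and for $\Eb[S_1]$, the plan is a dyadic decomposition using two distinct natural scales simultaneously: dyadic intervals $I_{j,n}^{z_j}$ of scale $y_j 2^n$ around $z_j$ for the ``free'' endpoint, and dyadic annuli $\{2^k<|\cdot|\le 2^{k+1}\}$ around $0$ (or $1$) restricted to $k\ge K_\eps:=\lfloor\nu\log_2\eps\rfloor$ for the endpoint forced outside $[-\eps^\nu,\eps^\nu]$ (resp. $[1-\eps^\nu,1+\eps^\nu]$). Because $y_j\le\eps\ll\eps^\nu$ and $|a|\ge\eps^\nu$ on the annuli yields $|a-z_j|\ge |a|/2$, Lemma~\ref{lem:estimate_interval} bounds each block by the product of interval lengths; summing the resulting doubly geometric series truncated at $2^{K_\eps}\sim\eps^\nu$ produces a bound of order $y_j\eps^{-\nu}=O(\eps^{1-\nu})$, which vanishes since $\nu<1$. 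For $\Eb[S_1 X_2]$ one cannot factor directly because $X_2$ is unbounded, so I would use the truncation
\[
\Eb[S_1X_2]\le M\,\Eb[S_1]+\sqrt{\Eb[(X_1^{\rm full})^2]\,\Eb[X_2^2\,\mathbf 1\{X_2>M\}]},
\]
relying on the pointwise bound $S_1\le X_1^{\rm full}:=\sum_{e\in\Ec_\Rb}\tfrac{(a-b)^2y_1^2}{|a-z_1|^2|b-z_1|^2}$ and Cauchy--Schwarz. The second moments $\Eb[(X_j^{\rm full})^2]$ are uniformly bounded in $z_j$ after translating and rescaling to send $z_j$ to $i$ (which preserves the law of $\Ec_\Rb$) and invoking \eqref{eq:square_bound}; consequently $\Eb[X_2^2\,\mathbf 1\{X_2>M\}]\to 0$ as $M\to\infty$ by dominated convergence. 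Given $\delta>0$, one first chooses $M$ large to make the second term $<\delta$ and then $\eps$ small to make $M\Eb[S_1]<\delta$.

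The main obstacle is the necessity of handling the two natural scales $y_j$ and $\eps^\nu$ simultaneously throughout the dyadic decomposition: the denominators $|a-z_j|^2$ are controlled at scale $y_j$, but the restriction imposed by $\Ec_\Rb\setminus\Ec_j$ is encoded at the much coarser scale $\eps^\nu$, and it is precisely the mismatch between these scales, combined with the fact that Lemma~\ref{lem:estimate_interval} produces a square-root-of-area type bound on each block, that generates the saving $\eps^{1-\nu}\to 0$. A secondary care point is to verify that the hypotheses of Lemma~\ref{lem:b_2p} and the second-moment bounds from the proof of \eqref{propeq:two-point} are all uniform in $z_1,z_2$ allowed to approach the real axis.
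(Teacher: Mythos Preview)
Your proposal is correct and follows essentially the same route the paper indicates: the paper's proof simply says the lemma ``follows from the method we developed in the proof of \eqref{propeq:two-point}'' and omits details, and what you outline is precisely an implementation of that method (decompose into same/different excursion contributions, reduce via Lemmas~\ref{lem:B_1point} and~\ref{lem:b_2p} to endpoint sums, and control the restricted sums by dyadic decomposition together with Lemma~\ref{lem:estimate_interval}).

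Two minor remarks on the execution. First, your case (i) can be handled more cheaply: since $z_1\in D(0,\eps)$, $z_2\in D(1,\eps)$ and $r_j\le 1-\eta$, any $z_3\in\partial U(z_1,r_1)$, $z_4\in\partial U(z_2,r_2)$ satisfy $\Im z_3,\Im z_4\le c(\eta)\eps$ and $|z_3-z_4|\ge 1-c(\eta)\eps$, whence $G_\Hb(z_3,z_4)=O(\eps^2)$. Plugging this directly into the decomposition \eqref{E:proof_2p} shows that the \emph{entire} single-excursion contribution $\Pb[E_1]$ is already $O(\eps^2)|\log r_1|^{-1}|\log r_2|^{-1}$, so one need not track the restriction to $\Ec_\Rb\setminus\Ec_1$ there. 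Second, for case (ii) the paper's method uses block-wise Cauchy--Schwarz directly: writing $S_1$ and $X_2$ as sums over dyadic blocks and applying Cauchy--Schwarz to each product block (as in the ``Contribution of $E_2$'' paragraph of the proof of \eqref{propeq:two-point}) yields $\Eb[S_1X_2]\le C\sum_{n_1,m_1:\text{restricted}}2^{-(n_1+m_1)/2}\to 0$, bypassing your truncation-in-$M$ step. Your truncation argument is also valid, just slightly longer. Finally, the label \texttt{eq:square\_bound} you cite does not exist in the paper; the relevant second-moment estimate is the bound $X_j\le C y_j^4 2^{3n_j+3m_j}$ established in that same paragraph.
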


\begin{proof}
As we explained in details, the proof of Lemma \ref{L:1pointconc} followed from our approach to proving \eqref{propeq:one-point}. Similarly,
the proof of Lemma \ref{lem:quasi_indep} follows from the method we developed in the proof of \eqref{propeq:two-point}. We omit the details.
\end{proof}

\section{Properties of \texorpdfstring{$h_\theta$}{h theta} and height gap}\label{S:properties}
In this section, we prove Theorems \ref{T:prop}, \ref{T:level_line} and \ref{T:mixed}, starting with Theorem \ref{T:prop}.
Note that the spatial Markov property in Theorem~\ref{T:level_line}, as well as Theorem  \ref{T:mixed}, follow directly from the corresponding spatial Markov property of the loop soup, proved respectively in \cite{QianWerner19Clusters} and \cite{MR3901648}, as we explained before the statement of these theorems.

\subsection{Proof of Theorem \ref{T:prop}}\label{SS:properties_field}

\begin{proof}[Proof of Theorem \ref{T:prop}]
Point \ref{I:covariance}: See Lemma \ref{L:second_moment_to_crossing_proba} and Proposition \ref{P:crossing}.

Point \ref{I:conformal_invariance}: The conformal invariance of the field follows directly from the conformal covariance of the approximation procedure used to define it, together with the fact that the conformal anomaly disappears when $\gamma \to 0$ (derivative of the conformal map to the power $\gamma^2/2$). See \cite[Theorem 1.1]{ABJL21}.

Point \ref{I:symmetry}:
By definition,
$
h_\theta + h_\theta^- = \lim_{\gamma \to 0} \frac{1}{Z_\gamma} ( \Mc_\gamma - \E[\Mc_\gamma] ).
$
Because the second term in the expansion of $\Mc_\gamma$ is $\gamma^2$ times the occupation measure of $\Lc_D^\theta$ (see Theorem \ref{T:expansion_unsigned}) and because $Z_\gamma \geq \gamma^{2(1-\theta)+o(1)}$, this limit equals zero.

Point \ref{I:nondegenerate}: Let $f : D \to \R$ be a smooth test function, not identically zero. By Theorem \ref{T:h_and_minkowski},
\[
(h_\theta,f) = \sum_{k \geq 1} \sigma_k (\mu_k, f) \quad \quad \text{a.s.}
\]
and by Theorem \ref{T:measure_cluster}, there is a.s. an infinite number of $k$ such that $(\mu_k, f) \neq 0$. Lemma \ref{L:sum_elementary} below then concludes the proof.
\end{proof}

\begin{lemma}\label{L:sum_elementary}
Let $(a_n)_{n \geq 1}$ be a sequence of non zero real numbers and let $(\sigma_n)_{n \geq 1}$ be i.i.d. spins taking values in $\{\pm 1\}$ with equal probability 1/2. Assume that $\sum_n \sigma_n a_n$ converges almost surely. Then the distribution of $\sum_n \sigma_n a_n$ is non-atomic.
\end{lemma}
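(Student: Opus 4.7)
The plan is to combine the independence of the partial sum and the tail with a classical anti-concentration bound for Rademacher sums. Writing $S = \sum_{n \geq 1} \sigma_n a_n$, I aim to show that $\Prob{S = x} = 0$ for every $x \in \R$; the conclusion follows immediately.

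First, for each $N \geq 1$ I would split $S = S_N + R_N$ with $S_N = \sum_{n=1}^N \sigma_n a_n$ and $R_N = \sum_{n > N} \sigma_n a_n$, and use the independence of $S_N$ and $R_N$. Conditioning on the tail $R_N$ and bounding the conditional probability by its uniform maximum yields
\[
\Prob{S = x} = \E\left[\Prob{S_N = x - R_N \mid R_N}\right] \leq \sup_{y \in \R} \Prob{S_N = y}.
\]

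Second, I would invoke the Littlewood--Offord--Erd\H{o}s anti-concentration inequality, which states that for any nonzero coefficients $a_1, \dots, a_N$ and independent Rademacher signs $\sigma_1, \dots, \sigma_N$,
\[
\sup_{y \in \R} \Prob{S_N = y} \leq \binom{N}{\lfloor N/2 \rfloor} 2^{-N} \leq \frac{C}{\sqrt{N}}.
\]
Combining the two inequalities gives $\Prob{S = x} \leq C/\sqrt{N}$ for every $N \geq 1$, and letting $N \to \infty$ forces $\Prob{S = x} = 0$. Since $x$ is arbitrary, the distribution of $S$ is non-atomic.

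I do not anticipate any genuine obstacle: both steps are standard, and the whole argument is short enough to be written out as a direct proof rather than a sketch. The only substantive ingredient is the Littlewood--Offord--Erd\H{o}s bound, which supplies the quantitative $N^{-1/2}$ decay that forces the pointwise mass to vanish; the hypothesis $a_n \neq 0$ enters precisely at this step.
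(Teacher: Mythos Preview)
Your proof is correct, but it takes a different route from the paper. The paper uses an entirely self-contained martingale/symmetry argument: by L\'evy's zero--one law, the conditional probability $\Prob{S=x \mid \sigma_1,\dots,\sigma_{k-1}}$ converges a.s.\ to $\indic{S=x}$; on the other hand, flipping the $k$-th sign (which preserves the conditional law given $\sigma_1,\dots,\sigma_{k-1}$, since $\sigma_k$ is independent of them and symmetric) sends $\{S=x\}$ into $\{S=x-2\sigma_k a_k\}\subset\{S\neq x\}$ because $a_k\neq 0$, so each conditional probability is at most $1/2$. Hence the indicator is at most $1/2$ a.s., which forces $\Prob{S=x}=0$.

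Your argument instead imports the Littlewood--Offord--Erd\H{o}s bound, trading self-containment for a quantitative $C/\sqrt{N}$ rate that is stronger than needed. Both proofs are short; the paper's has the advantage of requiring no external combinatorial lemma, while yours makes the anti-concentration mechanism explicit and would generalise more readily if one wanted decay rates.
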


\begin{proof}
Let $x \in \R$. We are going to show that $\sum_n \sigma_n a_n \neq x$ a.s. which will conclude the proof. The conditional probability $\Prob{ \sum_n \sigma_n a_n = x \vert \sigma_1, \dots, \sigma_{k-1} }$ converges a.s. to $\indic{\sum_n \sigma_n a_n = x}$ as $k \to \infty$. But, by flipping the $k$-th spin and using the fact that $a_k \neq 0$, we see that
\begin{align*}
\P \Big( \sum_n \sigma_n a_n = x \big\vert \{\sigma_i\}_{i=1}^{k-1} \Big)
& \leq \P \Big( \sum_{n \neq k} \sigma_n a_n -\sigma_k a_k \neq x \big\vert \{\sigma_i\}_{i=1}^{k-1}  \Big)  = \P \Big( \sum_n \sigma_n a_n \neq x \big\vert \{\sigma_i\}_{i=1}^{k-1}  \Big).
\end{align*}
Rearranging, this shows that  $\Prob{ \sum_n \sigma_n a_n = x \vert \sigma_1, \dots, \sigma_{k-1} } \leq 1/2$ a.s. Therefore, $\indic{\sum_n \sigma_n a_n = x}$ is at most $1/2$ a.s., meaning that  $\sum_n \sigma_n a_n \neq x$ a.s.
\end{proof}

\subsection{First moment of \texorpdfstring{$h_\theta$}{h theta} with wired boundary conditions}\label{SS:wired_first_moment}

We now move to the proof of \eqref{E:T_level_line_expectation} in Theorem \ref{T:level_line}.

\begin{proof}[Proof of \eqref{E:T_level_line_expectation} in Theorem \ref{T:level_line}]
\begin{comment}
By \cite[Theorem 1]{QianWerner19Clusters}, the law of the loop soup inside $O$, conditionally on $O$, is that of the union of two conditionally independent collections of loops. The loops that do not touch the boundary of $O$ are simply distributed according to an unconditioned loop soup $\Lc_O^\theta$ in $O$. The loops that touch the boundary form a random collection $\mathcal{E}^\theta_O$ of excursions. The law of $\mathcal{E}^\theta_O$ is explicit only when $\theta = 1/2$. However, we will only need to use two facts concerning $\mathcal{E}^\theta_O$. First, points are polar for $\mathcal{E}^\theta_O$. Second, $\mathcal{E}^\theta_O$ is conformally invariant in the following strong sense: if $\mathcal{E}^\theta_O$ is conformally mapped to the unit disc, then the resulting collection of excursions $\Ec_{\partial \D}^\theta$ is independent of $O$ and invariant in law under M\"{o}bius transformations.
\end{comment}
Let $f$ be a smooth test function compactly included in $\D$.
We will denote by $\Mc_\gamma^{\Lc_\D^\theta \cup \Ec_{\partial \D}^\theta}$ and $\Mc_\gamma^{\Lc_\D^\theta}$ the multiplicative chaos associated to $\Lc_\D^\theta \cup \Ec_{\partial \D}^\theta$ and $\Lc_\D^\theta$ respectively. The definition of the latter has been recalled in Section \ref{S:multiplicative_chaos}. The former is more complicated since it involves the interaction of $\Lc_\D^\theta$ and $\Ec_{\partial \D}^\theta$. This has been extensively studied in \cite{jegoRW} and \cite{ABJL21}. We will only need to use the simple fact that
\begin{equation}
\label{E:level_line1}
\Mc_\gamma^{\Lc_\D^\theta \cup \Ec_{\partial \D}^\theta}(\d z) \indic{z \notin R(\Ec_{\partial \D}^\theta)} = \Mc_\gamma^{\Lc_\D^\theta}(\d z) \indic{z \notin R(\Ec_{\partial \D}^\theta)}
\quad \quad \text{a.s.},
\end{equation}
where we denoted by $R(\Ec_{\partial \D}^\theta)$ the range of $\Ec_{\partial \D}^\theta$, i.e. the set of points visited by some trajectory in $\Ec_{\partial \D}^\theta$.
This property is simply saying that a point not visited by $\Ec_{\partial \D}^\theta$ is thick for $\Lc_\D^\theta \cup \Ec_{\partial \D}^\theta$ if and only if it is thick for $\Lc_\D^\theta$.

We have
\begin{align*}
\Expect{  (h_\theta^\mathrm{wired},f) \vert \sigma_{\partial \D} }
= \lim_{\gamma \to 0}
\int_\D f(z) \frac{1}{Z_\gamma} \Expect{ \Mc_\gamma^{\Lc_\D^\theta \cup \Ec_{\partial \D}^\theta} (\d z) \indic{z \in \Cf^+(\Lc_\D^\theta \cup \Ec_{\partial \D}^\theta)} - \d z \big\vert \sigma_{\partial \D} }
\end{align*}
where the convergence holds in $L^2$. This follows simply from Cauchy--Schwarz and Theorem \ref{T:convergenceL2}.
Let us consider first the case $\sigma_{\partial \D} = -1$. If $z$ belongs to the range of $\Ec_{\partial \D}^\theta$, its sign necessarily agrees with $\sigma_{\partial \D}$. Together with \eqref{E:level_line1}, this gives
\begin{align*}
\Expect{ \Mc_\gamma^{\Lc_\D^\theta \cup \Ec_{\partial \D}^\theta} (\d z) \indic{z \in \Cf^+(\Lc_\D^\theta \cup \Ec_{\partial \D}^\theta)} \big\vert \sigma_{\partial \D} = -1 } 
= \Expect{ \Mc_\gamma^{\Lc_\D^\theta} (\d z) \indic{z \in \Cf^+(\Lc_\D^\theta \cup \Ec_{\partial \D}^\theta)} \indic{z \notin R(\Ec_{\partial \D}^\theta)} \big\vert \sigma_{\partial \D} = -1 }.
\end{align*}
By Lemma \ref{L:moment_measure}, this is further equal to
\begin{align*}
2 \Prob{z \in \Cf^+(\Lc_\D^\theta \cup \Xi_a^{z,\D} \cup \Ec_{\partial \D}^\theta), z \notin R(\Ec_{\partial \D}^\theta) \vert \sigma_{\partial \D} = -1 } \d z.
\end{align*}
Since a Lebesgue-typical point is a.s. not visited by $\Ec_{\partial \D}^\theta$, the above expression further reduces to
\[
\Big( 1 - \P \Big( z \overset{ \Lc_\D^\theta \cup \Xi_a^{z,\D}}{\longleftrightarrow} \Ec_{\partial \D}^\theta \Big) \Big) \d z
= \Big( 1 - \P \Big( 0 \overset{ \Lc_\D^\theta \cup \Xi_a^{0,\D}}{\longleftrightarrow} \Ec_{\partial \D}^\theta \Big) \Big) \d z.
\]
The last equality is obtained by noticing that the probability is conformally invariant.
Altogether, we have obtained that
\[
\Expect{  (h_\theta^\mathrm{wired},f) \vert \sigma_{\partial \D} = -1 } = - \Big( \lim_{\gamma \to 0} \frac{1}{Z_\gamma} \P \Big( 0 \overset{ \Lc_\D^\theta \cup \Xi_a^{0,\D}}{\longleftrightarrow} \Ec_{\partial \D}^\theta \Big) \Big) \int_\D f(z) \d z.
\]
This proves Theorem \ref{T:level_line} in the case $\sigma_{\partial \D} = -1$ with
\begin{equation}
\label{E:c_theta}
c(\theta) = \lim_{\gamma \to 0} \Prob{ 0 \overset{ \Lc_\D^\theta \cup \Xi_a^{0,\D}}{\longleftrightarrow} \Ec_{\partial \D}^\theta } \big/ Z_\gamma.
\end{equation}
The convergence of the right hand side of \eqref{E:c_theta} follows from the above line of arguments. It could also be obtained directly from Theorem \ref{T:ratio_general} and from the estimate \eqref{propeq:one-point} in Proposition \ref{prop:be_main} on $\Ec_{\partial \D}^\theta$.

To obtain the result when $\sigma_{\partial \D} = +1$, we use Theorem \ref{T:prop}, Point \ref{I:symmetry}. With the notations therein, and because $(h_\theta^\mathrm{wired})^- = - h_\theta^\mathrm{wired}$ a.s.,
\begin{align*}
\Expect{  (h_\theta^\mathrm{wired},f) \vert \sigma_{\partial \D} = 1 }
= - \Expect{ ((h_\theta^\mathrm{wired})^-, f) \vert - \sigma_{\partial \D} = -1 }.
\end{align*}
From the case of negative sign on $\partial \D$ that we have just treated, we know that the expectation on the right hand side equals $- c(\theta) \int_\D f(z) \d z$ a.s. This concludes the proof of the case $\sigma_{\partial \D} = +1$.
\end{proof}

\subsection{Second moment of \texorpdfstring{$h_\theta$}{h theta} with wired boundary conditions}\label{SS:wired_second_moment}

The goal of this section is to prove \eqref{E:T_level_line_bc} in Theorem \ref{T:level_line}. We start with a preliminary result that relates the second moment of $h_\theta^\mathrm{wired}$ to connectivity probabilities (see Lemma \ref{L:second_moment_to_crossing_proba} for the analogous result in a domain with zero-boundary condition).
As in Notation \ref{N:covariance} and conditionally given $\Ec_{\partial \D}^\theta$, we will denote by $C_{\theta, \D \setminus \Ec_{\partial \D}^\theta}$ the covariance of $h_\theta$ built out of a loop soup in the domain $\D \setminus \Ec_{\partial \D}^\theta$ with zero-boundary condition. Since the loops that are in different connected components of $\D \setminus \Ec_{\partial \D}^\theta$ are independent of each other, if $x$ and $y$ belong to different connecting component of $\D \setminus \Ec_{\partial \D}^\theta$, then $C_{\theta, \D \setminus \Ec_{\partial \D}^\theta}(x,y) = 0$.

\begin{lemma}\label{L:train}
Let $f : \D \to [0,\infty)$ be a smooth nonnegative function with compact support in $\D$. Then $\Expect{(h_\theta^\mathrm{wired},f)^2}$ is equal to
\begin{equation}
\label{E:L_height_second}
\int_{\D \times \D} \left( \Expect{ C_{\theta, \D \setminus \Ec_{\partial \D}^\theta}(x,y) } + \lim_{\gamma \to 0} \frac{1}{Z_\gamma^2} \Prob{ \Ec_{\partial \D}^\theta \overset{\Lc_\D^\theta}{\longleftrightarrow} \Xi_a^{x,\D}, \Ec_{\partial \D}^\theta \overset{\Lc_\D^\theta}{\longleftrightarrow} \Xi_a^{y,\D} } \right) f(x) f(y) \d x \d y.
\end{equation}
\end{lemma}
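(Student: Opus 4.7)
The plan is to mimic the second-moment computation of Lemma~\ref{L:second_moment_to_crossing_proba}, taking into account the extra layer of boundary excursions $\Ec_{\partial \D}^\theta$. Since a Lebesgue-typical point $x$ almost surely is not in the range of $\Ec_{\partial \D}^\theta$, identity \eqref{E:level_line1} allows us to replace $\Mc_\gamma^{+,\mathrm{wired}}(\d x)$ by $\indic{s_x = +}\,\Mc_\gamma^{\Lc_\D^\theta}(\d x)$, where $s_x$ denotes the sign of the cluster of $x$ in $\Lc_\D^\theta \cup \Ec_{\partial \D}^\theta$. As in the proof of Lemma~\ref{L:second_moment_to_crossing_proba}, applying \eqref{E:second_moment_measure2} conditionally on $\Ec_{\partial \D}^\theta$ will insert two independent thick loops $\Xi_a^{x,\D}$ and $\Xi_a^{y,\D}$, and the sign configuration will be governed by
\[
E_z := \Big\{ z \overset{\Lc_\D^\theta \cup \Xi_a^{z,\D}}{\longleftrightarrow} \Ec_{\partial \D}^\theta \Big\} \text{ for } z \in \{x,y\}, \quad E_{xy} := \Big\{ x \overset{\Lc_\D^\theta \cup \Xi_a^{x,\D} \cup \Xi_a^{y,\D} \cup \Ec_{\partial \D}^\theta}{\longleftrightarrow} y \Big\}.
\]
A short case analysis (on $E_x \cap E_y$ both signs equal $\sigma_{\partial \D}$; on $E_{xy} \setminus (E_x \cup E_y)$ the signs share an independent spin; outside these two events $s_x$ and $s_y$ are fully independent) then yields
\[
\P\big(s_x = + = s_y \,\big|\, \text{loops and excursions}\big) = \tfrac14 + \tfrac14 \indic{E_x \cap E_y} + \tfrac14 \indic{E_{xy} \setminus (E_x \cup E_y)}.
\]

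Combining this with \eqref{E:first_moment_measure}--\eqref{E:second_moment_measure2} applied conditionally on $\Ec_{\partial \D}^\theta$, and discarding the $O(\gamma^4)$ contribution of loops in $\Lc_\D^\theta$ visiting both $x$ and $y$ as in \eqref{E:proof_loop_2points}, one checks that $\E[\Mc_\gamma^{+,\mathrm{wired}}(\d x)] = \d x$ and $\E[\Mc_\gamma^{+,\mathrm{wired}}(\d x)\Mc_\gamma^{+,\mathrm{wired}}(\d y)] = \big(1+\P(E_x\cap E_y) + \P(E_{xy}\setminus(E_x\cup E_y))\big)\,\d x\,\d y + O(\gamma^4)$. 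Dividing by $Z_\gamma^2$ and integrating against $f(x)f(y)$ therefore yields
\[
\E[(h_\theta^\mathrm{wired},f)^2] = \int_{\D \times \D} f(x) f(y) \lim_{\gamma \to 0} \frac{\P(E_x \cap E_y) + \P(E_{xy} \setminus (E_x \cup E_y))}{Z_\gamma^2} \, \d x\,\d y.
\]
The term involving $\P(E_x \cap E_y)$ is exactly the second term of \eqref{E:L_height_second}. For the other contribution I would condition on $\Ec_{\partial \D}^\theta$ and use the restriction property of the loop soup: the loops of $\Lc_\D^\theta$ staying in a connected component $D'$ of $\D \setminus \Ec_{\partial \D}^\theta$ form an unconditioned loop soup in $D'$. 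Hence $E_{xy} \setminus (E_x \cup E_y)$ reduces, on the event that $x$ and $y$ lie in the same component $D'(x)$, to a two-point cluster-connection event in the sub-domain $D'(x)$, and Proposition~\ref{P:crossing} applies to give
\[
\lim_{\gamma \to 0} \frac{\P(E_{xy} \setminus (E_x \cup E_y) \mid \Ec_{\partial \D}^\theta)}{Z_\gamma^2} = C_{\theta, \D \setminus \Ec_{\partial \D}^\theta}(x,y) \quad \text{a.s.},
\]
with the convention that the right-hand side vanishes when $x$ and $y$ lie in distinct components. A Fubini step then produces the first term of \eqref{E:L_height_second}.

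The hard part will be justifying the exchange of the $\gamma$-limit with the integration over $(x,y,\Ec_{\partial \D}^\theta)$. The plan is to dominate using the trivial inclusion $E_{xy} \setminus (E_x \cup E_y) \subset \{x \overset{\Lc_\D^\theta \cup \Xi_a^{x,\D} \cup \Xi_a^{y,\D}}{\longleftrightarrow} y\}$, in which the excursions are simply forgotten. The unconditional upper bound of Lemma~\ref{L:crossing_upper} then yields $\P(E_{xy} \setminus (E_x \cup E_y) \mid \Ec_{\partial \D}^\theta)/Z_\gamma^2 \leq C|\log|x-y||^{2(1-\theta)+\eta}$ uniformly in $\gamma$ and in $\Ec_{\partial \D}^\theta$. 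Since $f$ is compactly supported in $\D$, this polylogarithmic bound is integrable against $f(x)f(y)\,\d x\,\d y$, and dominated convergence together with Fubini concludes the proof.
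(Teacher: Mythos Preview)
Your overall strategy matches the paper's: reduce to connection probabilities via the Girsanov identity \eqref{E:second_moment_measure2}, and then split the resulting event into the two pieces appearing in \eqref{E:L_height_second}. The identification of $\lim_{\gamma\to0}Z_\gamma^{-2}\P(E_{xy}\setminus(E_x\cup E_y))$ with $\E[C_{\theta,\D\setminus\Ec_{\partial\D}^\theta}(x,y)]$ and the domination argument are both handled the same way as in the paper.

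There is, however, a genuine error in your case analysis. You claim that outside $E_x\cap E_y$ and $E_{xy}\setminus(E_x\cup E_y)$ the signs $s_x,s_y$ are independent. This is false on the event $E_{xy}\cap(E_x\triangle E_y)$: if, say, $E_x$ holds and $E_y$ fails but $E_{xy}$ holds, then $y$ is connected to $x$ (through $\Lc\cup\Xi_a^{x}\cup\Xi_a^{y}$) and $x$ is connected to $\Ec_{\partial\D}^\theta$, so $y$ lies in the boundary cluster as well, and $s_x=s_y=\sigma_{\partial\D}$. The correct unconditioned identity is simply
\[
\P(s_x=+=s_y\mid\text{loops, excursions})=\tfrac14+\tfrac14\,\indic{E_{xy}},
\]
which after cancellation gives a density $\P(E_{xy})/Z_\gamma^2$, not $\big(\P(E_x\cap E_y)+\P(E_{xy}\setminus(E_x\cup E_y))\big)/Z_\gamma^2$. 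The two differ by $Z_\gamma^{-2}\P(E_{xy}\cap(E_x\triangle E_y))$, and you would then need to show this is $o(1)$.

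The paper avoids this issue by conditioning on $\{\sigma_{\partial\D}=-1\}$ from the outset (using the symmetry $(h_\theta^{\mathrm{wired}},f)\overset{d}{=}-(h_\theta^{\mathrm{wired}},f)$). Under this conditioning, $s_x=+$ forces $x$ to lie \emph{outside} the boundary cluster, so the event $E_{xy}\cap(E_x\triangle E_y)$ contributes $0$ rather than $1/2$, and the two displayed pieces are exactly what survives. This is the cleanest fix for your argument: add the line $\E[(h_\theta^{\mathrm{wired}},f)^2]=\E[(h_\theta^{\mathrm{wired}},f)^2\mid\sigma_{\partial\D}=-1]$ at the start and redo the case analysis accordingly; then your formula and the rest of the plan go through as written.
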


\begin{proof}
By symmetry,
\begin{align}
\nonumber
& \Expect{(h_\theta^\mathrm{wired},f)^2} = \Expect{ (h_\theta^\mathrm{wired},f)^2 \vert \sigma_{\partial \D} = -1 } 
= \lim_{\gamma \to 0} \frac{1}{Z_\gamma^2} \int_{\D \times \D} f(x) f(y) \times \\
& \times \Expect{ \left. \left( \Mc_\gamma^{\Lc_\D^\theta \cup \Ec_{\partial \D}^\theta} (\d x) \indic{x \in \Cf^+(\Lc_\D^\theta \cup \Ec_{\partial \D}^\theta)} - \d x \right) \left( \Mc_\gamma^{\Lc_\D^\theta \cup \Ec_{\partial \D}^\theta} (\d y) \indic{y \in \Cf^+(\Lc_\D^\theta \cup \Ec_{\partial \D}^\theta)} - \d y \right) \right\vert \sigma_{\partial \D} = -1 }.
\label{E:proof_height4}
\end{align}
We expand the product in the expectation on the right hand side as a sum of four terms. In Section \ref{SS:wired_first_moment} we already dealt with the crossed terms:
\[
\Expect{ \left. \Mc_\gamma^{\Lc_\D^\theta \cup \Ec_{\partial \D}^\theta} (\d z) \indic{z \in \Cf^+(\Lc_\D^\theta \cup \Ec_{\partial \D}^\theta)} \right\vert \sigma_{\partial \D} = -1 } = (1- p_{\gamma}) \d z
\quad \text{where} \quad
p_{\gamma} := \Prob{ z \overset{ \Lc_\D^\theta \cup \Xi_a^{z,\D}}{\longleftrightarrow} \mathcal{E}_{\partial \D}^\theta }.
\]
Let us recall that, by conformal invariance, $p_\gamma$ does not depend on $z$.
We now focus on
\begin{equation}
\label{E:proof_height1}
\Expect{ \left. \Mc_\gamma^{\Lc_\D^\theta \cup \Ec_{\partial \D}^\theta} (\d x) \Mc_\gamma^{\Lc_\D^\theta \cup \Ec_{\partial \D}^\theta} (\d y) \indic{x,y \in \Cf^+(\Lc_\D^\theta \cup \Ec_{\partial \D}^\theta)} \right\vert \sigma_{\partial \D} = -1 }.
\end{equation}
As in Section \ref{SS:wired_first_moment}, because $\sigma_{\partial \D} = -1$, $x$ and $y$ cannot belong to $\Ec_{\partial \D}^\theta$ allowing us to replace $\Mc_\gamma^{\Lc_\D^\theta \cup \Ec_{\partial \D}^\theta}$ by $\Mc_\gamma^{\Lc_\D^\theta}$. Moreover, by Lemma \ref{L:moment_measure}, the contribution of soups $\Lc_\D^\theta$ with a loop visiting both $x$ and $y$ vanishes as $\gamma \to 0$ (this is the same computation as in \eqref{E:proof_loop_2points}). \eqref{E:proof_height1} is therefore equal to
\[
\Expect{ \left. \Mc_\gamma^{\Lc_\D^\theta} (\d x) \Mc_\gamma^{\Lc_\D^\theta} (\d y) \indic{x,y \in \Cf^+(\Lc_\D^\theta \cup \Ec_{\partial \D}^\theta), \nexists \wp \in \Lc_\D^\theta: \{x,y\} \subset \wp} \indic{x,y \notin R(\Ec_{\partial \D}^\theta)} \right\vert \sigma_{\partial \D} = -1 } + o(Z_\gamma^2) \d x  \d y.
\]
Now, by Lemma \ref{L:moment_measure}, this is further equal to
\begin{align*}
& 4 \Prob{ x,y \in \Cf^+( \Lc_\D^\theta \cup \Xi_a^{x,\D} \cup \Xi_a^{y,\D} \cup \Ec_{\partial \D}^\theta ), x,y \notin R(\Ec_{\partial \D}^\theta) \vert \sigma_{\partial \D} = -1 } \d x  \d y + o(Z_\gamma^2)\d x  \d y.
\end{align*}
Since Lebesgue-typical points are almost surely not visited by $\Ec_{\partial \D}^\theta$, the event $\{ x,y \notin R(\Ec_{\partial \D}^\theta)\}$ has full probability. Then density of the first measure above is then equal to
\begin{align*}
& 2 \Prob{ \Xi_a^{x,\D} \overset{\Lc_\D^\theta}{\longleftrightarrow} \Xi_a^{y,\D}, \{ \Ec_{\partial \D}^\theta \overset{\Lc_\D^\theta}{\longleftrightarrow} \Xi_a^{x,\D} \cup \Xi_a^{y,\D} \}^c} 
+ \Prob{ \{ \Xi_a^{x,\D} \overset{\Lc_\D^\theta}{\longleftrightarrow} \Xi_a^{y,\D} \}^c, \{ \Ec_{\partial \D}^\theta \overset{\Lc_\D^\theta}{\longleftrightarrow} \Xi_a^{x,\D} \cup \Xi_a^{y,\D} \}^c } \\
& = \Prob{ \Xi_a^{x,\D} \overset{\Lc_\D^\theta}{\longleftrightarrow} \Xi_a^{y,\D}, \{ \Ec_{\partial \D}^\theta \overset{\Lc_\D^\theta}{\longleftrightarrow} \Xi_a^{x,\D} \cup \Xi_a^{y,\D} \}^c }
+ \Prob{ \{ \Ec_{\partial \D}^\theta \overset{\Lc_\D^\theta}{\longleftrightarrow} \Xi_a^{x,\D} \cup \Xi_a^{y,\D} \}^c } \\
& = \Prob{ \Xi_a^{x,\D} \overset{\Lc_\D^\theta}{\longleftrightarrow} \Xi_a^{y,\D}, \{ \Ec_{\partial \D}^\theta \overset{\Lc_\D^\theta}{\longleftrightarrow} \Xi_a^{x,\D} \cup \Xi_a^{y,\D} \}^c }
+ 1 - 2p_\gamma + \Prob{ \Ec_{\partial \D}^\theta \overset{\Lc_\D^\theta}{\longleftrightarrow} \Xi_a^{x,\D}, \Ec_{\partial \D}^\theta \overset{\Lc_\D^\theta}{\longleftrightarrow} \Xi_a^{y,\D} }.
\end{align*}
By the restriction properties of $\Lc_\D^\theta$ and $\Xi_a^{z,\D}$, $z=x,y$, one can show that
\[
\lim_{\gamma \to 0} \frac{1}{Z_\gamma^2} \Prob{ \Xi_a^{x,\D} \overset{\Lc_\D^\theta}{\longleftrightarrow} \Xi_a^{y,\D}, \{ \Ec_{\partial \D}^\theta \overset{\Lc_\D^\theta}{\longleftrightarrow} \Xi_a^{x,\D} \cup \Xi_a^{y,\D} \}^c } = \Expect{ C_{\theta, \D \setminus \Ec_{\partial \D}^\theta}(x,y) }
\]
where the expectation is w.r.t. $\Ec_{\partial \D}^\theta$.
The term $1-2p_\gamma$ cancels out with the three other terms in the expansion of the product inside the expectation in \eqref{E:proof_height4}.
Putting things together leads to \eqref{E:L_height_second}.
\end{proof}

We now state another key intermediate result (Proposition \ref{P:independance_u} below) for the proof of \eqref{E:T_level_line_bc} that we will prove in Section \ref{SS:quasi_independence} below.
By Theorem \ref{T:ratio_general}, we can define
\begin{equation}\label{E:def_u}
u(z) := \lim_{\gamma \to 0} \frac{1}{Z_\gamma} \Prob{ \left. \Ec_{\partial \D}^\theta \overset{\Lc_\D^\theta}{\longleftrightarrow} \Xi_a^{z,\D} \right\vert \Ec_{\partial \D}^\theta}, \quad z \in \D.
\end{equation}
By conformal invariance, the law of $u(z)$ does not depend on $z$. The constant $c(\theta)$ appearing in Theorem \ref{T:level_line} corresponds exactly to the expectation of $u(z)$; see \eqref{E:c_theta}. Proposition \ref{P:independance_u} below shows that $u(x_\eps)$ and $u(y_\eps)$ become asymptotically uncorrelated when $(x_\eps)_\eps$ and $(y_\eps)_\eps$ are targeting two distinct fixed boundary points.

\begin{proposition}\label{P:independance_u}
Let $x, y \in \partial \D$ be two distinct boundary points.
Let $(x_\eps)_\eps$ and $(y_\eps)_\eps$ be two sequences of points in $\D$ such that for all $\eps >0$, $|x-x_\eps| < \eps$ and $|y-y_\eps| < \eps$.
Then
\[
\max( \Expect{u(x_\eps) u(y_\eps)} - c(\theta)^2, 0) \to 0 \quad \quad \text{as} \quad \quad \eps \to 0.
\]
Moreover, for all $\delta >0$ fixed, the convergence is uniform in $x,y,(x_\eps)_\eps,(y_\eps)_\eps,$ provided that $|x - y| \geq \delta$.
\end{proposition}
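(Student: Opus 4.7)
The plan is to leverage conformal invariance of $\Ec_{\partial \D}^\theta$ and the quasi-independence estimates of Section \ref{sec:exc_bdy} in order to exhibit $u(x_\eps)$ and $u(y_\eps)$ as asymptotically uncorrelated functionals of $\Ec_{\partial \D}^\theta$. First, I would conformally map $\D$ to the upper half-plane $\Hb$ so that $x\mapsto 0$, $y\mapsto 1$ and a fixed reference boundary point of $\partial \D$ to $\infty$; write $z_{1,\eps}\in D(0,C\eps)\cap \Hb$ and $z_{2,\eps}\in D(1,C\eps)\cap \Hb$ for the images of $x_\eps$ and $y_\eps$. Since $u(z)$ is $\sigma(\Ec_\Rb)$-measurable, by introducing two conditionally independent copies $(\Lc^{(i)}, \Xi^{(i)})$, $i=1,2$, of $(\Lc_\Hb^\theta, \Xi_a^{z_{i,\eps}, \Hb})$ given $\Ec_\Rb$, and exchanging limit and expectation via a uniform $L^2$-bound analogous to Lemma \ref{L:crossing_upper}, I would obtain
\[
\E[u(x_\eps) u(y_\eps)] = \lim_{\gamma \to 0} \frac{1}{Z_\gamma^2}\, \P\bigl(\Ec_\Rb \overset{\Lc^{(1)}}{\longleftrightarrow} \Xi^{(1)},\; \Ec_\Rb \overset{\Lc^{(2)}}{\longleftrightarrow} \Xi^{(2)}\bigr).
\]

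The second step is the \emph{localisation}: fix $\nu\in(0,1)$ and let $\Ec_1, \Ec_2 \subset \Ec_\Rb$ be as in Lemma \ref{lem:quasi_indep} (excursions with both endpoints in $[-\eps^\nu,\eps^\nu]$ and $[1-\eps^\nu, 1+\eps^\nu]$, respectively). The goal is to replace $\Ec_\Rb$ by $\Ec_i$ in the connection event involving $(\Lc^{(i)}, \Xi^{(i)})$ at the cost of an $o(Z_\gamma)$ error, uniform in $\gamma$. More precisely, I would show
\[
\tfrac{1}{Z_\gamma}\, \P\bigl((\Ec_\Rb\setminus \Ec_1) \overset{\Lc^{(1)}}{\longleftrightarrow} \Xi^{(1)} \bigm| \Ec_\Rb\bigr) \xrightarrow[\eps\to 0]{L^2} 0,
\]
uniformly in $\gamma$, and its symmetric counterpart. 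This is the main technical obstacle and is the analogue for connection events of Lemma \ref{L:1pointconc}. A union bound over excursions $e\in\Ec_\Rb\setminus \Ec_1$ combined with Theorem \ref{T:ratio_general} applied to $K = e$ (viewed as a connected compact subset of $\overline{\Hb}$), and the identity \eqref{E:thick_to_disc}, should yield a bound of the form
\[
\tfrac{1}{Z_\gamma}\,\P\bigl(e \overset{\Lc^{(1)}}{\longleftrightarrow} \Xi^{(1)}\bigm| e\bigr) \le C\,\tfrac{(a(e) - b(e))^2\,\Im(z_{1,\eps})^2}{|a(e) - z_{1,\eps}|^2\,|b(e) - z_{1,\eps}|^2}
\]
analogous to Lemma \ref{lem:B_1point}. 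Taking expectation and using the interval decomposition used in the proof of \eqref{propeq:one-point} together with Lemma \ref{lem:estimate_interval} would then give the required convergence to zero.

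Once localised, the third step is to invoke the asymptotic independence of $\Ec_1$ and $\Ec_2$. The arguments behind Lemma \ref{lem:quasi_indep} (and, in particular, the underlying fact that the boundary-touching loops generating $\Ec_1$ are, at leading order, disjoint from those generating $\Ec_2$ as $\eps\to 0$) together with the conditional independence of $(\Lc^{(1)},\Xi^{(1)})$ and $(\Lc^{(2)},\Xi^{(2)})$ given $\Ec_\Rb$, should yield
\[
\tfrac{1}{Z_\gamma^2}\,\P\bigl(\Ec_1 \overset{\Lc^{(1)}}{\longleftrightarrow} \Xi^{(1)},\; \Ec_2 \overset{\Lc^{(2)}}{\longleftrightarrow} \Xi^{(2)}\bigr) \le (1+o(1))\,\prod_{i=1,2}\tfrac{1}{Z_\gamma}\,\P\bigl(\Ec_\Rb \overset{\Lc^{(i)}}{\longleftrightarrow} \Xi^{(i)}\bigr),
\]
and each factor on the right converges to $c(\theta)$ as $\gamma \to 0$ by \eqref{E:c_theta}. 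This yields $\E[u(x_\eps)u(y_\eps)] \le c(\theta)^2 + o(1)$. Uniformity in $x, y, x_\eps, y_\eps$ provided $|x-y|\ge \delta$ follows because every constant above depends on the conformal map $\D\to \Hb$ only through continuous quantities that remain bounded on $\{|x-y|\ge \delta\}$. The hardest step by far is the localisation: controlling the probability that $\Xi^{(1)}$ is connected via an independent loop soup to a \emph{faraway} excursion of $\Ec_\Rb$ is subtler than the corresponding hitting estimates of Section \ref{sec:exc_bdy}, and requires combining Theorem \ref{T:ratio_general} with quantitative moment bounds for boundary excursions.
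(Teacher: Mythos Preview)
Your localisation step (Step 2) and the paper's are similar in spirit, and indeed the paper first reduces $\E[u(z_1)u(z_2)]$ to $\E[u_1 u_2]$ where $u_j$ uses only the localised excursions $\Ec_j$. However, the paper does not bound connection events excursion-by-excursion as you propose. Instead it introduces the maximal radius $R_{\gamma,j}$ reached by the cluster of $z_j$ in $\Lc_\Hb^\theta\cup\Xi_a^{z_j}$, and then applies the \emph{hitting} estimate of Lemma~\ref{lem:quasi_indep} to the event $\{\Ec_\Rb\setminus\Ec_1$ intersects $U(z_1,R_{\gamma,1})\}$, combined with the tail bound on $R_{\gamma,j}$ from Lemma~\ref{L:Rgamma}. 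Your proposed bound $\tfrac{1}{Z_\gamma}\P(e\overset{\Lc^{(1)}}{\longleftrightarrow}\Xi^{(1)}\mid e)\le C\,(a(e)-b(e))^2\Im(z_1)^2/(|a(e)-z_1|^2|b(e)-z_1|^2)$ conflates a loop-soup connection event with the direct Brownian-excursion hitting probability of Lemma~\ref{lem:B_1point}; Theorem~\ref{T:ratio_general} gives convergence of ratios, not a quantitative bound of this shape in terms of endpoints.

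The more serious gap is Step 3. Lemma~\ref{lem:quasi_indep} does \emph{not} assert that $\Ec_1$ and $\Ec_2$ are asymptotically independent: it only says that in the joint hitting probability $\P[\Ec_\Rb\cap U(z_1,r_1)\neq\emptyset,\Ec_\Rb\cap U(z_2,r_2)\neq\emptyset]$ one may replace $\Ec_\Rb$ by the localised $\Ec_j$ at negligible cost. There is no factorisation here, and for $\theta<1/2$ the excursions $\Ec_\Rb$ are not a Poisson point process, so $\Ec_1$ and $\Ec_2$ are genuinely correlated. The paper obtains the decorrelation by a completely different mechanism: it realises $\Ec_\Rb$ via a Markovian exploration of a loop soup $\Lc_0$ wired only on $[-\eps^{1/2},\eps^{1/2}]$, so that (i) $\wt u_1$ is measurable with respect to the $\sigma$-algebra $\Fc$ generated by the boundary excursions of $\Lc_0$ and the CLE of $\Lc_0\cap\Hb$, while (ii) inside the CLE loop $\xi$ around $\wt z_2$ there sits an \emph{independent} wired loop soup whose excursions $\wt\Ec_\xi$ define $v_2$ with $\E[v_2\mid\Fc]=c(\theta)$. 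One then shows $\wt u_2\le v_2$ with probability $1-o(1)$ (since $\wt\Ec_2\subset\wt\Ec_\xi$ on a good event) and concludes $\E[\wt u_1\wt u_2]\lesssim\E[\wt u_1 v_2]=c(\theta)\E[\wt u_1]\le c(\theta)^2$, with the bad event handled by uniform integrability. This use of the spatial Markov property (Theorems~\ref{thm:qw19} and~\ref{thm:restriction}) is the essential idea your proposal is missing.
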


The proof of Proposition \ref{P:independance_u} is written in Section \ref{SS:quasi_independence}. When $\theta = 1/2$, one can use that $\Ec_{\partial \D}$ is a Poisson point process of Brownian excursions to prove Proposition \ref{P:independance_u} fairly painlessly. No such description is available for general intensities $\theta \leq 1/2$ and we will rely on estimates and techniques developed in Section \ref{sec:exc_bdy}. 
We can now prove:

\begin{proof}[Proof of \eqref{E:T_level_line_bc} in Theorem \ref{T:level_line}]
Let $(f_\eps)_{0<\eps<1}$ be a sequence of functions satisfying Assumption \ref{assumption_feps}. 
By \eqref{E:T_level_line_expectation}, for any $\eps >0$, $\Expect{(h_\theta^\mathrm{wired},f_\eps) \vert \sigma_{\partial \D}} = \sigma_{\partial \D} c(\theta)$ a.s. To show that $(h_\theta^\mathrm{wired},f_\eps) - \sigma_{\partial \D} c(\theta) \to 0$ in $L^2$, it is therefore enough to show that
\[
\limsup_{\eps \to 0} \Expect{(h_\theta^\mathrm{wired},f_\eps)^2} \leq c(\theta)^2.
\]
By Lemma \ref{L:train}, $\limsup_{\eps \to 0} \Expect{(h_\theta^\mathrm{wired},f_\eps)^2}$ is at most
\begin{equation}
\label{E:july2}
\limsup_{\eps \to 0} \int_{\D \times \D} \lim_{\gamma \to 0} \frac{1}{Z_\gamma^2} \Prob{ \Ec_{\partial \D}^\theta \overset{\Lc_\D^\theta}{\longleftrightarrow} \Xi_a^{x,\D}, \Ec_{\partial \D}^\theta \overset{\Lc_\D^\theta}{\longleftrightarrow} \Xi_a^{y,\D} } f_\eps(x) f_\eps(y) \d x \d y
\end{equation}
plus
\begin{equation}
\label{E:july3}
\limsup_{\eps \to 0} \int_{\D \times \D} \Expect{ C_{\theta,\D \setminus \Ec_{\partial \D}^\theta}(x,y) } f_\eps(x) f_\eps(y) \d x \d y.
\end{equation}
Let us first show that \eqref{E:july3} vanishes. To this end, notice that $C_{\theta,\D \setminus \Ec_{\partial \D}^\theta}(x,y)$ vanishes as soon as $x$ and $y$ belong to different connected components of $\D \setminus \Ec_{\partial \D}^\theta$ and notice also that $C_{\theta,D}(x,y)$ is nondecreasing with the domain $D$ (this follows from the expression \eqref{E:P_crossing_limit}). We can thus bound
$
\Expect{ C_{\theta,\D \setminus \Ec_{\partial \D}^\theta}(x,y) }
$
by $C_{\theta,\D}(x,y)$
times the probability $p_{xy}$ that $x$ and $y$ belong to the same connected component of $\D \setminus \Ec_\D^\theta$. Let $\delta >0$.  If $|x-y| < \delta$, we simply bound $p_{xy} \leq 1$. Otherwise, $p_{xy}$ vanishes uniformly in $x,y$ with $\d(x,\partial \D), \d(y, \partial \D) \to 0$ and $|x-y| \geq \delta$. \eqref{E:july3} is therefore at most
\begin{align*}
\limsup_{\eps \to 0} \int_{\D \times \D} \indic{|x-y|< \delta} C_{\theta,\D}(x,y) f_\eps(x) f_\eps(y) \d x \d y
+ \limsup_{\eps \to 0} o(1) \int_{\D \times \D} \indic{|x-y| \geq \delta} C_{\theta,\D}(x,y) f_\eps(x) f_\eps(y) \d x \d y
\end{align*}
where $o(1)$ may depend on $\delta$ and vanishes as $\eps \to 0$, $\delta >0$ fixed. By \eqref{E:assumption_feps}, the second integral above is bounded uniformly in $\eps$, so the second term vanishes. We can now let $\delta \to 0$ and observe that the first term goes to 0 by \eqref{E:assumption_feps2}. This shows that \eqref{E:july3} vanishes.

The rest of the proof consists in showing that \eqref{E:july2} is at most $c(\theta)^2$. Let $\eps>0, \gamma >0, x,y \in \D$. Let us first show that
\begin{equation}
\label{E:july4}
\frac{1}{Z_\gamma^2} \Prob{ \Ec_{\partial \D}^\theta \overset{\Lc_\D^\theta}{\longleftrightarrow} \Xi_a^{x,\D}, \Ec_{\partial \D}^\theta \overset{\Lc_\D^\theta}{\longleftrightarrow} \Xi_a^{y,\D} }
\leq C \max(1,-\log|x-y|).
\end{equation}
Let
\begin{equation}
\label{E:Rz}
R_z := \sup \{ R >0: \partial D(z, R) \overset{\Lc_\D^\theta}{\longleftrightarrow} \Xi_a^{z,\D} \}, \quad \quad z \in \{x,y\}.
\end{equation}
We have
\begin{equation}
\label{E:benoit2}
\Prob{ \Ec_{\partial \D}^\theta \overset{\Lc_\D^\theta}{\longleftrightarrow} \Xi_a^{x,\D}, \Ec_{\partial \D}^\theta \overset{\Lc_\D^\theta}{\longleftrightarrow} \Xi_a^{y,\D} }
\leq \int_{(0,2)^2} \Prob{R_z \in \d r_z, z=x,y} \Prob{\Ec_{\partial \D}^\theta \cap D(z,r_z), z =x,y}.
\end{equation}
By Proposition \ref{propeq:two-point},
\[
\Prob{\Ec_{\partial \D}^\theta \cap D(z,r_z), z =x,y}
\leq \frac{C \max(1, -\log|x-y|)}{\max(1,\log(\d(x,\partial \D)/r_x)) \max(1,\log(\d(y,\partial \D)/r_y))}.
\]
Plugging this in \eqref{E:benoit2} and integrating by parts with respect to both $r_x$ and $r_y$ leads to the following upper bound for the left hand side of \eqref{E:benoit2}:
\begin{align*}
C \max(1, -\log|x-y|) \int_{0}^{e^{-1} \d(y,\partial \D)} \d r_y \int_{0}^{e^{-1} \d(x,\partial \D)} \d r_x \frac{\Prob{R_z \geq r_z, z =x,y}}{r_x r_y (\log \d(x,\partial \D)/r_x)^2 (\log(\d(y,\partial \D)/r_y))^2}.
\end{align*}
The fact that we integrate over the intervals $(0,e^{-1} \d(z,\partial \D))$, $z=x,y$, stems from the fact that the derivative of $r_z \mapsto 1/\max(1,\log(\d(z,\partial \D)/r_z)$ vanishes outside of this interval. 
A slight variant of the argument of the proof of Lemma \ref{L:crossing_upper} shows that for $\eta$ arbitrary small (for our purposes, $\eta = \theta/2$ would do), there exists $C=C(\eta)>0$ such that
\begin{equation}
\label{E:Rz2}
\Prob{R_z \geq r_z, z =x,y} \leq C Z_\gamma^2 \prod_{z=x,y} \left( \log \frac{\d(z,\partial \D)}{r_z} \right)^{1-\theta+\eta},
\quad \quad r_z \in (0, e^{-1} \d(z,\partial \D)), \quad z=x,y.
\end{equation}
We omit these details. Putting things together and noticing that
\[
\int_{0}^{e^{-1} \d(z,\partial \D)} \frac{\d r_z}{r_z (\log \d(z,\partial \D)/r_z)^{1-\theta+\eta}} \leq C,
\quad \quad \text{if}~ \eta <\theta,
\quad \quad z =x,y,
\]
proves \eqref{E:july4}.

We now continue the proof that \eqref{E:july2} is at most $c(\theta)^2$. 
By \eqref{E:july4} and \eqref{E:assumption_feps2}, the contribution to \eqref{E:july3} of points $x,y$ with $|x-y| < \delta$ vanishes as $\delta \to 0$. More precisely, \eqref{E:july3} is at most
\[
\limsup_{\delta \to 0} \limsup_{\eps \to 0} \int_{\D \times \D} \indic{|x-y| \geq \delta} \lim_{\gamma \to 0} \frac{1}{Z_\gamma^2} \Prob{ \Ec_{\partial \D}^\theta \overset{\Lc_\D^\theta}{\longleftrightarrow} \Xi_a^{x,\D}, \Ec_{\partial \D}^\theta \overset{\Lc_\D^\theta}{\longleftrightarrow} \Xi_a^{y,\D} } f_\eps(x) f_\eps(y) \d x \d y.
\]
Let $\delta >0$ be fixed. Recalling that $\int_\D f_\eps = 1$ for all $\eps$, in order to conclude that \eqref{E:july3} is at most $c(\theta)^2$, it is enough to show that
\begin{equation}
\label{E:july5}
\max \left(
\lim_{\gamma \to 0} \frac{1}{Z_\gamma^2} \Prob{ \Ec_{\partial \D}^\theta \overset{\Lc_\D^\theta}{\longleftrightarrow} \Xi_a^{x,\D}, \Ec_{\partial \D}^\theta \overset{\Lc_\D^\theta}{\longleftrightarrow} \Xi_a^{y,\D} } - c(\theta)^2, 0 \right)
\end{equation}
converges uniformly to $0$ as $\d(x,\partial \D), \d(y,\partial \D) \to 0$ while $|x-y|$ stays at least $\delta$. By extracting subsequences if necessary, we can moreover assume that $x$ and $y$ converge to distinct boundary points.
So, let $x_0, y_0 \in \partial \D$ be two distinct boundary points and let $(x_\eps)_\eps$ and $(y_\eps)_\eps$ be two sequences of points in $\D$ that converge to $x_0$ and $y_0$ respectively.
We first notice that the loops in $\Lc_\D^\theta$ that will contribute to the probability in \eqref{E:july5} are localised near $x_\eps$ and $y_\eps$. More precisely, consider $A>0$ large and denote by $\Lc_{\D,z}^\theta = \{ \wp \in \Lc_\D^\theta : \wp \subset D(z, A \d(z,\partial \D)) \}$, for $z \in D$.
Then
\begin{equation}
\label{E:july1}
\Prob{ \Ec_{\partial \D}^\theta \overset{\Lc_\D^\theta}{\longleftrightarrow} \Xi_a^{x_\eps,\D}, \Ec_{\partial \D}^\theta \overset{\Lc_\D^\theta}{\longleftrightarrow} \Xi_a^{y_\eps,\D} }
= (1+o(1)) \Prob{ \Ec_{\partial \D}^\theta \overset{\Lc_{\D,x_\eps}^\theta}{\longleftrightarrow} \Xi_a^{x_\eps,\D}, \Ec_{\partial \D}^\theta \overset{\Lc_{\D,y_\eps}^\theta}{\longleftrightarrow} \Xi_a^{y_\eps,\D} }
\end{equation}
where $o(1) \to 0$ as $A \to \infty$, uniformly in $\gamma$ and $\eps>0$. This follows from the same type of considerations as in the proof of \eqref{E:july4}, and eventually boils down to the fact that the random variables $R_z$, $z \in \{x,y\}$, defined in \eqref{E:Rz}, are of order $\d(z,\partial \D)$. See in particular \eqref{E:Rz2}. But now, for $A>0$ large but fixed, if $\eps$ is small enough the discs $D(x_\eps, A \d(x_\eps,\partial \D))$ and $D(y_\eps, A \d(y_\eps,\partial \D))$ are disjoint implying that $\Lc_{\D,x_\eps}^\theta$ and $\Lc_{\D,y_\eps}^\theta$ are independent. Hence the probability on the right hand side of \eqref{E:july1} is equal to
\[
\Expect{ \Prob{\left. \Ec_{\partial \D}^\theta \overset{\Lc_{\D,x_\eps}^\theta}{\longleftrightarrow} \Xi_a^{x_\eps,\D} \right\vert \Ec_{\partial \D}^\theta } \Prob{ \left. \Ec_{\partial \D}^\theta \overset{\Lc_{\D,y_\eps}^\theta}{\longleftrightarrow} \Xi_a^{y_\eps,\D} \right\vert \Ec_{\partial \D}^\theta}
}.
\]
As in \eqref{E:july1} and by sending $A \to \infty$, we can replace $\Lc_{\D,x_\eps}^\theta$ and $\Lc_{\D,y_\eps}^\theta$ by $\Lc_{\D}^\theta$ (alternatively, adding more loops can only increase the probability of connection). This shows that
\begin{align*}
& \limsup_{\eps \to 0}
\lim_{\gamma \to 0} \frac{1}{Z_\gamma^2} \Prob{ \Ec_{\partial \D}^\theta \overset{\Lc_\D^\theta}{\longleftrightarrow} \Xi_a^{x_\eps,\D}, \Ec_{\partial \D}^\theta \overset{\Lc_\D^\theta}{\longleftrightarrow} \Xi_a^{y_\eps,\D} } \\
& \leq
\limsup_{\eps \to 0}
\lim_{\gamma \to 0} \frac{1}{Z_\gamma^2} \Expect{ \Prob{\left. \Ec_{\partial \D}^\theta \overset{\Lc_\D^\theta}{\longleftrightarrow} \Xi_a^{x_\eps,\D} \right\vert \Ec_{\partial \D}^\theta } \Prob{ \left. \Ec_{\partial \D}^\theta \overset{\Lc_\D^\theta}{\longleftrightarrow} \Xi_a^{y_\eps,\D} \right\vert \Ec_{\partial \D}^\theta}
}
= \limsup_{\eps \to 0} \Expect{u(x_\eps)u(y_\eps)}
\end{align*}
where $u(z), z \in \D,$ is defined in \eqref{E:def_u}. In the last equality above we applied dominated convergence theorem (the domination is obtained from the estimate \eqref{E:Rz2} on $R_z$ and by Proposition \ref{propeq:two-point}).
By Proposition \ref{P:independance_u}, $\limsup_{\eps \to 0} \Expect{u(x_\eps)u(y_\eps)} \leq c(\theta)^2$. This concludes the proof that \eqref{E:july3} is at most $c(\theta)^2$ which concludes the proof of \eqref{E:T_level_line_bc}.
\end{proof}

\subsection{Quasi independence of excursions near two distinct points}\label{SS:quasi_independence}

This section is devoted to the proof of Proposition \ref{P:independance_u}.

\begin{proof}[Proof of Proposition \ref{P:independance_u}]
By conformal invariance, we may work in the upper half plane $\Hb$ instead of the unit disc $\D$ and assume that our boundary points are $0$ and $1$.
Let $\eps \in (0,1/4)$, $z_1 \in \Hb \cap D(0,\eps)$, $z_2 \in \Hb \cap D(1,\eps)$.
Let $\Ec_1$ (resp. $\Ec_2$) be the set of excursions in $\Ec_\Rb$ with both endpoints in $[-\eps^{2/3}, \eps^{2/3}]$ (resp. $[1-\eps^{2/3}, 1+\eps^{2/3}]$).
For $j=1,2$, define
\begin{equation}\label{E:uj}
u_j:=\lim_{\gamma\to 0} \frac{1}{Z_\gamma} \Pb(\Ec_j  \overset{\Lc_\Hb^\theta \cup \Xi_a^{z_j,\Hb}}{\longleftrightarrow}  z_j \mid \Ec_j).
\end{equation}
The difference with $u(z_j)$ \eqref{E:def_u} is that the intersection can only come from an excursion from $\Ec_j$, instead of the whole set $\Ec_\R$ of excursions.
We are first going to show that
\begin{equation}
\label{E:pf_q3}
\limsup_{\eps \to 0}
\Expect{u(z_1) u(z_2)} \leq \limsup_{\eps \to 0} \Expect{u_1 u_2}.
\end{equation}
Let $K_{\gamma,1}$ and $K_{\gamma,2}$ be two \emph{independent} random compacts of $\Hb$ distributed as (the topological closure of) the cluster of $z_j$ in $\Lc_\Hb^\theta \cup \Xi_a^{z_j,\Hb}$, $j=1,2$.
We can bound for $j=1,2$,
\begin{align*}
u(z_j) \leq u_j + \lim_{\gamma \to 0} \frac{1}{Z_\gamma} \P (\Ec_\R \setminus \Ec_j \cap K_{\gamma,j} \neq \varnothing \mid \Ec_\R ).
\end{align*}
Multiplying these two inequalities for $j=1,2$ leads to
\begin{align*}
& \Expect{u(z_1) u(z_2)} - \Expect{u_1 u_2} \leq \sum_{\substack{i=1,2\\j=3-i}} \Expect{\lim_{\gamma \to 0} \frac{1}{Z_\gamma^2} \Prob{\left. \Ec_{\R} \setminus \Ec_i \cap K_{\gamma,i} \neq \varnothing, \Ec_j \cap K_{\gamma,j} \neq \varnothing \right\vert \Ec_\R }  } \\
& + \Expect{\lim_{\gamma \to 0} \frac{1}{Z_\gamma^2}
\Prob{\left. \forall j=1,2, \Ec_{\R} \setminus \Ec_j \cap K_{\gamma,j} \neq \varnothing \right\vert \Ec_\R } } \\
& \leq 2 \lim_{\gamma \to 0} \frac{1}{Z_\gamma^2} \sum_{\substack{i=1,2\\j=3-i}} \Prob{ \Ec_{\R} \setminus \Ec_i  \cap K_{\gamma,i} \neq \varnothing, \Ec_\R \cap K_{\gamma,j} \neq \varnothing }.
\end{align*}
In the last inequality we exchanged expectation and limit with the help of Fatou's lemma (or alternatively dominated convergence theorem).
For $j=1,2$, let $f_j : \D \to \Hb$ be a conformal map that sends $0$ to $z_j$ and let
\[
C_j(r) := f_j( r \partial \D ), \quad r \in [0,1],
\quad \text{and} \quad
R_{\gamma,j} := \sup \{ r>0: C_j(r) \cap K_{\gamma,j} \neq \varnothing \}.
\]
The definition of $C_j(r)$ does not depend on the specific choice of $f_j$. Note also that, by conformal invariance, $R_{\gamma,1}$ and $R_{\gamma,2}$ have the same law as $R_\gamma$ defined in \eqref{E:Rgamma}.
Let $\eta >0$ and let us first consider the event that $R_{\gamma,1}$ and $R_{\gamma,2}$ are at most $1-\eta$.
We have
\begin{align}\label{E:pf_q1}
    & \lim_{\gamma \to 0} \frac{1}{Z_\gamma^2} \Prob{ \Ec_{\R} \setminus \Ec_1  \cap K_{\gamma,1} \neq \varnothing, \Ec_\R \cap K_{\gamma,2} \neq \varnothing, R_{\gamma,1} \leq 1-\eta, R_{\gamma,2} \leq 1-\eta } \\
    & \leq \lim_{\gamma \to 0} \frac{1}{Z_\gamma^2} \int_{(0,1-\eta)^2} \Prob{R_{\gamma,1} \in \d r_1} \Prob{R_{\gamma,2} \in \d r_2}
\Prob{\Ec_\R \setminus \Ec_1 \cap C_1(r_1) \neq \varnothing, \Ec_\R \cap C_2(r_2) \neq \varnothing }. \nonumber
\end{align}
For $\eta >0$ fixed, Lemma \ref{lem:quasi_indep} shows that
\[
\Prob{\Ec_\R \setminus \Ec_1 \cap C_1(r_1) \neq \varnothing, \Ec_\R \cap C_2(r_2) \neq \varnothing }
\leq o_{\eps \to 0}(1) |\log r_1|^{-1} |\log r_2|^{-1}
\]
where $o_{\eps \to 0}(1) \to 0$ as $\eps \to 0$, uniformly in $r_1, r_2 \in (0,1-\eta)$. Plugging this estimate into \eqref{E:pf_q1} and integrating by parts shows that the left hand side of \eqref{E:pf_q1} is at most
\[
\frac{o_{\eps \to 0}(1)}{Z_\gamma^2} \int_{(0,1-\eta)^2} \frac{ \Prob{R_1 \geq r_1} \Prob{R_2 \geq r_2} }{r_1 r_2 |\log r_1 \log r_2|^2} \d r_1 \d r_2.
\]
Now, by \eqref{E:L_Rgamma1} in Lemma \ref{L:Rgamma} (applied to $\eta=\theta/2$), there exists $C>0$ such that for all $r_j \in (0,1-\eta)$, $\Prob{R_j \geq r_j} \leq C Z_\gamma |\log r_j|^{1-\theta/2}$. This shows that \eqref{E:pf_q1} vanishes as $\eps \to 0$.
On the event that $R_{\gamma,1}$ or $R_{\gamma,2}$ is at least $1-\eta$, we have with a union bound,
\begin{align*}
    & \lim_{\gamma \to 0} \frac{1}{Z_\gamma^2} \Prob{ \Ec_{\R} \setminus \Ec_1  \cap K_{\gamma,1} \neq \varnothing, \Ec_\R \cap K_{\gamma,2} \neq \varnothing, R_{\gamma,1} \text{~or~} R_{\gamma,2} \geq 1-\eta } \\
    & \leq \lim_{\gamma \to 0} \frac{1}{Z_\gamma^2} \sum_{\substack{i=1,2\\j=3-i}} \Prob{R_{\gamma,i} \geq 1-\eta} \Prob{\Ec_\R \cap K_{\gamma,j} \neq \varnothing}
    = 2c(\theta) \lim_{\gamma \to 0} \frac{1}{Z_\gamma} \Prob{R_\gamma \geq 1-\eta}
\end{align*}
where $R_\gamma$ is defined in \eqref{E:Rgamma}. The last equality follows from conformal invariance and \eqref{E:c_theta}. In particular,
the right hand side is independent of $\eps$ and goes to zero as $\eta \to 0$ (\eqref{E:L_Rgamma2} in Lemma \ref{L:Rgamma}). This concludes the proof of \eqref{E:pf_q3}.

The proof of Proposition \ref{P:independance_u} will then follow from (recall that $u_1$ and $u_2$ are defined in \eqref{E:uj}):

\begin{lemma}\label{lem:indep_u}
$\limsup_{\eps \to 0} \Expect{u_1 u_2} \leq c(\theta)^2$.
\end{lemma}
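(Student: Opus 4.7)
The strategy is to show both that $\Expect{u_j} \to c(\theta)$ as $\eps \to 0$ (marginal identification) and that $\Expect{u_1 u_2} - \Expect{u_1}\Expect{u_2} \to 0$ (asymptotic decorrelation). Combining the two yields the claimed upper bound $c(\theta)^2$. The plan is to write both expectations as iterated limits of hitting probabilities and to exploit the spatial separation of the endpoint sets $[-\eps^{2/3},\eps^{2/3}]$ and $[1-\eps^{2/3},1+\eps^{2/3}]$ on which $\Ec_1,\Ec_2$ are respectively supported.

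For the marginal identification, I would introduce, in analogy with \eqref{E:Rz}, the random radii $R_{\gamma,j}:=\sup\{r>0:C_j(r)\overset{\Lc^{(j)}}{\longleftrightarrow}\Xi_a^{z_j,\Hb}\}$, where $\Lc^{(1)},\Lc^{(2)}$ are the two independent copies of $\Lc_\Hb^\theta$ implicit in the definitions of $u_1,u_2$ and $C_j(r)$ is the image of $r\partial\D$ under a conformal map $\D\to\Hb$ sending $0$ to $z_j$. Conditioning on these radii and using the independence of $\Lc^{(j)}$ from $\Ec_\R$ gives
\[
\Expect{u_j}=\lim_{\gamma\to 0}\frac{1}{Z_\gamma}\int_0^1\Prob{\Ec_j\cap C_j(r)\neq\varnothing}\,\mathbb{P}(R_{\gamma,j}\in dr).
\]
The conformally invariant analogue of Lemma \ref{L:1pointconc} lets us replace $\Ec_j$ by $\Ec_\R$ at the cost of an error $o(|\log r|^{-1})$ that is uniform in $r\in(0,1-\eta)$ and vanishes as $\eps\to 0$; combined with the bound \eqref{E:L_Rgamma1} on the tail of $R_{\gamma,j}/Z_\gamma$ this yields $\Expect{u_j}\to c(\theta)$ by the very definition \eqref{E:c_theta}.

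For the decorrelation, analogous manipulations express $\Expect{u_1 u_2}$ as a double integral whose integrand is $\Prob{\Ec_1\cap C_1(r_1)\neq\varnothing,\,\Ec_2\cap C_2(r_2)\neq\varnothing}$. The key step is to prove
\[
\Prob{\Ec_1\cap C_1(r_1)\neq\varnothing,\,\Ec_2\cap C_2(r_2)\neq\varnothing}\le\Prob{\Ec_1\cap C_1(r_1)\neq\varnothing}\Prob{\Ec_2\cap C_2(r_2)\neq\varnothing}+o(1)|\log r_1|^{-1}|\log r_2|^{-1},
\]
uniformly in $r_1,r_2\in(0,1-\eta)$ as $\eps\to 0$. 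This quantitative factorisation will be obtained by exploiting the fact that $\Ec_1$ and $\Ec_2$ are generated by loops of the wired-boundary soup visiting disjoint tiny boundary intervals: exploring the outer boundary of the cluster containing the wired arc $[-\eps^{2/3},\eps^{2/3}]$ via Theorem \ref{thm:restriction} (one-sided restriction of \cite{MR3901648}) produces a domain in which the excursion configurations inside and outside decouple, so that the remaining dependence with $\Ec_2$ is mediated only by loops crossing the separating curve — a contribution whose total weight can be estimated by the first- and second-moment identities of Propositions \ref{prop:integral} and \ref{prop:sum_mm2}, together with the quasi-independence Lemma \ref{lem:quasi_indep}. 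Integrating the factorised bound back against the two radii and using the marginal identification then yields $\limsup_\eps\Expect{u_1 u_2}\le c(\theta)^2$.

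The main obstacle is precisely this factorisation step. The loop soup with wired boundary does not straightforwardly decompose into independent pieces across disjoint boundary arcs, and translating the conformal restriction of the outer boundary (which controls only the outer envelope of the excursions) into a quantitative conditional independence for the full collections $\Ec_1,\Ec_2$ requires careful tracking of error terms uniformly in the scales $r_1,r_2$. The second-moment control from Section \ref{sec:exc_bdy}, which bounds the expected squared length of excursions in any interval, should provide the necessary domination to justify the decoupling with explicit $o(1)|\log r_1|^{-1}|\log r_2|^{-1}$ error.
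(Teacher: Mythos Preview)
Your proposal correctly identifies the need to decouple $\Ec_1$ and $\Ec_2$ and even points to the right tool (the exploration of a partially-wired soup via Theorem~\ref{thm:restriction}), but the argument is incomplete at exactly the point you flag as the ``main obstacle'': the factorisation inequality
\[
\Prob{\Ec_1\cap C_1(r_1)\neq\varnothing,\,\Ec_2\cap C_2(r_2)\neq\varnothing}\le\Prob{\Ec_1\cap C_1(r_1)\neq\varnothing}\Prob{\Ec_2\cap C_2(r_2)\neq\varnothing}+o(1)|\log r_1|^{-1}|\log r_2|^{-1}
\]
is asserted but not proved. The sentence ``the second-moment control \dots\ should provide the necessary domination'' is a hope, not an argument. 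Since $\Ec_1$ and $\Ec_2$ are both carved out of the \emph{same} wired soup $\Ec_\Rb$, there is no a priori reason the joint hitting probability should factorise, and the tools in Section~\ref{sec:exc_bdy} (Propositions~\ref{prop:integral}, \ref{prop:sum_mm2}, Lemma~\ref{lem:quasi_indep}) only compare $\Ec_j$ to $\Ec_\Rb$; they do not by themselves produce independence between $\Ec_1$ and $\Ec_2$.

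The paper avoids proving any such quantitative factorisation. Instead it realises $\Ec_\Rb$ as the image under a conformal map $g$ of a loop soup $\Lc_0$ wired only on $[-\eps^{1/2},\eps^{1/2}]$, and uses the decomposition of $\Lc_0$ into (i) boundary excursions $\wh\Ec_1$, (ii) the CLE of the free part, and (iii) independent wired soups inside each CLE loop. Letting $\Fc=\sigma(\wh\Ec_1,\text{CLE})$, the pulled-back variable $\wt u_1$ is $\Fc$-measurable, while a new variable $v_2$ built from the wired soup inside the CLE loop $\xi$ around $\wt z_2$ satisfies $\Expect{v_2\mid\Fc}=c(\theta)$ exactly, by conformal invariance and independence of item (iii) from $\Fc$. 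The crux is then not a probability estimate but an \emph{almost-sure domination}: on an event of probability $1-o(1)$ one has $\wt u_2\le v_2$, because on that event the excursions $\wt\Ec_2$ are contained in $\wt\Ec_\xi$. This gives $\Expect{\wt u_1\wt u_2\mathbf 1_E}\le\Expect{\wt u_1 v_2}=c(\theta)\Expect{\wt u_1}\le c(\theta)^2$ directly. The complementary event is handled by a separate uniform-integrability lemma ($\Expect{u_1^p u_2^p}$ bounded for some $p>1$), which your outline does not mention and without which the high-probability domination cannot be upgraded to a bound on expectations.
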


\begin{proof}[Proof of Lemma~\ref{lem:indep_u}]
Instead of considering directly a loop soup $\Lc_\Rb$ which is wired everywhere, we will obtain $\Lc_\Rb$ and $\Ec_\Rb$ by exploring a loop soup which is partially wired, and then use the independence between different parts of the loop soup derived for this partial exploration process \cite{MR3901648,QianWerner19Clusters}. 
%This independence is slightly distorted by the conformal maps, and we need to show that this distortion is negligible at the infinitesimal level as $\eps\to 0$.

Let $\Lc_0$ be a loop soup in $\Hb$ wired on $[-\eps^{1/2},\eps^{1/2}]$ and free elsewhere.
We will denote by $\Lc_0 \cap \Hb$ the subset of $\Lc_0$ consisting of all the loops that are contained in $\Hb$.
By Theorems~\ref{thm:qw19} and~\ref{thm:restriction}, we can decompose $\Lc_0$  into several independent parts:
\begin{enumerate}
\item The excursions $\wh\Ec_1$ attached to $[-\eps^{1/2}, \eps^{1/2}]$.
\item The CLE coming from the outer boundaries of the outermost clusters in $\Lc_0 \cap \Hb$.
\item For each loop $\ell$ in the CLE in the previous item, let $O_\ell$ be the domain encircled by $\ell$. Let $f_\ell$ be a conformal map from $\Hb$ onto $O_\ell$. For each $\ell$, we sample an independent loop soup $\Lc_\ell$ with wired boundary conditions in $\Hb$, and put $f_\ell(\Lc_\ell)$ inside $O_\ell$.
\end{enumerate}
We will denote by $\Fc$ the $\sigma$-algebra generated by the items 1 and 2.

To obtain a loop soup $\Lc_\R$ in $\Hb$ wired on $\R$ from $\Lc_0$, one can proceed as follows.
Let $\eta$ be the outer boundary of the cluster in $\Lc_0$ which is wired on $[-\eps^{1/2},\eps^{1/2}]$. Let $O$ be the domain enclosed by $\eta$ and  $[-\eps^{1/2},\eps^{1/2}]$. Let $g$ be the conformal map from $O$ onto $\Hb$ that sends $-\eps^{2/3}, \eps^{2/3}, \eps^{1/2}$ to $-\eps^{2/3}, \eps^{2/3}, \infty$. Let $\Lc_\Rb$ be the image under $g$ of $\Lc_0$ restricted to $\overline O$. Then $\Lc_\R$ is distributed as a loop soup in $\Hb$ with wired boundary conditions. Let $\Ec$ be the boundary excursions induced by $\Lc_\Rb$ and let $\Ec_1$ (resp. $\Ec_2$) be the set of excursions in $\Ec$ with both endpoints in $[-\eps^{2/3}, \eps^{2/3}]$ (resp. $[1-\eps^{2/3}, 1+\eps^{2/3}]$).
Let $\wt z_1:=g^{-1}(z_1), \wt z_2:=g^{-1}(z_2).$
Let $\wt \Ec:=g^{-1}(\Ec)$, $\wt \Ec_1:=g^{-1}(\Ec_1)$, $\wt \Ec_2:=g^{-1}(\Ec_2)$. See Figure \ref{fig:exploration} for a schematic representation of some of these notations. Let
\begin{align*}
\wt u_j:=\lim_{\gamma\to 0} \frac{1}{Z_\gamma} \Pb(\wt \Ec_j  \overset{\Lc_O^\theta \cup \Xi_a^{\tilde z_j,O}}{\longleftrightarrow} \wt z_j \mid \wt \Ec_j, O), \quad \quad j=1,2.
\end{align*}
By conformal invariance, $(u_1, u_2)$ and $(\wt u_1, \wt u_2)$ have the same law. Therefore, to prove Lemma~\ref{lem:indep_u}, it is equivalent to prove that
$
\limsup_{\eps \to 0}
\Eb[\wt u_1 \wt u_2] \leq c(\theta)^2.
$

Let $\xi$ be the CLE loop that encircles $\wt z_2$. The wired loop soup $f_\xi(\Lc_\xi)$ (see item 3 above) inside the domain $O_\xi$ delimited by $\xi$ is composed of excursions $\wt \Ec_\xi$ attached to $\xi$ and loops $\Lc_{O_\xi}$ contained in $O_\xi$. Define
\begin{equation}\label{E:pf_v2}
v_2:=\lim_{\gamma\to 0} \frac{1}{Z_\gamma} \Pb( \wt \Ec_\xi  \overset{\Lc_{O_\xi}^\theta \cup \Xi_a^{\tilde z_2,O_\xi}}{\longleftrightarrow}  \wt z_2 \mid \wt \Ec_\xi, O_\xi).
\end{equation}
By conformal invariance and the independence of $\Lc_\xi$ and $\Fc$, $\Expect{v_2 \vert \Fc} = c(\theta)$.
We are going to show that
\begin{equation}
\label{E:pf_indep_goal}
\P(\wt u_2 \leq v_2) \to 1
\quad \text{as} \quad
\eps \to 0.
\end{equation}
Let us explain how we will conclude from this.
Notice that $\wt u_1$ is measurable w.r.t. $\Fc$. Indeed, $\wt \Ec_1$, the domain $O$ and $\wt z_1$ are all measurable w.r.t. the excursions $\wh \Ec_1$ and the CLE loops. Therefore, we have on the event $E := \{ \wt u_2 \leq v_2 \}$,
\[
\Expect{\wt u_1 \wt u_2 \mathbf{1}_E} \leq \Expect{\wt u_1 v_2 \mathbf{1}_E}
\leq \Expect{\wt u_1 v_2} = \Expect{\wt u_1 \Expect{v_2 \vert \Fc}} = c(\theta) \Expect{\wt u_1} \leq c(\theta)^2
\]
since, by adding more excursions,
\[
\Expect{\wt u_1} \leq \lim_{\gamma \to 0} \frac{1}{Z_\gamma} \E[\Pb(\wt \Ec  \overset{\Lc_O^\theta \cup \Xi_a^{\tilde z_j,O}}{\longleftrightarrow} \wt z_j \mid \wt \Ec, O)] = c(\theta).
\]
On the complementary event $E^c$ (whose probability vanishes as $\eps \to 0$ by \eqref{E:pf_indep_goal}), we can bound for all $M>0$,
\begin{align*}
    \limsup_{\eps \to 0} \Expect{\wt u_1 \wt u_2 \mathbf{1}_{E^c}} \leq \limsup_{\eps \to 0} \Expect{\wt u_1 \wt u_2 \indic{\tilde u_1 \tilde u_2 > M}} + M \limsup_{\eps \to 0} \P(E^c) = \limsup_{\eps \to 0} \Expect{\wt u_1 \wt u_2 \indic{\tilde u_1 \tilde u_2 > M}}.
\end{align*}
We will show in Lemma \ref{L:uniform_integrability_u} below that the quantity on the right hand side vanishes as $M \to \infty$. Altogether, this will prove Lemma \ref{lem:indep_u}.

It remains to prove \eqref{E:pf_indep_goal}. We proceed in several steps.

\begin{figure}\label{fig:quasi_indep}
\centering
\includegraphics[width=.9\textwidth]{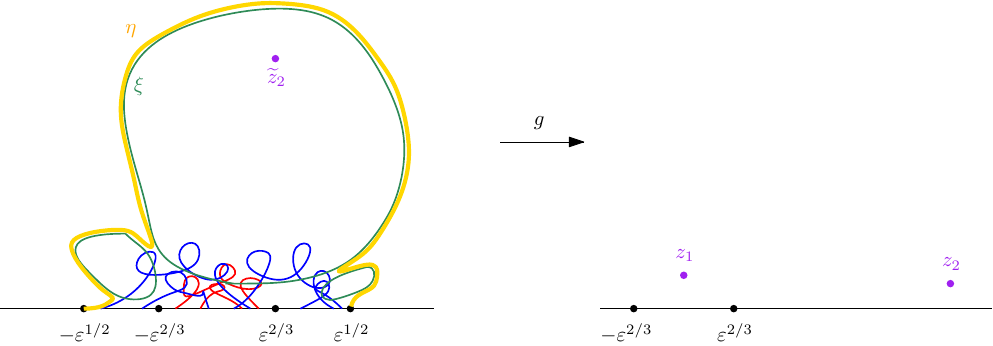}
\caption{\label{fig:exploration} Proof of Lemma~\ref{lem:indep_u}. The excursions in $\wh \Ec_0$ are in red. The set $\wh \Ec_1$ is the union of the red and blue excursions.} 
\end{figure}

$\bullet~$ \textbf{Step 1.} The goal of this step is to show that $\Pb[ g([-\eps^{1/2}, \eps^{1/2}]) \subseteq [-2\eps^{1/2}, 2 \eps^{1/2}] ] = 1 +o(1)$.
%If we look at the image under $g$ (i.e., the right side of Figure~\ref{fig:quasi_indep}), then the event  $F$ is the same as the event that a Brownian motion started from $z_2$ hits $\Rb$ before hitting $g(\wh \Ec_1)$. 
%Similarly to \eqref{eq:deriv_close}, we can deduce that
Let $\eps^{-1/2}O$ be the domain $O$ rescaled by $\eps^{-1/2}$. 
Let $f$ be the conformal map from $\eps^{-1/2}O$ onto $\Hb$ that sends $-1/2, 1/2, 1$ to $-1,1,\infty$.
Then $g(z)= h\circ f (\eps^{-1/2}z)$, where $h$ is a composition of scaling and translation that sends $f(\pm \eps^{1/6})$  to $\pm \eps^{2/3}$.
Note that $f$ is analytic, so for all $z\in D(0, \eps^{7/12}) \cap \Hb$,
\begin{align*}
g'(z) = \eps^{-1/2} f'(\eps^{-1/2}z) \frac{2\eps^{2/3}}{f(\eps^{1/6})- f(-\eps^{1/6})}
=(f'(0) + O(\eps^{1/12}) ) \frac{2\eps^{1/6}}{f'(0) 2 \eps^{1/6} + O(\eps^{1/3})}=1+O(\eps^{1/12}).
\end{align*}
%where the implicit constant in $O(\eps^{1/12})$ depends only on derivatives of $f$ at $0$, and is a.s.\ a finite random variable that we denote by $A$. Note that the law of $\eps^{-1/2}O$ is the same for all $\eps$, hence the law of $A$ does not depend on $\eps$. 
%\begin{align*}%\label{eq:deriv_close2}
%g'(z) =1 +O(1) \eps^{1/12} \text{ for all } z \in D(0, \eps^{7/12}) \cap \Hb.
%\end{align*}
Consequently, there exists $x_0 \in[\eps^{2/3}, \eps^{1/2}]$ such that
\begin{align*}
g(\eps^{1/2}) = g(\eps^{2/3}) + (\eps^{1/2} -\eps^{2/3}) g'(x_0) =\eps^{1/2} + O(1) \eps^{7/12},
\end{align*}
where $O(1)$ is bounded by some random variable $A_0$ whose law only depends on the law of $f$, which does not depend on $\eps$. 
Let $F_1$ be the event that $A_0\le \eps^{-1/24}$, so that $\Pb(F_1)=1+o(1)$. On $F_1$, we have $g(\eps^{1/2}) \le 2\eps^{1/2}$. Similarly, we can define $F_2$ with $\Pb(F_2)=1+o(1)$ so that $g(-\eps^{1/2}) \ge -2\eps^{1/2}$.
On the event $F_1 \cap F_2$, we have $g([-\eps^{1/2}, \eps^{1/2}]) \subseteq [-2\eps^{1/2}, 2 \eps^{1/2}]$. This concludes Step 1.

$\bullet~$ \textbf{Step 2.} Recall that $\wh \Ec_1$ denotes the set of excursions in $\Ec_0$ with both endpoints in $[-\eps^{1/2}, \eps^{1/2}]$.
The goal of this step is to prove that
\begin{align}\label{eq:PF}
\Pb[g(\wh \Ec_1) \cap D(1, \eps^{7/12}) =\emptyset] =1+o(1).
\end{align}
Let $\Ec_4$ be the set of excursions in $\Ec$ that have both endpoints in $[-2\eps^{1/2}, 2 \eps^{1/2}]$. Then $g(\wh \Ec_1) \subset \Ec_4$ on the event that $g([-\eps^{1/2}, \eps^{1/2}]) \subseteq [-2\eps^{1/2}, 2 \eps^{1/2}]$. Thanks to Step 1, we deduce that
\begin{align*}
\Pb[g(\wh \Ec_1) \cap D(1, \eps^{7/12}) \neq \emptyset]\leq \Pb[\Ec_4 \cap D(1, \eps^{7/12}) \neq \emptyset] + o(1)
\end{align*}
By Lemma~\ref{lem:est_be}, we can further bound
\begin{align*}
\Pb[\Ec_4 \cap D(1, \eps^{7/12}) \neq \emptyset] & \leq \Eb \left[ \sum_{e\in \Ec; a(e), b(e) \in [-2\eps^{1/2}, 2 \eps^{1/2}]} \frac{(b(e)-a(e))^2}{(1-a(e))^2 (1-b(e))^2}\right] \eps^{7/6} + o(1) \\
& \leq C \Eb \left[ \sum_{e\in \Ec; a(e), b(e) \in [-2\eps^{1/2}, 2 \eps^{1/2}]} (b(e)-a(e))^2 \right] \eps^{7/6} + o(1).
\end{align*}
With the help of Lemma~\ref{lem:estimate_interval}, we can bound the last expectation above by $C \eps^{1/2} \eps^{1/2}$ proving \eqref{eq:PF}.

$\bullet~$ \textbf{Step 3.} In this step we show that
\[
\P[\Ec_2 \subset D(1, \eps^{7/12})] = 1+o(1).
\]
%Let $F_5:=\{\Ec_2 \subset D(1, \eps^{7/12})\}$.
We have
\begin{align*}
\Pb[\Ec_2 \not\subset D(1, \eps^{7/12})] \le C_1\Eb\left[\sum_{e\in \Ec_2} (b(e) -a(e))^2 \eps^{-7/6}\right] \le C_2 \eps^{4/3} \eps^{-7/6}= C \eps^{1/6},
\end{align*}
where the term $C_1 (b(e) -a(e))^2 \eps^{-7/6}$ represents the probability that an excursion with endpoints $a(e)$ and $b(e)$ has diameter at least $\eps^{7/12}/2$, and the last inequality follows from Lemma~\ref{lem:estimate_interval}.
%As a consequence, $\P(F_5) \to 1+o(1)$ as $\eps \to 0$.

$\bullet~$ \textbf{Step 4. Conclusion.}
On the event $F:=\{g(\wh \Ec_1) \cap D(1, \eps^{7/12}) =\emptyset\} \cap \{\Ec_2 \subset D(1, \eps^{7/12})\}$, $g(\wh \Ec_1) \cap \Ec_2 = \varnothing$ and the loops in $\Lc_O$ that can contribute to $\wt u_2$ can only come from $\Lc_{O_\xi}$ ($\wh \Ec_1$ cannot contribute). Hence, on $F$,
\[
\wt u_2 = \lim_{\gamma\to 0} \frac{1}{Z_\gamma} \Pb( \wt \Ec_2  \overset{\Lc_{O_\xi}^\theta \cup \Xi_a^{\tilde z_2,O_\xi}}{\longleftrightarrow}  \wt z_2 \mid \wt \Ec_2, O_\xi).
\]
Now, because on the event $F$, the excursions in $\wt\Ec_2$ are contained in $\wt \Ec_\xi$, changing $\wt \Ec_2$ into $\wt \Ec_\xi$ can only increase the above probability, i.e. $\wt u_2 \leq v_2$. Since $\P[F] = 1+o(1)$ by Steps 2 and 3, this concludes the proof of \eqref{E:pf_indep_goal} and the proof of Lemma \ref{lem:indep_u}.
\end{proof}

To finish the proof of Proposition \ref{P:independance_u}, we need to deal with the uniform integrability of $u_1 u_2$:

\begin{lemma}[Uniform integrability of $u_1 u_2$]\label{L:uniform_integrability_u}
For all $p \in (0,1/(1-\theta))$, $\limsup_{\eps \to 0} \Expect{u_1^p u_2^p} < \infty$. In particular, 
\begin{equation}
\label{E:L_uniform_integrability_u}
\lim_{M \to \infty} \limsup_{\eps \to 0} \Expect{u_1 u_2 \indic{u_1 u_2 > M}} =0.
\end{equation}
\end{lemma}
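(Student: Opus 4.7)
The plan is to establish the stronger moment bound $\limsup_{\eps \to 0} \Expect{u_1^p u_2^p} < \infty$ for some $p > 1$ and then conclude \eqref{E:L_uniform_integrability_u} by a standard truncation. Since $\theta \in (0,1/2]$ gives $1/(1-\theta) \geq 2$, we can fix $p \in (1, 1/(1-\theta))$; the Markov-type bound $\Expect{u_1 u_2 \indic{u_1 u_2 > M}} \leq M^{1-p} \Expect{(u_1 u_2)^p}$ then immediately yields \eqref{E:L_uniform_integrability_u} by letting $M \to \infty$.

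The key idea is to dominate $u_j$ pointwise by a function of the conformal distance from $z_j$ to $\Ec_j$. Let $f_{z_j} : \Hb \to \D$ be the conformal map sending $z_j$ to $0$ and set $\rho_j := \inf_{w \in \Ec_j} |f_{z_j}(w)| \in (0,1]$. Since $\Ec_j$ is independent of $\Lc_\Hb^\theta \cup \Xi_a^{z_j,\Hb}$ and by conformal invariance, a cluster of $z_j$ in $\Lc_\Hb^\theta \cup \Xi_a^{z_j,\Hb}$ can meet $\Ec_j$ only if the outer conformal radius $R_\gamma$ defined in \eqref{E:Rgamma} is at least $\rho_j$. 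Applying \eqref{E:L_Rgamma1} from Lemma \ref{L:Rgamma} with a small parameter $\eta' > 0$ (chosen so that $p(1-\theta+\eta') < 1$, which is possible because $p < 1/(1-\theta)$), this yields the a.s.\ pointwise domination
\[
u_j \;\leq\; C_{\eta'} \bigl(1 + |\log \rho_j|\bigr)^{1-\theta+\eta'}.
\]

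The second ingredient is the tail control for $(\rho_1, \rho_2)$. The inclusion $\Ec_j \subset \Ec_\Rb$ together with the two-point estimate \eqref{propeq:two-point} of Proposition \ref{prop:be_main} yields
\[
\P(\rho_1 \leq r_1,\, \rho_2 \leq r_2) \;\leq\; C\bigl(1 + G_\Hb(z_1,z_2)\bigr) |\log r_1|^{-1} |\log r_2|^{-1},
\]
which is uniform in $\eps$ since $|z_1 - z_2| \geq 1/2$ and $\Im z_j \leq \eps$ give a bounded $G_\Hb(z_1,z_2)$. Combining this bound with the one-point estimate \eqref{propeq:one-point} (used to handle the mixed marginal regions) in the layer-cake representation of $\Expect{(1+|\log \rho_1|)^q (1+|\log \rho_2|)^q}$ with $q := p(1-\theta+\eta') < 1$ produces a double integral of the form $\int\!\int s^{q-2} t^{q-2}\,ds\,dt$ over $(1,\infty)^2$, which converges precisely because $q < 1$; the resulting bound is $\eps$-uniform.

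The main obstacle is the domination step: it requires carefully exploiting the conditional independence of $\Ec_j$ from both the independent loop soup and the thick loop, together with the conformal invariance that reduces the conditional connection probability to the universal quantity $\P(R_\gamma \geq \rho_j)$ controlled by Lemma \ref{L:Rgamma}. Once this domination is in place the remaining layer-cake moment integration is routine, and the threshold $1/(1-\theta)$ appears naturally as the integrability borderline $q = 1$.
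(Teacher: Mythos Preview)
Your proof is correct and follows essentially the same approach as the paper: dominate $u_j$ by $C(1+|\log \rho_j|)^{1-\theta+\eta'}$ via conformal invariance and Lemma~\ref{L:Rgamma}, then control the joint tail of $(\rho_1,\rho_2)$ by Proposition~\ref{prop:be_main} and integrate via layer-cake. One small slip: for $\theta\in(0,1/2]$ one has $1/(1-\theta)\in(1,2]$, not $\geq 2$; but all you need is $1/(1-\theta)>1$ so that some $p>1$ lies in the admissible range, and this holds.
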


\begin{proof}[Proof of Lemma \ref{L:uniform_integrability_u}]
Let $p \in (0,1/(1-\theta))$ and let $\eta >0$ be small enough so that $p(1-\theta+\eta)<1$.
For $j=1,2$, let $f_j : \Hb \to \D$ be a conformal map that sends $z_j$ to 0 and let $R_j := \d (0, f_j(\Ec_j))$. By conformal invariance and Lemma \ref{L:Rgamma},
\[
u_j = \lim_{\gamma\to 0} \frac{1}{Z_\gamma} \Pb(f_j(\Ec_j)  \overset{\Lc_\D^\theta \cup \Xi_a^{0,\D}}{\longleftrightarrow}  0 \mid \Ec_j)
\leq \lim_{\gamma\to 0} \frac{1}{Z_\gamma} \Pb( R_j \partial \D  \overset{\Lc_\D^\theta \cup \Xi_a^{0,\D}}{\longleftrightarrow} 0 \mid R_j) \leq C (1 + |\log R_j|)^{1-\theta+\eta}.
\]
Using that for nonnegative random variables $X$ and $Y$, $\Expect{XY} = \int_{(0,\infty)^2} \Prob{X \geq x, Y \geq y} \d x \d y$, we deduce that $\Expect{u_1^p u_2^p}$ is at most
\begin{align*}
 C \E \Big[\prod_{j=1,2} (1 + |\log R_j|)^{p(1-\theta+\eta)} \Big]
\leq C \int_{(0,\infty)^2} \Prob{(1+|\log R_j|)^{p(1-\theta+\eta)} \geq t_j, j=1,2 } \d t_1 \d t_2.
\end{align*}
By \eqref{propeq:two-point} in Proposition \ref{prop:be_main}, the probability in the integral is equal to
\[
\Prob{\forall j=1,2, R_j \leq \exp \left(1-t_j^{\frac{1}{p(1-\theta+\eta)}} \right)} \leq C ((t_1+1)(t_2+1))^{-\frac{1}{p(1-\theta+\eta)}}.
\]
Since $p(1-\theta+\eta)<1$, this implies that the integral converges which concludes the proof of the bound on $\Expect{u_1^p u_2^p}$. \eqref{E:L_uniform_integrability_u} then follows by H\"{o}lder's inequality and the fact that the interval $(0,1/(1-\theta))$ contains real numbers that are strictly larger than 1.
\end{proof}
This concludes the proof of Proposition \ref{P:independance_u}.
\end{proof}

\section{Approximation of \texorpdfstring{$\Mc_a^+$}{M a +} from the discrete}\label{S:discrete}

The purpose of this section is to show that the field $h_\theta$ agrees with a multiple of the GFF when $\theta = 1/2$. See Theorem \ref{T:identification_field1/2}. To do so, we will show in Theorem \ref{T:one_cluster} a stronger version of the convergence of the discrete approximation $\Mc_{a,N}$ \eqref{E:def_discrete_measure} to $\Mc_a$: we will show that this convergence even holds when one restricts the measures to individual clusters. This result of independent interest holds for any subcritical intensity $\theta \in (0,1/2]$.

\begin{notation}
For all $x \in D$ and $k \geq 1$, we will denote by $\Cc(x,k)$ the $k$-th outermost cluster surrounding $x$. ($k=1$ corresponds to the outermost cluster)
\end{notation}

Recall that we denote by $\Cf$ the collection of all clusters $\Cc$ of $\Lc_D^\theta$.
Let $\Lc_{D_N}^\theta$ be a random walk loop soup approximating $\Lc_D^\theta$ as in Section \ref{S:multiplicative_chaos}.
We will use analogous notations $\Cf_N$, $\Cc_N(x,k)$, etc. in the discrete. In this section we will view clusters (or rather their closures) as elements of $(\Kc, \d_\Kc)$ \eqref{E:Hausdorff_distance}, i.e. as random compact subsets of $\overline{D}$. We recall that discrete clusters converge to continuous clusters:

\begin{theorem}[\cite{lupu2018convergence}, Section 4.3 in \cite{ALS2}]\label{T:discrete_clusters}
Let $n \geq 1$, $x_i \in D, k_i \geq 1$, $i=1, \dots, n$. The following convergence
\[
( \{ \overline{\Cc_N(x_i,k_i)} \}_{i=1}^n, \Lc_{D_N}^\theta )
\xrightarrow[N \to \infty]{} ( \{ \overline{\Cc(x_i,k_i)} \}_{i=1}^n, \Lc_D^\theta )
\]
holds in distribution relatively to the topologies induced by $\d_\Kc$ and $\d_\mathfrak{L}$ respectively. 
\end{theorem}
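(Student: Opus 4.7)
The plan is to promote the marginal convergence of the random walk loop soup to the Brownian loop soup into a joint convergence with finitely many distinguished clusters. First I would use a strong (KMT-type) coupling of the underlying random walks to Brownian motions, applied loop by loop, so that $\Lc_{D_N}^\theta$ and $\Lc_D^\theta$ can be realised on the same probability space with $d_\mathfrak{L}(\Lc_{D_N}^\theta, \Lc_D^\theta) \to 0$ in probability; this is the loop-soup analogue of the construction underlying \cite{LawlerFerreras07RWLoopSoup}. Under this coupling, the statement becomes deterministic up to a vanishing error: on a good event, for each base point $x_i$ and each nesting index $k_i$, the closure of the $k_i$-th outermost discrete cluster around $x_i$ is Hausdorff-close to that of the $k_i$-th outermost continuous cluster.

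To implement this, I would proceed by truncation at a macroscopic scale $\varepsilon$. Using Lemma \ref{L:surround} and its discrete analogue, with overwhelming probability both the continuous and the discrete $k_i$-th outermost clusters around $x_i$ contain at least one loop of diameter bounded below by some $\varepsilon_i(\omega)>0$; moreover, only finitely many such clusters of diameter $\geq \varepsilon$ exist in any compact subset of $D$. For each such macroscopic "anchor" loop in the continuum, the strong coupling provides an essentially matching loop in the discrete soup, giving a natural partial bijection between discrete and continuum clusters of diameter at least $\varepsilon$. One then shows that this bijection preserves the nesting structure around the $x_i$'s, which reduces Hausdorff convergence of the clusters to the statement that two loops $\wp,\wp' \in \Lc_D^\theta$ belong to the same continuous cluster if and only if their discrete counterparts belong to the same discrete cluster.

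The main obstacle is precisely this equivalence of connectivity. Two Brownian loops can be connected through a chain of intermediate loops of arbitrarily small diameters, and such chains can be destroyed on the lattice, where loops of diameter much less than the mesh $1/N$ cannot contribute meaningfully to connectivity. At the critical intensity $\theta = 1/2$, the cleanest route is the metric-graph approach of \cite{LupuIsomorphism, lupu2018convergence}: on the metric graph the clusters are the sign-components of the metric-graph GFF, so Hausdorff convergence of clusters follows from convergence of the metric-graph GFF to the continuum GFF together with non-degeneracy of the zero-set of the limit. For general $\theta \in (0,1/2]$, one instead exploits the convergence of the outermost boundaries of outermost clusters to a CLE$_\kappa$ (following \cite{SheffieldWernerCLE, KempainnenSmirnovCLE}), peels off the outermost clusters around each $x_i$, and iterates inductively in the complementary (finitely connected) domain to reach the $k_i$-th nested cluster. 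The quantitative control on small loops afforded by Lemmas \ref{L:surround} and \ref{L:cross_loop} ensures that the error introduced at each peeling step is controllable, and a diagonal extraction in $\varepsilon \to 0$ and $N \to \infty$ delivers the joint convergence. This inductive peeling is the substantive content of \cite{lupu2018convergence, ALS2}, which we treat as a black box.
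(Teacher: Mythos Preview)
The paper does not prove this theorem; it is stated with a citation to \cite{lupu2018convergence} and \cite{ALS2} and used as a black box. Your proposal ends the same way, deferring the substantive connectivity argument to those references, so in that sense you match the paper's treatment exactly.

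The heuristic outline you give beforehand is broadly reasonable: strong coupling of the loop soups (Lawler--Trujillo Ferreras), truncation at a macroscopic scale, and then the hard step of matching connectivity. Two small comments. First, the reference to \cite{KempainnenSmirnovCLE} is not quite apt here; that work concerns Ising interfaces. For general $\theta\in(0,1/2]$, the relevant input is the convergence of discrete loop soup cluster boundaries to CLE$_\kappa$ established by van de Brug--Camia--Lis and Lupu, which is precisely what \cite{lupu2018convergence} provides. Second, your description of the $\theta=1/2$ case via the metric-graph GFF is accurate and is indeed the mechanism used in \cite{lupu2018convergence}; the ``inductive peeling'' for nested clusters, together with the topological arguments ensuring the nesting structure is preserved under the coupling, is what \cite{ALS2} Section 4.3 supplies. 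Since you ultimately invoke these references, your proposal is acceptable as a pointer to the literature, though it is not a self-contained proof---nor does the paper claim one.
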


Let $\Mc_{a,N}$ be the discrete approximation \eqref{E:def_discrete_measure} of $\Mc_a$. The main result of this section is:

\begin{theorem}\label{T:one_cluster}
Let $\theta \leq 1/2$ and $a \in (0,2)$.
For all $n \geq 1$, $x_i \in D, k_i \geq 1$, $i=1 \dots n$, the following convergence
\[
\left( \{ \Mc_a^N \mathbf{1}_{\Cc_N(x_i,k_i)} \}_{i=1}^n, \{ \overline{\Cc_N(x_i,k_i)} \}_{i=1}^n, \Lc_{D_N}^\theta \right) \xrightarrow[N \to \infty]{} \left( \{ \Mc_a \mathbf{1}_{\Cc(x_i,k_i)} \}_{i=1}^n, \{ \overline{\Cc(x_i,k_i)} \}_{i=1}^n, \Lc_D^\theta \right)
\]
holds in distribution relatively to the weak topologies of measures and the topologies induced by $\d_\Kc$ and $\d_\mathfrak{L}$.
\end{theorem}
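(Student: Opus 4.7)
The plan is to lift the joint convergence of the loop soup and of the multiplicative chaos $\Mc_a^N$ (given by \cite[Theorem 1.12]{ABJL21}) together with the cluster convergence (Theorem \ref{T:discrete_clusters}) to a joint statement that also captures the restriction of $\Mc_a^N$ to each specified cluster.

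First, I would establish the joint convergence
\begin{equation*}
\bigl(\{\overline{\Cc_N(x_i,k_i)}\}_{i=1}^n,\ \Lc_{D_N}^\theta,\ \Mc_a^N\bigr) \xrightarrow[N \to \infty]{(d)} \bigl(\{\overline{\Cc(x_i,k_i)}\}_{i=1}^n,\ \Lc_D^\theta,\ \Mc_a\bigr),
\end{equation*}
which follows from tightness of the marginals together with the fact that both underlying convergence results rely on the same coupling of discrete to continuum loop soups. By Skorokhod's representation theorem, one may work on a common probability space where the convergence is almost sure. On the limiting side, $\Mc_a \mathbf{1}_{\Cc(x,k)}$ is well-defined as a Radon measure because $\Mc_a$-typical points are visited by infinitely many loops \cite[Theorem 1.8]{ABJL21} and hence lie in a unique cluster; moreover $\overline{\Cc(x,k)} \setminus \Cc(x,k)$ carries no $\Mc_a$-mass, since any loop visiting a boundary point would place that point in a single cluster (distinct clusters being pairwise disjoint).

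Second, a straightforward induction over the family of starting points reduces the problem to the case $n=1$, using that the clusters $\overline{\Cc(x_i,k_i)}$ are a.s. finitely many and pairwise at positive Hausdorff distance. I would then prove that, for any continuous bounded $f: D \to \R$,
\begin{equation*}
(\Mc_a^N \mathbf{1}_{\Cc_N(x,k)}, f) \xrightarrow[N \to \infty]{(\P)} (\Mc_a \mathbf{1}_{\Cc(x,k)}, f).
\end{equation*}
Fix $\delta > 0$ small, chosen so that $\partial U_\delta$ carries no $\Mc_a$-mass (which holds for all but countably many $\delta$), where $U_\delta$ is the open $\delta$-neighborhood of $\overline{\Cc(x,k)}$. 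By almost sure Hausdorff convergence of the clusters, $\overline{\Cc_N(x,k)} \subset U_\delta$ eventually. Decomposing
\begin{equation*}
(\Mc_a^N \mathbf{1}_{\Cc_N(x,k)}, f) = (\Mc_a^N \mathbf{1}_{U_\delta}, f) - (\Mc_a^N \mathbf{1}_{U_\delta \setminus \Cc_N(x,k)}, f),
\end{equation*}
the Portmanteau theorem and the continuity set property force the first term to converge a.s. to $(\Mc_a \mathbf{1}_{U_\delta}, f)$, which in turn tends to $(\Mc_a \mathbf{1}_{\Cc(x,k)}, f)$ as $\delta \downarrow 0$ by dominated convergence.

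The main obstacle is controlling the \emph{parasite term} $(\Mc_a^N \mathbf{1}_{U_\delta \setminus \Cc_N(x,k)}, f)$ uniformly in $N$: this collects the contributions of discrete clusters other than $\Cc_N(x,k)$ whose traces fall inside $U_\delta$. The key uniform estimate is the first-moment identity of Lemma \ref{L:moment_measure}, which yields $\E[\Mc_a^N(U_\delta)] = 2\,\mathrm{Leb}(U_\delta)$ independently of $N$, so the parasite mass tends to $0$ as $\delta \to 0$ in expectation. For the almost-sure refinement, one uses that any continuum cluster $\Cc'$ intersecting $U_\delta$ for small $\delta$ is a.s. at positive distance from $\Cc(x,k)$; by joint Hausdorff convergence, its discrete approximation is eventually disjoint from $\overline{\Cc_N(x,k)}$, so parasite clusters in the discrete correspond bijectively to parasite clusters in the continuum and their masses pass to the limit. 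Letting $\delta \to 0$ kills the remaining error by dominated convergence. Putting these ingredients together yields Theorem \ref{T:one_cluster}; Theorem \ref{T:identification_field1/2} then follows from the specialization $\theta = 1/2$ combined with the metric-graph Le Jan isomorphism of \cite{LupuIsomorphism}, which identifies the cluster-wise signed measure with $:e^{\gamma h}:\d x$ at criticality.
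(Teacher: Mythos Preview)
Your overall architecture (joint convergence, Skorokhod, enlargement by a $\delta$-neighbourhood) is close in spirit to the paper's, but the control of the ``parasite term'' has a genuine gap.

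First, the first-moment identity you invoke, $\E[\Mc_a^N(U_\delta)] = 2\,\mathrm{Leb}(U_\delta)$, is not valid here: $U_\delta$ is the $\delta$-neighbourhood of the \emph{random} continuum cluster $\overline{\Cc(x,k)}$, and in the Skorokhod coupling this set is correlated with $\Lc_{D_N}^\theta$ and hence with $\Mc_a^N$. Lemma~\ref{L:moment_measure} applies to deterministic sets only. To get a useful first-moment bound one must condition on the cluster and use the description of the remaining loops (Lemma~\ref{L:condition_cluster}); the paper does exactly this in Lemma~\ref{L:neighbourhood}, where the key point is that the first-moment density of $\Mc_a$ in the complementary domain is $\CR(z,\cdot)^a$, which decays like $2^{-pa}$ near the cluster boundary. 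Your argument does not capture this decay.

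Second, your ``almost-sure refinement'' is circular. You write that parasite clusters in the discrete ``correspond bijectively to parasite clusters in the continuum and their masses pass to the limit'' --- but passing cluster-wise masses to the limit is precisely the statement of Theorem~\ref{T:one_cluster} that you are trying to prove. Moreover there are infinitely many clusters in any neighbourhood of $\overline{\Cc(x,k)}$, so no finite bijection argument can work without a uniform tail estimate.

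The paper avoids both problems by \emph{not} attempting a direct convergence proof. Instead it first establishes only a one-sided stochastic domination of any subsequential limit by the target (Step~1, via the dyadic enlargement $\Cc^{(p)}$ and Lemma~\ref{L:neighbourhood}), and then upgrades this to equality by a global mass-conservation argument (Step~2): since the total mass $\Mc_a^N(D)\to\Mc_a(D)$ and the mass of every cluster in $\Cf_q$ is stochastically dominated, with the complement $D\setminus\bigcup_{\Cc\in\Cf_q}\Cc$ carrying negligible mass by Lemma~\ref{L:large_clusters}, strict inequality anywhere would force a deficit in the total. This domination-plus-conservation trick is the missing ingredient in your proposal.
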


As in the continuum, let $\Mc_{a,N}^+$ be the measure $\Mc_{a,N}$ restricted to positive clusters, where we assign independent spins to each cluster. As a consequence of Theorem \ref{T:one_cluster}, we will get:

\begin{corollary}\label{C:discrete_ma+}
Let $\theta \in (0,1/2]$ and $a \in (0,2)$. The following convergence
\[
(\Lc_{D_N}^\theta, \Mc_{a,N}^+) \to (\Lc_D^\theta, \Mc_a^+) \quad \quad \text{as} \quad N \to \infty,
\]
holds in distribution relatively to the topology induced by $d_{\mathfrak{L}}$ for $\Lc_{D_N}^\theta$ and the weak topology on $\C$ for $\Mc_{a,N}^+$.
\end{corollary}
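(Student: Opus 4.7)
The plan is to deduce Corollary \ref{C:discrete_ma+} from Theorem \ref{T:one_cluster} via a truncation argument, combined with the convergence $(\Lc_{D_N}^\theta, \Mc_{a,N}) \to (\Lc_D^\theta, \Mc_a)$ from \cite[Theorem 1.12]{ABJL21}. Both $\Mc_a^+$ and $\Mc_{a,N}^+$ admit cluster-by-cluster decompositions with i.i.d.\ signs, and the truncated decompositions converge jointly with the loop soup by Theorem \ref{T:one_cluster}; the tail contribution will be controlled by the total mass.

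First, I would fix a canonical enumeration of clusters. Choose a dense sequence $(x_k)_{k \geq 1}$ in $D$ and define $\Cc_k$ (resp.\ $\Cc_k^N$) to be the outermost cluster surrounding $x_k$ in $\Lc_D^\theta$ (resp.\ $\Lc_{D_N}^\theta$) among those distinct from $\Cc_1, \dots, \Cc_{k-1}$ (resp.\ $\Cc_1^N, \dots, \Cc_{k-1}^N$). Almost surely this yields an enumeration of every cluster. Let $(\sigma_k)_{k \geq 1}$ be i.i.d.\ symmetric $\pm 1$ variables, independent of everything else, and use the \emph{same} sequence to sign $\Cc_k$ and $\Cc_k^N$; this is licit since the sign variables are independent from the loop soups.

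Next, for fixed $K \geq 1$, Theorem \ref{T:one_cluster} gives the joint convergence
\[
\left( \Lc_{D_N}^\theta, \{\overline{\Cc_k^N}\}_{k=1}^K, \{\Mc_{a}^N \mathbf{1}_{\Cc_k^N}\}_{k=1}^K \right) \xrightarrow[N \to \infty]{(\mathrm{d})} \left( \Lc_D^\theta, \{\overline{\Cc_k}\}_{k=1}^K, \{\Mc_a \mathbf{1}_{\Cc_k}\}_{k=1}^K \right).
\]
Setting $\Mc_{a,N}^{+,K} := \sum_{k=1}^K \mathbf{1}_{\sigma_k = +1} \Mc_{a}^N \mathbf{1}_{\Cc_k^N}$ and $\Mc_a^{+,K}$ analogously, the continuous mapping theorem gives $(\Lc_{D_N}^\theta, \Mc_{a,N}^{+,K}) \to (\Lc_D^\theta, \Mc_a^{+,K})$ in distribution for every $K$.

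For the tail control, given a bounded continuous test function $f : \C \to \R$ compactly supported in $D$, we have
\[
\big| (\Mc_{a,N}^+ - \Mc_{a,N}^{+,K}, f) \big| \leq \|f\|_\infty \Big( (\Mc_{a}^N, \mathbf{1}_{\mathrm{supp}\, f}) - \sum_{k=1}^K (\Mc_{a}^N \mathbf{1}_{\Cc_k^N}, \mathbf{1}_{\mathrm{supp}\, f}) \Big).
\]
By \cite[Theorem 1.12]{ABJL21}, $(\Mc_{a}^N, \mathbf{1}_{\mathrm{supp}\, f}) \to (\Mc_a, \mathbf{1}_{\mathrm{supp}\, f})$ jointly with the truncated cluster sums, so the right-hand side converges in distribution to $(\Mc_a, \mathbf{1}_{\mathrm{supp}\, f}) - \sum_{k=1}^K (\Mc_a \mathbf{1}_{\Cc_k}, \mathbf{1}_{\mathrm{supp}\, f})$. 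Since by \cite[Theorem 1.8]{ABJL21} every $\Mc_a$-typical point lies in a cluster, $\Mc_a = \sum_{k \geq 1} \Mc_a \mathbf{1}_{\Cc_k}$ a.s., so this limit is the tail of a convergent series of non-negative numbers and tends to $0$ a.s.\ as $K \to \infty$. Consequently,
\[
\limsup_{K \to \infty} \limsup_{N \to \infty} \mathbb{P}\big( |(\Mc_{a,N}^+ - \Mc_{a,N}^{+,K}, f)| > \eps \big) = 0
\]
for every $\eps > 0$, and the same holds without the outer $\limsup_N$ in the continuum. A standard Slutsky-type argument then upgrades the convergence of $(\Lc_{D_N}^\theta, \Mc_{a,N}^{+,K})$ for every $K$ to the convergence $(\Lc_{D_N}^\theta, \Mc_{a,N}^+) \to (\Lc_D^\theta, \Mc_a^+)$.

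The main (and essentially only) technical obstacle is the uniform-in-$N$ tail control, which rests on the \emph{joint} convergence of the total mass and the truncated cluster masses. This joint convergence is not immediate from Theorem \ref{T:one_cluster} alone, but follows by combining it with \cite[Theorem 1.12]{ABJL21}, e.g.\ by extracting subsequential joint limits and identifying them using that the discrete total mass is the sum of the discrete cluster masses. Once this joint statement is in hand, the tail of the discrete cluster decomposition inherits the finiteness of $(\Mc_a, \mathbf{1}_{\mathrm{supp}\, f})$, and the proof concludes.
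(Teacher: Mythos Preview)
Your proof is correct and follows the same overall strategy as the paper: truncate to finitely many clusters, invoke Theorem \ref{T:one_cluster} for the truncated convergence, and control the remainder. The only difference is in how the tail is handled. The paper appeals to Lemma \ref{L:large_clusters}, which gives the $L^1$ estimate $\limsup_{N\to\infty}\E[\Mc_a^N(D\setminus\bigcup_{\Cc\in\Cf_{N,q}}\Cc)]\to 0$ as $q\to\infty$ (and the analogous continuum statement), so the tail is controlled directly in expectation. You instead bootstrap the tail control from the joint convergence of the total mass $\Mc_a^N$ together with the cluster masses, deducing that the discrete remainder converges in law to the continuum remainder, which vanishes as $K\to\infty$. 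Your route is softer and avoids Lemma \ref{L:large_clusters}, at the cost of the extra step you flag (upgrading Theorem \ref{T:one_cluster} to include $\Mc_a^N$ in the joint statement, which indeed follows since $\Mc_a$ is $\Lc_D^\theta$-measurable). Both arguments are fine; the paper's is slightly more direct because the needed $L^1$ bound is already available.
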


The exact same result would still hold if we were considering the loop soup on the cable graph where the connection with the Gaussian free field is the strongest \cite{LupuIsomorphism}. This allows us to prove Theorem \ref{T:identification_field1/2}:

\begin{proof}[Proof of Theorem \ref{T:identification_field1/2}, assuming Corollary \ref{C:discrete_ma+}]
Once Corollary \ref{C:discrete_ma+} is established, the proof of Theorem \ref{T:identification_field1/2} follows along the same lines as the proof of \cite[Theorem 1.5]{ABJL21}. The main difference is that we need to use Lupu's version \cite{LupuIsomorphism} of Le Jan's isomorphism which recovers the signs of the discrete GFF: on the cable graph, $(\sigma_x \sqrt{2 \ell_x})_x$ has the same law as the cable graph GFF \cite{LupuIsomorphism}. In particular, this identifies $\Mc_{a,N}^+$ with the uniform measure on the set of $a$-thick points of the cable graph GFF which is known to converge to the exponential of the continous GFF \cite{BiskupLouidor}. Together with Corollary \ref{C:discrete_ma+} (or rather, the cable graph analogue), this allows us to prove Theorem \ref{T:identification_field1/2}. We refer to the proof of \cite[Theorem 1.5]{ABJL21} for more details.
\end{proof}

The rest of Section \ref{S:discrete} is dedicated to the proof of Theorem \ref{T:one_cluster} and Corollary \ref{C:discrete_ma+}. We will first state and prove some preliminary lemmas before returning to the proof of the main results of this section.

\subsection{Preliminary lemmas}

Let us denote by
\begin{equation}
\label{E:not_Qf}
\Qf_p := \{ x + [-2^{-p},2^{-p}]^2, x \in 2^{-p} \Z^2 \}, \quad \quad p \geq 1,
\end{equation}
a collection of dyadic squares of side length $2^{-p+1}$. For all $x \in D$ and $k \geq 1$, we enlarge the cluster $\Cc(x,k)$ with the help of these dyadic squares and we denote by
\begin{equation}
\label{E:not_enlargedC}
\Cc^{(p)}(x,k) := \bigcup_{Q \in \Qf_p, Q \cap \Cc(x,p) \neq \varnothing} Q.
\end{equation}

\begin{lemma}\label{L:condition_cluster}
Let $x \in D$, $k \geq 1$. Let $O_i, i \geq 0$, be the connected components of $D \setminus \Cc(x,k)$ with $O_0$ being the unique component whose boundary include $\partial D$. Conditionally on $\Cc(x,k)$, we have:
\begin{itemize}
\item
The collections of loops $\{ \wp \in \Lc_D^\theta, \wp \subset O_i \}, i \geq 0$, are independent;
\item
For all $i \geq 1$, $\{ \wp \in \Lc_D^\theta, \wp \subset O_i \}$ has the law of a Brownian loop soup in $O_i$ with intensity $\theta$;
\item
$\{ \wp \in \Lc_D^\theta, \wp \subset O_0 \}$ has the law of a Brownian loop soup in $O_0$ with intensity $\theta$ conditionally to have exactly $k-1$ clusters surrounding the inner boundary of $O_0$.
\end{itemize}
\end{lemma}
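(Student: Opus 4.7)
The plan is to prove the lemma by first establishing the discrete analogue in the random walk loop soup on a lattice approximation $D_N$ of $D$, and then passing to the continuum scaling limit using Theorem \ref{T:discrete_clusters}. This mirrors the approach that will later be used for Lemma \ref{Lem cond loop soup D k}.

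In the discrete setting, the key observation is that $\Lc_{D_N}^\theta$ is a Poisson point process with intensity $\theta \mu_{D_N}^{\mathrm{loop}}$. Let $\Cc_N(x,k)$ denote the $k$-th outermost cluster of $\Lc_{D_N}^\theta$ surrounding $x$, and $O_0^N, O_1^N, \ldots$ the connected components of $D_N \setminus \Cc_N(x,k)$, with $O_0^N$ the one containing $\partial D_N$. Every loop in $\Lc_{D_N}^\theta$ either is part of the chain forming $\Cc_N(x,k)$ or has its range contained in exactly one of the $O_i^N$. By the standard restriction property for Poisson point processes on disjoint sets, conditionally on $\Cc_N(x,k)$, the loops lying in different $O_i^N$'s are independent, and within each $O_i^N$ they form a Poisson point process with the restricted intensity $\theta \mu_{O_i^N}^{\mathrm{loop}}$, subject only to the additional constraint that $\Cc_N(x,k)$ is indeed the $k$-th outermost cluster surrounding $x$. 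This constraint splits into two parts: loops outside $\Cc_N(x,k)$ cannot merge with it (automatic, since their ranges are disjoint from it), and in $O_0^N$ there must be exactly $k-1$ clusters surrounding the inner boundary of $O_0^N$ (these correspond to $\Cc_N(x,1), \ldots, \Cc_N(x,k-1)$). For inner components $O_i^N$ with $i \geq 1$, the constraint imposes nothing, so the loop soups there are unconditioned, and they are conditionally independent of everything else.

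I would then pass to the continuum limit. Theorem \ref{T:discrete_clusters} provides the joint convergence in distribution of $(\overline{\Cc_N(x,k)}, \Lc_{D_N}^\theta)$ to $(\overline{\Cc(x,k)}, \Lc_D^\theta)$ in the Hausdorff and $d_{\mathfrak{L}}$ topologies. Invoking Skorohod's representation, one can work on a common probability space where the convergence is almost sure. Standard properties of Hausdorff convergence then imply that, after a suitable enumeration of components, the components of $D_N \setminus \overline{\Cc_N(x,k)}$ converge to those of $D \setminus \overline{\Cc(x,k)}$, and the subcollections of loops contained in each component converge to the corresponding subcollections in the limit. The claimed joint laws in the continuum then follow from the discrete statement by passing to the limit in the characteristic functionals of the conditional distributions.

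The main obstacle is the passage to the limit, where one must ensure that no relevant loops get ``lost'' between components in the scaling limit: loops whose ranges come arbitrarily close to $\partial\Cc(x,k)$ without being part of it must be accounted for, and the enumeration of components in the discrete setting must match that in the continuum. A further subtlety is that the event ``exactly $k-1$ clusters surround the inner boundary of $O_0$'' is neither obviously open nor closed in $d_{\mathfrak{L}}$, so its preservation under the limit requires an additional argument; the most robust way forward is to use that, almost surely, the surrounding clusters in $O_0$ are macroscopic, so small perturbations of the loop configuration do not change their count, together with the fact that Theorem \ref{T:discrete_clusters} can be applied simultaneously to the clusters $\Cc_N(x,1), \ldots, \Cc_N(x,k-1)$ as well.
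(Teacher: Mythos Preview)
Your approach is plausible but takes a genuinely different route from the paper. The paper works entirely in the continuum: it conditions on the event that the dyadic enlargement $\Cc^{(p)}(x,k)$ (defined in \eqref{E:not_enlargedC}) equals a fixed deterministic union $C_p$ of dyadic squares, uses the Poisson restriction property of $\Lc_D^\theta$ to see that loops not intersecting $C_p$ are conditionally independent of $\Cc(x,k)$ (modulo the constraint that $\Cc(x,k)$ is the $k$-th outermost cluster), and then lets $p \to \infty$. This sidesteps the scaling-limit technicalities you correctly flag as obstacles---matching of infinitely many components, loops accumulating on $\partial\Cc(x,k)$, and stability of the ``exactly $k-1$ surrounding clusters'' count---because the approximation happens at the level of the random set $\Cc(x,k)$ rather than at the level of the lattice. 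Your reference to Lemma~\ref{Lem cond loop soup D k} is also slightly off: that lemma defers to the present one and uses the same dyadic-enlargement argument, not a discrete-to-continuum passage.

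Your route would additionally require that conditional laws be preserved under the joint convergence of Theorem~\ref{T:discrete_clusters}, which is more than joint convergence in distribution: one needs either continuity of the conditioning map or a direct identification of regular conditional probabilities in the limit. This can be made to work (your observation that the $k-1$ surrounding clusters are macroscopic is the right ingredient for the stability of the count), but the paper's argument is cleaner precisely because the Poisson structure is already available in the continuum and no scaling limit is needed.
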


\begin{proof}
This lemma can be proven by working on the event that the enlargement $\Cc^{(p)}(x,k)$ of $\Cc(x,k)$ is equal to a specific union $C_p$ of dyadic squares. One then uses that the loops that do not intersect $C_p$ are independent from $\Cc(x,k)$ (except that $\Cc(x,k)$ is exactly the $k$-th outermost cluster surrounding $x$) and then let $p \to \infty$. We omit the details.
\end{proof}

Next, we show that the $\Mc_a$-measure of one cluster is well approximated by the $\Mc_a$-measure of its $\eps$-neighbourhood.

\begin{lemma}\label{L:neighbourhood}
Let $n, p \geq 1$, and for $i=1 \dots n$, let $x_i \in D$, $k_i \geq 1$ and let $C_{i,p}$ be a subset of $D$ which is a union of dyadic squares in $\Qf_p$. Assume that the sets $C_{i,p}, i =1 \dots n$, are pairwise disjoint and let $E_p$ be the event that for all $i=1 \dots n$, $\Cc^{(p)}(x_i,k_i) = C_{i,p}$. Then 
\begin{equation}
\label{E:L_neighbourhood}
\E \Big[ \sum_{i=1}^n \Mc_a(C_{i,p} \setminus \Cc(x_i,k_i)) \Big\vert \Cc(x_i,k_i), i=1 \dots n \Big] \mathbf{1}_{E_p} \leq C 2^{-p a}  \mathbf{1}_{E_p}
\end{equation}
for some deterministic constant $C>0$ which only depends on the domain $D$.
\end{lemma}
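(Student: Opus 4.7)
The plan is to bound $\E[\Mc_a(C_{i,p} \setminus \Cc(x_i,k_i)) \mid \mathcal{F}]$ pointwise on $E_p$, where $\mathcal{F} := \sigma(\Cc(x_i,k_i), i=1,\ldots,n)$, by combining the conditional independence given by Lemma~\ref{L:condition_cluster} with the Girsanov identity \eqref{E:first_moment_measure} and the restriction property \eqref{E:restriction_thick_loop} of the thick loop. Since $E_p$ and the $C_{i,p}$ are $\mathcal{F}$-measurable, an $\mathcal{F}$-a.s.\ pointwise estimate immediately yields the conclusion. First I would iterate Lemma~\ref{L:condition_cluster} to describe the conditional law given $\mathcal{F}$: the loops of $\Lc_D^\theta$ inside each connected component $O_j$ of $D\setminus\bigcup_i\Cc(x_i,k_i)$ form independent Brownian loop soups in $O_j$, subject only, in the unique component $O_0$ containing $\partial D$, to a positive-probability topological conditioning encoding the prescribed numbers of surrounding clusters.

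Next I would compare the normalisation of $\Mc_a$ in $D$ (which contains the factor $\CR(z,D)^{-a}$, see \eqref{E:def_discrete_measure}) with the analogous intrinsic multiplicative chaos $\tilde{\Mc}_a^{O_j}$ of the loop soup in $O_j$ (which uses $\CR(z,O_j)^{-a}$). Since the underlying thick-point sets coincide under the conditional law above, one obtains the a.s.\ identity
\[
\Mc_a(dz)\,\mathbf{1}_{z\in O_j} = \Big(\tfrac{\CR(z,O_j)}{\CR(z,D)}\Big)^{a} \tilde{\Mc}_a^{O_j}(dz).
\]
Because $\E[\tilde{\Mc}_a^{O_j}(dz)\mid \mathcal{F}] \leq 2\,dz$ in the unconditioned components, and likewise up to a bounded multiplicative density in $O_0$, this yields
\[
\E[\Mc_a(dz)\mid\mathcal{F}]\,\mathbf{1}_{z\in O_j} \leq C \Big(\tfrac{\CR(z,O_j)}{\CR(z,D)}\Big)^{a} dz
\]
with $C$ depending only on $D$ and $\theta$.

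For $z \in C_{i,p} \setminus \Cc(x_i,k_i)$ lying in a component $O_j$, the cluster $\Cc(x_i,k_i)$ is part of $\partial O_j$ and $z$ is within distance at most $2\sqrt{2}\cdot 2^{-p}$ of it (the diameter of a square in $\Qf_p$); by Koebe's $1/4$ theorem $\CR(z,O_j) \leq C' 2^{-p}$, while trivially $\CR(z,O_j) \leq \CR(z,D)$. Summing over $i$ and using the pairwise disjointness of the $C_{i,p}$ gives
\[
\E\Big[\textstyle\sum_i \Mc_a(C_{i,p}\setminus \Cc(x_i,k_i))\,\Big|\,\mathcal{F}\Big]\mathbf{1}_{E_p} \leq C \int_D \min\!\Big(1,\tfrac{2^{-p}}{\CR(z,D)}\Big)^{a} dz \cdot \mathbf{1}_{E_p},
\]
and splitting this integral at $\CR(z,D) = 2^{-p}$, together with the standard estimate $|\{z\in D:\CR(z,D)<r\}|\lesssim r$ for bounded smooth $D$, delivers the required bound of order $2^{-pa}$.

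The main technical point is justifying the bound $\E[\tilde{\Mc}_a^{O_0}(dz)\mid\mathcal{F}]\leq C\,dz$ in the boundary component, i.e.\ showing that the Radon--Nikodym density of the conditioning event (a prescribed number of clusters surrounding each inner hole of $O_0$) with respect to the unconditioned loop-soup law in $O_0$ is bounded by a constant depending only on $D$ and $\theta$, uniformly in $n$ and the $k_i$. This will be handled by an FKG lower bound on the conditioning probability combined with the loop-soup crossing estimates recalled in Section~\ref{S:preliminaries} (in particular Lemma~\ref{L:surround} and Theorem~\ref{T:large_crossing}), which monotonically dominate the events of interest.
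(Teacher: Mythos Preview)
Your overall strategy matches the paper's: decompose $D\setminus\bigcup_i\Cc(x_i,k_i)$ into connected components via (an iteration of) Lemma~\ref{L:condition_cluster}, pass from $\Mc_a$ in $D$ to the intrinsic chaos in each component $O_j$ through the conformal-radius density, and invoke Koebe to bound $\CR(z,O_j)\lesssim 2^{-p}$ for points $z$ in the relevant set.

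The genuine gap is in your last paragraph. You propose to control the conditioned (multiply connected) component by bounding the conditioning probability from below uniformly in $n$ and the $k_i$, so that the factor $1/\P(\text{cond})$ is bounded. This does not work: in the nested case the paper spells out ($x_i=x$, $k_i=i$), the conditioning in each annular region $O_j$ is that the loop soup there has \emph{no} cluster surrounding the inner boundary, and this probability tends to $0$ as the conformal modulus of $O_j$ grows; the estimates of Lemma~\ref{L:surround} and Theorem~\ref{T:large_crossing} cannot give a uniform lower bound. The paper resolves this by a different use of FKG: ``no surrounding cluster'' is a \emph{decreasing} event for the Poisson point process and $\Mc_a$ is an increasing functional of the loops, so FKG gives
\[
\E\big[\Mc_a(O_j^p)\ \big|\ \text{no surrounding cluster}\big]\;\leq\;\E\big[\Mc_a(O_j^p)\big]
\]
directly, regardless of how small the conditioning probability is. This is the step you are missing.

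A secondary point: the paper finishes by summing $\sum_j|O_j|\leq|D|$, whereas your route through $\int_D\min(1,2^{-p}/\CR(z,D))^a\,dz$ and the estimate $|\{z:\CR(z,D)<r\}|\lesssim r$ requires boundary regularity of $D$ that is not assumed, and when $a>1$ the split you describe actually yields only $O(2^{-p})$ rather than $O(2^{-pa})$ (the region $\{\CR(z,D)\geq 2^{-p}\}$ contributes $2^{-pa}\int_{\CR\geq 2^{-p}}\CR(z,D)^{-a}\,dz\sim 2^{-p}$).
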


\begin{proof}[Proof of Lemma \ref{L:neighbourhood}]
To ease notations, we will prove this lemma for the $n$ outermost clusters surrounding a given point $x \in D$. With the notations of the lemma, this corresponds to the particular case $x_i = x$ and $k_i = i$ for $i=1 \dots n$. The general case follows from the same proof.

For $j=1 \dots n$, let $O_j$ be the connected component of $D \setminus \bigcup_{i=1}^n \Cc(x,i)$ whose boundary intersects both $\Cc(x,j-1)$ and $\Cc(x,j)$, where by convention we write $\Cc(x,0) = \partial D$. Let $O_j, j \geq n+1$, denote the other connected components of $D \setminus \bigcup_{i=1}^n \Cc(x,i)$. $O_1, \dots, O_n$ are annular-like, whereas $O_{n+1}, O_{n+2}, \dots$ are disc-like. Conditionally on $\Cc(x,i), i =1 \dots n$, the law of the loops in each of these connected components is described in Lemma \ref{L:condition_cluster}.
For all $j \geq 1$, let $O_j^p$ denote the set of points of $O_j$ which are at distance at most $\sqrt{2} \cdot 2^{-p+1}$ to the boundary of $O_j$. On the event $E_p$,
\[
\bigcup_{i=1}^n C_{i,p} \setminus \Cc(x,i) \subset \bigcup_{j \geq 1} O_j^p,
\]
which implies that
\begin{align*}
& \E \Big[ \sum_{i=1}^n \Mc_a(C_{i,p} \setminus \Cc(x,i)) \Big\vert \Cc(x,i), i=1 \dots n \Big] \mathbf{1}_{E_p}  \leq \sum_{j \geq 1} \Expect{ \Mc_a( O_j^p ) \Big\vert \Cc(x,i), i=1 \dots n } \mathbf{1}_{E_p}.
\end{align*}
For all $j \geq n+1$, conditionally on $\Cc(x,i), i=1 \dots n$, the law of the loops inside $O_j$ is that of a Brownian loop soup in $O_j$. Therefore, by \cite[Theorem 1.1]{ABJL21}, we have for all $j \geq n+1$,
\begin{align*}
& \Expect{ \Mc_a(O_j^p) \Big\vert \Cc(x,i), i =1 \dots n } \leq C \int_{O_j^p} \CR(z, O_j)^{a} \d z
\leq C 2^{-pa} |O_j|
\end{align*}
because if a point $z$ is a distant at most $\sqrt{2} \cdot 2^{-p+1}$ to the boundary of $O_j$, then $\CR(z,O_j)$ is at most $4 \sqrt{2} \cdot 2^{-p+1}$ (Koebe quarter theorem).
For $j =1 \dots n$, the conditional law of the loops in $O_j$ is that of a loop soup in $O_j$ conditioned to not have any cluster surrounding the inner boundary of $O_j$. By FKG inequality, such a loop soup is stochastically dominated by an unconditioned loop soup in the same domain. So the same reasoning applies, except that the domain $O_j$ is not simply connected so one has to replace $\log \CR(z,O_j)$ by the harmonic extension of $\log |z - \cdot|$ from $\partial O_j$ to $O_j$ (see Remark \ref{rmk:multiplicative_chaos}). Nevertheless, the same conclusion holds: for all $j=1 \dots n$,
\[
\Expect{ \Mc_a(O_j^p) \Big\vert \Cc(x,i), i =1 \dots n }
\leq C 2^{-pa} |O_j|.
\]
Summing over $j$ leads to
\begin{align*}
& \E \Big[ \sum_{i=1}^n \Mc_a(C_{i,p} \setminus \Cc(x,i)) \Big\vert \Cc(x,i), i=1 \dots n \Big] \mathbf{1}_{E_p}
\leq C 2^{-p\gamma^2/2} \sum_{j \geq 1} |O_j| \mathbf{1}_{E_p} \leq C |D| 2^{-pa} \mathbf{1}_{E_p}.
\end{align*}
This concludes the proof of Lemma \ref{L:neighbourhood}.
\end{proof}

We will also need to ensure that most of the $\Mc_a$-mass comes from the ``large'' clusters:
\[
\Cf_{q} := \{ \Cc(x,k), x \in 2^{-q} \Z^2 \cap D, k \leq 2^q \},
\quad
\Cf_{N,q} := \{ \Cc_N(x,k), x \in 2^{-q} \Z^2 \cap D, k \leq 2^q \},
\quad q \geq 1.
\]

\begin{lemma}\label{L:large_clusters}
As $q \to \infty$,
\[
\E \Big[ \Mc_a ( D \setminus \bigcup_{\Cc \in \Cf_{q}} \Cc ) \Big] \to 0
\quad \text{and} \quad
\limsup_{N \to \infty} \E \Big[ \Mc_a^N ( D \setminus \bigcup_{\Cc \in \Cf_{N,q}} \Cc ) \Big] \to 0.
\]
\end{lemma}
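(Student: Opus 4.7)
The plan is to apply the Girsanov-type first-moment identity from Lemma \ref{L:moment_measure} with the functional $F(z,\Lc) = \mathbf{1}\{z \notin \bigcup_{\Cc \in \Cf_q(\Lc)} \Cc\}$ to obtain
\[
\E\Big[\Mc_a\big(D \setminus \bigcup_{\Cc \in \Cf_q} \Cc\big)\Big] = 2 \int_D \P\Big[z \notin \bigcup_{\Cc \in \Cf_q(\Lc_D^\theta \cup \{\Xi_{a_i}^z\}_{i\geq 1})} \Cc\Big]\, \d z.
\]
Since the integrand is bounded by $1$ and $D$ has finite area, it suffices by dominated convergence to show pointwise convergence to $0$ as $q \to \infty$ for each fixed $z \in D$.

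Fix $z$ and let $x_q \in 2^{-q}\Z^2 \cap D$ denote the closest dyadic point to $z$, so $|x_q - z| \leq \sqrt{2}\, 2^{-q}$. Under the augmented soup $\Lc_D^\theta \cup \{\Xi_{a_i}^z\}_{i\geq 1}$, the point $z$ belongs to a well-defined cluster $\Cc_z^{\mathrm{aug}}$ (containing the thick loops at $z$). The event that $z$ lies in some element of $\Cf_q^{\mathrm{aug}}$ is implied by the conjunction of two events: (i) the outer boundary $L_z$ of $\Cc_z^{\mathrm{aug}}$ encloses $x_q$, so that $\Cc_z^{\mathrm{aug}}$ surrounds $x_q$ and therefore equals $\Cc(x_q, k)^{\mathrm{aug}}$ for some integer $k \geq 1$; and (ii) this depth $k$ satisfies $k \leq 2^q$. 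It thus remains to show that each of these events holds with probability tending to $1$ as $q \to \infty$.

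For (i), since $z$ lies in the open domain enclosed by $L_z$, one has $\d(z,L_z) > 0$ almost surely. For $q$ large enough, $|x_q - z| < \d(z,L_z)$, forcing $x_q$ to be enclosed by $L_z$; the complementary probability tends to $0$ by dominated convergence. For (ii), $k-1$ equals the number of loops in the nested CLE-type structure of the augmented soup that enclose $x_q$ but not $z$. Each such separating loop has diameter at least $|x_q - z|$, and by standard estimates on nested CLE$_\kappa$ (the conformal radii of successive nested regions decrease by i.i.d.\ multiplicative factors with positive log-expectation), the expected number of nested loops of diameter at least $r$ around a given point is $O(\log(1/r))$. This gives $\E[k-1] = O(q)$, whence Markov's inequality yields $\P(k > 2^q) = O(q\cdot 2^{-q}) \to 0$. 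Combining (i) and (ii), the pointwise ``bad'' probability vanishes, which together with dominated convergence proves the continuous statement.

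For the discrete statement, the argument mirrors the continuous one: a discrete Girsanov-type identity (which holds by the explicit form of $\Mc_a^N$ from \eqref{E:def_discrete_measure} together with the independence of the multiplicative chaos normalization from the combinatorial cluster structure, or alternatively by passing to the limit from the continuum result using the convergence $\Mc_a^N \to \Mc_a$) reduces the claim to the analogous pointwise probability bound in the discrete augmented random walk loop soup. The main obstacle is transferring the nested-CLE loop-count estimate to the discrete level uniformly in $N$; this follows from the convergence of the discrete outermost clusters to their continuum counterparts (Theorem \ref{T:discrete_clusters}) together with uniform integrability of $\Mc_a^N(D)$ (available since $\E[\Mc_a^N(D)] \to 2|D|$ and the second moment is controlled in the $L^2$ regime, then extended by truncation). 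After sending $N \to \infty$ for each fixed $q$ one recovers the continuum expectation, and sending $q \to \infty$ concludes.
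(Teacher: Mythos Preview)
Your Girsanov route for the continuum statement is genuinely different from the paper's and does work, but step (ii) is mis-argued. Once (i) holds, the clusters $\Cc^{\mathrm{aug}}(x_q,1),\dots,\Cc^{\mathrm{aug}}(x_q,k-1)$ all have $\Cc_z^{\mathrm{aug}}$ in their interior and therefore enclose \emph{both} $x_q$ and $z$; they are not ``loops enclosing $x_q$ but not $z$'', and in particular their diameters are not controlled by $|x_q-z|$. What is true, and simpler, is that on the event (i) one has $k=m$, where $m$ is the nesting depth of $\Cc_z^{\mathrm{aug}}$ seen from $z$. Since the thick loop has positive reach $\rho=\|\Xi_a^z-z\|_\infty>0$, any cluster of $\Lc_D^\theta$ surrounding $z$ with outer boundary inside $D(z,\rho)$ gets merged into $\Cc_z^{\mathrm{aug}}$; hence the $m-1$ strictly outer clusters all have diameter $\ge\rho$, and by local finiteness of nested CLE$_\kappa$ there are only finitely many of these. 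Thus $m<\infty$ a.s.\ is a \emph{fixed} random variable (independent of $q$), and $\P(k>2^q)=\P(m>2^q)\to 0$ trivially. So the continuum argument can be repaired, and it is more direct than the paper's proof, which instead reuses the dyadic-enlargement estimate of Lemma~\ref{L:neighbourhood}: the paper picks $p\ll q$, works on the event $E_{N,q,p}$ that the $2^{-p}$-enlargements of the clusters in $\Cf_{N,q}$ cover $D$, bounds the mass on that event by $C2^{-pa}$ via Lemma~\ref{L:neighbourhood}, and handles the complement by uniform integrability of $\Mc_a^N(D)$.

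The discrete part of your proposal, however, is not a proof. You invoke a ``discrete Girsanov-type identity'' without stating one; the explicit form \eqref{E:def_discrete_measure} does not by itself yield a clean description of the conditional law of $\Lc_{D_N}^\theta$ given that a fixed vertex is $a$-thick, and the alternative you suggest (``passing to the limit from the continuum result using $\Mc_a^N\to\Mc_a$'') goes in the wrong direction, since it is precisely the discrete statement that is needed. More seriously, the crux of the discrete claim is the uniformity in $N$, and your appeal to Theorem~\ref{T:discrete_clusters} plus uniform integrability is not enough: Theorem~\ref{T:discrete_clusters} gives convergence of finitely many fixed clusters, not a uniform control on nesting depths or on the probability that a typical thick point falls outside $\Cf_{N,q}$. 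The paper's enlargement argument is designed precisely to sidestep this issue: the bound $C2^{-pa}$ on the event $E_{N,q,p}$ holds uniformly in $N$ because it comes from a first-moment computation in the (conditioned) random walk loop soup in each complementary component, and the complementary event is handled via the uniform integrability of $(\Mc_a^N(D))_N$ established in \cite{ABJL21}.
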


\begin{proof}[Proof of Lemma \ref{L:large_clusters}]
We prove the claim in the discrete setting. The continuous case is similar.
The proof of this lemma is very close in spirit to the the proof of Lemma \ref{L:neighbourhood}. Let $q \geq 1$ and let $p \leq q$ be much small than $q$. For a cluster $\Cc \in \Cf_N$, we will denote by $\Cc^{(p)}$ its $2^{-p}$-enlargement with dyadic squares as in \eqref{E:not_enlargedC}.
Let $E_{N,q,p}$ be the event that $\bigcup_{\Cc \in \Cf_{N,q}} \Cc^{(p)}$ covers the whole domain $D$. Since the clusters $\Cc \in \Cf_N$ are dense in $D$, for any fixed $p$,
\begin{equation}
\label{E:pf_L_large1}
\lim_{q \to \infty} \liminf_{N \to \infty} \Prob{E_{N,q,p}} \to 1.
\end{equation}
On this event, we can write
\[
\limsup_{N \to \infty}
\E \Big[ \Mc_a^N \Big( D \setminus \bigcup_{\Cc \in \Cf_{N,q}} \Cc ) \mathbf{1}_{E_{N,q,p}} \Big]
\leq
\limsup_{N \to \infty}
\E \Big[ \Mc_a^N \Big( \bigcup_{\Cc \in \Cf_{N,q}} \Big( \Cc^{(p)} \setminus \Cc \Big) \Big) \Big]
\]
and we can use the exact same strategy as in the proof of Lemma \ref{L:neighbourhood} to show that the right hand side term is at most $C 2^{-p a}$ for some constant $C>0$ depending only on the domain $D$. More precisely, we use a discrete version of Lemma \ref{L:condition_cluster} together with the expression given in \cite[Propostion 10.1]{ABJL21} for the first moment of $\Mc_a^N$ that converges to the continuum expression.

On the complementary event, we simply bound
\[
\E \Big[ \Mc_a^N ( D \setminus \bigcup_{\Cc \in \Cf_{N,q}} \Cc ) \mathbf{1}_{E_{N,q,p}^c} \Big]
\leq \Expect{ \Mc_a^N(D) \mathbf{1}_{E_{N,q,p}^c} }.
\]
The uniform integrability of $(\Mc_a^N(D))_{N \geq 1}$ (see \cite{ABJL21}), together with \eqref{E:pf_L_large1}, ensures that the right hand side term goes to zero as $N \to \infty$ and then $q \to \infty$. Let us give some details. Let $M >0$ be large. We can bound
\begin{align*}
\Expect{ \Mc_a^N(D) \mathbf{1}_{E_{N,q,p}^c} }
\leq \Expect{ \Mc_a^N(D) \mathbf{1}_{\Mc_a^N(D) > M} } + M \Prob{E_{N,q,p}^c}.
\end{align*}
By \eqref{E:pf_L_large1}, we deduce that
\[
\limsup_{q \to \infty} \limsup_{N \to \infty} \Expect{ \Mc_a^N(D) \mathbf{1}_{E_{N,q,p}^c} }
\leq \limsup_{N \to \infty} \Expect{ \Mc_a^N(D) \mathbf{1}_{\Mc_a^N(D) > M} }.
\]
By the uniform integrability of $(\Mc_a^N(D))_{N \geq 1}$, the right hand side term goes to zero as $M \to \infty$. The left hand side term being independent of $M$, it has to vanish as well.
Wrapping things up, we have
\[
\limsup_{q \to \infty} \limsup_{N \to \infty} \E \Big[ \Mc_a^N ( D \setminus \bigcup_{\Cc \in \Cf_{N,q}} \Cc ) \Big] \leq C 2^{-p a}.
\]
We obtain the desired result by letting $p \to \infty$.
\end{proof}

\subsection{Proof of Theorem \ref{T:one_cluster}}

\begin{proof}[Proof of Theorem \ref{T:one_cluster}]
Let $n \geq 1$, $x_1, \dots, x_n \in D$ and $k_1, \dots, k_n \geq 1$.
First of all, we claim that the convergence in distribution
\begin{equation}
\label{E:pf_thm_cluster1}
\left( \Mc_a^N, \{ \overline{\Cc_N(x_i,k_i)} \}_{i=1}^n, \Lc_{D_N}^\theta \right) \xrightarrow[N \to \infty]{} \left( \Mc_a, \{ \overline{\Cc(x_i,k_i)} \}_{i=1}^n, \Lc_D^\theta \right)
\end{equation}
holds. Indeed it was proven in \cite{ABJL21} that 
\[
\left( \Mc_a^N, \Lc_{D_N}^\theta \right) \xrightarrow[N \to \infty]{} \left( \Mc_a, \Lc_D^\theta \right)
\]
and in \cite{Lupu18} (see Theorem \ref{T:discrete_clusters}) that
\[
\left( \{ \overline{\Cc_N(x_i,k_i)} \}_{i=1}^n, \Lc_{D_N}^\theta \right) \xrightarrow[N \to \infty]{} \left( \{ \overline{\Cc(x_i,k_i)} \}_{i=1}^n, \Lc_D^\theta \right).
\]
The convergence \eqref{E:pf_thm_cluster1} then follows from the fact that $\Mc_a$ and $\{ \Cc(x_i,k_i) \}_{i=1}^n$ are both measurable with respect to $\Lc_D^\theta$.
Now, we want to establish the joint convergence of
\begin{equation}
\label{E:pf_thm_cluster4}
\left( \{ \Mc_a^N \mathbf{1}_{\Cc_N(x_i,k_i)} \}_{i=1}^n, \{ \Cc_N(x_i,k_i) \}_{i=1}^n, \Lc_{D_N}^\theta \right), \quad N \geq 1.
\end{equation}
The convergence \eqref{E:pf_thm_cluster1} together with the trivial bound $\mathbf{1}_{\Cc_N(x_i,k_i)} \leq 1$ shows that this sequence is tight. To conclude the proof, it is enough to identify the law of the subsequential limits. We will do this in three steps.

\textbf{Step 1: stochastic domination of the subsequential limits.}
Let $\left( \{ \Mc_i \}_{i=1}^n, \{ \Cc_i \}_{i=1}^n, \Lc \right)$ be any subsequential limit of \eqref{E:pf_thm_cluster4}. In the following and to ease the notations, we will omit the subsequence and we will assume that the convergence holds as $N \to \infty$.
For $i=1 \dots n$, let $f_i : D \to [0,\infty)$ be bounded nonnegative continuous functions, let $\varphi : \R^n \to [0,\infty)$ be a nondecreasing, bounded, 1-Lipschitz function and let $\psi_1, \psi_2$ be two nonnegative continuous bounded functions. The goal of this first step is to establish the following stochastic domination:
\begin{equation}
\label{E:proof_discrete_stoch_domi}
\varphi \left( \left( \Mc_i, f_i \right)_{i=1}^n \right) \psi_1 \left( \left( \Cc_i \right)_{i=1}^n \right) \psi_2 \left( \Lc \right)
\preceq
\varphi \left( \left( \Mc_a \mathbf{1}_{\Cc(x_i,k_i)}, f_i \right)_{i=1}^n \right) \psi_1 \left( \left( \Cc(x_i,k_i) \right)_{i=1}^n \right) \psi_2 \left( \Lc_D^\theta \right).
\end{equation}

Let $p \geq 1$ be large and recall the definition \eqref{E:not_Qf} of $\Qf_p$.
Let $\Cc_N^{(p)}(x_i,k_i)$ be the discrete analogue of the enlarged version $\Cc^{(p)}(x_i,k_i)$ of $\Cc(x_i,k_i)$ \eqref{E:not_enlargedC}.
For $i=1, \dots, n$, let $C_{i,p}$ be any subset of $D$ which can be written as a union of dyadic squares $Q \in \Qf_p$ and let $E_{N,p}$ (resp. $E_p$) be the event that for all $i =1 \dots n$, $\Cc_N^{(p)}(x_i,k_i) = C_{i,p}$ (resp. $\Cc^{(p)}(x_i,k_i) = C_{i,p}$). Because we are looking at ``large'' clusters, we will be able to restrict ourselves to the event that these clusters are well separated. For now, this means that we can assume that the sets $C_{i,p}, i =1 \dots n$ are pairwise disjoint. As a consequence of this assumption, we will later have to deal with the probability that the clusters $\Cc(x_i,k_i)$, $i=1 \dots n$, are at distance at least $10.2^{-p}$ to each other, but this probability tends to 1 as $p \to \infty$.
Because the $f_i$'s are nonnegative, on the event $E_{N,p}$, we can bound
\[
\left( \Mc_a^N \mathbf{1}_{\Cc_N(x_i,k_i)}, f_i \right) \leq \left( \Mc_a^N \mathbf{1}_{C_{i,p}}, f_i \right), \quad i=1 \dots n.
\]
By the joint convergence \eqref{E:pf_thm_cluster1}, we deduce that
\begin{align}
\label{E:proof_discrete_stoch_domi2}
&
\limsup_{N \to \infty} \Expect{ \mathbf{1}_{E_{N,p}} \varphi \left( (\Mc_a^N \mathbf{1}_{\Cc_N(x_i,k_i)},f_i)_{i=1}^n \right) \psi_1 \left( \left( \Cc_N(x_i,k_i) \right)_{i=1}^n \right) \psi_2 \left( \Lc_{D_N}^\theta \right) } \\
& \leq
\Expect{ \mathbf{1}_{E_p} \varphi \left( (\Mc_a \mathbf{1}_{C_{i,p}},f_i)_{i=1}^n \right) \psi_1 \left( \left( \Cc(x_i,k_i) \right)_{i=1}^n \right) \psi_2 \left( \Lc_D^\theta \right) }. \nonumber
\end{align}
On the event $E_p$, we can bound
\[
(\Mc_a \mathbf{1}_{C_{i,p}},f_i) \leq (\Mc_a \mathbf{1}_{\Cc(x_i,k_i)},f_i) + \norme{f_i}_\infty \Mc_a(C_{i,p} \setminus \Cc(x_i,k_i)).
\]
Recalling that $\varphi$ is 1-Lipschitz, this leads to the following upper bound for the right hand side of \eqref{E:proof_discrete_stoch_domi2}:
\begin{align*}
& \Expect{ \mathbf{1}_{E_p} \varphi \left( (\Mc_a \mathbf{1}_{\Cc(x_i,k_i)},f_i)_{i=1}^n \right) \psi_1 \left( \left( \Cc(x_i,k_i) \right)_{i=1}^n \right) \psi_2 \left( \Lc_D^\theta \right) } \\
& + \norme{\psi_1}_\infty \norme{\psi_2}_\infty \sum_{i=1}^n \norme{f_i}_\infty \Expect{ \mathbf{1}_{E_p} \Mc_a(C_{i,p} \setminus \Cc(x_i,k_i)) }.
\end{align*}
By Lemma \ref{L:neighbourhood}, the second term is at most $C 2^{-pa} \Prob{E_p}$.
Summing over all pairwise disjoint $C_{i,p}, i = 1\dots n$, we obtain that
\begin{align*}
& \Expect{ \varphi \left( (\Mc_i,f_i)_{i=1}^n \right) \psi_1 \left( \left( \Cc_i \right)_{i=1}^n \right) \psi_2 \left( \Lc \right) }
\leq \Expect{ \varphi \left( (\Mc_a \mathbf{1}_{\Cc(x_i,k_i)},f_i)_{i=1}^n \right) \psi_1 \left( \left( \Cc(x_i,k_i) \right)_{i=1}^n \right) \psi_2 \left( \Lc_D^\theta \right) } \\
& ~~~~~~~~ + C 2^{-p a} + \norme{\varphi}_\infty \norme{\psi_1}_\infty \norme{\psi_2}_\infty \Prob{ \min_{i \neq j} \d(\Cc(x_i,k_i), \Cc(x_j,k_j)) \leq 10.2^{-p} }.
\end{align*}
After letting $p \to \infty$, this gives
\[
\Expect{ \varphi \left( (\Mc_i,f_i)_{i=1}^n \right) \psi_1 \left( \left( \Cc_i \right)_{i=1}^n \right) \psi_2 \left( \Lc \right) }
\leq \Expect{ \varphi \left( (\Mc_a \mathbf{1}_{\Cc(x_i,k_i)},f_i)_{i=1}^n \right) \psi_1 \left( \left( \Cc(x_i,k_i) \right)_{i=1}^n \right) \psi_2 \left( \Lc_D^\theta \right) }.
\]
Since this is true for any suitable $\varphi, \psi_1, \psi_2$, this shows the desired stochastic domination \eqref{E:proof_discrete_stoch_domi}.

\medskip

\textbf{Step 2: the expectations agree.}
We specify the result of the first step to a single test function $f: D \to [0,\infty)$.
Recall the notation of Lemma \ref{L:large_clusters}. As a consequence of Step 1, any subsequential limit
\[
\left( (\tilde{\Mc}_a \mathbf{1}_{\Cc}, f)_{\Cc \in \tilde{\Cf}_{q_2} \setminus \tilde{\Cf}_{q_1}}, 0 \leq q_1 < q_2 \right)
\quad \text{of} \quad
\left( (\Mc_a^N \mathbf{1}_{\Cc_N}, f)_{\Cc_N \in \Cf_{N,q_2} \setminus \Cf_{N,q_1}}, 0 \leq q_1 < q_2 \right)_{N \geq 1}
\]
is stochastically dominated by its continuum counterpart (in the sense that each finite dimensional marginals are stochastically dominated). In particular, for all $q_2 > q_1 \geq 0$,
\begin{equation}
\label{E:pf_thm_cluster5}
\E \Big[ \sum_{\Cc \in \tilde{\Cf}_{q_2} \setminus \tilde{\Cf}_{q_1}} \left( \tilde{\Mc}_a \mathbf{1}_{\Cc}, f \right) \Big]
\leq \E \Big[ \sum_{\Cc \in \Cf_{q_2} \setminus \Cf_{q_1}} \left( \Mc_a \mathbf{1}_{\Cc}, f \right) \Big].
\end{equation}
Assume that this inequality is strict for some $q_2 > q_1 \geq 0$ and let $\eps >0$ be the difference of the right hand side and the left hand side. Applying \eqref{E:pf_thm_cluster5} between 0 and $q_1$ and between $q_2$ and some large $q \geq q_2$, we obtain that
\[
\E \Big[ \sum_{\Cc \in \tilde{\Cf}_{q}} \left( \tilde{\Mc}_a \mathbf{1}_{\Cc}, f \right) \Big]
\leq \E \Big[ \sum_{\Cc \in \Cf_{q}} \left( \Mc_a \mathbf{1}_{\Cc}, f \right) \Big] - \eps.
\]
By Lemma \ref{L:large_clusters}, letting $q \to \infty$ in the previous inequality yields
\[
\lim_{N \to \infty} \Expect{ \left( \Mc_a^N, f \right) } \leq \Expect{ \left( \Mc_a, f \right) } - \eps.
\]
But we know that $\Expect{ \left( \Mc_a^N, f \right) }$ converges to $\Expect{ \left( \Mc_a, f \right) }$. Therefore, it implies that for all $q_2 > q_1 \geq 1$, the inequality \eqref{E:pf_thm_cluster5} is an equality. Together with the stochastic domination, it proves that
\[
\left( (\tilde{\Mc}_a \mathbf{1}_{\Cc}, f)_{\Cc \in \tilde{\Cf}_{q_2} \setminus \tilde{\Cf}_{q_1}}, 0 \leq q_1 < q_2 \right)
\overset{\mathrm{(d)}}{=} \left( (\Mc_a \mathbf{1}_{\Cc}, f)_{\Cc \in \Cf_{q_2} \setminus \Cf_{q_1}}, 0 \leq q_1 < q_2 \right).
\]

\medskip

\textbf{Step 3: conclusion.}
Putting Steps 1 and 2 together, we see that for any $x \in D$, $k \geq 1$ and $f:D \to [0,\infty)$,
\[
\lim_{N \to \infty} \Expect{ \left( \Mc_a^N \mathbf{1}_{\Cc_N(x,k)}, f \right) } = \Expect{ \left( \Mc_a \mathbf{1}_{\Cc(x,k)}, f \right) }.
\]
Together with the stochastic domination \eqref{E:proof_discrete_stoch_domi} proven in Step 1, this proves Theorem \ref{T:one_cluster}.
\end{proof}

We conclude this section with:

\begin{proof}[Proof of Corollary \ref{C:discrete_ma+}]
This follows directly from Theorem \ref{T:one_cluster} and from Lemma \ref{L:large_clusters} that shows that most of the mass is carried by large clusters.
\end{proof}

\section{Wick powers and expansion of \texorpdfstring{$\Mc_\gamma^+$}{Mgamma+}}\label{S:2D_Wick}

\subsection{Main results}

The Wick powers of the GFF $h$, denoted by $:\!h^n\!:, n \geq 2$, give a precise meaning to $h^n$, $n \geq 2$. Because the GFF is only a generalised function, taking such powers is \textit{a priori} not a well defined operation. A non trivial fact is that it is actually possible to make sense of these powers via a renormalisation procedure. Contrary to the renormalisation of the exponential of the field, the renormalisation is not multiplicative but additive: one subtracts to $h(x)^n$ a polynomial in $h(x)$ with diverging coefficients and degree strictly smaller than $n$. See Section \ref{S:Wick_recall} for more details.

The Wick renormalisation of the powers of the GFF $h$ provides another approach to the multiplicative chaos of the field. Indeed, by naively expanding the exponential around 0, one could hope to define the exponential of $\gamma$ times the GFF $h$ by
\begin{equation}
\label{E:expansion_Liouville}
:\!e^{\gamma h(x)} \d x\!:
\quad = \quad
\sum_{n \geq 0} \frac{\gamma^n}{n!} :\!h^n\!:.
\end{equation}
It is well known that this procedure works in the $L^2$-phase (see e.g. \cite[Section 2.3]{lacoin2017semiclassical}) in the sense that the above sum converges to a random measure that agrees with the exponential of the field. See Remark \ref{R:complex_L2} for why \eqref{E:expansion_Liouville} does not hold outside of the $L^2$-phase. Note that it is \textit{a priori} not obvious that the resulting sum is nonnegative; eventually this boils down to \eqref{E:Hermite_exp}.

In the Brownian loop soup setting, Le Jan \cite{LeJan2011Loops} gave a construction of the renormalised powers $:\!L_x^n\!:$ of the local time $:\!L_x\!:$ for any intensity $\theta$. This is closely related to the (self-)intersection local time of Brownian motion (see the lecture notes \cite{LeGall92} and the references therein). 
In Section \ref{S:expansion_unsigned}, we will give the precise relation between the multiplicative chaos $\Mc_\gamma$ of the loop soup and these intersection local times. When $\theta=1/2$, this question reduces to the aforementioned GFF setting and we directly obtain that
\begin{equation}
\label{E:expansion_unsigned_critical}
\Mc_{\gamma,\theta=1/2}( \d x) = 2\sum_{n \geq 0} \frac{2^n \gamma^{2n}}{(2n)!} (2\pi)^n :\!L_x^n\!: \d x \quad \quad \text{a.s.}
\end{equation}
In Theorem \ref{T:expansion_unsigned}, we show that such a relation also holds for any intensity $\theta>0$. However, the coefficients in the expansion are different and depend on $\theta$:
\begin{equation}
\label{E:expansion_unsigned_general}
\Mc_\gamma(\d x) = 2 \sum_{n \geq 0} \frac{\gamma^{2n}}{2^n} \frac{\Gamma(\theta)}{n! \Gamma(n + \theta)} (2\pi)^n :L_x^n: \d x \quad \quad \text{a.s.}
\end{equation}
See Theorem \ref{T:expansion_unsigned} for a precise statement.

\medskip

As already alluded to, when $\theta =1/2$, the Wick powers of the local time reduce to the even powers of the GFF. To define the odd powers, one has to take into account the sign of the field.
In Sections \ref{S:1D} and \ref{S:expansion_signed} we address the question of defining \emph{all} the powers of the field $h_\theta$ for any intensity $\theta \in (0,1/2)$.
We showed in \cite{JLQ23a} that the 1D and 2D loop soups are intimately related.
We will therefore start in Section \ref{S:1D} by building our intuition with the one-dimensional setting. This case is particularly appealing since for any $\theta$, there is an isomorphism relating the local time of the loop soup to a squared Bessel process of dimension $d=2\theta$, see \cite{Lupu18} and Proposition \ref{P:1Disomorphism} below.
In dimension 1, the analogue of the field $h_\theta$ and the local time are both well defined pointwise, so there is no need of normalising the powers to define them. However, we will show in Lemmas \ref{L:1D_martingale} and \ref{L:1D_martingale2} that there is a unique way of normalising the powers so that the resulting process is a martingale. When these powers are integer powers of the local time, this normalisation agrees with Le Jan's two-dimensional normalisation procedure (generalised Laguerre polynomials). 
%For ``odd'' powers (that take into account the sign of $h_\theta$), this normalisation reveals a notion of duality between loop soups with intensity $\theta$ and $\theta^* = 2-\theta$.

In Section \ref{S:expansion_signed}, we will come back to the 2D setting and state a precise conjecture about a renormalisation procedure that would define all the Wick powers of $h_\theta$; see Conjecture \ref{Conjecture1}. This procedure in particular reveals a surprising notion of \emph{duality} between the intensities $\theta$ and $\theta^* = 2-\theta$.
We then elaborate on this conjecture and show that it would imply the following expansion for the measure $\Mc_\gamma^+$: almost surely,
\[
\Mc_\gamma^+(\d x) = \Big( \sum_{k \geq 0} \frac{\gamma^{2k}}{2^k} \frac{\Gamma(\theta)}{k! \Gamma(k+\theta)} (2\pi)^k :L_x^k:
+ c_\mathrm{conj} \gamma^{2(1-\theta)} \frac{\gamma^{2k}}{2^k} \frac{\Gamma(\theta^*)}{k! \Gamma(k+\theta^*)} (2\pi)^k :h_\theta(x) L_x^k: \Big) \d x.
\]
See Section \ref{S:expansion_signed} for the definition of the different terms appearing in the above display.

\subsection{Renormalised powers of the GFF and of the occupation field of the loop soup}
\label{S:Wick_recall}

The purpose of this section is to recall the definition of the Wick powers of the GFF and of the local time of the loop soup. We start by working in the discrete setting where everything is well defined pointwise. Let $D_N \subset \frac1N \Z^2$ be a discrete approximation of $D$ as in \eqref{eq:DN}. Let $\Lc_{D_N}^\theta$ be a random walk loop soup in $D_N$ and denote by  $\ell_x$ the local time at $x$ (see \eqref{E:discrete_local_time}). Let $\varphi_N$ be a discrete GFF in $D_N$ whose covariance is given by the discrete Green function $G_N$.

To renormalise the powers of the GFF, the relevant polynomials are the Hermite polynomials $H_n, n \geq 0,$ which are the only monic\footnote{Monic means that the leading coefficient equals 1.} polynomials that are orthogonal w.r.t. the Gaussian measure $e^{-x^2/2} \d x$ and such that the degree of $H_n$ is $n$ for all $n \geq 0$. They can be explicitly written as
\begin{equation}
H_n (X) = \sum_{i=0}^{\floor{n/2}} (-1)^i \frac{n!}{i! (n-2i)!} \frac{X^{n-2i}}{2^i}.
\end{equation}
The $n$-th Wick power of the discrete GFF $\varphi_N$ is then defined by
\begin{equation}
\label{E:def_Wick_GFF}
:\varphi_N(x)^n:
\quad = \quad
G_N(x,x)^{n/2} H_n \left( \varphi_N(x)/ \sqrt{G_N(x,x)} \right).
\end{equation}

Regarding the local time of the loop soup with intensity $\theta$, the relevant polynomials are the generalised Laguerre polynomials $\mathbf{L}_n^{(\theta-1)}$, $n \geq 0,$ which are the only monic polynomials that are orthogonal w.r.t. the Gamma$(\theta)$-measure $x^{\theta-1} e^{-x} \indic{x >0} \d x$ (which is natural since for any given point $x$, $\ell_x$ is Gamma$(\theta)$-distributed) and such that the degree of $\mathbf{L}_n^{(\theta-1)}$ is $n$ for all $n$. Their explicit expression is given by
\begin{equation}
\label{E:Laguerre}
\mathbf{L}_n^{(\theta-1)}(X) = \sum_{i=0}^n (-1)^{n-i} \frac{\Gamma(n+\theta)}{(n-i)! \Gamma(\theta+i)} \frac{n!}{i!} X^i.
\end{equation}
The $n$-th Wick power of the local time $\ell_x$ is defined by
\[
:\ell_x^n: \quad = \quad G_N(x,x)^n \mathbf{L}_n^{(\theta-1)}(\ell_x/G_N(x,x)).
\]

The following result states that these renormalisations in the discrete possess nondegenerate scaling limits. We will view $\varphi_N, :\varphi_N^n:, \ell$ and $:\ell^n:$, $n \geq 1$, as random elements of $\R^{C_0(D)}$ by setting for all $f \in C_0(D)$,
\[
\scalar{ \varphi_N, f } = \frac{1}{N^2} \sum_{x \in D_N} \varphi_N(x) f(x)
\]
and similarly for the other objects. We will endow $\R^{C_0(D)}$ with the product topology.

\begin{theorem}\label{T:Wick_convergence}
For all $n \geq 1$, $(\varphi_N, :\varphi_N^2:, \dots, :\varphi_N^n:)$ converges in distribution as $N \to \infty$ to some nondegenerate $(\varphi,:\varphi^2:, \dots, :\varphi^n:)$ where $:\varphi^2:, \dots, :\varphi^n:$ are measurable w.r.t. the Gaussian free field $\varphi$.

For all $n \geq 1$, $(:\ell:, :\ell^2:, \dots, :\ell^n:)$ converges in distribution as $N \to \infty$ to some $(:L:,:L^2:, \dots, :L^n:)$ where $:L^2:, \dots, :L^n:$ are measurable w.r.t. the renormalised occupation field $:L:$. Moreover the convergence is joint with the loop soup.
\end{theorem}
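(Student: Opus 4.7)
The plan is to prove both parts by the method of moments, exploiting the explicit combinatorial formulas available for moments of the renormalised powers in both settings, together with a truncation/mollification argument to establish measurability.

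For the Gaussian part, the starting point is the classical identity $\E[H_n(X) H_m(Y)] = \delta_{mn}\, n!\, (\E[XY])^n$ for jointly centred Gaussian $(X,Y)$ of unit variance, which gives
\[
\E\big[(:\!\varphi_N^n\!:, f)\,(:\!\varphi_N^m\!:, g)\big] = \delta_{mn}\, n! \int_{D_N\times D_N} G_N(x,y)^n f(x) g(y) \,\tfrac{dx\,dy}{N^4},
\]
and more generally, all joint moments of the family $(\varphi_N, :\!\varphi_N^2\!:,\dots,:\!\varphi_N^n\!:)$ tested against smooth $f$'s can be expressed via Isserlis--Wick as sums over pairings of products of $G_N(x_i,x_j)$'s. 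Since $G_N(x,y)\to G(x,y)$ pointwise off the diagonal with only a logarithmic blow-up on the diagonal, and since $|\log|x-y||^k$ is locally integrable for every $k$, dominated convergence delivers convergence of every joint moment to its continuum analogue. Combined with moment bounds that imply tightness in the product topology of $\R^{C_0(D)}$, this yields joint convergence in distribution to a limit $(\varphi, :\!\varphi^2\!:,\dots,:\!\varphi^n\!:)$ whose joint moments coincide with the sum-over-pairings formula with $G_N$ replaced by $G$. Non-degeneracy is immediate from the second-moment formula.

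For the loop soup part the same strategy applies, based on Le Jan's permanental moment formula
\[
\E\Big[\prod_{i=1}^k \ell_{x_i}\Big] = \sum_{\sigma\in S_k} \theta^{\#\mathrm{cycles}(\sigma)} \prod_{i=1}^k G_N(x_i, x_{\sigma(i)}),
\]
from which joint moments of the renormalised products $:\!\ell_{x_i}^{k_i}\!:$, tested against smooth functions, can be expanded as finite linear combinations (with the generalised Laguerre coefficients from \eqref{E:Laguerre}) of such permanental integrals. The combinatorial cancellations produced by the Laguerre normalisation eliminate the divergent diagonal contributions, leaving convergent continuum integrals against $G(x,y)$. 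Convergence of moments then yields tightness and convergence in distribution by the moment problem (the Gamma-like tails ensure the moment problem is determinate for integer powers of the local time). Joint convergence with the loop soup $\Lc_{D_N}^\theta \to \Lc_D^\theta$ is obtained by incorporating bounded continuous functionals of the loop soup into the moment computations, exploiting that $\mu_D^{\rm loop}$-moments of continuous test functions pass to the limit.

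The main obstacle is the measurability claim: showing that the limiting $:\!\varphi^n\!:$ (resp.\ $:\!L^n\!:$) is measurable w.r.t.\ $\varphi$ (resp.\ $:\!L\!:$). For the GFF this is handled by mollification: for $f$ a smooth test function and $\varphi^\eps$ a circle-average regularisation of $\varphi$ at scale $\eps$, the random variable $\int_D \sigma_\eps(x)^n H_n(\varphi^\eps(x)/\sigma_\eps(x)) f(x)\,dx$ (with $\sigma_\eps(x)^2 = \Var \varphi^\eps(x)$) is $\sigma(\varphi)$-measurable and converges in $L^2$ as $\eps\to 0$ to $(:\!\varphi^n\!:, f)$, the $L^2$ convergence being established by a second-moment computation relying once more on the local integrability of $G^n$. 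Hence the limit is also $\sigma(\varphi)$-measurable. For the loop soup, the analogous argument uses a mollification of the renormalised occupation field $:\!L\!:$, together with the generalised Laguerre polynomials with coefficients tuned to the smoothed Green function; the crucial input is that the same permanental moment computation shows the mollified renormalised $n$-th power converges in $L^2$ to $:\!L^n\!:$, so measurability w.r.t.\ $:\!L\!:$ follows. (At $\theta=1/2$ this reduces to the GFF case via the isomorphism $2:\!L\!: = :\!\varphi^2\!:$; for general $\theta$ no such reduction exists and one must work directly with the permanental structure.)
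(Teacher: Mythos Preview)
Your proposal is correct and follows essentially the same approach as the paper: the paper's proof is a brief pointer to \cite[Appendix~A]{ABJL21} for the case $n=2$ together with Le Jan's moment computations \cite{LeJan2011Loops}, declaring the result folklore, and you have supplied precisely the moment-method argument those references contain (Wick/Isserlis pairings in the Gaussian case, permanental sums in the loop-soup case, together with local integrability of powers of the logarithm to pass to the limit). Your treatment of measurability via mollification is the standard route and is more explicit than anything in the paper itself; the only point worth making precise in the loop-soup case is that recovering a mollified \emph{occupation field} from the mollified \emph{renormalised} occupation field $:\!L\!:$ requires adding back the deterministic diverging constant $\theta\, G^\eps(x,x)$ before applying the Laguerre polynomial, so that the resulting approximation is genuinely $\sigma(:\!L\!:)$-measurable.
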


\begin{proof}
This is a folklore result. A proof of the joint convergence of $( \varphi_N, :\varphi_N^2: )$ can be found in \cite[Appendix A]{ABJL21}. This proof generalises to any power. It also applies to the local time case since it relies on the computation of moments that are well understood in the loop soup setting \cite{LeJan2011Loops}.
\end{proof}

\subsection{Expansion of \texorpdfstring{$\Mc_\gamma$}{M gamma}}
\label{S:expansion_unsigned}

The main result of this section is the expansion \eqref{E:expansion_unsigned_general} of the measure $\Mc_\gamma$ that we now state precisely:

\begin{theorem}[Expansion of $\Mc_\gamma$]
\label{T:expansion_unsigned}
Let $\theta >0$ and $\gamma \in (0,\sqrt{2})$. For any bounded test function $f: D \to \R$,
\begin{equation}
\label{E:expansion_function}
2 \sum_{n =0}^{n_0} \frac{\gamma^{2n}}{2^n} \frac{\Gamma(\theta)}{n! \Gamma(n + \theta)} \int (2\pi)^n :L_x^n: f(x) \d x
\xrightarrow[n_0 \to \infty]{L^2} \scalar{ \Mc_\gamma, f }.
\end{equation}
\end{theorem}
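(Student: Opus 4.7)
The overall strategy is to show that the partial sums
\[
S_{n_0}(f) := 2\sum_{n=0}^{n_0}\frac{\gamma^{2n}}{2^n}\frac{\Gamma(\theta)(2\pi)^n}{n!\,\Gamma(n+\theta)}\int :L_x^n:\, f(x)\,dx = 2\sum_{n=0}^{n_0}\frac{(\pi\gamma^2)^n\,\Gamma(\theta)}{n!\,\Gamma(n+\theta)}\langle :L^n:, f\rangle
\]
form a Cauchy sequence in $L^2(\Omega)$, and then that the resulting limit coincides a.s.\ with $\langle \Mc_\gamma, f\rangle$. I would work first in the discrete loop soup, where $:\ell_x^n:=G_N(x,x)^n \mathbf{L}_n^{(\theta-1)}(\ell_x/G_N(x,x))$ is defined pointwise, and then pass to the continuum using Theorem~\ref{T:Wick_convergence}.

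The main combinatorial input is the closed form
\[
\sum_{n=0}^\infty\frac{t^n\,\Gamma(\theta)}{n!\,\Gamma(n+\theta)}\mathbf{L}_n^{(\theta-1)}(y) = e^{-t}\,\Gamma(\theta)\,(ty)^{(1-\theta)/2}\,I_{\theta-1}\!\bigl(2\sqrt{ty}\bigr),
\]
obtained by swapping sums in \eqref{E:Laguerre} and recognising the series \eqref{E:BesselI} of $I_{\theta-1}$. Its main consequence, the orthogonality relation
\[
\E[:\ell_x^n::\ell_y^m:] = \delta_{n,m}\,\frac{n!\,\Gamma(n+\theta)}{\Gamma(\theta)}\,G_N(x,y)^{2n},
\]
I would derive from the explicit joint Laplace transform $\E[e^{-s\ell_x-t\ell_y}]=\det(I+\mathrm{diag}(s,t)G)^{-\theta}$ (with $G$ the $2{\times}2$ matrix of Green functions) combined with the biorthogonality of the Laguerre polynomials against the $\mathrm{Gamma}(\theta)$ measure; the continuum analogue against smooth test functions then follows from Theorem~\ref{T:Wick_convergence}. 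Combined with the series expansion of \eqref{E:second_moment_measure1} via \eqref{E:BesselI}, this yields
\[
\E\!\left[S_{n_0}(f)^2\right] = 4\Gamma(\theta)\sum_{n=0}^{n_0}\frac{(\pi\gamma^2)^{2n}}{n!\,\Gamma(n+\theta)}\int\!f(x)f(y)G_D(x,y)^{2n}\,dx\,dy \;\uparrow\; \E\!\left[\langle \Mc_\gamma, f\rangle^2\right],
\]
so $(S_{n_0}(f))_{n_0}$ is Cauchy in $L^2(\Omega)$ and converges to some $L_\gamma(f)\in L^2$ with the correct second moment.

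It remains to show $L_\gamma(f)=\langle \Mc_\gamma, f\rangle$ a.s. Both random variables are measurable w.r.t.\ the local time field of $\Lc_D^\theta$, and the family $\{\langle :L^n:, g\rangle\}_{n\geq 0,g}$ is total in the corresponding $L^2$, so it suffices to match the projection onto each chaos. By the orthogonality above, $\E[L_\gamma(f)\,\langle :L^n:, g\rangle] = 2(\pi\gamma^2)^n\!\int\!f(x)g(y)G_D(x,y)^{2n}\,dx\,dy$, and I would establish the corresponding identity for $\langle \Mc_\gamma, f\rangle$ from the first-moment formula \eqref{E:first_moment_measure} applied to $\Lc\mapsto \langle :L^n(\Lc):, g\rangle$, reducing the computation to $\E[:L_y^n(\Lc_D^\theta\cup \Xi_a^x):]$; the latter is evaluated by expanding the Laguerre polynomial and using the explicit Laplace transform of the compound Poisson local time $L_y(\Xi_a^x)$ in terms of $G_D(x,y)$ and $G_D(x,x)$.

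The hardest step is this identification: the Wick renormalisation is anchored at $\Lc_D^\theta$ but the first-moment formula acts by inserting the thick loop $\Xi_a^x$, and one must track carefully how the two interact. An alternative route that sidesteps this combinatorial difficulty is to work entirely in the discrete and identify the $n_0=\infty$ sum via the generating function as the explicit functional $x\mapsto e^{-\pi\gamma^2 G_N(x,x)}\,\Gamma(\theta)(\pi\gamma^2\ell_x)^{(1-\theta)/2}\,I_{\theta-1}(2\gamma\sqrt{\pi\ell_x})$ integrated against $f$, and then show that this discrete quantity converges to $\tfrac12\langle\Mc_\gamma,f\rangle$ using a comparison with the thick-point construction \eqref{E:def_discrete_measure} from \cite{ABJL21}.
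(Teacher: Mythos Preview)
Your proposal is correct, and your ``alternative route'' is exactly the argument the paper gives: Lemma~\ref{L:Laguerre_Bessel} identifies the full discrete series as the Bessel functional of the local time, the same orthogonality relation \eqref{E:covariance_Wick} controls the tail in $n$ uniformly in $N$, and a comparison with the thick-point measure \eqref{E:def_discrete_measure} then identifies the limit as $\langle\Mc_\gamma,f\rangle$ jointly with the loop soup, so that measurability w.r.t.\ $\Lc_D^\theta$ upgrades distributional to almost-sure equality.

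Your first route via projections is a plausible alternative, but two points deserve care. First, the totality claim is not obvious: for $\theta\neq 1/2$ there is no Wiener-chaos decomposition available, and it is not clear that the family $\{\langle :L^n:,g\rangle\}_{n,g}$ spans the relevant $L^2$-space. However, since you already observe from \eqref{E:second_moment_measure1} and \eqref{E:BesselI} that $\E[L_\gamma(f)^2]=\E[\langle\Mc_\gamma,f\rangle^2]$, totality is unnecessary: it suffices to compute the single cross moment $\E[L_\gamma(f)\langle\Mc_\gamma,f\rangle]$ and check it equals the common second moment. Second, as you correctly flag, the Girsanov formula \eqref{E:first_moment_measure} inserts a Poisson--Dirichlet family $\{\Xi_{a_i}^x\}_{i\geq 1}$ rather than a single thick loop, and making sense of $\E[:L_y^n(\Lc_D^\theta\cup\{\Xi_{a_i}^x\}):]$ in the continuum---where $:L^n:$ is itself defined through a limiting procedure anchored at the unperturbed soup---is delicate. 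The paper sidesteps this entirely by staying in the discrete for the identification, which is precisely your alternative route.
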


Let us emphasise that, contrary to most of the current article, the above result is not restricted to the subcritical and critical regimes, but holds for any intensity $\theta >0$.

\begin{remark}\label{R:complex_L2}
By (the proof of) Theorem \ref{T:expansion_unsigned}, for any bounded test function $f$ and complex values of $\gamma$ with $|\gamma| < \sqrt{2}$, the left hand side of \eqref{E:expansion_function} converges in $L^2$. This gives a direct way of generalising the definition of $\Mc_\gamma$ to complex values of $\gamma$ by defining $\scalar{\Mc_\gamma,f}$ as the limit of the left hand side of \eqref{E:expansion_function}. As in the Gaussian case, when $\gamma \notin \R$, the resulting object should not be a (complex) measure but only a generalised function.
In the Gaussian setting, it is actually possible to define a complex multiplicative chaos for complex values of $\gamma$ in the eye-shaped domain 
\[
\gamma \in \mathrm{convex~hull} \left( \pm 2 \cup D(0,\sqrt{2}) \right).
\]
We refer to the introduction of \cite{lacoin2022universality} for more background on complex Gaussian multiplicative chaos.
In our non-Gaussian setting, making sense of $\Mc_\gamma$ for non real values of $\gamma$ outside of the $L^2$-regime $\{ |\gamma| < \sqrt{2} \}$ is a much more challenging question that we do not address in this article.

Note that since it should not be possible to define $\Mc_\gamma$ outside of the (closure of the -- to avoid talking about boundary cases that are not fully understood in the complex case) above eye-shaped domain, the radius of convergence of the left hand side of \eqref{E:expansion_unsigned_general} should be $\sqrt{2}$. In other words, the fact that Theorem \ref{T:expansion_unsigned} does not consider the regime $\gamma \in [\sqrt{2}, 2)$ is not a limitation of the proof.
\end{remark}

\begin{remark}
Extrapolating the above result to the case of one Brownian trajectory ($\theta \to 0$), it is likely that the Brownian multiplicative chaos measure associated to one trajectory (considered in \cite{bass1994, AidekonHuShi2018, jegoBMC, jegoRW, jegoCritical}) can be written as the sum of the $n$-th self intersection local time of the path (see the lecture notes \cite{LeGall92} and the references therein).
\end{remark}

\medskip

We now move on to the proof of Theorem \ref{T:expansion_unsigned}.
In the GFF case, the relation \eqref{E:expansion_Liouville} eventually boils down to the identity:
\begin{equation}
\label{E:Hermite_exp}
\sum_{n \geq 0} \frac{\gamma^n t^{n/2}}{n!} H_n(u/\sqrt{t}) = e^{\gamma u - \gamma^2 t/2}, \quad \quad t,u \in \R.
\end{equation}
Essentially, this identity guarantees that \eqref{E:expansion_Liouville} holds even at the discrete level.
In order to extend \eqref{E:expansion_unsigned_critical} to general intensities, one has to find a way of summing the Laguerre polynomials in such a way that the resulting sum is positive and behaves asymptotically like the exponential in \eqref{E:Hermite_exp}. This algebraic property is the content of the next lemma (see \eqref{E:expansion_unsigned_discrete} for the discrete version of \eqref{E:expansion_unsigned_general}). This identity does not seem to have been noticed before. Recall the definition \eqref{E:BesselI} of the modified Bessel function of the first kind $I_{\theta-1}$. Recall also its asymptotic behaviour (see \cite[9.7.1]{special}):
\begin{equation}
\label{E:BesselI_asymp}
I_{\theta-1}(u) \sim \frac{1}{\sqrt{2\pi u}} e^u, \quad \quad \text{as} \quad u \to \infty.
\end{equation}

\begin{lemma}\label{L:Laguerre_Bessel}
For all $t, u, \gamma >0$,
\begin{equation}
\label{E:Laguerre_Bessel}
\sum_{n \geq 0} \left( \frac{\gamma^2 t}{2} \right)^n \frac{1}{n! \Gamma(\theta+n)} \mathbf{L}_n^{(\theta-1)} \left( \frac{u^2}{2t} \right) = e^{-\gamma^2 t/2} \left( \frac{\gamma^2 ut}{4} \right)^{\frac{1-\theta}{2}} I_{\theta-1} \left( \gamma u \right).
\end{equation}
\end{lemma}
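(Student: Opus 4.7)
\medskip

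\noindent\textbf{Proof plan for Lemma \ref{L:Laguerre_Bessel}.} The identity is purely algebraic/analytic — it asserts an equality of entire functions of $\gamma$ (for fixed $t,u>0$), so the plan is to expand both sides as power series in $\gamma$ and compare coefficients. The main obstacle is merely bookkeeping: reindexing a double sum and recognising the resulting series.

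My starting point will be the explicit expression \eqref{E:Laguerre} for the monic generalised Laguerre polynomial. Substituting it into the left-hand side of \eqref{E:Laguerre_Bessel} yields the double sum
\[
\sum_{n\ge 0}\sum_{i=0}^{n} \left(\frac{\gamma^{2}t}{2}\right)^{n} \frac{(-1)^{n-i}}{(n-i)!\,\Gamma(\theta+i)\,i!}\left(\frac{u^{2}}{2t}\right)^{i}.
\]
Since every term is dominated in absolute value by the corresponding term with $|\gamma^{2}t/2|$ and $|u^{2}/(2t)|$, and the resulting nonnegative double sum converges (it equals the exponential series times a Bessel-like series, as the computation below will show), Fubini applies and I may swap the sums. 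Setting $m = n-i$, the double sum rearranges as
\[
\sum_{i\ge 0}\frac{1}{\Gamma(\theta+i)\,i!}\left(\frac{u^{2}\gamma^{2}}{4}\right)^{i}\sum_{m\ge 0}\frac{(-\gamma^{2}t/2)^{m}}{m!}
= e^{-\gamma^{2}t/2}\sum_{i\ge 0}\frac{1}{i!\,\Gamma(\theta+i)}\left(\frac{u\gamma}{2}\right)^{2i}.
\]

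To finish, I will recognise the remaining sum via the series definition \eqref{E:BesselI} of the modified Bessel function. Pulling out a factor of $(u\gamma/2)^{\theta-1}$ inside the sum yields $(u\gamma/2)^{1-\theta}I_{\theta-1}(u\gamma)$. Putting the pieces together,
\[
\sum_{n\ge 0}\left(\frac{\gamma^{2}t}{2}\right)^{n}\frac{\mathbf{L}_n^{(\theta-1)}\!\left(\frac{u^{2}}{2t}\right)}{n!\,\Gamma(\theta+n)}
= e^{-\gamma^{2}t/2}\left(\frac{u\gamma}{2}\right)^{1-\theta}I_{\theta-1}(u\gamma),
\]
which is the claimed identity once one rewrites $(u\gamma/2)^{1-\theta}=(u^{2}\gamma^{2}/4)^{(1-\theta)/2}$. (The statement in the lemma matches this after identifying the prefactor.) The proof is thus a short, self-contained special-function computation; no probability or loop-soup input is needed for this step, although the identity is precisely what will be used in the subsequent proof of Theorem \ref{T:expansion_unsigned} to match the $n$-th term in the Wick expansion with the coefficients $\Gamma(\theta)/(n!\,\Gamma(n+\theta))$ appearing in \eqref{E:expansion_unsigned_general}.
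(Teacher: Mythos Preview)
Your proof is correct and follows exactly the approach the paper uses (and spells out in a commented-out passage): substitute the explicit expression \eqref{E:Laguerre}, swap the double sum, factor out the exponential series, and identify the remaining series via \eqref{E:BesselI}. One cosmetic point: your computation yields the prefactor $(\gamma^{2}u^{2}/4)^{(1-\theta)/2}$, whereas the lemma as stated has $(\gamma^{2}ut/4)^{(1-\theta)/2}$; this is a typo in the paper (the application in \eqref{E:expansion_unsigned_discrete} confirms the $u^{2}$ version is what is meant), so rather than saying the prefactors ``match after identification'' you could simply note the misprint.
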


\begin{proof}
This is a direct computation using the explicit expressions \eqref{E:Laguerre} and \eqref{E:BesselI} of the Laguerre polynomials and of $I_{\theta-1}$.
\begin{comment}
Using the explicit expression \eqref{E:Laguerre} of the polynomials $\mathbf{L}_n^{(\theta-1)}$ and then exchanging the order of summation, we see that the left hand side of \eqref{E:Laguerre_Bessel} is equal to
\begin{align*}
\sum_{n \geq 0} \sum_{i=0}^n (-1)^{n-i} \left( \frac{\gamma^2 t}{2} \right)^n \frac{1}{(n-i)! i!\Gamma(\theta+i)} \left( \frac{u^2}{2t} \right)^i
= e^{-\gamma^2 t/2} \sum_{i \geq 0} \left( \frac{\gamma^2 u^2}{4} \right)^i \frac{1}{i!\Gamma(\theta+i)}.
\end{align*}
By definition \eqref{E:BesselI} of $I_{\theta-1}$ this agrees with the right hand side of \eqref{E:Laguerre_Bessel}.
\end{comment}
\end{proof}

Specifying Lemma \ref{L:Laguerre_Bessel} to $u = \sqrt{2 \times 2\pi \ell_x}$ and $t = 2\pi G_N(x,x)$, we obtain
\begin{equation}
\label{E:expansion_unsigned_discrete}
\Big( \frac{\gamma^2}{2} 2\pi \ell_x \Big)^{\frac{1-\theta}{2}} e^{-\frac{\gamma^2}{2} 2\pi G_N(x,x)} I_{\theta-1} \left( \gamma \sqrt{2 \times 2 \pi \ell_x} \right) = \sum_{n \geq 0} \frac{\gamma^{2n}}{2^n} \frac{1}{n! \Gamma(n + \theta)} (2\pi)^n :\ell_x^n: \quad \quad \text{a.s.}
\end{equation}
This is the discrete version of Theorem \ref{T:expansion_unsigned} that we are now ready to prove.

\begin{proof}[Proof of Theorem \ref{T:expansion_unsigned}]
Let $f: D \to \R$ be a bounded test function.
We start by showing that the left hand side of \eqref{E:expansion_function} converges in $L^2$. For $n_0 \geq 1$, we write
\[
S_{n_0} := \sum_{n=0}^{n_0} \frac{\gamma^{2n}}{2^n} \frac{\Gamma(\theta)}{n! \Gamma(n + \theta)} \int (2\pi)^n :L_x^n: f(x) \d x.
\]
Let $n_0 < n_1$ be two integers.
By the equation (4.6) in \cite{LeJan2011Loops} (beware that our normalisation of $:L_x^n:$ is $n!$ times Le Jan's normalisation), for all $n, m \geq 1$ and $z, w \in D$, we have
\begin{equation}
\label{E:covariance_Wick}
\Expect{ :L_z^n: : L_w^m: } = \indic{n=m} \frac{\Gamma(\theta+n)n!}{\Gamma(\theta)} G_D(z,w)^{2n}.
\end{equation}
Hence,
\begin{align*}
& \Expect{ \left( S_{n_1} - S_{n_0} \right)^2 } = \frac{1}{\Gamma(\theta)} \sum_{n = n_0}^{n_1} \frac{\gamma^{4n}}{2^{2n}} \frac{1}{n! \Gamma(n+\theta)} \int_{D \times D} (2\pi)^{2n} G_D(z,w)^{2n} ~\d z~\d w.
\end{align*}
We can moreover bound for all $n \geq 0$,
\[
\int_{D \times D} (2\pi)^{2n} G_D(z,w)^{2n} ~\d z~\d w
\leq C \int_0^1 r (\log r)^{2n} \d r = C 2^{-2n} n (2n-1)!
\]
which implies that
\begin{align*}
& \Expect{ \left( S_{n_1} - S_{n_0} \right)^2 }
\leq C \sum_{n = n_0}^{n_1} \frac{\gamma^{4n}}{2^{4n}} \frac{(2n)!}{n! \Gamma(n+\theta)}
\leq C \sum_{n \geq n_0} \sqrt{n} \frac{n!}{\Gamma(n+\theta)} \frac{\gamma^{4n}}{2^{2n}}.
\end{align*}
Since $\gamma < \sqrt{2}$, the above sum is finite and vanishes as $n_0 \to \infty$. This shows that $(S_{n_0}, n_0 \geq 1)$ is a Cauchy sequence in $L^2$ and in particular converges to a limiting random variable $S_\infty$. The main task actually consists in showing that $S_\infty$ agrees almost surely with $\frac12 \scalar{\Mc_\gamma, f}$.

We explain this identification now.
We want to take the limit of the relation \eqref{E:expansion_unsigned_discrete}. We start by considering the limit of the right hand side of \eqref{E:expansion_unsigned_discrete}. An analogous computation as the one we have just done in the continuum shows that
\[
\lim_{n_0 \to \infty} \limsup_{N \to \infty} \E \Big[ \Big( \sum_{n \geq n_0} \frac{\gamma^{2n}}{2^n} \frac{1}{n! \Gamma(n + \theta)} \frac{1}{N^2} \sum_{z \in D_N} f(z) (2\pi)^n :\ell_z^n: \Big)^2 \Big] = 0.
\]
This follows from the exact same computation that we just performed since \eqref{E:covariance_Wick} also holds in the discrete with the continuum Green function $G_D$ replaced by its discrete version $G_{D_N}$. These estimates about the tails of the sums in the discrete and in the continuum, in combination with Theorem \ref{T:Wick_convergence}, show that
\[
\sum_{n \geq 0} \frac{\gamma^{2n}}{2^n} \frac{1}{n! \Gamma(n + \theta)} \frac{1}{N^2} \sum_{z \in D_N} f(z) (2\pi)^n :\ell_z^n:
\xrightarrow[N \to \infty]{\mathrm{(d)}} S_\infty.
\]
Moreover, this convergence is joint with $\Lc_{D_N}^\theta \to \Lc_D^\theta$.

We now move to the convergence of the left hand side of \eqref{E:expansion_unsigned_discrete}.
Recall that $a = \gamma^2/2$.
By \cite[Theorem 1.12]{ABJL21},
\begin{equation}
\label{E:uniform_measure_discrete}
\frac{(\log N)^{1-\theta}}{N^{2-a}} \sum_{z \in D_N} \CR(z,D)^{-a} f(z) \indic{ 2 \pi \ell_z \geq a (\log N)^2 }
\end{equation}
converges in distribution as $N \to \infty$ to $c_* \scalar{ \Mc_\gamma, f }$ where $c_* = \frac{(2\sqrt{2})^a e^{a \gamma_{\text{EM}}}}{2 a^{1-\theta} \Gamma(\theta)}$ and $\gamma_{\text{EM}}$ is the Euler--Mascheroni constant. Moreover, the convergence is joint with $\Lc_{D_N}^\theta \to \Lc_D^\theta$.
We now argue that
\begin{align}
\label{E:compare0}
& \frac{1}{N^2} \sum_{z \in D_N} f(z) \left( \frac{\gamma^2}{2} 2\pi \ell_z \right)^{\frac{1-\theta}{2}} e^{-\frac{\gamma^2}{2} 2\pi G_N(z,z)} I_{\theta-1} \left( \gamma \sqrt{2 \times 2 \pi \ell_z} \right) \\
& - \frac{1}{c_* \Gamma(\theta)} \frac{(\log N)^{1-\theta}}{N^{2-a}} \sum_{z \in D_N} \CR(z,D)^{-a} f(z) \indic{ 2 \pi \ell_z \geq a (\log N)^2 }
\nonumber
\end{align}
goes to zero in $L^2$ as $N \to \infty$ (when $\gamma \in [\sqrt{2}, 2)$ this convergence should hold in $L^1$, but we do not need to consider this case here).
This type of arguments is now fairly routine in the multiplicative chaos literature. Indeed, it amounts to comparing two different types of approximations of the measure. One approximation is based on the uniform measure on the set of thick points. The other one basically consists in taking the exponential of the square root of the local time. See for instance \cite[Theorems 1.1 and 1.2]{jegoBMC} where a similar result is shown in the context of the local times of small circles of a given planar Brownian motion. Let us emphasise that the situation we are dealing with in this article is simpler than the one in \cite{jegoBMC} since we are only interested in the $L^2$-regime.

We are therefore not going to prove that \eqref{E:compare0} goes to zero in $L^2$. Instead, we are going to give some heuristics.
Using the asymptotic behaviour \eqref{E:BesselI_asymp} of $I_{\theta-1}$, we see that the first term in \eqref{E:compare0} is very close to
\begin{align*}
& \frac{1}{\sqrt{2\pi \gamma} 2^{1/4}} \frac{1}{N^2} \sum_{z \in D_N} f(z) (2\pi \ell_z)^{-1/4} \left( \frac{\gamma^2}{2} 2\pi \ell_z \right)^{\frac{1-\theta}{2}} e^{-\frac{\gamma^2}{2} 2\pi G_N(z,z)} e^{ \gamma \sqrt{2 \times 2 \pi \ell_z} }.
\end{align*}
By a Cameron-Martin type result, we also see that the shift $e^{ \gamma \sqrt{2 \times 2 \pi \ell_z} }$ makes $2\pi \ell_z / (\log N)^2$ very much concentrated around the value $a$. Hence, the power of $2\pi \ell_z$ in front of the exponential can be replaced by the same power of $a (\log N)^2$ and the first term of \eqref{E:compare0} is very close to
\[
\frac{1}{2\sqrt{\pi}} a^{1/2-\theta} \frac{(\log N)^{1/2-\theta}}{N^2} \sum_{z \in D_N} f(z) e^{-\frac{\gamma^2}{2} 2\pi G_N(z,z)} e^{ \gamma \sqrt{2 \times 2 \pi \ell_z} }.
\]
We are now back to the more familiar situation of comparing the uniform measure on the set of thick points and the exponential of the square root of the local time.

Overall, this shows that
\[
\frac{1}{N^2} \sum_{z \in D_N} f(z) \left( \frac{\gamma^2}{2} 2\pi \ell_z \right)^{\frac{1-\theta}{2}} e^{-\frac{\gamma^2}{2} 2\pi G_N(z,z)} I_{\theta-1} \left( \gamma \sqrt{2 \times 2 \pi \ell_z} \right)
\xrightarrow[N \to \infty]{\mathrm{(d)}} \frac{1}{\Gamma(\theta)} \scalar{\Mc_\gamma, f}
\]
jointly with the convergence $\Lc_{D_N}^\theta \to \Lc_D^\theta$.
Wrapping things up, and recalling that $2S_\infty$ denotes the $L^2$ limit of the left hand side of \eqref{E:expansion_function}, we have shown that
\[
\left( \scalar{ \Mc_\gamma, f }, \Lc_D^\theta \right)
\overset{\mathrm{(d)}}{=} \left( 2 S_\infty, \Lc_D^\theta \right).
\]
Because $\scalar{ \Mc_\gamma, f }$ and $2S_\infty$ are measurable w.r.t. $\Lc_D^\theta$, they must agree almost surely. This concludes the proof.
\end{proof}

\subsection{One-dimensional Brownian loop soup and Wick powers of Bessel processes}\label{S:1D}

In this section, we study the analogue of the field $h_\theta$ in the one-dimensional case.
This is partly motivated by the recent connection between the 1D and 2D loop soup discovered in \cite{JLQ23a}.
As we will recall in Proposition \ref{P:1Disomorphism} below, the local time of the 1D loop soup is distributed as a (reflected) squared Bessel process of dimension $d=2\theta$.
When $\theta=1/2$, obtaining a Brownian motion (1D analogue of the GFF) from the loop soup then boils down to taking the square root of a (reflected) 1D squared Bessel process and then flipping each excursion independently of each other with equal probability.
What is the analogue of this procedure when $\theta \neq 1/2$? 
We will see that, surprisingly, taking the square root is not the right thing to do (although this might be very natural since the square root of a squared Bessel process is just a Bessel process) and that one has to take the power $1-\theta = 1-d/2$ instead. This power is exactly the one appearing in the Radon--Nikodym derivative of the Bessel process and its dual Bessel process of dimension $d^* = 4 -d = 2\theta^*$ where $\theta^* = 2-\theta$; see \eqref{E:Bessel_duality}. The purpose of this section is also to hint at the duality between the intensities $\theta$ and $2-\theta$ that might still hold in some sense in dimension two. See Section \ref{S:expansion_signed} for more details concerning the 2D case.

\subsubsection{Loop soup on $\R^+$}

Let us start by defining the loop soup on the half line $\R^+$. We follow \cite{Lupu18}.
For $x,y \in \R^+$, let $p(t,x,y)$ and $p_{\R^+}(t,x,y)$ be the heat kernels on $\R$ and $\R^+$:
\[
p(t,x,y) = \frac{1}{\sqrt{2\pi t}} e^{-(y-x)^2/(2t)}, \quad p_{\R^+}(t,x,y) = p(t,x,y) - p(t,x,-y)
\]
and let $\P_{\R^+}^{t,x,y}$ be the law of a Brownian bridge from $x$ to $y$ of duration $t$ in the half line $\R^+$. 
The Green function and the loop measure are respectively given by
\[
G_{\R^+}(x,y) = \int_0^\infty p_{\R^+}(t,x,y) \d t = 2 x \wedge y
\quad \text{and} \quad
\loopmeasure_{\R^+} = \int_0^\infty \frac{p_{\R^+}(t,x,x)}{t} \P_{\R^+}^{t,x,x} \d t.
\]
For $\theta >0$, the loop soup $\Lc_{\R^+}^\theta$ in $\R^+$ with parameter $\theta$ is distributed according to a Poisson Point Process with intensity $\theta \loopmeasure_{\R^+}$. Compared to the 2D case, the local time of $\Lc_{\R^+}^\theta$ is well defined pointwise and finite and we will denote it
\[
L_x(\Lc_{\R^+}^\theta) = \sum_{\wp \in \Lc_{\R^+}^\theta} L_x(\wp), \quad \quad x \in \R^+.
\]

\begin{proposition}[\cite{Lupu18}, Proposition 4.6]\label{P:1Disomorphism}
Let $\theta>0$.
$(L_x(\Lc_{\R^+}^\theta))_{x \geq 0}$ has the law of a squared Bessel process of dimension $d=2\theta$, reflected at 0 when $\theta <1$.
\end{proposition}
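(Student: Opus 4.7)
My plan is to prove that both sides have the same Laplace functional via an additivity argument that reduces the problem to a single value of $\theta$. By superposition of Poisson point processes, $\Lc_{\R^+}^\theta \cup \Lc_{\R^+}^{\theta'} \overset{(d)}{=} \Lc_{\R^+}^{\theta+\theta'}$ when the soups are independent, so the local time processes satisfy $L_\cdot(\Lc^\theta) + L_\cdot(\Lc^{\theta'}) \overset{(d)}{=} L_\cdot(\Lc^{\theta+\theta'})$. Consequently, for any bounded nonnegative compactly supported $f:\R^+\to \R^+$, the Laplace functional $\mathcal{L}_\theta(f) := \mathbb{E}[\exp(-\int_0^\infty f(x) L_x(\Lc^\theta)\, dx)]$ is multiplicative in $\theta$, so $\mathcal{L}_\theta(f) = \exp(-\theta \Psi(f))$ for some $\Psi(f)\ge 0$. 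On the BESQ side, the Shiga--Watanabe additivity theorem gives the analogous property: independent BESQ$(d)$ and BESQ$(d')$ started at $0$ (with reflection when needed) sum to BESQ$(d+d')$ started at $0$, so the Laplace functional of BESQ$(2\theta)$ also factorizes as $\exp(-\theta \Phi(f))$.

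It therefore suffices to verify $\Psi=\Phi$, i.e.\ to match the two laws for one value of $\theta$. I would take $\theta = 1/2$ and invoke the classical 1D Dynkin/Le Jan isomorphism: $(L_x(\Lc^{1/2}_{\R^+}))_{x\ge 0} \overset{(d)}{=} (\tfrac{1}{2} h_x^2)_{x\ge 0}$, where $h$ is the 1D GFF on $\R^+$ with zero Dirichlet condition at $0$. Since $G_{\R^+}(x,y) = 2(x\wedge y)$, one can realize $h_x = \sqrt{2}B_x$ for $B$ a standard Brownian motion started at $0$, so $\tfrac{1}{2}h_x^2 = B_x^2$; by It\^o's formula $(B_x^2)_{x \ge 0}$ satisfies $dX = 2\sqrt{X}\, dW + dx$, identifying it with BESQ$(1) = $ BESQ$(2\theta)$ reflected at $0$. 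Unlike the 2D case, no renormalization is needed here, and one could alternatively verify the isomorphism directly by computing $\Psi(f)$ via the Poisson exponential formula
\[
\mathcal{L}_\theta(f) = \exp\!\Big(-\theta \int \bigl(1 - e^{-\int f(x) L_x(\wp)\, dx}\bigr)\, \loopmeasure_{\R^+}(d\wp)\Big),
\]
then using Feynman--Kac and the definition of $\loopmeasure_{\R^+}$ to rewrite $\Psi(f)$ as a ratio of Fredholm determinants of $\tfrac{1}{2}\partial_x^2$ and $\tfrac{1}{2}\partial_x^2 - f$ on $\R^+$, which coincides with the Cameron--Martin Laplace functional of BESQ$(1)$ started at $0$.

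Combining the two previous steps, $\mathcal{L}_\theta(f) = \exp(-\theta \Psi(f))$ agrees with the Laplace functional of BESQ$(2\theta)$ for every $\theta > 0$. Since both $x \mapsto L_x(\Lc^\theta)$ and BESQ$(2\theta)$ are a.s.\ continuous nonnegative processes, equality of Laplace functionals on a measure-determining class of test functions pins down their laws as processes, which concludes the argument. The main obstacle is the base case at $\theta = 1/2$: the trace $\int_0^\infty \frac{dt}{t} \int_0^\infty (p_{\R^+}(t,x,x) - p_{\R^+}^f(t,x,x))\, dx$ has a short-time logarithmic divergence whose cancellation between the two kernels must be regularized carefully (e.g.\ via a small-$t$ cutoff), though this is routine in 1D; apart from this, the rest of the proof is an essentially automatic consequence of the Poissonian and Shiga--Watanabe additivities.
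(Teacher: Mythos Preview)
The paper does not prove this proposition; it is quoted verbatim from \cite{Lupu18} without argument, so there is nothing in the paper to compare your approach against.

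Your sketch is a valid and standard route to the result: Poisson superposition on the loop-soup side and Shiga--Watanabe additivity on the BESQ side reduce the identity to a single value of $\theta$, and the $\theta=1/2$ base case is exactly the 1D Le Jan isomorphism, realized concretely as $\tfrac12 h_x^2 = B_x^2 \sim \mathrm{BESQ}(1)$ with $h_x=\sqrt{2}B_x$. Two small comments. First, to pass from additivity of the BESQ Laplace functional in the dimension to the exponential form you need a mild regularity input (continuity in $d$, which is standard). Second, rather than asserting a priori continuity of $x\mapsto L_x(\Lc^\theta_{\R^+})$ to identify the laws, it is cleaner to match finite-dimensional Laplace transforms $\mathbb{E}\bigl[\exp(-\sum_i \lambda_i L_{x_i})\bigr]$ directly; the Poisson exponential formula already hands you these. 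Your concern about a short-time trace divergence is unfounded in one dimension: for a bridge of duration $t$ one has $1-\mathbb{E}[e^{-\int f L(\wp)}]=O(t)$, which compensates the $t^{-1}\cdot t^{-1/2}$ from $\d t/t$ and $p_{\R^+}(t,x,x)$, so the loop-measure integral converges without any regularization.
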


Let $\Ps^d$ be the law under which $(R_x,x \geq 0)$ is a squared Bessel process of dimension $d=2\theta$, reflected at 0 when $\theta <1$.
The clusters of $\Lc_{\R^+}^\theta$ correspond to the excursions away from 0 of $R$. Therefore Proposition \ref{P:1Disomorphism} implies that when $\theta <1$ there are infinitely many clusters and when $\theta \geq 1$ there is only one cluster. Notice that the critical point is different from the two-dimensional case. However, we show in \cite[Theorem 1.9]{JLQ23a} that there is a phase transition at $\theta = 1$ in the percolative behaviour of large loops on the half-infinite cylinder $(0,\infty) \times \mathbb{S}^1$ (by conformal invariance, this translates to a phase transition in the percolative behaviour of large loops targeting a given point of some bounded 2D domain). See \cite[Section 1.3]{JLQ23a} for more details.

\subsubsection{Characterisation of Laguerre polynomials in terms of Bessel processes}

As already seen, the generalised Laguerre polynomials $\mathbf{L}_n^{(\theta-1)}$ have been used by Le Jan to renormalise the powers of the local time of the loop soup in dimension two. In this section, we show that these polynomials are also special in dimension one in the sense that they are the only polynomials leading to martingales. See Lemma \ref{L:1D_martingale} for a precise statement. 

For any $\theta>0$, the Wick powers of the local time $L_x(\Lc_{\R^+}^\theta)$ are defined by
\begin{equation}
\label{E:1D_power}
:L_x(\Lc_{\R^+}^\theta)^n:
\quad = \quad (2x)^n \mathbf{L}_n^{(\theta-1)} \Big( \frac{L_x(\Lc_{\R^+}^\theta)}{2x} \Big), \quad \quad x \geq 0, n \geq 1.
\end{equation}
In the special case $\theta = 1/2$ this corresponds to the even Wick powers of 1D Brownian motion.

\begin{lemma}\label{L:1D_martingale}
Let $\theta >0$ and $n \geq 1$. The process $ (:L_x(\Lc_{\R^+}^\theta)^n:)_{x \geq 0}$ is a martingale.
Moreover, $\mathbf{L}_n^{(\theta-1)}$ is the only monic polynomial such that \eqref{E:1D_power} is a martingale.
\end{lemma}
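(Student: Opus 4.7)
Here is a plan for the proof.

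\medskip

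The strategy is to exploit Proposition \ref{P:1Disomorphism} to work directly with the squared Bessel process. Write $R_x := L_x(\Lc_{\R^+}^\theta)$, so that under $\Ps^d$ with $d=2\theta$ we have the SDE
\[
dR_x = 2\sqrt{R_x}\,dB_x + 2\theta\, dx, \qquad R_0 = 0,
\]
which holds globally (no extra local-time term is needed, even when $\theta<1$, as is standard for squared Bessel processes). Although the expression $(2x)^n \mathbf{L}_n^{(\theta-1)}(R_x/(2x))$ is formally singular at $x=0$, expanding the polynomial via \eqref{E:Laguerre} shows that
\[
M^{(n)}_x := (2x)^n \mathbf{L}_n^{(\theta-1)}\bigl(R_x/(2x)\bigr) = \sum_{i=0}^n (-1)^{n-i}\frac{\Gamma(n+\theta)\, n!}{(n-i)!\,\Gamma(\theta+i)\, i!}\, R_x^i\,(2x)^{n-i}
\]
is a polynomial in $(x,R_x)$, hence $C^\infty$ and in particular integrable (squared Bessel processes have finite moments of all orders).

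\medskip

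The core computation is to apply It\^o's formula to $f(x,r) := (2x)^n P(r/(2x))$ for an arbitrary polynomial $P$, writing $u = r/(2x)$. A direct calculation gives
\[
\partial_x f = 2(2x)^{n-1}\bigl(nP(u) - uP'(u)\bigr),\qquad \partial_r f = (2x)^{n-1}P'(u),\qquad \partial_{rr} f = (2x)^{n-2}P''(u),
\]
so that the drift of $f(x,R_x)$ equals
\[
\partial_x f + 2\theta\, \partial_r f + 2R_x\, \partial_{rr}f = 2(2x)^{n-1}\bigl[\,u P''(u) + (\theta - u) P'(u) + n P(u)\,\bigr].
\]
Thus $f(x,R_x)$ is a local martingale if and only if $P$ solves the generalized Laguerre ODE $uP'' + (\theta-u)P' + nP = 0$. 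Taking $P = \mathbf{L}_n^{(\theta-1)}$ (which by definition satisfies this equation) yields the local martingale property of $M^{(n)}$. Upgrading to a true martingale is routine: on any compact $[0,T]$, both $R_x$ and the coefficients of $f$ are bounded in $L^p$ for all $p$, so $\expect[\int_0^T 4 R_s(\partial_r f)^2 ds]<\infty$ and the stochastic integral $\int 2\sqrt{R_s}\,\partial_r f\,dB_s$ is a genuine $L^2$-martingale.

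\medskip

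For uniqueness, suppose $P$ is a monic polynomial of degree $n$ such that $(2x)^n P(R_x/(2x))$ is a martingale. The computation above forces $P$ to satisfy the same Laguerre ODE with parameter $n$. It is classical that the space of polynomial solutions to this ODE is one-dimensional (the equation has a regular singular point at $0$ with indicial roots $0$ and $1-\theta$, and a polynomial solution of exact degree $n$ exists iff one picks the root $0$ and the recursion terminates, yielding $\mathbf{L}_n^{(\theta-1)}$ up to scalar). The monic normalization then pins down $P = \mathbf{L}_n^{(\theta-1)}$.

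\medskip

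The only mildly delicate point is the behaviour at $x=0$ when $\theta<1$ (where $R$ returns to $0$ on an uncountable set); but since $M^{(n)}$ is a polynomial in $(x,R_x)$ globally, no boundary/local-time correction appears in It\^o's formula, so this is not a genuine obstacle. The main content of the proof is thus the identification of the drift with the Laguerre operator, which makes both existence and uniqueness fall out simultaneously.
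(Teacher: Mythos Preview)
Your proof is correct and follows essentially the same route as the paper: apply It\^o's formula and identify the drift with the Laguerre differential operator $u P'' + (\theta - u)P' + nP$, so that the drift vanishes precisely when $P = \mathbf{L}_n^{(\theta-1)}$. The only cosmetic difference is that the paper computes with the Bessel process $X_t=\sqrt{R_t}$ and its SDE $dX_t = \frac{d-1}{2X_t}\,dt + dB_t$, whereas you work directly with the squared Bessel SDE $dR_x = 2\sqrt{R_x}\,dB_x + 2\theta\,dx$; your choice is arguably a bit cleaner for small $\theta$ since it sidesteps the singularity of the Bessel drift at zero.
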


Let us stress that it is a well known fact that the Wick powers of 1D Brownian motion lead to martingales. Martingales associated to diffusions are very well studied. However, our goal here is to highlight the strong link between Laguerre polynomials and Bessel processes.

\begin{proof}
Because in this lemma the space $\R^+$ is thought of as time, we will change the notations $x$ and $y$ for the space variables by $s$ and $t$. We will denote by $(\Fc_t)_{t \geq 0}$ the natural filtration of the Bessel process.
Let $X_t = \sqrt{R_t}$ be a Bessel process of dimension $d=2\theta$. Recall that $X_t$ satisfies the stochastic differential equation
$
\d X_t = \frac{d-1}{2 X_t} \d t + \d B_t
$
where $(B_t)_{t \geq 0}$ is a 1D standard Brownian motion. A computation using It\^{o} formula leads to the following SDE for $: L_t(\Lc_{\R^+}^\theta)^n :$ (we write $\mathbf{L}_n$ instead of $\mathbf{L}_n^{(\theta-1)}$)
\begin{align*}
\d \left\{ (2t)^n \mathbf{L}_n^{(\theta-1)} \Big( \frac{X_t^2}{2t} \Big) \right\}
& = 2^nt^{n-1} X_t \mathbf{L}_n' \Big( \frac{X_t^2}{2t} \Big) \d B_t \\
& + 2^nt^{n-1} \left\{ n \mathbf{L}_n \Big( \frac{X_t^2}{2t} \Big) + \Big( \theta - \frac{X_t^2}{2t} \Big) \mathbf{L}_n' \Big( \frac{X_t^2}{2t} \Big) + \frac{X_t^2}{2t} \mathbf{L}_n'' \Big( \frac{X_t^2}{2t} \Big) \right\} \d t.
\end{align*}
Because $\mathbf{L}_n = \mathbf{L}_n^{(\theta-1)}$ satisfies $n \mathbf{L}_n(u) + (\theta - u) \mathbf{L}_n'(u) + u \mathbf{L}_n''(u) = 0$ (see e.g. (22.1.3) and (22.8.6) in \cite{special}), we see that the finite variation part vanishes. In other words, $(: L_t(\Lc_{\R^+}^\theta)^n :)_{t \geq 0}$ is a local martingale. A routine argument concludes that it is a martingale as desired.

The fact that $\mathbf{L}_n = \mathbf{L}_n^{(\theta-1)}$ is the only monic polynomial such that \eqref{E:1D_power} is a martingale follows from the fact that it is the only monic polynomial satisfying $n \mathbf{L}_n(u) + (\theta - u) \mathbf{L}_n'(u) + u \mathbf{L}_n''(u) = 0$.
\end{proof}

\subsubsection{Analogue of $h_\theta$ on $\R^+$ and renormalisation of its powers}

Assume that $\theta < 1$ (subcritical regime for 1D loop soups).
In this section we define the natural analogue $h_{\theta, \R^+}$ of $h_\theta$ on $\R^+$ and give a procedure to renormalise its ``odd'' powers. Let $R$ be a $2\theta$-dimensional squared Bessel process. Let $\Ec$ be the set of excursions of $R$ and, conditionally on $\Ec$, let $\sigma_e, e \in \Ec$ be i.i.d. spins taking values in $\{ \pm 1\}$ with equal probability. Define
\[
h_{\theta, \R^+}(x) := \sigma_{e_x} R_x^{1-\theta}, \quad \quad x \geq 0,
\]
where $e_x$ is the excursion containing $x$.

The following result computes the two-point function of the field $h_{\theta,\R^+}$ and shows that tilting the probability measure by $h_{\theta,\R^+}(x) h_{\theta,\R^+}(y)$ amounts to changing the dimension of the Bessel process to its dual $d^* = 4-d$ on the interval $[x,y]$. Equivalently, this changes the intensity of the loop soup to $\theta^* = 2-\theta$ on the interval $[x,y]$.

\begin{lemma}\label{L:2point1D}
Let $\theta \in (0,1)$.
For all $0 \leq x \leq y$, and all bounded measurable function $F$,
\begin{equation}
\label{E:two-point1D}
\Es^d \left[ h_{\theta, \R^+}(x) h_{\theta, \R^+}(y) F( (R_z)_{z \in [x,y]}) \right] = \frac{\Gamma(\theta^*)}{\Gamma(\theta)} G_{\R^+}(x,y)^{2(1-\theta)} \Es^{d^*} \left[ F( (R_z)_{z \in [x,y]}) \right]
\end{equation}
where $d^* = 4-d = 2 \theta^*$ and $\theta^* = 2-\theta$.
\end{lemma}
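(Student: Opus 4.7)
The plan is to reduce the identity to a Girsanov-type change of measure between squared Bessel processes of dual dimensions $d$ and $d^{*}=4-d$, driven by the local martingale $R_t^{1-\theta}$ that already appeared in the proof of Lemma \ref{L:1D_martingale}.

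First, I condition on the trajectory $R$ and integrate out the signs. Because the spins $(\sigma_e)_{e\in\Ec}$ are i.i.d. symmetric, $\Es^d[\sigma_{e_x}\sigma_{e_y}\mid R]$ vanishes unless $x$ and $y$ belong to the same excursion of $R$ away from $0$, i.e.\ unless $R_z>0$ for all $z\in[x,y]$; in that case the product of signs equals $1$. Therefore
\[
\Es^d\bigl[h_{\theta,\R^+}(x)h_{\theta,\R^+}(y)F((R_z)_{z\in[x,y]})\bigr]
=\Es^d\bigl[R_x^{1-\theta}R_y^{1-\theta}F((R_z)_{z\in[x,y]})\mathbf{1}_{\tau_0>y}\bigr],
\]
where $\tau_0=\inf\{z\geq x:R_z=0\}$.

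Next, I apply the Markov property at time $x$. If $\Es_a^d$ denotes the law of BESQ$^d$ started at $a$, the right-hand side becomes
\[
\Es^d\Bigl[R_x^{1-\theta}\;\Es_{R_x}^d\bigl[R_{y-x}^{1-\theta}F((R_s)_{s\in[0,y-x]})\mathbf{1}_{R\text{ does not hit }0\text{ on }[0,y-x]}\bigr]\Bigr].
\]
The key ingredient is then the Bessel duality. A direct Itô computation (already used in the proof of Lemma~\ref{L:1D_martingale}) shows that under $\Es_a^d$ with $a>0$, the process $(R_t^{1-\theta})_{t\geq 0}$ is a nonnegative local martingale up to $\tau_0$, and the change of measure
\[
\frac{d\Qb_a}{d\Ps_a^d}\Big|_{\Fc_t}=\frac{R_t^{1-\theta}}{a^{1-\theta}}\mathbf{1}_{\tau_0>t}
\]
turns $R$ into a BESQ$^{d^{*}}$ started at $a$, with $d^{*}=4-d=2\theta^{*}$. (Recall that $d^{*}\geq 2$, so under $\Qb_a$ the process indeed never hits $0$, which is consistent with the indicator above.) Therefore
\[
\Es_{a}^d\bigl[R_{y-x}^{1-\theta}F((R_s)_{s\in[0,y-x]})\mathbf{1}_{\tau_0>y-x}\bigr]
=a^{1-\theta}\Es_a^{d^{*}}\bigl[F((R_s)_{s\in[0,y-x]})\bigr],
\]
and the quantity to compute reduces to $\Es^d\bigl[R_x^{2(1-\theta)}G(R_x)\bigr]$ where $G(a):=\Es_a^{d^{*}}[F((R_s)_{s\in[0,y-x]})]$.

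It remains to identify the marginal of $R_x$ under $\Ps^d$ (started from $0$) with that under $\Ps^{d^{*}}$. Under BESQ$^d$ from $0$, $R_x$ has the Gamma density $\frac{1}{\Gamma(\theta)(2x)^{\theta}}y^{\theta-1}e^{-y/(2x)}$ on $\R^+$, and similarly with $\theta$ replaced by $\theta^{*}$ under BESQ$^{d^{*}}$. Using $y^{2(1-\theta)}y^{\theta-1}=y^{\theta^{*}-1}$ (since $\theta^{*}-1=1-\theta$), one matches the two densities up to the constant $\frac{\Gamma(\theta^{*})}{\Gamma(\theta)}(2x)^{\theta^{*}-\theta}=\frac{\Gamma(\theta^{*})}{\Gamma(\theta)}(2x)^{2(1-\theta)}$. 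Since $G_{\R^+}(x,y)=2(x\wedge y)=2x$ when $x\leq y$, this gives exactly the right-hand side of \eqref{E:two-point1D}, by Markov property applied to the BESQ$^{d^{*}}$ expectation.

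I do not expect any serious obstacle: everything is elementary once the martingale $R_t^{1-\theta}$ is identified. The only mild subtlety is that the change of measure must be applied conditionally on $R_x>0$ (the martingale degenerates at $0$), which is why I first use the Markov property at $x$ and then average over $R_x$, whose law has a smooth density on $(0,\infty)$.
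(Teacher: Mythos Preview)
Your proof is correct and follows essentially the same route as the paper: integrate out the signs to get the indicator $\{R_z>0,\ z\in[x,y]\}$, apply the Markov property at $x$, use the Bessel duality $(R_t/a)^{1-\theta}$ as a Radon--Nikodym derivative to pass from dimension $d$ to $d^{*}$, and finish with the Gamma$(\theta,2x)$ moment computation. The paper's proof is identical in structure, only phrasing the last step slightly more concisely.
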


\begin{proof}
By independence of the signs on different clusters and then by Markov property, we have
\begin{align*}
\Es^d \left[ h_{\theta, \R^+}(x) h_{\theta, \R^+}(y) F( (R_z)_{z \in [x,y]}) \right]
& = \Es^d \left[ R_x^{1-\theta} R_y^{1-\theta} \indic{\forall z \in [x,y], R_z>0} F( (R_z)_{z \in [x,y]}) \right] \\
& = \Es^d \left[ R_x^{1-\theta} \Es^d_{R_x} \left[ R_{y-x}^{1-\theta} \indic{ \forall z \in [0,y-x], R_z >0 } F( (R_z)_{z \in [0,y-x]}) \right] \right].
\end{align*}
As already mentioned, the key observation is that the power $1-\theta$ is exactly the power one needs to take in order to obtain the dual Bessel process of dimension $d^* = 4-d$. More precisely, for any $t>0$ and starting point $r>0$, restricted to the event that $\{ \forall s \in [0,t], R_s >0 \}$, we have
\begin{equation}
\label{E:Bessel_duality}
\frac{\d \P_r^{d^*}}{\d \P_r^{d}} {}_{\big\vert_{\Fc_t}} = \left( \frac{R_t}{r} \right)^{1-d/2}
\end{equation}
where $(\Fc_s)_{s \geq 0}$ is the natural filtration associated to $(R_s)_{s \geq 0}$.
See \cite[Proposition 2.2]{LawlerBessel}.
This implies that the left hand side of \eqref{E:two-point1D} is equal to
\begin{align*}
\Es^d \left[ R_x^{2(1-\theta)} \Es^{d^*}_{R_x} \left[ \indic{ \forall z \in [0,y-x], R_z >0 } F( (R_z)_{z \in [0,y-x]}) \right] \right] = \Es^d \left[ R_x^{2(1-\theta)} \Es_{R_x}^{d^*} \left[ F( (R_z)_{z \in [0,y-x]}) \right] \right]
\end{align*}
since the dual Bessel process does not reach zero a.s. ($d^* \geq 2$).
Under $\Ps^d$, $R_x$ is Gamma$(\theta,2x)$-distributed ($\theta$ and $2x$ are respectively the shape and scale parameters). A computation with Gamma distributions shows that for any measurable function $f: [0,\infty) \to \R$,
\[
\Es^d \left[ R_x^{2(1-\theta)} f(R_x) \right]
= \Es^d \left[ R_x^{2(1-\theta)} \right] \Es^{d^*} \left[ f(R_x) \right]
= (2x)^{2(1-\theta)} \frac{\Gamma(2-\theta)}{\Gamma(\theta)} \Es^{d^*} \left[ f(R_x) \right].
\]
Wrapping things up and by Markov property, we have obtained that the left hand side of \eqref{E:two-point1D} is equal to
\begin{align*}
(2x)^{2(1-\theta)} \frac{\Gamma(2-\theta)}{\Gamma(\theta)} \Es^{d^*} \left[ \Es_{R_x}^{d^*} \left[ F( (R_z)_{z \in [0,y-x]}) \right] \right] 
= (2x)^{2(1-\theta)} \frac{\Gamma(2-\theta)}{\Gamma(\theta)} \Es^{d^*} \left[ F( (R_z)_{z \in [x,y]}) \right].
\end{align*}
Lemma \ref{L:2point1D} then follows from the fact that $G_{\R^+}(x,y) = 2x$.
\end{proof}

When $\theta \in (0,1)$, Lemma \ref{L:2point1D} suggests to define the ``odd'' powers (i.e. powers that are not integer powers of the local time) of $h_{\theta,\R^+}$ by
\begin{equation}
:h_{\theta,\R^+}(x) L_x(\Lc_{\R^+}^\theta)^n:
\quad = \quad (2x)^n h_{\theta,\R^+}(x) \mathbf{L}_n^{(\theta^*-1)} \Big( \frac{L_x(\Lc_{\R^+}^\theta)}{2x} \Big), \quad \quad x \geq 0, n \geq 1.
\end{equation}
When $\theta = 1/2$, this definition agrees with the definition \eqref{E:def_Wick_GFF} of odd Wick powers of 1D Brownian motion. This follows from the identity \eqref{E:Laguerre_Hermite} between odd Hermite polynomials and the Laguerre polynomials $\mathbf{L}_n^{(\theta^*-1)}$ where $\theta^* = 3/2$.
In the following lemma, we show that these renormalised powers define martingales. In view of Lemma \ref{L:1D_martingale}, we believe that this normalisation has the potential to work in 2D as well. See Section \ref{S:expansion_signed} for much more about the two-dimensional case.

\begin{lemma}\label{L:1D_martingale2}
Let $\theta \in (0,1)$. The processes
\[
(h_{\theta,\R^+}(x))_{x \geq 0}, \quad \quad  (:h_{\theta,\R^+}(x) L_x(\Lc_{\R^+}^\theta)^n:)_{x \geq 0}, \quad \quad, n \geq 1,
\]
are martingales.
\end{lemma}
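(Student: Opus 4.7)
\emph{Strategy.} The plan is to combine two ingredients: (i) the independence of the signs $\sigma_e$ across distinct excursions, and (ii) the Bessel duality \eqref{E:Bessel_duality}, whose Radon--Nikodym exponent $1-d/2 = 1-\theta$ matches \emph{exactly} the power appearing in the definition of $h_{\theta,\R^+}$. I expect this matching to be precisely what singles out $1-\theta$ and plays the role of the forthcoming 2D phenomenon from Section~\ref{S:expansion_signed}. I will work with the enlarged filtration $\Gc_x := \Fc_x \vee \sigma(\sigma_{e_s} : s \leq x)$, where $\Fc_x$ is the natural filtration of $R$; both processes in the statement are $(\Gc_x)$-adapted. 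For $0 \leq x < y$ and a measurable $f$, I split $\E[\sigma_{e_y} f(R_y) \mid \Gc_x]$ according to whether $e_y = e_x$ (equivalently, whether $R$ stays strictly positive on $[x,y]$). On $\{e_y \neq e_x\}$, the sign $\sigma_{e_y}$ is a fresh i.i.d.\ coin, independent of $\Gc_x \vee \sigma(R)$ and centred, so this contribution vanishes; on $\{e_y = e_x\}$, the sign equals $\sigma_{e_x}$ and can be pulled out. Hence, on $\{R_x > 0\}$,
\begin{equation*}
\E[\sigma_{e_y} f(R_y) \mid \Gc_x] = \sigma_{e_x} \, \E^d_{R_x}\!\left[ f(R_{y-x}) \mathbf{1}_{\{R_s > 0,\, \forall s \in [0, y-x]\}} \right].
\end{equation*}

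\emph{The martingale identities.} For $n=0$, I would take $f(r)=r^{1-\theta}$. The duality \eqref{E:Bessel_duality} then absorbs this factor and converts the expectation under $\P^d$ into one under $\P^{d^*}$, yielding $R_x^{1-\theta}\, \E^{d^*}_{R_x}[1] = R_x^{1-\theta}$, so $\E[h_{\theta,\R^+}(y) \mid \Gc_x] = h_{\theta,\R^+}(x)$ on $\{R_x > 0\}$. For $n \geq 1$, I would apply the same scheme with $f(r) = r^{1-\theta}(2y)^n \mathbf{L}_n^{(\theta^*-1)}(r/(2y))$; the Bessel duality again eats the $R^{1-\theta}$ factor and produces
\begin{equation*}
R_x^{1-\theta}\, \E^{d^*}_{R_x}\!\left[(2y)^n \mathbf{L}_n^{(\theta^*-1)}(R_{y-x}/(2y))\right].
\end{equation*}
At this point, the inner expectation should equal $(2x)^n \mathbf{L}_n^{(\theta^*-1)}(R_x/(2x))$ by Lemma~\ref{L:1D_martingale} applied with intensity $\theta^*$: that lemma's It\^o computation is local and translation-invariant in time, so it remains valid for a $d^*$-dimensional squared Bessel process started from $R_x > 0$ at time $x$ rather than from $0$ at time $0$ (note that $d^* \in (2,4)$, so there is no reflection to worry about). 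Multiplying by $\sigma_{e_x}$ reproduces $:\!h_{\theta,\R^+}(x) L_x^n\!:$ on $\{R_x > 0\}$.

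\emph{Main subtlety.} The only step I expect to require a separate argument is the boundary case $\{R_x = 0\}$, on which both $h_{\theta,\R^+}(x)$ and the Wick product vanish and for which the decomposition above does not directly apply. To handle it, I would argue as follows: conditionally on $\Fc_x \cap \{R_x = 0\}$, the future $(R_s)_{s \geq x}$ is a fresh reflected $2\theta$-dimensional squared Bessel process independent of $\Fc_x$, and the signs of all excursions beginning at or after time $x$ form an i.i.d.\ $\pm 1$ family independent of everything so far. Simultaneously flipping all of these future signs leaves the conditional law of $(R_s)_{s \geq x}$ unchanged but negates both $h_{\theta,\R^+}(y)$ and $:\!h_{\theta,\R^+}(y) L_y^n\!:$, forcing their conditional expectations given $\Gc_x$ on $\{R_x=0\}$ to vanish, in agreement with the corresponding values at $x$. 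This is the only place where the ``fresh start'' structure at zeros of $R$ is invoked; the rest of the argument rests entirely on the algebraic matching between the exponent $1-\theta$ and the Bessel Radon--Nikodym derivative, together with the ODE identity that the polynomials $\mathbf{L}_n^{(\theta^*-1)}$ satisfy.
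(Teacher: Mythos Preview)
Your proposal is correct and follows essentially the same approach as the paper: reduce to the same-excursion event via independence of the signs, apply the Bessel duality \eqref{E:Bessel_duality} to absorb the $R^{1-\theta}$ factor and pass from $\P^d$ to $\P^{d^*}$, and then invoke Lemma~\ref{L:1D_martingale} at intensity $\theta^*$. You are in fact slightly more careful than the paper on two points: you work with the enlarged filtration $\Gc_x$ (the paper writes $\Fc_s$, the natural filtration of $R$, even though $\sigma_{e_s}$ is not $\Fc_s$-measurable), and you explicitly dispose of the boundary case $\{R_x=0\}$ by the sign-flip symmetry argument, which the paper leaves implicit.
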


\begin{proof}
Let $n \geq 0$ (the case $n=0$ corresponding to $h_{\theta,\R^+}$). As before, denote by $(\Fc_t)_{t \geq 0}$ the natural filtration associated to $R$. 
By independence of the spins on different excursions and then by the relation \eqref{E:Bessel_duality} between Bessel processes with dimensions $d=2\theta$ and $d^* = 4-d=2\theta^*$, we obtain that for any $0 \leq s < t$,
\begin{align*}
\Es^d \left[ :h_{\theta,\R^+}(t) L_t(\Lc_{\R^+}^\theta)^n: \Big\vert \Fc_s \right]
& = \sigma_{e_s} \Es^d \left[ R_t^{1-\theta} \indic{\forall u \in [s,t], R_u >0} (2t)^n \mathbf{L}_n^{(\theta^*-1)} \Big( \frac{L_t(\Lc_{\R^+}^\theta)}{2t} \Big) \Big\vert \Fc_s \right] \\
& = \sigma_{e_s} R_s^{1-\theta} \Es^{d^*} \left[ (2t)^n \mathbf{L}_n^{(\theta^*-1)} \Big( \frac{L_t(\Lc_{\R^+}^\theta)}{2t} \Big) \Big\vert \Fc_s \right].
\end{align*}
If $n=0$, this immediately gives the desired martingale property. If $n\geq 1$, we use Lemma \ref{L:1D_martingale} to conclude that
\[
\Expect{ :h_{\theta,\R^+}(t) L_t(\Lc_{\R^+}^\theta)^n: \Big\vert \Fc_s }
= \sigma_{e_s} R_s^{1-\theta} (2s)^n \mathbf{L}_n^{(\theta^*-1)} \Big( \frac{L_s(\Lc_{\R^+}^\theta)}{2s} \Big)
= ~:\!h_{\theta,\R^+}(s) L_s(\Lc_{\R^+}^\theta)^n\!:.
\]
The last equality follows by definition. This concludes the proof.
\end{proof}

\subsection{Wick powers involving \texorpdfstring{$h_\theta$}{h theta} and expansion of \texorpdfstring{$\Mc_\gamma^+$}{Mgamma+}}
\label{S:expansion_signed}

In this section we elaborate on Conjecture \ref{Conj:intro} by giving a much more precise conjecture (Conjecture \ref{Conjecture1} below) that would allow us to define \emph{all} the Wick powers of $h_\theta$. We will then show that this conjecture would have very strong implications concerning the asymptotic behaviour of the crossing probabilities studied in \cite{JLQ23a} and would also provide the expansion of the signed measure $\Mc_\gamma^+$; see \eqref{E:conj_crossing_probability} and \eqref{E:conj_expansion}.

Let us go back to the discrete setting first. Recall the definition \eqref{E:def_htheta_discrete} of $h_{\theta,N}$, the (conjecturally) analogue of $h_\theta$ in the discrete. We work on the cable graph so that $h_{\theta,N}$ is simply a discrete GFF.
The following lemma shows that the duality between $\theta$ and $\theta^* = 2-\theta$ that holds in the one-dimensional setting (Lemma \ref{L:2point1D}) also holds \emph{locally} in the two-dimensional setting when $\theta=1/2$; see also Remark \ref{rmk:isomorphism1/2}.

\begin{lemma}\label{L:duality_criticality}
Assume that $\theta=1/2$ and let $F : [0,\infty)^2 \to \R$ be a bounded measurable function. For all $x, y \in D_N$,
\begin{equation}
\label{E:duality_critical}
\Expect{ h_{\theta,N}(x) h_{\theta,N}(y) F(\ell_x(\Lc_{D_N}^\theta), \ell_y(\Lc_{D_N}^\theta)) }
= G_{D_N}(x,y) \Expect{ F(\ell_x(\Lc_{D_N}^{\theta^*}), \ell_y(\Lc_{D_N}^{\theta^*})) }
\end{equation}
where $\theta^* = 2-\theta = 3/2$.
\end{lemma}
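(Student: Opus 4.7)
The key input is Lupu's isomorphism on the cable graph \cite{LupuIsomorphism}, already used in Theorem~\ref{T:identification_field1/2}: the signed occupation field $(\sigma_{\Cc_x}\sqrt{2\ell_x(\Lc^{1/2}_{D_N})})_x$ of the critical loop soup is equal in joint law to a cable-graph Gaussian free field $\tilde\varphi$ with covariance $G_{D_N}$. Under this coupling one has the pointwise identities $\ell_x=\tilde\varphi(x)^2/2$ and $h_{1/2,N}(x)=c\,\tilde\varphi(x)$, where $c$ is a constant read off from the prefactor in \eqref{E:def_htheta_discrete}. After this substitution the identity to be proved reduces to a two-point algebraic identity about Gaussian expectations.

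By the monotone class theorem it suffices to check the identity for $F(a,b)=e^{-\lambda_1 a-\lambda_2 b}$ with $\lambda_1,\lambda_2\geq 0$. Write $\Lambda=\mathrm{diag}(\lambda_1,\lambda_2)$ and $G_2=(G_{D_N}(u,v))_{u,v\in\{x,y\}}$. The right-hand side is then the bivariate Laplace transform at $(x,y)$ of the local times of a $\theta^*=3/2$ loop soup, which by the classical Vere-Jones / Le Jan determinantal formula equals $G_{D_N}(x,y)\det(I+\Lambda G_2)^{-3/2}$. On the left-hand side, after the Lupu substitution, one is left with the Gaussian integral
\[
c^2\,\E\!\left[\tilde\varphi(x)\tilde\varphi(y)\exp\!\left(-\tfrac{1}{2}\mathbf{v}^T\Lambda\mathbf{v}\right)\right],\qquad \mathbf{v}=(\tilde\varphi(x),\tilde\varphi(y))^T\sim\mathcal{N}(0,G_2).
\]
Completing the square and using the $2\times 2$ cofactor identity $[(G_2^{-1}+\Lambda)^{-1}]_{xy}=G_2(I+\Lambda G_2)^{-1}\big|_{xy}=G_{D_N}(x,y)/\det(I+\Lambda G_2)$ evaluates this to $c^2\,G_{D_N}(x,y)\det(I+\Lambda G_2)^{-3/2}$, and matching with the right-hand side fixes $c$ and gives the lemma.

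The same computation can be recast more structurally via the BFS--Dynkin isomorphism \eqref{E:BFS_Dynkin} applied to the Gaussian integral above: it expresses the integral as $G_{D_N}(x,y)$ times the Laplace transform (in $\Lambda$) of $\tilde\varphi^2/2+L_\wp$ at $(x,y)$, where $\wp$ is an independent Brownian bridge from $x$ to $y$ sampled from $\mu^{x,y}_{D_N}/G_{D_N}(x,y)$. A Feynman--Kac / resolvent computation identifies the Laplace transform of $L_\wp$ at $(x,y)$ with $\det(I+\Lambda G_2)^{-1}$, so multiplying by the Gaussian factor $\det(I+\Lambda G_2)^{-1/2}$ produces $\det(I+\Lambda G_2)^{-3/2}$, the Laplace transform of the $\theta^*=3/2$ local times. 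This is the source of the duality $\theta^*=2-\theta$: at $\theta=1/2$, the exponents add as $1/2+1=3/2$, corresponding to the superposition of an independent $\theta=1/2$ loop soup and a Brownian-bridge contribution. The main obstacle is purely book-keeping of normalisations (the $2\pi$ in the GFF convention $h=\sqrt{2\pi}\varphi$, the prefactor $\Gamma(\theta)/(2^{1-\theta}\Gamma(2-\theta))$ in \eqref{E:def_htheta_discrete}, and the choice $G_{D_N}\sim\tfrac{1}{2\pi}\log N$), so that the coupling constant $c$ comes out exactly as in the statement; no probabilistic difficulty arises beyond the $2\times 2$ Gaussian integral and Lupu's pointwise identity.
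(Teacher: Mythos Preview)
Your approach is correct and genuinely different from the paper's. The paper proves the identity by expanding $F$ in the Laguerre basis $\mathbf{L}_n^{(\theta^*-1)}$, then uses the algebraic identity \eqref{E:Laguerre_Hermite} relating odd Hermite polynomials to $\mathbf{L}_n^{(1/2)}$ to rewrite the left-hand side as a product of odd Wick powers of the GFF, whose two-point function is known. You instead test against exponentials $F(a,b)=e^{-\lambda_1 a-\lambda_2 b}$ and compute both sides via the determinantal Laplace transform of loop-soup local times and a $2\times2$ Gaussian integral. Interestingly, the paper itself notes at the outset that ``one could use the BFS--Dynkin isomorphism \eqref{E:BFS_Dynkin} to show this lemma'', which is precisely your second formulation; they choose the Laguerre route because it motivates the definition \eqref{E:Wick_def_conjecture} of $:h_{\theta,N}\ell^n:$ for general $\theta$ via the Hermite--Laguerre link, which is the whole point of the subsequent subsection. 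Your argument is shorter if one only wants the lemma, but it does not surface that polynomial identity.

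One quibble: the phrase ``matching with the right-hand side fixes $c$'' is misleading. The constant $c$ is not a free parameter---it is determined by the prefactor $\Gamma(\theta)/(2^{1-\theta}\Gamma(2-\theta))$ and the $2\pi$ in \eqref{E:def_htheta_discrete}, together with Lupu's identification $\sigma_x\sqrt{2\ell_x}=\varphi_N$ (covariance $G_{D_N}$). You must \emph{check} that $c^2$ equals the constant appearing in the lemma, not solve for it. As you acknowledge, this is pure book-keeping, but the sentence should be rephrased.
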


The key ingredient in the proof of Lemma \ref{L:duality_criticality} is that the odd Hermite polynomials are related to the Laguerre polynomials $\mathbf{L}_n^{(\theta^*-1)}$ with $\theta^* = 3/2$. See \eqref{E:Laguerre_Hermite} below.

\begin{proof}
One could use the BFS-Dynkin isomorphism \eqref{E:BFS_Dynkin} to show this lemma. We will use another approach here that allows us to highlight some extra features of the Wick powers.
By density-type arguments, it is enough to prove \eqref{E:duality_critical} when $F(s,t) = P(s/G_{D_N}(x,x)) \times Q(t/G_{D_N}(y,y))$ is a product of two polynomials $P(\cdot/G_{D_N}(x,x))$ and $Q(\cdot/G_{D_N}(y,y))$. We can moreover assume that these polynomials belong to the basis $\{ \mathbf{L}_n^{(\theta^*-1)}, n \geq 0 \}$, i.e. that $P = \mathbf{L}_n^{(\theta^*-1)}$ and $Q = \mathbf{L}_m^{(\theta^*-1)}$ for some $n, m \geq 0$. The right hand side of \eqref{E:duality_critical} can then be computed directly thanks to the study of the renormalised local time of the loop soup with intensity $\theta^*$ (see \cite{LeJan2011Loops})
\[
\E \Big[ \mathbf{L}_n^{(\theta^*-1)} \Big( \frac{\ell_x(\Lc_{D_N}^{\theta^*})}{G_{D_N}(x,x)} \Big) \mathbf{L}_m^{(\theta^*-1)} \Big( \frac{\ell_y(\Lc_{D_N}^{\theta^*})}{G_{D_N}(y,y)} \Big) \Big]
= \indic{n=m} \frac{\Gamma(\theta^*+n)n!}{\Gamma(\theta^*)} \frac{G_{D_N}(x,y)^{2n}}{G_{D_N}(x,x)G_{D_N}(y,y)}.
\]
Concerning the left hand side term in \eqref{E:duality_critical}, we use the fact that $\ell_x(\Lc_{D_N}^{\theta}) = h_{\theta,N}(x)^2/2$ and the following identity between odd Hermite polynomials and Laguerre polynomials \cite[(22.5.41)]{special}
\begin{equation}
\label{E:Laguerre_Hermite}
\frac{1}{2^n} G^{\frac{2n+1}{2}} H_{2n+1}\left( \frac{h}{\sqrt{G}} \right)
= h G^n \mathbf{L}_n^{(\theta^*-1)} \left( \frac{h^2}{2G} \right), \quad \quad n\geq 0, h \in \R, G>0.
\end{equation}
Combining these two ingredients gives
\begin{align*}
& \E \Big[ h_{\theta,N}(x) h_{\theta,N}(y) \mathbf{L}_n^{(\theta^*-1)} \Big( \frac{\ell_x(\Lc_{D_N}^{\theta})}{G_{D_N}(x,x)} \Big) \mathbf{L}_m^{(\theta^*-1)} \Big( \frac{\ell_y(\Lc_{D_N}^{\theta})}{G_{D_N}(y,y)} \Big) \Big] \\
& = \frac{1}{2^{n+m}} G_{D_N}(x,x)^{1/2} G_{D_N}(y,y)^{1/2} \E \Big[ H_{2n+1} \Big( \frac{h_{\theta,N}(x)}{G_{D_N}(x,x)^{1/2}} \Big) H_{2m+1} \Big( \frac{h_{\theta,N}(y)}{G_{D_N}(y,y)^{1/2}} \Big) \Big] \\
& = \indic{n =m} \frac{(2n+1)!}{2^{2n}} \frac{G_{D_N}(x,y)^{2n+1}}{G_{D_N}(x,x)^n G_{D_N}(y,y)^n}
\end{align*}
where the last equality comes from the study of the Wick powers of the GFF (see \cite{LeJan2011Loops}).
Recalling that $\theta^* = 3/2$, one can check that $\frac{\Gamma(\theta^*+n)n!}{\Gamma(\theta^*)} = \frac{(2n+1)!}{2^{2n}}$ and we obtain that both sides of \eqref{E:duality_critical} are equal when $F(s,t) = \mathbf{L}_n^{(\theta^*-1)}(s / G_{D_N}(x,x)) \mathbf{L}_m^{(\theta^*-1)}(t / G_{D_N}(y,y))$. This concludes the proof.
\end{proof}

Let $\theta \in (0,1/2]$.
Lemmas \ref{L:2point1D}, \ref{L:1D_martingale2} and \ref{L:duality_criticality} suggest to define the Wick renormalisation of $h_{\theta,N}(x) \ell_x^n$ by
\begin{equation}
\label{E:Wick_def_conjecture}
:h_{\theta,N}(x) \ell_x^n: \quad = \quad h_{\theta,N}(x) G(x,x)^n \mathbf{L}_n^{(\theta^*-1)}(\ell_x / G(x,x))
\end{equation}
where $\theta^* = 2-\theta$. The relation \eqref{E:Laguerre_Hermite} between odd Hermite polynomials and Laguerre polynomials $\mathbf{L}_n^{(1/2)}$ guarantees that this renormalisation of $h_{\theta,N}(x) \ell_x^n$ is consistent with the usual renormalisation when $\theta=1/2$:
\[
:h_{\theta=1/2,N}(x) \ell_x^n:
\quad = \quad \frac{1}{ 2^n} G(x,x)^{\frac{2n+1}{2}} H_{2n+1} \left( \frac{h_{\theta=1/2,N}(x)}{\sqrt{G(x,x)}} \right).
\]
We make the following conjecture:

\begin{conjecture}\label{Conjecture1}
Let $\theta \in (0,1/2)$.
The Wick powers $:h_{\theta,N}(x) \ell_x^n:$ defined in \eqref{E:Wick_def_conjecture} have nondegenerate scaling limits. More precisely, for any $n_0 \geq 1$,
\[
\left( h_{\theta,N}, (:h_{\theta,N} \ell^n:)_{n=1 \dots n_0}, \Lc_{D_N}^\theta \right)
\xrightarrow[N \to \infty]{\mathrm{(d)}} \left(h_\theta^{\mathrm{conj}}, (:h_\theta^{\mathrm{conj}} L^n:)_{n =1 \dots n_0}, \Lc_D^\theta \right)
\]
where the right hand side term is defined by this limit. The convergence holds in the space $(H^{-\eps})^{n_0} \times \mathfrak{L}$ for any $\eps >0$. Moreover, $h_\theta^{\mathrm{conj}}$ and  $(:h_\theta^{\mathrm{conj}} L^n:)_{n =1 \dots n_0}$ are measurable w.r.t. $\Lc_D^\theta$.
\end{conjecture}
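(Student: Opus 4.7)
The plan is to combine moment computations at the discrete level with the Doob-transform description of $\mathbb{P}_{x \leftrightarrow y}$ in Theorem \ref{T:cluster_xy_doob}, using the duality $\theta \leftrightarrow \theta^{*} = 2-\theta$ (supported by the 1D analogue in Lemma \ref{L:2point1D}) as the structural guide. For smooth test functions $f, g$ and integers $n, m \geq 0$, one first writes
\[
\mathbb{E}\bigl[\langle :h_{\theta,N}\ell^{n}:, f\rangle\, \langle :h_{\theta,N}\ell^{m}:, g\rangle\bigr]
\]
as a double integral whose integrand factors as $\sigma_{x}\sigma_{y} = \mathbf{1}_{x \leftrightarrow y}$ (by the independence of signs on distinct clusters) times products of the local-time polynomials $\mathbf{L}_{n}^{(\theta^{*}-1)}(\ell_{x}/G)$ and $\mathbf{L}_{m}^{(\theta^{*}-1)}(\ell_{y}/G)$ and the normalising factors from \eqref{E:def_htheta_discrete}. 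Higher moments decompose similarly via the $\sigma_{\Cc}$-independence into pairings weighted by multi-point connection probabilities.

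The key identity to establish is a discrete two-point isomorphism generalising the BFS--Dynkin identity recalled in Remark \ref{rmk:isomorphism1/2}: for distinct $x, y \in D_{N}$ and bounded $F$,
\[
\mathbb{E}\bigl[(2\pi\ell_{x})^{1-\theta}(2\pi\ell_{y})^{1-\theta}\, F(\ell_{x}, \ell_{y})\, \mathbf{1}_{x \leftrightarrow y}\bigr]
= c(\theta)\, G_{N}(x,y)^{2(1-\theta)}\, \mathbb{E}^{\theta^{*}}_{x \leftrightarrow y}\!\bigl[F(\ell_{x}, \ell_{y})\bigr],
\]
where the right-hand measure corresponds to a loop soup of effective intensity $\theta^{*} = 2-\theta$ at $x$ and $y$, together with an additional connecting path. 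The one-dimensional prototype of such an identity is Lemma \ref{L:2point1D}, where the shift $\theta \mapsto \theta^{*}$ arises from the Bessel duality \eqref{E:Bessel_duality}. Inserted into the second moment, this identity turns the Wick polynomials $\mathbf{L}_{n}^{(\theta^{*}-1)}$ and $\mathbf{L}_{m}^{(\theta^{*}-1)}$ into polynomials orthogonal under Gamma$(\theta^{*})$ marginals of $\ell_{x}, \ell_{y}$, reducing the computation to a two-point function of order $G_{N}(x,y)^{2(1-\theta)+2n}\delta_{n,m}$.

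Given the two-point identity, tightness of $\langle :h_{\theta,N}\ell^{n}:, f\rangle$ in $H^{-\varepsilon}$ follows from the $L^{2}$ bound above: the resulting two-point function blows up like $|\log|x-y||^{2(1-\theta)+2n+o(1)}$ on the diagonal, which, after Wick subtraction of diagonal contributions at scales below $1/N$, is integrable against smooth test functions by the Sobolev embedding argument of \eqref{E:Sobolev2}. Joint convergence with $\mathcal{L}_{D_{N}}^{\theta} \to \mathcal{L}_{D}^{\theta}$ is obtained by combining the discrete analogue of Theorem \ref{T:cluster2point} (convergence of $\mathbb{P}_{x \leftrightarrow y, N}$ to $\mathbb{P}_{x \leftrightarrow y}$) with Theorem \ref{T:one_cluster} to pass cluster-by-cluster to the continuum. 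Higher moments are handled by the same mechanism, inductively reducing $2k$-point functions to sums over pairings of the arguments, each pairing producing its own copy of the two-point Doob transform.

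The main obstacle I anticipate is the two-point isomorphism itself. At criticality $\theta=1/2$, it reduces to Lemma \ref{L:duality_criticality}, whose proof hinges on the classical relation \eqref{E:Laguerre_Hermite} between odd Hermite polynomials and the Laguerre polynomials $\mathbf{L}_{n}^{(1/2)}$, and on the Gaussian backbone provided by the metric-graph GFF of \cite{LupuIsomorphism}. For $\theta \neq 1/2$ no Gaussian structure is available, and the identity would have to be proved either by a direct Poisson--Dirichlet expansion of the cluster measure as in \cite{ABJL21}, or, more attractively, through a metric-graph-type decomposition generalising Lupu's isomorphism, splitting the loop soup into ``bridging'' excursions between $x$ and $y$ plus a residual soup whose local-time marginals at $x$ and $y$ have the Gamma$(\theta^{*})$ distribution. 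Establishing such a cluster-level duality between intensities $\theta$ and $2-\theta$ is, to our view, the central open problem underlying Conjecture \ref{Conjecture1}.
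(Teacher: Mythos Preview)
This statement is labelled a \emph{Conjecture} in the paper and is not proved there; the paper explicitly says ``We plan to prove these conjectures in a future work.'' So there is no proof in the paper to compare your proposal against.

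That said, your outline is well-aligned with the paper's own motivating discussion. The paper arrives at Conjecture~\ref{Conjecture1} precisely by exhibiting the 1D duality (Lemma~\ref{L:2point1D}, Lemma~\ref{L:1D_martingale2}) and the critical 2D case $\theta=1/2$ (Lemma~\ref{L:duality_criticality}) as evidence, and then defines the Wick powers \eqref{E:Wick_def_conjecture} by analogy. Your plan---compute second moments via a two-point isomorphism that shifts the effective intensity from $\theta$ to $\theta^*=2-\theta$, then use orthogonality of $\mathbf{L}_n^{(\theta^*-1)}$ under the resulting Gamma$(\theta^*)$ marginals---is exactly the mechanism the paper is hinting at, and you correctly identify the missing ingredient: a subcritical analogue of the BFS--Dynkin identity (i.e.\ a 2D version of Lemma~\ref{L:2point1D}) that would justify the $\theta\leftrightarrow\theta^*$ duality at the level of discrete two-point functions. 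The paper does not supply this, and neither do you; your proposal is an honest research programme rather than a proof, and your final paragraph accurately names the central open problem.

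One technical point: even granting the two-point identity you postulate, the passage from second-moment bounds to convergence in distribution (rather than mere tightness) would still require identifying all subsequential limits, which in the paper's other proofs (e.g.\ Theorem~\ref{T:one_cluster}) is done via a stochastic-domination-plus-matching-expectations argument. Your appeal to ``higher moments via pairings'' is plausible but would need the full $2k$-point isomorphism, not just the two-point one, and for $\theta\neq 1/2$ there is no Wick/Isserlis theorem to fall back on---so this step is also genuinely open.
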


Let us stress that this conjecture is known when $\theta = 1/2$; see Theorem \ref{T:Wick_convergence}.
This conjecture is particularly appealing to us since:

\begin{theorem}\label{T:expansion_signed_discrete}
Let $\theta \in (0,1/2]$ and $\gamma \in (0,\sqrt{2})$. Define
\begin{equation}
\label{E:def_expansion_discrete_signed}
m_{\gamma,N}(x) = 
\sum_{k \geq 0} \frac{\gamma^{2k}}{2^k} \frac{\Gamma(\theta)}{k! \Gamma(k+\theta)} (2\pi)^k:\ell_x^k:
+ \gamma^{2(1-\theta)} \sum_{k \geq 0} \frac{\gamma^{2k}}{2^k} \frac{\Gamma(\theta^*)}{k! \Gamma(k+\theta^*)} (2\pi)^k :h_{\theta,N}(x) \ell_x^k:
\end{equation}
where the Wick powers $:h_{\theta,N}(x) \ell_x^k:$ are defined in \eqref{E:Wick_def_conjecture}.
For any bounded smooth test function $f$,
\begin{equation}
\frac{1}{N^2} \sum_{x \in D_N} f(x) m_{\gamma,N}(x) \xrightarrow[N \to \infty]{\mathrm{(d)}} \scalar{\Mc_\gamma^+,f}
\end{equation}
jointly with $\Lc_{D_N}^\theta \to \Lc_D^\theta$.
\end{theorem}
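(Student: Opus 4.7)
The plan is to convert the two infinite sums defining $m_{\gamma,N}(x)$ into a single closed form via the Laguerre-Bessel identity of Lemma~\ref{L:Laguerre_Bessel}, specialized as in \eqref{E:expansion_unsigned_discrete} to $t = 2\pi G_N(x,x)$ and $u = \sqrt{4\pi\ell_x}$. The first sum then rewrites as $\Gamma(\theta)(\gamma^2\pi\ell_x)^{(1-\theta)/2} e^{-\pi\gamma^2 G_N(x,x)} I_{\theta-1}(\gamma\sqrt{4\pi\ell_x})$, exactly the identity underlying Theorem~\ref{T:expansion_unsigned}. For the second sum, I would apply the same identity with $\theta$ replaced by its dual $\theta^* = 2-\theta$ (whose role was already crucial in Section~\ref{S:1D}); using the explicit form $h_{\theta,N}(x) = \frac{\Gamma(\theta)}{2^{1-\theta}\Gamma(\theta^*)}\sigma_{\Cc_x}(2\pi\ell_x)^{1-\theta}$ from \eqref{E:def_htheta_discrete} and the definition \eqref{E:Wick_def_conjecture} of the $\theta^*$-Wick powers, the prefactor $\gamma^{2(1-\theta)} h_{\theta,N}(x)$ combines exactly with the $\theta^*$-analog of the Laguerre-Bessel prefactor to yield $\Gamma(\theta)\sigma_{\Cc_x}(\gamma^2\pi\ell_x)^{(1-\theta)/2} e^{-\pi\gamma^2 G_N(x,x)} I_{\theta^*-1}(\gamma\sqrt{4\pi\ell_x})$. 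Adding the two pieces produces the remarkably symmetric identity
\[
m_{\gamma,N}(x) = \Gamma(\theta)(\gamma^2\pi\ell_x)^{(1-\theta)/2} e^{-\pi\gamma^2 G_N(x,x)} \bigl[ I_{\theta-1}(\gamma\sqrt{4\pi\ell_x}) + \sigma_{\Cc_x} I_{\theta^*-1}(\gamma\sqrt{4\pi\ell_x}) \bigr].
\]
A strong consistency check: at $\theta = 1/2$ the bracket simplifies to $\sqrt{2/(\pi u)}\, e^{\sigma_{\Cc_x} u}$ with $u = \gamma\sqrt{4\pi\ell_x}$, and the whole expression reduces to the Wick exponential $e^{\gamma h_{1/2,N}(x) - \pi\gamma^2 G_N(x,x)}$ of the cable-graph GFF, whose scaling limit is the Liouville measure $\Mc_\gamma^+ = {:}e^{\gamma h}{:}\,\d x$ in agreement with Theorem~\ref{T:identification_field1/2}.

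For general $\theta \in (0,1/2]$, I would split $m_{\gamma,N}(x)$ according to the sign $\sigma_{\Cc_x}$ and exploit the classical identity $I_{\theta-1}(u) - I_{\theta^*-1}(u) = \tfrac{2\sin(\pi\theta)}{\pi} K_{1-\theta}(u)$, together with the large-$u$ asymptotics $I_\nu(u) \sim e^u/\sqrt{2\pi u}$ and $K_\nu(u) \sim \sqrt{\pi/(2u)}\,e^{-u}$. On positive clusters the two Bessel terms add to approximately $2 I_{\theta-1}(u)$, while on negative clusters they nearly cancel and leave only the exponentially subdominant $K_{1-\theta}$ contribution. In the thick-point regime $2\pi\ell_x \sim a(\log N)^2$ that drives the measure $\Mc_\gamma$ (with $a = \gamma^2/2$), one has $u \sim \gamma^2 \log N \to \infty$, so the negative-cluster contribution is of order $N^{-2\gamma^2(1+o(1))}$ relative to the positive one and hence vanishes after integrating against $f$.

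The positive-cluster part thus satisfies $m_{\gamma,N}(x)\indic{\sigma_{\Cc_x}=+1} = 2\indic{\sigma_{\Cc_x}=+1}\cdot (\text{first sum})(x)$ up to an exponentially negligible relative error, uniformly on thick points. By Theorem~\ref{T:expansion_unsigned}, $\frac{1}{N^2}\sum_x f(x)\cdot(\text{first sum})(x) \to \frac{1}{2}\scalar{\Mc_\gamma, f}$ in $L^2$; the stronger cluster-by-cluster convergence given by Theorem~\ref{T:one_cluster} and Corollary~\ref{C:discrete_ma+} then lets me insert the indicator $\indic{\sigma_{\Cc_x}=+1}$ and conclude $\frac{1}{N^2}\sum_x f(x) m_{\gamma,N}(x) \to \scalar{\Mc_\gamma^+, f}$ jointly with $\Lc_{D_N}^\theta \to \Lc_D^\theta$. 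The main obstacle is to control uniformly in $N$ both the Bessel-difference correction on negative clusters and the contribution from non-thick points where the large-$u$ asymptotics fail; these issues are handled by the same $L^2$-moment estimates on renormalized Wick powers that underlie Theorem~\ref{T:expansion_unsigned}, available throughout the subcritical regime $\gamma < \sqrt{2}$.
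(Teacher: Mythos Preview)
Your proposal is correct and follows essentially the same approach as the paper: both apply the Laguerre--Bessel identity (Lemma~\ref{L:Laguerre_Bessel}) twice, once with $\theta$ and once with $\theta^*$, to collapse $m_{\gamma,N}(x)$ into the closed form $\Gamma(\theta)(\gamma^2\pi\ell_x)^{(1-\theta)/2} e^{-\pi\gamma^2 G_N(x,x)}[I_{\theta-1}+\sigma_x I_{\theta^*-1}]$, then argue that the negative-cluster contribution is negligible (leading asymptotics cancel) while the positive-cluster part matches the discrete approximation of $\Mc_\gamma^+$ via Corollary~\ref{C:discrete_ma+}. Your explicit use of the identity $I_{\theta-1}-I_{1-\theta}=\tfrac{2\sin(\pi\theta)}{\pi}K_{1-\theta}$ to quantify the cancellation is a nice addition that the paper leaves implicit, but otherwise the arguments coincide.
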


Let us emphasise that Theorem \ref{T:expansion_signed_discrete} is not conditionally on Conjecture \ref{Conjecture1}. This result is another strong indication that Conjecture \ref{Conjecture1} should hold.

\begin{proof}[Proof of Theorem \ref{T:expansion_signed_discrete}]
Let $m_{\gamma,N}^\mathrm{even}(x)$ and $m_{\gamma,N}^\mathrm{odd}(x)$ be respectively the first and second term on the right hand side of \eqref{E:def_expansion_discrete_signed}.
Recall that, as a consequence of Lemma \ref{L:Laguerre_Bessel}, we obtained that (see \eqref{E:expansion_unsigned_discrete})
\begin{align*}
m_{\gamma,N}^\mathrm{even}(x)
= 
\Gamma(\theta) \left( \frac{\gamma^2}{2} 2\pi\ell_x \right)^{\frac{1}{2} - \frac{\theta}{2}} e^{-\frac{\gamma^2}{2} G(x,x)} I_{\theta -1} ( \gamma \sqrt{2\times 2\pi \ell_x} ).
\end{align*}
Similarly, Lemma \ref{L:Laguerre_Bessel} applied to $\theta^*$ instead of $\theta$ yields
\begin{align*}
m_{\gamma,N}^\mathrm{odd}(x)
& = \gamma^{2(1-\theta)} h_{\theta,N}(x) \Gamma(\theta^*) 
\left( \frac{\gamma^2}{2} 2\pi\ell_x \right)^{\frac{1}{2} - \frac{\theta^*}{2}} e^{-\frac{\gamma^2}{2} G(x,x)} I_{\theta^* -1} ( \gamma \sqrt{2\times 2\pi\ell_x} ) \\
& = 2^{1-\theta} \Gamma(\theta^*) \left( \frac{\gamma^2}{2} \right)^{\frac{1}{2} - \frac{\theta}{2}} h_{\theta,N}(x) (2\pi\ell_x)^{\frac{1}{2} - \frac{\theta^*}{2}} e^{-\frac{\gamma^2}{2} G(x,x)} I_{\theta^* -1} ( \gamma \sqrt{2\times 2\pi \ell_x} ).
\end{align*}
Recall the definition \eqref{E:def_htheta_discrete} of $h_{\theta,N}$: $h_{\theta,N}(x) = c_\theta \sigma_x (2\pi\ell_x)^{1-\theta}$ where $c_\theta = 2^{\theta-1} \Gamma(\theta) / \Gamma(\theta^*)$. We can rewrite
$
h_{\theta,N}(x) (2\pi\ell_x)^{\frac{1}{2} - \frac{\theta^*}{2}} = c_\theta \sigma_x (2\pi\ell_x)^{\frac{1}{2} - \frac{\theta}{2}}
$
and we overall find that $m_{\gamma,N}(x)$ is equal to
\begin{align*}
\left( \frac{\gamma^2}{2} 2\pi \ell_x \right)^{\frac{1}{2} - \frac{\theta}{2}} e^{-\frac{\gamma^2}{2} G(x,x)} \left\{ \Gamma(\theta) I_{\theta -1} ( \gamma \sqrt{2\times 2\pi \ell_x} ) + 2^{1-\theta} \Gamma(\theta^*) c_\theta \sigma_x I_{\theta^* -1} ( \gamma \sqrt{2\times 2\pi \ell_x} ) \right\}.
\end{align*}
The constant $c_\theta$ has been chosen so that the two constants in front of the modified Bessel functions match (which is particularly important since they have the same asymptotic behaviour at infinity)
and
\begin{align*}
m_{\gamma,N}(x) = \Gamma(\theta) \left( \frac{\gamma^2}{2} 2\pi \ell_x \right)^{\frac{1}{2} - \frac{\theta}{2}} e^{-\frac{\gamma^2}{2} G(x,x)} \left\{  I_{\theta -1} ( \gamma \sqrt{2\times 2\pi \ell_x} ) + \sigma_x I_{\theta^* -1} ( \gamma \sqrt{2\times 2\pi \ell_x} ) \right\}.
\end{align*}
In the special case $\theta = 1/2$, we have
\[
I_{-1/2}(z) = \sqrt{\frac{2}{\pi z}} \cosh(z)
\quad \text{and} \quad
I_{1/2}(z) = \sqrt{\frac{2}{\pi z}} \sinh(z)
\]
and we do recover that
\[
m_{\gamma,N}(x) = \Gamma(1/2) \frac{1}{\sqrt{\pi}} e^{-\frac{\gamma^2}{2} G(x,x)} e^{\sigma_x \gamma \sqrt{2\times 2\pi \ell_x} } = e^{-\frac{\gamma^2}{2} G(x,x)} e^{\sigma_x \gamma \sqrt{2\times 2\pi\ell_x} }
\]
which converges to $:e^{\gamma h}:$ as $N \to \infty$. In general ($\theta \in (0,1/2]$), if $\sigma_x= -1$, the leading order terms of the two Bessel functions will cancel each other. Since the normalisation factor has been tuned to normalise these leading order terms, the subleading order terms will vanish in the scaling limit and the measure will concentrate on points with a positive spin. One can therefore use a similar reasoning as in the proof of Theorem \ref{T:expansion_unsigned} and compare $m_{\gamma,N}(x)$ with $\Mc_{a,N}(\d x) \indic{\sigma_x = 1}$. In Corollary \ref{C:discrete_ma+} we showed that this latter measure converges to $\Mc_\gamma^+$, so this argument shows that $m_{\gamma,N}(x)$ also converges to $\Mc_\gamma^+$.
\end{proof}

\paragraph*{Consequences of Conjecture \ref{Conjecture1}}
Let us now assume that Conjecture \ref{Conjecture1} holds and discuss the implications it would have. In the course of proving Conjecture \ref{Conjecture1}, it is likely that one would obtain a control on the growth of the correlation of $:h_{\theta,N}(x) \ell_x^n:$ and $:h_{\theta,N}(y) \ell_y^m:$ of a similar type as \eqref{E:covariance_Wick}. This control would ensure that most of the mass of $m_{\gamma,N}$ is carried by, say, the first hundred terms in the two sums in \eqref{E:def_expansion_discrete_signed}. This would then allow us to take the scaling limit of these sums (exactly like in the proof of Theorem \ref{T:expansion_unsigned}) and we would obtain, in conjunction with Theorem \ref{T:expansion_signed_discrete}, that
\begin{equation}
\label{E:conj_expansion}
\Mc_\gamma^+ = \sum_{k \geq 0} \frac{\gamma^{2k}}{2^k} \frac{\Gamma(\theta)}{k! \Gamma(k+\theta)} (2\pi)^k :L_x^k:
+ \gamma^{2(1-\theta)} \frac{\gamma^{2k}}{2^k} \frac{\Gamma(\theta^*)}{k! \Gamma(k+\theta^*)} (2\pi)^k :h_\theta^\mathrm{conj}(x) L_x^k:.
\end{equation}
In particular, this shows that the second order term in the expansion of $\Mc_\gamma^+$ is of order $\gamma^{2(1-\theta)}$ which would imply that the limit
\begin{equation}
\label{E:conj_crossing_probability}
c_{\mathrm{conj}} := \lim_{\gamma \to 0} \frac{Z_\gamma}{\gamma^{2(1-\theta)}} \in (0,\infty)
\end{equation}
exists and is nondegenerate. This would also identify $h_\theta^\mathrm{conj}$ with $c_\mathrm{conj} h_\theta$. Finally, with the help of a Tauberian theorem, it may be the case that the above convergence \eqref{E:conj_crossing_probability} implies the convergence of the crossing probability
\begin{equation}
\lim_{r \to 0} | \log r |^{1-\theta} \P \Big( e^{-1} \partial \D \overset{\Lc_\D^\theta}{\longleftrightarrow} r \D \Big) \in (0,\infty).
\end{equation}
This would be a considerable strengthening of the main result of \cite{JLQ23a} (see Theorem \ref{T:large_crossing}).

\bibliographystyle{alpha}
\bibliography{bibliography}

\end{document}